\documentclass{amsart}
\usepackage{amsmath}
\usepackage{amssymb}
\usepackage[all]{xy}
\usepackage{graphicx}
\usepackage{mathrsfs}

\newcommand{\C}{\mathbb{C}}
\newcommand{\F}{\mathbb{F}}
\newcommand{\Fq}{\F_q}
\newcommand{\Fqb}{\overline{\mathbb{F}}_q}
\newcommand{\Qlb}{\overline{\mathbb{Q}}_\ell}
\newcommand{\fK}{\mathfrak{K}}
\newcommand{\fO}{\mathfrak{O}}
\newcommand{\Z}{\mathbb{Z}}

\newcommand{\cO}{\mathcal{O}}
\newcommand{\Fr}{\mathsf{Fr}}
\newcommand{\Gm}{\mathbb{G}_m}
\newcommand{\bA}{\mathbb{A}}

\newcommand{\cS}{\mathscr{S}}
\newcommand{\cT}{\mathscr{T}}
\DeclareMathOperator{\Spec}{Spec}
\newcommand{\pt}{\mathrm{pt}}

\newcommand{\scA}{\mathscr{A}}
\newcommand{\scD}{\mathscr{D}}

\newcommand{\scM}{\mathscr{M}}
\newcommand{\Irr}{\mathrm{Irr}}
\newcommand{\Db}{D^{\mathrm{b}}}
\newcommand{\Kb}{K^{\mathrm{b}}}

\newcommand{\Proj}{\mathsf{Proj}}
\newcommand{\Inj}{\mathsf{Inj}}
\newcommand{\Pure}{\mathsf{Pure}}
\newcommand{\Tilt}{\mathsf{Tilt}}
\newcommand{\Kos}{\mathsf{Kos}}
\newcommand{\Orl}{\mathsf{Orl}}
\newcommand{\op}{\mathrm{op}}

\newcommand{\pse}{\Im}
\newcommand{\pg}{\varpi}
\newcommand{\incl}{\iota}
\newcommand{\itr}{\varrho}
\newcommand{\iinf}{\upsilon}
\newcommand{\degr}{\zeta}

\newcommand{\Gv}{{\check G}}

\newcommand{\Iv}{{\check I}}
\newcommand{\Jv}{{\check J}}
\newcommand{\GvO}{{\Gv(\fO)}}



\newcommand{\Fl}{\mathsf{Fl}}
\newcommand{\Gr}{\mathsf{Gr}}
\newcommand{\tcN}{{\widetilde{\mathcal{N}}}}
\newcommand{\cN}{{\mathcal{N}}}

\newcommand{\Weil}{\mathrm{Weil}}
\newcommand{\mix}{\mathrm{mix}}

\newcommand{\misc}{\mathrm{misc}}
\newcommand{\ddel}{\cD^{\mathrm{Del}}}
\newcommand{\dw}{\cD^\Weil}
\newcommand{\dm}{\cD^\mix}

\newcommand{\dmisc}{\cD^\misc}

\newcommand{\dc}{\cD}
\newcommand{\dsw}{\cD_\cS^\Weil}
\newcommand{\dsm}{\cD_\cS^\mix}
\newcommand{\tD}{{}^{\mathrm{t}}\cD}
\newcommand{\dsmisc}{\cD_\cS^\misc}
\newcommand{\ds}{\cD_\cS}

\newcommand{\Pervm}{\mathsf{P}^\mix}
\newcommand{\Pervw}{\mathsf{P}^\Weil}
\newcommand{\Pervs}{\mathsf{P}_\cS}
\newcommand{\Pervsw}{\mathsf{P}_\cS^\Weil}
\newcommand{\Pervsm}{\mathsf{P}_\cS^\mix}
\newcommand{\Purew}{\mathsf{Pure}^\Weil}
\newcommand{\exs}{\varkappa}

\DeclareMathOperator{\real}{\mathsf{real}}
\newcommand{\IC}{\mathrm{IC}}
\newcommand{\ICm}{\mathrm{IC}^\mix}
\newcommand{\Dm}{\Delta^\mix}
\newcommand{\Nm}{\nabla^\mix}
\newcommand{\Tm}{\mathrm{T}^\mix}
\newcommand{\Tw}{\mathrm{T}}
\newcommand{\uQlb}{{\underline{\overline{\mathbb{Q}}}_\ell}}

\newcommand{\pH}{{}^p\! H}



\newcommand{\Coh}{\mathsf{Coh}}
\newcommand{\Cohgm}{\Coh^{G \times \Gm}}
\newcommand{\Mod}{\text{-}\mathsf{Mod}}
\newcommand{\free}{{\mathrm{free}}}
\newcommand{\Dfr}{\Db_\free}

\newcommand{\cF}{\mathcal{F}}
\newcommand{\cG}{\mathcal{G}}
\newcommand{\cK}{\mathcal{K}}
\newcommand{\cL}{\mathcal{L}}

\newcommand{\cA}{\mathcal{A}}

\newcommand{\simto}{\overset{\sim}{\longrightarrow}}
\newcommand{\ovto}[1]{\overset{#1}{\longrightarrow}}
\newcommand{\xto}[1]{\xrightarrow{#1}}

\newcommand{\ttimes}{\mathrel{\widetilde{\times}}}
\newcommand{\tboxtimes}{\mathrel{\widetilde{\boxtimes}}}
\newcommand{\cRHom}{R\mathcal{H}\!\mathit{om}}
\newcommand{\la}{\langle}
\newcommand{\ra}{\rangle}

\newcommand{\tlimind}{\mathop{2\text{-}\mathrm{lim\ ind}}\limits}
\DeclareMathOperator{\im}{im}
\newcommand{\End}{\mathrm{End}}
\newcommand{\Lotimes}{\mathchoice%
  {\overset{\scriptscriptstyle L}{\otimes}}%
  {\otimes^{\scriptscriptstyle L}}{\otimes^L}{\otimes^L}}

\newcommand{\D}{\mathbb{D}}

\newcommand{\cD}{\mathcal{D}}

\newcommand{\cW}{\mathcal{W}}

\newcommand{\id}{\mathrm{id}}

\newcommand{\scB}{\mathscr{B}}
\newcommand{\scC}{\mathscr{C}}
\newcommand{\scE}{\mathscr{E}}

\newcommand{\cC}{\mathcal{C}}

\DeclareMathOperator{\Hom}{Hom}
\DeclareMathOperator{\Ext}{Ext}
\DeclareMathOperator{\Ind}{Ind}

\DeclareMathOperator{\uHom}{\underline{Hom}}
\DeclareMathOperator{\uEnd}{\underline{End}}
\DeclareMathOperator{\uExt}{\underline{Ext}}
\DeclareMathOperator{\uRHom}{\mathit{R}\underline{Hom}}

\DeclareMathOperator{\wt}{wt}
\DeclareMathOperator{\gr}{gr}
\DeclareMathOperator{\cok}{cok}
\DeclareMathOperator{\rad}{rad}

\DeclareMathOperator{\supp}{supp}

\def\eq#1{\text{\rm $#1$-eq}}

\newtheorem{thm*}{Theorem}
\numberwithin{equation}{section}
\newtheorem{thm}{Theorem}[section]
\newtheorem{lem}[thm]{Lemma}
\newtheorem{prop}[thm]{Proposition}
\newtheorem{cor}[thm]{Corollary}
\theoremstyle{definition}
\newtheorem{defn}[thm]{Definition}

\theoremstyle{remark}
\newtheorem{rmk}[thm]{Remark}

\title{Koszul duality and semisimplicity of Frobenius}

\author{Pramod N. Achar}
\address{Department of Mathematics\\
   Louisiana State University\\
   Baton Rouge, LA 70803\\
   USA}
\email{pramod@math.lsu.edu}
\author{Simon Riche}
\address{Clermont Universit{\'e}, Universit{\'e} Blaise Pascal, Laboratoire de  
Math{\'e}ma\-tiques, BP 10448, F-63000 Clermont-Ferrand. \newline
\indent CNRS, UMR 6620, Laboratoire de Math{\'e}matiques, F-63177 Aubi{\`e}re.
}
\email{simon.riche@math.univ-bpclermont.fr}

\begin{document}

\begin{abstract}
A fundamental result of Beilinson--Ginzburg--Soergel states that on flag varieties and related spaces, a certain modified version of the category of $\ell$-adic perverse sheaves exhibits a phenomenon known as \emph{Koszul duality}.  The modification essentially consists of discarding objects whose stalks carry a nonsemisimple action of Frobenius.  In this paper, we prove that a number of common sheaf functors (various pull-backs and push-forwards) induce corresponding functors on the modified category or its triangulated analogue.  In particular, we show that these functors preserve semisimplicity of the Frobenius action.
\end{abstract}

\maketitle

\section{Introduction}
\label{sect:intro}

Let $X$ be a variety over a finite field $\F_q$.  In Deligne's work on the Weil conjectures~\cite{del1,del2}, a central role is played by the category of ``mixed constructible complexes of $\Qlb$-sheaves'' on $X$, denoted $\dw(X)$ in the present paper.  (Henceforth, we will avoid calling this category ``mixed,'' as that conflicts with the terminology of~\cite{bgs}.) In order to belong to $\dw(X)$, a complex $\cF$ must have the property that the eigenvalues of the Frobenius action on stalks of $\cF$ at $\F_{q^n}$-points of $X$ are of a certain form.  One of the main results of Deligne's work states that this constraint on eigenvalues of Frobenius is preserved by all the usual sheaf operations~\cite[\S6.1]{del2}.

However, since the work of Beilinson--Ginzburg--Soergel~\cite{bgs}, it has been known that $\dw(X)$ and its abelian subcategory $\Pervw(X)$ of perverse sheaves are ``too large'' for certain applications in representation theory.  For instance, when $X$ is the flag variety of a reductive algebraic group $G$, the category $\Pervsw(X)$ of perverse sheaves smooth along the stratification $\cS$ by Bruhat cells is very close to being a \emph{Koszul category} (see Section~\ref{ss:koszul-cat}).  To achieve Koszulity, one must replace it by the full subcategory $\Pervsm(X)$ consisting of objects on which the Frobenius action is semisimple and has integral eigenvalues.  A similar phenomenon occurs at the level of the derived category in work of Arkhipov--Bezrukavnikov--Ginzburg~\cite{abg}; see the remarks at the end of the introduction.

For a variety $X$ with a fixed stratification $\cS$, we may pose two general questions:
\begin{enumerate}
\item[(Q1)] Is there a triangulated category $\dsm(X) \subset \dsw(X)$ analogous to $\Pervsm(X)$ on whose objects the Frobenius action is semisimple and has integral eigenvalues?
\item[(Q2)] Following Deligne, do the usual sheaf operations preserve these stronger conditions on the action of Frobenius?
\end{enumerate}
These questions (along with (Q3) below) are closely related to the ``standard conjectures on algebraic cycles'' and to the Tate conjecture; see~\cite[\S2.9]{tate} or~\cite[Proposition~1.15]{milne:mot}.  The aim of this paper is to supply positive answers in certain very special cases.  In fact, the flag variety is the archetype for the cases we are able to treat; the Koszul duality phenomenon is an essential ingredient in our proofs.

Let us explain what form the answers to the questions above might take, starting with~(Q1).  It is fairly easy to write down (see Section~\ref{ss:miscible}) a condition on objects that generalizes the definition of $\Pervsm(X)$.   However, the resulting full subcategory, which we denote $\dsmisc(X)$ and call the \emph{miscible category}, has a severe disadvantage: \emph{it is not a triangulated category}.  The problem lies in the word ``full'': $\dsmisc(X)$ contains morphisms with no cone, so to give a satisfactory answer to (Q1), we must discard some morphisms from that category so that what remains is a triangulated category.  Equivalently, we could answer (Q1) by constructing a triangulated category $\dsm(X)$ together with a triangulated functor $\incl: \dsm(X) \to \dsw(X)$ such that the following conditions hold:
\begin{enumerate}
\item[(D1)] $\incl$ is faithful (but not full in general).  
\item[(D2)] The essential image of $\incl$ is $\dsmisc(X)$.
\end{enumerate}
One additional desideratum we might impose on $\dsm(X)$ and $\incl$ is as follows:
\begin{enumerate}
\item[(D3)] $\dsm(X)$ admits a $t$-structure whose heart can be identified with $\Pervsm(X)$, and $\incl$ is $t$-exact and induces a fully faithful functor $\incl: \Pervsm(X) \to \Pervsw(X)$.
\end{enumerate}

Turning now to~(Q2), we say that a functor $F: \dsw(X) \to \dw_\cT(Y)$ is \emph{miscible} if $F(\dsmisc(X)) \subset \dmisc_\cT(Y)$.  A positive answer to~(Q2) consists of showing that the usual sheaf operations are miscible.  However, the restricted functor $F: \dsmisc(X) \to \dmisc_\cT(Y)$ is not one that can be studied with the usual tools of homological algebra, because the categories involved are not triangulated.

In retrospect, we see that (Q2) was too coarse a question, because it was only about preserving a certain class of objects.  Instead, we really ought to ask:
\begin{enumerate}
\item[(Q3)] Do the usual sheaf operations preserve the class of morphisms in the image of $\incl: \dsm(X) \to \dsmisc(X)$?
\end{enumerate}

Let us make this more precise.  A miscible functor $F: \dsw(X) \to \dw_\cT(Y)$ is said to be \emph{genuine} if there is a functor of triangulated categories $\tilde F: \dsm(X) \to \dm_\cT(Y)$ making the diagram
\[
\xymatrix{
\dsm(X) \ar[d]_{\tilde F} \ar[r]^{\incl} & \dsmisc(X) \ar[d]^F \\
\dm_\cT(Y) \ar[r]_{\incl} & \dmisc_\cT(Y)}
\]
commute.  (Q3)~asks us to show that the usual sheaf operations are genuine.  The definition of genuineness suggests that we should go back and add one more desideratum to our list:
\begin{enumerate}
\item[(D4)] For any genuine functor $F: \dsw(X) \to \dw_\cT(Y)$, the induced functor $\tilde F: \dsm(X) \to \dm_\cT(Y)$ is unique up to isomorphism.
\end{enumerate}

\bigskip

In this paper, we consider a very special class of stratifications, called \emph{affable} stratifications.  For varieties with an affable stratification, we explain how to construct the category $\dsm(X)$ and the functor $\incl: \dsm(X) \to \dsw(X)$ satisfying desiderata (D1)--(D4), answering~(Q1).  We prove that a number of common functors (proper push-forwards, tensor products, etc.) are at least miscible, answering (Q2).  For some of these (notably, locally closed inclusions, and push-forward along a smooth proper map), we further prove that they are genuine, answering (Q3).  

The paper is divided into three parts.  Part~\ref{part:homological} introduces various notions and results in abstract homological algebra that are needed later.  In particular, Section~\ref{sect:infext} introduces a class of additive categories, called \emph{infinitesimal extensions}, that are ``almost triangulated.'' Section~\ref{sect:hot-orlov} introduces {Orlov categories}, which are a useful tool for constructing morphisms between functors of triangulated categories.  Orlov categories also turn out to be closely related to Koszul duality, of which we give a self-contained account in Section~\ref{sect:kosorlov}.

Part~\ref{part:sheaf} is the core of the paper.  It contains the definition of \emph{affable stratification}, and the definition of the category $\dsm(X)$.  (This definition relies on the fact that $\dsmisc(X)$ is an infinitesimal extension.)  The main results, which assert the miscibility or genuineness of various functors, appear in Section~\ref{sect:genuine}.  Their proofs rely heavily on the theory of Orlov categories.

Finally, Part~\ref{part:applications} gives two brief applications of these results to representation theory, both related to the work of Arkhipov--Bezrukavnikov--Ginzburg~\cite{abg} mentioned earlier.  That paper deals with the affine Grassmannian $\Gr$ for a semisimple algebraic group, stratified by orbits of an Iwahori subgroup.  Realizing that $\dsw(\Gr)$ was the wrong category for their purposes, the authors of that paper substituted the derived category $\Db\Pervsm(\Gr)$.  It turns out that in this case, the natural functor $\Db\Pervsm(\Gr) \to \dsw(\Gr)$ is faithful and induces an equivalence $\Db\Pervsm(\Gr) \simto \dsm(\Gr)$.  Using the sheaf operations on this category that are made available by the results of Part~\ref{part:sheaf}, we prove two small results about Andersen--Jantzen sheaves and about Wakimoto sheaves.

In a subsequent paper~\cite{ar}, the authors will use the theory developed here to show that a derived version of the geometric Satake equivalence coming from~\cite{abg} is compatible with restriction to a Levi subgroup.

\subsection*{Acknowledgments}

The first author is grateful to the Universit\'e Clermont-Fer\-rand II for its hospitality during a visit in June 2010, when much of the work in this paper was carried out.  This visit was supported by the CNRS and the ANR.  In addition, P.A. received support from NSA Grant No.~H98230-09-1-0024 and NSF Grant No.~DMS-1001594, and S.R. is supported by ANR Grant No.~ANR-09-JCJC-0102-01.

\part{Homological algebra}
\label{part:homological}

\section{Mixed and Koszul categories}
\label{sect:mixed}

We begin by collecting a number of definitions related to abelian and triangulated categories.  Fix a field $\Bbbk$.  In this section, and throughout Part~\ref{part:homological}, all additive categories will be $\Bbbk$-linear, and all functors between additive categories will be assumed to be additive and $\Bbbk$-linear as well.

In any additive category $\scA$, we write $\Ind(\scA)$ for the set of isomorphism classes of indecomposable objects in $\scA$, or, by an abuse of notation, for a chosen set of representative objects of those isomorphism classes.  Similarly, in an abelian category $\scM$, we write $\Irr(\scM)$ for the set of isomorphism classes of simple objects, or for a chosen set of representatives of those isomorphism classes. For any $L \in \Irr(\scM)$, the ring $\End(L)$ is a division ring over $\Bbbk$.  We say that $\scM$ is \emph{split} if
\[
\End(L) \cong \Bbbk \qquad\text{for all $L \in \Irr(\scM)$.}
\]
Finally, we say that $\scM$ is a \emph{finite-length} abelian category if it is both noetherian and artinian.  

\subsection{Mixed categories}

Let $\scM$ be a finite-length abelian category.  As in~\cite{bgs}, a \emph{mixed structure} on $\scM$ is a function $\wt: \Irr(\scM) \to \Z$ such that
\begin{equation}\label{eqn:mixed-ext-vanish}
\Ext^1(S,S') = 0
\qquad
\text{if $S, S'$ are simple objects with $\wt(S') \ge \wt(S)$.}
\end{equation}
This function is called a \emph{weight function}.  The \emph{set of weights} of an object $X$ is simply the set of values of $\wt$ evaluated on the composition factors of $X$.  An object is said to be \emph{pure} if all its simple composition factors have the same weight.  It is a consequence of~\eqref{eqn:mixed-ext-vanish} that pure objects are automatically semisimple.  Every object $X$ is endowed with a canonical \emph{weight filtration}, denoted
\[
W_\bullet X,
\]
such that $W_k X$ is the unique maximal subobject of $X$ with weights${}\le k$.

\subsection{Mixed triangulated categories}

Suppose that we have a triangulated category $\scD$ equipped with a bounded $t$-structure whose heart is $\scM$.  A \emph{mixed structure} on $\scD$ is simply a mixed structure on $\scM$ that satisfies the following stronger version of~\eqref{eqn:mixed-ext-vanish}:
\begin{equation}\label{eqn:mixed-hom-vanish}
\Hom_{\scD}^i(S,S') = 0
\quad
\text{if $S, S' \in \scM$ are simple and $\wt(S') > \wt(S) - i$.}
\end{equation}
Here, as usual, we write $\Hom^i(S,S')$ for $\Hom(S,S'[i])$.  When $i = 1$, this condition is equivalent to~\eqref{eqn:mixed-ext-vanish}, by~\cite[Remarque~3.1.17(ii)]{bbd}.  An object $M \in \scD$ is said to \emph{have weights${}\le w$} (resp. \emph{have weights${}\ge w$}, \emph{be pure of weight $w$}) if each cohomology object $H^i(M) \in \scM$ has weights${}\le w+i$ (resp. has weights${}\ge w+i$, is pure of weight $w+i$).  

In the special case where $\scD = \Db(\scM)$, condition~\eqref{eqn:mixed-ext-vanish} implies~\eqref{eqn:mixed-hom-vanish}, because any morphism in $\Hom^i(S,S')$ is a composition of morphisms in various $\Hom^1$-groups.  In other words, the bounded derived category of a mixed abelian category automatically has a mixed structure.  The following basic facts are well-known.

\begin{lem}\label{lem:mixed-tri-basic}
Let $\scM$ be the heart of a $t$-structure on $\scD$, and suppose $\scD$ has a mixed structure.
\begin{enumerate}
\item If $X, Y \in \scD$ are objects such that $X$ has weights${}\le w$ and $Y$ has weights${}>w$, then $\Hom(X,Y) = 0$.\label{it:mixed-tri-basic-homv}
\item Let $X$ be an object of $\scD$ with weights${}\ge a$ and${}\le b$.  For any $w \in \Z$, there is a distinguished triangle\label{it:mixed-tri-basic-dt}
\[
X' \to X \to X'' \to
\]
where $X'$ has weights${}\ge a$ and${}\le w$, and $X''$ has weights${}>w$ and${}\le b$.
\item Every pure object $X \in \scD$ is semisimple.  That is, if $X$ is pure of weight $w$, then $X \cong \bigoplus_i H^i(X)[-i]$, where each $H^i(X) \in \scM$ is a pure (and therefore semisimple) object of weight $w+i$.\label{it:mixed-tri-basic-pure}\qed
\end{enumerate}
\end{lem}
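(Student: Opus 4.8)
The plan is to deduce all three assertions from the vanishing conditions~\eqref{eqn:mixed-ext-vanish} and~\eqref{eqn:mixed-hom-vanish} by d\'evissage along the $t$-structure. Two elementary observations will be used repeatedly. First, in a distinguished triangle $A \to B \to C \to$, if $A$ and $C$ both have weights${}\le w$ (resp.\ both have weights${}\ge a$), then so does $B$: this follows from the long exact cohomology sequence, since a subobject, a quotient, or an extension of objects of weights${}\le w+i$ again has weights${}\le w+i$. Second, the shift $[1]$ carries objects of weights${}>w$ to objects of weights${}>w$.

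For~\eqref{it:mixed-tri-basic-homv}, I would induct on the total number of nonzero cohomology objects of $X$ and of $Y$. Using the truncation triangles $\tau_{\le i-1}X \to X \to H^i(X)[-i] \to$ (and similarly for $Y$) and the resulting long exact $\Hom$-sequences, this reduces to the case $X = A[-i]$, $Y = B[-j]$ with $A, B \in \scM$; a further d\'evissage inside $\scM$ along composition series (using that a short exact sequence in the heart gives a distinguished triangle in $\scD$) reduces to $A = S$, $B = S'$ simple. Then $\Hom(X,Y) = \Hom^{i-j}(S,S')$, while the hypotheses give $\wt(S) \le w+i$ and $\wt(S') \ge w+j+1$, so $\wt(S') > \wt(S) - (i-j)$; hence this group vanishes by~\eqref{eqn:mixed-hom-vanish}.

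Assertion~\eqref{it:mixed-tri-basic-pure} then follows quickly. By the definition of purity each $H^i(X)$ is pure of weight $w+i$, hence semisimple (a pure object of $\scM$ is semisimple, as noted after~\eqref{eqn:mixed-ext-vanish}), so it remains only to split the truncation triangles. Let $i_0$ be the largest degree in which $X$ has nonzero cohomology; then $\tau_{\le i_0-1}X$ is again pure of weight $w$, so the connecting map $\delta\colon H^{i_0}(X)[-i_0] \to (\tau_{\le i_0-1}X)[1]$ runs from an object of weights${}\le w$ to an object of weights${}>w$. By~\eqref{it:mixed-tri-basic-homv}, $\delta = 0$, so $X \cong \tau_{\le i_0-1}X \oplus H^{i_0}(X)[-i_0]$, and induction on the number of cohomology objects of $X$ finishes.

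For~\eqref{it:mixed-tri-basic-dt} I would once more induct on the number of nonzero cohomology objects of $X$. If $X$ is concentrated in a single degree, the assertion is the (shifted) weight filtration in $\scM$. Otherwise, choose $m$ so that $Y := \tau_{\le m}X$ and $Z := \tau_{>m}X$ each have strictly fewer nonzero cohomology objects; by the inductive hypothesis there are triangles $Y' \to Y \to Y'' \to$ and $Z' \to Z \to Z'' \to$ with $Y', Z'$ of weights in $[a,w]$ and $Y'', Z''$ of weights in $(w,b]$. Let $\delta\colon Z \to Y[1]$ be the connecting map. The obstruction to factoring $Z' \to Z \xto{\delta} Y[1]$ through $Y'[1] \to Y[1]$ is its further composite into $Y''[1]$, an element of $\Hom(Z', Y''[1])$; this vanishes by~\eqref{it:mixed-tri-basic-homv} since $Z'$ has weights${}\le w$ and $Y''[1]$ has weights${}>w$. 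Choosing a factorization $h\colon Z' \to Y'[1]$, completing it to a distinguished triangle $Y' \to X' \to Z' \xto{h} Y'[1]$, and applying axiom (TR3) together with the $3 \times 3$ lemma (see, e.g., \cite[1.1.11]{bbd}), one obtains a distinguished triangle $X' \to X \to X'' \to$ in which $X''$ sits in a triangle $Y'' \to X'' \to Z'' \to$; the observation on weights in triangles above then gives weights in $[a,w]$ for $X'$ and in $(w,b]$ for $X''$. The genuinely nontrivial point is this gluing step: it is exactly here that one needs the $\Hom$-vanishing of~\eqref{it:mixed-tri-basic-homv}, and hence the full condition~\eqref{eqn:mixed-hom-vanish} rather than merely~\eqref{eqn:mixed-ext-vanish}; everything else is routine d\'evissage.
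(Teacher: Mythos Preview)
The paper does not actually prove this lemma: it is introduced with ``The following basic facts are well-known'' and the statement ends with a \qed\ symbol, so there is no argument in the paper to compare against. Your proof is correct and is the standard d\'evissage one would expect; the reduction of~\eqref{it:mixed-tri-basic-homv} to~\eqref{eqn:mixed-hom-vanish}, the splitting argument for~\eqref{it:mixed-tri-basic-pure}, and the inductive gluing for~\eqref{it:mixed-tri-basic-dt} via the $3\times 3$ lemma are all fine. One small remark: your invocation of \cite[1.1.11]{bbd} in part~\eqref{it:mixed-tri-basic-dt} would read more cleanly if you set it up as a commutative square (e.g.\ $Z'[-1]\to Y'$ over $Z[-1]\to Y$) and then apply the $9$-lemma directly, rather than first building $X'$ and then appealing to TR3; but the content is the same.
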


Note that neither the distinguished triangle in part~\eqref{it:mixed-tri-basic-dt} nor the direct-sum decomposition in part~\eqref{it:mixed-tri-basic-pure} is canonical in general.

\subsection{Tate twists; mixed and graded versions}

Suppose now that $\scM$ is a mixed abelian category endowed with an autoequivalence, denoted $M \mapsto M\la 1\ra$, such that for a simple object $S$, $\wt(S\la 1\ra) = \wt(S) + 1$.  Suppose also that we have an exact functor $\degr: \scM \to \scM'$ to another finite-length abelian category $\scM'$, together with an isomorphism $\varepsilon: \degr \circ \la 1 \ra \simto \degr$.  Assume that every simple object of $\scM'$ lies in the essential image of $\degr$.  Then $\scM$ is called a \emph{mixed version} of $\scM'$ if for all objects $M,N \in \scM$, $\degr$ induces an isomorphism
\begin{equation}\label{eqn:degrading}
\bigoplus_{n \in \Z} \Hom_{\scM}(M,N\la n\ra) \simto \Hom_{\scM'}(\degr M, \degr N).
\end{equation}

There are two natural ways to generalize this notion to the setting of triangulated categories.  Suppose that $\scD$ is a triangulated category equipped with an autoequivalence $\la 1\ra: \scD \to \scD$, a functor $\degr: \scD \to \scD'$ whose essential image generates $\scD'$ as a triangulated category, and an isomorphism $\varepsilon: \degr \circ \la 1 \ra \simto \degr$.  Then $\scD$ is called a \emph{graded version} of $\scD'$ if the isomorphism~\eqref{eqn:degrading} holds for all objects $M,N \in \scD$.   

Suppose, in addition, that $\scD$ and $\scD'$ are equipped with $t$-structures such that $\scD$ is a mixed triangulated category, and such that the functors $\la 1 \ra$ and $\degr$ are $t$-exact.  In this case, $\scD$ is said to be a \emph{mixed version} of $\scD'$.

\subsection{Koszul categories}
\label{ss:koszul-cat}

Let $\scM$ be a mixed abelian category.  $\scM$ is said to be \emph{Koszul} if the following stronger version of~\eqref{eqn:mixed-ext-vanish} and~\eqref{eqn:mixed-hom-vanish} holds:
\begin{equation}\label{eqn:koszul-ext-vanish}
\Ext^i(S,S') = 0
\qquad
\text{if $S, S'$ are simple objects with $\wt(S') \ne \wt(S) - i$.}
\end{equation}
In contrast with the setting of~\eqref{eqn:mixed-ext-vanish} and~\eqref{eqn:mixed-hom-vanish}, the $i = 1$ case of~\eqref{eqn:koszul-ext-vanish} does not imply the general condition.  On the other hand, this equation implies the following stronger version of Lemma~\ref{lem:mixed-tri-basic}\eqref{it:mixed-tri-basic-homv}.  

\begin{lem}\label{lem:koszul-tri-basic}
Let $\scM$ be a Koszul category, and let $X, Y \in \Db(\scM)$.  If $X$ has weights${}\le w$ and $Y$ has weights${}> w$, then $\Hom(X,Y) = \Hom(Y,X) = 0$.\qed
\end{lem}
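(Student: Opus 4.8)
The plan is to deduce the vanishing of $\Hom(X,Y)$ and $\Hom(Y,X)$ from the Koszul condition~\eqref{eqn:koszul-ext-vanish} by reducing to the case where $X$ and $Y$ are (shifts of) simple objects. First I would note that both $\Db(\scM)$ and its bifunctor $\Hom$ behave additively in each variable, so I can replace $X$ and $Y$ by their cohomology objects: using the standard ``stupid'' or canonical truncation, any $X$ with weights${}\le w$ is an iterated extension of the complexes $H^i(X)[-i]$, each of which still has weights${}\le w$ (that is, $H^i(X)$ has weights${}\le w+i$); similarly for $Y$. By the long exact sequences in $\Hom$ attached to these distinguished triangles, it suffices to treat the case $X = A[-i]$ and $Y = B[-j]$ with $A, B \in \scM$, $A$ having weights${}\le w+i$, and $B$ having weights${}\ge w+j+1$. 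Then $\Hom(X,Y) = \Hom^{\,i-j}(A,B) = \Ext^{i-j}(A,B)$ (computed in $\Db(\scM)$, which agrees with Yoneda Ext of $\scM$).

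Next I would reduce further to the case where $A$ and $B$ are simple. Filtering $A$ and $B$ by composition series and using the long exact $\Ext$-sequences again, it is enough to show $\Ext^k(S,S') = 0$ whenever $S, S'$ are simple, $\wt(S) \le w + i$, $\wt(S') \ge w + j + 1$, and $k = i - j$. But then $\wt(S') - \wt(S) \ge (w+j+1) - (w+i) = 1 - k$, i.e. $\wt(S') > \wt(S) - k$, so in particular $\wt(S') \ne \wt(S) - k$, and~\eqref{eqn:koszul-ext-vanish} gives the vanishing. Note this argument is not symmetric in the roles of $X$ and $Y$ a priori, so for $\Hom(Y,X)$ I run the same reduction: it comes down to $\Ext^{j-i}(S',S)$ with $\wt(S') \ge w+j+1$, $\wt(S) \le w+i$, and now $\wt(S) - \wt(S') \le i - j = -(j-i) < -(j-i)+1$... more directly, $\wt(S) - (j-i) \le (w+i) - (j-i) = w + 2i - j$ while $\wt(S') \ge w + j + 1$; since we would need $\wt(S) = \wt(S') + (j - i)$ for a nonzero $\Ext^{j-i}$, and $\wt(S') + (j-i) \ge w + j + 1 + j - i > w + i \ge \wt(S)$ when $j \ge i$ (and the group is $0$ for degree reasons when $j < i$), equation~\eqref{eqn:koszul-ext-vanish} again forces vanishing.

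The only genuinely delicate point is the bookkeeping: one must be careful that the truncation/filtration reductions are carried out so that the weight inequalities are preserved at each step, and that the grading shifts in the $\Ext$-degree are tracked correctly (this is where the strict inequality ``weights${}>w$'' on $Y$, as opposed to ``${}\ge w$'', is essential — it is exactly what turns the weak inequality $\wt(S') \ge \wt(S) - k$ of Lemma~\ref{lem:mixed-tri-basic}\eqref{it:mixed-tri-basic-homv} into the strict inequality $\wt(S') \ne \wt(S) - k$ needed to invoke~\eqref{eqn:koszul-ext-vanish}). I expect this indexing to be the main obstacle to a clean writeup, but it is routine; no new idea beyond~\eqref{eqn:koszul-ext-vanish} and the additivity of $\Hom$ in triangulated categories is required.
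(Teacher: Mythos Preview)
Your proposal is correct and is exactly the routine d\'evissage argument the paper has in mind: the lemma is stated with a \qed{} and no proof, preceded only by the remark that~\eqref{eqn:koszul-ext-vanish} implies it. Your reduction via truncation and composition series to the case of simple objects, followed by the weight bookkeeping showing $\wt(S') \ne \wt(S) - k$, is the intended argument; the second case ($\Hom(Y,X)$) could be written more cleanly but the inequalities you derive are right.
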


\begin{cor}\label{cor:koszul-split}
Let $\scM$ be a Koszul category.  If $X \in \scM$ has no composition factors of weight $w$, then $X \cong W_{w-1}X \oplus X/W_{w-1}X$.
\end{cor}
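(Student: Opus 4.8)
Suppose $X \in \scM$ has no composition factors of weight $w$. The plan is to split $X$ along its weight filtration at the $w$th step. Write $W = W_{w-1}X$ and let $q: X \to X/W$ be the quotient map. Since $X$ has no composition factors of weight $w$, the quotient $X/W$ has all its composition factors of weight${}\ge w+1$, hence in particular of weight${}> w$; meanwhile $W$ has all composition factors of weight${}\le w-1$, hence of weight${}< w$. Viewing $W$ and $X/W$ as complexes concentrated in degree $0$ inside $\Db(\scM)$, the object $X/W$ has weights${}> w$ and $W$ has weights${}\le w$ (indeed even weights${}\le w-1$).

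The key step is to produce a splitting of the short exact sequence $0 \to W \to X \to X/W \to 0$. This sequence corresponds to a class in $\Ext^1_\scM(X/W, W) = \Hom_{\Db(\scM)}(X/W, W[1])$. I claim this group vanishes. Apply Lemma~\ref{lem:koszul-tri-basic} with the roles of $X$ and $Y$ played by $W$ (which has weights${}\le w-1$, in particular${}\le w$) and $X/W$ (which has weights${}> w$): the lemma gives $\Hom_{\Db(\scM)}(X/W, W) = 0$ as well as $\Hom_{\Db(\scM)}(W, X/W) = 0$. To kill $\Hom_{\Db(\scM)}(X/W, W[1])$, I apply the lemma again, now to the pair consisting of $W[1]$ and $X/W$: the shift $W[1]$ has weights${}\le w-1 \le w$ still (the grading convention for shifts in Section~\ref{sect:mixed} puts $H^{-1}$ of $W[1]$ equal to $W$, which has weights${}\le w-1$, so $W[1]$ has weights${}\le w-1+(-1) = w-2 \le w$), while $X/W$ has weights${}> w$. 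Hence $\Hom_{\Db(\scM)}(X/W, W[1]) = 0$ by Lemma~\ref{lem:koszul-tri-basic}. Therefore the extension splits, giving $X \cong W \oplus X/W = W_{w-1}X \oplus X/W_{w-1}X$.

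I expect the only real subtlety to be bookkeeping with the weight-vs-degree shift conventions, i.e.\ checking that $W[1]$ still "has weights${}\le w$" in the sense of the definition for objects of $\scD$ given in Section~\ref{sect:mixed}, so that Lemma~\ref{lem:koszul-tri-basic} genuinely applies. Everything else is a direct invocation of the Koszul vanishing in the form already packaged as Lemma~\ref{lem:koszul-tri-basic}; no computation with the Koszul condition~\eqref{eqn:koszul-ext-vanish} itself is needed beyond what that lemma already encapsulates.
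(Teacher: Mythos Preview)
Your approach is identical to the paper's: both show $\Ext^1(X/W_{w-1}X, W_{w-1}X)=0$ by applying Lemma~\ref{lem:koszul-tri-basic} to $X/W_{w-1}X$ (weights${}>w$) and $(W_{w-1}X)[1]$ (weights${}\le w$), and conclude that the short exact sequence splits. One bookkeeping slip: under the convention in Section~\ref{sect:mixed}, shifting by $[1]$ \emph{raises} weights by $1$, so $W[1]$ has weights${}\le (w-1)+1=w$, not ${}\le w-2$; fortunately you only use the weaker bound ${}\le w$, so the argument is unaffected.
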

\begin{proof}
Since $X/W_{w-1}X$ has weights${}\ge w+1$ and $(W_{w-1}X)[1]$ has weights${}\le w$, we have $\Ext^1(X/W_{w-1}X, W_{w-1}X) \cong \Hom(X/W_{w-1}X, (W_{w-1}X)[1]) = 0$ by the previous lemma, so the short exact sequence $0 \to W_{w-1}X \to X \to X/W_{w-1}X \to 0$ splits.
\end{proof}

A key feature of Koszul categories is that one can often construct a new abelian category $\scM^\natural$, called the \emph{Koszul dual} of $\scM$, such that there is a canonical equivalence of derived categories of $\scM$ and $\scM^\natural$.  ($\scM$ and $\scM^\natural$ need not be equivalent abelian categories.)  A very general form of this equivalence, in which $\scM$ and $\scM^\natural$ are both categories of finitely-generated modules over Koszul rings, is developed in~\cite{bgs}.

Assuming that $\scM$ has enough projectives, the category $\scM^\natural$ can be described as the full subcategory of $\Db(\scM)$ given by
\begin{equation}\label{eqn:koszul-duality}
\scM^\natural = \left\{ X \in \Db(\scM) \,\Bigg|\,
\begin{array}{c}
\text{for any indecomposable projective $P \in \scM$,} \\
\text{we have $\Hom(X,P[k]) = 0$ if $k < \wt (P/\rad P)$} \\
\text{and $\Hom(P[k],X) = 0$ if $k > \wt (P/\rad P)$}
\end{array}
\right\}.
\end{equation}
The following theorem is one way to formulate Koszul duality in this setting.

\begin{thm}[Koszul Duality]\label{thm:koszul-duality}
Let $\scM$ be a Koszul category with enough projectives, and assume that every object has finite projective dimension.  Then $\scM^\natural$ is the heart of a $t$-structure on $\Db(\scM)$, and the realization functor (see Section~\ref{ss:realization})
\[
\real: \Db(\scM^\natural) \to \Db(\scM)
\]
is an equivalence of categories.  Moreover, $\scM^\natural$ is itself a Koszul category, with
\[
\Irr(\scM^\natural) = \{ P[-\wt (P/\rad P)] \mid \text{$P \in \scM$ an indecomposable projective} \}
\]
and with weight function $\wt^\natural: \Irr(\scM^\natural) \to \Z$ given by
\[
\wt^\natural( P[-\wt (P/\rad P)] ) = \wt (P/\rad P).
\]
Finally, $\scM^\natural$ has enough injectives, and every object has finite injective dimension.  The indecomposable injectives are of the form $\{ L[-\wt L] \mid L \in \Irr(\scM) \}$.
\end{thm}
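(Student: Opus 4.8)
The plan is to build the $t$-structure on $\Db(\scM)$ whose heart is $\scM^\natural$, check it behaves well, and then run Koszul duality on the other side. First I would verify that $\scM^\natural$, as defined by the orthogonality conditions in~\eqref{eqn:koszul-duality}, is the heart of a bounded $t$-structure. The natural move is to exhibit a semiorthogonal-type family: for each weight $w$, let $\scD^{\le 0}$ (resp.\ $\scD^{\ge 0}$) be generated (under extensions, shifts in the appropriate direction, and $\la 1\ra$) by objects $P[-\wt(P/\rad P)]$ with $P$ indecomposable projective. Using that $\scM$ has enough projectives and finite projective dimension, every object of $\Db(\scM)$ has a finite resolution by projectives, so these subcategories generate; the Koszul vanishing~\eqref{eqn:koszul-ext-vanish} — equivalently Lemma~\ref{lem:koszul-tri-basic} — gives the required $\Hom$-vanishing $\Hom(X,Y[-1])=0$ for $X\in\scD^{\le 0}$, $Y\in\scD^{\ge 0}$, and more precisely pins down the orthogonality so that the heart is exactly $\scM^\natural$. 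One then checks the truncation functors exist, again by inducting on projective resolution length. The realization functor $\real\colon \Db(\scM^\natural)\to\Db(\scM)$ is then an equivalence by the standard criterion (Be{\u\i}linson's lemma, cf.\ Section~\ref{ss:realization}): it suffices that $\Hom_{\Db(\scM)}(S,S'[k])$ for $S,S'\in\scM^\natural$ is computed by $\Ext^k_{\scM^\natural}$, which follows because the generating objects $P[-\wt(P/\rad P)]$ are projective in $\scM$ hence have no higher self-$\Ext$ in $\Db(\scM)$ beyond what the $t$-structure records.

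Next I would identify the simples and the weight function of $\scM^\natural$. The indecomposable objects of the heart $\scM^\natural$ that are also projective objects of $\scM$ (placed in the right degree) are exactly the $P[-\wt(P/\rad P)]$; the content to check is that these are the \emph{simple} objects of $\scM^\natural$, i.e.\ have no proper nonzero subobjects or quotients in $\scM^\natural$. This is where the orthogonality conditions do the work: a nonzero map out of or into $P[-\wt(P/\rad P)]$ within the heart, by~\eqref{eqn:koszul-duality}, forces it to be split injective or surjective, so $P[-\wt(P/\rad P)]$ is simple. One defines $\wt^\natural$ on these by $\wt^\natural(P[-\wt(P/\rad P)]) = \wt(P/\rad P)$ and verifies condition~\eqref{eqn:koszul-ext-vanish} for $\scM^\natural$: a computation of $\Hom_{\Db(\scM)}(P[-a], Q[-b][i])$ for indecomposable projectives $P,Q$ shows this vanishes unless $i$ equals exactly $a-b = \wt^\natural(\text{target}) - \wt^\natural(\text{source})$ up to the sign convention, precisely because in a Koszul category the graded $\Ext$-algebra of the projectives is concentrated in the diagonal bidegree. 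So $\scM^\natural$ is Koszul.

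Finally, the statements about injectives in $\scM^\natural$ and finite injective dimension follow by applying the first half of the theorem to $(\scM^\natural)^{\op}$, or more concretely by a direct duality argument inside $\Db(\scM)$: the objects $L[-\wt L]$ for $L\in\Irr(\scM)$ satisfy the $\Hom$-orthogonality dual to~\eqref{eqn:koszul-duality}, hence lie in $\scM^\natural$ and are injective there; that $\scM^\natural$ has enough of them and that every object has finite injective dimension comes from the finite projective dimension hypothesis on $\scM$, which bounds the length of the relevant complexes. I expect the main obstacle to be the first step — showing $\scM^\natural$ really is the heart of a $t$-structure, in particular producing the truncation functors and checking the heart is as claimed rather than something larger. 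The delicate point is that~\eqref{eqn:koszul-ext-vanish} is genuinely stronger than its degree-$1$ case, so one cannot just cite a general "derived category of a mixed category" statement; one has to use the full Koszul vanishing (via Lemma~\ref{lem:koszul-tri-basic} and Corollary~\ref{cor:koszul-split}) together with finiteness of projective dimension to make the inductive construction of truncations go through, and to rule out extra objects in the heart.
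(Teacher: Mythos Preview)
Your approach is genuinely different from the paper's, and it has some real gaps.

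\textbf{How the paper does it.} The paper does not construct the $t$-structure directly on $\Db(\scM)$ via orthogonality conditions. Instead, it passes through the equivalence $R:\Db(\scM)\simto\Kb(\Proj(\scM))$ and works entirely on the homotopy category. It first checks (Step~1 of the proof of Theorem~\ref{thm:kosorlov-duality}) that $\Proj(\scM)$ is an \emph{Orlov category} with $\deg P = -\wt(P/\rad P)$, using the Koszul condition to get the required Hom-vanishing between indecomposable projectives. Then it invokes the general Proposition~\ref{prop:kosorlov-tstruc}, which builds a $t$-structure $(\Kb(\scA)_\lhd,\Kb(\scA)_\rhd)$ on $\Kb(\scA)$ for \emph{any} Orlov category $\scA$ via explicit support conditions on chain complexes; the heart $\Kos(\Proj(\scM))$ is then identified with $\scM^\natural$. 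This buys concreteness: truncation is constructed by hand in Lemma~\ref{lem:kosorlov-trunc} using the degree function and Lemma~\ref{lem:kosorlov-cone}, rather than by an orthogonality argument. For the realization equivalence, the paper (Step~3) does \emph{not} argue via generators being projective; it first identifies the injectives $L[-\wt L]$ of $\scM^\natural$ (Step~2), lets $\Db(\scM^\natural)'\subset\Db(\scM^\natural)$ be the full triangulated subcategory they generate, checks that $\real|_{\Db(\scM^\natural)'}$ is an equivalence by computing on these injectives, and then bootstraps to show every object of $\scM^\natural$ actually has finite injective dimension (hence $\Db(\scM^\natural)'=\Db(\scM^\natural)$).

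\textbf{Gaps in your proposal.} First, your definition of $\scD^{\le 0}$ and $\scD^{\ge 0}$ is not usable as written: you give the same generating set for both aisles, and you invoke $\la 1\ra$, but the theorem does not assume a Tate twist exists. You need an honest asymmetric definition; the paper's $\lhd/\rhd$ support conditions supply one, and producing truncations without that crutch is the hard part you flag but do not resolve. Second, and more seriously, your argument for $\real$ being an equivalence is circular. You say the generators $P[-\wt(P/\rad P)]$ ``are projective in $\scM$ hence have no higher self-$\Ext$ in $\Db(\scM)$ beyond what the $t$-structure records''---but in $\scM^\natural$ these objects are \emph{simple}, not projective, so they certainly have higher $\Ext$ in $\scM^\natural$; indeed $\Hom_{\Db(\scM)}(P[-a],Q[-b+k])\cong\Hom_\scM(P,Q)$ when $k=a-b$, which is typically nonzero. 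What must be shown is that the Yoneda $\Ext^k_{\scM^\natural}$ \emph{surjects} onto $\Hom^k_{\Db(\scM)}$ for all $k$, and nothing in your sketch addresses this. The paper's injective-subcategory trick is precisely designed to circumvent this difficulty without computing $\Ext_{\scM^\natural}$ directly.
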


There is, of course, an analogous construction of a Koszul dual category $\scM \mapsto {}^\natural \scM$ for Koszul categories with enough injectives in which every object has finite injective dimension.  Starting from a category $\scM$ satisfying the hypotheses of Theorem~\ref{thm:koszul-duality}, one finds that the composition
\[
\Db({}^\natural(\scM^\natural)) \simto \Db(\scM^\natural) \simto \Db(\scM)
\]
induces an equivalence of abelian categories ${}^\natural(\scM^\natural) \simto \scM$.  In this way, passage to the Koszul dual is an involution.

The idea of Koszul duality is quite well-known.  However, the specific version stated above cannot readily be extracted from the statements in~\cite{bgs}, because that paper imposes additional assumptions on $\scM$: specifically, $\scM$ is assumed to be endowed with a Tate twist, and to have only finitely many isomorphism classes of simple objects up to Tate twist.  We will therefore give a self-contained proof in Section~\ref{sect:kosorlov}, which also contains a more general statement with weaker assumptions on $\scM$.

\subsection{Realization functors for homotopy categories}
\label{ss:realization}

Let $\scD$ be a triangulated category equipped with a $t$-structure $(\scD^{\le 0}, \scD^{\ge 0})$, and let $\scC = \scD^{\le 0} \cap \scD^{\ge 0}$ be its heart.  One can ask for a $t$-exact functor of triangulated categories
\[
\Db(\scC) \to \scD
\]
that restricts to the identity functor on $\scC$.  Such a functor is called a \emph{realization functor}.  One well-known construction of such a functor, adequate for Theorem~\ref{thm:koszul-duality} above, is given in~\cite[\S 3.1]{bbd}.  That construction assumes that $\scD$ is a full triangulated subcategory of the derived category of an abelian category.  Unfortunately, many of the triangulated categories we encounter in this paper are not of that form.

In~\cite{bei}, Beilinson has explained how to axiomatize the notion of a ``filtered derived category'' and thereby generalize the construction of~\cite[\S 3.1]{bbd} to other triangulated categories.  In particular, he treats the case where $\scD$ is the category $\dw(X)$ of mixed Weil complexes of $\Qlb$-sheaves on a variety over a finite field, cf.~Section~\ref{sect:mixedweil}.

Another case that is important in this paper is that in which $\scD$ is the bounded homotopy category $\Kb(\scA)$ of an additive category $\scA$.  In this section, we explain how to apply the formalism of~\cite{bei} to this setting.

\begin{lem}\label{lem:chain-ses}
Suppose we have three objects $X= (X^\bullet,d_X)$, $Y = (Y^\bullet, d_Y)$, and $Z = (Z^\bullet,d_Z)$ in $\Kb(\scA)$, and two chain maps $f = (f^\bullet): X \to Y$ and $g = (g^\bullet): Y \to Z$.  Assume that for each $i \in \Z$, we have an identification $Y^i \cong X^i \oplus Z^i$ such that the maps
\[
f^i: X^i \to X^i \oplus Z^i
\qquad\text{and}\qquad
g^i: X^i \oplus Z^i \to Z^i
\]
are the inclusion and projection maps, respectively for $X^i$ and $Z^i$ as direct summands of $Y^i$.  Then there is a chain map $\delta: Z \to X[1]$ such that
\[
X \ovto{f} Y \ovto{g} Z \ovto{\delta} X[1]
\]
is a distinguished triangle in $\Kb(\scA)$.
\end{lem}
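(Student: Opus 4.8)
The plan is to show that the proposed triangle is isomorphic, in $\Kb(\scA)$, to the mapping cone triangle $X \ovto{f} Y \to \cone(f) \to X[1]$, which is distinguished by the very definition of the triangulated structure on $\Kb(\scA)$. The first step is to use the hypotheses to pin down the differential of $Y$. Writing $d_Y^i$ as a $2\times 2$ matrix with respect to $Y^i = X^i \oplus Z^i$, the fact that $f$ is a chain map equal to the inclusion of the first summand forces the first column to be $\binom{d_X^i}{0}$, and the fact that $g$ is a chain map equal to the projection onto $Z^i$ forces the bottom-right entry to be $d_Z^i$. Thus $d_Y^i$ is "upper triangular" with an off-diagonal term $\sigma^i \colon Z^i \to X^{i+1}$, and the identity $d_Y^{i+1}d_Y^i = 0$ amounts precisely to $d_X^{i+1}\sigma^i + \sigma^{i+1}d_Z^i = 0$. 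This last relation is exactly what is needed for $\delta^i := -\sigma^i$ to define a chain map $\delta \colon Z \to X[1]$ (the sign being dictated by the chosen conventions for the shift and the cone). This produces the required $\delta$; it remains to check that the resulting triangle is distinguished.

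Next I would compare directly with $\cone(f)$. Using $Y^i = X^i \oplus Z^i$ one has $\cone(f)^i = X^{i+1} \oplus X^i \oplus Z^i$, and I would write out its differential explicitly from the standard formula together with the upper-triangular description of $d_Y$. I would then introduce two chain maps: $\phi \colon \cone(f) \to Z$, the projection onto the $Z$-summand (clearly a chain map), and $\psi \colon Z \to \cone(f)$ given by $z \mapsto (-\sigma^i z, 0, z)$. A brief computation using $d_X^{i+1}\sigma^i = -\sigma^{i+1}d_Z^i$ shows $\psi$ is a chain map, and one has $\phi \circ \psi = \id_Z$ on the nose. The composite $\psi \circ \phi$ is homotopic to $\id_{\cone(f)}$ via the explicit homotopy $(x', x, z) \mapsto (x, 0, 0)$, which one verifies by a direct calculation with the cone differential. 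Hence $\phi$ is an isomorphism in $\Kb(\scA)$, with inverse represented by $\psi$.

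Finally I would verify that $(\id_X, \id_Y, \phi)$ is a morphism of triangles from $X \ovto{f} Y \to \cone(f) \to X[1]$ to $X \ovto{f} Y \ovto{g} Z \ovto{\delta} X[1]$. The first two squares commute strictly (the middle one because $\phi$ composed with the canonical map $Y \to \cone(f)$ is $g$). The third square, comparing $\delta \circ \phi$ with the canonical projection $\cone(f) \to X[1]$, commutes only up to the homotopy $(x', x, z) \mapsto -x$, which must be exhibited. Since all three vertical arrows are isomorphisms in $\Kb(\scA)$, this is an isomorphism of triangles, and therefore the target triangle is distinguished, as desired.

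I do not expect any genuine difficulty here: the entire content is bookkeeping with the several direct-sum decompositions and with signs. The one point that genuinely needs care is the comparison of connecting maps in the last square — the naive expectation that $\phi$ makes all three squares commute strictly fails, and one must (i) fix the sign of $\delta$ compatibly with the cone/shift conventions and (ii) produce the homotopy witnessing commutativity of the third square in $\Kb(\scA)$. Making these sign choices mutually consistent is the only thing that requires attention.
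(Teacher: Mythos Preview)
Your proof is correct, but it takes a longer route than the paper's. The paper also writes $d_Y^i$ in upper-triangular form $\begin{bmatrix} d_X^i & t^i \\ 0 & d_Z^i \end{bmatrix}$ and defines $\delta^i = t^i$, but then simply observes that with this differential, $Y$ \emph{is} the cocone of $\delta: Z \to X[1]$ on the nose, so the triangle is distinguished by rotation of the standard cone triangle for $\delta$. This avoids entirely the construction of $\phi$, $\psi$, the homotopy witnessing $\psi\phi \simeq \id$, and the homotopy for the third square that you (rightly) flag as the delicate point. Your approach via $\cone(f)$ is perfectly valid and perhaps more hands-on, but the paper's observation that one of the three objects already \emph{is} a mapping (co)cone is the shortcut that eliminates all the sign and homotopy bookkeeping you anticipated.
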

\begin{proof}
Using the identification $Y^i = X^i \oplus Z^i$, we can write the differential $d_Y^i: Y^i \to Y^{i+1}$ as a matrix
\[
d_Y^i =
\begin{bmatrix} s^i & t^i \\ u^i & v^i \end{bmatrix}.
\]
Note that $d_Y^i \circ f^i = [\begin{smallmatrix} s^i \\ u^i \end{smallmatrix}]$.  On the other hand, $f^{i+1} \circ d_X^i = [\begin{smallmatrix} d_X^i \\ 0 \end{smallmatrix}]$.  We conclude that $s^i = d_X^i$ and $u^i = 0$.  Similar reasoning shows that $v^i = d_Y^i$.  Define $\delta: Z \to X[1]$ by setting $\delta^i = t^i: Z^i \to X^{i+1} = (X[1])^i$.  It follows from the fact that $d_Y^{i+1} \circ d_Y^i = 0$ that $\delta^\bullet$ is, in fact, a chain map.  Moreover, it is now evident from the formula for $d_Y$ above that $Y$ is the cocone of $\delta: Z \to X[1]$.
\end{proof}

Let $F\scA$ denote the additive category whose objects are sequences
\[
\cdots \overset{e_{-1}}{\longleftarrow} X_{-1} \overset{e_{0}}{\longleftarrow} X_0 \overset{e_{1}}{\longleftarrow} X_1 \overset{e_{2}}{\longleftarrow} \cdots
\]
of objects in $\scA$, satisfying the following conditions:
\begin{enumerate}
\item Each $e_i: X_i \to X_{i-1}$ is an inclusion of a direct summand of $X_{i-1}$.
\item There are integers $a \le b$ such that:
\begin{enumerate}
\item $X_i = X_a$ and $e_i = \id$ for all $i \le a$.
\item $X_i = 0$ for all $i > b$.
\end{enumerate}
\end{enumerate}
If $X = (X_\bullet, e^X_\bullet)$ and $Y = (Y_\bullet, e^Y_\bullet)$ are two objects of $F\scA$, a morphism $f: X \to Y$ is simply a collection of maps $(f_i: X_i \to Y_i)_{i \in \Z}$ such that $f_{i-1} \circ e^X_i = e^Y_i \circ f_i$ for all $i$.  Intuitively, we may think of $F\scA$ as the category of ``objects in $\scA$ equipped with finite decreasing filtrations.''

Let $s: F\scA \to F\scA$ be the functor which sends an object $X = (X_\bullet, e^X_\bullet)$ to
\[
s(X) = (s(X)_\bullet, e^{s(X)}_\bullet)
\qquad\text{where}\qquad
s(X)_i = X_{i-1}
\quad\text{and}\quad
e^{s(X)}_i = e^X_{i-1},
\]
and likewise for morphisms.  Note that we have a canonical morphism
\[
\alpha: X \to s(X)
\qquad\text{given by}\qquad
\alpha_i = e_i: X_i \to s(X)_i.
\]
For any $n \in \Z$, we can form the following full additive subcategories of $F\scA$:
\begin{align*}
F\scA({\le n}) &= \{ X = (X_\bullet,p_\bullet) \mid \text{$X_i = 0$ for $i > n$} \}, \\
F\scA({\ge n}) &= \{ X = (X_\bullet,p_\bullet) \mid \text{$X_i = X_n$ and $e_i = \id$ for $i \le n$} \}.
\end{align*}
Lastly, consider the functor $j: \scA \to F\scA$ that sends an object $X$ to the sequence given by
\[
j(X)_i = 
\begin{cases}
X & \text{if $i \le 0$,} \\
0 & \text{if $i > 0$,}
\end{cases}
\qquad
e^{j(X)}_i = 
\begin{cases}
\id & \text{if $i \le 0$,} \\
0 & \text{if $i > 0$,}
\end{cases}
\]
and which sends a morphism $f: X \to Y$ in $\scA$ to the sequence $(f_i)$ with  $f_i = f$ for $i \le 0$ and $f_i = 0$ for $i > 0$.  

\begin{lem}\label{lem:addfilt}
\begin{enumerate}
\item Given objects $X \in F\scA({\ge 1})$ and $Y \in F\scA({\le 0})$, we have $\Hom(X,Y) = 0$.  Moreover, $\alpha$ induces isomorphisms\label{it:ntfilt-hom}
\begin{equation}\label{eqn:ntfilt-hom}
\Hom(Y,X) \cong \Hom(Y, s^{-1}X) \cong \Hom(sY,X).
\end{equation}
\item Every object $X \in F\scA$ admits a direct-sum decomposition $X \cong A \oplus B$ with $A \in F\scA({\ge 1})$ and $B \in F\scA({\le 0})$.\label{it:ntfilt-dt}
\item The functor $j$ induces an equivalence of additive categories $j: \scA \simto F\scA({\le 0}) \cap F\scA({\ge 0})$.\label{it:ntfilt-heart}
\end{enumerate}
\end{lem}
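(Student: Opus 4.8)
The plan is to verify the three parts directly from the definitions, treating the category $F\scA$ as a very concrete combinatorial gadget. For part~\eqref{it:ntfilt-hom}, suppose $X \in F\scA({\ge 1})$ and $Y \in F\scA({\le 0})$. A morphism $f \colon X \to Y$ is a collection of maps $f_i \colon X_i \to Y_i$ commuting with the transition maps; but $Y_i = 0$ for $i \ge 1$, so $f_i = 0$ for $i \ge 1$, while for $i \le 0$ the condition $f_{i} \circ e^X_{i+1} = e^Y_{i+1} \circ f_{i+1}$ together with $e^X_{i+1} = \id$ (valid since $X \in F\scA({\ge 1})$ means $X$ is constant and the transition maps are identities for indices $\le 1$, hence in particular for $i+1 \le 1$) forces $f_i = f_{i+1}$, and iterating down from $f_1 = 0$ gives $f_i = 0$ for all $i \le 0$ as well. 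Wait---one must be slightly careful: $X \in F\scA({\ge 1})$ means $X_i = X_1$ and $e^X_i = \id$ for $i \le 1$, so the argument is to start at $f_1 = 0$ and propagate leftward. For the isomorphisms in~\eqref{eqn:ntfilt-hom}, note that for $Y \in F\scA({\le 0})$ the functor $s$ simply shifts the (finitely supported) filtration indices up by one and $sY$ still lies in $F\scA({\le 1})$; a morphism $sY \to X$ is determined by its components in degrees $\le 1$, and since $X$ is constant with identity transitions in that range, such a morphism amounts to a single map $(sY)_1 = Y_0 \to X_1$, which is the same datum as a morphism $Y \to X$ (determined by its degree-$0$ component, as $Y$ is supported in degrees $\le 0$ and $X$ is constant below degree $1$). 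The middle term $\Hom(Y, s^{-1}X)$ is handled symmetrically. I would spell this out by checking that $\alpha \colon Y \to sY$ induces the displayed maps and that they are bijective by the degreewise analysis just sketched.

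For part~\eqref{it:ntfilt-dt}, given $X = (X_\bullet, e^X_\bullet) \in F\scA$ with support in $[a,b]$, I would build the decomposition degree by degree. Because each $e^X_i \colon X_i \to X_{i-1}$ is the inclusion of a direct summand, I can choose complements and successively split off the part of the filtration that "survives into degree $1$." Concretely, let $A_i = X_i$ for $i \ge 1$ and $A_i = X_1$ (via the composite inclusion $X_i \hookleftarrow \cdots$---rather, for $i \le 1$ set $A_i = \operatorname{im}(X_1 \hookrightarrow X_i)$, the image of the composite of the relevant $e^X_j$'s, which is a direct summand of $X_i$) for $i \le 1$, with the evident transition maps; and let $B$ be the complementary filtration, $B_i$ a chosen complement to $A_i$ in $X_i$ compatible with the inclusions. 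Then $A \in F\scA({\ge 1})$ and $B \in F\scA({\le 0})$ once one checks that $B_i = 0$ for $i \ge 1$ and that $B$ is constant with identity transitions for $i \le 0$; the latter requires choosing the complements compatibly, which is possible by a downward induction on $i$ starting from degree $1$, using at each step that $e^X_i$ splits. The only real content here is organizing these choices so the direct-sum decomposition is a decomposition in $F\scA$, i.e.\ respects all the $e_i$ simultaneously.

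Part~\eqref{it:ntfilt-heart} is then essentially formal: $F\scA({\le 0}) \cap F\scA({\ge 0})$ consists of those $X$ with $X_i = 0$ for $i > 0$ and $X_i = X_0$, $e_i = \id$ for $i \le 0$---that is, exactly the objects in the image of $j$, with $j$ clearly fully faithful (a morphism in $F\scA$ between such objects is determined by, and any map can serve as, its degree-$0$ component). I would state this as: $j$ is visibly fully faithful onto its essential image, and that image is precisely the intersection described.

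I expect part~\eqref{it:ntfilt-dt} to be the main obstacle---not because it is deep, but because producing a decomposition that is genuinely a morphism-compatible direct sum in $F\scA$ requires making the complements cohere across all degrees, and it is easy to write something that splits each $X_i$ individually without respecting the $e_i$. The clean way around this is the downward induction on the degree, splitting off $\operatorname{im}(X_{i+1} \to X_i)$ together with a complement at each step and checking compatibility with the single new transition map, rather than trying to choose all complements at once.
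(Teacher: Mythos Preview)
Your approach matches the paper's in all three parts, but two details in your write-up are not quite right and would need to be fixed.

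In part~\eqref{it:ntfilt-hom}, your argument for the isomorphisms rests on the claim that a morphism $Y \to X$ is ``determined by its degree-$0$ component.'' This is false: the compatibility $g_{i-1}\circ e^Y_i = g_i$ (using $e^X_i=\id$ for $i\le 1$) shows that $g_i$ is the \emph{restriction} of $g_{i-1}$ along the split inclusion $e^Y_i$, so going downward each $g_{i-1}$ is an extension of $g_i$ and is not uniquely determined by it. The paper avoids this by arguing directly that the map $\phi\colon \Hom(Y,s^{-1}X)\to \Hom(Y,X)$ induced by $\alpha_{s^{-1}X}$ is given degreewise by post-composition with $e^X_{i+1}$, which is the identity for $i\le 0$; hence $\phi$ is literally the identity on the morphism data and therefore bijective. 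Your vanishing argument for $\Hom(X,Y)$ is fine.

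In part~\eqref{it:ntfilt-dt}, you write that one must check ``$B$ is constant with identity transitions for $i \le 0$.'' That is the wrong condition: $B\in F\scA({\le 0})$ only requires $B_i=0$ for $i>0$; in fact $B$ typically grows as $i$ decreases (in the paper's construction, $B_i = B_{i+1}\oplus Y_i$ where $Y_i$ is a complement to $e^X_{i+1}(X_{i+1})$ in $X_i$). The constancy condition applies to $A$, not $B$. Your inductive construction of compatible complements is otherwise exactly what the paper does.
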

\begin{proof}
\eqref{it:ntfilt-hom}~If $X = (X_\bullet, e^X_\bullet) \in F\scA({\ge 1})$ and $Y = (Y_\bullet, e^Y_\bullet) \in F\scA({\le 0})$, then for any map $f =(f_\bullet): X \to Y$, we clearly have $f_i = 0$ for $i \ge 1$.  On the other hand, for $i \le 0$, we have
\[
f_i \circ e^X_{i+1} \circ e^X_{i+2} \circ \cdots \circ e^X_1 = e^Y_{i+1} \circ e^Y_{i+2} \circ \cdots \circ e^Y_1 \circ f_1 = 0.
\]
Since $e^X_{i+1} = \cdots = e^X_1 = \id_{X_1}$, it follows that $f_i = 0$ for all $i$, so $\Hom(X,Y) = 0$.  Next, for any morphism $g: Y \to s^{-1}X$ or $g: Y \to X$, we have $g_i = 0$ for $i \ge 1$.  Thus, the natural map $\phi: \Hom(Y,s^{-1}X) \to \Hom(Y,X)$ induced by $\alpha_{s^{-1}X}$ can be described by
\[
\phi(g)_i :
\begin{cases}
e^X_{i+1} \circ g_i & \text{if $i \le 0$,} \\
0 & \text{if $i \ge 1$.}
\end{cases}
\]
But $X \in F(\scA)({\ge 1})$ means that $e^X_{i+1}$ is the identity map for $i \le 0$, and it follows that $\phi$ is a bijection.  The same reasoning shows that $\Hom(Y,X) \cong \Hom(sY,X)$.

\eqref{it:ntfilt-dt}~Let $N \le 0$ be such that $X = (X_\bullet, e^X_\bullet) \in F\scA({\ge N})$.  We will construct the terms of $A$ and $B$ by downward induction as follows.  For $i \ge 1$, let $A_i = X_i$ and let $B_i = 0$.  Next, for $N \le i \le 0$, if $A_{i+1}$ and $B_{i+1}$ are already defined, then the map $e_{i+1}: X_{i+1} \to X_i$ lets us regard $A_{i+1}$ and $B_{i+1}$ as direct summands of $X_i$.  Let $Y_i$ be a complementary direct summand in $X_i$ to $X_{i+1}$, and then set $A_i = A_{i+1}$ and $B_i = B_{i+1} \oplus Y_i$.  With respect to the identifications $X_{i+1} = A_{i+1} \oplus B_{i+1}$ and $X_i = A_i \oplus B_i$, $e_{i+1}$ has the form
\begin{equation}\label{eqn:ntfilt-oplus}
e^X_{i+1} = \begin{bmatrix}
\id & 0 \\ 0 & \bar e_{i+1}
\end{bmatrix}
\end{equation}
for some map $\bar e_{i+1}: B_{i+1} \to B_i$.  Finally, for $i < N$, we set $A_i = A_N$ and $B_i = B_N$.  Let us put
\[
e_i^A =
\begin{cases}
e^X_i & \text{if $i > 1$}, \\
\id_{A_1} & \text{if $i \le 1$},
\end{cases}
\qquad
e_i^B =
\begin{cases}
0 & \text{if $i > 1$,} \\
\bar e_i & \text{if $N < i \le 1$,} \\
\id & \text{if $i \le N$.}
\end{cases}
\]
Then the object $A = (A_\bullet, e^A_\bullet)$ belongs to $F\scA({\ge 1})$, $B = (B_\bullet, e^B_\bullet)$ lies in $F\scA({\le 0})$.  It follows from~\eqref{eqn:ntfilt-oplus} that $X \cong A \oplus B$.

\eqref{it:ntfilt-heart}~It is clear that $j$ is faithful and essentially surjective.  Moreover, it is easy to see that any morphism $f = (f_\bullet): X \to Y$ between two objects $X, Y \in F\scA({\le 0}) \cap F\scA({\ge 0})$ is determined by $f_0$.  Thus, $j$ is full, and hence an equivalence of categories.
\end{proof}

We now consider the bounded homotopy category $\Kb(F\scA)$ of $F\scA$.  The functors $s$ and $j$ extend in an obvious way to functors of triangulated categories
\[
s: \Kb(F\scA) \to \Kb(F\scA), \qquad
j: \Kb(\scA) \to \Kb(F\scA),
\]
and $\alpha$ extends to a morphism of functors $\alpha: \id_{\Kb(F\scA)} \to s$.  We also define $\Kb(F\scA)({\le n})$ (resp.~$\Kb(F\scA)({\ge n})$) to be the full subcategory of $\Kb(F\scA)$ consisting of objects isomorphic to a complex $X = (X^\bullet, d_X)$ with $X^i \in F\scA({\le n})$ (resp.~$X^i \in F\scA({\ge n})$) for all $i$.

\begin{lem}\label{lem:trifilt}
With the above notation, we have the following properties.
\begin{enumerate}
\item $s^n (\Kb(F\scA)({\le 0})) = \Kb(F\scA)({\le n});s^n(\Kb(F\scA)({\ge 0})) = \Kb(F\scA)({\ge n})$.\label{it:filt-shift}
\item $\Kb(F\scA)({\ge 1}) \subset \Kb(F\scA)({\ge 0})$, $\Kb(F\scA)({\le 1}) \supset \Kb(F\scA)({\le 0})$, and $\bigcup_{n \in \Z} \Kb(F\scA)({\le n}) = \bigcup_{n \in \Z} \Kb(F\scA)({\ge n}) = \Kb(F\scA)$.\label{it:filt-nondeg}
\item For any object $X \in \Kb(F\scA)$, we have $\alpha_X = s(\alpha_{s^{-1}(X)})$.\label{it:filt-alpha}
\item For $X \in \Kb(F\scA)({\ge 1})$ and $Y \in \Kb(F\scA)({\le 0})$, we have $\Hom(X,Y) = 0$.  Moreover, $\alpha$ induces isomorphisms\label{it:filt-hom}
\[
\Hom(Y,X) \cong \Hom(Y, s^{-1}X) \cong \Hom(sY,X).
\]
\item For any object $X \in \Kb(F\scA)$, there is a distinguished triangle $A \to X \to B \to$ with $A \in \Kb(F\scA)({\ge 1})$ and $B \in \Kb(F\scA)({\le 0})$.\label{it:filt-dt}
\item Every object of $\Kb(F\scA)({\le 0}) \cap \Kb(F\scA)({\ge 0})$ is isomorphic to a chain complex $X = (X^\bullet, d_X)$ with $X^i \in F\scA({\le 0}) \cap F\scA({\ge 0})$ for all $i$.\label{it:filt-supp}
\item The functor $j$ gives rise to an equivalence of triangulated categories $j: \Kb(\scA) \simto \Kb(F\scA)({\le 0}) \cap \Kb(F\scA)({\ge 0})$.\label{it:filt-heart}
\end{enumerate}
\end{lem}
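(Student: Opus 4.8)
The plan is to verify the seven assertions in order, exploiting the fact that most of them are the homotopy-categorical shadows of the additive-category statements in Lemma~\ref{lem:addfilt}, together with Lemma~\ref{lem:chain-ses}. Parts~\eqref{it:filt-shift}, \eqref{it:filt-nondeg}, and~\eqref{it:filt-alpha} are essentially formal. For~\eqref{it:filt-shift}, observe that $s$ acts termwise on a complex in $\Kb(F\scA)$, and $s^n$ sends a term lying in $F\scA({\le 0})$ (resp.~$F\scA({\ge 0})$) to one in $F\scA({\le n})$ (resp.~$F\scA({\ge n})$) directly from the definition of $s$ on $F\scA$; since $s$ is an autoequivalence, this gives equality of subcategories. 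Part~\eqref{it:filt-nondeg} follows from the evident inclusions $F\scA({\ge 1}) \subset F\scA({\ge 0})$ and $F\scA({\le 0}) \subset F\scA({\le 1})$ at the level of $F\scA$, applied termwise, together with the observation that any object of $F\scA$ is supported in some bounded window, so any complex over it lies in $\Kb(F\scA)({\le n}) \cap \Kb(F\scA)({\ge n})$ for $n \gg 0$ and $n \ll 0$ respectively. Part~\eqref{it:filt-alpha} is immediate from the corresponding identity $\alpha_X = s(\alpha_{s^{-1}X})$ on $F\scA$, which holds because $\alpha$ is defined termwise by the transition maps $e_i$.

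Next I would do~\eqref{it:filt-hom} and~\eqref{it:filt-dt}, which are the substantive points. For~\eqref{it:filt-hom}: given complexes $X \in \Kb(F\scA)({\ge 1})$ and $Y \in \Kb(F\scA)({\le 0})$ with all terms in the respective additive subcategories, $\Hom$-groups in $\Kb$ are computed by the total complex of the $\Hom$-double-complex, and by Lemma~\ref{lem:addfilt}\eqref{it:ntfilt-hom} every entry $\Hom_{F\scA}(X^i, Y^j)$ vanishes, hence $\Hom_{\Kb(F\scA)}(X,Y) = 0$. For the isomorphisms $\Hom(Y,X) \cong \Hom(Y, s^{-1}X) \cong \Hom(sY,X)$, the point is that $\alpha$ induces a termwise isomorphism on $\Hom$-complexes by Lemma~\ref{lem:addfilt}\eqref{it:ntfilt-hom}, and a quasi-isomorphism — indeed isomorphism — of $\Hom$-complexes induces an isomorphism on homotopy classes of maps. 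For~\eqref{it:filt-dt}: apply Lemma~\ref{lem:addfilt}\eqref{it:ntfilt-dt} termwise to a representative complex $X = (X^\bullet, d_X)$, choosing decompositions $X^i \cong A^i \oplus B^i$ with $A^i \in F\scA({\ge 1})$, $B^i \in F\scA({\le 0})$; the vanishing in Lemma~\ref{lem:addfilt}\eqref{it:ntfilt-hom} forces the differential $d_X^i$ to be block upper-triangular with respect to these decompositions (the $B^i \to A^{i+1}$ block is a map in $F\scA({\le 0}) \to F\scA({\ge 1})$, hence zero — wait, one must check the orientation; here $\Hom(A^i, B^{i+1}) = 0$ by~\eqref{it:ntfilt-hom}, so the off-diagonal block $A^i \to B^{i+1}$ vanishes), so $A^\bullet$ is a subcomplex and $B^\bullet$ a quotient complex, and Lemma~\ref{lem:chain-ses} produces the distinguished triangle $A \to X \to B \to$.

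Finally,~\eqref{it:filt-supp} and~\eqref{it:filt-heart}. For~\eqref{it:filt-supp}: if $X \in \Kb(F\scA)({\le 0}) \cap \Kb(F\scA)({\ge 0})$, pick a representative with terms $X^i \in F\scA({\le 0})$; I would run a standard truncation/retraction argument to replace it, up to homotopy equivalence, by a complex with terms also in $F\scA({\ge 0})$. Concretely, write $X^i = j(Z^i) \oplus C^i$ where $j(Z^i) \in F\scA({\le 0}) \cap F\scA({\ge 0})$ is the ``$F\scA({\ge 0})$-part'' (the common value $X^i_0$ placed in degrees $\le 0$) and $C^i$ is a complementary summand lying in $F\scA({\le -1})$; the hypothesis that $X$ also lies in $\Kb(F\scA)({\ge 0})$ means (via~\eqref{it:filt-dt} and the characterization of the ``heart'') that the subcomplex or quotient complex built from the $C^i$ is acyclic, hence contractible as a bounded complex of objects in an additive category — more precisely, one shows the $C^\bullet$-part splits off as a contractible direct summand — leaving a representative with all terms in $F\scA({\le 0}) \cap F\scA({\ge 0})$. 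Then~\eqref{it:filt-heart} follows: $j: \Kb(\scA) \to \Kb(F\scA)$ is fully faithful since $j: \scA \to F\scA$ is (by Lemma~\ref{lem:addfilt}\eqref{it:ntfilt-heart} it is full, faithful, and identifies $\scA$ with $F\scA({\le 0}) \cap F\scA({\ge 0})$, and full-faithfulness of a functor of additive categories passes to bounded homotopy categories), and~\eqref{it:filt-supp} shows its essential image is exactly $\Kb(F\scA)({\le 0}) \cap \Kb(F\scA)({\ge 0})$; a fully faithful, essentially surjective triangulated functor is an equivalence. I expect~\eqref{it:filt-supp} to be the main obstacle: pinning down the ``acyclic part splits off'' argument requires care, since one is working in a homotopy category of an additive (not abelian) category, so one cannot simply invoke truncations — instead one must exhibit an explicit contracting homotopy, or argue that the $C^\bullet$-complex, being a bounded complex isomorphic to zero in the relevant quotient, is null-homotopic and its inclusion into $X$ is a split monomorphism of complexes. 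Everything else is bookkeeping with the termwise structure of $F\scA$.
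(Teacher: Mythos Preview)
Your proposal is correct and, for parts~\eqref{it:filt-shift}--\eqref{it:filt-dt} and~\eqref{it:filt-heart}, follows the same approach as the paper essentially word-for-word. The only real difference is in part~\eqref{it:filt-supp}, where you make it harder than necessary.

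For~\eqref{it:filt-supp}, the paper avoids any explicit contracting-homotopy argument entirely by reusing parts~\eqref{it:filt-hom} and~\eqref{it:filt-dt}. Starting from a representative with all $X^i \in F\scA({\ge 0})$ (rather than $F\scA({\le 0})$ as you suggest), the termwise decomposition from~\eqref{it:filt-dt} produces a distinguished triangle $A \to X \to B \to$ with $A \in \Kb(F\scA)({\ge 1})$ and, crucially, with each $B^i \in F\scA({\le 0}) \cap F\scA({\ge 0})$ (since $B^i$ is a direct summand of $X^i \in F\scA({\ge 0})$). Now if $X$ also lies in $\Kb(F\scA)({\le 0})$, then $\Hom(A,X) = 0$ and $\Hom(A,B[-1]) = 0$ by~\eqref{it:filt-hom}, which forces the map $A \to X$ to vanish and its retraction through $B[-1]$ to vanish as well, so $A \cong 0$ and $X \cong B$. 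No splitting-off-a-contractible-summand argument is needed; the Hom-vanishing does all the work. Your version (starting from $F\scA({\le 0})$ and showing the $C^\bullet$-part vanishes) is the mirror image and would also succeed by the same Hom-vanishing trick, but you did not spot that shortcut and instead anticipated a delicate homotopy argument that is unnecessary.
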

\begin{proof}
Parts~\eqref{it:filt-shift}--\eqref{it:filt-alpha} are straightforward from the definitions.  For~\eqref{it:filt-hom}, the vanishing of $\Hom(X,Y)$ follows from the corresponding statement in Lemma~\ref{lem:addfilt}\eqref{it:ntfilt-hom}.  Because the isomorphisms in~\eqref{eqn:ntfilt-hom} are natural, they induce corresponding isomorphisms in the additive category of chain complexes over $F\scA$.  Furthermore, the latter isomorphisms respect homotopy, and so descend to $\Kb(F\scA)$.

For part~\eqref{it:filt-dt}, given an object $X = (X^\bullet, d_X) \in \Kb(F\scA)$, let us endow each term of the chain complex with a decomposition $X^i = A^i \oplus B^i$ with $A^i \in F\scA({\ge 1})$ and $B^i \in F\scA({\le 0})$, as in Lemma~\ref{lem:addfilt}\eqref{it:ntfilt-dt}.  Each differential $d_X^i: X^i \to X^{i+1}$ can then be written as a matrix
\[
d_X^i = \begin{bmatrix}
d_A^i & \delta^i \\
0 & d_B^i
\end{bmatrix},
\]
where the lower left-hand entry is $0$ because $\Hom(A^i,B^{i+1}) = 0$ by Lemma~\ref{lem:addfilt}\eqref{it:ntfilt-hom}.  Then $A = (A^\bullet, d_A^\bullet)$ is a chain complex in $\Kb(F\scA)({\ge 1})$, and $B = (B^\bullet, d_B^\bullet) \in \Kb(F\scA)({\le 0})$.  We have obvious chain maps $A \to X \to B$, and this diagram extends to a distinguished triangle by Lemma~\ref{lem:chain-ses}.

If we apply this construction to a chain complex $X$ with $X^i \in F\scA({\ge 0})$ for all $i$, then we find that $B^i \in F\scA({\le 0}) \cap F\scA({\ge 0})$.  If $X$ also lies in $\Kb(F\scA)({\le 0})$, then we must have $A \cong 0$ (because $\Hom(A,X) = \Hom(A, B[-1]) = 0$), so $X \cong B$.  This establishes part~\eqref{it:filt-supp}.  It follows from that statement that the inclusion functor
\[
\Kb(F\scA({\le 0}) \cap F\scA({\ge 0})) \simto
\Kb(F\scA)({\le 0}) \cap \Kb(F\scA)({\ge 0})
\]
is an equivalence of categories.  Part~\eqref{it:filt-heart} then follows from Lemma~\ref{lem:addfilt}\eqref{it:ntfilt-heart}.
\end{proof}

In the terminology of~\cite[Appendix]{bei}, the preceding lemma states that $\Kb(F\scA)$, together with the data consisting of $s$, $j$, and $\alpha$, is an \emph{$f$-category over $\Kb(\scA)$}.  The machinery of {\em loc.~cit.}~then gives us the following result.

\begin{thm}[{\cite[\S A.7]{bei}}]\label{thm:realization}
Let $\scA$ be an additive category, and let $\scC$ be the heart of a $t$-structure on $\Kb(\scA)$.  There is a $t$-exact functor of triangulated categories $\real: \Db(\scC) \to \Kb(\scA)$ with the property that $\real|_{\scC} \cong \id_{\scC}$.\qed
\end{thm}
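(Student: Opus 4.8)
The plan is to invoke Beilinson's machinery from \cite[Appendix]{bei} essentially as a black box, so the entire content of the argument consists of verifying that $\Kb(F\scA)$, equipped with the functors $s$, $j$ and the natural transformation $\alpha: \id \to s$, satisfies the axioms of an \emph{$f$-category over $\Kb(\scA)$} in the sense of \emph{loc.\ cit.} Once that verification is in place, \cite[\S A.7]{bei} produces, for any $t$-structure on $\Kb(\scA)$ with heart $\scC$, a $t$-exact realization functor $\real: \Db(\scC) \to \Kb(\scA)$ restricting to the identity on $\scC$, which is exactly the assertion of the theorem.

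The bulk of the work is therefore the axiom check, and this is precisely the purpose of Lemmas~\ref{lem:addfilt} and~\ref{lem:trifilt}: each numbered item of Lemma~\ref{lem:trifilt} matches one of the defining conditions for an $f$-category. Concretely I would organize the proof of the theorem as a short paragraph that lists Beilinson's axioms and pairs them off: the shift behaviour of the two subcategories $\Kb(F\scA)(\le n)$ and $\Kb(F\scA)(\ge n)$ under $s^n$ comes from Lemma~\ref{lem:trifilt}\eqref{it:filt-shift}; the nestedness $\Kb(F\scA)(\ge 1) \subset \Kb(F\scA)(\ge 0)$, the dual inclusion, and the exhaustiveness of the filtration come from \eqref{it:filt-nondeg}; compatibility of $\alpha$ with $s$ is \eqref{it:filt-alpha}; the $\Hom$-vanishing and the adjunction-type isomorphisms between $\Hom(Y,X)$, $\Hom(Y,s^{-1}X)$, $\Hom(sY,X)$ are \eqref{it:filt-hom}; the existence of the two-step filtration distinguished triangle $A \to X \to B \to$ with $A \in \Kb(F\scA)(\ge 1)$, $B \in \Kb(F\scA)(\le 0)$ is \eqref{it:filt-dt}; and finally the identification of the ``heart of the filtration'' $\Kb(F\scA)(\le 0) \cap \Kb(F\scA)(\ge 0)$ with $\Kb(\scA)$ via $j$ is \eqref{it:filt-heart}, with the auxiliary statement \eqref{it:filt-supp} ensuring this heart is described by termwise conditions. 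Thus all the hypotheses of \cite[\S A.7]{bei} hold, and the cited construction applies verbatim.

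The one point deserving a sentence of care is that Beilinson's setup in \cite{bei} is phrased for a filtered \emph{triangulated} category sitting over a given triangulated category, and one must confirm that $\Kb(\scA)$ being a homotopy category rather than a derived category poses no obstruction: the axioms are formulated purely in terms of triangulated structure, shift functors, and $\Hom$-groups, none of which refer to an underlying abelian category, so $\Kb(F\scA) \to \Kb(\scA)$ is an admissible instance. This is really the only place where one has to think, and it amounts to observing that \cite[Appendix]{bei} never uses that the base is a derived category.

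I do not expect a genuine obstacle here; the theorem is a formal consequence of the preceding two lemmas plus the existence of Beilinson's realization construction. If anything, the subtle step is bookkeeping: making sure the indexing conventions for $F\scA(\le n)$ versus $F\scA(\ge n)$ (decreasing filtrations, the role of $s$ as ``shift the filtration'') line up with the sign and direction conventions in \cite{bei}, so that the $t$-exactness and the normalization $\real|_{\scC} \cong \id_{\scC}$ come out as stated rather than with a shift. I would handle this simply by citing \cite[\S A.7]{bei} for the construction and noting that Lemma~\ref{lem:trifilt} supplies exactly the data required there, leaving no computation to reproduce.
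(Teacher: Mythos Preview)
Your proposal is correct and matches the paper's approach exactly: the paper states just before the theorem that Lemma~\ref{lem:trifilt} verifies $\Kb(F\scA)$ is an $f$-category over $\Kb(\scA)$ in Beilinson's sense, and then simply cites \cite[\S A.7]{bei} for the construction (the theorem carries a \qed\ with no further proof). Your axiom-by-axiom pairing is precisely the content the paper packages into Lemma~\ref{lem:trifilt}.
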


Together, this result and those in~\cite[\S 3.1]{bbd} and~\cite{bei} cover all the cases we need.  We will henceforth make use of realization functors whenever necessary without further explanation. 

\section{Infinitesimal extensions of triangulated categories}
\label{sect:infext}

In this section, we will study a kind of ``thickened'' version of a triangulated category, with extra morphisms (called infinitesimal morphisms) that do not have cones.  Such a category looks bizarre from the usual perspective of homological algebra, but they arise naturally in the setting of \'etale $\ell$-adic sheaves on certain varieties, cf.~Section~\ref{sect:affable}.

\subsection{Basic properties of infinitesimal extensions}

Let $\scD$ be a triangulated category.  Let $\pse\scD$ be the category whose objects are the same as those of $\scD$, but whose $\Hom$-spaces are given by
\begin{equation}\label{eqn:pseudotri-hom}
\Hom_{\pse\scD}(X,Y) = \Hom_{\scD}(X,Y) \oplus \Hom_{\scD}(X, Y[-1]),
\end{equation}
and where composition of morphisms is given by the rule
\begin{equation}\label{eqn:pseudotri-comp}
(g_0,g') \circ (f_0,f') = (g_0 \circ f_0, g_0[-1] \circ f' + g' \circ f_0).
\end{equation}
There are obvious functors $\incl: \scD \to \pse\scD$ and $\pg: \pse\scD \to \scD$ that send objects to themselves, and for which the induced maps
\[
\incl: \Hom_{\scD}(X,Y) \to \Hom_{\pse\scD}(X,Y)
\qquad\text{and}\qquad
\pg: \Hom_{\pse\scD}(X,Y) \to \Hom_{\scD}(X,Y)
\]
are the inclusion and projection maps, respectively, for $\Hom_{\scD}(X,Y)$ as a direct summand of $\Hom_{\pse\scD}(X,Y)$.  We also have the inclusion map
\begin{equation}\label{eqn:iinf-defn}
\iinf: \Hom_{\scD}(X,Y[-1]) \to \Hom_{\pse\scD}(\incl X,\incl Y).
\end{equation}
It follows from~\eqref{eqn:pseudotri-comp} that $\iinf$ is a natural transformation.

\begin{defn}\label{defn:infext}
The category $\pse\scD$ defined above is called the \emph{infinitesimal extension} of $\scD$.  A morphism $f = (f_0,f'): X \to Y$ in $\pse\scD$ is said to be \emph{infinitesimal} if $\pg(f) = 0$, or, equivalently, if $f_0 = 0$.  On the other hand, $f$ is \emph{genuine} if $f = \incl(f_0)$, i.e., if $f' = 0$.

A diagram $X \to Y \to Z \to X[1]$ is called a \emph{distinguished triangle} if there is a commutative diagram
\[
\xymatrix{
X \ar[r] \ar[d]_{\theta}^{\wr} &
Y \ar[r] \ar[d]^{\wr} &
Z \ar[r] \ar[d]^{\wr} &
X[1] \ar[d]_{\theta[1]}^{\wr} \\
\incl(X') \ar[r]^{\incl(f)} &
\incl(Y') \ar[r]^{\incl(g)} &
\incl(Z') \ar[r]^{\incl(h)} &
\incl(X'[1]) }
\]
where $X' \ovto{f} Y' \ovto{g} Z' \ovto{h} X'[1]$ is some distinguished triangle in $\scD$, and where the vertical maps are isomorphisms.  A morphism is said to \emph{have a cone} if it occurs in some distinguished triangle.
\end{defn}

\begin{rmk}
For morphisms in $\pse\scD$, the property of being genuine is \emph{not} natural. In particular, a genuine morphism may be conjugate to a morphism that is not genuine.  In contrast, being infinitesimal is a natural notion.
\end{rmk}

It is clear from the definitions of $\pg$ and $\incl$ that
\begin{equation}\label{eqn:infext-id}
\pg \circ \incl \cong \id_{\scD}.
\end{equation}
Note that a morphism that has a cone must be conjugate to a genuine morphism, and so cannot be infinitesimal.  In other words, infinitesimal morphisms do not have cones, so $\pse\scD$ cannot be a triangulated category unless $\scD = 0$.

\begin{lem}\label{lem:infext-basic}
\begin{enumerate}
\item If $f: X \to Y$ and $g: Y \to Z$ are both infinitesimal morphisms in $\pse\scD$, then $g \circ f = 0$.
\item A morphism $f = (f_0,f')$ in $\pse\scD$ is an isomorphism if and only if $\pg(f) = f_0$ is an isomorphism in $\scD$.
\end{enumerate}
\end{lem}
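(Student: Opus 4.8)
The plan is to treat both parts as short computations with the composition law~\eqref{eqn:pseudotri-comp}, organised so that the underlying conceptual point — that the infinitesimal morphisms form a square-zero two-sided ideal — is visible. For part~(1), I would simply write $f = (0,f')$ and $g = (0,g')$ using the characterisation of infinitesimal morphisms in Definition~\ref{defn:infext}, and plug into~\eqref{eqn:pseudotri-comp}: both components of $g \circ f$ vanish, since the first component is $0 \circ 0$ and the second, $g_0[-1] \circ f' + g' \circ f_0$, has a factor $g_0 = 0$ in the first summand and a factor $f_0 = 0$ in the second. No obstacle arises here.

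For part~(2), the forward implication is immediate: $\pg$ is a functor by construction (its compatibility with composition is exactly the projection of~\eqref{eqn:pseudotri-comp} onto its first component), so it carries isomorphisms to isomorphisms. For the converse, assume $f_0$ is invertible in $\scD$, and look for an inverse of $f = (f_0, f')$ of the form $g = (f_0^{-1}, g')$. Computing $g \circ f$ by~\eqref{eqn:pseudotri-comp} yields the pair $(\,\id_X,\ f_0^{-1}[-1] \circ f' + g' \circ f_0\,)$, so the choice $g' = -\,f_0^{-1}[-1] \circ f' \circ f_0^{-1}$ forces $g \circ f = \id_X$. Then I would check, again via~\eqref{eqn:pseudotri-comp} together with $f_0[-1] \circ f_0^{-1}[-1] = \id$, that this same $g$ also satisfies $f \circ g = \id_Y$; alternatively, one may note that $\pg(g) = f_0^{-1}$ is invertible, so by the half of~(2) already proved $g$ has a left inverse, and hence $f$ is a genuine two-sided inverse of $g$.

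I expect no real difficulty in either part; the only thing to watch is the bookkeeping with the shift $[-1]$ and the sign in~\eqref{eqn:pseudotri-comp}. If one wishes to dispense with the second verification $f \circ g = \id_Y$ altogether, the cleanest route is the ideal-theoretic remark made possible by part~(1): the infinitesimal morphisms form a two-sided ideal $\scI$ with $\scI^2 = 0$, and $\pg$ is the quotient functor $\pse\scD \to \pse\scD/\scI \cong \scD$; the standard fact that a morphism in a square-zero extension is invertible exactly when its image in the quotient is then yields part~(2) with no sign computation at all.
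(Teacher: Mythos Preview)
Your proposal is correct and follows essentially the same route as the paper: part~(1) is an immediate consequence of the composition formula~\eqref{eqn:pseudotri-comp}, and for part~(2) the paper also exhibits the explicit inverse $g = (f_0^{-1},\, -f_0^{-1}[-1]\circ f' \circ f_0^{-1})$. Your additional ideal-theoretic remark is a nice conceptual gloss but is not needed for the argument.
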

\begin{proof}
The first assertion is immediate from~\eqref{eqn:pseudotri-comp}.  If $f$ is an isomorphism, then it is clear that $f_0 = \pg(f)$ must be as well.  If $f_0$ is an isomorphism, then one may check that $g = (f_0^{-1}, -f_0^{-1}[-1] \circ f' \circ f_0^{-1})$ is an inverse for $f$.
\end{proof}

Let $\itr: \pse\scD \to \scD$ be the functor defined as follows: for an object $X$, we put
\[
\itr(X) = X \oplus X[-1],
\]
and for a morphism $f = (f_0,f'): X \to Y$, we put
\[
\itr(f) =
\begin{bmatrix}
f_0 & \\ f' & f_0[-1]
\end{bmatrix}
: X \oplus X[-1] \to Y \oplus Y[-1].
\]

\begin{lem}\label{lem:infext-biadj}
The functor $\incl: \scD \to \pse\scD$ is left adjoint to $\itr$ and right adjoint to $\itr[1]$.
\end{lem}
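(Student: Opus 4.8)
The plan is to verify the two adjunctions directly by exhibiting natural isomorphisms of $\Hom$-spaces, reading off everything from the defining formulas~\eqref{eqn:pseudotri-hom} and~\eqref{eqn:pseudotri-comp} for $\pse\scD$ and from the definition of $\itr$. For the first adjunction $\incl \dashv \itr$, I would start from
\[
\Hom_{\scD}(\itr X, Y) = \Hom_{\scD}(X \oplus X[-1], Y) = \Hom_{\scD}(X,Y) \oplus \Hom_{\scD}(X[-1], Y),
\]
and identify the second summand with $\Hom_{\scD}(X, Y[1])$ via the shift autoequivalence. The right-hand side is then $\Hom_{\scD}(X,Y) \oplus \Hom_{\scD}(X,Y[1])$, which by~\eqref{eqn:pseudotri-hom} (applied with $Y$ replaced by $Y[1]$, or equivalently rewritten) is exactly $\Hom_{\pse\scD}(X, \incl Y)$ — recall $\Hom_{\pse\scD}(X, Z) = \Hom_{\scD}(X,Z) \oplus \Hom_{\scD}(X, Z[-1])$, so with $Z = Y$ the two summands are $\Hom_{\scD}(X,Y)$ and $\Hom_{\scD}(X,Y[-1])$; I should therefore instead match $\itr X = X \oplus X[1]$ — let me note that the paper's $\itr$ uses $X[-1]$, so the correct bookkeeping gives $\Hom_{\scD}(\itr X, Y) = \Hom_{\scD}(X,Y) \oplus \Hom_{\scD}(X,Y[-1])$ after rewriting $\Hom_{\scD}(X[-1],Y) \cong \Hom_{\scD}(X, Y[1])$... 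The cleanest route is to chase $\Hom_{\scD}(X[-1], Y) \cong \Hom_{\scD}(X, Y[1])$ is wrong in this direction; rather $\Hom_{\scD}(X[-1],Y) = \Hom_{\scD}(X, Y[1])$ by applying $[1]$. To avoid sign confusion I will simply write down the candidate unit and counit and check the triangle identities.

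Concretely, for $\incl \dashv \itr$ I would define the unit $\eta_X : X \to \itr(\incl X) = X \oplus X[-1]$ in $\pse\scD$ to be the genuine morphism corresponding to the inclusion of the first summand, i.e. $\eta_X = \incl(\iota_X)$ where $\iota_X : X \to X \oplus X[-1]$, and define the counit $\varepsilon_Y : \incl(\itr Y) \to Y$ in $\scD$ to be the projection $Y \oplus Y[-1] \to Y$. Then I would check: (i) naturality of $\eta$ and $\varepsilon$, which follows from the matrix formula for $\itr(f)$ having $f_0$ in the top-left corner, so that the genuine/projection maps commute past it; and (ii) the two triangle identities $\itr(\varepsilon_Y) \circ \eta_{\itr Y} = \id_{\itr Y}$ and $\varepsilon_{\incl X} \circ \incl(\eta_X) = \id_{\incl X}$, each of which reduces to a short computation with the block matrices using~\eqref{eqn:pseudotri-comp} and the identity $\pg \circ \incl \cong \id$ from~\eqref{eqn:infext-id}. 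Equivalently, and perhaps more transparently for the write-up, I would just exhibit the natural bijection
\[
\Hom_{\pse\scD}(\incl X, \incl Y)\ \longrightarrow\ \Hom_{\scD}(\itr X, Y), \qquad (f_0, f') \mapsto \begin{bmatrix} f_0 & f'[1] \end{bmatrix}
\]
— wait, here I must be careful that the target is $\Hom_{\scD}(X \oplus X[-1], Y)$, so I send $(f_0,f')$ to the row vector whose first entry is $f_0 : X \to Y$ and whose second entry is the map $X[-1] \to Y$ obtained from $f' : X \to Y[-1]$ by shifting, i.e. $f'[1] : X[-1]... $ no: $f' : X \to Y[-1]$ shifts to $f'[1] : X[1] \to Y$; the one I want is $X[-1] \to Y$, which is $f'$ itself viewed via $\Hom(X,Y[-1]) = \Hom(X[-1], Y[-2])$... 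The honest statement is $\Hom_{\scD}(X[-1], Y) \cong \Hom_{\scD}(X, Y[1]) \neq \Hom_{\scD}(X, Y[-1])$, so for the adjunction to work the correct functor must send $\itr X = X \oplus X[1]$ to $Y$; I will follow the paper's convention literally, note that $\Hom_{\scD}(X \oplus X[-1], Y) = \Hom_{\scD}(X,Y) \oplus \Hom_{\scD}(X,Y[1])$, and observe this is $\Hom_{\pse\scD}(X, Y[1])$-ish — so in fact the correct adjoint relation is with a shift, and since the paper asserts $\incl \dashv \itr$ and $\incl \dashv \itr[1]$ simultaneously one of these shifts is absorbed. I would sort this out by writing the explicit bijection once and letting the shift land where it must.

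For the second adjunction, $\incl$ right adjoint to $\itr[1]$, i.e. $\itr[1] \dashv \incl$, the argument is entirely parallel: I would produce a natural bijection $\Hom_{\scD}(X, \incl Y) \cong \Hom_{\pse\scD}(\itr[1] X... )$ no — $\Hom_{\pse\scD}(\incl A, \itr[1]... )$; concretely $\Hom_{\scD}(\itr[1](X), Y)$... again I would write $\itr[1](X) = (X \oplus X[-1])[1] = X[1] \oplus X$ and chase $\Hom_{\scD}(X[1] \oplus X, Y) = \Hom_{\scD}(X, Y[-1]) \oplus \Hom_{\scD}(X,Y)$, which is literally $\Hom_{\pse\scD}(\incl X, \incl Y)$ by~\eqref{eqn:pseudotri-hom}. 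This is the cleanest of the two and I would likely present it first, then deduce the other by applying the shift autoequivalence $[-1]$ (which is an equivalence of both $\scD$ and $\pse\scD$) to one side. Naturality in each variable follows from the block-matrix description of $\itr(f)$ together with the composition law~\eqref{eqn:pseudotri-comp}. The main obstacle is purely bookkeeping: keeping the shifts $[1]$ versus $[-1]$ straight across the identifications $\Hom_{\scD}(A[n], B) \cong \Hom_{\scD}(A, B[-n])$, so that the two claimed adjunctions come out with exactly the shift stated; there is no conceptual difficulty, only the risk of an off-by-one in the shift, which I would pin down by committing to the explicit formulas above rather than to abstract nonsense.
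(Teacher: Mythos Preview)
Your confusion about shifts is a symptom of a single error: for the adjunction $\incl \dashv \itr$, the right adjoint $\itr$ must be applied to the \emph{second} variable, not the first. The correct bijection is
\[
\Hom_{\pse\scD}(\incl X, Y) \;\cong\; \Hom_{\scD}(X, \itr Y) \;=\; \Hom_{\scD}(X, Y \oplus Y[-1]) \;=\; \Hom_{\scD}(X,Y) \oplus \Hom_{\scD}(X, Y[-1]),
\]
which matches~\eqref{eqn:pseudotri-hom} on the nose, with no shift gymnastics whatsoever. You repeatedly compute $\Hom_{\scD}(\itr X, Y)$ instead, which gives $\Hom_{\scD}(X,Y) \oplus \Hom_{\scD}(X, Y[1])$; this is the wrong group, and that is the entire source of the ``off-by-one'' agony in your write-up. (Your computation for the second adjunction $\itr[1] \dashv \incl$ is correct, precisely because there $\itr[1]$ \emph{is} on the first variable.)

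This error propagates: you place the unit in $\pse\scD$ and the counit in $\scD$, but it is the other way around. The unit $\eta_X: X \to \itr(\incl X)$ lives in $\scD$ and is indeed just the inclusion of the first summand. The counit $\epsilon_Y: \incl(\itr Y) \to Y$ lives in $\pse\scD$, and here is the substantive point you miss: it is \emph{not} the genuine projection. In the paper's proof the counit is
\[
\epsilon_Y = \begin{bmatrix} (\id_Y, 0) & (0, \id_{Y[-1]}) \end{bmatrix} : Y \oplus Y[-1] \to Y,
\]
whose second component is the infinitesimal morphism $(0,\id_{Y[-1]})$. This infinitesimal piece is exactly what carries the summand $\Hom_{\scD}(X, Y[-1])$ across the bijection; with your ``projection'' counit, the adjunction would collapse to $\Hom_{\scD}(X,Y)$ alone and the triangle identities would fail. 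The one nontrivial verification is naturality of $\epsilon$ with respect to a morphism $(f_0,f')$ in $\pse\scD$, and this is precisely where the block form of $\itr(f)$ and the composition rule~\eqref{eqn:pseudotri-comp} are used; the paper carries this out explicitly.
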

\begin{proof}
We will prove the first assertion by explicitly constructing the unit $\eta: \id_{\scD} \to \itr \incl$ and the counit $\epsilon: \incl \itr \to \id_{\pse\scD}$.  For an object $X \in \scD$, define
\[
\eta_X: X \to X \oplus X[-1]
\qquad\text{by}\qquad
\eta_X = \begin{bmatrix} \id_X \\ 0 \end{bmatrix}.
\]
It is straighforward to check that for a morphism $f: X \to Y$ in $\scD$, we have $\itr(\incl(f)) \circ \eta_X = \eta_Y \circ f$, so this is indeed a morphism of functors.  Next, define
\[
\epsilon_X: X \oplus X[-1] \to X
\qquad\text{by}\qquad
\epsilon_X = \begin{bmatrix} (\id_X,0) & (0,\id_{X[-1]}) \end{bmatrix}.
\]
Here, the notation ``$\id$'' denotes identity morphisms in $\scD$, of course.  Consider a morphism $f = (f_0,f'): X \to Y$ in $\pse\scD$.  The following equation shows that $\epsilon$ is a morphism of functors:
\[
(f_0,f') \circ \begin{bmatrix} (\id_X,0) & (0,\id_{X[-1]}) \end{bmatrix}
=
\begin{bmatrix} (\id_Y,0) & (0,\id_{Y[-1]}) \end{bmatrix}
\begin{bmatrix}
(f_0,0) & \\ (f',0) & (f_0[-1],0)
\end{bmatrix}.
\]
Next, we must show that $\itr \epsilon \circ \eta \itr = \id: \itr \to \itr$.  This follows from the following calculations:
\begin{gather*}
\eta\itr_X = 
\begin{bmatrix}
\id_X & 0 \\ 0 & \id_{X[-1]} \\ 0 & 0 \\ 0 & 0 \end{bmatrix}
: X \oplus X[-1] \to (X \oplus X[-1]) \oplus (X \oplus X[-1])[-1],
\\
\itr\epsilon_X =
\begin{bmatrix}
\id_X & 0 & 0 & 0 \\
0 & \id_{X[-1]} & \id_{X[-1]} & 0
\end{bmatrix}
:
\begin{aligned}
 (X \oplus X[-1]) \oplus (X \oplus X&[-1])[-1)) \\ &\to  X \oplus X[-1].
\end{aligned}
\end{gather*}
The proof that $\epsilon \incl \circ \incl\eta = \id: \incl \to \incl$ is similar.  Thus, $\incl$ is left adjoint to $\itr$.

For the other adjunction, we record below the formulas for the unit $\eta: \id_{\pse\scD} \to \incl \itr[1]$ and the counit $\epsilon: \itr[1] \incl \to \id_{\scD}$ but otherwise omit further details.
\[
\eta_X =
\begin{bmatrix} (0,\id_X) \\ (\id_X,0) \end{bmatrix}
: X \to X[1] \oplus X,
\qquad
\epsilon_X =
\begin{bmatrix}
0 & \id_X
\end{bmatrix}
: X[1] \oplus X \to X.
\qedhere
\]
\end{proof}

\subsection{Distinguished triangles in an infinitesimal extension}

A number of familiar facts from homological algebra remain valid in $\pse\scD$, even though that category is not triangulated.  We prove a few of these in the next two lemmas.

\begin{lem}\label{lem:infext-hom-les}
Let $X \to Y \to Z \to$ be a distinguished triangle in $\pse\scD$.  For any object $A \in \pse\scD$, the following two sequences are exact:
\begin{gather*}
\cdots \to \Hom(A,X) \to \Hom(A,Y) \to \Hom(A,Z) \to \Hom(A,X[1]) \to \cdots \\
\cdots \to \Hom(X[1],A) \to \Hom(Z,A) \to \Hom(Y,A) \to \Hom(X,A) \to \cdots
\end{gather*}
\end{lem}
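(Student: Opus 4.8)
The plan is to reduce at once to a ``standard'' distinguished triangle and then observe that both long sequences in $\pse\scD$ split as direct sums of ordinary long exact sequences in $\scD$.

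First I would invoke the definition of distinguished triangle (Definition~\ref{defn:infext}) together with Lemma~\ref{lem:infext-basic}(2): every isomorphism in $\pse\scD$ induces isomorphisms on all $\Hom$-spaces, and the ladder diagram in Definition~\ref{defn:infext} commutes, so it suffices to prove exactness in the case where the given triangle is literally $\incl(X') \xrightarrow{\incl(f)} \incl(Y') \xrightarrow{\incl(g)} \incl(Z') \xrightarrow{\incl(h)} \incl(X'[1])$ for some distinguished triangle $X' \xrightarrow{f} Y' \xrightarrow{g} Z' \xrightarrow{h} X'[1]$ in $\scD$, and with $A = \incl(A')$ for $A' \in \scD$ (recall objects of $\pse\scD$ and $\scD$ coincide).

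Next, by~\eqref{eqn:pseudotri-hom} we have $\Hom_{\pse\scD}(\incl A', \incl B') = \Hom_{\scD}(A',B') \oplus \Hom_{\scD}(A',B'[-1])$ for every $B'$, and by~\eqref{eqn:pseudotri-comp}, since the second component of each of $\incl(f)$, $\incl(g)$, $\incl(h)$ vanishes (they are genuine), post-composition with $\incl(f)$ is the direct sum of $f_* \colon \Hom_{\scD}(A',X') \to \Hom_{\scD}(A',Y')$ and $(f[-1])_* \colon \Hom_{\scD}(A',X'[-1]) \to \Hom_{\scD}(A',Y'[-1])$, and similarly for $\incl(g)$ and for the connecting map $\incl(h)$. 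Hence the first long sequence of the lemma is the direct sum of the long exact sequence obtained by applying $\Hom_{\scD}(A',-)$ to $X' \to Y' \to Z' \to$ and the one obtained by applying $\Hom_{\scD}(A',-)$ to the shifted triangle $X'[-1] \to Y'[-1] \to Z'[-1] \to$. Both are exact by the axioms of a triangulated category, and a direct sum of exact sequences of abelian groups is exact. The second, contravariant, sequence is treated identically: it decomposes as the direct sum of the long exact sequences obtained by applying $\Hom_{\scD}(-,A')$ and $\Hom_{\scD}(-,A'[-1])$ to the triangle.

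There is no real obstacle here; the one point requiring care is that the connecting maps induced by $\incl(h)$ (and its precomposition analogue) also respect the direct-sum decomposition of the $\Hom$-spaces, but this is immediate from~\eqref{eqn:pseudotri-comp} precisely because $h$ is genuine, so no ``cross term'' mixing the two summands appears.
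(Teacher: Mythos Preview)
Your proof is correct, but it takes a different route from the paper's. The paper first reduces (as you do) to a triangle of the form $\incl(X') \to \incl(Y') \to \incl(Z') \to$, and then invokes the adjunction of Lemma~\ref{lem:infext-biadj}: since $\incl$ is right adjoint to $\itr[1]$, one has $\Hom_{\pse\scD}(A,\incl(-)) \cong \Hom_{\scD}(\itr(A[1]),-)$ naturally, so applying $\Hom_{\pse\scD}(A,-)$ to the given triangle is the same as applying $\Hom_{\scD}(\itr(A[1]),-)$ to a distinguished triangle in $\scD$, which is exact by the triangulated axioms; the contravariant case uses the other adjunction. Your argument bypasses Lemma~\ref{lem:infext-biadj} entirely and instead uses the explicit composition formula~\eqref{eqn:pseudotri-comp} to see that post- or pre-composition with a genuine morphism is block-diagonal with respect to the decomposition~\eqref{eqn:pseudotri-hom}, so the long sequence splits as a direct sum of two ordinary long exact sequences in $\scD$. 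Your approach is more hands-on and self-contained; the paper's is shorter once the adjunction is in hand and illustrates why $\itr$ is a useful gadget. Either is fine.
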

\begin{proof}
By replacing the given triangle $X \to Y \to Z \to$ by an isomorphic one if necessary, we may assume that it arises by applying $\incl$ to a distinguished triangle $X' \to Y' \to Z' \to$ in $\scD$.  By Lemma~\ref{lem:infext-biadj}, applying the functor $\Hom_{\pse\scD}(A,\cdot)$ to the given triangle is equivalent to applying $\Hom_{\scD}(\itr(A[1]), \cdot)$ to a triangle in $\scD$, so the resulting sequence is exact.  Similar reasoning applies to $\Hom_{\pse\scD}(\cdot, A)$.
\end{proof}

\begin{lem}\label{lem:infext-sq-comp}
Consider a commutative diagram
\begin{equation}\label{eqn:infext-sq-comp}
\vcenter{\xymatrix{
X \ar[r]^f \ar[d]_p & Y \ar[d]_q \\
X' \ar[r]_{i} & Y' }}
\end{equation}
in $\pse\scD$.  If $f$ and $i$ both have cones, then this diagram can be completed to a morphism of distinguished triangles
\begin{equation}\label{eqn:infext-sq-completed}
\vcenter{\xymatrix{
X \ar[r]^f \ar[d]_p & Y \ar[r]^g \ar[d]_q &
  Z \ar[r]^h \ar[d]_r & X[1] \ar[d]^{p[1]} \\
X' \ar[r]_{i} & Y' \ar[r]_{j} &
  Z' \ar[r]_{k} & X'[1] }}
\end{equation}
Moreover, if $p$ and $q$ are isomorphisms, then $r$ is an isomorphism as well.
\end{lem}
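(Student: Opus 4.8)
The plan is to reduce the statement to the axiom (TR3) — completing a commutative square to a morphism of distinguished triangles — in the genuinely triangulated category $\scD$, using the functors $\incl$ and $\pg$ to pass back and forth.

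\textbf{Reduction to genuine rows.} Since $f$ and $i$ have cones, Definition~\ref{defn:infext} provides distinguished triangles containing them that are isomorphic in $\pse\scD$ to $\incl$ applied to distinguished triangles of $\scD$. Conjugating the square~\eqref{eqn:infext-sq-comp} by the components of these isomorphisms is harmless — conjugation preserves commutative squares and, by Lemma~\ref{lem:infext-basic}(2), isomorphisms, so one may conjugate back at the end — so I would assume from the outset that we are given honest distinguished triangles $X \ovto{f} Y \ovto{g} Z \ovto{h} X[1]$ and $X' \ovto{i} Y' \ovto{j} Z' \ovto{k} X'[1]$ in $\scD$ (so $f,g,h,i,j,k$ are genuine), while $p = (p_0,p')$ and $q = (q_0,q')$ are arbitrary morphisms of $\pse\scD$. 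The task is then to produce $r = (r_0,r')$.

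\textbf{Unfolding via the composition rule.} By~\eqref{eqn:pseudotri-comp}, commutativity of~\eqref{eqn:infext-sq-comp} in $\pse\scD$ is equivalent to the pair of identities in $\scD$
\[
q_0 \circ f = i \circ p_0, \qquad q' \circ f = i[-1] \circ p',
\]
and, in the same way, producing the two squares of~\eqref{eqn:infext-sq-completed} amounts to finding $r_0 \colon Z \to Z'$ and $r' \colon Z \to Z'[-1]$ in $\scD$ with
\[
r_0 \circ g = j \circ q_0, \quad k \circ r_0 = p_0[1] \circ h, \quad r' \circ g = j[-1] \circ q', \quad k[-1] \circ r' = \pm\, p'[1] \circ h,
\]
the sign in the last identity being forced by the sign convention for the shift on $\pse\scD$. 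To obtain all four at once, I would apply (TR3) in $\scD$ to the commutative square whose top row is $f$, whose bottom row is $i \oplus i[-1] \colon X' \oplus X'[-1] \to Y' \oplus Y'[-1]$, and whose verticals are the column vectors $\binom{p_0}{p'}$ and $\binom{q_0}{q'}$: its commutativity is exactly the first display, and its bottom row is the initial map of the distinguished triangle formed as the direct sum of the triangle on $i$ with its shift by $[-1]$. Then (TR3) furnishes $\binom{r_0}{r'} \colon Z \to Z' \oplus Z'[-1]$ completing it, and reading off components yields the second display; setting $r := (r_0,r')$ and using~\eqref{eqn:pseudotri-comp} once more gives~\eqref{eqn:infext-sq-completed}. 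For the final assertion, apply $\pg \colon \pse\scD \to \scD$ to~\eqref{eqn:infext-sq-completed}: because $\pg$ sends distinguished triangles and isomorphisms of $\pse\scD$ to distinguished triangles and isomorphisms of $\scD$ (immediate from Definition~\ref{defn:infext}, $\pg \circ \incl \cong \id$, and Lemma~\ref{lem:infext-basic}(2)), the result is a morphism of distinguished triangles of $\scD$ whose outer verticals $\pg(p) = p_0$, $\pg(q) = q_0$ are isomorphisms; the triangulated five lemma then forces $\pg(r) = r_0$ to be an isomorphism, whence $r$ is an isomorphism by Lemma~\ref{lem:infext-basic}(2).

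\textbf{Expected main obstacle.} The one delicate point is the sign bookkeeping in the two displays: shifting a distinguished triangle in $\scD$ introduces a sign on its connecting morphism, and this must be reconciled with the sign the shift functor on $\pse\scD$ carries on infinitesimal components, by choosing the auxiliary triangle on $i$ (replaced if necessary by an isomorphic copy with an adjusted connecting morphism) so that \emph{both} squares of~\eqref{eqn:infext-sq-completed} come out strictly commuting, rather than one of them commuting only up to sign. A more pedestrian alternative, in the spirit of Lemma~\ref{lem:infext-hom-les}, is to construct $r_0$ and $r'$ by two separate applications of (TR3) to the two row-squares and then verify the cross-compatibility between them by direct computation.
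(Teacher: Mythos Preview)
Your argument is correct and essentially matches the paper's proof for the construction of $r$: the paper also reduces to genuine $f$ and $i$, reads the commutativity of~\eqref{eqn:infext-sq-comp} as two separate commuting squares in $\scD$ (one involving $(p_0,q_0)$ and one involving $(p',q')$), and applies TR3 to each to obtain $r_0$ and $r'$ --- precisely your ``pedestrian alternative.'' Your direct-sum packaging via the bottom row $i \oplus i[-1]$ is an equivalent repackaging of the same idea, and you are right that the only subtlety either way is the sign on the shifted connecting map, which the paper leaves implicit.

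For the final assertion that $r$ is an isomorphism when $p$ and $q$ are, your route differs from the paper's: you apply $\pg$ and invoke the triangulated five-lemma in $\scD$, then lift back via Lemma~\ref{lem:infext-basic}(2). The paper instead stays in $\pse\scD$, using Lemma~\ref{lem:infext-hom-les} to get long exact $\Hom$-sequences, then the ordinary five-lemma and Yoneda. Your argument is slightly more direct and avoids the need for Lemma~\ref{lem:infext-hom-les} at this point; the paper's version has the mild advantage of establishing and using Lemma~\ref{lem:infext-hom-les} as a reusable tool. Both are valid.
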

\begin{proof}
By replacing $f$ and $i$ by isomorphic maps, we may assume that they are both genuine.  (We cannot assume that $p$ and $q$ are genuine, however.)  Let us write these maps as pairs:
\[
f = (f_0,0), \quad
i = (i_0,0), \quad
p = (p_0,p'), \quad
q = (q_0,q').
\]
The commutativity of~\eqref{eqn:infext-sq-comp} implies that the following squares in $\scD$ each commute:
\begin{equation}\label{eqn:infext-sq-sep}
\vcenter{\xymatrix{
X \ar[r]^{f_0} \ar[d]_{p_0} & Y \ar[d]^{q_0} \\
X' \ar[r]_{i_0} & Y' }}
\qquad\text{and}\qquad
\vcenter{\xymatrix{
X \ar[r]^{f_0} \ar[d]_{p'} & Y \ar[d]^{q'} \\
X'[-1] \ar[r]_{i_0[-1]} & Y'[-1] }}
\end{equation}
We can complete each of these to a morphism of distinguished triangles as follows:
\[
\xymatrix@C=20pt{
X \ar[r]^{f_0} \ar[d]^{p_0} & Y \ar[r]^{g_0} \ar[d]_{q_0} &
  Z \ar[r]^{h_0} \ar[d]_{r_0} & X[1] \ar[d]_{p_0[1]} \\
X' \ar[r]_{i_0} & Y' \ar[r]_{j_0} &
  Z' \ar[r]_{k_0} & X'[1] }
\ 
\xymatrix{
X \ar[r]^{f_0} \ar[d]^{p'} & Y \ar[r]^{g_0} \ar[d]_{q'} &
  Z \ar[r]^{h_0} \ar[d]_{r'} & X[1] \ar[d]_{p'[1]} \\
X'[-1] \ar[r]_{i_0[-1]} & Y'[-1] \ar[r]_{j_0[-1]} &
  Z'[-1] \ar[r]_-{k_0[-1]} & X' }
\]
Note that we have chosen the same objects $Z$ and $Z'$ and the same morphisms $g_0$, $h_0$, $j_0$, $k_0$ in both diagrams.  Let us put
\[
g = \incl(g_0),\quad
h = \incl(h_0),\quad
j = \incl(j_0),\quad
k = \incl(k_0),
\]
and let $r = (r_0,r') \in \Hom(Z,Z')$.  Then~\eqref{eqn:infext-sq-completed} commutes.

For any object $A \in \pse\scD$, applying $\Hom(A,\cdot)$ to the diagram~\eqref{eqn:infext-sq-completed} gives us a morphism of long exact sequences, by Lemma~\ref{lem:infext-hom-les}.  If $p$ and $q$ are isomorphisms, then, by the $5$-lemma, the map $\Hom(A,Z) \to \Hom(A, Z')$ induced by $r$ is always an isomorphism.  By Yoneda's lemma, $r$ itself is an isomorphism.
\end{proof}

\subsection{Pseudotriangulated functors}
\label{ss:pseudotri}

We will now study functors that respect the structure of an infinitesimal extension of a triangulated category.

\begin{defn}\label{defn:pseudotri}
An additive functor $F: \pse\scD \to \pse\scD'$ is said to be \emph{pseudotriangulated} if the following two conditions hold:
\begin{enumerate}
\item It commutes with $[1]$ and takes distinguished triangles to distinguished triangles.\label{it:pseudotri-dt}
\item It commutes with $\iinf \circ \pg$.\label{it:pseudotri-commute}
\end{enumerate}
We also use the term \emph{pseudotriangulated} for functors $\scD \to \pse\scD'$ satisfying just condition~\eqref{it:pseudotri-dt}.
\end{defn}

The last condition means that the following diagram commutes:
\[
\xymatrix{
\Hom_{\pse\scD}(X,Y[-1]) \ar[r]^-{\pg} \ar[d]_F &
  \Hom_{\scD}(X,Y[-1]) \ar[r]^-{\iinf} & \Hom_{\pse\scD}(X,Y) \ar[d]^F \\
\Hom_{\pse\scD'}(FX,FY[-1]) \ar[r]_-{\pg} &
  \Hom_{\scD'}(FX,FY[-1]) \ar[r]_-{\iinf} & \Hom_{\pse\scD'}(FX,FY) }
\]
The following basic facts about pseudotriangulated functors are immediate consequences of the definition.

\begin{lem}\label{lem:pseudotri-basic}
Let $F: \pse\scD \to \pse\scD'$ be a pseudotriangulated functor.  Then:
\begin{enumerate}
\item $F$ takes infinitesimal morphisms to infinitesimal morphisms.\label{it:pseudotri-inf}
\item We have $\pg \circ F \circ \incl \circ \pg \cong \pg \circ F$. \qed\label{it:pseudotri-pg}
\end{enumerate}
\end{lem}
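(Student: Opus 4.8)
The plan is to deduce both assertions formally from the two conditions in Definition~\ref{defn:pseudotri}, using only the additivity of $F$ and the direct-sum decomposition~\eqref{eqn:pseudotri-hom} of the $\Hom$-spaces of $\pse\scD$; there is no real obstacle here, and the proof should be quite short.

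For part~\eqref{it:pseudotri-inf}, I would first observe that every infinitesimal morphism lies in the image of $\iinf$, and more precisely in the image of $\iinf \circ \pg$. Indeed, write an infinitesimal morphism $f\colon X \to Y$ as $f = (0,f')$ with $f' \in \Hom_{\scD}(X,Y[-1])$; then $f = \iinf(f')$, and since $f' = \pg(\incl(f'))$ by~\eqref{eqn:infext-id}, where $\incl(f') \in \Hom_{\pse\scD}(X, Y[-1])$, we get $f = \iinf(\pg(\incl(f')))$. Now condition~\eqref{it:pseudotri-dt} guarantees that $F$ commutes with $[1]$, so the target-side map $\iinf\colon \Hom_{\scD'}(FX, FY[-1]) \to \Hom_{\pse\scD'}(FX, FY)$ is well defined, and condition~\eqref{it:pseudotri-commute} then yields $F(f) = \iinf(\pg(F(\incl(f'))))$. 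This lies in the image of $\iinf$, hence is infinitesimal. (Equivalently: $F$ commutes with $\iinf \circ \pg$; every infinitesimal morphism is of the form $(\iinf\circ\pg)(g)$ for a suitable $g$; and every morphism in the image of $\iinf\circ\pg$ is infinitesimal since its first component vanishes.)

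For part~\eqref{it:pseudotri-pg}, the functors $\pg \circ F \circ \incl \circ \pg$ and $\pg \circ F$ from $\pse\scD$ to $\scD'$ agree on objects, because $\incl$ and $\pg$ act as the identity on objects. On a morphism $f = (f_0,f')\colon X \to Y$, I would use the decomposition $f = \incl(f_0) + \iinf(f')$ furnished by~\eqref{eqn:pseudotri-hom} together with the additivity of $F$ and of $\pg$ to write
\[
\pg(F(f)) = \pg(F(\incl(f_0))) + \pg(F(\iinf(f'))).
\]
By part~\eqref{it:pseudotri-inf} the morphism $F(\iinf(f'))$ is infinitesimal, so $\pg(F(\iinf(f'))) = 0$, and hence $\pg(F(f)) = \pg(F(\incl(f_0))) = \pg(F(\incl(\pg(f))))$. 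Thus the two functors are in fact equal, and in particular canonically isomorphic, which is the assertion. The only delicate points in the whole argument are bookkeeping: matching $\incl$, $\pg$, $\iinf$ with the two summands of~\eqref{eqn:pseudotri-hom}, and invoking condition~\eqref{it:pseudotri-dt} before using condition~\eqref{it:pseudotri-commute} so that the relevant instance of $\iinf$ on the target side makes sense.
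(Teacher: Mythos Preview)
Your proof is correct and complete. The paper gives no proof at all for this lemma (note the \qed inside the statement itself), treating both parts as immediate consequences of Definition~\ref{defn:pseudotri}; your argument is exactly the routine verification the paper leaves to the reader.
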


\begin{lem}\label{lem:infext-induc}
For any pseudotriangulated functor $F: \pse\scD \to \pse\scD'$, there is a functor of triangulated categories $\tilde F: \scD \to \scD'$, unique up to isomorphism, such that $\pg \circ F \cong \tilde F \circ \pg$.  
\end{lem}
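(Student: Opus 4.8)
The plan is to produce $\tilde F$ on objects by setting $\tilde F(X) = F(X)$, viewing objects of $\scD$ and $\scD'$ as objects of $\pse\scD$ and $\pse\scD'$ via $\incl$, and to produce it on morphisms by composing with $\pg$. Concretely, given a morphism $g \colon X \to Y$ in $\scD$, I would define $\tilde F(g) = \pg\bigl(F(\incl(g))\bigr) \colon F(X) \to F(Y)$, an honest morphism in $\scD'$. Functoriality of $\tilde F$ is immediate from functoriality of $F$ together with the fact that $\pg$ is a functor and $\incl$ is a functor, and additivity is clear. The identity $\pg \circ F \cong \tilde F \circ \pg$ is then what needs checking on morphisms: for a morphism $f = (f_0, f')$ in $\pse\scD$, I must show $\pg(F(f)) = \tilde F(\pg(f)) = \pg(F(\incl(f_0)))$. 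Writing $f = \incl(f_0) + \iinf(\pg(f''))$ for the appropriate infinitesimal correction (using~\eqref{eqn:iinf-defn} and the fact that $\iinf$ lands in infinitesimal morphisms), additivity of $F$ reduces this to showing that $F$ sends $\iinf(\pg(f''))$ to an infinitesimal morphism, which is precisely Lemma~\ref{lem:pseudotri-basic}\eqref{it:pseudotri-inf} (infinitesimal morphisms have $\pg = 0$). So the natural isomorphism is in fact the identity.

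Next I would verify that $\tilde F$ is triangulated. It commutes with $[1]$ because $F$ does and because $\incl$ intertwines the shift functors on $\scD$ and $\pse\scD$ (the shift on $\pse\scD$ being induced from that of $\scD$). For distinguished triangles: given a distinguished triangle $X \xrightarrow{f} Y \xrightarrow{g} Z \xrightarrow{h} X[1]$ in $\scD$, applying $\incl$ gives a distinguished triangle in $\pse\scD$ in the sense of Definition~\ref{defn:infext}; applying $F$ gives a distinguished triangle in $\pse\scD'$, so by definition it is isomorphic (in $\pse\scD'$) to one coming from $\incl$ applied to a genuine distinguished triangle in $\scD'$. Applying $\pg$ and using $\pg \circ \incl \cong \id_{\scD'}$ from~\eqref{eqn:infext-id}, together with Lemma~\ref{lem:infext-basic}(2) (which says $\pg$ detects isomorphisms in $\pse\scD'$, hence carries the isomorphism of triangles to an isomorphism of triangles in $\scD'$), yields that $\tilde F$ applied to the original triangle is isomorphic to a genuine distinguished triangle, hence is distinguished. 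Here one should be a little careful to check that the connecting morphism behaves correctly, i.e. that $\tilde F(h) = \pg(F(\incl(h)))$ matches the connecting morphism of the image triangle; this follows from chasing the isomorphism of triangles through $\pg$.

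For uniqueness up to isomorphism: if $G \colon \scD \to \scD'$ is another triangulated functor with $\pg \circ F \cong G \circ \pg$, then precomposing the isomorphism $\pg \circ F \cong \tilde F \circ \pg$ with $\incl$ and using $\pg \circ \incl \cong \id$ gives $\pg \circ F \circ \incl \cong \tilde F$, and likewise $\pg \circ F \circ \incl \cong G$, so $\tilde F \cong G$. I expect the main obstacle to be purely bookkeeping: tracking the connecting maps and the isomorphisms of triangles carefully enough to see that the "distinguished triangle" structure descends correctly under $\pg$, since $\pg$ is not faithful and distinguished triangles in $\pse\scD'$ are only defined up to isomorphism. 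Note that condition~\eqref{it:pseudotri-commute} in Definition~\ref{defn:pseudotri} is not needed for this lemma — only condition~\eqref{it:pseudotri-dt} is used — which is consistent with the remark in the definition that the term "pseudotriangulated" is applied to functors $\scD \to \pse\scD'$ satisfying just~\eqref{it:pseudotri-dt}.
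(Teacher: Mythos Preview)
Your construction of $\tilde F$ as $\pg \circ F \circ \incl$ and your uniqueness argument are exactly what the paper does; your write-up is simply more explicit about checking that $\tilde F$ respects distinguished triangles, which the paper leaves implicit.

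However, your closing remark is wrong and contradicts your own argument. You invoke Lemma~\ref{lem:pseudotri-basic}\eqref{it:pseudotri-inf} to conclude that $F$ sends the infinitesimal correction $\iinf(f')$ to an infinitesimal morphism, and this step genuinely requires condition~\eqref{it:pseudotri-commute} of Definition~\ref{defn:pseudotri}. Indeed, that condition says precisely that $F(\iinf(\pg(f''))) = \iinf(\pg(F(f'')))$, which lies in the image of $\iinf$ and is therefore infinitesimal; condition~\eqref{it:pseudotri-dt} alone says nothing about how $F$ acts on infinitesimal morphisms, since those never occur in distinguished triangles. So the identity $\pg \circ F \cong \tilde F \circ \pg$ on morphisms does depend on condition~\eqref{it:pseudotri-commute}. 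The remark after Definition~\ref{defn:pseudotri} about functors $\scD \to \pse\scD'$ is not relevant here: for such functors the domain has no infinitesimal morphisms, so condition~\eqref{it:pseudotri-commute} is vacuous, whereas for $F: \pse\scD \to \pse\scD'$ it has content and is needed.
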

\begin{defn}\label{defn:infext-induc}
The functor $\tilde F: \scD \to \scD'$ is said to be \emph{induced} by $F$.
\end{defn}
\begin{proof}
Let $\tilde F = \pg \circ F \circ \incl$.  By Lemma~\ref{lem:pseudotri-basic}\eqref{it:pseudotri-pg}, we have that $\pg \circ F \cong \tilde F \circ \pg$.  For uniqueness, suppose we have an isomorphism $\phi: \tilde F \circ \pg \simto G \circ \pg$ for some $G: \scD \to \scD'$.  Since the objects of $\scD$ are the same as those of $\pse\scD$, we can define a morphism $\phi': \tilde F \to G$ simply by setting $\phi'_X = \phi_X: \tilde F(X) \to G(X)$, and this is clearly an isomorphism.
\end{proof}

The lemma above may be thought of as saying that pseudotriangulated functors are ``automatically'' compatible with $\pg$.  The analogous property for $\incl$, however, is not automatic, and turns out to be rather more difficult to study.

\begin{defn}\label{defn:pseudotri-genuine}
Let $\scD$ and $\scD'$ be two triangulated categories.  A pseudotriangulated functor $F: \pse\scD \to \pse\scD'$ is said to be \emph{genuine} if the induced functor $\tilde F$ satisfies $\incl \circ \tilde F \cong F \circ \incl$.
\end{defn}

There is still a uniqueness property like that in Lemma~\ref{lem:infext-induc}.

\begin{lem}\label{lem:genuine-unique}
Let $F: \pse\scD \to \pse\scD'$ be a pseudotriangulated functor.  If $G: \scD \to \scD'$ is a functor of triangulated categories such that $\incl \circ G \cong F \circ \incl$, then $G \cong \tilde F$.
\end{lem}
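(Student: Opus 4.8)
The plan is to exploit the fact, established in Lemma~\ref{lem:infext-induc}, that $\tilde F$ was \emph{defined} to be $\pg \circ F \circ \incl$, and that the output category $\scD'$ is recovered from $\pse\scD'$ by applying $\pg$. So the natural strategy is: given the hypothesis $\incl \circ G \cong F \circ \incl$, compose both sides with $\pg$ on the left, use $\pg \circ \incl \cong \id_{\scD'}$ from~\eqref{eqn:infext-id}, and read off $G \cong \pg \circ F \circ \incl = \tilde F$. The point is that $\pg$ is a genuine functor of categories (it is the identity on objects and a split projection on morphisms), so it can be composed with the given natural isomorphism to produce a natural isomorphism between the composites.

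Concretely, here are the steps in order. First, start from the given isomorphism of functors $\incl \circ G \cong F \circ \incl : \scD \to \pse\scD'$. Second, apply $\pg : \pse\scD' \to \scD'$ on the left: since any functor carries isomorphisms of functors to isomorphisms of functors, we obtain $\pg \circ \incl \circ G \cong \pg \circ F \circ \incl$. Third, invoke~\eqref{eqn:infext-id}, which gives $\pg \circ \incl \cong \id_{\scD'}$; substituting on the left-hand side yields $G \cong \pg \circ F \circ \incl$. Fourth, recall from the proof of Lemma~\ref{lem:infext-induc} that $\tilde F$ is precisely $\pg \circ F \circ \incl$ (this is how the induced functor was constructed), so $G \cong \tilde F$, as desired.

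I do not expect any genuine obstacle here; the lemma is essentially a formal consequence of the construction of $\tilde F$ together with the relation $\pg \circ \incl \cong \id$. The only point requiring a word of care is that $\tilde F$ was defined via a \emph{choice} (namely, the specific formula $\pg \circ F \circ \incl$) and is only characterized up to isomorphism by the property $\pg \circ F \cong \tilde F \circ \pg$; but since we are proving $G \cong \tilde F$ only up to isomorphism, it suffices to compare $G$ against that specific representative, which is exactly what the computation above does. One could alternatively phrase the argument more symmetrically by noting that $G$ also satisfies $\pg \circ F \cong G \circ \pg$ — indeed, $G \circ \pg \cong \pg \circ \incl \circ G \circ \pg \cong \pg \circ F \circ \incl \circ \pg \cong \pg \circ F$, where the last step uses Lemma~\ref{lem:pseudotri-basic}\eqref{it:pseudotri-pg} — and then appeal directly to the uniqueness clause of Lemma~\ref{lem:infext-induc}. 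Either route is short; I would present the first one as it is the most direct.
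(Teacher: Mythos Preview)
Your proposal is correct. Your primary route --- compose $\incl \circ G \cong F \circ \incl$ with $\pg$ on the left and use $\pg \circ \incl \cong \id_{\scD'}$ to read off $G \cong \pg \circ F \circ \incl = \tilde F$ --- is in fact slightly more direct than the paper's argument, which composes with $\pg$ on \emph{both} sides, invokes Lemma~\ref{lem:pseudotri-basic}\eqref{it:pseudotri-pg} to obtain $G \circ \pg \cong \tilde F \circ \pg$, and then appeals to the uniqueness clause of Lemma~\ref{lem:infext-induc}. The alternative you sketch at the end is exactly the paper's approach; your first route simply bypasses the detour through $\pg$ on the right and the uniqueness statement.
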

\begin{proof}
Composing on both sides with $\pg$ and using Lemma~\ref{lem:pseudotri-basic}\eqref{it:pseudotri-pg}, we find that $\pg \circ \incl \circ G \circ \pg \cong \pg \circ F \circ \incl \circ \pg \cong \tilde F \circ \pg$.  From~\eqref{eqn:infext-id}, we see that $G \circ \pg \cong \tilde F \circ \pg$, so $G \cong \tilde F$ by Lemma~\ref{lem:infext-induc}.
\end{proof}

Genuineness for functors is quite a subtle condition, and we will only be able to establish it when $\scD$ and $F$ obey rather strong constraints.  The next lemma tells us how this notion is related to genuineness for morphisms, but since the latter is not a natural property, it seems difficult to prove that a functor $F$ is genuine by reasoning directly with morphisms.  Instead, our strategy will be to seek indirect ways of showing that $\incl \circ \tilde F$ and $F \circ \incl$ are isomorphic.

\begin{lem}\label{lem:infext-gen-mor}
A pseudotriangulated functor $F: \pse\scD \to \pse\scD'$ is genuine if and only if it is isomorphic to a pseudotriangulated functor $F': \pse\scD \to \pse\scD'$ that sends genuine morphisms to genuine morphisms.
\end{lem}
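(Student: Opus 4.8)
The plan is to prove both implications by passing through the induced triangulated functor $\tilde F = \pg \circ F \circ \incl \colon \scD \to \scD'$, exploiting that $\incl$ and $\pg$ are the identity on objects, so that $F$, $\tilde F$, and the ``obvious'' extension of $\tilde F$ to the infinitesimal extensions all agree on objects.

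The \emph{if} direction is short. Suppose $F \cong F'$ with $F'$ pseudotriangulated and sending genuine morphisms to genuine morphisms. For a genuine morphism $\incl(f_0)$, the image $F'(\incl(f_0))$ is genuine, hence is $\incl$ applied to its genuine component $\pg(F'(\incl(f_0))) = \tilde{F'}(f_0)$; since $F'$ and $\incl \circ \tilde{F'}$ also agree on objects, this gives $F' \circ \incl = \incl \circ \tilde{F'}$ on the nose. As $F \cong F'$ also yields $\tilde F \cong \tilde{F'}$ (apply $\pg$ and $\incl$ to the isomorphism), transporting along it gives $F \circ \incl \cong F' \circ \incl = \incl \circ \tilde{F'} \cong \incl \circ \tilde F$, so $F$ is genuine.

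For the \emph{only if} direction, fix an isomorphism $\theta \colon \incl \circ \tilde F \simto F \circ \incl$ and write $\theta_X = (a_X, b_X)$; by Lemma~\ref{lem:infext-basic} the genuine component $a_X$ is invertible in $\scD'$. First I would pin down $F$ on an arbitrary morphism: using additivity of $F$, Lemma~\ref{lem:pseudotri-basic}, and the compatibility of $F$ with $\iinf \circ \pg$, one finds $F(f_0, f') = (\tilde F(f_0),\, c(f_0) + \tilde F(f'))$ for a ``cross term'' $c(f_0) \in \Hom_{\scD'}(FX, FY[-1])$ additive in $f_0$. Next, naturality of $\theta$ should force $c$ to be the coboundary $c(f_0) = \beta_Y \circ \tilde F(f_0) - \tilde F(f_0)[-1] \circ \beta_X$, where $\beta_X := b_X \circ a_X^{-1}$. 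Finally I would conjugate $F$ by the infinitesimal automorphisms $\psi_X := (\id_{FX}, \beta_X)$; the resulting functor $F' \cong F$ should satisfy $F'(f_0, f') = (\tilde F(f_0), \tilde F(f'))$, i.e.\ it is the evident extension of the triangulated functor $\tilde F$ to the infinitesimal extensions. Such an $F'$ manifestly sends genuine morphisms to genuine morphisms, commutes with $[1]$ and with $\iinf \circ \pg$, and carries $\incl$ of a distinguished triangle $\Delta$ in $\scD$ to $\incl(\tilde F \Delta)$, which is distinguished; hence $F'$ is pseudotriangulated, and we are done.

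The main obstacle will be the bookkeeping in this last direction: reading off the shape of $F$ on a general morphism and verifying that $c$ is a coboundary. A secondary nuisance is the shift functor---$\tilde F$ is only required to commute with $[1]$ up to a fixed isomorphism---so the formula $F'(f_0,f') = (\tilde F f_0, \tilde F f')$ and the verification that it defines a functor should be read modulo that coherence datum. Once this is handled, everything else is formal.
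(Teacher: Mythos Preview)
Your argument is correct, and the ``if'' direction is essentially identical to the paper's. For the ``only if'' direction, however, the paper takes a shorter route: rather than computing the cross term $c(f_0)$, identifying it as a coboundary, and then conjugating by your auxiliary automorphisms $\psi_X = (\id, \beta_X)$, the paper simply uses the given isomorphism $\theta$ itself. From the naturality square for $\theta$ applied to a genuine morphism $(f_0,0)$, together with the square obtained by applying $\iinf$ to the analogous square for $(f',0)\colon X \to Y[-1]$ and invoking the compatibility $F(\iinf\pg(f',0)) = \iinf\pg(F(f',0))$, one reads off directly that $\theta_Y \circ (\tilde F(f_0), \tilde F(f')) = F(f_0,f') \circ \theta_X$. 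Thus the collection $\{\theta_X\}$ already exhibits the desired isomorphism $F' \simto F$ for $F'(f_0,f') := (\tilde F(f_0), \tilde F(f'))$, with no need to isolate $c$ or introduce $\beta$ and $\psi$. Your $\psi_X$ is in fact $\theta_X \circ (a_X,0)^{-1}$, so the two isomorphisms differ only by a genuine automorphism; your approach works, but the paper's avoids the explicit bookkeeping you flag as the main obstacle.
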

\begin{proof}
If $F$ sends genuine morphisms to genuine morphisms, then it is easy to see that $\incl \circ \pg \circ F \circ \incl \cong F \circ \incl$.  In other words, $\incl \circ \tilde F \cong F \circ \incl$, so $F$ is genuine.  For the converse, suppose $F$ is genuine, and fix an isomorphism $\theta: \incl \circ \tilde F \to F \circ \incl$.  For a morphism $f_0: X \to Y$ in $\scD$, we have a commutative diagram
\[
\xymatrix@C=60pt{
\tilde F(X) \ar[r]^{(\tilde F(f_0), 0)} \ar[d]_{\theta_X} &
  \tilde F(Y) \ar[d]^{\theta_Y} \\
F(X) \ar[r]_{F(f_0,0)} & F(Y) }
\]
Form the analogous diagram for another morphism $f': X \to Y[-1]$ in $\scD$.  Applying the natural transformation $\iinf: \Hom_{\scD}(F(X),F(Y[-1])) \to \Hom_{\pse\scD}(F(X), F(Y))$ to that diagram, we obtain
\[
\xymatrix@C=60pt{
\tilde F(X) \ar[r]^{(0,\tilde F(f'))} \ar[d]_{\theta_X} &
  \tilde F(Y) \ar[d]^{\theta_Y} \\
F(X) \ar[r]_{\iinf(\pg F(f',0))} & F(Y) }
\]
Since $F$ commutes with $\iinf \circ \pg$, we have $\iinf(\pg F(f',0)) = F(0,f')$.  Combining the two diagrams, we find that
\[
\xymatrix@C=60pt{
\tilde F(X) \ar[r]^{(\tilde F(f_0), \tilde F(f'))} \ar[d]_{\theta_X} &
  \tilde F(Y) \ar[d]^{\theta_Y} \\
F(X) \ar[r]_{F(f_0,f')} & F(Y) }
\]
commutes.  Let $F': \pse\scD \to \pse\scD'$ be the functor given by $F'(X) = F(X)$ for objects $X$, and by $F'(f) = (\tilde F(f_0), \tilde F(f'))$ for morphisms $f = (f_0,f')$.  The commutative diagram above shows that the collection $\{\theta_X\}$ can be regarded as an isomorphism of functors $\theta: F' \simto F$.  Moreover, $F'$ clearly sends genuine morphisms to genuine morphisms.
\end{proof}

\begin{lem}\label{lem:infext-gen-itr}
If $F: \pse\scD \to \pse\scD'$ is genuine, then $\itr \circ F \cong \tilde F \circ \itr$.
\end{lem}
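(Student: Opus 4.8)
The plan is to reduce, via Lemma~\ref{lem:infext-gen-mor}, to a ``strictly compatible'' model of $F$, for which the asserted isomorphism is essentially the canonical identification expressing that both $\itr$ and $\tilde F$ build biproducts out of $[-1]$-shifts.

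First I would observe that both sides of the claim are insensitive, up to isomorphism, to replacing $F$ by an isomorphic pseudotriangulated functor: whiskering an isomorphism $F \simto F'$ with $\itr$ gives $\itr F \cong \itr F'$, while the construction $\tilde F = \pg \circ F \circ \incl$ from Lemma~\ref{lem:infext-induc} shows that the same isomorphism yields $\tilde F \cong \tilde{F'}$, hence $\tilde F \circ \itr \cong \tilde{F'} \circ \itr$. So it suffices to prove the lemma for one representative of the isomorphism class of $F$.

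Because $F$ is genuine, the proof of Lemma~\ref{lem:infext-gen-mor} supplies such a representative: a pseudotriangulated functor $F'$, isomorphic to $F$ (so $\tilde{F'} \cong \tilde F$), that agrees with $F$ on objects and sends a morphism $f = (f_0,f')$ to $(\tilde F(f_0), \tilde F(f'))$. Working with $F'$, I would then compute both sides directly. On objects, since $\incl$ and $\pg$ act as the identity on objects we have $\tilde F(Z) = F(Z)$ for every $Z$, and since the triangulated functor $\tilde F$ is additive and commutes with $[1]$ there is a canonical isomorphism $\itr F'(Z) = F(Z) \oplus F(Z)[-1] \cong \tilde F(Z) \oplus \tilde F(Z)[-1] \cong \tilde F(Z \oplus Z[-1]) = \tilde F \itr(Z)$. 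On morphisms, $\itr F'(f)$ is the matrix $\left[\begin{smallmatrix} \tilde F(f_0) & 0 \\ \tilde F(f') & \tilde F(f_0)[-1] \end{smallmatrix}\right]$, which is exactly the image under $\tilde F$ (applied entrywise, via the biproduct comparison isomorphisms above and additivity of $\tilde F$) of the defining matrix of $\itr(f)$. Hence the object-level isomorphisms are natural in $f$ and assemble into a natural isomorphism $\itr F' \cong \tilde F \itr$, and therefore $\itr F \cong \tilde F \itr$.

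The only step calling for care is the last one: one must check that the comparison isomorphisms witnessing that $\tilde F$ preserves biproducts and commutes with $[1]$ are natural and compatible with applying $\tilde F$ to the matrix $\itr(f)$. I do not expect this to be a real obstacle; the substance of the lemma is already carried by Lemma~\ref{lem:infext-gen-mor}, and the present statement is simply the ``$\itr$-analogue'' of the defining relation $\incl \circ \tilde F \cong F \circ \incl$ of genuineness.
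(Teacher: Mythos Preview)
Your proposal is correct and follows essentially the same route as the paper: reduce via Lemma~\ref{lem:infext-gen-mor} to a model of $F$ satisfying $F(f_0,f') = (\tilde F(f_0),\tilde F(f'))$, then verify directly that $\itr(F(f))$ equals $\tilde F(\itr(f))$ as a $2\times 2$ matrix. The paper's proof is a terser version of exactly this argument; your added remarks about invariance under isomorphism of $F$ and naturality of the biproduct comparison are reasonable clarifications but not points of departure.
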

\begin{proof}
It is clear that for an object $X$ in $\pse\scD$, we have $\itr(F(X)) \cong \tilde F(\itr(X)) \cong \tilde F(X) \oplus \tilde F(X)[-1]$.  By Lemma~\ref{lem:infext-gen-mor}, we may assume that $F$ sends genuine morphisms to genuine morphisms.  Indeed, we may assume that for a morphism $f = (f_0,f')$ in $\pse\scD$, we have $F(f) = (\tilde F(f_0), \tilde F(f'))$.   The result follows from the observation that
\[
\itr(F(f)) = 
\begin{bmatrix}
\tilde F(f_0) & \\ \tilde F(f') & \tilde F(f_0[-1])
\end{bmatrix}
= \tilde F(\itr(f)).\qedhere
\]
\end{proof}

\begin{lem}\label{lem:infext-ind-adjoint}
Let $F: \pse\scD \to \pse\scD'$ and $G: \pse\scD' \to \pse\scD$ be a pair of pseudotriangulated functors.  If $F$ is left adjoint to $G$, then the induced functor $\tilde F$ is left adjoint to $\tilde G$.
\end{lem}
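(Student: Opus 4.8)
The plan is to use the fact that the infinitesimal morphisms form a two-sided ideal $\mathcal{I}$ in $\pse\scD$ for which $\pg\colon\pse\scD\to\scD$ is the associated quotient functor, and that pseudotriangulated functors respect this ideal, so that the given adjunction simply passes to the quotient.

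First I would record the elementary structural facts. The class $\mathcal{I}$ of infinitesimal morphisms in $\pse\scD$ is a two-sided ideal: by the composition rule \eqref{eqn:pseudotri-comp} one has $(g_0,g')\circ(0,f')=(0,g_0[-1]\circ f')$ and $(0,g')\circ(f_0,f')=(0,g'\circ f_0)$, both infinitesimal. By \eqref{eqn:pseudotri-hom} the projection $\pg$ induces, naturally in $X$ and $Y$, an isomorphism $\Hom_{\pse\scD}(X,Y)/\mathcal{I}(X,Y)\simto\Hom_{\scD}(X,Y)$; the same holds for $\pse\scD'$, with ideal $\mathcal{I}'$. Finally, since $\tilde F=\pg\circ F\circ\incl$ and both $\pg$ and $\incl$ are the identity on objects, $\tilde F X$ coincides with $F(\incl X)$ as an object of $\scD'$, and likewise $\tilde G Y=G(\incl Y)$ as an object of $\scD$.

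Next I would transport the adjunction. Let $\eta\colon\id_{\pse\scD}\to GF$ and $\epsilon\colon FG\to\id_{\pse\scD'}$ be the unit and counit, and for $X\in\scD$, $Y\in\scD'$ consider the adjunction bijection
\[
\Phi\colon\Hom_{\pse\scD'}\bigl(F(\incl X),\incl Y\bigr)\simto\Hom_{\pse\scD}\bigl(\incl X,G(\incl Y)\bigr),\qquad \Phi(g)=G(g)\circ\eta_{\incl X},
\]
natural in $X$ and $Y$, with inverse $h\mapsto\epsilon_{\incl Y}\circ F(h)$. The key claim is that $\Phi$ restricts to a bijection between $\mathcal{I}'\bigl(F(\incl X),\incl Y\bigr)$ and $\mathcal{I}\bigl(\incl X,G(\incl Y)\bigr)$. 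Indeed, if $g$ is infinitesimal then $G(g)$ is infinitesimal by Lemma~\ref{lem:pseudotri-basic}\eqref{it:pseudotri-inf}, hence $\Phi(g)=G(g)\circ\eta_{\incl X}$ is infinitesimal because $\mathcal{I}$ is an ideal; symmetrically $\Phi^{-1}$ sends infinitesimal morphisms to infinitesimal morphisms, using Lemma~\ref{lem:pseudotri-basic}\eqref{it:pseudotri-inf} applied to $F$. Passing to the quotients identified above, $\Phi$ therefore induces an isomorphism $\Hom_{\scD'}(\tilde F X,Y)\simto\Hom_{\scD}(X,\tilde G Y)$, natural in $X\in\scD$ and $Y\in\scD'$ (naturality survives the quotient because the functors $\pg$ are full). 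This exhibits $\tilde F$ as left adjoint to $\tilde G$.

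The argument is essentially formal, so there is no serious obstacle; the one point requiring attention is that $\Phi$ must carry the ideal of infinitesimal morphisms into itself \emph{in both directions}, which is exactly why the hypothesis asks that \emph{both} $F$ and $G$ be pseudotriangulated rather than just $F$.
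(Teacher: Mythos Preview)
Your proof is correct and follows essentially the same approach as the paper's own proof: both show that the adjunction bijection $\Phi$ carries infinitesimal morphisms to infinitesimal morphisms in both directions (using that $F$ and $G$ preserve infinitesimals and that infinitesimals form an ideal), and then pass to the quotient by this ideal to obtain the adjunction between $\tilde F$ and $\tilde G$. The paper's version is slightly terser but the argument is the same.
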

\begin{proof}
We begin by showing that the adjunction morphism
\[
\Phi: \Hom_{\pse\scD'}(F(X), Y) \simto \Hom_{\pse\scD}(X, G(Y))
\]
has the property that $\Phi(f)$ is infinitesimal if and only if $f$ is infinitesimal.  Let $\eta: \id_{\pse\scD'} \to G \circ F$ be the unit of the adjunction, and recall that $\Phi$ is given by $\Phi(f) = G(f) \circ \eta_X$.  If $f$ is infinitesimal, then $G(f)$ is infinitesimal, and then any composition with $G(f)$ is also infinitesimal.  The opposite implication is similar, using the fact that $\Phi^{-1}(g) = \epsilon_Y \circ F(g)$, where $\epsilon: F \circ G \to \id_{\pse\scD}$ is the counit.

Now $\Hom_{\scD'}(\tilde F(X), Y) \cong \Hom_{\scD'}(\pg F(\incl X), Y)$ is canonically isomorphic to the quotient of $\Hom_{\pse\scD'}(F(X),Y)$ by the subspace of infinitesimal morphisms.  The same holds for $\Hom_{\scD}(X, \tilde G(Y))$, so we see that $\Phi$ induces a canonical isomorphism $\Hom_{\scD'}(\tilde F(X),Y) \simto \Hom_{\scD}(X, \tilde G(Y))$.
\end{proof}

\begin{thm}\label{thm:adjoint-genuine}
Let $F: \pse\scD \to \pse\scD'$ be a genuine pseudotriangulated functor.  If $F$ has a right adjoint (resp.~left adjoint) pseudotriangulated functor $G: \pse\scD' \to \pse\scD$, then $G$ is also genuine. 
\end{thm}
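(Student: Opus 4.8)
The plan is to deduce genuineness of $G$ from the biadjunction of Lemma~\ref{lem:infext-biadj} between $\scD$ and $\pse\scD$, combined with the compatibility $\itr \circ F \cong \tilde F \circ \itr$ for genuine $F$ recorded in Lemma~\ref{lem:infext-gen-itr}. I want to avoid reasoning directly with genuine morphisms --- which, as the remark following Definition~\ref{defn:infext} warns, form a non-natural class and are therefore awkward to manipulate --- so instead I will produce the required isomorphism $\incl \circ \tilde G \cong G \circ \incl$ (which is literally what ``$G$ is genuine'' means) by showing that these two functors $\scD' \to \pse\scD$ possess isomorphic adjoints on one side, and then invoking the uniqueness of adjoints.

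First I would treat the case in which $G$ is a right adjoint of $F$. Then $F$ is a left adjoint of $G$, so Lemma~\ref{lem:infext-ind-adjoint} makes $\tilde F$ a left adjoint of $\tilde G$, while Lemma~\ref{lem:infext-biadj} makes $\itr[1]$ a left adjoint of $\incl$, over both $\scD$ and $\scD'$. Composing these adjunctions, the functor $G \circ \incl$ acquires a left adjoint $\itr[1] \circ F$, and the functor $\incl \circ \tilde G$ acquires a left adjoint $\tilde F \circ \itr[1]$. I would then invoke Lemma~\ref{lem:infext-gen-itr} to get $\itr \circ F \cong \tilde F \circ \itr$, and compose on the left with the shift $[1]$ --- using that $\tilde F$, being triangulated, commutes with $[1]$ --- to upgrade this to $\itr[1] \circ F \cong \tilde F \circ \itr[1]$. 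Since $G \circ \incl$ and $\incl \circ \tilde G$ therefore have isomorphic left adjoints, they are themselves isomorphic, which says exactly that $G$ is genuine.

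The case in which $G$ is a left adjoint of $F$ should run in exact parallel, with the words ``left'' and ``right'' exchanged throughout: now $\tilde G$ is a left adjoint of $\tilde F$, and $\itr$ is a right adjoint of $\incl$, so that $G \circ \incl$ has right adjoint $\itr \circ F$ and $\incl \circ \tilde G$ has right adjoint $\tilde F \circ \itr$; these agree directly by Lemma~\ref{lem:infext-gen-itr}, and uniqueness of adjoints again forces $G \circ \incl \cong \incl \circ \tilde G$. I do not expect any individual step to present a serious obstacle; the only thing that truly needs care is the bookkeeping --- keeping straight which of $\itr$ and $\itr[1]$ is the left and which the right adjoint of $\incl$, and correctly tracking the shift that is built into $\itr[1]$ --- after which Lemma~\ref{lem:infext-gen-itr} provides precisely the identity required. (It is also worth remarking that the hypothesis that $G$ itself be pseudotriangulated is indispensable, since otherwise $\tilde G$ is not even defined.)
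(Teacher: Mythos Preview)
Your proposal is correct and follows essentially the same approach as the paper's proof: both combine Lemma~\ref{lem:infext-biadj} and Lemma~\ref{lem:infext-ind-adjoint} to identify the left adjoints of $G \circ \incl$ and $\incl \circ \tilde G$ as $\itr[1] \circ F$ and $\tilde F \circ \itr[1]$ respectively, then invoke Lemma~\ref{lem:infext-gen-itr} and uniqueness of adjoints. You are slightly more explicit about the shift $[1]$ and about the dual case, but the argument is the same.
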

\begin{proof}
We will treat the case where $G$ is right adjoint to $F$; the other case is similar.  By Lemmas~\ref{lem:infext-biadj} and~\ref{lem:infext-ind-adjoint}, $\incl \circ \tilde G$ is right adjoint to $\tilde F \circ \itr[1]$, and $G \circ \incl$ is right adjoint to $\itr[1] \circ F$.  But $\tilde F \circ \itr[1] \cong \itr[1] \circ F$ by Lemma~\ref{lem:infext-gen-itr}.  Since adjoint functors are unique up to isomorphism, it follows that $\incl \circ \tilde G \cong G \circ \incl$.
\end{proof}

\section{Homotopy categories of Orlov categories}
\label{sect:hot-orlov}

Let $\scA$ and $\scB$ be two additive categories, and consider their bounded homotopy categories $\Kb(\scA)$ and $\Kb(\scB)$.  In the sequel, we will encounter the problem of showing that two functors $F,F': \Kb(\scA) \to \Kb(\scB)$ are isomorphic without having any explicit way to construct a morphism between them.

The main results of this section (Theorems~\ref{thm:orlov} and~\ref{thm:orlov-inf}) give us a way to solve this problem, provided that the additive categories $\scA$ and $\scB$ satisfy the conditions of the following definition.  The idea of using properties of the categories to prove an isomorphism of functors is due to Orlov~\cite{orl:edc}.

\begin{defn}\label{defn:orlov}
Let $\scA$ be an additive category equipped with a function $\deg: \Ind(\scA) \to \Z$. $\scA$ is said to be an \emph{Orlov category} if the following conditions hold:
\begin{enumerate}
\item All $\Hom$-spaces in $\scA$ are finite-dimensional.\label{it:orlov-homfin}
\item For any $S \in \Ind(\scA)$, we have $\End(S) \cong \Bbbk$.\label{it:orlov-endind}
\item If $S, S' \in \Ind(\scA)$ with $\deg(S) \le \deg(S')$ and $S \not\cong S'$, then $\Hom(S,S') = 0$.\label{it:orlov-deg}
\end{enumerate}
An object $X \in \scA$ is said to be \emph{homogeneous} of degree $n$ if it is isomorphic to a direct sum of indecomposable objects of degree $n$.  An additive functor $F: \scA \to \scB$ between two Orlov categories is said to be \emph{homogeneous} if it takes homogeneous objects of degree $n$ in $\scA$ to homogeneous objects of degree $n$ in $\scB$.
\end{defn}

It follows from conditions~\eqref{it:orlov-homfin} and~\eqref{it:orlov-endind} above that any Orlov category is \emph{Karoubian} (every idempotent endomorphism splits) and \emph{Krull--Schmidt} (every object is a direct sum of finitely many indecomposable objects, whose isomorphism classes and multiplicities are uniquely determined).

\subsection{Preliminaries on Orlov categories}

We first require some additional notation and lemmas.  For an object $X = (X^\bullet, d_X) \in \Kb(\scA)$, let us define the \emph{support} of $X$ to be the subset $\supp X \subset \Z \times \Z$ such that
\[
(i,j) \in \supp X\qquad \text{if and only if}\qquad
\txt{$X^i$ contains a nonzero homogeneous\\ direct summand of degree $j$.}
\]
Note that this notion is not homotopy-invariant: isomorphic objects of $\Kb(\scA)$ may have different supports.  For any subset $\Sigma \subset \Z \times \Z$, let $\Kb(\scA)_\Sigma$ denote the following full subcategory of $\Kb(\scA)$:
\[
\Kb(\scA)_\Sigma = \{X \in \Kb(\scA) \mid
\text{$X$ is isomorphic to an object $X'$ with $\supp X' \subset \Sigma$} \}.
\]
Clearly, every object of $\Kb(\scA)$ belongs to some $\Kb(\scA)_\Sigma$ with $\Sigma$ finite.  Let us endow $\Z \times \Z$ with the lexicographic order:
\begin{equation}\label{eqn:lexico}
(i,j) \leq (i',j')
\qquad
\text{if $i < i'$, or if $i = i'$ and $j \le j'$.}
\end{equation}
With respect to this order, any finite set $\Sigma \subset \Z \times \Z$ has a largest element.

\begin{lem}\label{lem:orlov-summand}
Let $X = (X^\bullet, d_X) \in \Kb(\scA)$, and suppose $X \cong A[n] \oplus Y$, where $A$ is an object of $\scA$.  Then $Y$ is isomorphic to a chain complex $Y' = (Y'{}^\bullet, d_{Y'})$ with $\supp Y' \subset \supp X$.
\end{lem}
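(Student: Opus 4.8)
The plan is to upgrade the abstract isomorphism $X \cong A[n] \oplus Y$ in $\Kb(\scA)$ to a genuine direct-sum decomposition of the \emph{chain complex} $X$, and then take the complementary summand as the desired $Y'$. The one place where care is needed — and the reason this works at all — is that $A[n]$ is concentrated in a single degree, so there are no nonzero chain homotopies between chain endomorphisms of $A[n]$; equivalently $\Hom_{\Kb(\scA)}(A[n],A[n]) = \Hom_{\scA}(A,A)$.

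First I would choose chain maps $\alpha\colon A[n]\to X$, $\gamma\colon X\to A[n]$, $\beta\colon Y\to X$, $\delta\colon X\to Y$ representing the structure maps of the splitting, so that $\gamma\alpha\simeq\id_{A[n]}$, $\delta\beta\simeq\id_Y$ and $\alpha\gamma+\beta\delta\simeq\id_X$, where $\simeq$ denotes chain homotopy. By the observation above, $\gamma\alpha=\id_{A[n]}$ on the nose, so $e:=\alpha\gamma\colon X\to X$ is an idempotent \emph{chain map}, and hence so is $f:=\id_X-e$. Since $\scA$ is Karoubian, so is the category $\mathrm{Ch}(\scA)$ of chain complexes over $\scA$: an idempotent chain endomorphism splits degreewise in $\scA$, and the differential of $X$ automatically restricts to the resulting degreewise summands. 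Thus $f$ splits as $X\xrightarrow{q}Z\xrightarrow{j}X$ with $qj=\id_Z$ and $jq=f$, for some honest chain complex $Z$.

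Next, for each $i$ the object $Z^i\cong\im(f^i)$ is a direct summand of $X^i$, so by the Krull--Schmidt property of $\scA$ its indecomposable summands form a sub-multiset of those of $X^i$; in particular $\supp Z\subseteq\supp X$. Finally I would identify $Z$ with $Y$ inside $\Kb(\scA)$: there one has $f=\beta\delta$ and $\delta\beta=\id_Y$, so the idempotent $f\in\End_{\Kb(\scA)}(X)$ is split both by $(q,j)$ and by $(\delta,\beta)$; uniqueness of idempotent splittings then gives mutually inverse isomorphisms $q\beta\colon Y\simto Z$ and $\delta j\colon Z\to Y$ in $\Kb(\scA)$ (check using $jq=f=\beta\delta$, $qj=\id_Z$, $\delta\beta=\id_Y$). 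Taking $Y':=Z$ completes the argument.

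I do not expect a deeper obstacle: the argument uses only that $\mathrm{Ch}(\scA)$ is Karoubian, that $\scA$ is Krull--Schmidt, and that $A[n]$ has no nontrivial self-homotopies — the degree function and condition~\eqref{it:orlov-deg} of Definition~\ref{defn:orlov} play no role here. An alternative, if one wishes to avoid idempotent splittings, is to use minimal complexes: Gauss elimination replaces $X$ by a homotopy-equivalent minimal complex $X_{\min}$ with $\supp X_{\min}\subseteq\supp X$, and $A[n]\oplus Y_{\min}$ is again minimal, so by uniqueness of minimal complexes it is isomorphic to $X_{\min}$ as a chain complex, yielding $\supp Y_{\min}\subseteq\supp X$.
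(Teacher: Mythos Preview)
Your proof is correct and follows essentially the same approach as the paper: both hinge on the observation that $A[n]$ admits no nonzero self-homotopies, so $\gamma\alpha=\id_{A[n]}$ holds on the nose, yielding an honest idempotent chain endomorphism of $X$ whose complementary summand is the desired $Y'$. The paper carries this out by hand in degree $-n$ (writing $X^{-n}\cong A\oplus B$ and checking the differentials restrict), whereas you package the same computation as ``$\mathrm{Ch}(\scA)$ is Karoubian'' and invoke Krull--Schmidt and uniqueness of idempotent splittings; the resulting complex $Z$ is literally the same as the paper's $Y'$, since $f^i=\id_{X^i}$ for $i\ne -n$.
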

\begin{proof}
Let $i: A[n] \to X$ and $p: X \to A[n]$ be the inclusion and projection maps coming from the given direct sum decomposition.  Let us represent these by chain maps: $i = (i^k)_{k \in \Z}$ and $p = (p^k)_{k \in \Z}$.  The statement that $p \circ i = \id_{A[n]}$ in $\Kb(\scA)$ is equivalent to the statement that the chain map $(p^k \circ i^k)_{k \in \Z}$ is homotopic to $\id_{A[n]}$.  But $A[n]$ clearly admits no nonzero homotopies, so the composition $(p^k \circ i^k)_{k \in \Z}$ is equal to $\id_{A[n]}$ as a chain map.  In particular, $p^{-n} \circ i^{-n} = \id_A$.  It follows that $i^{-n} \circ p^{-n} \in \End(X^{-n})$ is an idempotent.

Recall that $\scA$ is a Karoubian category.  Therefore, there is some object $B \in \scA$ such that we can identify $X^{-n} \cong A \oplus B$, and such that under this identification, $p^{-n}$ and $i^{-n}$ are the projection and inclusion maps for the direct summand $A$.  With respect to this direct-sum decomposition, we may write the differentials $d_X^{-n-1}: X^{-n-1} \to X^{-n}$ and $d_X^{-n}: X^{-n} \to X^{-n+1}$ as matrices
\[
d_X^{-n-1} = \begin{bmatrix} a \\ b \end{bmatrix}
\qquad\text{and}\qquad
d_X^{-n} = \begin{bmatrix} s & t \end{bmatrix}.
\]
Since $(p^\bullet)$ is a chain map, we see that $a = p^{-n} \circ d_X^{-n-1} = 0$.  Similarly, $s = d_X^{-n} \circ i^{-n} = 0$.  It follows that $t \circ b = d_X^{-n} \circ d_X^{-n-1} = 0$.  Define the chain complex $Y' = (Y'{}^\bullet, d_{Y'})$ by
\[
Y'{}^k =
\begin{cases}
X^k & \text{if $k \ne -n$,} \\
B & \text{if $k = n$,}
\end{cases}
\qquad
d_{Y'}^k =
\begin{cases}
d_X^k & \text{if $k \ne -n-1, -n$,} \\
b & \text{if $k = -n-1$,} \\
t & \text{if $k = -n$.}
\end{cases}
\]
$Y'$ has support contained in that of $X$, and we clearly have $X \cong A[n] \oplus Y'$, so $Y'$ becomes isomorphic to $Y$ after passing to the homotopy category $\Kb(\scA)$.
\end{proof}

\begin{lem}\label{lem:homog-coker}
Let $f: A \to B$ be a morphism in an Orlov category $\scA$, and assume that $B$ is homogeneous of degree $n$.  Then $f$ has a ``homogeneous cokernel.''  That is, there is a morphism $q: B \to Q$, where $Q$ is also homogeneous of degree $n$, such that
\begin{enumerate}
\item We have $q \circ f = 0$.
\item If $g: B \to C$ is any morphism such that $g \circ f = 0$ and $C$ is homogeneous of degree $n$, then there is a unique morphism $r: Q \to C$ such that $g = r \circ q$.
\end{enumerate}
In fact, there is an isomorphism $u: Q^\perp \oplus Q \to B$ (for some homogeneous object $Q^\perp$) such that $q \circ u$ is simply the canonical projection map $Q^\perp \oplus Q \to Q$.
\end{lem}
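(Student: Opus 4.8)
The plan is to reduce the statement to elementary linear algebra on the ``multiplicity spaces'' of $B$, exploiting that the Orlov axioms force the endomorphism ring of $B$ to be semisimple. First I would set up notation: since $\scA$ is Krull--Schmidt, fix a decomposition $B \cong \bigoplus_{[T]} T \otimes_{\Bbbk} V_{T}$, the sum running over the isomorphism classes $[T]$ of indecomposable objects of degree $n$ occurring in $B$, with each $V_{T}$ a finite-dimensional vector space. By Definition~\ref{defn:orlov}\eqref{it:orlov-endind} and \eqref{it:orlov-deg}, for indecomposables $T, T'$ of degree $n$ one has $\Hom(T, T') \cong \Bbbk$ if $T \cong T'$ and $\Hom(T,T') = 0$ otherwise; hence $\Hom(B, T) = 0$ whenever $T$ is indecomposable of degree $n$ and does not occur in $B$, and more generally, for any homogeneous object $C \cong \bigoplus_{[T]} T \otimes_{\Bbbk} U_{T}$ of degree $n$ there is a canonical isomorphism $\Hom(B, C) \cong \bigoplus_{[T]} \Hom_{\Bbbk}(V_{T}, U_{T})$ in which only the classes $[T]$ occurring in $B$ contribute. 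In particular every direct summand of $B$ is again homogeneous of degree $n$.

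Next I would construct $Q$. For each $[T]$ occurring in $B$, the corresponding component $f_{T} \colon A \to T \otimes_{\Bbbk} V_{T}$ of $f$ lies in $\Hom(A, T \otimes_{\Bbbk} V_{T}) = \Hom(A, T) \otimes_{\Bbbk} V_{T}$; viewing it (via the finite-dimensionality of $\Hom(A,T)$) as a linear map $\Hom(A,T)^{*} \to V_{T}$, let $W_{T} \subseteq V_{T}$ be its image, and fix a complement $V_{T} = W_{T} \oplus W_{T}'$. Set $Q^{\perp} := \bigoplus_{[T]} T \otimes_{\Bbbk} W_{T}$ and $Q := \bigoplus_{[T]} T \otimes_{\Bbbk} W_{T}'$; these are homogeneous of degree $n$, the resulting isomorphism $u \colon Q^{\perp} \oplus Q \to B$ and the projection $q \colon B \to Q$ satisfy the last assertion of the lemma by construction, and since $f_{T}$ in fact lies in $\Hom(A,T) \otimes_{\Bbbk} W_{T}$ for every $T$, we get $q \circ f = 0$, which is condition~(1).

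For the universal property, let $g \colon B \to C$ with $C$ homogeneous of degree $n$ and $g \circ f = 0$. By the isomorphism of the first paragraph $g$ is given by a family of linear maps $\bar g_{T} \colon V_{T} \to U_{T}$, and unwinding the analogous description of $\Hom(A,C)$ shows that $g \circ f = 0$ is equivalent to $\bar g_{T}(W_{T}) = 0$ for all $[T]$ occurring in $B$. Each such $\bar g_{T}$ therefore factors uniquely as $\bar g_{T} = \bar r_{T} \circ \pi_{T}$, with $\pi_{T} \colon V_{T} \to W_{T}'$ the projection along $W_{T}$; assembling the $\bar r_{T}$ produces the desired $r \colon Q \to C$ with $g = r \circ q$, and its uniqueness is immediate because $q$ is a split epimorphism.

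The step that needs the most care is not any single computation but the bookkeeping of which indecomposable types may occur in the test object $C$: a priori $C$ can involve degree-$n$ indecomposables absent from $B$, and it is precisely the vanishing $\Hom(B, T) = 0$ for such $T$ that lets us discard those ``new'' summands of $C$ and reduce the entire universal property to linear algebra on the finitely many multiplicity spaces $V_{T}$.
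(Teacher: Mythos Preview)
Your proof is correct and follows essentially the same approach as the paper: both decompose $B$ into its isotypic components indexed by degree-$n$ indecomposables and reduce the problem to linear algebra on the finite-dimensional multiplicity spaces. The only cosmetic difference is that the paper works dually, setting $E_S = \ker\bigl(\Hom(B,S) \to \Hom(A,S)\bigr)$ and $Q = \bigoplus_S E_S^* \otimes S$ (so that $Q$ and $q$ are canonical, with the complement chosen only at the very end), whereas you choose complements $W_T'$ from the start; since $E_T^* \cong V_T/W_T \cong W_T'$, the two constructions yield isomorphic objects.
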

\begin{proof}
Let $I_n = \{S \in \Ind(\scA) \mid \deg S = n\}$.  For any $S \in I_n$, we have a natural pairing
\[
\Hom(S,B) \otimes \Hom(B,S) \to \Hom(S,S) \cong \Bbbk.
\]
This pairing is nondegenerate: to see this, it suffices to consider the special case where $B$ is itself indecomposable, and in that case, the nondegeneracy is obvious from condition~\eqref{it:orlov-deg} of Definition~\ref{defn:orlov}.  Let us write $H_S = \Hom(B,S)$ for brevity.  The pairing above gives us a canonical isomorphism $H_S^* \cong \Hom(S,B)$.  (Here $H_S^* = \Hom(H_S,\Bbbk)$.)

Recall that in any $\Bbbk$-linear additive category, it makes sense to form tensor products of objects with finite-dimensional $\Bbbk$-vector spaces.  Note that $H_S$ is always finite-dimensional, and it vanishes for all but finitely many $S$, so the direct sum $\bigoplus_{S \in I_n} H_S^* \otimes S$ is a well-defined object of $\scA$.   We claim that there is a canonical isomorphism
\begin{equation}\label{eqn:homog-canon-form}
\bigoplus_{S \in I_n} H_S^* \otimes S \simto B.
\end{equation}
Indeed, there is a natural map $\bigoplus_{S \in I_n} \Hom(S,B) \otimes S \to B$; this map is evidently an isomorphism when $B$ is indecomposable, so it is an isomorphism in general.

For each $S \in I_n$, let $E_S$ denote the kernel of the map $\Hom(B,S) \to \Hom(A,S)$ induced by $f$.  Let $i_S: E_S \to H_S$ denote the inclusion map, and let $q_S: H_S^* \to E_S^*$ denote its dual.  Next, let
\[
Q = \bigoplus_{S \in I_n} E_S^* \otimes S,
\]
and let $q: B \to Q$ be the map given by $q = \bigoplus q_S \otimes \id_S$ (using the identification~\eqref{eqn:homog-canon-form}).  To describe $q$ another way, consider the chain of natural isomorphisms
\[
\Hom(B,Q) \cong \bigoplus_{S \in I_n} E_S^* \otimes \Hom(B,S) \cong \bigoplus_{S \in I_n} \Hom(E_S, \Hom(B,S)).
\]
We have $i_S \in \Hom(E_S, \Hom(B,S))$, and under these identifications, we have $q = \bigoplus i_S$.  Observe that the map $f$ gives rise to a commutative diagram
\[
\xymatrix{
\Hom(B,Q) \ar@{=}[r]^-{\sim} \ar[d]_{\cdot \circ f} &
  \bigoplus_{S \in I_n} \Hom(E_S, \Hom(B, S)) \ar[d] \\
\Hom(A,Q) \ar@{=}[r]^-{\sim} &
  \bigoplus_{S \in I_n} \Hom(E_S, \Hom(A, S)) }
\]
By the definition of $E_S$, we see that each $i_S$ is in the kernel of the map
\[
\Hom(E_S, \Hom(B,S)) \to \Hom(E_S, \Hom(A,S)).
\]
It follows that $q$ is in the kernel of $\Hom(B,Q) \to \Hom(A,Q)$.  In other words, $q \circ f = 0$, as desired.

It is easy to see from the above construction that for any $T \in I_n$, there are natural isomorphisms $\Hom(Q,T) \cong E_T$ and $\Hom(B,T) \cong H_T$, and that the map $\Hom(Q,T) \to \Hom(B,T)$ induced by $q$ is none other than $i_T: E_T \to H_T$.  In other words, if $g \in \Hom(B,T)$ is any morphism in $E_T$, i.e., such that $g \circ f = 0$, then there is a unique morphism $r \in \Hom(Q,T)$ such that $g = r \circ q$.  Thus, we have just proved a special case of the desired universal property of $q$.  Since the universal property holds for indecomposable objects, it holds in general.

Finally, each $q_S: H_S^* \to E_S^*$ is a surjective map of vector spaces, so there certainly exists some isomorphism $u_S: (\ker q_S) \oplus E_S^* \simto H_S^*$ such that $q_S \circ u_S$ is the projection map onto the second direct summand.  Let $Q^\perp = \bigoplus (\ker q_S) \otimes S$.  Then there is an obvious isomorphism $u: Q^\perp \oplus Q \to B$ such that $q \circ u$ is also a projection map.
\end{proof}

\begin{cor}\label{cor:homog-coker-idemp}
Let $f: A \to B$ be a morphism in an Orlov category $\scA$, and assume that $B$ is homogeneous of degree $n$.  There is an idempotent endomorphism $\theta: B \to B$ such that for any morphism $g: B \to C$ with $C$ also homogeneous of degree $n$, the following two conditions are equivalent:
\begin{enumerate}
\item $g \circ \theta = g$.
\item $g \circ f = 0$.
\end{enumerate}
\end{cor}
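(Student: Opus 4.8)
The plan is to read off $\theta$ directly from the data produced by Lemma~\ref{lem:homog-coker}. Applied to $f : A \to B$, that lemma furnishes a morphism $q : B \to Q$, with $Q$ homogeneous of degree $n$, such that $q \circ f = 0$ and such that any $g : B \to C$ with $C$ homogeneous of degree $n$ and $g \circ f = 0$ factors (uniquely) as $g = r \circ q$. It also furnishes an isomorphism $u : Q^\perp \oplus Q \simto B$, with $Q^\perp$ homogeneous, for which $q \circ u$ is the canonical projection onto $Q$. First I would write $\sigma : Q \to Q^\perp \oplus Q$ for the canonical inclusion, set $v = u \circ \sigma : Q \to B$, and define $\theta = v \circ q : B \to B$.

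The elementary observation underlying everything is that $q \circ v = \id_Q$: indeed $q \circ v = (q \circ u) \circ \sigma$, and $q \circ u$ is by construction the canonical projection $Q^\perp \oplus Q \to Q$, whose composite with $\sigma$ is $\id_Q$. Consequently $\theta \circ \theta = v \circ (q \circ v) \circ q = v \circ q = \theta$, so $\theta$ is idempotent; equivalently, $\theta$ is the idempotent cutting out the direct summand $Q$ of $B$ under the decomposition given by $u$. Now let $g : B \to C$ with $C$ homogeneous of degree $n$. If $g \circ f = 0$, the universal property of $q$ gives $g = r \circ q$ for some $r : Q \to C$, whence $g \circ \theta = r \circ q \circ v \circ q = r \circ \id_Q \circ q = r \circ q = g$. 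Conversely, if $g \circ \theta = g$, then $g \circ f = g \circ \theta \circ f = g \circ v \circ (q \circ f) = 0$ since $q \circ f = 0$. This gives the asserted equivalence of conditions~(1) and~(2).

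I do not expect any genuine obstacle: Lemma~\ref{lem:homog-coker} has already done all the substantive work, and the corollary merely repackages its universal property as a single idempotent. The only points needing a little care are keeping straight that $q \circ u$ is the \emph{projection} (so that $q \circ v = \id_Q$, not merely some split monomorphism), and noticing that the hypothesis ``$C$ homogeneous of degree $n$'' is invoked only in the implication $(2) \Rightarrow (1)$, where it is exactly what licenses the appeal to the universal property of $q$; the reverse implication needs no hypothesis on $C$ at all.
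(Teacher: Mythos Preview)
Your proof is correct and takes essentially the same approach as the paper: both define $\theta = u \circ i \circ q$ where $i: Q \to Q^\perp \oplus Q$ is the inclusion of the second summand (your $\sigma$), and the verification you give is exactly what the paper's terse ``has the required properties'' is pointing to.
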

\begin{proof}
Let $q: B \to Q$ and $u: Q^\perp \oplus Q \simto B$ be as in Lemma~\ref{lem:homog-coker}, and let $i: Q \to Q^\perp \oplus Q$ be the inclusion map of the second summand.  Then $\theta = u \circ i \circ q$ has the required properties.
\end{proof}

\begin{lem}\label{lem:dt-orlov}
Let $\Sigma \subset \Z \times \Z$ be a finite set with largest element $(i,j)$, and let $\Sigma' = \Sigma \smallsetminus \{(i,j)\}$.
\begin{enumerate}
\item For any object $X \in \Kb(\scA)_\Sigma$, there is a distinguished triangle\label{it:dt-orlov-exist}
\[
P \to X \to Y \to P[1]
\]
with $P \in \Kb(\scA)_{\{(i,j)\}}$ and $Y \in \Kb(\scA)_{\Sigma'}$.
\item Suppose we have a commutative diagram in $\Kb(\scA)$ as follows, in which the horizontal rows are distinguished triangles:\label{it:dt-orlov-unique}
\begin{equation}\label{eqn:dt-orlov}
\vcenter{\xymatrix{
P \ar[r]^f\ar[d]_p & X \ar[r]^g & Y \ar[r]^h\ar[d]^r & P[1] \ar[d]^{p[1]} \\
P' \ar[r]_{f'} & X' \ar[r]_{g'} & Y' \ar[r]_{h'} & P'[1] }}
\end{equation}
If $P, P' \in \Kb(\scA)_{\{(i,j)\}}$, $X, X' \in \Kb(\scA)_\Sigma$, and $Y,Y' \in \Kb(\scA)_{\Sigma'}$, then there is a unique map $q: X \to X'$ that makes the above diagram commute.
\item Let $P \ovto{f} X \to Y \to P[1]$ be a distinguished triangle, and let $f': P' \to X$ be any morphism, where $P, P' \in \Kb(\scA)_{\{(i,j)\}}$, $X \in \Kb(\scA)_\Sigma$, and $Y \in \Kb(\scA)_{\Sigma'}$.  Form the morphism $P \oplus P' \to X$, and complete it to a distinguished triangle \label{it:dt-orlov-cone}
\[
\xymatrix{P \oplus P' \ar[r]^-{[\begin{smallmatrix} f & f'\end{smallmatrix}]} &
 X \ar[r] & Z \ar[r] & (P \oplus P')[1].}
\]
Then we have $Z \in \Kb(\scA)_{\Sigma''}$, where $\Sigma'' = \Sigma' \cup \{(i-1,j)\}$.
\end{enumerate}
\end{lem}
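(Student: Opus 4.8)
I would prove the three parts in order; parts~(1) and~(3) are essentially explicit, and the real work lies in the uniqueness assertion of~(2). For part~(1), choose a representative with $\supp X \subseteq \Sigma$, and write $X = (X^\bullet, d_X)$. Because $(i,j)$ is the lexicographically largest element of $\Sigma$, no pair $(i',j')$ with $i' > i$ lies in $\Sigma$; hence $X^k = 0$ for $k > i$, and every indecomposable summand of $X^i$ has degree at most $j$. As $\scA$ is Krull--Schmidt, split $X^i = A \oplus B$, where $A$ is the sum of the summands of degree exactly $j$ (so $A$ is homogeneous of degree $j$) and $B$ the sum of the rest (all of degree less than $j$). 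Put $P = A[-i]$, and let $f\colon P \to X$ be the chain map equal to the inclusion $A \hookrightarrow A \oplus B = X^i$ in degree $i$ and to $0$ elsewhere; the chain-map conditions hold automatically because $P$ is concentrated in degree $i$ and $X^{i+1} = 0$. Since $f$ is termwise a split monomorphism, Lemma~\ref{lem:chain-ses} applied to $0 \to P \xrightarrow{f} X \xrightarrow{g} Y \to 0$, where $Y^i = B$ and $Y^k = X^k$ for $k \ne i$, produces the desired distinguished triangle; one checks directly that $P \in \Kb(\scA)_{\{(i,j)\}}$ and $\supp Y \subseteq \Sigma'$.

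For part~(3), I would apply the octahedral axiom to the composable pair $P \xrightarrow{\iota} P \oplus P' \xrightarrow{[f\ f']} X$, whose composite is $f$. Here $\cone(\iota) \cong P'$ (the inclusion of a direct summand), $\cone(f) \cong Y$ (the given triangle), and $\cone([f\ f']) = Z$, so the octahedron supplies a distinguished triangle $P' \to Y \to Z \to P'[1]$. Since $P' \in \Kb(\scA)_{\{(i,j)\}}$, I may take $P' = A'[-i]$ with $A'$ homogeneous of degree $j$, together with a representative of $Y$ with $\supp Y \subseteq \Sigma'$; then $Z$ is isomorphic to the mapping cone of the chain map $A'[-i] \to Y$ realizing $P' \to Y$, which agrees termwise with $Y$ except that a copy of $A'$ (homogeneous of degree $j$) is adjoined in cohomological degree $i-1$. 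Hence $\supp Z \subseteq \Sigma' \cup \{(i-1,j)\} = \Sigma''$.

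For part~(2), existence is routine: rotating the rows of~\eqref{eqn:dt-orlov}, commutativity of the right-hand square says exactly that $(r[-1], p)$ is a morphism of the first two terms of the rotated triangles $Y[-1] \to P \to X \to Y$ and $Y'[-1] \to P' \to X' \to Y'$, so axiom~(TR3) produces $q\colon X \to X'$ completing a morphism of triangles, that is, with $qf = f'p$ and $g'q = rg$. For uniqueness, two such maps differ by $q\colon X \to X'$ with $qf = 0$ and $g'q = 0$, and one must show $q = 0$. The essential input is the vanishing $\Hom_{\Kb(\scA)}(Y, P'[k]) = 0$ for all $k \le 0$: with $\supp Y \subseteq \Sigma'$ and $P' = A'[-i]$ ($A'$ homogeneous of degree $j$), the term $Y^{i-k}$ relevant to such a chain map is $0$ for $k < 0$ (since no positions with first coordinate $> i$ occur in $\Sigma$), and for $k = 0$ a chain map $Y \to P'$ is a morphism $Y^i \to A'$ from an object whose summands have degree less than $j$ into a homogeneous object of degree $j$, hence $0$ by Definition~\ref{defn:orlov}\eqref{it:orlov-deg}. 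Feeding $\Hom(Y,P') = 0$ into the long exact sequences of the two triangles shows that $f^*\colon \Hom(X,P') \to \Hom(P,P')$ and $g'_*\colon \Hom(Y,X') \to \Hom(Y,Y')$ are injective; writing $q = f's$ from $g'q = 0$ and $q = ug$ from $qf = 0$ and chasing the long exact sequences reduces $q = 0$ to lifting a certain map $P \to Y'[-1]$ along $f$.

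I expect this last lifting step --- closing the diagram chase in the uniqueness half of~(2) --- to be the crux: the naive chase loops back to a Hom-vanishing statement of precisely the shape one started from. Resolving it will likely require supplementing the abstract manipulations with a chain-level analysis that pins down the maps between the ``top pieces'' $P$ and $P'$, using the homogeneous-cokernel constructions of Lemma~\ref{lem:homog-coker} and Corollary~\ref{cor:homog-coker-idemp}, or organizing the chase as a contraction argument exploiting boundedness of the complexes. Parts~(1) and~(3) I expect to go through essentially as sketched.
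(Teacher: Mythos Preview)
Your arguments for parts~(1) and~(3) and for existence in~(2) are correct and essentially match the paper's. (The paper phrases~(1) via the connecting map $Y[-1]\to P$ rather than Lemma~\ref{lem:chain-ses}, and handles~(3) by observing that $\Kb(\scA)_{\Sigma''}$ is extension-closed rather than by inspecting an explicit cone, but these are cosmetic.)

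You are also right that the purely triangulated diagram chase for uniqueness in~(2) does not close: the missing vanishing you would need, such as $\Hom(P[1],Y')=0$ or injectivity of $f'_*$ on $\Hom(P,P')$, genuinely fails, since $\Sigma'$ may contain pairs $(i-1,\ell)$ with $\ell\ge j$. The paper abandons the abstract chase precisely where you do and works at the chain level, as you anticipated. Concretely: fix representatives with $X^k\cong Y^k\oplus P^k$ and $X'^k\cong Y'^k\oplus P'^k$, so that $f,g,f',g'$ are the obvious inclusion and projection maps. From $g'q=0$ one writes $q=f'\tilde q$ with $\tilde q^k=0$ for $k\ne i$, and since $\Hom(Y^i,P'^i)=0$ one has $\tilde q^i=[\,0\ \ a\,]$ for some $a\colon P^i\to P'^i$. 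The chain-map condition $q^i\circ d_X^{i-1}=0$ then forces $a\circ v=0$, where $v\colon X^{i-1}\to P^i$ is the $P^i$-component of $d_X^{i-1}$. Now Corollary~\ref{cor:homog-coker-idemp}, applied to $v$, produces an idempotent $\theta\in\End(P^i)$ with $a\theta=a$. The homotopy $h^i\colon P^i\to X'^{i-1}$ witnessing $qf=0$ may be replaced by $h^i\theta$ (it still witnesses the same null-homotopy because $q^i\iota_2\theta=q^i\iota_2$), and this modified homotopy satisfies $h^i\circ v=0$. Extending by zero to $\tilde h^k\colon X^k\to X'^{k-1}$ then gives a null-homotopy of $q$ itself: the only nontrivial verification, at $k=i-1$, reduces exactly to $h^i\circ v=0$.

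So your instinct to invoke the homogeneous-cokernel machinery is on target; the point you were missing is that $\theta$ is used not to analyze $a$ further but to \emph{adjust the homotopy} so that it extends from $P$ to all of $X$.
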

\begin{proof}
\eqref{it:dt-orlov-exist}~We may assume that $X$ is a chain complex $(X^\bullet, d_X)$ with $\supp X^\bullet \subset \Sigma$.  Choose a direct-sum decomposition $X^i = A \oplus B$, where $A$ has degrees${}<j$ and $B$ is homogeneous of degree $j$.  With respect to this direct-sum decomposition, we may write the differential $d_X^{i-1}: X^{i-1} \to X^i$ as a matrix $d_X^{i-1} =
[\begin{smallmatrix} a \\ b \end{smallmatrix}]$.  Let $Y = (Y^\bullet,d_Y)$ be the chain complex given by
\[
Y^k = \begin{cases}
X^k & \text{if $k \ne i$,} \\
A & \text{if $k = i$}
\end{cases}
\qquad\text{and}\qquad
d_Y^k = \begin{cases}
d_X^k & \text{if $k \ne i, i-1$,} \\
a & \text{if $k = i - 1$,} \\
0 & \text{if $k = i$.}
\end{cases}
\]
Clearly, $Y \in \Kb(\scA)_{\Sigma'}$.  Let $P = (P^\bullet, d_P)$ be the complex given by
\[
P^k = \begin{cases}
0 & \text{if $k \ne i$,} \\
B & \text{if $k = i$}
\end{cases}
\qquad\text{and}\qquad
d_P = 0.
\]
We clearly have $P \in \Kb(\scA)_{\{(i,j)\}}$.  Consider the morphism $\delta: Y[-1] \to P$ where $\delta^i: Y^{i-1} \to P^i$ is the map $b$.  It is easy to see that the cone of $\delta$ is isomorphic to $X$, so we have a distinguished triangle $P \to X \to Y \to$, as desired.

\eqref{it:dt-orlov-unique}~Assume that $P$ and $P'$ (resp.~$Y$ and $Y'$) are represented by chain complexes whose support is contained in the set $\{(i,j)\}$ (resp.~$\Sigma'$).  We may assume that the terms of the chain complex $X = (X^\bullet, d_X)$ can be identified with terms of $Y$ and $P$ as follows:
\[
X^k \cong
\begin{cases}
Y^k & \text{if $k \ne i$,} \\
Y^i \oplus P^i & \text{if $k = i$.}
\end{cases}
\]
Let us denote the inclusion and projection maps for the direct sum $X^i \cong Y^i \oplus P^i$ as follows:
\[
\xymatrix{
Y^i \ar@<0.5ex>[r]^{\iota_1} & X^i \ar@<0.5ex>[l]^{\pi_1} }
\qquad\qquad
\xymatrix{
P^i \ar@<0.5ex>[r]^{\iota_2} & X^i \ar@<0.5ex>[l]^{\pi_2} }
\]
The maps $f: P \to X$, $g: X \to Y$ are then given by
\[
f^k = \begin{cases}
0 & \text{if $k \ne i$,} \\
\iota_2 & \text{if $k = i$,}
\end{cases}
\qquad\qquad
g^k = \begin{cases}
\id & \text{if $k \ne i$,} \\
\pi_1 & \text{if $k = i$.}
\end{cases}
\]
We fix analogous identifications for the objects and morphisms in the triangle $P' \to X' \to Y' \to$.

The existence of $q$ follows from general properties of triangulated categories.  For uniqueness, it is sufficient to consider the special case where $p = 0$ and $r = 0$.  Suppose $q: X \to X'$ is a map making~\eqref{eqn:dt-orlov} commute; we must show that $q = 0$.  Since $g' \circ q = 0$, $q$ must factor through $f'$.  Let $\tilde q: X \to P'$ be a map such that $q = f' \circ \tilde q$.  We may assume that this equality holds at the level of chain maps (not just up to homotopy).  In particular,
\[
q^k =
\begin{cases}
0 & \text{if $k \ne i$,} \\
\iota'_2 \circ \tilde q^i & \text{if $k = i$.}
\end{cases}
\]
Now, the map $\tilde q^i: Y^i \oplus P^i \to P'{}^i$ can be written as a matrix $\tilde q^i = \begin{bmatrix} 0 & a \end{bmatrix}$, where the left-hand entry is $0$ because $\Hom(Y^i, P'{}^i) = 0$ by part~\eqref{it:orlov-deg} of Definition~\ref{defn:orlov}.  We therefore have
\[
q^i = \iota'_2 \circ \tilde q^i =
\begin{bmatrix}
0 & 0 \\
0 & a
\end{bmatrix}
\qquad
\text{with $a: P^i \to P'{}^i$.}
\]
Using these identifications $X^i \cong Y^i \oplus P^i$ and $X'{}^i \cong Y'{}^i \oplus P'{}^i$, we can write the differentials $d_X^{i-1}$ and $d_{X'}^{i-1}$ as matrices
\[
d_{X}^{i-1} =
\begin{bmatrix}
u \\ v
\end{bmatrix}
\qquad\text{and}\qquad
d_{X'}^{i-1} =
\begin{bmatrix}
u' \\ v'
\end{bmatrix}.
\]
Now, $q$ is a chain map, and since $q^{i-1} = 0$, we must have $q^i \circ d_X^{i-1} = 0$, or
\[
q^i \circ d_X^{i-1} =
\begin{bmatrix}
0 & 0 \\
0 & a
\end{bmatrix}
\begin{bmatrix}
u \\ v
\end{bmatrix}
=
\begin{bmatrix}
0 \\
av
\end{bmatrix}
= 0.
\]
We will now make use of the ``homogeneous cokernel'' of $v: X^{i-1} \to P^i$.  Corollary~\ref{cor:homog-coker-idemp} lets us associate to this map an idempotent endomorphism $\theta: P^i \to P^i$.  Since $a \circ v = 0$, we have $a \circ \theta = a$, and so
\begin{equation}\label{eqn:dt-orlov-idemp}
q^i \circ \iota_2 \circ \theta =
\begin{bmatrix}
0 \\ a
\end{bmatrix}
\begin{bmatrix} \theta \end{bmatrix}
= 
\begin{bmatrix}
0 \\ a
\end{bmatrix}
= q^i \circ \iota_2.
\end{equation}

Next, we have $q \circ f = 0$.  This is a statement about the existence of a certain homotopy; specifically, there is a map $h^i: P^i \to X'{}^{i-1}$ such that
\begin{equation}\label{eqn:dt-orlov-homotop}
q^i \circ \iota_2 = d_{X'}^{i-1} \circ h^i.
\end{equation}
Of course, the homotopy is not unique in general.  Indeed, in view of~\eqref{eqn:dt-orlov-idemp}, we could replace $h^i$ by $h^i \circ \theta$, and~\eqref{eqn:dt-orlov-homotop} would still hold.  By carrying out this replacement if necessary, we henceforth assume that $h^i \circ \theta = h^i$, or equivalently,
\begin{equation}\label{eqn:dt-orlov-homotop2}
h^i \circ v = 0.
\end{equation}

Let us define a collection of maps $\tilde h^k: X^k \to X'{}^{k-1}$ by
\[
\tilde h^k =
\begin{cases}
0 & \text{if $k \ne i$,} \\
\big[\begin{matrix} 0 & h^i \end{matrix}\big]
: Y^i \oplus P^i \to X'{}^{i-1} & \text{if $k = i$.}
\end{cases}
\]
We claim that
\begin{equation}\label{eqn:dt-orlov-null-homotop}
q^k = \tilde h^{k+1} \circ d_X^k + d_{X'}^{k-1} \circ \tilde h^k
\end{equation}
for all $k$.  If $k < i-1$ or $k > i$, both sides are obviously $0$. If $k = i$, we have $\tilde h^{i+1} = 0$, so this is essentially a restatement of~\eqref{eqn:dt-orlov-homotop}.  Finally, if $k = i-1$, we know that $q^{i-1} = 0$ and $\tilde h^{i-1} = 0$; we must check that $\tilde h^i \circ d_X^{i-1} = 0$.  But this follows from~\eqref{eqn:dt-orlov-homotop2}.  We see from~\eqref{eqn:dt-orlov-null-homotop} that $q$ is null-homotopic, as desired.

\eqref{it:dt-orlov-cone}~Consider the following octahedral diagram, which is associated with the composition $P \to P \oplus P' \to X$:
\[
\xymatrix@=10pt{
&&&& P \oplus P' \ar[ddddl]\ar[ddr] \\ \\
&&&&& P' \ar[dlllll]|!{[uul];[ddll]}\hole_(.6){+1}
  \ar[ddddl]|!{[ddll];[drrr]}\hole \\
P \ar[uuurrrr]\ar[drrr] &&&&&&&& Z \ar[uuullll]_{+1}
\ar[ulll]^{+1} \\
&&& X \ar[urrrrr]\ar[ddr] \\ \\
&&&& Y \ar[uuullll]^{+1}\ar[uuurrrr] }
\]
We see that there is a distinguished triangle $Y \to Z \to P'[1] \to$.  The category $\Kb(\scA)_{\Sigma''}$ is stable under extensions, and since $Y$ and $P'[1]$ both belong to it, it follows that $Z \in \Kb(\scA)_{\Sigma''}$ as well.
\end{proof}

Part~\eqref{it:dt-orlov-unique} of Lemma~\ref{lem:dt-orlov} has an analogue in the infinitesimal extension $\pse\Kb(\scA)$.  Recall that the objects of this category are the same as those of $\Kb(\scA)$, so the notion of support makes sense here as well.  The full subcategories $\pse\Kb(\scA)_{\Sigma}$ (for $\Sigma \subset \Z \times \Z$) are defined in the same way as $\Kb(\scA)_{\Sigma}$.

\begin{lem}\label{lem:dt-orlov-inf}
Suppose we have a commutative diagram in $\pse\Kb(\scA)$ as follows, in which the horizontal rows are distinguished triangles:
\begin{equation}\label{eqn:dt-orlov-inf}
\vcenter{\xymatrix{
P \ar[r]^f\ar[d]_p & X \ar[r]^g & Y \ar[r]^h\ar[d]^r & P[1] \ar[d]^{p[1]} \\
P' \ar[r]_{f'} & X' \ar[r]_{g'} & Y' \ar[r]_{h'} & P'[1] }}
\end{equation}
If $P, P' \in \pse\Kb(\scA)_{\{(i,j)\}}$, $X, X' \in \pse\Kb(\scA)_\Sigma$, and $Y,Y' \in \pse\Kb(\scA)_{\Sigma'}$, then there is a unique map $q: X \to X'$ that makes the above diagram commute.
\end{lem}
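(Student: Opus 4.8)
The plan is to reduce the statement to Lemma~\ref{lem:dt-orlov}\eqref{it:dt-orlov-unique} together with a single vanishing statement about $\Hom$-spaces in $\Kb(\scA)$.

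\emph{Normalisation.} By Definition~\ref{defn:infext} and Lemma~\ref{lem:infext-basic}, after replacing the two rows of~\eqref{eqn:dt-orlov-inf} by isomorphic distinguished triangles and conjugating $p$ and $r$ accordingly — an operation affecting neither the existence nor the uniqueness of a completing $q$ — I may assume the rows are $\incl(T)$ and $\incl(T')$ for distinguished triangles
\[
T=\bigl(P\xrightarrow{f_0}X\xrightarrow{g_0}Y\xrightarrow{h_0}P[1]\bigr),\qquad
T'=\bigl(P'\xrightarrow{f'_0}X'\xrightarrow{g'_0}Y'\xrightarrow{h'_0}P'[1]\bigr)
\]
in $\Kb(\scA)$, while $p$, $r$ remain arbitrary morphisms of $\pse\Kb(\scA)$. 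Writing $p=(p_0,p_1)$, $r=(r_0,r_1)$, and $q=(q_0,q')$ for a prospective completion via the decomposition~\eqref{eqn:pseudotri-hom}, I would expand the composition law~\eqref{eqn:pseudotri-comp} and observe that commutativity of~\eqref{eqn:dt-orlov-inf} is equivalent to the simultaneous assertions that $(p_0,q_0,r_0)$ is a morphism of triangles $T\to T'$ and that $(p_1,q',r_1)$ is a morphism of triangles $T\to T'[-1]$ — the two ``fourth-square'' compatibilities being exactly the genuine and the infinitesimal parts of the given identity $h'\circ r=p[1]\circ h$.

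\emph{Key vanishing.} Because $(i,j)$ is the largest element of $\Sigma$ for the lexicographic order, any complex with support contained in $\Sigma$ is concentrated in chain-degrees ${}\le i$, whereas $P'[-1]$ (and $P[-1]$) is concentrated in chain-degree $i+1$; hence there are no nonzero chain maps, so $\Hom_{\Kb(\scA)}(X,P'[-1])=0$ and $\Hom_{\Kb(\scA)}(P,P'[-1])=0$. For existence, Lemma~\ref{lem:dt-orlov}\eqref{it:dt-orlov-unique} applied to $T$, $T'$ (whose objects carry the required supports) yields $q_0$ making $(p_0,q_0,r_0):T\to T'$ a morphism of triangles. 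For the infinitesimal component, $p_1=0$ by the vanishing above; applying $\Hom_{\Kb(\scA)}(X,-)$ to $T'[-1]$ and using $h'_0[-1]\circ(r_1\circ g_0)=(p_1[1]\circ h_0)\circ g_0=0$, the morphism $r_1\circ g_0$ lifts through $g'_0[-1]$ to some $q':X\to X'[-1]$; the composite $q'\circ f_0$ then lands in the image of $\Hom_{\Kb(\scA)}(P,P'[-1])=0$, so $q'\circ f_0=0=f'_0[-1]\circ p_1$ and $(0,q',r_1):T\to T'[-1]$ is a morphism of triangles. Thus $q=(q_0,q')$ completes~\eqref{eqn:dt-orlov-inf}. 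For uniqueness it is enough to treat $p=r=0$: then $q_0=0$ by the uniqueness clause of Lemma~\ref{lem:dt-orlov}\eqref{it:dt-orlov-unique}, while $q'$ satisfies $g'_0[-1]\circ q'=0$ and hence lies in the image of $\Hom_{\Kb(\scA)}(X,P'[-1])=0$, forcing $q'=0$.

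\emph{Main obstacle.} There is no deep difficulty once the right vanishing is isolated: the content is precisely that $\Sigma$ has a single lexicographically-top bidegree, so that shifting $P'$ by $[-1]$ moves it out of the chain-degree range occupied by $X$, which annihilates the infinitesimal component $q'$ and collapses the lemma onto Lemma~\ref{lem:dt-orlov}\eqref{it:dt-orlov-unique}. The only points requiring care are the normalisation step (reducing to rows of the form $\incl(T)$ and to honest complexes with the prescribed supports) and the mechanical check via~\eqref{eqn:pseudotri-comp} that commutativity in $\pse\Kb(\scA)$ decouples into the two diagrams over $\Kb(\scA)$.
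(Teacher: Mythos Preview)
Your proof is correct. Both your argument and the paper's hinge on the same key vanishing $\Hom_{\Kb(\scA)}(X,P'[-1])=0$ and both ultimately reduce uniqueness to Lemma~\ref{lem:dt-orlov}\eqref{it:dt-orlov-unique}, but the routes differ. You decouple every morphism into its genuine and infinitesimal components via~\eqref{eqn:pseudotri-comp} and treat the two resulting diagrams $T\to T'$ and $T\to T'[-1]$ in $\Kb(\scA)$ separately; this makes existence a two-step construction (Lemma~\ref{lem:dt-orlov}\eqref{it:dt-orlov-unique} for $q_0$, an explicit lift through $g'_0[-1]$ for $q'$). The paper instead invokes Lemma~\ref{lem:infext-sq-comp} for existence in one stroke, and for uniqueness factors any commuting $q$ as $f'\circ\tilde q$ via Lemma~\ref{lem:infext-hom-les}, then observes that $\tilde q:X\to P'$ must be genuine since its infinitesimal component lies in $\Hom_{\Kb(\scA)}(X,P'[-1])=0$; hence $q$ itself is genuine and Lemma~\ref{lem:dt-orlov}\eqref{it:dt-orlov-unique} applies directly. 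The paper's route is shorter and reuses the general infrastructure of Section~\ref{sect:infext}; yours is more self-contained and makes the mechanism by which the vanishing kills the infinitesimal part more transparent.
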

\begin{proof}
The existence of $q$ follows from Lemma~\ref{lem:infext-sq-comp}; we must prove uniqueness.  As in Lemma~\ref{lem:dt-orlov}\eqref{it:dt-orlov-unique}, it suffices to consider the case where $p = 0$ and $r = 0$.  Furthermore, every distinguished triangle is, by definition, isomorphic to a diagram obtained by applying $\incl: \Kb(\scA) \to \pse\Kb(\scA)$ to a distinguished triangle in $\Kb(\scA)$, so we may assume that the morphisms $f, g, h, f', g', h'$ are all genuine.  

Suppose $q: X \to X'$ makes the diagram commute.  Since $g' \circ q = 0$, it follows from Lemma~\ref{lem:infext-hom-les} that $q$ factors through $f'$.  Let $\tilde q: X \to P'$ be a map such that $q = f' \circ \tilde q$.  It is clear from the support assumptions that $\Hom_{\Kb(\scA)}(X, P'[-1]) = 0$, so in fact, $\tilde q$ must be genuine; it can have no nonzero infinitesimal component.  The same must then hold for $q$.  Since every morphism in our diagram is in the image of $\incl$, we have actually reduced the problem to the setting of Lemma~\ref{lem:dt-orlov}\eqref{it:dt-orlov-unique}, where the desired uniqueness is already known.
\end{proof}

\subsection{Morphisms of functors}
\label{ss:orlov-main}

We are now ready to prove the main results of this section.  Their proofs are adaptations of an argument due to Orlov~\cite[Proposition~2.16]{orl:edc}.

\begin{thm}\label{thm:orlov}
Let $\scA$ and $\scB$ be two Orlov categories.  Let $F,F': \Kb(\scA) \to \Kb(\scB)$ be two functors of triangulated categories.  Assume that $F(\scA) \subset \scB$ and $F'(\scA) \subset \scB$, and that the induced functors $F|_{\scA}, F'|_{\scA}: \scA \to \scB$ are homogeneous.  Any morphism of additive functors
\[
\theta^\circ: F|_{\scA} \to F'|_{\scA}
\]
can be extended to a morphism $\theta: F \to F'$ of functors of triangulated categories in such a way that if $\theta^\circ$ is an isomorphism, then $\theta$ is as well.
\end{thm}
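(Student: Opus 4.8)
The plan is to build $\theta$ on all of $\Kb(\scA)$ by induction on the ``size'' of objects, measured by their support $\supp X \subseteq \Z \times \Z$ (as defined just before Lemma~\ref{lem:dt-orlov}), with that lemma supplying both the existence and, crucially, the \emph{uniqueness} of the fill-in maps out of which $\theta$ is assembled. Two preliminary points are needed. First, since $F$ and $F'$ are triangulated and their restrictions to $\scA$ are homogeneous, and a bounded complex is an iterated mapping cone on its terms, $F$ and $F'$ act termwise up to isomorphism and hence carry $\Kb(\scA)_\Sigma$ into $\Kb(\scB)_\Sigma$ for every finite $\Sigma$; thus Lemma~\ref{lem:dt-orlov}\eqref{it:dt-orlov-unique} may be applied to the images under $F$ and $F'$ of distinguished triangles living in $\Kb(\scA)_\Sigma$. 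Second, for objects $A[n]$ with $A \in \scA$ homogeneous one has $\Hom_{\Kb(\scA)}(A[n],B[m]) = 0$ for $n \ne m$ and $= \Hom_\scA(A,B)$ for $n = m$, so one may unambiguously set $\theta_{A[n]} := \theta^\circ_A[n]$ (this is forced by compatibility with $[1]$), and naturality of $\theta$ among these ``single-stratum'' objects is immediate from naturality of $\theta^\circ$.

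For the inductive step, fix a finite $\Sigma$ with largest element $(i,j)$ in the lexicographic order~\eqref{eqn:lexico}, set $\Sigma' = \Sigma \smallsetminus \{(i,j)\}$, and assume $\theta$ has been defined, as a natural transformation compatible with the single-stratum values, on $\Kb(\scA)_\Gamma$ for all finite $\Gamma$ preceding $\Sigma$ in the ordering of finite subsets ``first by cardinality, then by largest element.'' Given $X \in \Kb(\scA)_\Sigma$, choose a distinguished triangle $P \xto{f} X \xto{g} Y \xto{h} P[1]$ as in Lemma~\ref{lem:dt-orlov}\eqref{it:dt-orlov-exist}, with $P \in \Kb(\scA)_{\{(i,j)\}}$ and $Y \in \Kb(\scA)_{\Sigma'}$; note $P[1] \in \Kb(\scA)_{\{(i-1,j)\}}$. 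Applying $F$ and $F'$ yields two distinguished triangles in $\Kb(\scB)$, and the maps $\theta_P$, $\theta_Y$, $\theta_{P[1]} = \theta_P[1]$ are available. The compatibility needed to assemble $(\theta_P,\theta_Y)$ into a morphism of triangles is $F'(h) \circ \theta_Y = \theta_P[1] \circ F(h)$, i.e.\ naturality of $\theta$ with respect to $h$; granting this, Lemma~\ref{lem:dt-orlov}\eqref{it:dt-orlov-unique} produces a unique $\theta_X \colon F(X) \to F'(X)$ completing the diagram. One then verifies, by repeated use of that uniqueness statement (and of Lemma~\ref{lem:dt-orlov}\eqref{it:dt-orlov-cone} to reduce a general morphism to the structure maps $f$, $g$), together with the $\Hom$-vanishing of Definition~\ref{defn:orlov}\eqref{it:orlov-deg} which kills all maps out of the top stratum into lower strata, that $\theta_X$ is independent of the choices, that $\theta_{X[1]} = \theta_X[1]$, and that $\theta$ is natural on all of $\Kb(\scA)_\Sigma$.

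The main obstacle is making this naturality bookkeeping come out right, and in particular justifying that the compatibility with $h$ is already available when it is invoked. The map $h$ targets $P[1]$, supported at $(i-1,j)$, which need not lie in $\Sigma'$, so a priori one would need $\theta$ to be known to be natural on a subcategory of the same cardinality as $\Kb(\scA)_\Sigma$. This is where Definition~\ref{defn:orlov}\eqref{it:orlov-deg} re-enters: a nonzero component of a differential into a homogeneous object of degree $j$ must issue from a summand of degree ${}\ge j$, so $h \ne 0$ forces the degree-$j$-or-higher content of $X$ one cohomological step below $i$ to be nonzero; consequently $\Sigma'$ already contains a point $(i-1,j'')$ with $j'' \ge j$, and hence $\Sigma' \cup \{(i-1,j)\}$ has largest element $< (i,j)$. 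Thus the refined ordering (cardinality, then largest element) is exactly strong enough, and one observes that for any fixed $X$ only finitely many $\Gamma$'s are ever invoked, so the recursion defining $\theta_X$ terminates.

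Finally, the last clause is easy once this machinery is in place: if $\theta^\circ$ is an isomorphism then each $\theta_{A[n]}$ is, and at every inductive step two of the three vertical maps in the relevant morphism of distinguished triangles are isomorphisms, whence so is the third; therefore $\theta \colon F \to F'$ is an isomorphism. The overall argument follows the strategy of Orlov~\cite[Proposition~2.16]{orl:edc}.
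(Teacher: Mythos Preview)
Your strategy is the paper's: peel off the top-degree piece in the largest position via Lemma~\ref{lem:dt-orlov}\eqref{it:dt-orlov-exist}, define $\theta_X$ as the fill-in guaranteed uniquely by Lemma~\ref{lem:dt-orlov}\eqref{it:dt-orlov-unique}, and verify independence of choices and naturality by that same uniqueness together with Lemma~\ref{lem:dt-orlov}\eqref{it:dt-orlov-cone}. The problem is your induction scheme. The ordering ``cardinality, then largest element'' on all finite subsets of $\Z \times \Z$ is not well-founded, so the transfinite induction you announce is ill-posed globally. Your observation that $h \ne 0$ forces $(i-1, j'') \in \Sigma'$ for some $j'' \ge j$ is correct, but it only addresses the compatibility needed to \emph{define} $\theta_X$; in the naturality step for a general morphism $s \colon X \to X'$ one may have to replace $P'$ by $P \oplus P'$ and pass to the cone $Z \in \Kb(\scA)_{\Sigma' \cup \{(i-1,j)\}}$ of Lemma~\ref{lem:dt-orlov}\eqref{it:dt-orlov-cone}, and there the point $(i-1,j)$ can appear with no vanishing hypothesis available. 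A correct termination argument does exist --- all sets arising in the recursion from a fixed $\Sigma_0$ lie in the finite rectangle $[a,b] \times D_0$, where $[a,b]$ is the range of first coordinates in $\Sigma_0$ and $D_0$ its set of second coordinates, using that single-column sets are the base case to see that the minimal column never drops --- but you have not supplied it, and your sentence ``the refined ordering \ldots\ is exactly strong enough'' is, as written, false.

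The paper sidesteps all of this by first enlarging $\supp X$ to a \emph{paragraph} (a rectangle with a possibly truncated last line) and inducting simply on its cardinality. The shape is chosen so that when the largest element $(b,e)$ is removed from a paragraph with at least two lines, the remainder $\Sigma'$ is again a paragraph and already contains $(b-1,e)$; thus $P[1] \in \Kb(\scA)_{\Sigma'}$ and $\Sigma'' = \Sigma'$ automatically, so every auxiliary object one meets in Steps~2 and~3 stays in $\Kb(\scA)_{\Sigma'}$, and the inductive hypothesis applies directly with no further bookkeeping.
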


\begin{rmk}\label{rmk:orlov}
The statement of the theorem is equivalent to the following fact.  Let $F: \Kb(\scA) \to \Kb(\scB)$ be a functor that satisfies the hypotheses of the theorem, and consider the functor $F' := \Kb(F|_{\scA}): \Kb(\scA) \to \Kb(\scB)$ induced by $F|_{\scA}: \scA \to \scB$.  Then there is an isomorphism of functors $F \simto F'$.
\end{rmk}

There is an analogous statement in which the codomain category is replaced by an infinitesimal extension.  Note that for objects $X, Y \in \scB$, we have that $\Hom(X,Y[-1]) = 0$ in $\Kb(\scB)$.  Therefore, the composition $\scB \hookrightarrow \Kb(\scB) \ovto{\incl} \pse\Kb(\scB)$ is full and faithful.  In other words, we can identify $\scB$ with a full subcategory of $\pse\Kb(\scB)$ just as we do with $\Kb(\scB)$.  

\begin{thm}\label{thm:orlov-inf}
Let $\scA$ and $\scB$ be two Orlov categories.  Let $F,F': \Kb(\scA) \to \pse\Kb(\scB)$ be two pseudotriangulated functors.  Assume that $F(\scA) \subset \scB$ and $F'(\scA) \subset \scB$, and that the induced functors $F|_{\scA}, F'|_{\scA}: \scA \to \scB$ are homogeneous.  Any morphism of additive functors
\[
\theta^\circ: F|_{\scA} \to F'|_{\scA}
\]
can be extended to a morphism $\theta: F \to F'$ of pseudotriangulated functors in such a way that if $\theta^\circ$ is an isomorphism, then $\theta$ is as well.
\end{thm}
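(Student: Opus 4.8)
The plan is to follow Orlov's method \cite[Proposition~2.16]{orl:edc}, in the same spirit as the proof of Theorem~\ref{thm:orlov}, constructing $\theta\colon F\to F'$ by induction on the size of the support of objects of $\Kb(\scA)$. The only places where the triangulated structure of the target enters that argument are (i) the completion of a commutative square to a morphism of distinguished triangles, together with the fact that the third vertical arrow is an isomorphism as soon as the first two are, and (ii) the uniqueness of fill-in maps under support constraints. For a target of the form $\pse\Kb(\scB)$ these roles are played, respectively, by Lemma~\ref{lem:infext-sq-comp} and by Lemma~\ref{lem:dt-orlov-inf} (its hypotheses about supports make sense because the objects of $\pse\Kb(\scB)$ are complexes over $\scB$), so the argument transfers essentially mechanically once one substitutes these two lemmas. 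It is worth noting that one cannot cheaply reduce to Theorem~\ref{thm:orlov} by applying it to the triangulated functors $\pg\circ F$ and $\pg\circ F'$ and then setting $\theta_X=\incl(\bar\theta_X)$: the resulting family would commute with morphisms of $\Kb(\scA)$ only up to an infinitesimal correction, since nothing controls the infinitesimal components of $F(\phi)$ and $F'(\phi)$ relative to $\bar\theta$.

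Concretely, I would proceed as follows. For an object supported on a single bidegree $(i,j)$ --- that is, of the form $S[-i]$ with $S\in\scA$ homogeneous of degree $j$ --- set $\theta_{S[-i]}=\theta^\circ_S[-i]$, extended by additivity; this makes sense because $F(S[-i])=F(S)[-i]$ (as $F$ commutes with $[1]$) and $F(S)=F|_{\scA}(S)\in\scB$, and it is natural and compatible with $[1]$ because $\theta^\circ$ is. Assume inductively that a natural transformation has been defined, compatibly with $[1]$, on the full subcategory of objects whose support has size $<n$, and let $X$ have support $\Sigma$ of size $n$ with largest element $(i,j)$ in the lexicographic order~\eqref{eqn:lexico}. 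Choose a distinguished triangle $P\xrightarrow{f}X\xrightarrow{g}Y\xrightarrow{h}P[1]$ as in Lemma~\ref{lem:dt-orlov}\eqref{it:dt-orlov-exist}, so that $P$ is supported on $\{(i,j)\}$ and $Y$ has support of size $n-1$; applying the pseudotriangulated functors $F$ and $F'$ gives distinguished triangles in $\pse\Kb(\scB)$. Since $\theta_Y$ and $\theta_P[1]=\theta_{P[1]}$ are already defined and the square they span over the connecting map $h$ commutes (by inductive naturality applied to $h\colon Y\to P[1]$), a suitable rotation of that square together with Lemma~\ref{lem:infext-sq-comp} produces a morphism $\theta_X\colon F(X)\to F'(X)$ fitting into a morphism of distinguished triangles whose outer arrows are $\theta_P$ and $\theta_Y$.

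The crux of the matter --- and the step I expect to require the most care --- is to show that $\theta_X$ is independent of the choices made (a representative complex for $X$, the homogeneous degree-$j$ summand of $X^i$ used to cut out $P$ and $Y$, and the completion to a morphism of triangles), and that the resulting family $\{\theta_X\}$ is natural for \emph{all} morphisms of $\Kb(\scA)$. Both follow from Lemma~\ref{lem:dt-orlov-inf}: any two competing presentations $P\to X\to Y\to$ of this kind differ by an isomorphism of distinguished triangles inducing the identity on $X$ (because the homogeneous degree-$j$ part of $X^i$ is unique up to isomorphism), so the uniqueness of the fill-in, given the inductively-fixed outer arrows, forces the two candidate maps $\theta_X$ to coincide; and naturality is the usual five-lemma-style diagram chase, using Lemma~\ref{lem:infext-hom-les} and again the uniqueness in Lemma~\ref{lem:dt-orlov-inf}.

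Once well-definedness is in hand, compatibility with $[1]$ at level $n$ is immediate (choose the shifted triangle), so the induction goes through; taking the union over $n$, which is legitimate since every object of $\Kb(\scA)$ has finite support, yields a natural transformation $\theta\colon F\to F'$ commuting with $[1]$, i.e.\ a morphism of pseudotriangulated functors, and by construction it restricts to $\theta^\circ$ on $\scA$. Finally, if $\theta^\circ$ is an isomorphism then so is each $\theta_X$, by the same induction: the base case is Lemma~\ref{lem:infext-basic}, and the inductive step is the last sentence of Lemma~\ref{lem:infext-sq-comp}. The genuinely delicate point throughout is bookkeeping the infinitesimal components correctly --- recall that genuineness of morphisms is not a natural notion --- which is precisely why the construction must be run inside $\pse\Kb(\scB)$ rather than lifted after the fact.
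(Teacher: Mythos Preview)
Your overall strategy is right and mirrors the paper's joint proof of Theorems~\ref{thm:orlov} and~\ref{thm:orlov-inf}. But your well-definedness argument has a genuine gap. You claim that two presentations $P\to X\to Y\to$ and $P'\to X\to Y'\to$ always differ by an isomorphism of triangles with identity on $X$, ``because the homogeneous degree-$j$ part of $X^i$ is unique up to isomorphism.'' This is false: for such an isomorphism $(p,\id_X,r)$ to exist, $f\colon P\to X$ must factor through $f'\colon P'\to X$, equivalently $g'\circ f\colon P\to Y'$ must vanish. But $P$ sits in degree $j$ while $Y'^i$ has summands of degree $<j$, and the Orlov condition does not force such maps to vanish. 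Concretely, take indecomposables $S$ (degree $0$) and $T$ (degree $1$) with $\Hom(T,S)\ne 0$, and $X=(S\oplus T)[-i]$; for any nonzero $\phi\colon T\to S$, the two inclusions $t\mapsto(0,t)$ and $t\mapsto(\phi(t),t)$ both exhibit $T$ as a degree-$1$ summand of $X^i$, yet no $p$ satisfies $f'\circ p=f$. (The two triangles \emph{are} isomorphic via $(\id,q,\id)$ for an automorphism $q$ of $X$, but then the prism argument only yields $\theta'_X\circ F(q)=F'(q)\circ\theta_X$, and you cannot yet invoke naturality for $q$.)

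The paper handles well-definedness differently: it compares both presentations to an intermediary $P''=P\oplus P'\to X\to Y''\to$, since the inclusions $P\hookrightarrow P''$ and $P'\hookrightarrow P''$ \emph{do} produce morphisms of triangles with identity on $X$, and Lemma~\ref{lem:dt-orlov-inf} then gives $\theta_X=\theta''_X=\theta'_X$. The cost is that Lemma~\ref{lem:dt-orlov}\eqref{it:dt-orlov-cone} only places $Y''$ in $\Kb(\scA)_{\Sigma'\cup\{(i-1,j)\}}$, so the inductive hypothesis must already cover that set. This is why the paper does not induct on $|\supp X|$ but first enlarges each support to a \emph{paragraph} (a rectangle with a possibly shorter last row): for a paragraph with at least two rows one always has $(i-1,j)\in\Sigma'$, while single-row paragraphs form the base case, where $\theta_X$ is defined directly via $\theta^\circ$. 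Your naturality step needs the same device: to cover an arbitrary $s\colon X\to X'$ by a morphism of the chosen triangles, one generally must replace $P'$ by $P\oplus P'$, again invoking Lemma~\ref{lem:dt-orlov}\eqref{it:dt-orlov-cone}.
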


\begin{rmk}\label{rmk:orlov-inf}
Equivalently, this theorem says that any functor $F: \Kb(\scA) \to \pse\Kb(\scB)$ satisfying the hypotheses of the theorem is isomorphic to the composition
\[
\Kb(\scA) \xto{\Kb(F|_{\scA})}  \Kb(\scB) \ovto{\incl} \pse\Kb(\scB).
\]
\end{rmk}

\begin{proof}[Proof of Theorems~\ref{thm:orlov} and~\ref{thm:orlov-inf}]
Constructing a morphism of functors $\theta: F \to F'$ consists of the following three steps:
\begin{enumerate}
\item For each object $X \in \Kb(\scA)$, construct a morphism $\theta_X: F(X) \to F'(X)$ in $\Kb(\scB)$ or $\pse\Kb(\scB)$, and show that it is an isomorphism if $\theta^\circ$ is.
\item Show that $\theta_X$ is independent of choices in the construction.
\item Show that for any morphism $s: X \to X'$ in $\Kb(\scA)$, we have $F'(s) \circ \theta_X = \theta_{X'} \circ F(s)$.
\end{enumerate}
We will carry out these steps by an induction argument involving the support of an object.  We say that a subset $\Sigma \subset \Z \times \Z$ is a \emph{paragraph} if it is of the form
\[
\Sigma = \big(\{a, a+1, \ldots, b-1\} \times \{c, c+1, \ldots, d\}\big) \cup \{(b,c),(b,c+1), \ldots, (b,e)\}
\]
for some $e$ with $c \le e \le d$.  We also say that such a paragraph $\Sigma$ has $b-a+1$ \emph{lines}.  For $\Sigma$ as above, we see that the largest element is $(b,e)$.  Note that if $\Sigma$ has at least $2$ lines, then $(b-1,e) \in \Sigma$ as well.  Obviously, the support of any object is contained in some paragraph.

To begin the induction, let $\Sigma$ be a paragraph with a single line, so that $\Sigma \subset \{n \} \times \Z$ for some $n \in \Z$.  For any object $X \in \Kb(\scA)_{\Sigma}$, we have $X[n] \in \scA$, so we have available a morphism $\theta^\circ_{X[n]}: F(X[n]) \to F'(X[n])$.  Define $\theta_X : F(X) \to F'(X)$ by $\theta_X = \theta^\circ_{X[n]}[-n]$.  Trivially, statements~(1)--(3) hold for objects $X, X'$ whose support is contained in $\{n\} \times \Z$.  Moreover, $\theta_X$ is an isomorphism of objects if $\theta^\circ$ is an isomorphism of functors.

For the inductive step, let us assume that $\Sigma$ is a paragraph with at least two lines.  Let $(b,e)$ denote its largest element, and let $\Sigma' = \Sigma \smallsetminus \{(b,e)\}$.  Then $\Sigma'$ is also a paragraph, and it contains $(b-1,e)$.  Assume that steps~(1)--(3) above have already been carried out for objects and morphisms of $\Kb(\scA)_{\Sigma'}$.  We will now carry them out for objects and morphisms of $\Kb(\scA)_{\Sigma}$.

{\it Step~1}.  For an object $X \in \Kb(\scA)_{\Sigma}$, we can find, by Lemma~\ref{lem:dt-orlov}\eqref{it:dt-orlov-exist}, a triangle
\begin{equation}\label{eqn:dt-orlov-choice}
P \to X \to Y \ovto{\delta} P[1]
\end{equation}
with $\supp P \subset \{(b,e)\}$ and $\supp Y \subset \Sigma'$. Note that $\delta: Y \to P[1]$ is a morphism in $\Kb(\scA)_{\Sigma'}$, so we already have morphisms $\theta_Y$ and $\theta_{P[1]}$ such that the diagram
\[
\xymatrix{
F(Y) \ar[r]^{F(\delta)} \ar[d]_{\theta_Y} & F(P[1]) \ar[d]^{\theta_{P[1]}} \\
F'(Y) \ar[r]_{F'(\delta)} & F'(P[1]) }
\]
commutes.  We define $\theta_X: F(X) \to F'(X)$ by completing this square to a morphism of distinguished triangles (invoking Lemma~\ref{lem:infext-sq-comp} in the case of $\pse\Kb(\scB)$):
\begin{equation}\label{eqn:dt-orlov-theta}
\vcenter{\xymatrix{
F(P) \ar[r] \ar[d]_{\theta_P = \theta_{P[1]}[-1]} & F(X) \ar[r] \ar@{.>}[d]_{\theta_X} &
F(Y) \ar[r]^{F(\delta)} \ar[d]_{\theta_Y} & F(P[1]) \ar[d]^{\theta_{P[1]}} \\
F'(P) \ar[r] & F'(X) \ar[r] &
F'(Y) \ar[r]_{F'(\delta)} & F'(P[1]) }}
\end{equation}
Note that if $\theta^\circ$ is an isomorphism, then we know inductively that $\theta_P$ and $\theta_Y$ are isomorphisms, so it follows (perhaps by Lemma~\ref{lem:infext-sq-comp} again) that $\theta_X$ is as well.

{\it Step 2}.  We must now show that $\theta_X$ is independent of the choices made above.  Either Lemma~\ref{lem:dt-orlov}\eqref{it:dt-orlov-unique} or Lemma~\ref{lem:dt-orlov-inf} tells us that $\theta_X$ is uniquely determined once the triangle~\eqref{eqn:dt-orlov-choice} is fixed, but we must also prove independence of the choice of that triangle.  Let
\begin{equation}\label{eqn:dt-orlov-pp}
P' \to X \to Y' \to P'[1]
\end{equation}
be another such triangle, and let $\theta'_X: F(X) \to F'(X)$ be the morphism obtained from it by the construction above.  We must show that $\theta'_X = \theta_X$.

To do this, we will construct a third triangle as an intermediary.  Let $P'' = P \oplus P'$.  Consider the obvious map $P'' \to X$, and let $Y''$ denote its cone.  By Lemma~\ref{lem:dt-orlov}\eqref{it:dt-orlov-cone}, we have $Y'' \in \Kb(\scA)_{\Sigma'}$, and the construction above gives us a third morphism $\theta''_X: F(X) \to F'(X)$.  Note that $P \to X$ factors through $P'' \to X$, so we can form a morphism of triangles as follows:
\begin{equation}\label{eqn:dt-orlov-pchoice}
\vcenter{\xymatrix{
P \ar[r]\ar[d]_{f} & X \ar[r]\ar@{=}[d] & Y \ar[r]\ar[d]_{g} & P[1] \ar[d]^{f[1]} \\
P'' \ar[r] & X \ar[r] & Y'' \ar[r] & P''[1] }}
\end{equation}
Applying $F$ and $F'$ to this diagram, we obtain the following diagram:
\begin{equation}\label{eqn:dt-prism}
\vcenter{\tiny\xymatrix@=15pt{
F(P) \ar[rr] \ar[dr]^{F(f)} \ar[ddd]^(.7){\theta_{P}} && 
  F(X) \ar[rr] \ar@{=}[dr] \ar'[d][ddd]^(.55555){\theta_X} &&
  F(Y) \ar[rr] \ar[dr]^{F(g)} \ar'[d][ddd]^(.55555){\theta_Y}  && 
  F(P[1]) \ar[dr]^{F(f[1])} \ar'[d][ddd]^(.55555){\theta_{P[1]}} \\
& F(P'') \ar[rr] \ar[ddd]^(.3){\theta_{P''}} && F(X) \ar[rr] \ar[ddd]^(.3){\theta''_X} &&
  F(Y'') \ar[rr] \ar[ddd]^(.3){\theta_{Y''}} && 
  F(P''[1]) \ar[ddd]^(.3){\theta_{P''[1]}} \\ \\
F'(P) \ar'[r][rr] \ar[dr]^{F'(f)} && 
  F'(X) \ar'[r][rr] \ar@{=}[dr] &&
  F'(Y) \ar'[r][rr] \ar[dr]^{F'(g)} && F'(P[1]) \ar[dr]^{F'(f[1])} \\
& F'(P'') \ar[rr]  && F'(X) \ar[rr]  &&
  F'(Y'') \ar[rr]  && F'(P''[1]) }}
\end{equation}
Some care is required in assessing the commutativity of this diagram.  There are four morphisms of distinguished triangles in this diagram: the ``top'' and ``bottom,'' each obtained by applying a functor to~\eqref{eqn:dt-orlov-pchoice}, and the ``front'' and ``back,'' each of which is an instance of~\eqref{eqn:dt-orlov-theta}.  We also know by induction that $F'(g) \circ \theta_{Y} = \theta_{Y''} \circ F(g)$ and $F'(f[1]) \circ \theta_{P'[1]} = \theta_{P[1]} \circ F(f[1])$.  We thus obtain two morphisms of triangles from the ``top back'' to the ``bottom front,'' which we write down together as follows:
\[
\xymatrix{
F(P) \ar[r]\ar[d] & F(X) \ar[r] \ar@/_1ex/[d]_{\theta_X} \ar@/^1ex/[d]^{\theta''_X} &
  F(Y) \ar[r]\ar[d] & F(P[1]) \ar[d] \\
F'(P'') \ar[r] & F'(X) \ar[r] & F'(Y'') \ar[r] & F'(P''[1]) }
\]
We can now invoke Lemma~\ref{lem:dt-orlov}\eqref{it:dt-orlov-unique} or Lemma~\ref{lem:dt-orlov-inf} again to deduce that $\theta_X = \theta''_X$.  But since $P' \to X$ also factors through $P'' \to X$, the same argument shows that $\theta'_X = \theta''_X$ as well, so $\theta_X = \theta'_X$, as desired.

{\it Step 3}.  Let $s:X \to X'$ be a morphism in $\Kb(\scA)_{\Sigma}$.  Choose distinguished triangles 
\[
P \to X \to Y \to P[1],\qquad
P' \to X' \to Y' \to P'[1]
\]
as in Lemma~\ref{lem:dt-orlov}\eqref{it:dt-orlov-exist}.  We may assume without loss of generality that the composition $P \to X \to X'$ factors through $P' \to X'$: if not, simply replace $P'$ by $P \oplus P'$. (The cone of $P \oplus P' \to X'$ is still in $\Kb(\scA)_{\Sigma'}$ by Lemma~\ref{lem:dt-orlov}\eqref{it:dt-orlov-cone}, so the new triangle is still of the required form.)  We then have a morphism of distinguished triangles
\[
\xymatrix{
P \ar[r]\ar[d] & X \ar[r]\ar[d]_{s} & Y \ar[r]\ar[d] & P[1] \ar[d] \\
P' \ar[r] & X' \ar[r] & Y' \ar[r] & P'[1] }
\]
Applying $F$ and $F'$, we obtain a large diagram analogous to~\eqref{eqn:dt-prism}:
\[
\vcenter{\tiny\xymatrix@=15pt{
F(P) \ar[rr] \ar[dr] \ar[ddd]^(.7){\theta_{P}} && 
  F(X) \ar[rr] \ar[dr]^{F(s)} \ar'[d][ddd]^(.55555){\theta_X} &&
  F(Y) \ar[rr] \ar[dr] \ar'[d][ddd]^(.55555){\theta_Y}  && 
  F(P[1]) \ar[dr] \ar'[d][ddd]^(.55555){\theta_{P[1]}} \\
& F(P') \ar[rr] \ar[ddd]^(.3){\theta_{P'}} && 
  F(X') \ar[rr] \ar[ddd]^(.3){\theta_{X'}} &&
  F(Y') \ar[rr] \ar[ddd]^(.3){\theta_{Y'}} && 
  F(P'[1]) \ar[ddd]^(.3){\theta_{P''[1]}} \\ \\
F'(P) \ar'[r][rr] \ar[dr] && 
  F'(X) \ar'[r][rr] \ar[dr]^{F'(s)} &&
  F'(Y) \ar'[r][rr] \ar[dr] && 
  F'(P[1]) \ar[dr] \\
& F'(P'') \ar[rr]  && F'(X') \ar[rr]  &&
  F'(Y'') \ar[rr]  && F'(P''[1]) }}
\]
As with~\eqref{eqn:dt-prism}, by studying the parts of this diagram known to be commutative, we obtain two morphisms of distinguished triangles
\[
\xymatrix{
F(P) \ar[r]\ar[d] & F(X) \ar[r] \ar@/_1ex/[d]_{F'(s)\circ\theta_X} \ar@/^1ex/[d]^{\theta_{X'}\circ F(s)} &
  F(Y) \ar[r]\ar[d] & F(P[1]) \ar[d] \\
F'(P') \ar[r] & F'(X') \ar[r] & F'(Y') \ar[r] & F'(P'[1]) }
\]
By Lemma~\ref{lem:dt-orlov}\eqref{it:dt-orlov-unique} or~\ref{lem:dt-orlov-inf}, we must have $F'(s) \circ \theta_X = \theta_{X'} \circ F(s)$, as desired.

If $\theta^\circ$ is an isomorphism, we noted at the end of Step~1 that $\theta_X$ is an isomorphism for all $X$, so $\theta$ is an isomorphism of functors.
\end{proof}

\begin{thm}\label{thm:orlov-genuine}
Let $\scA$ and $\scB$ be two Orlov categories, and let $F: \pse\Kb(\scA) \to \pse\Kb(\scB)$ be a pseudotriangulated functor.  If $F(\scA) \subset \scB$, and if the induced functor $F|_{\scA}: \scA \to \scB$ is homogeneous, then $F$ is genuine.
\end{thm}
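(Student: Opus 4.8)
The plan is to obtain genuineness as an essentially formal consequence of Theorem~\ref{thm:orlov-inf}, applied not to $F$ itself but to the composite $F \circ \incl : \Kb(\scA) \to \pse\Kb(\scB)$, where $\incl : \Kb(\scA) \to \pse\Kb(\scA)$ is the canonical embedding. First I would check that $F \circ \incl$ is pseudotriangulated in the sense of the second clause of Definition~\ref{defn:pseudotri} (a functor out of a triangulated category into an infinitesimal extension need only commute with $[1]$ and preserve distinguished triangles): the embedding $\incl$ commutes with $[1]$ and carries distinguished triangles of $\Kb(\scA)$ to distinguished triangles of $\pse\Kb(\scA)$ by the very definition of the latter notion, and $F$ does the same by hypothesis, so the composite does too. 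Since $\incl$ is the identity on objects, we have $(F \circ \incl)(\scA) = F(\scA) \subset \scB$ and $(F \circ \incl)|_{\scA} = F|_{\scA}$, which is homogeneous by assumption. Thus $F \circ \incl$ meets all the hypotheses of Theorem~\ref{thm:orlov-inf}.

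By Remark~\ref{rmk:orlov-inf}, there is then an isomorphism of functors
\[
F \circ \incl \;\cong\; \incl \circ \Kb(F|_{\scA}) : \Kb(\scA) \to \pse\Kb(\scB).
\]
Next I would identify the induced functor $\tilde F$ furnished by Lemma~\ref{lem:infext-induc}: by construction $\tilde F = \pg \circ F \circ \incl$, so composing the displayed isomorphism with $\pg$ on the left and using $\pg \circ \incl \cong \id_{\Kb(\scA)}$ from~\eqref{eqn:infext-id} gives $\tilde F \cong \pg \circ \incl \circ \Kb(F|_{\scA}) \cong \Kb(F|_{\scA})$. Substituting this back into the displayed isomorphism yields $\incl \circ \tilde F \cong \incl \circ \Kb(F|_{\scA}) \cong F \circ \incl$, which is exactly the defining condition for $F$ to be genuine (Definition~\ref{defn:pseudotri-genuine}).

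I do not anticipate a serious obstacle, since all the real work is already packaged into Theorem~\ref{thm:orlov-inf} and Lemma~\ref{lem:infext-induc}; the delicate points are merely bookkeeping. One must make sure that $F \circ \incl$ honestly satisfies the weaker pseudotriangulatedness hypothesis of Theorem~\ref{thm:orlov-inf} as a functor out of the honest triangulated category $\Kb(\scA)$, and one must keep track of the fact that the functor $\tilde F$ obtained from the explicit formula $\pg \circ F \circ \incl$ coincides, up to isomorphism, with the one appearing in the definition of genuineness --- this is precisely the content of the uniqueness clause of Lemma~\ref{lem:infext-induc}. Finally, it is worth noting where the hypothesis $F(\scA) \subset \scB$ enters: it is needed to make sense of $\Kb(F|_{\scA})$ and to be able to regard $\scB$ as a full subcategory of both $\Kb(\scB)$ and $\pse\Kb(\scB)$, as in the discussion preceding Theorem~\ref{thm:orlov-inf}.
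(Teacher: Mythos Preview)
Your proposal is correct and follows essentially the same approach as the paper: both arguments invoke Remark~\ref{rmk:orlov-inf} to obtain $F \circ \incl \cong \incl \circ \Kb(F|_{\scA})$, and then identify $\tilde F$ with $\Kb(F|_{\scA})$. The only minor difference is that the paper obtains this last identification by a separate appeal to Remark~\ref{rmk:orlov}, whereas you derive it by composing the displayed isomorphism with $\pg$ and using~\eqref{eqn:infext-id}; your route is slightly more economical but not substantively different.
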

\begin{proof}
Let $\tilde F: \Kb(\scA) \to \Kb(\scB)$ be the functor induced by $F$ as in Definition~\ref{defn:infext-induc}.  According to Remark~\ref{rmk:orlov}, we have $\tilde F \cong \Kb(F|_{\scA})$.  On the other hand, by Remark~\ref{rmk:orlov-inf}, we have $F \circ \incl \cong \incl \circ \Kb(F|_{\scA})$.  Therefore, $\incl \circ \tilde F \cong F \circ \incl$, as desired.
\end{proof}

\subsection{Bifunctors}
\label{ss:bifunc}

The results of Section~\ref{ss:orlov-main} can be generalized to bifunctors.  We briefly indicate how to carry out this generalization.  Let $\scA$, $\scA'$, and $\scB$ be Orlov categories.  An additive bifunctor $F: \scA \times \scA' \to \scB$ is said to be \emph{bihomogeneous} if for any two homogeneous objects $X \in \scA$, $X' \in \scA'$, we have that $F(X,X')$ is homogeneous of degree $\deg X + \deg X'$.  In the setting of infinitesimal extensions, an additive bifunctor $F: \pse\Kb(\scA) \times \pse\Kb(\scA') \to \pse\Kb(\scB)$ is said to be \emph{pseudotriangulated} if the functors $F(X,\cdot)$ and $F(\cdot, X')$ are pseudotriangulated for any fixed objects $X$ and $X'$.  Finally, $F$ is said to be \emph{genuine} if there is a triangulated bifunctor $\tilde F: \Kb(\scA) \times \Kb(\scA') \to \Kb(\scB)$ such that $\incl \circ \tilde F \cong F \circ (\incl \times \incl)$.

\begin{thm}\label{thm:biorlov}
Let $\scA$, $\scA'$, and $\scB$ be Orlov categories, and suppose we have two triangulated bifunctors $F, F': \Kb(\scA) \times \Kb(\scA') \to \Kb(\scB)$, or two pseudotriangulated bifunctors $F, F': \Kb(\scA) \times \Kb(\scA') \to \pse\Kb(\scB)$.  Assume that $F(\scA \times \scA') \subset \scB$ and $F'(\scA \times \scA') \subset \scB$, and that the induced functors $F|_{\scA \times \scA'}, F'|_{\scA \times \scA'}: \scA \times \scA' \to \scB$ are bihomogeneous.  Any morphism of additive bifunctors
\[
\theta^\circ: F|_{\scA \times \scA'} \to F'|_{\scA \times \scA'}
\]
can be extended to a morphism $\theta: F \to F'$ of pseudotriangulated bifunctors in such a way that if $\theta^\circ$ is an isomorphism, then $\theta$ is as well.
\end{thm}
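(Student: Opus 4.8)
The plan is to reduce this bifunctorial statement to a \emph{direct} application of Theorems~\ref{thm:orlov} and~\ref{thm:orlov-inf} in the second variable (with the first variable frozen in $\scA$), followed by a \emph{parametrised rerun} of the proof of those theorems in the first variable (with the second variable carried along as a spectator). Throughout I will write $F(X,X')$ for the value of the bifunctor, use freely that $F(X,\cdot)$ and $F(\cdot,X')$ are (pseudo)triangulated for fixed $X$ and $X'$, and handle the two cases (codomain $\Kb(\scB)$ or $\pse\Kb(\scB)$) uniformly, invoking Theorem~\ref{thm:orlov} and Lemma~\ref{lem:dt-orlov} in the former and Theorem~\ref{thm:orlov-inf}, Lemma~\ref{lem:infext-sq-comp}, and Lemma~\ref{lem:dt-orlov-inf} in the latter.

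\emph{Freezing the first variable in $\scA$.} First I would fix $X\in\scA$. Then $F(X,\cdot)$ and $F'(X,\cdot)$ are (pseudo)triangulated functors $\Kb(\scA')\to\Kb(\scB)$ (resp.~$\to\pse\Kb(\scB)$) that carry $\scA'$ into $\scB$ — because $X$ is homogeneous and $F,F'$ are bihomogeneous — and whose restrictions to $\scA'$ are homogeneous. Applying Theorem~\ref{thm:orlov} (resp.~Theorem~\ref{thm:orlov-inf}) to these data and to $\theta^\circ_{X,\cdot}$ produces a morphism $\theta_{X,\cdot}\colon F(X,\cdot)\to F'(X,\cdot)$ of (pseudo)triangulated functors, an isomorphism whenever $\theta^\circ$ is. Because the extension furnished by those theorems is canonical, i.e.~uniquely determined by $\theta^\circ_{X,\cdot}$ (this is exactly what Step~2 of their proof establishes: $\theta$ is independent of every choice made), the assignment $X\mapsto\theta_{X,\cdot}$ is automatically functorial in $X\in\scA$. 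By Remark~\ref{rmk:orlov} one also has $F(X,\cdot)\cong\Kb\bigl(F(X,\cdot)|_{\scA'}\bigr)$ for $X\in\scA$, and likewise for $F'$; passing to these models makes the supports of $F(X,X')$ and $F'(X,X')$ controlled by that of $X'$.

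\emph{Extending the first variable.} Next I would fix $X'\in\Kb(\scA')$ and rerun the support induction from the proofs of Theorems~\ref{thm:orlov} and~\ref{thm:orlov-inf}, now in the \emph{first} variable. When $\supp X$ lies in a single line, so $X\cong B_0[-b]$ with $B_0\in\scA$, put $\theta_{X,X'}=\theta_{B_0,X'}[-b]$. For the inductive step, pick, by Lemma~\ref{lem:dt-orlov}\eqref{it:dt-orlov-exist}, a triangle $P\to X\to Y\ovto{\delta}P[1]$ in $\Kb(\scA)$ with $\supp P$ a single point and $\supp Y$ strictly smaller; apply the (pseudo)triangulated functors $F(\cdot,X')$ and $F'(\cdot,X')$; the square assembled from $F(\delta,X')$, $F'(\delta,X')$, $\theta_{Y,X'}$ and $\theta_{P[1],X'}$ commutes by the inductive hypothesis; complete it to a morphism of distinguished triangles — invoking Lemma~\ref{lem:infext-sq-comp} in the pseudotriangulated case — and let $\theta_{X,X'}$ be the middle arrow. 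Independence of the chosen triangle (the three-triangle argument with $P\oplus P'$ and Lemma~\ref{lem:dt-orlov}\eqref{it:dt-orlov-cone}), the isomorphism property, and naturality in $X$ then follow exactly as in Steps~2 and~3 of their proof, using the uniqueness in Lemma~\ref{lem:dt-orlov}\eqref{it:dt-orlov-unique} or Lemma~\ref{lem:dt-orlov-inf}. Naturality in $X'$ is obtained by a parallel induction on $\supp X$: for $X\in\scA$ it is what the first stage gives, and in general it holds because, for $t\colon X'_1\to X'_2$, the morphisms $\theta_{X,X'_2}\circ F(\id_X,t)$ and $F'(\id_X,t)\circ\theta_{X,X'_1}$ fit into two morphisms of the $F(\cdot,X'_i)$-triangles agreeing on the $P$- and $Y$-terms, hence coincide by the same uniqueness lemmas. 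Assembling, one gets $\theta\colon F\to F'$ extending $\theta^\circ$, a morphism of (pseudo)triangulated bifunctors that is an isomorphism when $\theta^\circ$ is.

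\emph{The main obstacle.} The delicate point is whether the uniqueness statements of Lemma~\ref{lem:dt-orlov}\eqref{it:dt-orlov-unique} and Lemma~\ref{lem:dt-orlov-inf} genuinely apply to the triangle obtained by pushing $P\to X\to Y\to P[1]$ through $F(\cdot,X')$ and $F'(\cdot,X')$: those lemmas require the ``$P$-term'' to be supported at a single point of $\Z\times\Z$, whereas $F(P,X')$, with $P\cong B_0[-b]$, has support a translate of $\supp X'$ and is spread out in general. This is the bifunctorial avatar of the (largely implicit) fact used in the proof of Theorem~\ref{thm:orlov} that applying a homogeneous triangulated functor to an object of $\Kb(\scA)_\Sigma$ yields, up to isomorphism, an object of $\Kb(\scB)_\Sigma$; here one needs the sharper fact, recorded at the end of the first stage, that $F(X,X')$ and $F'(X,X')$ admit models supported in $\{(i+i',j+j') : (i,j)\in\supp X,\ (i',j')\in\supp X'\}$, together with a correspondingly strengthened uniqueness lemma — e.g.~one permitting the ``$P$-term'' to be spread over several degrees, at the cost of a more careful invocation of the degree vanishing of Definition~\ref{defn:orlov}\eqref{it:orlov-deg}. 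Making this matching precise — equivalently, organising the second stage as a single simultaneous induction over the supports of both variables — is where essentially all of the work lies; everything else is the formal manipulation already performed in the one-variable case.
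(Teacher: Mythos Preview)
Your two-stage plan --- extend in $X'$ first via Theorems~\ref{thm:orlov}/\ref{thm:orlov-inf} for each fixed $X\in\scA$, then rerun the support induction in $X$ with $X'\in\Kb(\scA')$ carried along --- is a different organisation from the paper's.  The paper sketches a single \emph{simultaneous} induction on the paragraphs containing $\supp X$ and $\supp X'$, and the new ingredient it highlights is a ``27-lemma'': when both supports have at least two lines, one must check that peeling off the top point of $X$ and peeling off the top point of $X'$ yield the same $\theta_{X,X'}$.  This is done by extending the commutative cube of known maps $\theta_{Y,Y'}$, $\theta_{P[1],Y'}$, $\theta_{Y,P'[1]}$, $\theta_{P[1],P'[1]}$ to a morphism between the two distinguished $9$-tuples (in the sense of~\cite[Proposition~1.1.11]{bbd}) obtained by applying $F$ and $F'$ to both triangles at once; the middle arrow is then identified with either candidate via Lemma~\ref{lem:dt-orlov}\eqref{it:dt-orlov-unique} or~\ref{lem:dt-orlov-inf}.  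Your sequential ordering sidesteps this consistency check altogether, which is a genuine simplification if the rest goes through.

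The gap is exactly the one you flag at the end, and you do not close it.  Your stage-2 induction needs the uniqueness in Lemma~\ref{lem:dt-orlov}\eqref{it:dt-orlov-unique} or~\ref{lem:dt-orlov-inf} applied to the images under $F(\cdot,X')$ and $F'(\cdot,X')$, but those lemmas require the $P$-term to sit at a single point of $\Z\times\Z$, and $F(P,X')$ does not when $X'$ is spread out.  The naive Minkowski-sum strengthening you gesture at is false in general: with $(b,e)=(1,0)$, a point $(0,10)\in\Sigma'$, and $\supp X'\supset\{(0,0),(1,100)\}$, the position-$1$ term of $F(Y,X')$ can carry a degree-$110$ summand while that of $F'(P,X')$ carries one of degree~$0$, so the key vanishing $\Hom(Y^i,P'{}^i)=0$ in the proof of Lemma~\ref{lem:dt-orlov}\eqref{it:dt-orlov-unique} breaks down.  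Your alternative --- reorganise as a simultaneous induction --- is precisely the paper's route, and then the 27-lemma reappears.  (To be fair, the paper's ``as above'' is silent on this same support issue; its sketch leaves this point to the reader as well.)
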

\begin{proof}[Sketch of proof]
We must construct a morphism $\theta_{X,X'}: F(X,X') \to F'(X,X')$ for each object $(X,X') \in \Kb(\scA) \times \Kb(\scA')$.  This construction proceeds by induction on the size of the supports of $X$ and $X'$.  Both supports may be replaced by ``paragraphs.''  If both supports are contained in a single line, then $\theta_{X,X'}$ is easily defined in terms of $\theta^\circ$.  Otherwise, suppose $\supp X$ has at least two lines.  Form a distinguished triangle like~\eqref{eqn:dt-orlov-choice} using Lemma~\ref{lem:dt-orlov}\eqref{it:dt-orlov-exist}, and apply the functors $F(\cdot, X')$ and $F'(\cdot, X')$ to it.  The construction of $\theta_{X,X'}$ and the proof that it is independent of choices involving $X$ are as above.  A similar construction can be carried out if $\supp X'$ instead has at least two lines.

However, when neither $\supp X$ nor $\supp X'$ is contained in a single line, there is a further well-definedness issue: we must check that $\theta_{X,X'}$ does not depend on whether we carried out the above construction using $X$ or using $X'$.  If we apply $F$ to distinguished triangles 
\[
P \to X \to Y \to, \qquad P' \to X' \to Y' \to
\]
coming from Lemma~\ref{lem:dt-orlov}\eqref{it:dt-orlov-exist} in both variables simultaneously, we get a large diagram in $\Kb(\scB)$ or $\pse\Kb(\scB)$ involving nine objects arranged into three horizontal distinguished triangles and three vertical distinguished triangles, as in~\cite[Proposition~1.1.11]{bbd}.  Let us call such a diagram a \emph{distinguished $9$-tuple}.  Applying $F'$ gives us another distinguished $9$-tuple.  To proceed, we must use a ``$27$-lemma,'' stating that a commutative cube (involving the known morphisms $\theta_{Y,Y'}$, $\theta_{P[1],Y'}$, $\theta_{Y,P'[1]}$, and $\theta_{P[1],P'[1]}$) can be extended to a morphism of distinguished $9$-tuples.  That morphism contains a morphism $F(X,X') \to F'(X,X')$, which must coincide with both versions of $\theta_{X,X'}$ by Lemma~\ref{lem:dt-orlov}\eqref{it:dt-orlov-unique} or~\ref{lem:dt-orlov-inf}.
\end{proof}

The proof of Theorem~\ref{thm:orlov-genuine} applies in this setting as well.

\begin{thm}\label{thm:biorlov-genuine}
Let $\scA$, $\scA'$, and $\scB$ be Orlov categories, and let $F: \pse\Kb(\scA) \times \pse\Kb(\scA') \to \pse\Kb(\scB)$ be a pseudotriangulated bifunctor.  If $F(\scA \times \scA') \subset \scB$, and if the induced functor $F|_{\scA \times \scA'}: \scA \times \scA' \to \scB$ is bihomogeneous, then $F$ is genuine. \qed
\end{thm}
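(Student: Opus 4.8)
The plan is to reproduce the proof of Theorem~\ref{thm:orlov-genuine} essentially word for word, with Theorem~\ref{thm:orlov-inf} replaced throughout by its bifunctor analogue, Theorem~\ref{thm:biorlov}. To keep the notation unambiguous, write $\Phi$ for the given pseudotriangulated bifunctor, and set $G := \Phi|_{\scA \times \scA'}\colon \scA \times \scA' \to \scB$, which is a bihomogeneous additive bifunctor by hypothesis. Just as for ordinary additive functors, $G$ induces a bifunctor $\Kb(G)\colon \Kb(\scA) \times \Kb(\scA') \to \Kb(\scB)$ on bounded homotopy categories (formed, as usual, by taking total complexes of termwise double complexes), and this $\Kb(G)$ is triangulated in each variable. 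I would take $\Kb(G)$ to be the triangulated bifunctor $\tilde\Phi$ that the definition of genuineness in Section~\ref{ss:bifunc} asks us to exhibit; it then remains only to produce an isomorphism $\incl \circ \Kb(G) \cong \Phi \circ (\incl \times \incl)$ of bifunctors $\Kb(\scA) \times \Kb(\scA') \to \pse\Kb(\scB)$.

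For this I would apply Theorem~\ref{thm:biorlov} to the two bifunctors $F_1 := \Phi \circ (\incl \times \incl)$ and $F_2 := \incl \circ \Kb(G)$, with $\theta^\circ$ the identity isomorphism. Both are pseudotriangulated: $F_1$ because $\Phi$ is pseudotriangulated in each variable and $\incl$ sends distinguished triangles to distinguished triangles by the very definition of distinguished triangles in an infinitesimal extension, and $F_2$ because $\Kb(G)$ is triangulated in each variable while $\incl\colon \Kb(\scB) \to \pse\Kb(\scB)$ again preserves distinguished triangles. Since $\Phi(\scA \times \scA') \subset \scB$, both $F_1$ and $F_2$ carry $\scA \times \scA'$ into $\scB$, which is identified with a full subcategory of $\pse\Kb(\scB)$ (here one uses, as before Theorem~\ref{thm:orlov-inf}, that $\Hom_{\Kb(\scB)}(X, Y[-1]) = 0$ for $X, Y \in \scB$); using in addition $\pg \circ \incl \cong \id$, one checks that $F_1|_{\scA \times \scA'}$ and $F_2|_{\scA \times \scA'}$ are both canonically identified with $G$, and in particular are bihomogeneous. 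Theorem~\ref{thm:biorlov} then yields an isomorphism $\theta\colon F_1 \simto F_2$, that is, $\Phi \circ (\incl \times \incl) \cong \incl \circ \Kb(G) = \incl \circ \tilde\Phi$. By definition this says that $\Phi$ is genuine.

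I do not expect a substantial obstacle here. The delicate combinatorics for bifunctors --- the ``distinguished $9$-tuple'' and ``$27$-lemma'' bookkeeping --- has already been absorbed into the proof of Theorem~\ref{thm:biorlov}, and everything else is a transcription of the proof of Theorem~\ref{thm:orlov-genuine}. The only points calling for a moment of care are (i) confirming that $\incl \circ \Kb(G)$ is indeed a pseudotriangulated bifunctor, and (ii) writing out the canonical identifications of $F_1|_{\scA \times \scA'}$ and $F_2|_{\scA \times \scA'}$ with $G$ precisely enough that the single datum $\theta^\circ = \id_G$ can be fed to Theorem~\ref{thm:biorlov}; both are routine unwindings of the definitions, exactly parallel to the single-functor case.
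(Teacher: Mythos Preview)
Your proposal is correct and follows exactly the approach the paper intends: the paper simply states that ``the proof of Theorem~\ref{thm:orlov-genuine} applies in this setting as well'' and ends with a \qed, and your argument is precisely the bifunctor transcription of that proof, invoking Theorem~\ref{thm:biorlov} in place of Theorems~\ref{thm:orlov} and~\ref{thm:orlov-inf} (via their Remark reformulations). The only cosmetic difference is that the single-variable proof first identifies the abstractly induced functor $\tilde F = \pg \circ F \circ \incl$ with $\Kb(F|_{\scA})$ before comparing with $F \circ \incl$, whereas you take $\tilde\Phi = \Kb(G)$ from the outset; since the bifunctor definition of genuineness in Section~\ref{ss:bifunc} only asks for the existence of some $\tilde F$, your shortcut is harmless.
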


\section{Koszul duality from Orlov categories}
\label{sect:kosorlov}

In this section, we will show that there is a very close relationship between Koszul categories and a certain class of Orlov categories, called \emph{Koszulescent} Orlov categories.  Specifically, we will prove in Section~\ref{ss:koszulescent} that there is a one-to-one correspondence 
\begin{equation}\label{eqn:koszul-orlov}
\xymatrix@C=60pt{
{\left\{
\begin{array}{c}
\text{equivalence classes} \\
\text{of split Koszul} \\
\text{abelian categories}
\end{array}
\right\}}
\ar@{<->}[r]^{\sim} &
{\left\{
\begin{array}{c}
\text{equivalence classes} \\
\text{of Koszulescent} \\
\text{Orlov categories}
\end{array}
\right\}.} }
\end{equation}
In one direction, the map is easy to describe: given a split Koszul category $\scM$, it turns out that the category of all pure objects of weight $0$ in $\Db(\scM)$ is a Koszulescent Orlov category.  (The description of the map in the other direction is given in Section~\ref{ss:tstruc}.)  This correspondence may be seen as a generalization of Koszul duality (Theorem~\ref{thm:koszul-duality}); indeed, as promised in Section~\ref{ss:koszul-cat}, we give in Section~\ref{ss:koszul-duality} a new proof of Theorem~\ref{thm:koszul-duality} based on the correspondence~\eqref{eqn:koszul-orlov}.

\subsection{A $t$-structure on $\Kb(\scA)$}
\label{ss:tstruc}

Let $\scA$ be an Orlov category.  Consider the following two subsets of $\Z \times \Z$:
\[
\lhd = \{(i,j) \mid i \le -j\},
\qquad
\rhd = \{(i,j) \mid i \ge -j\}.
\]
As in the previous section, we associate to these subsets certain full subcategories $\Kb(\scA)_{\lhd}$, $\Kb(\scA)_{\rhd}$ of $\Kb(\scA)$.  We begin with some lemmas about these categories.

\begin{lem}\label{lem:kosorlov-orth}
If $X \in \Kb(\scA)_{\lhd}$ and $Y[1] \in \Kb(\scA)_{\rhd}$, then $\Hom(X,Y) = 0$.
\end{lem}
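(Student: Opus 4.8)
The plan is to reduce the claim to a computation of $\Hom$-groups between homogeneous objects, then exploit the degree axiom of an Orlov category together with a dimension count on the support regions $\lhd$ and $\rhd$. First I would reduce to the case where $X = (X^\bullet, d_X)$ and $Y = (Y^\bullet, d_Y)$ are honest chain complexes with $\supp X \subset \lhd$ and $\supp(Y[1]) \subset \rhd$; unwinding the shift, the latter says $\supp Y \subset \{(i,j) \mid i \ge -j - 1\}$, i.e. each $Y^i$ is homogeneous of degrees $\ge -i-1$. Now any morphism $X \to Y$ in $\Kb(\scA)$ is represented by a chain map $(f^i : X^i \to Y^i)_{i \in \Z}$. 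I would argue that every component $f^i$ is zero: each $X^i$ is homogeneous of degrees $\le -i$ (from $\supp X \subset \lhd$) while each $Y^i$ is homogeneous of degrees $\ge -i-1$, so the only degrees that can contribute to $\Hom(X^i, Y^i)$ are $-i$ (on the source side) against $-i-1$ (on the target side)—but wait, $-i > -i-1$, so by Definition~\ref{defn:orlov}\eqref{it:orlov-deg} (with the inequality $\deg S' = -i-1 < -i = \deg S$, hence $\Hom(S,S') = 0$ unless $S \cong S'$, which is impossible here since the degrees differ) we get $\Hom(X^i, Y^i) = 0$ componentwise.

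Hold on—that last count is slightly too generous: $X^i$ has degrees $\le -i$, not exactly $-i$, and $Y^i$ has degrees $\ge -i-1$, so there \emph{can} be overlap, namely in degrees $-i-1$ and below versus... no: degrees of $X^i$ are among $\{\dots, -i-2, -i-1, -i\}$ and degrees of $Y^i$ are among $\{-i-1, -i, -i+1, \dots\}$, so the overlap is in degrees $-i-1$ and $-i$. In those overlapping degrees the indecomposable summands \emph{can} be isomorphic, so $\Hom(X^i, Y^i)$ need not vanish, and a single chain map need not be zero. Therefore the genuine argument must instead produce a null-homotopy. The right approach is the one used repeatedly in Section~\ref{sect:hot-orlov}: induct on the support of $X$ using Lemma~\ref{lem:dt-orlov}\eqref{it:dt-orlov-exist}. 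Given $X \in \Kb(\scA)_\lhd$ with $\supp X$ finite, peel off the top piece: there is a distinguished triangle $P \to X \to X' \to P[1]$ with $\supp P \subset \{(i_0,j_0)\}$ (the largest element of $\supp X$, which still lies in $\lhd$) and $\supp X' \subset \supp X \smallsetminus \{(i_0,j_0)\} \subset \lhd$. Applying $\Hom(-,Y)$ gives an exact sequence $\Hom(X',Y) \to \Hom(X,Y) \to \Hom(P,Y)$, so by induction on $|\supp X|$ it suffices to treat $X = P[-i_0]'$... more precisely $X \in \Kb(\scA)_{\{(i_0,j_0)\}}$, i.e. $X \cong A[-i_0]$ for a homogeneous object $A$ of degree $j_0$ with $i_0 \le -j_0$.

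In that base case, $\Hom_{\Kb(\scA)}(A[-i_0], Y) = \Hom_{\Kb(\scA)}(A, Y[i_0])$, and since $A$ sits in a single degree (homological degree $0$), this equals the cokernel of $d : \Hom_\scA(A, Y^{i_0 - 1}) \to \Hom_\scA(A, Y^{i_0})$ modulo the image of $\Hom_\scA(A, Y^{i_0+1})$ pulled back—concretely it is a subquotient of $\Hom_\scA(A, Y^{i_0})$. Now $A$ is homogeneous of degree $j_0 \le -i_0$, while $Y^{i_0}$ is homogeneous of degrees $\ge -(i_0) - 1 = -i_0 - 1$; since $j_0 \le -i_0$ but we need strict comparison, split $Y^{i_0}$ into its degree-$(-i_0-1)$ part, its degree-$(-i_0)$ part, and higher. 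Degrees of $A$ are $\le -i_0$; the problematic overlap is again degrees $-i_0-1$ and $-i_0$. Hmm—this shows the naive support bound $\lhd$ versus $\rhd$ is genuinely tight and the vanishing is not purely formal from Definition~\ref{defn:orlov}\eqref{it:orlov-deg} at the level of a single $\Hom_\scA$. I expect the main obstacle is exactly this: one must use that the \emph{differential} $d_Y$ is a degree-preserving... it is not degree-preserving, but the key point is that the overlap degrees $-i_0$ and $-i_0-1$ correspond, after shift, to homological degrees $0$ and $-1$ of $Y[i_0]$, and $\Hom^0$ and $\Hom^{-1}$ from something concentrated in degree $\le -i_0 \le$ the weight bound get killed by the combination of the Orlov degree axiom applied at \emph{each} homological spot plus the triangle $P\to X\to X'$ reduction applied symmetrically to $Y$ as well (peeling off the \emph{bottom} of $Y$, i.e. the $j$-smallest summand, dually using the second exact sequence in a $\Hom$ long exact sequence). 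So the clean plan: induct on $|\supp X| + |\supp(Y[1])|$; use Lemma~\ref{lem:dt-orlov}\eqref{it:dt-orlov-exist} on $X$ (top element, in $\lhd$) and a dual version on $Y$ (bottom element, so that $Y[1]$'s support shrinks within $\rhd$); the base case has $X \cong A[m]$ and $Y \cong B[n]$ with $A,B$ homogeneous, $m \le -\deg A$, $n \ge -\deg B - 1$, whence $\Hom_{\Kb(\scA)}(A[m], B[n]) = \Hom_\scA(A,B)$ if $m = n$ and $0$ otherwise; and $m = n$ forces $\deg A \le -m = -n \le \deg B + 1$, while $\deg A \ge \deg B$ is needed for a nonzero $\Hom_\scA(A,B)$ together with $A \cong B$—but then $\deg A = \deg B$ and $-m \ge \deg A = \deg B \ge -m - 1$ still allows it, so one further observes that $(i,j) = (m, \deg A) \in \lhd$ means $m \le -\deg A$, i.e. $\deg A \le -m$, \emph{and} $(i+1, j) = (m+1, \deg B) \in \rhd$ (from $Y[1] \in \Kb(\scA)_\rhd$, $\supp Y[1] = \{(m+1, \deg B)\}$) means $m + 1 \ge -\deg B$, i.e. $\deg B \ge -m-1$; combined with $\deg A = \deg B$ and $A\cong B$ forcing nothing contradictory—so in fact the base case $m=n$ with $\deg A = \deg B = -m$ is \emph{not} excluded, and the honest statement must be that such configurations don't actually arise because a complex in $\Kb(\scA)_\lhd$ concentrated on the line $\lhd$'s boundary, once reduced, has vanishing differential forcing it out of the relevant range—I would resolve this residual case by noting $\Hom_{\Kb(\scA)}(A[m],B[m]) = \Hom_\scA(A,B)$ requires no homotopy correction and then invoking that the triangle decomposition was chosen so that $\delta : X' \to P[1]$ and its analogue glue the boundary pieces, killing exactly the diagonal $i = -j$ contributions via the connecting maps; in short, the hard part is bookkeeping the boundary line $i = -j$, and I would handle it by the two-sided dévissage (peel top of $X$, peel bottom of $Y$) which strictly decreases the total support until one lands in a configuration where the degree axiom applies with a strict inequality.
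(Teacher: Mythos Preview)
Your first approach is exactly the paper's proof, but you made a sign error in the shift and then abandoned it. Recall that the standard convention is $(Y[1])^k = Y^{k+1}$, so $Y^i = (Y[1])^{i-1}$. Thus $(i,j)\in\supp Y$ if and only if $(i-1,j)\in\supp(Y[1])$, and $\supp(Y[1])\subset\rhd$ gives $i-1\ge -j$, i.e.\ $j\ge -i+1$. Hence each $Y^i$ has degrees $\ge -i+1$, \emph{not} $\ge -i-1$. Since each $X^i$ has degrees $\le -i$, every indecomposable summand $S$ of $X^i$ satisfies $\deg S \le -i < -i+1 \le \deg S'$ for every indecomposable summand $S'$ of $Y^i$; in particular $S\not\cong S'$ and Definition~\ref{defn:orlov}\eqref{it:orlov-deg} gives $\Hom_\scA(S,S')=0$. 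So $\Hom_\scA(X^i,Y^i)=0$ for all $i$, every chain map $X\to Y$ is zero, and $\Hom_{\Kb(\scA)}(X,Y)=0$. No homotopies or d\'evissage are needed.

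The entire second half of your proposal is chasing a phantom overlap created by this shift error. Note in particular that your base-case analysis at the end never closes: you correctly observe that with your (incorrect) bounds the configuration $m=n$, $\deg A=\deg B=-m$ is not excluded, and the vague appeal to ``connecting maps killing the diagonal'' is not an argument. The lesson here is that when a straightforward degree count seems to leave a one-step gap in a statement that is supposed to be the easy orthogonality axiom of a $t$-structure, it is worth rechecking the arithmetic before building elaborate machinery around it.
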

\begin{proof}
We may assume that the underlying chain complexes of $X$ and $Y$ are such that  $\supp X \subset \lhd$ and $\supp Y[1] \subset \rhd$.  Then, for each $i \in \Z$, the homogeneous summands of $X^i$ have degree${}\le -i$, while those of $Y^i = (Y[1])^{i-1}$ have degree${}\ge -i+1$, so there are no nonzero morphisms $X^i \to Y^i$.  It follows that $\Hom(X,Y) = 0$ in $\Kb(\scA)$.
\end{proof}

\begin{lem}\label{lem:kosorlov-cone}
Let $S \in \Ind(\scA)$. 
\begin{enumerate}
\item If $X \in \Kb(\scA)_{\rhd}$, the cone of any nonzero morphism $S[\deg S] \to X$ lies in $\Kb(\scA)_{\rhd}$.
\item If $X \in \Kb(\scA)_{\lhd}$, the cocone of any nonzero morphism $X \to S[\deg S]$ lies in $\Kb(\scA)_{\lhd}$.
\end{enumerate}
\end{lem}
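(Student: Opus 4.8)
The plan is to establish part~(1) by an explicit analysis of the mapping cone at the chain level; part~(2) then follows by the dual argument, or by applying~(1) to the opposite Orlov category $\scA^{\op}$ (with degree function $-\deg$) under the identification $\Kb(\scA)^{\op}\cong\Kb(\scA^{\op})$, which exchanges $\lhd$ with $\rhd$ and cones with cocones.

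Put $d=\deg S$ and fix a representative $(X^\bullet,d_X)$ of $X$ with $\supp X\subset\rhd$; then every homogeneous summand of $X^{-d}$ has degree ${}\ge d$, and likewise every summand of $X^{-d-1}$ has degree ${}\ge d+1$. Representing the given morphism $S[\deg S]\to X$ by a chain map, and using that $S[d]$ is concentrated in homological degree $-d$, the chain map amounts to a single map $f^{-d}\colon S\to X^{-d}$ with $d_X^{-d}\circ f^{-d}=0$, and $f^{-d}\ne 0$ since the morphism is nonzero. The crucial observation is that $f^{-d}$ is a \emph{split monomorphism}: as $S$ is indecomposable of degree $d$, parts~\eqref{it:orlov-endind} and~\eqref{it:orlov-deg} of Definition~\ref{defn:orlov}, together with the fact that $X^{-d}$ has only summands of degree ${}\ge d$, force any map $S\to X^{-d}$ to land in the $S$-isotypic part of $X^{-d}$, hence (if nonzero) to be split injective. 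Thus $X^{-d}\cong S\oplus W$, with $f^{-d}$ the inclusion of the first factor and $W$ still having all homogeneous summands in degrees ${}\ge d$; moreover $d_X^{-d}\circ f^{-d}=0$ says precisely that $d_X^{-d}$ annihilates this copy of $S$.

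Next I would write out $C=\cone(f)$ at the chain level: it coincides with $X^\bullet$ except that $C^{-d-1}=S\oplus X^{-d-1}$, the only affected differentials being $C^{-d-2}=X^{-d-2}\to S\oplus X^{-d-1}$ (which is zero on the $S$-summand of the target) and $S\oplus X^{-d-1}\to X^{-d}$ (which restricts to $f^{-d}$ on the $S$-summand of the source). The only feature incompatible with $\rhd$ is this copy of $S$ in homological degree $-d-1$, i.e.\ the point $(-d-1,d)\notin\rhd$. But that copy of $S$, together with the copy of $S$ in $X^{-d}=S\oplus W$ sitting inside $C^{-d}$, constitutes a subcomplex $N$ of $C$ with differential $\id_S$ --- the two vanishings needed for this are exactly the two parenthetical facts just stated. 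Since $N$ is contractible it is isomorphic to $0$ in $\Kb(\scA)$, and since $N$ is a degreewise direct summand of $C$, Lemma~\ref{lem:chain-ses} yields a distinguished triangle $N\to C\to C/N\to N[1]$, whence $C\cong C/N$ in $\Kb(\scA)$. Finally $C/N$ agrees with $X$ outside homological degrees $-d-1$ and $-d$, where its terms are $X^{-d-1}$ and $W$; both of these have their homogeneous summands in degrees compatible with $\rhd$, so $C/N\in\Kb(\scA)_\rhd$, and therefore $C\in\Kb(\scA)_\rhd$.

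The one step where anything genuine happens is the passage from ``the morphism is nonzero'' to ``$f^{-d}$ is a split monomorphism'', which uses indecomposability of $S$, the Orlov degree axiom, and the hypothesis $X\in\Kb(\scA)_\rhd$ simultaneously; if $f^{-d}$ were merely nonzero (not split), or if $X^{-d}$ could contain summands of degree $<d$, the contractible subcomplex $N$ would not split off and the conclusion would in fact fail --- as is already shown by the case $f=0$, where $\cone(f)=X\oplus S[d+1]$. Everything else is routine bookkeeping with the explicit mapping cone and Lemma~\ref{lem:chain-ses}.
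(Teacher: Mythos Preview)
Your proof is correct and follows essentially the same approach as the paper's: both arguments hinge on showing that $f^{-d}$ is a split monomorphism (you argue this directly from the Orlov axioms, while the paper routes through the homogeneous-cokernel Lemma~\ref{lem:homog-coker}), and then identifying a representative of the cone with support in $\rhd$. The paper builds this representative $Y$ directly and verifies that $Y[-1]\to S[p]\to X\to Y$ is a triangle; you write out the standard mapping cone $C$ and peel off the contractible degreewise summand $N$ via Lemma~\ref{lem:chain-ses}, arriving at the same complex $C/N=Y$. One minor imprecision: the vanishing you label as ``zero on the $S$-summand of the target'' for $d_C^{-d-2}$ is not actually needed for $N$ to be a subcomplex (only $d_C^{-d-1}(S)\subset S$ and $d_C^{-d}(S)=0$ are), though it is true and harmless to note.
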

\begin{proof}
We will prove only the first assertion; the second one is similar.  Let $p = \deg S$.  Assume that the chain complex $X = (X^\bullet, d_X)$ is such that $\supp X \subset \rhd$. Write $X^{-p}$ as a direct sum $X^{-p} = A \oplus B$, where $A$ is homogeneous of degree $p$, and $B$ is a direct sum of homogeneous summands whose degrees are${}>p$.  Since $\Hom(S,B) = 0$, any nonzero chain map $f: S[p] \to X$ must have the form
\[
f^k = 
\begin{cases}
0 & \text{if $k \ne -p$,} \\
[\begin{smallmatrix} a \\ 0 \end{smallmatrix}]: S \to A \oplus B & \text{if $k = -p$,}
\end{cases}
\]
where $a: S \to A$ is some nonzero map.  Let $q: A \to A'$ be the homogeneous cokernel of $a$, as in Lemma~\ref{lem:homog-coker}.  That lemma also tells us that $q$ may be regarded as a projection onto a direct summand of $A$.  Here, we claim that $a$ is in fact the inclusion map of a complementary summand.  We can certainly write $A \cong C \oplus A'$ for some homogeneous object $C$.  Since $q \circ a = 0$, we have $a = [\begin{smallmatrix} c\\ 0\end{smallmatrix}]$ for some $c: S \to C$.  But if $c$ is not an isomorphism, then it has its own nonzero cokernel, contradicting the universal property of $q$.  Thus, $a$ identifies $S$ with the summand $C$.

Let $Y^{-p}$ denote the object $A' \oplus B$.  We henceforth identify $X^{-p}$ with the direct sum $S \oplus Y^{-p}$, and $f^{-p}: S \to S \oplus Y^{-p}$ with the inclusion map of the first summand.  Let $r: S \oplus Y^{-p} \to Y^{-p}$ be the projection to the second summand.  We may write the differential $d_X^{-p}: S \oplus Y^{-p} \to X^{-p+1}$ as a matrix
\[
d_X^{-p} = \big[\begin{matrix} 0 & s\end{matrix}\big].
\]
Here, the first entry must be $0$ because $d_X^{-p} \circ f^{-p} = 0$.  Similarly, we may write $d_X^{-p-1}$ as a matrix
\[
d_X^{-p-1} = \begin{bmatrix} u \\ v \end{bmatrix}.
\]
Since $d_X^{-p} \circ d_X^{-p-1} = 0$, we see that $s \circ v = 0$.  We also have $d_X^{-p+1} \circ s = 0$ and $v \circ d_X^{-p-2} = 0$.

We now regard the object $Y^{-p}$ as a term of the chain complex $Y = (Y^\bullet, d_Y)$ given by
\[
Y^k = 
\begin{cases}
X^k & \text{if $k \ne -p$,} \\
A' \oplus B & \text{if $k = -p$,}
\end{cases}
\qquad
d_Y^k =
\begin{cases}
d_X^k & \text{if $k \ne -p, -p-1$,} \\
s & \text{if $k = -p$,} \\
v & \text{if $k = -p-1$.}
\end{cases}
\]
The observations in the preceding paragraph show that $d_Y^k \circ d_Y^{k-1} = 0$ for all $k$, so this is a well-defined chain complex.  Next, we define a morphism $g: Y[-1] \to S[p]$ by
\[
g^k =
\begin{cases}
0 & \text{if $k \ne -p$,} \\
u: Y^{-p-1} \to S & \text{if $k = -p.$}
\end{cases}
\]
Again, this is a chain map since $u \circ d_X^{-p-2} = 0$.  

It is now easy to see that the cone of $u: Y[-1] \to S[p]$ is none other than $X$, and that the second map in the triangle $Y[-1] \to S[p] \to X \to$ is $f$.  Thus, the cone of $f$ is isomorphic to $Y$, which lies in $\Kb(\scA)_{\rhd}$ by construction.
\end{proof}

\begin{lem}\label{lem:kosorlov-trunc}
For any $X \in \Kb(\scA)$, there is a distinguished triangle $A \to X \to B \to$ with $A \in \Kb(\scA)_{\lhd}$ and $B[1] \in \Kb(\scA)_{\rhd}$.
\end{lem}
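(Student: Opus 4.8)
The plan is to realize the required triangle as a ``stupid truncation'' of a well-chosen chain-complex representative of $X$, split along the antidiagonal $\lhd \cap \rhd = \{(i,j) \mid i+j = 0\}$. Represent $X$ by a bounded complex $(X^\bullet, d_X)$ over $\scA$; since $\scA$ is Krull--Schmidt, each term splits according to degree as $X^i = \bigoplus_j X^i_j$ with $X^i_j$ homogeneous of degree $j$. Set $A^i = \bigoplus_{j \le -i} X^i_j$ and $B^i = \bigoplus_{j \ge -i+1} X^i_j$, so that $X^i = A^i \oplus B^i$ for every $i$. If the $A^i$ form a subcomplex $A \subseteq X$, with quotient $B = X/A$, then Lemma~\ref{lem:chain-ses} turns the termwise-split sequence into a distinguished triangle $A \to X \to B \to A[1]$ in $\Kb(\scA)$; and since $\supp A \subseteq \{(i,j) \mid i+j \le 0\} = \lhd$ we get $A \in \Kb(\scA)_\lhd$, while $\supp B \subseteq \{(i,j) \mid i+j \ge 1\}$ gives $\supp B[1] \subseteq \rhd$, i.e.\ $B[1] \in \Kb(\scA)_\rhd$. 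So it suffices to arrange that $A$ is a subcomplex.

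By condition~\eqref{it:orlov-deg} of Definition~\ref{defn:orlov}, a component $X^i_j \to X^{i+1}_{j'}$ of $d_X$ can be nonzero only if $j \ge j'$; thus $d_X$ is ``degree-nonincreasing'', and comparing with the shape of $A^{i+1}$ one sees that the only obstruction to $d_X(A^i) \subseteq A^{i+1}$ is the component of $d^i_X$ from $X^i_{-i}$ to $X^{i+1}_{-i}$ — a morphism between two homogeneous objects of degree $-i$. The crux is therefore to replace $X$ by a homotopy-equivalent complex in which this ``antidiagonal'' component of the differential vanishes for all $i$.

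For this step I would use Gaussian elimination. Because $X^i_{-i}$ and $X^{i+1}_{-i}$ are finite direct sums of indecomposables of degree $-i$, conditions~\eqref{it:orlov-endind} and~\eqref{it:orlov-deg} of Definition~\ref{defn:orlov} present $\Hom(X^i_{-i}, X^{i+1}_{-i})$ as a block-diagonal space of matrices over $\Bbbk$, so if the antidiagonal component of $d^i_X$ is nonzero then, after suitable automorphisms of $X^i$ and $X^{i+1}$ supported on their degree-$(-i)$ summands, it contains an identity morphism $S \simto S$ for some indecomposable $S$ of degree $-i$; this may be cancelled by the standard Gaussian-elimination homotopy equivalence. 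Each cancellation merely deletes a pair of indecomposable summands and modifies $d_X$ between the surviving terms in cohomological degree $i$, so it never enlarges the support, never disturbs the antidiagonal component in any other cohomological degree, and strictly lowers the total number of indecomposable summands; hence after finitely many steps all antidiagonal components vanish and the first paragraph applies. The only real work lies in this bookkeeping about the differential; everything else is formal. (Alternatively one could induct on $\#\supp X$, splitting off the lexicographically largest cell by Lemma~\ref{lem:dt-orlov}\eqref{it:dt-orlov-exist}, truncating the rest inductively, and reassembling via the octahedral axiom and Lemma~\ref{lem:kosorlov-orth}; in that route the case where the split-off cell sits exactly one row above the antidiagonal is precisely where Lemma~\ref{lem:kosorlov-cone} is needed.)
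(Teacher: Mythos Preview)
Your Gaussian-elimination argument is correct and takes a genuinely different route from the paper. The paper proceeds by induction on $|\supp X|$ in the $*$-formalism of~\cite[\S1.3.9]{bbd}: it peels off the lexicographically largest cell via Lemma~\ref{lem:dt-orlov}\eqref{it:dt-orlov-exist}, truncates the remainder inductively, and then reassembles, using Lemma~\ref{lem:kosorlov-orth} to swap $*$-factors when the cell lies strictly above the antidiagonal and Lemma~\ref{lem:kosorlov-cone} in the boundary case $i-1=-j$ --- precisely your parenthetical alternative. Your primary approach instead realizes the triangle concretely as a termwise-split short exact sequence of chain complexes, after killing the antidiagonal components of the differential by cancelling contractible summands. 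This avoids both the $*$-calculus and Lemma~\ref{lem:kosorlov-cone}, and exhibits the truncation as an honest subcomplex of a chain-level model; the paper's route, by contrast, stays entirely inside the triangulated category. Your bookkeeping claim is sound: cancelling a degree-$(-i)$ pair modifies $d^{i\pm1}$ only by restriction to surviving summands, and the preparatory change-of-basis automorphisms are supported on the degree-$(-i)$ pieces, so the antidiagonal components of $d^{i\pm1}$ (which live in degrees $-i\pm1$) are untouched and previously cleared antidiagonals stay cleared.
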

\begin{proof}
We will make use of the ``$*$'' operation for objects in a triangulated category from~\cite[\S1.3.9]{bbd}.  In this language, we must show that
\begin{equation}\label{eqn:kosorlov-trunc}
X \in \Kb(\scA)_{\lhd} * (\Kb(\scA)_{\rhd}[-1]).
\end{equation}
We proceed by induction on the size of the support of $X$.  If $\supp X$ is a singleton, then $X \cong A[n]$ for some homogeneous object $A \in \scA$ and some $n \in \Z$.  If $n \ge \deg A$, then $X \in \Kb(\scA)_{\lhd}$; otherwise, $X[1] \in \Kb(\scA)_{\rhd}$.  In either case,~\eqref{eqn:kosorlov-trunc} holds trivially.

Next, consider the general case.  Let $\Sigma = \supp X$, and let $(i,j)$ be the largest element of $\Sigma$.  Lemma~\ref{lem:dt-orlov}\eqref{it:dt-orlov-exist} says that there are objects $P$ and $Y$ such that
\[
X \in \{P\} * \{Y\},
\]
with $\supp P = \{(i,j)\}$ and $\supp Y \subset \Sigma' = \Sigma \smallsetminus \{(i,j)\}$.  By induction, we may assume that there exist objects $A' \in \Kb(\scA)_{\lhd}$ and $B' \in \Kb(\scA)_{\rhd}[-1]$ such that $Y \in \{A'\} * \{B'\}$, so that
\[
X \in \{P\} * \{A'\} * \{B'\}.
\]
If $i \le -j$, then $P \in \Kb(\scA)_{\lhd}$, so $\{P\} * \{A'\} \subset \Kb(\scA)_{\lhd}$, and then~\eqref{eqn:kosorlov-trunc} follows.  On the other hand, if $i > -j$, we proceed by induction on the number of indecomposable summands in the homogeneous object $P$.  Write $P = S[-i] \oplus P'$, where $S \in \Ind(\scA)$ is an indecomposable object of degree $j$, and where $P'$ contains fewer indecomposable summands (possibly zero).  We then have
\[
X \in \{S[-i]\} * \{P'\} * \{A'\} * \{B'\}.
\]
By induction, we have $\{P'\} * \{A'\} * \{B'\} \subset \Kb(\scA)_{\lhd} * (\Kb(\scA)_{\rhd}[-1])$.  Thus, there exist objects $A'' \in \Kb(\scA)_{\lhd}$ and $B'' \in \Kb(\scA)_{\rhd}[-1]$ so that
\[
X \in \{S[-i]\} * \{A''\} * \{B''\}.
\]
Consider a distinguished triangle 
\begin{equation}\label{eqn:kosorlov-trunc2}
S[-i] \to Z \to A'' \to S[-i+1].
\end{equation}
If $i-1 > -j$, then $S[-i+1] \in \Kb(\scA)_{\rhd}[-1]$, so we know by Lemma~\ref{lem:kosorlov-orth} that $\Hom(A'', S[-i+1]) = 0$.  Therefore, every such triangle splits, and we have $\{S[-i]\} * \{A''\} \subset \{A''\} * \{S[-i]\}$.  It follows that
\[
X \in \{A''\} * ( \{S[-i]\} * \{B''\}) \subset \Kb(\scA)_{\lhd} * (\Kb(\scA)_{\rhd}[-1]),
\]
as desired.  On the other hand, if $i-1 \not> -j$, we must have $i - 1 = -j$ (recall that $i > -j$).  If the triangle~\eqref{eqn:kosorlov-trunc2} splits, then the preceeding argument still applies.  But if~\eqref{eqn:kosorlov-trunc2} does not split, i.e., if the map $A'' \to S[-i+1]$ is nonzero, then we are in the setting of Lemma~\ref{lem:kosorlov-cone}, which tells us that $Z \in \Kb(\scA)_{\lhd}$.  Since $X \in \{Z\} * \{B''\}$, we see that~\eqref{eqn:kosorlov-trunc} holds in this case as well.
\end{proof}

\begin{prop}\label{prop:kosorlov-tstruc}
For any Orlov category $\scA$, the pair $(\Kb(\scA)_{\lhd}, \Kb(\scA)_{\rhd})$ is a bounded $t$-structure on $\Kb(\scA)$.  The heart
\[
\Kos(\scA) = \Kb(\scA)_{\lhd} \cap \Kb(\scA)_{\rhd}
\]
is a split finite-length abelian category, and the simple objects in $\Kos(\scA)$ are those isomorphic to objects in the set
\[
\Irr(\Kos(\scA)) = \{ S[\deg S] \mid S \in \Ind(\scA) \}.
\]
Moreover, $\Kos(\scA)$ has the structure of a mixed category, with weight function $\wt: \Irr(\Kos(\scA)) \to \Z$ given by $\wt(S[\deg S]) = \deg S$.
\end{prop}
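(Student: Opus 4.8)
The plan is to verify the $t$-structure axioms directly from Lemmas~\ref{lem:kosorlov-orth} and~\ref{lem:kosorlov-trunc}, and then to extract the rest of the statement from one elementary $\Hom$-computation together with an induction on the size of supports.

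\emph{The $t$-structure.} Set $\scD^{\le n} := \Kb(\scA)_{\lhd}[-n]$ and $\scD^{\ge n} := \Kb(\scA)_{\rhd}[-n]$. Since $(i,j)\in\lhd$ implies $(i-1,j)\in\lhd$ and $(i,j)\in\rhd$ implies $(i+1,j)\in\rhd$, the subcategory $\Kb(\scA)_{\lhd}$ is stable under $[1]$ and $\Kb(\scA)_{\rhd}$ under $[-1]$, so $\scD^{\le 0}\subseteq\scD^{\le 1}$ and $\scD^{\ge 1}\subseteq\scD^{\ge 0}$. The orthogonality $\Hom(\scD^{\le 0},\scD^{\ge 1})=0$ is Lemma~\ref{lem:kosorlov-orth}, and the existence of truncation triangles is Lemma~\ref{lem:kosorlov-trunc}, so $(\Kb(\scA)_{\lhd},\Kb(\scA)_{\rhd})$ is a $t$-structure; it is bounded because any object represented by a complex with support in a finite set $\Sigma$ lies in $\scD^{\le n}\cap\scD^{\ge -n}$ once $n\ge\max_{(i,j)\in\Sigma}|i+j|$. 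Hence $\Kos(\scA)$ is abelian by \cite[Th\'eor\`eme~1.3.6]{bbd}, and there are $t$-cohomology functors $H^n\colon\Kb(\scA)\to\Kos(\scA)$.

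\emph{The key computation and simplicity.} For $S,T\in\Ind(\scA)$ and $n\in\Z$, the complexes $S[\deg S]$ and $T[\deg T][n]$ are concentrated in the single cohomological degrees $-\deg S$ and $-\deg T-n$; as there are neither nonzero chain maps nor nonzero homotopies between complexes concentrated in distinct single degrees, $\Hom_{\Kb(\scA)}(S[\deg S],T[\deg T][n])$ vanishes unless $n=\deg S-\deg T$, and otherwise equals $\Hom_{\scA}(S,T)$, which by Definition~\ref{defn:orlov}\eqref{it:orlov-endind} and~\eqref{it:orlov-deg} is $\Bbbk$ if $S\cong T$ and $0$ otherwise. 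In particular each $S[\deg S]$ has support $\{(-\deg S,\deg S)\}\subset\lhd\cap\rhd$, hence lies in $\Kos(\scA)$; the objects $\{S[\deg S]\}_{S\in\Ind(\scA)}$ are pairwise non-isomorphic with $\End=\Bbbk$ and no nonzero maps between distinct ones; and $H^n(S)=S[\deg S]$ if $n=\deg S$, and $0$ otherwise. I would next show that each $M:=S[\deg S]$ is simple, for which it suffices to check that every nonzero $\phi\colon M\to X$ in $\Kos(\scA)$ is a monomorphism (then a proper subobject $W\subsetneq M$ would make the nonzero quotient map $M\to M/W$ monic, hence an isomorphism, forcing $W=0$). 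To that end, represent $X$ by a complex $X^\bullet$ with support in $\rhd$, which is possible since $X\in\scD^{\ge 0}$; as $M$ is concentrated in degree $-\deg S$, any chain map representing $\phi$ has a single component $\phi^{-\deg S}\colon S\to X^{-\deg S}$, which is nonzero since $\phi\ne 0$, and because the homogeneous summands of $X^{-\deg S}$ have degrees $\ge\deg S$ while $\Hom_{\scA}(S,T)=0$ for indecomposable $T\not\cong S$ of degree $\ge\deg S$, this map has a nonzero---hence invertible---component $S\to S$ onto a direct summand of $X^{-\deg S}$. A Gaussian-elimination move (cancelling the corresponding contractible direct summand of $\cone(\phi)$) then exhibits $\cone(\phi)$ as homotopy equivalent to a complex all of whose terms are terms, or direct summands of terms, of $X^\bullet$, hence with support in $\rhd$; so $\cone(\phi)\in\scD^{\ge 0}$, i.e.\ $\ker\phi=H^{-1}(\cone(\phi))=0$.

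\emph{Finite length, irreducibles, and the mixed structure.} With simplicity of the $S[\deg S]$ in hand, I would show that every $X\in\Kos(\scA)$ has finite length with composition factors among them: choosing a bounded complex $(X^\bullet,d)$ representing $X$, the stupid truncations $\sigma_{\ge i}X$ (keeping the terms in cohomological degrees $\ge i$) sit in termwise-split triangles $\sigma_{\ge i+1}X\to\sigma_{\ge i}X\to X^i[-i]\to{}$ (Lemma~\ref{lem:chain-ses}), and the associated long exact $t$-cohomology sequences reduce, by downward induction on $i$ and the fact that subobjects, quotients and extensions of finite-length objects with prescribed factors are again of that kind, to the observation that each $H^m(X^i[-i])$ is a finite direct sum of the simple objects $S[\deg S]$; taking $i$ minimal gives the claim for $H^0(X)=X$. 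Thus $\Kos(\scA)$ is noetherian and artinian with $\Irr(\Kos(\scA))=\{S[\deg S]\mid S\in\Ind(\scA)\}$ (a simple object, being of finite length with factors among these, must be one of them), and it is split since $\End(S[\deg S])=\Bbbk$. Finally, setting $\wt(S[\deg S])=\deg S$ and using $\Ext^1_{\Kos(\scA)}(S[\deg S],T[\deg T])=\Hom_{\Kb(\scA)}(S[\deg S],T[\deg T][1])$, the key computation shows this $\Ext^1$ vanishes unless $\deg T=\deg S-1$, in particular whenever $\wt(T[\deg T])\ge\wt(S[\deg S])$, which is exactly~\eqref{eqn:mixed-ext-vanish}. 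The main obstacle is the simplicity of $S[\deg S]$: the danger is a circularity, since the support/cohomology induction by itself only furnishes finite filtrations with subquotients among the $S[\deg S]$, whereas upgrading these to composition series (and thereby concluding finite length) presupposes simplicity; I break the circle by proving simplicity first, directly, via the cone-and-cancel argument above, which is where the axioms $\End_{\scA}(S)=\Bbbk$ and the degree-vanishing condition of Definition~\ref{defn:orlov}\eqref{it:orlov-deg} are used in an essential way.
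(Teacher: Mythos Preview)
Your proof is correct. The $t$-structure, boundedness, simplicity, and mixed-structure steps match the paper's approach closely: your ``Gaussian-elimination'' argument for simplicity is exactly the content of Lemma~\ref{lem:kosorlov-cone}, and the paper likewise invokes \cite[Remarque~3.1.17]{bbd} for the $\Ext^1$ identification.

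The one genuine difference is in the finite-length step. The paper represents $X$ by a complex with support in $\lhd$, uses Lemma~\ref{lem:dt-orlov}\eqref{it:dt-orlov-exist} to peel off the single largest support point $(i,j)$ via a triangle $P\to X\to Y\to$, and then does a case analysis on whether $i<-j$ (where $P\in\scD^{\le -1}$ forces $Y\cong X\oplus P[1]$, and Lemma~\ref{lem:orlov-summand} reduces the support of $X$) or $i=-j$ (where $P$ is a semisimple object of the heart and one analyzes $\ker f$, $\cok f$). Your stupid-truncation induction peels off an entire cohomological degree at once and uses only the long exact sequence of $t$-cohomology and the closure of finite-length objects under subquotients and extensions; it avoids both the case analysis and the appeal to Lemma~\ref{lem:orlov-summand}. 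This is slightly cleaner. The paper's approach, on the other hand, stays closer to the Orlov-category bookkeeping (support sets, largest elements) used throughout Section~\ref{sect:hot-orlov}, which makes it a more natural companion to the arguments there.
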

\begin{proof}
It is clear that $\Kb(\scA)_{\lhd}[1] \subset \Kb(\scA)_{\lhd}$ and $\Kb(\scA)_{\rhd}[1] \supset \Kb(\scA)_{\rhd}$.  The other axioms for a $t$-structure have been checked in Lemmas~\ref{lem:kosorlov-orth} and~\ref{lem:kosorlov-trunc}, so the pair $(\Kb(\scA)_{\lhd}, \Kb(\scA)_{\rhd})$ does indeed constitute a $t$-structure.  

Since the support of any object $X$ is finite, it is clear that there exist integers $n$ and $m$ such that $X[n] \in \Kb(\scA)_{\lhd}$ and $X[m] \in \Kb(\scA)_{\rhd}$.  In other words, the $t$-structure is bounded.

For brevity, let us write $\scC = \Kos(\scA)$.  Let $S \in \Ind(\scA)$, and consider the object $E = S[\deg S] \in \scC$.  Let $X$ be any other object of $\scC$.  If $f: E \to X$ is a nonzero morphism, Lemma~\ref{lem:kosorlov-cone} tells us that the cone of $f$ lies in $\Kb(\scA)_{\rhd}$, which means that the kernel of $f$ must be $0$.  This means that $E$ contains no nontrivial subobject in $\cC$.  In other words, $E$ is simple.  

Let us call a simple object of $\scC$ \emph{good} if it is isomorphic to $S[\deg S]$ for some $S \in \Ind(\scA)$.  More generally, an object of $\scC$ is said to be \emph{good} if it has a composition series whose composition factors are good simple objects.  We will now show that every object of $\scC$ is good.  Given an object $X = (X^\bullet, d_X) \in \scC$, let $\Sigma = \supp X$.  Assume that $\Sigma \subset \lhd$.  We proceed by induction on the size of $\Sigma$.  

The base case is that in which $\Sigma$ is a singleton.  Since $X \in \scC$, we must have $X \cong A[n]$, where $A \in \scA$ is homogeneous of degree $n$.  Such an object is evidently a direct sum of good simple objects.

Otherwise, let $(i,j)$ be the largest element of $\Sigma$, and form the distinguished triangle
\[
P \ovto{f} X \to Y \to
\]
as in Lemma~\ref{lem:dt-orlov}\eqref{it:dt-orlov-exist}.  Since $(i,j) \in \lhd$, we have $i \le -j$.  If $i < -j$, then $P \in \Kb(\scA)_{\lhd}[1]$, so $\Hom(P,X) = 0$ by Lemma~\ref{lem:kosorlov-orth}.  It follows that $Y \cong X \oplus P[1]$.  Recall that $Y$ has strictly smaller support than $X$.  By Lemma~\ref{lem:orlov-summand}, $X$ is isomorphic to a chain complex whose support is contained in that of $Y$, so $X$ is already known to be good by induction.

Suppose, on the other hand, that $i = -j$.  Then $P \in \scC$.  Indeed, $P$ is clearly a semisimple object whose direct summands are good simple objects.  Moreover, $f: P \to X$ is a morphism in $\scC$, so we may write $P \cong \ker f \oplus \im f$.  The map $f$ is then the direct sum of an injective map $\im f \to X$ and the zero map $\ker f \to 0$, so its cone is 
\[
Y \cong \cok f \oplus (\ker f)[1].
\]
Using Lemma~\ref{lem:orlov-summand} again, we see that $\cok f$ is an object of $\scC$ with strictly smaller support than $X$, so it is good.  From the short exact sequence
\[
0 \to \im f \to X \to \cok f \to 0,
\]
we see that $X$ is good, as desired.  In other words, we have just shown that every object of $\scC$ has finite length, and that every simple object is isomorphic to some $S[\deg S]$ with $S \in \Ind(\scA)$.  It follows from Definition~\ref{defn:orlov}\eqref{it:orlov-endind} that $\scC$ is split.

Finally, to show that $\scC$ is a mixed category with the weight function given above, we must check that $\Ext^1(S[\deg S], T[\deg T]) = 0$ for $S,T \in \Ind(\scA)$ if $\deg T \ge \deg S$.  By~\cite[Remarque~3.1.17(2)]{bbd}, we have
\[
\Ext^1_{\scC}(S[\deg S], T[\deg T]) \cong \Hom_{\Kb(\scA)}(S[\deg S], T[\deg T+1]),
\]
It is clear that $\Hom(S[\deg S], T[\deg T+1]) = 0$ if $\deg T \ge \deg S$, as desired.
\end{proof}

\begin{cor}\label{cor:kosorlov-supp}
We have $\Kb(\scA)_{\lhd \cap \rhd} = \Kos(\scA)$.  Moreover, for a chain complex $X = (X^\bullet, d_X)$ with $\supp X \subset \lhd \cap \rhd$, the associated graded of the weight filtration on $X$ is given by $\gr^W_k X = X^{-k}[k]$.
\end{cor}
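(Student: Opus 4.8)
Write $\scC = \Kos(\scA) = \Kb(\scA)_{\lhd} \cap \Kb(\scA)_{\rhd}$. Since $\lhd \cap \rhd$ is contained in both $\lhd$ and $\rhd$, the inclusion $\Kb(\scA)_{\lhd \cap \rhd} \subseteq \scC$ is immediate; the plan is to prove the reverse inclusion by induction, and then to deduce the statement about the weight filtration from it. Fix $X \in \scC$. Because $\scC$ is a finite-length mixed category (Proposition~\ref{prop:kosorlov-tstruc}), $X$ has only finitely many weights, and I would induct on the number of distinct weights, carrying along the sharper claim that an object whose weights lie in an interval $[a,b]$ is isomorphic to a chain complex with support in $\lhd \cap \rhd$ and with terms concentrated in cohomological degrees $-b, \ldots, -a$. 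The base case is $X$ pure of weight $w$: then $X$ is semisimple, and since the simple objects of $\scC$ of weight $w$ are precisely the $S[\deg S]$ with $\deg S = w$, we get $X \cong A[w]$ for a homogeneous object $A \in \scA$ of degree $w$, a complex of the required form.

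For the inductive step, let $w$ be the largest weight of $X$, set $X' = W_{w-1}X$, and recall that $X/X' = \gr^W_w X$ is pure of weight $w$, hence isomorphic to $A[w]$ with $A$ homogeneous of degree $w$. The weights of $X'$ form a strictly smaller set, so by induction $X'$ is isomorphic to a complex $M$ with $\supp M \subseteq \lhd \cap \rhd$ and $M^i = 0$ for $i < -(w-1)$. The distinguished triangle $X' \to X \to X/X' \to X'[1]$ exhibits $X$ as the cocone of a morphism $A[w] \to M[1]$; choosing a chain-map representative, its only possibly nonzero component is a map $A \to M^{-w+1}$, and the explicit formula $\mathrm{cocone}(f)^i = Y^i \oplus Z^{i-1}$ for the cocone of $f \colon Y \to Z$ — together with $M^{-w} = 0$ — shows that $X$ is isomorphic to the complex whose term in degree $i$ is $M^i$ for $i \ne -w$ and $A$ for $i = -w$. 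This complex has support in $\lhd \cap \rhd$ and is concentrated in cohomological degrees $\ge -w$, which completes the induction and hence the proof that $\Kb(\scA)_{\lhd \cap \rhd} = \scC$.

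For the last assertion, let $X = (X^\bullet, d_X)$ be a complex with $\supp X \subseteq \lhd \cap \rhd$, so $X^i$ is homogeneous of degree $-i$. For each $p$, the brutal truncation $\sigma_{\ge p}X$ (the subcomplex with $(\sigma_{\ge p}X)^i = X^i$ for $i \ge p$ and $0$ otherwise) has support in $\lhd \cap \rhd$, so it lies in $\scC$ by the first part; moreover it is filtered in $\scC$ by the subcomplexes $\sigma_{\ge q}X$, $q \ge p$, with subquotients $X^q[-q]$, each pure of weight $-q$. Hence $\sigma_{\ge -k}X$ has weights $\le k$ while the quotient $X/\sigma_{\ge -k}X = \sigma_{\le -k-1}X$ has weights $\ge k+1$. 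Since any morphism from an object of weights $\le k$ to an object of weights $\ge k+1$ vanishes (its image would have composition factors of both types), every subobject of $X$ of weights $\le k$ is killed by $X \to \sigma_{\le -k-1}X$ and therefore lies in $\sigma_{\ge -k}X$; thus $\sigma_{\ge -k}X$ is the maximal subobject of $X$ of weights $\le k$, i.e.\ $W_k X = \sigma_{\ge -k}X$. Finally $\gr^W_k X = \sigma_{\ge -k}X / \sigma_{\ge -k+1}X$, which is $X^{-k}$ placed in cohomological degree $-k$, namely $X^{-k}[k]$.

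The crux is the reverse inclusion of the first assertion: converting the abstract weight filtration of $X$ into an honest complex supported on the antidiagonal. Everything else is essentially formal, but this step forces one to reinforce the inductive hypothesis so as to keep track of cohomological degrees and to perform a careful cocone bookkeeping; it also relies on the standard fact, for mixed categories, that $\gr^W_k$ of any object is pure of weight $k$, which is what makes the induction descend one weight at a time.
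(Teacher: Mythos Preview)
Your proof is correct. For the first assertion the paper takes a shorter, more abstract route: it simply observes that the simple objects $S[\deg S]$ lie in $\Kb(\scA)_{\lhd\cap\rhd}$, that $\Kb(\scA)_\Sigma$ is stable under extensions for \emph{any} $\Sigma$ (because the cocone of a chain map between complexes supported in $\Sigma$ is again a complex supported in $\Sigma$), and that $\Kos(\scA)$, being a finite-length category, is the smallest extension-closed full subcategory containing its simples. Your argument is effectively a hands-on unwinding of this extension-closure statement along the weight filtration; it is longer, but has the advantage of producing an explicit complex and of tracking the cohomological degree range, which is exactly the extra control you need to make the cocone step go through cleanly. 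For the second assertion the paper just says ``obvious''; your brutal-truncation argument is precisely the natural justification, the one nontrivial point being that each $\sigma_{\ge p}X$ lies in $\Kos(\scA)$ by the first part, so that the triangle $\sigma_{\ge -k}X \to X \to \sigma_{\le -k-1}X \to{}$ becomes a short exact sequence in the heart and the maximality of $W_kX$ can be read off from weights.
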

\begin{proof}
It is clear that $\Kb(\scA)_{\lhd \cap \rhd} \subset \Kos(\scA)$, and that the simple objects $S[\deg S]$ (with $S \in \Ind(\scA)$) of $\Kos(\scA)$ lie in $\Kb(\scA)_{\lhd \cap \rhd}$.  Recall that for any subset $\Sigma \subset \Z \times \Z$, the category $\Kb(\scA)_\Sigma$ is stable under extensions.  The smallest strictly full subcategory of $\Kb(\scA)$ containing all the $S[\deg S]$ and stable under extensions is none other than $\Kos(\scA)$, because every object of $\Kos(\scA)$ has finite length.  It follows that $\Kos(\scA) \subset \Kb(\scA)_{\lhd \cap \rhd}$.  The second claim is obvious.
\end{proof}

The calculation at the end of the proof of Proposition~\ref{prop:kosorlov-tstruc} actually shows the stronger statement that $\Ext_{\scC}^1(S[\deg S], T[\deg T]) = 0$ unless $\deg T = \deg S - 1$.  Indeed, the same reasoning gives us the following more general statement: for $S, T \in \Ind(\scA)$, the simple objects $S[\deg S], T[\deg T] \in \Irr(\Kos(\scA))$ have the property that
\begin{equation}\label{eqn:kosorlov-homv}
\Hom^i_{\Kb(\scA)}( S[\deg S], T[\deg T] ) = 0
\qquad\text{if $\deg T \ne \deg S - i$.}
\end{equation}
This is a stronger version of the condition~\eqref{eqn:mixed-hom-vanish}.  An easy induction argument yields a strengthened version of Lemma~\ref{lem:mixed-tri-basic}\eqref{it:mixed-tri-basic-homv}, as follows.

\begin{cor}\label{cor:kosorlov-fakekoszul}
For an Orlov category $\scA$, the mixed structure on $\Kos(\scA)$ makes $\Kb(\scA)$ into a mixed triangulated category.  Moreover, if $X, Y \in \Kb(\scA)$ are objects such that $X$ has weights${}\le w$ and $Y$ has weights${}>w$, then $\Hom(X,Y) = \Hom(Y,X) = 0$. \qed
\end{cor}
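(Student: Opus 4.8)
\emph{Proposal.} The plan is to deduce everything from the strong vanishing~\eqref{eqn:kosorlov-homv} proved just above, together with Proposition~\ref{prop:kosorlov-tstruc}, which already tells us that $(\Kb(\scA)_{\lhd}, \Kb(\scA)_{\rhd})$ is a bounded $t$-structure whose heart $\Kos(\scA)$ is a split finite-length mixed abelian category with $\wt(S[\deg S]) = \deg S$. For the first assertion I only have to check that this mixed structure on $\Kos(\scA)$ satisfies the triangulated condition~\eqref{eqn:mixed-hom-vanish} inside $\Kb(\scA)$. Since every simple object of $\Kos(\scA)$ is isomorphic to some $S[\deg S]$ with $S \in \Ind(\scA)$, it is enough to note that if $\wt(S') > \wt(S) - i$ then in particular $\wt(S') \neq \wt(S) - i$, so $\Hom^i_{\Kb(\scA)}(S, S') = 0$ by~\eqref{eqn:kosorlov-homv}. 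Hence $\Kb(\scA)$ is a mixed triangulated category.

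For the ``moreover'' part I would argue by a double dévissage. First, boundedness of the $t$-structure means $X$ and $Y$ each have only finitely many nonzero cohomology objects, and the truncation triangles exhibit $X$ (resp.\ $Y$) as an iterated extension of the shifts $H^i(X)[-i]$ (resp.\ $H^j(Y)[-j]$). If $X$ has weights${}\le w$ then $H^i(X)$ has weights${}\le w+i$, so $H^i(X)[-i]$ has weights${}\le w$; dually $H^j(Y)[-j]$ has weights${}>w$. Applying the cohomological functors $\Hom(-,X)$ and $\Hom(Y,-)$ to these triangles shows that both $\Hom(Y,X)$ and $\Hom(X,Y)$ vanish provided $\Hom(H^j(Y)[-j], H^i(X)[-i]) = \Hom^{j-i}_{\Kb(\scA)}(H^j(Y), H^i(X))$ and $\Hom(H^i(X)[-i], H^j(Y)[-j]) = \Hom^{i-j}_{\Kb(\scA)}(H^i(X), H^j(Y))$ vanish for all $i,j$. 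Second, since $\Kos(\scA)$ has finite length, a dévissage along composition series (using the long exact $\Hom^\bullet$-sequences attached to short exact sequences in the heart, in both variables) reduces these to the case of simple objects: it suffices to prove $\Hom^{j-i}_{\Kb(\scA)}(\tau, \sigma) = 0$ and $\Hom^{i-j}_{\Kb(\scA)}(\sigma, \tau) = 0$ whenever $\sigma, \tau \in \Irr(\Kos(\scA))$ with $\wt(\sigma) \le w+i$ and $\wt(\tau) \ge w+j+1$.

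At this point~\eqref{eqn:kosorlov-homv} closes the argument. Writing $\sigma \cong S[\deg S]$ and $\tau \cong T[\deg T]$, it says $\Hom^{j-i}_{\Kb(\scA)}(\tau, \sigma) = 0$ unless $\wt(\sigma) = \wt(\tau) - (j-i)$, and $\Hom^{i-j}_{\Kb(\scA)}(\sigma, \tau) = 0$ unless $\wt(\tau) = \wt(\sigma) - (i-j)$. But $\wt(\tau) - (j-i) \ge (w+j+1) - (j-i) = w+i+1 > w+i \ge \wt(\sigma)$, and $\wt(\sigma) - (i-j) \le (w+i) - (i-j) = w+j < w+j+1 \le \wt(\tau)$; in both cases the required equality fails, so both $\Hom$-groups vanish. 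This in particular reproves Lemma~\ref{lem:mixed-tri-basic}\eqref{it:mixed-tri-basic-homv} in the present setting (the ``easy half''), the new content being the vanishing of $\Hom(Y,X)$, in the spirit of Lemma~\ref{lem:koszul-tri-basic}.

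I do not expect a genuine obstacle here: all the weight has already been carried by~\eqref{eqn:kosorlov-homv}. The only points to watch are the bookkeeping --- that ``weights${}\le w$'' and ``weights${}>w$'' pass correctly through the cohomological shift $[-i]$, that the dévissage is finite on both axes (cohomological amplitude and composition length), and that the strict inequality $w+i+1 > w+i$ is exactly what produces the cohomological-degree mismatch needed to invoke~\eqref{eqn:kosorlov-homv}.
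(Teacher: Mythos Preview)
Your argument is correct and is precisely the ``easy induction argument'' the paper alludes to just before the corollary: reduce to simples via truncation and composition-series d\'evissage, then invoke~\eqref{eqn:kosorlov-homv}. The paper gives no further details beyond that one-line remark, so there is nothing to compare.
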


\subsection{Koszulescent Orlov categories}
\label{ss:koszulescent}

The vanishing property~\eqref{eqn:kosorlov-homv} and Corollary~\ref{cor:kosorlov-fakekoszul} closely resemble properties of Koszul categories.  To make this resemblance into a precise statement, we must impose the following additional condition on an Orlov category.

\begin{defn}\label{defn:koszulescent}
An Orlov category $\scA$ is said to be \emph{Koszulescent} if the realization functor
\[
\real: \Db(\Kos(\scA)) \to \Kb(\scA)
\]
is an equivalence of categories.
\end{defn}

If $\scA$ is Koszulescent, then~\eqref{eqn:kosorlov-homv} is equivalent to the defining condition~\eqref{eqn:koszul-ext-vanish} for a Koszul category, and Corollary~\ref{cor:kosorlov-fakekoszul} is equivalent to Lemma~\ref{lem:koszul-tri-basic}.  In particular, we have the following observation.

\begin{prop}\label{prop:kosorlov-koszul}
If $\scA$ is a Koszulescent Orlov category, then $\Kos(\scA)$ is a split Koszul abelian category. \qed
\end{prop}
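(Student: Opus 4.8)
The plan is to assemble three ingredients already in place: Proposition~\ref{prop:kosorlov-tstruc}, the defining property of a Koszulescent category (Definition~\ref{defn:koszulescent}), and the $\Hom$-vanishing~\eqref{eqn:kosorlov-homv}. I do not expect any genuine obstacle here; the real work was done in establishing the $t$-structure of Proposition~\ref{prop:kosorlov-tstruc}.

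First I would invoke Proposition~\ref{prop:kosorlov-tstruc}, which already tells us that $\Kos(\scA)$ is a split finite-length abelian category carrying a mixed structure, with $\Irr(\Kos(\scA)) = \{S[\deg S] \mid S \in \Ind(\scA)\}$ and weight function $\wt(S[\deg S]) = \deg S$. Thus splitness and the compatible weight function come for free, and the only thing left to check is the Koszul vanishing~\eqref{eqn:koszul-ext-vanish}: that $\Ext^i_{\Kos(\scA)}(L, L') = 0$ for simple $L, L'$ whenever $\wt(L') \ne \wt(L) - i$.

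To verify this, write $L = S[\deg S]$ and $L' = T[\deg T]$ with $S, T \in \Ind(\scA)$, and use the standard identification $\Ext^i_{\scM}(X,Y) \cong \Hom_{\Db(\scM)}(X, Y[i])$ for the abelian category $\scM = \Kos(\scA)$. Since $\scA$ is Koszulescent, the realization functor $\real: \Db(\Kos(\scA)) \to \Kb(\scA)$ is an equivalence, and by construction (Section~\ref{ss:realization}) it restricts to the identity on the heart $\Kos(\scA)$; hence it yields isomorphisms
\begin{align*}
\Ext^i_{\Kos(\scA)}(L, L')
&\cong \Hom_{\Db(\Kos(\scA))}(L, L'[i]) \\
&\cong \Hom_{\Kb(\scA)}(L, L'[i]) = \Hom^i_{\Kb(\scA)}(S[\deg S], T[\deg T]).
\end{align*}
By~\eqref{eqn:kosorlov-homv} the last group vanishes unless $\deg T = \deg S - i$, that is, unless $\wt(L') = \wt(L) - i$, which is exactly~\eqref{eqn:koszul-ext-vanish}. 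Hence $\Kos(\scA)$ is Koszul, as required. The only points calling for a word of justification --- that $\Ext$ in an abelian category agrees with $\Hom$ in its bounded derived category, and that a realization functor is the identity on its heart --- are standard, so no difficulty arises.
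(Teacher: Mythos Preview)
Your proposal is correct and is exactly the argument the paper has in mind: the sentence preceding the proposition explains that under Koszulescence, \eqref{eqn:kosorlov-homv} becomes the Koszul condition~\eqref{eqn:koszul-ext-vanish}, which together with the splitness and mixed structure from Proposition~\ref{prop:kosorlov-tstruc} gives the result. This is why the paper marks the proposition with \qed\ and no separate proof.
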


The following result is a sort of converse to the preceding one. 

\begin{prop}\label{prop:kosorlov-orlov}
Let $\scC$ be a split Koszul abelian category, and consider the additive category
\[
\Orl(\scC) = \{\text{pure objects of weight $0$ in $\Db(\scC)$} \}.
\]
The isomorphism classes of indecomposable objects in $\Orl(\scC)$ are given by
\[
\Ind(\Orl(\scC)) = \{ L[-\wt L] \mid L \in \Irr(\scC) \}.
\]
If we define $\deg: \Ind(\Orl(\scC)) \to \Z$ by $\deg (L[-\wt L]) = \wt L$, then $\Orl(\scC)$ becomes a Koszulescent Orlov category.  Moreover, there is a natural equivalence of abelian categories $\scC \simto \Kos(\Orl(\scC))$.
\end{prop}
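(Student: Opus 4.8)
The plan is to construct a single $t$-exact triangulated equivalence $\Psi\colon \Kb(\Orl(\scC))\simto\Db(\scC)$ and to read off all three assertions from it. First the indecomposables and the Orlov axioms. By Lemma~\ref{lem:mixed-tri-basic}\eqref{it:mixed-tri-basic-pure} a pure weight‑$0$ object $X\in\Db(\scC)$ decomposes as $\bigoplus_i H^i(X)[-i]$ with each $H^i(X)\in\scC$ pure of weight $i$, hence semisimple, hence a sum of simples of weight $i$; thus $\Orl(\scC)$ is Karoubian with $\Ind(\Orl(\scC))=\{L[-\wt L]\mid L\in\Irr(\scC)\}$, and these are pairwise nonisomorphic since $\End_{\Db(\scC)}(L[-\wt L])=\End_\scC(L)\cong\Bbbk$ by splitness. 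With $\deg(L[-\wt L])=\wt L$, axiom~\eqref{it:orlov-endind} of Definition~\ref{defn:orlov} is this computation; axiom~\eqref{it:orlov-homfin} holds because $\Hom_{\Orl(\scC)}(L[-\wt L],L'[-\wt L'])=\Ext_\scC^{\wt L-\wt L'}(L,L')$ is finite‑dimensional by the standing finiteness hypotheses on $\scC$; and axiom~\eqref{it:orlov-deg} follows from this same identity together with~\eqref{eqn:koszul-ext-vanish}: if $\wt L\le\wt L'$ and $L\not\cong L'$, this group vanishes (for $\wt L<\wt L'$ the exponent is negative; for $\wt L=\wt L'$ it is $\Hom_\scC$ between distinct simples). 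The full force of~\eqref{eqn:koszul-ext-vanish} in fact gives the sharper vanishing $\Hom_{\Db(\scC)}(A,B[k])=0$ for all $A,B\in\Orl(\scC)$ and all $k\ne 0$.

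This last vanishing lets the fully faithful inclusion $\Orl(\scC)\hookrightarrow\Db(\scC)$ be extended to a triangulated functor $\Psi\colon\Kb(\Orl(\scC))\to\Db(\scC)$: a bounded complex over $\Orl(\scC)$ is totalized by iterated cones, and the concentration of $\Hom$'s in degree $0$ makes this construction well defined and functorial, exactly as for a tilting subcategory. I claim $\Psi$ is an equivalence. Since $\Orl(\scC)$ generates $\Kb(\Orl(\scC))$ as a triangulated category (stupid truncations), a standard five‑lemma dévissage reduces full faithfulness to the case $A,B\in\Orl(\scC)$, $n\in\Z$: both $\Hom_{\Kb(\Orl(\scC))}(A,B[n])$ and $\Hom_{\Db(\scC)}(A,B[n])$ vanish for $n\ne 0$ (the latter by the previous paragraph) and equal $\Hom_{\Orl(\scC)}(A,B)$ for $n=0$, with $\Psi$ inducing the identity there. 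For essential surjectivity, the essential image of $\Psi$ is a triangulated subcategory of $\Db(\scC)$ containing every $L[-\wt L]$, hence every shift of every simple object, hence — $\scC$ having finite length — every object of $\scC$ and every shift, hence all of $\Db(\scC)$.

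Next, $\Psi$ is $t$-exact for the $t$-structure $(\Kb(\Orl(\scC))_{\lhd},\Kb(\Orl(\scC))_{\rhd})$ of Proposition~\ref{prop:kosorlov-tstruc} and the standard one on $\Db(\scC)$. As $\Psi$ is already an equivalence, it suffices to show $\Psi(\Kb(\Orl(\scC))_{\lhd})\subseteq\Db(\scC)^{\le 0}$ and $\Psi(\Kb(\Orl(\scC))_{\rhd})\subseteq\Db(\scC)^{\ge 0}$. If $\supp X\subseteq\lhd$, then $\Psi(X)$ is built, via stupid truncations, from the objects $X^i[-i]$; and $X^i$, being homogeneous of degrees${}\le -i$, is isomorphic in $\Db(\scC)$ to $\bigoplus_{j\le -i}W_j[-j]$ with $W_j\in\scC$ semisimple of weight $j$, so $X^i[-i]$, and hence $\Psi(X)$, lies in $\Db(\scC)^{\le 0}$; the $\rhd$ case is symmetric. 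Therefore $\Psi$ restricts to an equivalence of hearts $\Kos(\Orl(\scC))\simto\scC$ (Proposition~\ref{prop:kosorlov-tstruc} identifies the left‑hand side as the heart), and a quasi‑inverse is the asserted natural equivalence $\scC\simto\Kos(\Orl(\scC))$ — natural because $\Psi$ is the canonical extension of the inclusion.

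Finally, Koszulescence: we must show the realization functor $\real\colon\Db(\Kos(\Orl(\scC)))\to\Kb(\Orl(\scC))$ of Theorem~\ref{thm:realization} is an equivalence. It is the identity on the common heart, and its essential image generates (the simples generate, $\Kos(\Orl(\scC))$ having finite length), so it is enough that $\real$ induce an isomorphism on $\Hom^n$ between simple objects of $\Kos(\Orl(\scC))$ for every $n$. Such a map is always an isomorphism for $n\le 1$; in higher degrees one compares dimensions, using $\Psi$ to get $\Hom^n_{\Kb(\Orl(\scC))}(S[\deg S],T[\deg T])\cong\Hom^n_{\Db(\scC)}(L,L')=\Ext^n_\scC(L,L')$ and the equivalence $\Kos(\Orl(\scC))\simto\scC$ of the previous paragraph to get $\Ext^n_{\Kos(\Orl(\scC))}(S[\deg S],T[\deg T])\cong\Ext^n_\scC(L,L')$, so the two sides are finite‑dimensional of equal dimension; since $\real$ is injective in degree $n$ whenever it is an isomorphism in all degrees $<n$ (a standard property of realization functors, cf.~\cite{bbd,bei}), induction forces $\real$ to be an isomorphism in all degrees, hence fully faithful, hence an equivalence. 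I expect the genuinely delicate points to be (a) making the extension of the inclusion to $\Psi$ precise — totalization in a triangulated category is not functorial, and the degree‑$0$ concentration of $\Hom$'s must be used essentially — and (b) this last step, the interface between the abstract realization formalism and the explicit equivalence $\Psi$; the $t$-exactness computation, where the sets $\lhd$, $\rhd$ and hence the mixed structure first enter, is the main remaining technical work.
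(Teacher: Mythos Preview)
Your argument is correct, but the route differs from the paper's in an interesting way.  You build an equivalence $\Psi\colon\Kb(\Orl(\scC))\to\Db(\scC)$ by totalizing complexes, using the ``tilting'' vanishing $\Hom_{\Db(\scC)}(A,B[k])=0$ for $k\ne 0$ and $A,B\in\Orl(\scC)$.  The paper instead goes in the opposite direction: for each $X\in\scC$ it assembles the weight‑graded pieces $\gr^W_{-k}X[k]$ into a chain complex in $\Orl(\scC)$ (the differentials being the $\Ext^1$‑classes coming from successive layers of the weight filtration), obtaining an \emph{exact} functor of abelian categories $Q_0\colon\scC\to\Kos(\Orl(\scC))$, and then derives and composes with $\real$ to get $Q\colon\Db(\scC)\to\Kb(\Orl(\scC))$.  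Both approaches finish Koszulescence with the same Beilinson--Bernstein--Deligne dévissage on $\real$.

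What each buys: your approach is conceptually cleaner and makes the $t$-exactness transparent (the support calculation in $\lhd$, $\rhd$ you give is exactly right), but it leans on the totalization construction you yourself flag as delicate --- making $\Psi$ a well‑defined triangulated functor on the homotopy category is genuine work, and the paper's own Orlov machinery (Lemma~\ref{lem:dt-orlov}) or an external reference such as~\cite[Proposition~1.5]{bbm} is needed to nail it down.  The paper's weight‑filtration construction is more explicit and entirely self‑contained: one checks $\partial_{k-1}\cdot\partial_k=0$ by exhibiting the relevant $\Ext^2$‑class as arising from a three‑step filtration, and exactness follows from Lemma~\ref{lem:chain-ses}.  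A small bonus of the paper's route is that the abelian equivalence $\scC\simto\Kos(\Orl(\scC))$ comes with a concrete formula rather than as the restriction of a triangulated equivalence to hearts.
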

\begin{proof}
For brevity, let us write $\scA = \Orl(\scC)$.  The description of indecomposable objects in $\scA$ follows from Lemma~\ref{lem:mixed-tri-basic}\eqref{it:mixed-tri-basic-pure}.  For $L, L' \in \Irr(\scC)$, it is obvious that $\Hom(L[-\wt L], L'[-\wt L']) = 0$ in $\Db(\scC)$ if $\wt L < \wt L'$, or if $\wt L = \wt L'$ but $L \not\cong L'$.  Thus, $\scA$ is an Orlov category.  

We will now construct a functor $\tilde Q: \scC \to \Kb(\scA)$.  For an object $X \in \scC$, we use the weight filtration $W_\bullet X$ to construct a short exact sequence
\[
0 \to \gr^W_{k-1} X \to W_k X/ W_{k-2} X \to \gr^W_k X \to 0.
\]
This sequence determines an element $\partial_k = \partial_{X,k} \in \Ext^1(\gr^W_k X, \gr^W_{k-1} X)$.  Because all morphisms in $\scC$ are strictly compatible with $W_\bullet M$, any morphism $f: X \to Y$ gives rise to a morphism of short exact sequences of the above form, and so, in $\Db(\scC)$, a commutative diagram
\begin{equation}\label{eqn:orlovkos-chainmap}
\vcenter{\xymatrix{
\gr^W_k X \ar[d]_{\gr^W_k(f)} \ar[r]^-{\partial_{X,k}}
  & (\gr^W_{k-1} X)[1] \ar[d]^{\gr^W_{k-1}(f)[1]} \\
\gr^W_k Y \ar[r]_-{\partial_{Y,k}} & (\gr^W_{k-1} Y)[1] }}
\end{equation}
Next, consider the Yoneda product $\partial_{k-1} \cdot \partial_{k} \in \Ext^2(\gr^W_k X, \gr^W_{k-2} X)$, which corresponds to the exact sequence
\[
0 \to \gr^W_{k-2} X \to W_{k-1} X/ W_{k-3} X \to W_k X/W_{k-2} X \to \gr^W_k X \to 0.
\]
This sequence arises from a filtration of the object $W_k X/W_{k-3} X$, so it follows that
\begin{equation}\label{eqn:orlovkos-diff}
\partial_{k-1} \cdot \partial_{k} = 0.
\end{equation}
Note that for any $k \in \Z$, the object $(\gr^W_{-k} X)[k] \in \Db(\scC)$ is pure of weight $0$, so it is an object in $\scA$.  Moreover,
\[
\partial_{-k}[k] \in \Hom((\gr^W_{-k} X)[k], (\gr^W_{-k-1} X)[k+1])
\]
is a morphism in $\scA$.  We now define $\tilde Q: \scC \to \Kb(\scA)$ by
\[
\tilde Q(X) = (\tilde Q(X)^\bullet, d^\bullet_{\tilde Q(X)})
\qquad\text{where}\qquad
\tilde Q(X)^k = (\gr^W_{-k} X)[k],
\quad
d^k_{\tilde Q(X)} = \partial_{X,-k}.
\]
That this is a chain complex follows from~\eqref{eqn:orlovkos-diff}. Moreover, from~\eqref{eqn:orlovkos-chainmap}, we see that any morphism $f: X \to Y$ in $\scC$ induces a morphism of chain complexes $\tilde Q(X) \to \tilde Q(Y)$, so this is indeed a functor.

For brevity, let us write $\scC' = \Kos(\scA) = \Kb(\scA)_{\lhd} \cap \Kb(\scA)_{\rhd}$.  Since the $k$th term of the chain complex $\tilde Q(X)$ is a homogeneous object of $\scA$ of degree $-k$, we see that $\tilde Q$ actually takes values in $\scC'$.  Let $Q_0: \scC \to \scC'$ be the functor obtained from $\tilde Q$ by restricting its codomain.

Note that applying the exact functor $\gr^W_k$ to a short exact sequence in $\scC$ yields a (necessarily) split short exact sequence of pure objects.  Therefore, applying $\tilde Q$ to a short exact sequence in $\scC$ yields a sequence of chain complexes satisfying the hypotheses of Lemma~\ref{lem:chain-ses}.  Invoking that lemma, we find that $\tilde Q$ takes short exact sequences in $\scC$ to distinguished triangles in $\Kb(\scA)$.  It follows that $Q_0: \scC \to \scC'$ is an exact functor of abelian categories, so it gives rise to a derived functor $Q_0': \Db(\scC) \to \Db(\scC')$.  Let us compose this with the realization functor $\real: \Db(\scC') \to \Kb(\scA)$ and define
\[
Q = \real {}\circ Q_0': \Db(\scC) \to \Kb(\scA).
\]
It is easy to see that this functor induces isomorphisms
\[
\Hom_{\Db(\scC)}(L[-\wt L], L'[-\wt L'+k]) \simto
\Hom_{\Kb(\scA)}(L[-\wt L], (L'[-\wt L'])[k])
\]
for all $L, L' \in \Irr(\scC)$ and all $k \in \Z$.  (Indeed, both $\Hom$-groups vanish unless $k = 0$.)  Since objects of the form $L[-\wt L]$ generate both $\Db(\scC)$ and $\Kb(\scA)$, it follows that $Q: \Db(\scC) \to \Kb(\scA)$ is an equivalence of categories.  

It follows that $\real$ is full and essentially surjective.  In particular, for all objects $X, Y \in \scC'$ and all $k \ge 0$, the induced map
\begin{equation}\label{eqn:kosorlov-d-func}
\real : \Ext^k_{\scC'}(X,Y) \to \Hom_{\Kb(\scA)}(X,Y[k])
\end{equation}
is surjective.  On the other hand, according to~\cite[Remarque~3.1.17]{bbd}, that map is always an isomorphism for $k = 0,1$, and if it is known to be an isomorphism when $k < n$ for all $X,Y \in \scC'$, then it is injective for $k = n$.  (A similar statement appears in~\cite[Lemma~3.2.3]{bgs}.)  By induction,~\eqref{eqn:kosorlov-d-func} is always an isomorphism.  By the end of the proof of~\cite[Proposition~3.1.16]{bbd}, we conclude that $\real$ is an equivalence of categories, and that $\scA$ is Koszulescent.

Finally, we now see that $Q_0': \Db(\scC) \to \Db(\scC')$ is an equivalence of categories as well.  Since this is the derived functor of $Q_0: \scC \to \scC'$, the latter is an equivalence of abelian categories.  In other words, $\scC \cong \Kos(\scA)$.
\end{proof}

We are now ready to complete the proof of the bijective correspondence~\eqref{eqn:koszul-orlov}.

\begin{thm}\label{thm:kosorlov}
The assignments $\scA \mapsto \Kos(\scA)$ and $\scC \mapsto \Orl(\scC)$ provide bijections, inverse to one another, between equivalence classes of Koszulescent Orlov categories and equivalence classes of split Koszul abelian categories.
\end{thm}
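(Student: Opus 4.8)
The plan is to deduce the theorem from Propositions~\ref{prop:kosorlov-koszul} and~\ref{prop:kosorlov-orlov} together with two bookkeeping points. Those propositions already show that $\Kos(\scA)$ is split Koszul whenever $\scA$ is Koszulescent, that $\Orl(\scC)$ is Koszulescent whenever $\scC$ is split Koszul, and (in the last sentence of Proposition~\ref{prop:kosorlov-orlov}) that there is a natural equivalence $\scC \simto \Kos(\Orl(\scC))$. So what remains is: (i) to check that $\scA \mapsto \Kos(\scA)$ and $\scC \mapsto \Orl(\scC)$ are well defined on equivalence classes; and (ii) to establish the other inverse relation, $\Orl(\Kos(\scA)) \cong \scA$.

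For (i): an equivalence of Orlov categories $\scA \simto \scA'$, by which I mean an additive equivalence intertwining the degree functions, extends to an equivalence $\Kb(\scA) \simto \Kb(\scA')$ that carries $\Kb(\scA)_{\lhd}$ and $\Kb(\scA)_{\rhd}$ to the corresponding subcategories of $\Kb(\scA')$, since it preserves the supports of chain complexes. Hence it restricts to an equivalence of hearts $\Kos(\scA) \simto \Kos(\scA')$, which respects the weight functions because by Proposition~\ref{prop:kosorlov-tstruc} those are determined by the degree functions; it is also compatible with the realization functors, so Koszulescence transports. Dually, an equivalence $\scC \simto \scC'$ of split Koszul categories compatible with weights induces a $t$-exact, weight-preserving equivalence $\Db(\scC) \simto \Db(\scC')$, which preserves purity and hence restricts to an equivalence $\Orl(\scC) \simto \Orl(\scC')$ compatible with the degree functions $\deg(L[-\wt L]) = \wt L$.

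For (ii): since $\scA$ is Koszulescent (Definition~\ref{defn:koszulescent}), the realization functor $\real\colon \Db(\Kos(\scA)) \simto \Kb(\scA)$ is an equivalence, and by Theorem~\ref{thm:realization} it is $t$-exact and restricts to the identity on $\Kos(\scA)$, so it preserves the weight function on the heart and hence preserves weights and purity. Thus $\real$ identifies $\Orl(\Kos(\scA))$ with the full subcategory of pure weight-$0$ objects of $\Kb(\scA)$, and it remains to identify the latter with $\scA$ as Orlov categories. For one inclusion: given $S \in \Ind(\scA)$, the complex concentrated in degree $0$ equals $(S[\deg S])[-\deg S]$, and since $S[\deg S] \in \Irr(\Kos(\scA))$ has weight $\deg S$ by Proposition~\ref{prop:kosorlov-tstruc}, this complex has a single nonzero cohomology object, which is pure of weight $\deg S$; so it is pure of weight $0$, and therefore so is every object of $\scA$. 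For the reverse: if $X \in \Kb(\scA)$ is pure of weight $0$, then Lemma~\ref{lem:mixed-tri-basic}\eqref{it:mixed-tri-basic-pure} gives $X \cong \bigoplus_i H^i(X)[-i]$ with each $H^i(X)$ pure of weight $i$, hence, the category being split here, a direct sum of copies of simples $S[\deg S]$ with $\deg S = i$; thus $H^i(X)[-i]$ is a homogeneous object of degree $i$ of $\scA$ and $X$ is isomorphic to an object of $\scA$. These assignments are mutually inverse, and since $L[-\wt L] = S$ for $L = S[\deg S]$ while $\deg(L[-\wt L]) = \deg S$, the identification respects the degree functions; this gives $\Orl(\Kos(\scA)) \cong \scA$. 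I do not expect any single step here to be a genuine obstacle: the mathematical substance of the correspondence is already carried by Propositions~\ref{prop:kosorlov-koszul} and~\ref{prop:kosorlov-orlov}, and what remains is the compatibility check in (ii)---essentially the two purity computations---and the routine transport of structure under equivalences in (i).
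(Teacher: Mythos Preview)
Your proof is correct and follows essentially the same approach as the paper: after invoking Propositions~\ref{prop:kosorlov-koszul} and~\ref{prop:kosorlov-orlov}, the paper reduces to showing $\scA \cong \Orl(\Kos(\scA))$, which it does by identifying $\Db(\Kos(\scA)) \cong \Kb(\scA)$ via Koszulescence and observing that the pure weight-$0$ objects in $\Kb(\scA)$ are precisely those in $\scA$. You spell out this last identification (the two purity inclusions) and the transport-of-structure check (i) in more detail than the paper, but the strategy is the same.
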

\begin{proof}
In view of Proposition~\ref{prop:kosorlov-orlov}, it remains only to show that if $\scA$ is a Koszulescent Orlov category, then $\scA \cong \Orl(\Kos(\scA))$.  Identifying $\Db(\Kos(\scA)) \cong \Kb(\scA)$, it is straightforward to see that an object of $\Kb(\scA)$ is pure of weight $0$ with respect to the mixed structure of Proposition~\ref{prop:kosorlov-tstruc} if and only if it lies in $\scA$.
\end{proof}

\begin{rmk}
The proof that $\Orl(\scC)$ is an Orlov category does not use Koszulity in any way; it is valid for any split mixed abelian category.  Thus, using Proposition~\ref{prop:kosorlov-tstruc}, we actually have a pair of maps
\[
\xymatrix@C=60pt{
{\left\{
\begin{array}{c}
\text{equivalence classes of split} \\
\text{mixed abelian categories}
\end{array}
\right\}}
\ar@/^1ex/[r]^-{\Orl} &
{\left\{
\begin{array}{c}
\text{equivalence classes} \\
\text{of Orlov categories}
\end{array}
\right\}.
\ar@/^1ex/[l]^-{\Kos} } }
\]
However, in the absence of the Koszulity and Koszulescence conditions, these maps are neither injective nor surjective.
\end{rmk}

\subsection{Koszul duality and Koszulescent Orlov categories}
\label{ss:koszul-duality}

In the previous section, we saw how to construct a Koszulescent Orlov category starting from an arbitrary split Koszul category.  However, when a Koszul category has enough projectives, there is another, more elementary way to build a Koszulescent Orlov category from it, as explained below.

\begin{thm}\label{thm:kosorlov-duality}
Let $\scM$ be a split Koszul category with enough projectives, and in which every object has finite projective dimension, so that we have a natural equivalence of categories
\[
R: \Db(\scM) \simto \Kb(\Proj(\scM)),
\]
where $\Proj(\scM)$ is the additive category of projective objects in $\scM$.  Then $\Proj(\scM)$ is a Koszulescent Orlov category, with degree function
\[
\deg: \Ind(\Proj(\scM)) \to \Z
\qquad\text{given by}\qquad
\deg P = -\wt (P/\rad P).
\]
Moreover, the split Koszul abelian category
\[
\scM^\natural = \Kos(\Proj(\scM))
\]
has enough injectives, and every object has finite injective dimension.
\end{thm}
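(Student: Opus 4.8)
The plan is: (i) check directly that $\Proj(\scM)$, with the stated degree function, is an Orlov category; (ii) deduce Koszulescence from a ``generation in degree one'' property; and (iii) read off the assertions about $\scM^\natural$ from the equivalence $\Db(\scM^\natural)\simeq\Db(\scM)$ furnished by (ii) together with $R$. Everything rests on two structural facts about an indecomposable projective $P\in\scM$ with top $L=P/\rad P$ of weight $w$: first, the radical filtration and the weight filtration of $P$ agree up to reindexing, $\rad^kP=W_{w-k}P$ for all $k\ge0$ (so that each layer $\rad^kP/\rad^{k+1}P=\gr^W_{w-k}P$ is pure, hence semisimple, of weight $w-k$); second, the terms of the minimal projective resolution $P_\bullet(L)\to L$ satisfy $\mathrm{top}(P_k(L))$ pure of weight $w-k$. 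Both follow from Koszulity. The second does so by dimension shifting: minimality makes the differentials of $\Hom^\bullet_\scM(P_\bullet(L),S)$ vanish, so $\Ext^k_\scM(L,S)\cong\Hom_\scM(\mathrm{top}(P_k(L)),S)$, which by~\eqref{eqn:koszul-ext-vanish} vanishes unless $\wt S=w-k$; a semisimple object detecting only weight $w-k$ is pure of that weight. The first follows by an induction on $k$ whose only use of Koszulity is that $\Hom_\scM(\rad P,S)\cong\Ext^1_\scM(L,S)$ --- obtained from $0\to\rad P\to P\to L\to0$ and projectivity of $P$ --- vanishes unless $\wt S=w-1$; the inductive step uses a projective cover of $\rad^{k-1}P$ and the fact that a morphism from an object of weights ${}\le w-k$ to a pure object of weight $w-k+1$ is zero. (Since $\scM$ is noetherian and artinian, every projective is of finite length, so these filtrations and resolutions behave well.)

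Granting this, the Orlov axioms are quick. Finiteness of $\Hom$-spaces holds because $\dim_\Bbbk\Hom_\scM(P,Q)$ equals the multiplicity of $L_P:=P/\rad P$ among the composition factors of $Q$, which is at most $\ell(Q)<\infty$. A nonzero quotient $P/W_wP$ would have $L$ among its composition factors, impossible as those all have weight $>w$; hence $W_wP=P$ and all composition factors of $P$ have weight ${}\le w$. Moreover $\gr^W_wP$, being a semisimple quotient of $P$, is a quotient of $\mathrm{top}(P)=L$, and it is nonzero, so $\gr^W_wP\cong L$; thus $L$ has multiplicity one in $P$ and $\End_\scM(P)=\Bbbk$. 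Finally, if $\deg P\le\deg Q$ and $P\not\cong Q$, i.e.\ $\wt L_P\ge\wt L_Q$, then $\Hom_\scM(P,Q)\ne0$ would force $L_P$ to be a composition factor of $Q$, hence $\wt L_P\le\wt L_Q$, hence $\wt L_P=\wt L_Q$ and $L_P$ a composition factor of $\gr^W_{\wt L_Q}Q\cong L_Q$, so $P\cong Q$ --- a contradiction. Thus $\Proj(\scM)$ is an Orlov category with the stated degree function.

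For Koszulescence, transport $\real$ through $R$ to a functor $\Db(\scM^\natural)\to\Db(\scM)$ which on $\scM^\natural=\Kos(\Proj(\scM))$ is the inclusion of a full subcategory of $\Kb(\Proj(\scM))\cong\Db(\scM)$. Arguing as in the proof of Proposition~\ref{prop:kosorlov-orlov} (via \cite[Remarque~3.1.17, Proposition~3.1.16]{bbd}), it suffices to prove surjectivity on $\Hom_{\Db(\scM)}(X,Y[n])$ for $X,Y$ simple in $\scM^\natural$. Writing $X=S[\deg S]$, $Y=T[\deg T]$ with $S,T$ indecomposable projectives, projectivity shows $\Hom_{\Db(\scM)}(S[\deg S],T[\deg T][n])$ vanishes unless $n=\deg S-\deg T=\wt L_T-\wt L_S$, in which case it equals $\Hom_\scM(S,T)$, the Yoneda product being realized by composition in $\scM$. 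So the point is that every morphism $S\to T$ of indecomposable projectives with $\wt L_T-\wt L_S=n$ is a sum of $n$-fold composites of morphisms each raising the top-weight by one. Indeed, the image of such a morphism, being generated by the image of $\mathrm{top}(S)$, lies in $W_{\wt L_S}T=\rad^nT\subseteq\rad^{n-1}T$; since $\mathrm{top}(\rad^{n-1}T)$ is pure of weight $\wt L_S+1$, a projective cover $Q\twoheadrightarrow\rad^{n-1}T$ has summands of top-weight $\wt L_S+1$, and lifting the morphism through $Q$ (possible as $S$ is projective) factors it, whereupon one recurses on $n$. Hence $\real$ is an equivalence, $\Proj(\scM)$ is Koszulescent, and $\scM^\natural$ is split Koszul by Proposition~\ref{prop:kosorlov-koszul}.

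It remains to treat injectives. For $L\in\Irr(\scM)$, the shifted minimal resolution $P_\bullet(L)[-\wt L]$ has $P_k(L)$ --- of top-weight $\wt L-k$, hence Orlov-degree $k-\wt L$ --- in cohomological degree $\wt L-k$, so its support lies on the antidiagonal $\{(a,-a)\}$; by Corollary~\ref{cor:kosorlov-supp} it defines an object of $\scM^\natural$, equal to $L[-\wt L]$ in $\Db(\scM)$. Using $\Db(\scM^\natural)\simeq\Db(\scM)$, one computes for $Y\in\scM^\natural$ with antidiagonal representative $Y^\bullet$ that $\Ext^k_{\scM^\natural}(Y,L[-\wt L])=H^{k-\wt L}\big(\Hom^\bullet_\scM(Y^\bullet,L)\big)$; since $\Hom_\scM(Y^j,L)=0$ unless $\wt L=j$, this complex is concentrated in a single cohomological degree, whence $\Ext^k_{\scM^\natural}(-,L[-\wt L])=0$ for $k\ne0$: the objects $L[-\wt L]$ are injective. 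Each simple $S[\deg S]$ of $\scM^\natural$ receives the nonzero --- hence monic --- map to $L_S[-\wt L_S]$ corresponding to $S\twoheadrightarrow L_S$, so, $\scM^\natural$ being of finite length, the usual induction on length shows it has enough injectives. Finally, the same identification gives $\Ext^n_{\scM^\natural}(S[\deg S],X)\cong\Hom_\scM(S,H^{n-\deg S}(X^\bullet))$ for $X\in\scM^\natural$, which is nonzero for only finitely many pairs $(S,n)$ because $X^\bullet$ is bounded and each $H^i(X^\bullet)$ is of finite length; hence $X$ has finite injective dimension. The genuinely delicate step is Koszulescence --- the reduction to, and proof of, the degree-one generation property --- which rests on the radical/weight filtration coincidence for projectives; the Orlov axioms and the statements about $\scM^\natural$ are essentially formal once that structural input is in hand.
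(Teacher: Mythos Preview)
Your step~(i) is correct and parallels the paper's Step~1, with the added (and correct) detail that the radical and weight filtrations of an indecomposable projective coincide.

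Step~(ii), however, has a genuine gap. You show that $\real$ is \emph{surjective} on $\Hom^n$ between simple objects of $\scM^\natural$, then invoke ``arguing as in Proposition~\ref{prop:kosorlov-orlov} via~\cite[3.1.17]{bbd}'' to conclude. But the argument in Proposition~\ref{prop:kosorlov-orlov} uses an auxiliary functor $Q$ (built from the Koszul structure on the \emph{source} category $\scC$) which forces $\real$ to be full on \emph{all} objects of the heart, not merely simples; without it, the BBD induction does not close. Remarque~3.1.17 gives injectivity on $\Ext^n$ only after bijectivity on $\Ext^{<n}$ is known for \emph{all} pairs of objects, and the five-lemma bootstrap from simples to arbitrary objects needs control of $\Ext^{n+1}$, which you do not yet have. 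Put differently, you have not excluded the possibility that $\Ext^i_{\scM^\natural}(S[\deg S],T[\deg T])\ne 0$ for some $0<i<\deg S-\deg T$; that vanishing is exactly Koszulity of $\scM^\natural$, which is a consequence of Koszulescence, not an input to it.

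The repair is to reorder: establish the injectives $L[-\wt L]$ \emph{before} Koszulescence. Your own computation in step~(iii) --- that $\Hom_\scM(Y^j,L)=0$ unless $j=\wt L$ for $Y\in\scM^\natural$ with antidiagonal representative $Y^\bullet$ --- shows $\Hom^k_{\Kb(\Proj(\scM))}(Y,L[-\wt L])=0$ for $k\ne 0$, and this is purely a calculation in $\Db(\scM)\cong\Kb(\Proj(\scM))$, requiring no equivalence with $\Db(\scM^\natural)$. Taking $k=1$ and using only the free identification $\Ext^1_{\scM^\natural}=\Hom^1_{\Kb}$ gives injectivity of $L[-\wt L]$; hence $\scM^\natural$ has enough injectives. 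Now dimension-shift: for $X,Y\in\scM^\natural$ and an injective resolution $Y\to I^\bullet$ with cosyzygies $C^{(j)}$, both $\Ext^n_{\scM^\natural}(X,Y)$ and $\Hom^n_{\Kb}(X,Y)$ reduce to $\Ext^1_{\scM^\natural}(X,C^{(n-2)})=\Hom^1_{\Kb}(X,C^{(n-2)})$, so $\real$ is an equivalence. This is essentially the paper's Steps~2--3; your degree-one-generation observation is true but then becomes redundant. Your finite-injective-dimension argument also rests on the equivalence, so once the latter is obtained this way, step~(iii) goes through as written.
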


From the description of the category $\Kos(\Proj(\scM))$ in Proposition~\ref{prop:kosorlov-tstruc}, and in particular the description of its irreducibles and mixed structure, we see that this definition of $\scM^\natural$ coincides with that in~\eqref{eqn:koszul-duality}, and that the theorem is a restatement of Theorem~\ref{thm:koszul-duality}.

\begin{proof}
For clarity, we will not identify $\Db(\scM)$ and $\Kb(\Proj(\scM))$; we will instead explicitly use the functor $R$ to go back and forth between them.  All shifts of objects of $\scM$ should be understood to be objects of $\Db(\scM)$.  We proceed in several steps.

Because every object of $\scM$ has finite length, the Fitting lemma and its consequences hold in $\scM$.  For instance, any object $X \in \scM$ has a unique minimal subobject $\rad X$ (called its \emph{radical}) such that $X/\rad X$ is semisimple.  In the special case where $X$ is an indecomposable projective, $X/\rad X$ is simple.  These facts, and others related to the Fitting lemma, will be used freely throughout the following proof.

{\it Step 1. $\Proj(\scM)$ is an Orlov category.}  Let $P$ be an indecomposable projective in $\scM$, and let $L = P/\rad P$ denote its unique simple quotient.  Consider its weight filtration $W_\bullet P$, and let $w$ be the smallest integer such that $W_w P = P$.  Then $\gr^W_w P = P/W_{w-1}P$ is a semisimple quotient of $P$, so we must in fact have that $\gr^W_w P \cong L$ and $\rad P = W_{w-1}P$.  In particular, the simple object $L$, which has weight $w$, cannot occur as a composition factor of $\rad P$, so it follows that $\dim \Hom(P,P) = 1$.

More generally, suppose that $P'$ is another indecomposable projective, with simple quotient $L' = P'/\rad P'$.  Then $L$ cannot occur as a composition factor of $P'$ if $\wt L > \wt L'$, and if $\wt L = \wt L'$, then $L$ occurs in $P'$ if and only if $L \cong L'$, or, equivalently, if $P \cong P'$.  We have just shown that $\Hom(P,P') = 0$ if $\wt L \ge \wt L'$ and $P \not\cong P'$, so $\Proj(\scM)$ is indeed an Orlov category.

{\it Step 2.  $\scM^\natural$ has enough injectives.}  Let $L \in \Irr(\scM)$, and let $P \to L$ be its projective cover.  Then Proposition~\ref{prop:kosorlov-tstruc} tells us that $R(P[-\wt L])$ is a simple object in $\scM^\natural$, and every simple object arises in this way. Let $P' \to L'$ be another projective cover of a simple object in $\scM$.  We claim that
\begin{equation}\label{eqn:koszul-inj1}
\Hom_{\Db(\scM)}(P'[-\wt L'], L[-\wt L + k]) = 0 \qquad \text{unless $P \cong P'$ and $k = 0$.}
\end{equation}
Indeed, this $\Hom$-group obviously vanishes if $k \ne \wt L - \wt L'$.  If we take $k = \wt L - \wt L'$, then the $\Hom$-group above is isomorphic to $\Hom_{\scM}(P',L)$, which vanishes unless $P' \cong P$, in which case we necessarily have $k = 0$.

On the other hand, we also have
\begin{equation}\label{eqn:koszul-inj2}
\Hom_{\Db(\scM)}(L[-\wt L], P'[-\wt L' + k]) = 0 \qquad\text{if $k < 0$}
\end{equation}
by Lemma~\ref{lem:koszul-tri-basic}, because the object $P'[-\wt L' +k]$ has weights${}\le k$ in $\Db(\scM)$, while $L[-\wt L]$ is pure of weight $0$.

The $\Hom$-vanishing statements above have analogues in $\Kb(\Proj(\scM))$ obtained by applying $R$.  From~\eqref{eqn:koszul-inj2} and the vanishing of~\eqref{eqn:koszul-inj1} for $k < 0$, we conclude that $R(L[-\wt L]) \in \scM^\natural$.  Next, taking~\eqref{eqn:koszul-inj1} for $k = 1$, we find that
\begin{multline*}
\Ext^1_{\scM^\natural}(R(P'[-\wt L']), R(L[-\wt L])) =\\
\Hom_{\Kb(\Proj(\scM))}(R(P'[-\wt L']), R(L[-\wt L+1])) = 0.
\end{multline*}
Thus, $R(L[-\wt L])$ is an injective object of $\scM^\natural$.  It is indecomposable, and there is a nonzero map to it from the simple object $R(P[-\wt L])$, so in fact $R(L[-\wt L])$ is an injective envelope of $R(P[-\wt L])$.  

{\it Step 3. Koszulescence and finite injective dimension.}  Let $\Phi: \Db(\scM^\natural) \to \Db(\scM)$ denote the composition
\[
\Db(\scM^\natural) \xto{\real} \Kb(\Proj(\scM)) \xto{R^{-1}} \Db(\scM).
\]
It is easy to see that for any two objects $L, L' \in \Irr(\scM)$ and any $k \in \Z$, $\Phi$ induces an isomorphism
\begin{multline*}
\Hom_{\Db(\scM^\natural)}( R(L[-\wt L]), R(L'[-\wt L' + k]) ) \\
\simto
\Hom_{\Db(\scM)}(L[-\wt L], L'[-\wt L' + k]),
\end{multline*}
as both $\Hom$-groups vanish unless $k = 0$.  Now, objects of the form $L[-\wt L]$ generate $\Db(\scM)$ as a triangulated category.  If we let $\Db(\scM^\natural)' \subset \Db(\scM^\natural)$ denote the full triangulated subcategory generated by objects of the form $R(L[-\wt L])$, then the above calculation shows that $\Phi$ induces an equivalence
\[
\Phi|_{\Db(\scM^\natural)'}: \Db(\scM^\natural)' \simto \Db(\scM).
\]
To describe $\Db(\scM^\natural)'$ in another way, note that it is the full triangulated subcategory of $\Db(\scM^\natural)$ generated by injective objects of $\scM^\natural$.  Thus, it contains precisely those objects of $\Db(\scM^\natural)$ that can be represented by a bounded chain complex of injectives.  In particular, for an object $X \in \scM^\natural \subset \Db(\scM^\natural)$, we have
\[
X \in \scM^\natural \cap \Db(\scM^\natural)'
\qquad\text{if and only if}\qquad
\text{$X$ has finite injective dimension.}
\]

Let $i: \Db(\scM^\natural)' \to \Db(\scM^\natural)$ denote the inclusion functor, and let $\Psi$ denote the composition
\[
\Db(\scM^\natural)' \xto{i} \Db(\scM^\natural) \xto{\real} \Kb(\Proj(\scM)).
\]
Since $\Psi \cong R \circ (\Phi \circ i)$, it is an equivalence of categories.  Consider an object $X \in \scM^\natural \subset \Db(\scM^\natural)$.  We obviously have
\[
(\real \circ i \circ \Psi^{-1} \circ \real)(X) \cong \real(X).
\]
Because the realization functor commutes with cohomology~\cite[\S3.1.14]{bbd}, it follows that $(i \circ \Psi^{-1} \circ \real)(X) \in \scM^\natural$.  Furthermore, since $\real$ is fully faithful on $\scM^\natural$, we must have $(i \circ \Psi^{-1} \circ \real)(X) \cong X$. In particular, every object $X \in \scM^\natural$ is in the essential image of $i$.

In other words, we have just shown that every object of $\scM^\natural$ has finite injective dimension.  It follows that $\Db(\scM^\natural)' = \Db(\scM^\natural)$, so we have equivalences of categories
\[
\Phi: \Db(\scM^\natural) \to \Db(\scM)
\qquad\text{and}\qquad
\real \cong R \circ \Phi: \Db(\scM^\natural) \to \Kb(\Proj(\scM)).
\]
The latter shows that $\Proj(\scM)$ is Koszulescent.
\end{proof}

\part{Sheaf Theory}
\label{part:sheaf}

\section{Mixed and Weil Categories of Perverse Sheaves}
\label{sect:mixedweil}

As noted in the introduction, the triangulated category of constructible $\Qlb$-complexes introduced by Deligne in~\cite{del2} is too large for many purposes in representation theory.  Over the course of Part~\ref{part:sheaf}, we will study how to replace it by a smaller category, and how to define sheaf functors on the new smaller category.  In the present section, we fix notation and assumptions, and we review some facts about Deligne's category.  We will also define the ``miscible category.'' 

\subsection{Weil complexes and Weil perverse sheaves}

Let $X$ be a variety over $\Fq$.  We write $X \otimes \Fqb$ for the variety $X \times_{\Spec \Fq} \Spec \Fqb$ obtained by extension of scalars. This variety comes with a geometric Frobenius map $\Fr: X \otimes \Fqb \to X \otimes \Fqb$.  Assume that $X$ is equipped with a stratification $\cS = \{X_s\}_{s \in S}$ (for some index set $S$).  All constructible complexes should be understood to be constructible with respect to this stratification.  For any constructible complex $\cF$ on $X$, we denote by $\exs(\cF)$ its pullback to $X \otimes \Fqb$.

Let $\ddel_\cS(X)$ denote the category of ``mixed constructible complexes'' introduced by Deligne in~\cite{del2}.  (This category is often denoted $\Db_{\mathrm{m}}(X)$, cf.~\cite{bbd}.)   We reiterate that the term ``mixed'' will not be used again for this category, because that conflicts with the conventions of Section~\ref{sect:mixed} and~\cite{bgs}.  The term ``mixed'' will be reserved for a category to be introduced in Section~\ref{sect:affable}.

In this setting of $\ddel_\cS(X)$, we have available the theory of weights from~\cite{del2}.  Let us fix, once and for all, a square root of the Tate sheaf on $X$.  This allows us to form Tate twists $\cF(\frac{n}{2})$ of a constructible complex $\cF \in \ddel_\cS(X)$ for any $n \in \Z$.  For the aesthetic benefit of avoiding fractions, we henceforth adopt the notation
\[
\cF\la n\ra = \cF(-\textstyle\frac{n}{2}).
\]
We denote by $\uQlb$ the constant sheaf with value $\Qlb$ on $X$ or on any subvariety.  Next, let $j_s: X_s \to X$ denote the inclusion map of the stratum $X_s$.  The following assumption (cf.~\cite[2.2.10(c)]{bbd}) will be in force whenever we discuss constructible complexes:
\begin{equation}\label{eqn:constr-hyp}
\left\{
\begin{array}{c}
\text{For any $s,t$, the sheaf $H^i(Rj_{s*}\uQlb)|_{X_t}$ is a local system} \\
\text{with irreducible subquotients of the form $\uQlb\la n\ra$.}
\end{array}
\right.
\end{equation}
(In fact, most varieties we will encounter satisfy a much stronger condition; see Section~\ref{sect:affable}.)
Next, consider the simple perverse sheaves
\[
\ICm_s = j_{s!*}\uQlb[\dim X_s]\la-\dim X_s\ra
\qquad\text{and}\qquad
\IC_s = \exs(\ICm_s).
\]
The condition~\eqref{eqn:constr-hyp} implies that each $H^i(\ICm_s)|_{X_t}$ is a local system with irreducible subquotients of the form $\uQlb\la n\ra$.  

Let $\dsw(X)$ denote the full triangulated subcategory of $\ddel_\cS(X)$ that is generated by the objects $\ICm_s$.  Similarly, let $\ds(X)$ denote the full triangulated subcategory of bounded complexes on $X \otimes \Fqb$ generated by the $\IC_s$.  Extension of scalars gives us a functor
\[
\exs: \dsw(X) \to \ds(X).
\]
Let $\Pervsw(X)$, resp.~$\Pervs(X)$, denote the abelian category of perverse $\Qlb$-sheaves in $\dsw(X)$, resp.~$\ds(X)$.  By~\cite[5.1.2]{bbd}, $\Pervsw(X)$ may be thought of as a certain category of perverse sheaves on $X \otimes \Fqb$ equipped with a ``Weil structure,'' but the analogous statement does \emph{not} hold for $\dsw(X)$.

By~\cite[Th\'eor\`eme~5.3.5]{bbd}, every object $\cF \in \Pervsw(X)$ is equipped with a canonical \emph{weight filtration}, denoted $W_\bullet \cF$.  The subquotients $\gr^W_i \cF$ are pure, but not necessarily semisimple, cf.~\cite[Proposition~5.3.9]{bbd}.  (This failure of semisimplicity shows that $\Pervsw(X)$ is not a mixed category.)  All morphisms in $\Pervsw(X)$ are strictly compatible with the weight filtration.

For $\cF, \cG \in \dsw(X)$, let us put
\[
\uRHom(\cF,\cG) = a_* \cRHom(\cF,\cG),
\]
where $a: X \to \Spec \Fq$ is the structure morphism.  We further put
\[
\uHom^i(\cF,\cG) = H^i(Ra_*\cRHom(\cF,\cG)).
\]
Thus, $\uHom^i(\cF,\cG)$ is a $\Qlb$-sheaf over $\Spec\Fq$.  In other words, we regard it as a $\Qlb$-vector space equipped with an automorphism
\begin{equation}\label{eqn:hom-fr}
\Fr: \uHom^i(\cF,\cG) \to \uHom^i(\cF,\cG)
\end{equation}
induced by the Frobenius map.  Because $\exs$ is compatible with all the usual sheaf operations, we have
\begin{equation}\label{eqn:hom-forget}
\Hom_{\ds(X)}^i(\exs(\cF),\exs(\cG)) \simeq \exs(\uHom^i(\cF,\cG)).
\end{equation}
In other words, $\Hom^i(\exs(\cF),\exs(\cG))$ is obtained from $\uHom^i(\cF,\cG)$ by forgetting the automorphism~\eqref{eqn:hom-fr}.

The $\Hom$-groups within $\dsw(X)$ are somewhat different.  By~\cite[(5.1.2.5)]{bbd}, there is a short exact sequence of $\Qlb$-vector spaces
\begin{equation}\label{eqn:dsw-ses}
0 \to \uHom^{i-1}(\cF,\cG)_\Fr \to
\Hom^i(\cF,\cG)
\to \uHom^i(\cF,\cG)^\Fr \to 0,
\end{equation}
where $(\cdot)_\Fr$ and $(\cdot)^\Fr$ denote coinvariants and invariants of $\Fr$ (that is, the cokernel and kernel of $\Fr - \id$), respectively.  Note that the natural morphism
\[
\Hom_{\dsw(X)}(\cF,\cG) \to \Hom_{\ds(X)}(\exs(\cF), \exs(\cG))
\]
factors through the map $\Hom_{\dsw(X)}(\cF,\cG) \to \uHom(\cF,\cG)^\Fr$ of~\eqref{eqn:dsw-ses}.

\subsection{Functors on the Weil category}

The usual sheaf operations are defined on $\ddel_\cS(X)$, so when working with $\dsw(X)$, we must check that that category is preserved by any functors we wish to use.  The following lemma is a useful tool for this.

\begin{lem}\label{lem:weil-obj}
For $\cF \in \ddel_\cS(X)$, the following conditions are equivalent:
\begin{enumerate}
\item $\cF \in \dsw(X)$.\label{it:weil-dsw}
\item For each stratum $j_s: X_s \to X$, we have $j_s^*\cF \in \dsw(X_s)$.\label{it:weil-std}
\end{enumerate}
\end{lem}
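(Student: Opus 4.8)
The plan is to prove the equivalence by a standard dévissage on the stratification, using the fact that $\dsw(X)$ is by definition the triangulated subcategory of $\ddel_\cS(X)$ generated by the simple objects $\ICm_s$. The implication \eqref{it:weil-dsw}$\Rightarrow$\eqref{it:weil-std} should be the easy direction: one first checks it for the generators, i.e. that $j_s^* \ICm_t \in \dsw(X_s)$ for all $s,t$. Here one uses the hypothesis~\eqref{eqn:constr-hyp} together with the description of $\ICm_t$ as an intersection cohomology complex: the restriction $j_s^*\ICm_t$ is a complex of local systems on $X_s$ whose irreducible subquotients are of the form $\uQlb\la n\ra[m]$, hence lies in $\dsw(X_s)$. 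Since $j_s^*$ is a triangulated functor and $\dsw(X_s)$ is a triangulated subcategory, the property of having $j_s^*\cF \in \dsw(X_s)$ is stable under shifts, cones, and direct summands, so it propagates from the generators $\ICm_t$ to all of $\dsw(X)$.

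For the converse \eqref{it:weil-std}$\Rightarrow$\eqref{it:weil-dsw}, I would argue by induction on the number of strata in the support of $\cF$, ordering the strata so that each $X_s$ is open in the union of the strata meeting $\supp\cF$ that have not yet been removed; equivalently, induct on a closed stratum $X_s$ in $\supp\cF$, with $j_s: X_s\hookrightarrow X$ closed and $U = X\smallsetminus X_s$ open with inclusion $h: U\hookrightarrow X$. There is the standard distinguished triangle
\[
h_! h^* \cF \to \cF \to j_{s*} j_s^* \cF \to.
\]
By hypothesis $j_s^*\cF \in \dsw(X_s)$, and $j_{s*}$ sends $\dsw(X_s)$ into $\dsw(X)$ (this is again checked on generators: $j_{s*}$ applied to an IC-complex on $X_s$ is an IC-complex on $X$, or more directly, $j_{s*}\uQlb\la n\ra[m]$ lies in $\dsw(X)$ because $\ddel$ is generated by such pushforwards and~\eqref{eqn:constr-hyp} controls its restrictions to all strata). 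Also $h^*\cF$ satisfies condition~\eqref{it:weil-std} on $U$ (its restrictions to strata of $U$ are restrictions of $j_t^*\cF$), and has strictly smaller support, so by induction $h^*\cF \in \dsw(U)$; then $h_!$ sends $\dsw(U)$ into $\dsw(X)$, again verified on generators. Hence two of the three terms of the triangle lie in $\dsw(X)$, and therefore so does the third, $\cF$.

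The main obstacle — and the step to be most careful about — is verifying the auxiliary stability statements for the individual functors $j_s^*$, $j_{s*}$, $h^*$, $h_!$: namely that each preserves the relevant generated subcategories. These are not formal, because they require knowing that $\ICm_s$-type generators are sent to objects built out of $\ICm$'s on the target, which is exactly where~\eqref{eqn:constr-hyp} is used. Concretely, I would package this as: a constructible complex $\cF$ lies in $\dsw(X)$ if and only if it lies in the triangulated subcategory generated by the $j_{t*}\uQlb\la n\ra[m]$; this reformulation makes $j_s^*$, $j_{s*}$, $h^*$, $h_!$ visibly preserve the subcategory (pushforward along a closed inclusion of such a generator is again such a generator, pullback of such a generator is a complex of the required shape by~\eqref{eqn:constr-hyp}, and $h_!$ fits into a triangle with $h_*$ and a generator supported on the boundary). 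Once that reformulation is in hand, both implications reduce to bookkeeping with distinguished triangles as above. A minor technical point to be handled is ensuring the support induction is well-founded — i.e. choosing the closed stratum inside $\supp\cF$ so that $h^*\cF$ genuinely has fewer strata in its support — but this is routine since $\cS$ is a finite stratification.
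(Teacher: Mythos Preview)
Your approach is correct in outline but takes a genuinely different route from the paper. The paper does not do a d\'evissage along the recollement triangle. Instead, it lets $\dsw(X)'$ be the full triangulated subcategory of $\ddel_\cS(X)$ consisting of objects satisfying condition~\eqref{it:weil-std}, observes that this is exactly the ``constructible'' category of \cite[\S2.2.10]{bbd}, and invokes that reference (using hypothesis~\eqref{eqn:constr-hyp}) to get the full gluing formalism and hence a bounded perverse $t$-structure on $\dsw(X)'$ with finite-length heart. The category is then generated by its simple perverse sheaves, which are identified with the $\ICm_s\la n\ra$; thus $\dsw(X)' = \dsw(X)$. This argument simultaneously establishes Proposition~\ref{prop:weil-func0}, whereas your approach proves only the fragments of that proposition needed for the induction.

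Your direct d\'evissage works, but be careful with the auxiliary step you flag as the main obstacle. The ``reformulation'' you propose --- replacing the $\ICm_t$ by generators $j_{t*}\uQlb\la n\ra[m]$ --- is circular: for a non-closed stratum $X_t$, showing $j_{t*}\uQlb \in \dsw(X)$ amounts to applying \eqref{it:weil-std}$\Rightarrow$\eqref{it:weil-dsw} to that object, which is what you are proving. The vague sentence about ``$h_!$ fits into a triangle with $h_*$ and a generator supported on the boundary'' does not correspond to any standard triangle for an object on $U$. The clean way to verify that $h_!$ preserves the subcategory, within your framework, is to stay with the $\ICm$ generators: for $X_t \subset U$ one has $\ICm_t^U \cong h^*\ICm_t^X$, so $h_!\ICm_t^U$ sits in the triangle $h_!h^*\ICm_t^X \to \ICm_t^X \to j_{s*}j_s^*\ICm_t^X \to$, and the last term lies in $\dsw(X)$ by the already-proved direction \eqref{it:weil-dsw}$\Rightarrow$\eqref{it:weil-std} together with the fact that closed pushforward sends $\dsw(X_s)$ into $\dsw(X)$. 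With that correction your argument goes through; it is more hands-on than the paper's, trading the citation of \cite{bbd} for an explicit induction.
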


For a similar statement on $X \otimes \Fqb$, see~\cite[Lemma~4.4.5]{bgs}.  We will prove this simultaneously with the following result.

\begin{prop}\label{prop:weil-func0}
Let $X$ be a stratified variety.  If $h: Y \to X$ is the inclusion of a locally closed union of strata, then the functors $h^*$ and $h^!$ (resp.~$h_*$ and $h_!$) send objects of $\dsw(X)$ to $\dsw(Y)$ (resp.~$\dsw(Y)$ to $\dsw(X)$).
\end{prop}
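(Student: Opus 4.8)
The plan is to prove Lemma~\ref{lem:weil-obj} and Proposition~\ref{prop:weil-func0} simultaneously, by induction on the (finite) number $|S|$ of strata of $X$. One half of the Lemma needs no induction: if $\cF \in \dsw(X)$ then $j_s^*\cF \in \dsw(X_s)$ for every $s$. Indeed, the objects of $\dsw(X)$ with this property form a triangulated subcategory (each $j_s^*$ is triangulated and each $\dsw(X_s)$ is a triangulated subcategory), so it suffices to check it on the generators $\ICm_t$; and by~\eqref{eqn:constr-hyp} the cohomology sheaves of $j_s^*\ICm_t$ are iterated extensions of objects $\uQlb\la n\ra$, while any object of $\ddel_\cS(X_s)$ whose cohomology sheaves are of this form lies in $\dsw(X_s)$. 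The same device — reduce a statement ``$\Phi$ sends $\dsw$ into $\dsw$'' to a check on the generating IC sheaves, using that the relevant subcategory is triangulated — recurs below. Two of its applications require no induction: if $i\colon Z \hookrightarrow X$ is the inclusion of a closed union of strata, then $i_* = i_!$ is $t$-exact and carries the IC sheaf of each stratum $X_s \subseteq Z$ (formed within $Z$) to $\ICm_s$, hence carries $\dsw(Z)$ into $\dsw(X)$; and Verdier duality $\D$ fixes every $\ICm_s$ (with the normalization used here), hence preserves $\dsw$. Since $\D$ exchanges $h^*$ with $h^!$ and $h_!$ with $h_*$, it will be enough in the Proposition to treat $h^*$ and $h_!$.

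If $|S| = 1$ both statements are trivial, so assume $|S| = n \ge 2$ and that the Lemma and Proposition hold for stratified varieties with fewer strata. Choose a closed stratum $X_{s_0}$ and set $Z = X_{s_0}$, $U = X \smallsetminus X_{s_0}$ (an open union of $n-1$ strata), with inclusions $i\colon Z \hookrightarrow X$ and $j\colon U \hookrightarrow X$. The second half of the Lemma for $X$ now follows: if $j_s^*\cF \in \dsw(X_s)$ for all $s$, then restriction to $U$ and the Lemma for $U$ give $j^*\cF \in \dsw(U)$, hence $j_!j^*\cF \in \dsw(X)$ by the Proposition for $U$; also $i^*\cF = j_{s_0}^*\cF \in \dsw(Z)$, so $i_*i^*\cF \in \dsw(X)$ by the unconditional fact above; and then the recollement triangle $j_!j^*\cF \to \cF \to i_*i^*\cF \to$ places $\cF$ in the triangulated subcategory $\dsw(X)$. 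Observe that only the Proposition for the strictly smaller variety $U$ was used.

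It remains to deduce the Proposition for $X$, now that the Lemma for $X$ is available. A locally closed inclusion $h\colon Y \hookrightarrow X$ factors as $Y \xto{j'} \overline{Y} \xto{i'} X$ with $j'$ open and $i'$ closed ($\overline{Y}$ being a closed union of strata), so by composition and the duality reduction it suffices to treat an open inclusion $j\colon U \hookrightarrow X$ and a closed inclusion $i\colon Z \hookrightarrow X$, in each case only for $h^*$ and $h_!$. For the open inclusion: if $\cF \in \dsw(X)$, the restrictions of $j^*\cF$ to the strata of $U$ lie in the respective $\dsw(X_s)$ by the (unconditional) first half of the Lemma, so $j^*\cF \in \dsw(U)$ by the Lemma for $U$; and if $\cG \in \dsw(U)$, then $j_!\cG$ restricts to $(j_s^U)^*\cG \in \dsw(X_s)$ on strata in $U$ and to $0$ on strata in the closed complement $X \smallsetminus U$, so $j_!\cG \in \dsw(X)$ by the Lemma for $X$. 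For the closed inclusion: $i_* = i_!$ was handled above, and for $\cF \in \dsw(X)$ the restrictions of $i^*\cF$ to the strata of $Z$ again lie in the respective $\dsw(X_s)$, so $i^*\cF \in \dsw(Z)$ by the Lemma for $Z$. (In each appeal to the Lemma for $U$ or for $Z$: if that variety equals $X$ the functor in question is the identity; otherwise it has fewer strata and the inductive hypothesis applies.) This completes the induction.

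The ingredients are all standard — recollement triangles, self-duality of IC sheaves, $t$-exactness of closed push-forward, and the elementary ``check on generators'' reduction. I expect the only genuinely delicate point to be organizational, namely the interleaving of the two statements so that no circularity arises; this is why the Lemma for $X$ is proved invoking the Proposition only for the smaller variety $U$, and the Proposition for $X$ only afterward, from the already-established Lemma for $X$. I also regard the Verdier-duality reduction as the key economy: without it the cases $Rj_*$ and $i^!$ would each require a separate d\'evissage, computing $Rj_*$ of an IC sheaf stratum by stratum from~\eqref{eqn:constr-hyp}.
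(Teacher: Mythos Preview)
Your inductive, recollement-based approach differs from the paper's. The paper instead defines $\dsw(X)'$ to be the full triangulated subcategory of $\ddel_\cS(X)$ consisting of objects satisfying condition~(2) of the Lemma, quotes~\cite[\S2.2.10]{bbd} to see that the six functors already preserve $\dsw(X)'$ (so the Proposition holds with $\dsw$ replaced by $\dsw{}'$), and then uses the resulting perverse $t$-structure on $\dsw(X)'$ to conclude that $\dsw(X)'$ is generated by its simple perverse sheaves, which are precisely the $\ICm_s\la n\ra$. Hence $\dsw(X)' = \dsw(X)$, and both statements drop out at once. Your argument is more hands-on and avoids both the citation and the $t$-structure.

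However, there is a gap in your organization. In proving (2)$\Rightarrow$(1) for $X$ you write ``$j_!j^*\cF \in \dsw(X)$ by the Proposition for $U$,'' and you later stress that ``only the Proposition for the strictly smaller variety $U$ was used.'' But the Proposition for $U$, as you have set up the induction, concerns inclusions of locally closed subvarieties \emph{into} $U$; it says nothing about the functor $j_!\colon \dsw(U) \to \dsw(X)$, which pertains to the inclusion $j\colon U \hookrightarrow X$ and is part of the Proposition for $X$. This is precisely the circularity you were trying to exclude.

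The repair is easy and uses only facts you have already established unconditionally. For this particular $j$ (whose closed complement $Z = X_{s_0}$ is a single stratum), check $j_!$ on generators: for $X_s \subset U$, write $\ICm_s^U$ for the IC sheaf of $X_s$ formed in $U$, so that $j^*\ICm_s \cong \ICm_s^U$. The recollement triangle
\[
j_!\ICm_s^U \to \ICm_s \to i_*i^*\ICm_s \to
\]
has its middle term in $\dsw(X)$ by definition, and its third term in $\dsw(X)$ by your unconditional (1)$\Rightarrow$(2) together with closed push-forward; hence $j_!\ICm_s^U \in \dsw(X)$, and $j_!$ carries all of $\dsw(U)$ into $\dsw(X)$. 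With this change the induction closes cleanly and the rest of your proof stands.
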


\begin{proof}[Proof of Lemma~\ref{lem:weil-obj} and Proposition~\ref{prop:weil-func0}]
Let $\dsw(X)' \subset \ddel_\cS(X)$ be the full triangulated subcategory consisting of objects satisfying condition~\eqref{it:weil-std} of the lemma.  This is the category referred to as the category of \emph{constructible complexes} in~\cite[\S2.2.10]{bbd}.  The assumption~\eqref{eqn:constr-hyp} corresponds to the condition in~\cite[\S2.2.10(c)]{bbd}, and according to that statement, the analogue of the proposition holds for $\dsw(X)'$ and $\dsw(Y)'$.  As a consequence, the formalism of gluing of $t$-structures applies in $\dsw(X)'$.  In particular, $\dsw(X)'$ admits a perverse $t$-structure; cf.~\cite[\S 2.2.17]{bbd}.  Since that $t$-structure is bounded and has a finite-length heart, $\dsw(X)'$ is generated as a triangulated category by the simple perverse sheaves it contains.  But a simple perverse sheaf $j_{s!*}\cL$ clearly lies in $\dsw(X)'$ if and only if the irreducible local system $\cL$ is isomorphic to some $\uQlb[\dim X_s]\la n\ra$.  Thus, $\dsw(X)'$ contains and is generated by the $\ICm_s$, so $\dsw(X)' = \dsw(X)$.
\end{proof}

\begin{prop}\label{prop:weil-func1}
For any stratified variety $X$, the functors $\D$, $\Lotimes$, and $\cRHom$ take objects of $\dsw(X)$ to $\dsw(X)$. 
\end{prop}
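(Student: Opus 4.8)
The plan is to use Lemma~\ref{lem:weil-obj} to reduce each of the three assertions to a statement about a single stratum (equipped with its tautological one-stratum stratification), to verify that statement directly for $\D$ and $\Lotimes$, and finally to deduce the case of $\cRHom$ formally from those two. Throughout, note that a stratum $j_s \colon X_s \hookrightarrow X$ is in particular the inclusion of a locally closed union of strata, so Proposition~\ref{prop:weil-func0} applies to it, and that $\dsw(X_s)$ is by definition the triangulated subcategory generated by the constant sheaf $\uQlb[\dim X_s]\la-\dim X_s\ra$.

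First I would treat Verdier duality. Given $\cF \in \dsw(X)$, Lemma~\ref{lem:weil-obj} reduces us to showing $j_s^*\D\cF \in \dsw(X_s)$ for every $s$. The standard compatibility $j_s^*\D \cong \D_{X_s}\, j_s^!$ turns this into $\D_{X_s}(j_s^!\cF) \in \dsw(X_s)$, and $j_s^!\cF \in \dsw(X_s)$ by Proposition~\ref{prop:weil-func0}; so it suffices to show that $\D_{X_s}$ preserves $\dsw(X_s)$. Since $\D_{X_s}$ is a contravariant triangulated functor, the full subcategory of objects it sends into $\dsw(X_s)$ is triangulated and closed under direct summands, so one only has to observe that $\D_{X_s}\bigl(\uQlb[\dim X_s]\la-\dim X_s\ra\bigr) \cong \uQlb[\dim X_s]\la-\dim X_s\ra$, i.e.\ that this generator is Verdier self-dual. (If $X_s$ is not connected or not equidimensional, run the argument on each component.)

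Next, for the tensor product: given $\cF,\cG \in \dsw(X)$, Lemma~\ref{lem:weil-obj} reduces us to $j_s^*(\cF\Lotimes\cG) \in \dsw(X_s)$, which since $j_s^*$ commutes with $\Lotimes$ equals $j_s^*\cF \Lotimes j_s^*\cG$, with both factors in $\dsw(X_s)$ by Proposition~\ref{prop:weil-func0}. So it remains to show $\Lotimes$ preserves $\dsw(X_s)$. Writing $\mathbf{1} = \uQlb[\dim X_s]\la-\dim X_s\ra$ for the generator, we have $\mathbf{1}\Lotimes\mathbf{1} \cong \uQlb[2\dim X_s]\la-2\dim X_s\ra$, which is a shift and Tate twist of $\mathbf{1}$ and hence lies in $\dsw(X_s)$. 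A short d\'evissage in two steps then finishes it: the subcategory of those $\cG$ with $\mathbf{1}\la n\ra[m]\Lotimes\cG \in \dsw(X_s)$ for all $n,m$ is triangulated, closed under summands, and contains $\mathbf{1}$, hence is all of $\dsw(X_s)$; and then the subcategory of those $\cF$ with $\cF\Lotimes\cG \in \dsw(X_s)$ for all $\cG\in\dsw(X_s)$ is triangulated, closed under summands, and contains every shift and twist of $\mathbf{1}$, hence is all of $\dsw(X_s)$.

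Finally, $\cRHom$ follows formally: biduality together with the tensor--hom identity $\cRHom(A,\cRHom(B,C)) \cong \cRHom(A\Lotimes B,C)$ gives $\cRHom(\cF,\cG) \cong \cRHom(\cF,\D\D\cG) \cong \D(\cF\Lotimes\D\cG)$, and the right-hand side lies in $\dsw(X)$ by the two cases already proved. I expect the only genuinely delicate points to be the single-stratum base cases for $\D$ and $\Lotimes$ and the need to carry out the $\Lotimes$-d\'evissage in both tensor variables; everything else is formal manipulation with Lemma~\ref{lem:weil-obj}, Proposition~\ref{prop:weil-func0}, and the standard compatibilities among the sheaf functors.
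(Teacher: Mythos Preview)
Your proposal is correct and follows essentially the same route as the paper for $\Lotimes$ and $\cRHom$: reduce to a single stratum via Lemma~\ref{lem:weil-obj} and the identity $j_s^*(\cF\Lotimes\cG)\cong j_s^*\cF\Lotimes j_s^*\cG$, check the generator computation $\ICm_s\Lotimes\ICm_s\cong\ICm_s[\dim X_s]\la-\dim X_s\ra$ there, and deduce $\cRHom$ from $\cRHom(\cF,\cG)\cong\D(\cF\Lotimes\D\cG)$.

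The one genuine difference is in your treatment of $\D$. The paper does not reduce to strata at all: it simply observes that $\D\ICm_s\cong\ICm_s$ on $X$ itself, and since the $\ICm_s$ generate $\dsw(X)$ as a triangulated category, this finishes the argument in one line. Your route---pulling back via $j_s^*\D\cong\D_{X_s}j_s^!$, invoking Proposition~\ref{prop:weil-func0} for $j_s^!$, and then checking self-duality of the generator on the stratum---is perfectly valid but longer, and it introduces an unnecessary dependence on the $j_s^!$ case of Proposition~\ref{prop:weil-func0}. The paper's argument has the virtue of being uniform with how one thinks about $\dsw(X)$ (as generated by the $\ICm_s$), while yours has the virtue of treating $\D$ and $\Lotimes$ by the same template.
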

\begin{proof}
The statement for $\D$ is clear, since $\D\ICm_s \cong \ICm_s$ for all $s$, and these objects generate $\dsw(X)$.  For $\Lotimes$, we first consider the special case where $X$ consists of a single stratum $X_s$.  In this case, $\ICm_s \cong \uQlb[\dim X]\la-\dim X\ra$, so
\begin{multline*}
\ICm_s \Lotimes \ICm_s \cong (\uQlb \Lotimes \uQlb)[2\dim X]\la -2\dim X\ra
\cong\\
 \uQlb[2\dim X]\la -2\dim X\ra \cong \ICm_s[\dim X]\la -\dim X\ra,
\end{multline*}
and the desired statement follows.  For general $X$, suppose $\cF, \cG \in \dsw(X)$.  Given a stratum $X_s \subset X$, we use the formula
\[
j_s^*(\cF \Lotimes \cG) \cong j_s^*\cF \Lotimes j_s^*\cG
\]
to see that each $j_s^*(\cF \Lotimes \cG)$ lies in $\dsw(X_s)$, so $\cF \Lotimes \cG \in \dsw(X)$ by Lemma~\ref{lem:weil-obj}.  Finally, the result holds for $\cRHom$ since $\cRHom(\cF,\cG) \cong \D(\cF \Lotimes \D\cG)$.
\end{proof}

\begin{defn}\label{defn:weak-strat}
Let $X$ and $Y$ be varieties endowed with stratifications $\cS$ and $\cT$, respectively.  A morphism $f: X \to Y$ is said to be \emph{weakly stratified} if for each stratum $Y_t \subset Y$, its preimage $f^{-1}(Y_t) \subset X$ is a union of strata.
\end{defn}

\begin{prop}\label{prop:weil-func2}
Let $X$ and $Y$ be varieties endowed with stratifications $\cS$ and $\cT$, respectively, and let $f: X \to Y$ be a weakly stratified morphism.  Then $f^*$ and $f^!$ take objects of $\dw_\cT(Y)$ to objects of $\dsw(X)$.
\end{prop}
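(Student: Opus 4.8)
The plan is to reduce the statement for $f^*$ to the case of a morphism between single strata by means of Lemma~\ref{lem:weil-obj}, and then to obtain the statement for $f^!$ from it by Verdier duality.

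Fix $\cG \in \dw_\cT(Y)$. By Lemma~\ref{lem:weil-obj}, in order to show $f^*\cG \in \dsw(X)$ it suffices to check that $j_s^* f^*\cG \in \dsw(X_s)$ for every stratum $j_s\colon X_s \hookrightarrow X$. Here I would use that $f$ is weakly stratified: the subsets $f^{-1}(Y_t)$ form a partition of $X$ into pairwise disjoint unions of strata, so each $X_s$ is contained in exactly one of them; equivalently, $f(X_s)$ lies in a single stratum $Y_t$, and $f$ restricts to a morphism $f_s\colon X_s \to Y_t$ with $f\circ j_s = j_t \circ f_s$. Hence $j_s^* f^*\cG \cong f_s^* j_t^*\cG$. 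Now Proposition~\ref{prop:weil-func0}, applied to the locally closed inclusion $j_t\colon Y_t \hookrightarrow Y$, gives $j_t^*\cG \in \dsw(Y_t)$, where $Y_t$ carries its induced stratification, which consists of the single stratum $Y_t$. By (the proof of) Proposition~\ref{prop:weil-func0}, $\dsw(Y_t)$ is generated, as a triangulated category, by the Tate twists $\uQlb\la n\ra$ of the constant sheaf on $Y_t$; since $f_s^*$ is a triangulated functor commuting with $[1]$ and with Tate twists, and $f_s^*$ sends the constant sheaf on $Y_t$ to the constant sheaf on $X_s$, it carries each generator $\uQlb\la n\ra$ into $\dsw(X_s)$, and therefore carries all of $\dsw(Y_t)$ into $\dsw(X_s)$. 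In particular $j_s^* f^*\cG \in \dsw(X_s)$, so $f^*\cG \in \dsw(X)$ by Lemma~\ref{lem:weil-obj}.

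For $f^!$ I would invoke the standard identity $f^! \cong \D \circ f^* \circ \D$ together with Proposition~\ref{prop:weil-func1}, which says that $\D$ preserves $\dsw$ on any stratified variety: given $\cG \in \dw_\cT(Y)$ we have $\D\cG \in \dw_\cT(Y)$, hence $f^*\D\cG \in \dsw(X)$ by the first part, hence $\D f^*\D\cG \cong f^!\cG \in \dsw(X)$.

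The proof is really just an assembly of earlier results, so I do not expect a genuine obstacle. The point deserving care is the base case of a morphism of single strata: one must know that $\dsw$ of a single smooth stratum is nothing more than the triangulated category generated by the Tate twists of its constant sheaf, and that ordinary pullback sends constant sheaves to constant sheaves and commutes with Tate twists. Both are immediate, and it is exactly here that the hypothesis ``weakly stratified'' enters, through the factorization $f\circ j_s = j_t\circ f_s$.
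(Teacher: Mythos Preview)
Your proposal is correct and follows essentially the same route as the paper's proof: reduce via Lemma~\ref{lem:weil-obj} to the single-stratum case using the factorization $f\circ j_s = j_t\circ f_s$, note that $f_s^*\uQlb\cong\uQlb$ so $f_s^*$ preserves the relevant Weil categories, and then deduce the statement for $f^!$ from $f^!\cong \D\circ f^*\circ \D$. The only cosmetic difference is that the paper checks this on the generators $\ICm_t$ of $\dw_\cT(Y)$ rather than on an arbitrary object $\cG$, which comes to the same thing.
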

\begin{proof}
For each stratum $X_s \subset X$, there is a unique stratum $Y_t \subset Y$ such that $X_s \subset f^{-1}(Y_t)$.  Let $f_s = f|_{X_s}: X_s \to Y_t$.  For $\ICm_t \in \Pervm_\cT(Y)$, we have $j_s^*f^*\ICm_t \cong f_s^* j_t^*\ICm_t$.  We know that $j_t^*\ICm_t \in \dw_\cT(Y_t)$ by Lemma~\ref{lem:weil-obj}.  We clearly have $f_s^*\uQlb \cong \uQlb$, so $f_s^*$ takes $\dw_\cT(Y_t)$ to $\dsw(X_s)$.  Therefore, the object $f_s^* j_t^*\ICm_t \cong j_s^* f^*\ICm_t$ lies in $\dsw(X_s)$ for all $s$ and $t$.  Using Lemma~\ref{lem:weil-obj} again, we see that $f^*\ICm_t \in \dsw(X)$, so the proposition holds for $f^*$.  It then follows for $f^! \cong \D \circ f^* \circ \D$.
\end{proof}

The preceding results cover most of the functors we will encounter.  (We will prove that certain push-forwards preserve the Weil category in Section~\ref{ss:stratified}.)  For the most part, we will suppress further mention of $\ddel_\cS(X)$ and silently regard sheaf operations as functors on the Weil category.  It is well known that all the usual sheaf operations enjoy the following property.

\begin{defn}\label{defn:geometric}
A functor $F: \dsw(X) \to \dw_\cT(Y)$ is said to be \emph{geometric} if it is a functor of triangulated categories that is equipped with a natural transformation
\begin{equation}\label{eqn:commute-uhom}
\uRHom(\cF,\cG) \to \uRHom(F(\cF), F(\cG))
\end{equation}
and it ``commutes with $\exs$,'' i.e., there exists a triangulated functor $\bar F: \ds(X) \to \dc_\cT(Y)$ such that
\[
\bar F \circ \exs \cong \exs \circ F.
\]
\end{defn}

For a geometric functor, the natural transformation~\eqref{eqn:commute-uhom}, combined with~\eqref{eqn:dsw-ses}, gives rise to a commutative diagram
\begin{equation}\label{eqn:dsw-ses-comm}
\vcenter{\tiny\xymatrix@C=20pt{
0 \ar[r] &
   \uHom^{i-1}(\cF,\cG)_\Fr \ar[d]\ar[r] &
   \Hom^i(\cF,\cG) \ar[d]\ar[r] &
   \uHom^i(\cF,\cG)^\Fr \ar[d]\ar[r] & 0 \\
0 \ar[r] &
   \uHom^{i-1}(F(\cF),F(\cG))_\Fr \ar[r] &
   \Hom^i(F(\cF),F(\cG)) \ar[r] &
   \uHom^i(F(\cF),F(\cG))^\Fr \ar[r] & 0}}
\end{equation}

\subsection{Weight filtrations in the Weil category}

Even though $\dsw(X)$ is not a mixed triangulated category in general, an analogue of Lemma~\ref{lem:mixed-tri-basic}\eqref{it:mixed-tri-basic-dt} still holds.

\begin{lem}\label{lem:weight-trunc}
Let $\cF$ be an object of $\dsw(X)$ with weights${}\ge a$ and${}\le b$.  For any integer $w$, there is a distinguished triangle
\begin{equation}\label{eqn:weight-trunc}
\cF_{\le w} \to \cF \to \cF_{>w} \to
\end{equation}
where $\cF_{\le w}$ has weights ${}\ge a$ and${}\le w$, and $\cF_{> w}$ has weights${}>w$ and${}\le b$.
\end{lem}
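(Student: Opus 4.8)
The plan is to reduce the statement to the case where $\cF$ is a perverse sheaf --- where it is nothing but the existence of the weight filtration on $\Pervsw(X)$ --- and then to propagate the construction upward through the perverse $t$-structure (which is available on $\dsw(X) = \dsw(X)'$ by the proof of Proposition~\ref{prop:weil-func0}) by induction on the number of nonzero perverse cohomology objects $\pH^i(\cF)$.

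For the base case, suppose $\cF$ is concentrated in a single perverse degree, so $\cF \cong \pH^n(\cF)[-n]$ for a perverse sheaf $P := \pH^n(\cF) \in \Pervsw(X)$ with weights ${}\ge a+n$ and ${}\le b+n$. Using the canonical weight filtration $W_\bullet P$ (\cite[Th\'eor\`eme~5.3.5]{bbd}), I would set $\cF_{\le w} = (W_{w+n}P)[-n]$ and $\cF_{>w} = (P/W_{w+n}P)[-n]$; the distinguished triangle coming from the short exact sequence $0 \to W_{w+n}P \to P \to P/W_{w+n}P \to 0$ then has the required form, the weight bounds being immediate because the composition factors of a subobject, a quotient, or an extension of objects of $\Pervsw(X)$ lie among those of the ambient objects (in particular $\cF_{\le w} = 0$ when $w < a$ and $\cF_{>w} = 0$ when $w \ge b$).

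For the inductive step, assume $\cF$ has at least two nonzero perverse cohomologies, choose an integer $m$ with $\pH^m(\cF) \ne 0$, and consider the triangle $\cF' \to \cF \to \cF'' \ovto{\delta} \cF'[1]$ with $\cF' = \ptau_{\le m}\cF$ and $\cF'' = \ptau_{>m}\cF$; each has smaller perverse amplitude and still has weights ${}\ge a$ and ${}\le b$. By induction, fix weight truncations $\cF'_{\le w} \to \cF' \to \cF'_{>w} \to$ and $\cF''_{\le w} \to \cF'' \to \cF''_{>w} \to$. The heart of the argument is to lift $\delta$: I must check that the composite $\cF''_{\le w} \to \cF'' \ovto{\delta} \cF'[1] \to \cF'_{>w}[1]$ vanishes, i.e.\ that $\Hom_{\dsw(X)}(\cF''_{\le w}, \cF'_{>w}[1]) = 0$. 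Since $\dsw(X)$ is \emph{not} a mixed triangulated category, this does not follow from Lemma~\ref{lem:mixed-tri-basic}; instead I would invoke the exact sequence~\eqref{eqn:dsw-ses}, which squeezes this $\Hom$-group between $\uHom^0(\cF''_{\le w},\cF'_{>w})_\Fr$ and $\uHom^1(\cF''_{\le w},\cF'_{>w})^\Fr$. Because $\cF''_{\le w}$ has weights ${}\le w$ and $\cF'_{>w}$ has weights ${}> w$, the object $\cRHom(\cF''_{\le w}, \cF'_{>w}) \cong \D(\cF''_{\le w} \Lotimes \D\cF'_{>w})$ has weights ${}\ge 1$ by Deligne's theory of weights~\cite{del2}; since $a_*$ preserves the property of having weights ${}\ge c$, it follows that $\uHom^j(\cF''_{\le w},\cF'_{>w})$ has weights ${}\ge 1+j$, so in particular $\uHom^0$ has weights ${}\ge 1$ and $\uHom^1$ has weights ${}\ge 2$. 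A Weil $\Qlb$-sheaf over $\Spec\Fq$ of weights ${}\ge 1$ has all Frobenius eigenvalues of absolute value ${}> 1$, hence has neither invariants nor coinvariants, so both boundary terms vanish and the composite above is zero. Therefore $\delta|_{\cF''_{\le w}}$ lifts to a morphism $\delta_1 \colon \cF''_{\le w} \to \cF'_{\le w}[1]$; letting $\cF_{\le w}$ be the cocone of $\delta_1$, the resulting commuting square extends to a morphism of triangles $(\cF'_{\le w} \to \cF_{\le w} \to \cF''_{\le w} \to) \to (\cF' \to \cF \to \cF'' \to)$. Feeding the square $\cF'_{\le w} \to \cF_{\le w}$ over $\cF' \to \cF$ into the $3 \times 3$ construction (\cite[Proposition~1.1.11]{bbd}) produces an object $\cF_{>w}$ together with distinguished triangles $\cF'_{\le w} \to \cF_{\le w} \to \cF''_{\le w} \to$, $\cF'_{>w} \to \cF_{>w} \to \cF''_{>w} \to$, and $\cF_{\le w} \to \cF \to \cF_{>w} \to$. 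The weight estimates for $\cF_{\le w}$ and $\cF_{>w}$ then follow from the perverse-cohomology long exact sequences, since an extension of objects with weights in a prescribed interval again has weights in that interval.

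I expect the $\Hom$-vanishing to be the only real obstacle. In a genuine mixed category it would be instant, but here it requires passing through the Frobenius invariants/coinvariants sequence~\eqref{eqn:dsw-ses} together with Deligne's weight estimates for $\cRHom$ and for $a_*$. The remaining ingredients --- the perverse-amplitude induction, the lifting of $\delta$, and the bookkeeping with the $3 \times 3$ diagram --- are routine manipulations in a triangulated category equipped with a $t$-structure.
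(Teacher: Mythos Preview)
Your argument is correct, but the inductive scheme differs from the paper's.  Both proofs rest on the same $\Hom$-vanishing: if $A$ has weights $\le w$ and $B$ has weights $>w$, then $\Hom_{\dsw(X)}(A,B[1])=0$.  The paper simply invokes this as \cite[Proposition~5.1.15(ii)]{bbd}; you instead unpack it via~\eqref{eqn:dsw-ses} and Deligne's weight bounds on $\cRHom$ and $a_*$, which is essentially how that proposition is proved in \cite{bbd}.  Where the two approaches diverge is in the induction.  The paper inducts on the \emph{total length} of $\cF$ (the sum of the lengths of the $\pH^i(\cF)$): it locates the smallest $k$ for which $\pH^k(\cF)$ has a composition factor of weight $\le w+k$, peels off the single subobject $W_{w+k}\pH^k(\cF)[-k]$ by lifting along $\tau_{\ge k}\cF\to\cF$ (using the vanishing above), applies the induction hypothesis to the cone, and concludes via the $*$-calculus of \cite[\S1.3.9]{bbd}.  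Your approach instead inducts on perverse amplitude, cuts $\cF$ once by perverse truncation, weight-truncates both halves, and reassembles via the $3\times3$ diagram.  Your method is more symmetric and avoids tracking individual composition factors; the paper's is more iterative but needs only the octahedral axiom rather than the full $9$-lemma.  One small imprecision in your write-up: you must choose $m$ so that both $\ptau_{\le m}\cF$ and $\ptau_{>m}\cF$ are strictly simpler than $\cF$---e.g.\ take $m$ to be the smallest degree with $\pH^m(\cF)\ne 0$.  As written, taking $m$ to be the largest such degree yields $\cF''=0$ and $\cF'=\cF$, and the induction stalls.
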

\begin{proof}
The statement holds trivially unless $a \le w \le b$, so assume that that is the case.  We proceed by induction on the ``total length'' of $\cF$, i.e., on the sum of the lengths of the perverse sheaves $\pH^i(\cF)$.  If the total length is $1$, then $\cF$ is a shift of a simple perverse sheaf, and so is pure.  The result holds trivially in this case as well.

Otherwise, let $k$ be the smallest integer such that $\pH^k(\cF)$ has composition factors of weight${}\le w+k$.  (If there is no such $k$, then $\cF$ has weights${}>w$, and the lemma holds trivially.)  Form the distinguished triangle
\[
\tau_{<k}\cF \to \cF \to \tau_{\ge k}\cF \to.
\]
Now, consider the term $\cG = W_{w+k}\pH^k(\cF)$ in the weight filtration of $\pH^k(\cF)$.  The inclusion $\cG \subset \pH^k(\cF)$ gives us a natural morphism $f: \cG[-k] \to \tau_{\ge k}\cF$.  Note that the truncation $\tau_{<k}\cF$ must have weights${}>w$.  It follows that
\[
\Hom(\cG[-k], \tau_{<k}\cF[1]) = 0
\]
by~\cite[Proposition~5.1.15(ii)]{bbd}.  Therefore, $f$ factors through $\cF$, say by $\tilde f: \cG[-k] \to \cF$.  Complete this to a distinguished triangle
\begin{equation}\label{eqn:wt-trunc1}
\cG[-k] \ovto{\tilde f} \cF \to \cF' \to.
\end{equation}
Since the induced map $\pH^k(\cG[-k]) \to \pH^k(\cF)$ is injective, we see that $\pH^i(\cF') \cong \pH^i(\cF)$ if $i \ne k$, and that $\pH^k(\cF') \cong \pH^k(\cF)/\cG$.  That is, $\cF'$ has lower total length than $\cF$, so by induction, there exists a distinguished triangle
\begin{equation}\label{eqn:wt-trunc2}
\cF'_{\le w} \to \cF' \to \cF'_{>w} \to
\end{equation}
with weights as specified in the statement of the lemma.  Using the ``$*$'' notation of~\cite[\S 1.3.9]{bbd} (cf.~Lemma~\ref{lem:kosorlov-trunc}), we see from~\eqref{eqn:wt-trunc1} and~\eqref{eqn:wt-trunc2} that
\[
\cF \in \{\cG[-k]\} * (\{\cF'_{\le w}\} * \{\cF'_{>w}\})
= (\{\cG[-k]\} * \{\cF'_{\le w}\}) * \{\cF'_{>w}\}.
\]
In particular, there is some object $\cF'' \in \{\cG[-k]\} * \{\cF'_{\le w}\}$ together with a distinguished triangle
\[
\cF'' \to \cF \to \cF'_{>w} \to.
\]
Since $\cG[-k]$ and $\cF'_{\le w}$ each have weights${}\le w$, the same holds for all objects in $\{\cG[-k]\} * \{\cF'_{\le w}\}$.  Thus, this triangle is of the desired form.
\end{proof}

\begin{cor}\label{cor:weight-filt}
Let $\Purew_\cS(X) \subset \dsw(X)$ denote the category of pure objects of weight $0$.  If $\cF \in \dsw(X)$ has weights${}\ge a$ and ${}\le b$, then
\[
\cF \in \Purew_\cS(X)[a] * \Purew_\cS(X)[a+1] * \cdots * \Purew_\cS(X)[b].
\]
\end{cor}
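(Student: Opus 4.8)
The plan is to induct on the difference $b-a$, with Lemma~\ref{lem:weight-trunc} doing all of the real work.

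For the base case $b=a$, I would first record the observation that any $\cF \in \dsw(X)$ that is pure of weight $a$ automatically lies in $\Purew_\cS(X)[a]$. This is pure bookkeeping: unwinding the definition of weights on a triangulated category, $\cF$ is pure of weight $a$ precisely when $\pH^i(\cF)$ is a pure perverse sheaf of weight $a+i$ for every $i$, which is exactly the condition that $\cF[-a]$ be pure of weight $0$, i.e.\ that $\cF[-a] \in \Purew_\cS(X)$; hence $\cF \cong (\cF[-a])[a] \in \Purew_\cS(X)[a]$. I would stress that this uses nothing beyond definitions; in particular I am \emph{not} asserting that pure objects of $\dsw(X)$ are semisimple (they are not), so no form of the decomposition theorem is needed.

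For the inductive step, assuming $b>a$ and that the corollary is known whenever the weight spread is smaller, I would apply Lemma~\ref{lem:weight-trunc} with $w=a$ to obtain a distinguished triangle $\cF_{\le a} \to \cF \to \cF_{>a} \to$ in which $\cF_{\le a}$ has weights ${}\ge a$ and ${}\le a$ — hence is pure of weight $a$, and so lies in $\Purew_\cS(X)[a]$ by the base case — while $\cF_{>a}$ has weights ${}>a$ and ${}\le b$. Since every weight occurring in an object of $\dsw(X)$ is an integer (the simple perverse sheaves being the $\ICm_s\la n\ra$ with $n\in\Z$), ``weights ${}>a$'' is the same as ``weights ${}\ge a+1$'', so $\cF_{>a}$ has weights in $[a+1,b]$ and the inductive hypothesis places it in $\Purew_\cS(X)[a+1] * \cdots * \Purew_\cS(X)[b]$. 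The distinguished triangle then gives
\[
\cF \in \Purew_\cS(X)[a] * \bigl( \Purew_\cS(X)[a+1] * \cdots * \Purew_\cS(X)[b] \bigr),
\]
and associativity of the $*$-operation (\cite[\S 1.3.9]{bbd}) completes the induction.

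As for the main obstacle: there essentially is not one. All the substance — producing a two-step filtration separating weights ${}\le w$ from weights ${}>w$ and controlling the weights of the two pieces — is already packaged into Lemma~\ref{lem:weight-trunc} (whose proof in turn invokes~\cite[Proposition~5.1.15]{bbd}). What is left above is only the routine iteration of that two-step filtration into a $(b-a+1)$-step one, together with the trivial identification of ``pure of weight $a$'' with ``$[a]$ applied to something pure of weight $0$''.
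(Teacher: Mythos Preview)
Your argument is correct and is exactly the iteration the paper intends: the corollary is stated without proof as an immediate consequence of Lemma~\ref{lem:weight-trunc}, and your induction on $b-a$ is the obvious way to unpack that. There is nothing to add.
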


\subsection{Mixed perverse sheaves and the miscible category}
\label{ss:miscible}

As noted earlier, the most obvious marker of the failure of $\Pervsw(X)$ and $\dsw(X)$ to be mixed categories is the fact that pure objects need not be semisimple.  As a first step towards remedying this, we must discard some objects from our categories.  Consider first the full subcategory of $\Pervsw(X)$ given by
\[
\Pervsm(X) = \{ \cF \in \Pervsw(X) \mid \text{for all $i$, $\gr^W_i\cF$ is semisimple} \},
\]
called the \emph{category of mixed perverse sheaves}.  This is not a Serre subcategory of $\Pervsw(X)$, as it is not closed under extensions, but it is closed under subquotients.  In particular, the kernel and cokernel of any morphism in $\Pervsm(X)$ is again in $\Pervsm(X)$, so $\Pervsm(X)$ is naturally an abelian category.  It is easy to see that $\Pervsm(X)$ is, in fact, a mixed category, so the terminology is justified.

Next, we consider the full additive subcategory 
\[
\Pure_\cS(X) = \{ \text{pure semisimple objects of weight $0$ in $\dsw(X)$} \}
\]
of $\dsw(X)$.  When there is no ambiguity about the stratification, we will usually just denote this category by $\Pure(X)$.  Inspired by Corollary~\ref{cor:weight-filt}, we introduce the following notion.

\begin{defn}
An object $\cF \in \dsw(X)$ is said to be \emph{miscible} if
\[
\cF \in \Pure(X)[a] * \Pure(X)[a+1] * \cdots * \Pure(X)[b]
\]
for some integers $a \le b$.  A geometric functor $F: \dsw(X) \to \dw_\cT(Y)$ is said to be \emph{miscible} if it takes miscible objects to miscible objects.
\end{defn}

The full subcategory of $\dsw(X)$ consisting of miscible objects is denoted
\[
\dsmisc(X).
\]
Unfortunately, this is \emph{not} a triangulated subcategory of $\dsw(X)$ (except in the trivial case where $X$ is the empty variety), because the cone of a morphism between two miscible objects need not be miscible.  It is desirable to replace $\dsmisc(X)$ by a smaller category that is triangulated and that contains $\Pervsm(X)$ as the heart of $t$-structure.  The authors do not know how to do this in general, but in the next section we will describe a solution for a very special class of stratifications.

\section{Affable stratifications}
\label{sect:affable}

For the remainder of the paper, we will restrict ourselves to varieties whose stratifications are of one of the following two types:

\begin{defn}\label{defn:aff-even}
An \emph{affine even stratification} of $X$ is a stratification $\cS = \{X_s\}_{s \in S}$ satisfying the following two conditions:
\begin{enumerate}
\item Each $X_s$ is isomorphic to the affine space $\bA^{\dim X_s}$.
\item For all $s,t \in S$ and $i \in \Z$, the sheaf $H^i(\ICm_s|_{X_t})$ vanishes if $i \not\equiv \dim X_s \pmod 2$, and is isomorphic to a direct sum of copies of $\uQlb\la i\ra$ otherwise.
\end{enumerate}
\end{defn}

\begin{defn}
A stratification $\cS$ of $X$ is said to be \emph{affable} if it admits a refinement $\cS'$ that is an affine even stratification.
\end{defn}

The main examples come from representation theory: according to~\cite[Corollary~4.4.3]{bgs}, the stratification of a partial flag variety for a reductive group by orbits of a Borel subgroup is an affine even stratification.  It follows that the stratification by orbits of a parabolic subgroup is affable.  Similar statements hold for partial affine flag varieties, stratified by orbits of an Iwahori or parahoric subgroup. Note that these results are stronger than the older Kazhdan--Lusztig theorem~\cite{kl} on pointwise purity: the latter states only that each $\ICm_s|_{X_t}$ on a flag variety is a pure object of $\dsw(X_t)$, whereas~\cite[Corollary~4.4.3]{bgs} tells us in addition that each cohomology sheaf belongs to $\Pervsm(X_t)$.  

\subsection{The Weil category for an affable stratification}
\label{ss:aff-weil}

To make sure that the considerations of Section~\ref{sect:mixedweil} apply to affine even and affable stratifications, we must check that~\eqref{eqn:constr-hyp} holds.  

\begin{lem}\label{lem:aff-constr-hyp}
Condition~\eqref{eqn:constr-hyp} holds for any affable stratification $\cS$ of $X$.
\end{lem}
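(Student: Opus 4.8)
The plan is to reduce to the case where $\cS$ itself is an affine even stratification and then to prove the statement there by induction on the number of strata, carried out simultaneously with the claim that Verdier duality preserves the relevant class of complexes. Call a complex $\cF$ on a variety with an affine even stratification \emph{Tate-constructible} if it is $\cS$-constructible and, for every stratum $X_t$ and every $i$, the local system $H^i(\cF)|_{X_t}$ has all its composition factors of the form $\uQlb\la n\ra$; thus condition~\eqref{eqn:constr-hyp} for an affine even $\cS$ is exactly the assertion that $Rj_{s*}\uQlb$ is Tate-constructible for every $s$. This class is visibly closed under shifts, Tate twists, the functor $j_!$ for $j$ the inclusion of an open union of strata, and $i_*$ for $i$ the inclusion of a closed union of strata (the last two just extend by zero). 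It is also closed under extensions: on a stratum $X_t\cong\bA^{\dim X_t}$ a subquotient of a lisse sheaf is lisse and inherits its composition factors, and the cohomology long exact sequence of a distinguished triangle, restricted to $X_t$, displays $H^i$ of the third term as an extension of a subsheaf and a quotient of the other two.

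For an affine even $X$ with $N$ strata I would prove two assertions at once: (A) $\D_X$ preserves Tate-constructibility, and (B) $Rj_{s*}\uQlb$ is Tate-constructible for every stratum $X_s$. For (B): pushing forward along the closed immersion $\overline{X_s}\hookrightarrow X$ (exact, and obviously preserving Tate-constructibility) reduces us to $X=\overline{X_s}$, so that $X_s$ is open, dense and smooth. Then $\cL:=\uQlb_{X_s}[\dim X_s]\la-\dim X_s\ra$ is Verdier self-dual on $X_s$, hence $\ICm_s=j_{s!*}\cL$ is self-dual on $X$. The adjunction map $\ICm_s\to\nabla_s:=Rj_{s*}\cL$ restricts over $X_s$ to the identity, so its cone $D_s$ is supported on $\partial:=\overline{X_s}\smallsetminus X_s$; writing $D_s=k_*\cG$ with $k\colon\partial\hookrightarrow X$ the inclusion, so that $\cG=k^*D_s=k^!D_s$, and applying $k^!$ to the triangle $\ICm_s\to\nabla_s\to D_s\to$ together with $k^!\nabla_s=k^!Rj_{s*}\cL=0$ (the $Rj_*$-recollement triangle), we get $\cG\cong k^!\ICm_s[1]=\D_\partial(k^*\ICm_s)[1]$. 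Now $k^*\ICm_s$ is Tate-constructible on $\partial$ by condition~(2) of Definition~\ref{defn:aff-even}, and $\partial$ is an affine even stratified variety with strictly fewer than $N$ strata, so the inductive hypothesis (A) for $\partial$ makes $\cG$, hence $D_s$, hence $\nabla_s$, Tate-constructible. For (A): given a Tate-constructible $\cF$ on $X$, pick a stratum $X_s$ of maximal dimension (so $j_s$ is open with closed complement $i_Z\colon Z\hookrightarrow X$, a union of $N-1$ strata) and dualize the recollement triangle $j_{s!}(\cF|_{X_s})\to\cF\to i_{Z*}(\cF|_Z)\to$ to obtain $i_{Z*}\D_Z(\cF|_Z)\to\D_X\cF\to Rj_{s*}\D_{X_s}(\cF|_{X_s})\to$. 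The first term is Tate-constructible by (A) for $Z$; and $\cF|_{X_s}$ is a finite iterated extension of objects $\uQlb_{X_s}\la n\ra[m]$, so its dual is too and $Rj_{s*}$ of it is a finite iterated extension of $(Rj_{s*}\uQlb_{X_s})\la n\ra[m]$, which are Tate-constructible by (B) for $\overline{X_s}$ — and the latter rests only on (A) for the boundary of $\overline{X_s}$, which has fewer than $N$ strata, so there is no circularity. The base case $N=1$ is trivial: $X\cong\bA^{d}$, where $\D$ manifestly preserves complexes of Tate-type local systems and $Rj_{s*}\uQlb=\uQlb$.

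For a general affable $\cS$ with affine even refinement $\cS'$, each stratum $X_s$ of $\cS$ is a locally closed union of $\cS'$-strata, and running the same recollement argument over $X_s$ shows that $Rj_{s*}\uQlb$ is a finite iterated extension of complexes $R\iota_{X'_\alpha*}(\cG_\alpha)$, where $\iota_{X'_\alpha}\colon X'_\alpha\to X$ is the inclusion of a $\cS'$-stratum and $\cG_\alpha$ is Tate-constructible on $X'_\alpha\cong\bA^{\bullet}$; by the affine even case each $R\iota_{X'_\alpha*}\uQlb_{X'_\alpha}$ is $\cS'$-constructible with Tate composition factors on every $\cS'$-stratum, hence so is $Rj_{s*}\uQlb$. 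It then remains to upgrade this to $\cS$-constructibility: the affine even hypothesis forces the Tate twist of $H^i(Rj_{s*}\uQlb)$ on every $\cS'$-stratum to equal the cohomological degree $i$, so these local systems are geometrically trivial and patch across the $\cS'$-strata of a given $\cS$-stratum $X_t$ to a lisse sheaf on $X_t$ all of whose composition factors are of the form $\uQlb\la n\ra$. I expect the genuine obstacle to be assertion (B) in the affine even case — the self-duality-plus-induction mechanism showing that $Rj_{s*}$ of the normalized constant sheaf stays pointwise of Tate type — since that is where Definition~\ref{defn:aff-even} is really used; the surrounding reductions (to $\overline{X_s}$, and from affable to affine even) are comparatively routine, apart from the patching point just mentioned, which also needs a little care.
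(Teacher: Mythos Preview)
Your argument for the affine even case is essentially the paper's, run through the dual triangle. The paper starts from $j_{s!}\uQlb[d]\la-d\ra \to \ICm_s \to i_*i^*\ICm_s$, applies $i^!$ to obtain a triangle relating $i^!j_{s!}\uQlb$ to $i^!\ICm_s$ and $i^*\ICm_s$, and then dualizes to reach $i^*Rj_{s*}\uQlb$; you work directly with $\ICm_s \to \nabla_s \to k_*\cG$ and identify $\cG \cong \D_\partial(k^*\ICm_s)[1]$ via $k^!\nabla_s=0$. In both cases the content is Definition~\ref{defn:aff-even}(2) together with the inductive availability of Verdier duality on the boundary. Your packaging of the latter as a simultaneous induction on (A) and (B) is a bit more explicit than the paper's appeal to Proposition~\ref{prop:weil-func1}, but the mechanism is the same.

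The gap is in your affable-to-affine-even reduction. First, the claim that the Tate twist of $H^i(Rj_{s*}\uQlb)$ on each $\cS'$-stratum equals the cohomological degree $i$ is false: already for $j\colon\bA^1\hookrightarrow\bP^1$ one has $(R^1j_*\uQlb)_\infty\cong\uQlb(-1)=\uQlb\la 2\ra$, and in general $Rj_{s*}\uQlb$ is not pure. Second, and more seriously, being lisse --- even geometrically trivial --- on each stratum of the finer $\cS'$ does not make a sheaf lisse on an $\cS$-stratum $X_t$; a skyscraper is lisse on every stratum of a fine enough stratification. What you need is $\cS$-constructibility of $Rj_{s*}\uQlb$, which is part of the standing hypotheses on stratifications (cf.\ the discussion preceding~\eqref{eqn:constr-hyp} and~\cite[\S2.2.10]{bbd}) rather than something to be extracted from the refinement. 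With that in hand, the Tate-type condition on composition factors of the lisse sheaf $H^i(Rj_{s*}\uQlb)|_{X_t}$ follows by restricting to a single point and invoking the affine even case. (The paper's own proof is written for the affine even case --- ``clear from the definition'' refers to Definition~\ref{defn:aff-even}(2) --- and leaves the affable reduction implicit.)
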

\begin{proof}
We proceed by induction on the number of strata in $X$.  If $X$ consists of a single stratum, there is nothing to prove.  Otherwise, let $j_s: X_s \to X$ be the inclusion of an open stratum, and let $i: Z \to X$ be the inclusion of the complementary closed subvariety.  Consider the distinguished triangle
\[
j_{s!}\uQlb[\dim X_s]\la -\dim X_s\ra \to \ICm_s \to i_*i^*\ICm_s \to.
\]
We know by induction that the induced stratification on $Z$ satisfies~\eqref{eqn:constr-hyp}.  It is clear from the definition that $i^*\ICm_s$ has the second property in Lemma~\ref{lem:weil-obj}, so $i^*\ICm_s \in \dsw(Z)$.  By Proposition~\ref{prop:weil-func1}, $i^!\ICm_s \cong \D i^*\ICm_s$ lies in $\dsw(Z)$ as well.  It follows from the distinguished triangle
\[
i^!j_{s!}\uQlb[\dim X_s]\la -\dim X_s\ra \to i^!\ICm_s \to i^*\ICm_s \to
\]
that $i^!j_{s!}\uQlb$ lies in $\dsw(Z)$, so using $\D$ again, we have $i^*j_{s*}\uQlb \in \dsw(Z)$, and this implies~\eqref{eqn:constr-hyp}.
\end{proof}

\subsection{The mixed category}
\label{ss:aff-mixed}

If $\cS$ is an affable stratification of $X$, we define the \emph{mixed category} of $X$ to be the triangulated category
\begin{equation}\label{eqn:dsm-defn}
\dsm(X) = \Kb\Pure(X).
\end{equation}
Our first goal is to describe the relationship between this category and $\dsmisc(X)$.

\begin{thm}\label{thm:misc-infext}
There is a natural equivalence of additive categories 
\begin{equation}\label{eqn:misc-infext}
I: \pse\dsm(X) \simto \dsmisc(X).
\end{equation}
\end{thm}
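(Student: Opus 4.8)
The plan is to build the functor $I$ term-by-term using the homotopy-category description $\dsm(X) = \Kb\Pure(X)$, and to show it is an equivalence by exploiting the fact (Corollary~\ref{cor:weight-filt}) that every miscible object admits a filtration by shifts of pure semisimple objects. First I would observe that an object of $\pse\dsm(X)$ is just a bounded complex $P^\bullet = (\cdots \to P^i \to P^{i+1} \to \cdots)$ of objects $P^i \in \Pure(X)$, viewed as an object of $\pse\Kb\Pure(X)$; since $\Pure(X)$ sits inside $\dsw(X)$, such a complex can be ``realized'' inside $\dsw(X)$. The key point is that in $\dsw(X)$ one has $\Hom(\Pure(X)[a], \Pure(X)[b]) = 0$ whenever $b > a$ (by Lemma~\ref{lem:mixed-tri-basic}\eqref{it:mixed-tri-basic-homv}, or rather its Weil-category analogue via purity and weight considerations), and that $\Hom^1(\Pure(X)[a],\Pure(X)[a])$ — the group governing extensions of consecutive terms — is precisely the space where the differentials $P^i \to P^{i+1}$ live, \emph{up to} the infinitesimal ambiguity encoded in \eqref{eqn:pseudotri-hom}. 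So the recipe: given $P^\bullet$, use the differentials to iteratively glue the shifted pure pieces $P^i[-i]$ via distinguished triangles in $\dsw(X)$, producing an object $I(P^\bullet)$ that by construction lies in $\Pure(X)[a] * \cdots * \Pure(X)[b]$, i.e. in $\dsmisc(X)$. This is formally parallel to the construction of $\tilde Q$ in the proof of Proposition~\ref{prop:kosorlov-orlov}, with the weight filtration there replaced here by the given complex structure.

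Next I would define $I$ on morphisms. A morphism $f = (f_0, f')$ in $\pse\dsm(X)$ from $P^\bullet$ to $Q^\bullet$ consists of a chain-map-up-to-homotopy $f_0$ together with a degree-$(-1)$ piece $f'$; both contribute to an actual morphism $I(P^\bullet) \to I(Q^\bullet)$ in $\dsw(X)$ because the Hom-space in $\dsmisc(X)$ between two such filtered objects decomposes, by the orthogonality vanishing above together with the long exact sequences of Lemma~\ref{lem:infext-hom-les} (applied in $\dsw(X)$), exactly as the right-hand side of \eqref{eqn:pseudotri-hom} — the ``genuine'' part matching $\Hom_{\Kb\Pure(X)}$ and the ``infinitesimal'' part matching $\Hom_{\Kb\Pure(X)}(\cdot, \cdot[-1])$. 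One must check functoriality, i.e. compatibility with the twisted composition law \eqref{eqn:pseudotri-comp}; this is a direct computation with the gluing triangles and is where the $g_0[-1] \circ f'$ cross-term appears naturally. Essential surjectivity of $I$ is then immediate from the definition of $\dsmisc(X)$: any miscible $\cF$ lies in some $\Pure(X)[a]*\cdots*\Pure(X)[b]$, and unwinding that iterated $*$ produces a complex over $\Pure(X)$ mapping to $\cF$. Full faithfulness is the Hom-space identification just described, made precise by induction on the length $b - a$ of the filtration, using the five-lemma on the relevant long exact sequences (much as in Lemma~\ref{lem:infext-sq-comp}).

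The main obstacle, I expect, is \emph{well-definedness}: showing that $I(P^\bullet)$ does not depend (up to canonical isomorphism) on the choices of cones made in the iterated gluing, and that these choices can be made functorially in $P^\bullet$. In the triangulated world one normally invokes uniqueness of cones, but here the relevant maps live in an infinitesimal extension and the gluing data is only determined up to the infinitesimal ambiguity, so one needs the rigidity supplied by the vanishing $\Hom^{<0}(\Pure(X)[a],\Pure(X)[b]) = 0$ together with $\Hom^{-1}$-vanishing to pin down the isomorphisms — this is morally the reason the target is $\pse\dsm(X)$ rather than $\dsm(X)$, and the reason $I$ lands in $\dsmisc(X)$ (a mere full subcategory) rather than something triangulated. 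I would handle this by the same strategy as in Section~\ref{ss:orlov-main}: set up an induction on $b-a$, show at each stage that the new gluing triangle and its realization are determined uniquely by the data already constructed (invoking Lemma~\ref{lem:infext-sq-comp} and the Hom-vanishing), and thereby produce a coherent system of isomorphisms exhibiting $I$ as a well-defined functor. Once well-definedness is in hand, the equivalence statement follows formally.
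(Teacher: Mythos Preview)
Your gluing strategy for constructing $I$ on objects is reasonable and is in fact close to what the paper does for essential surjectivity. But there is a genuine gap in the step where you assert that the $\Hom$-space in $\dsw(X)$ between two glued objects ``decomposes, by the orthogonality vanishing above together with the long exact sequences, exactly as the right-hand side of~\eqref{eqn:pseudotri-hom}.'' This does \emph{not} follow from orthogonality and diagram-chasing alone. Already in the simplest nontrivial case --- $P, Q \in \Pure(X)$ and $k=1$ --- one needs a natural isomorphism $\Hom_{\dsw(X)}(P,Q[1]) \cong \Hom_{\Pure(X)}(P,Q)$. Using~\eqref{eqn:dsw-ses} and purity of $\uHom^\bullet(P,Q)$, the left side is $\uHom^0(P,Q)_{\Fr}$ while the right side is $\uHom^0(P,Q)^{\Fr}$; the canonical map $V^{\Fr} \to V_{\Fr}$ is an isomorphism precisely when $\Fr$ acts semisimply on the generalized $1$-eigenspace of $V$. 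That semisimplicity is a substantive fact about $\uHom^\bullet(\ICm_s,\ICm_t)$, not a formal consequence of weight orthogonality. Without it, your inductive Hom-computation cannot get started, and hence full faithfulness (and even well-definedness of $I$ on infinitesimal morphisms) is unproven.

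The paper's proof supplies exactly this missing ingredient, but by an indirect route: it temporarily redefines $\dsm(X)$ as $\Db\Pervsm(X)$, invokes the Beilinson--Ginzburg--Soergel theorem (Theorem~\ref{thm:bgs-aff-even}) that $\Pervsm(X)$ is Koszul and that $\Db\Pervsm(X)$ is a mixed version of $\ds(X)$, and uses this to prove Frobenius semisimplicity on $\uHom(\incl\cF,\incl\cG)$ (Lemma~\ref{lem:aff-even-semis}). Only then does the Hom-decomposition~\eqref{eqn:aff-even-dsw} follow, yielding the fully faithful extension $\tilde\jmath:\pse\dsm(X)\to\dsw(X)$; essential surjectivity onto $\dsmisc(X)$ is then the inductive gluing argument you describe. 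Finally the temporary definition is reconciled with $\Kb\Pure(X)$ via Proposition~\ref{prop:kosorlov-orlov}. So the architecture is: BGS $\Rightarrow$ mixed-version property $\Rightarrow$ Frobenius semisimplicity $\Rightarrow$ Hom-decomposition $\Rightarrow$ equivalence. Your proposal attempts to bypass the first three links and jump straight to the last, which cannot work without an independent source for the semisimplicity.
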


Once this result is proved, we will identify $\pse\dsm(X)$ with $\dsmisc(X)$.  We will explain in Section~\ref{ss:aff-infext} how to transfer various notions and results from Section~\ref{sect:infext} to the setting of $\dsmisc(X)$.  For now, note that this identification gives us a canonical functor
\[
\incl: \dsm(X) \to \dsmisc(X).
\]
By an abuse of notation, we will also write $\incl$ for the composition $\dsm(X) \to \dsmisc(X) \to \dsw(X)$.  Let $\zeta = \exs \circ \incl$, so that we have a commutative diagram
\[
\xymatrix@C=0pt{
\dsm(X) \ar[rr]^{\incl}\ar[dr]_\degr && \dsw(X) \ar[dl]^\exs \\
& \ds(X) }
\]
In the course of the proof of Theorem~\ref{thm:misc-infext}, we will simultaneously establish the following statement, which tells us in part that $\dsm(X)$ and $\Pervsm(X)$ are mixed versions of $\ds(X)$ and $\Pervs(X)$, respectively.

\begin{prop}\label{prop:mixed-version}
\begin{enumerate}
\item $\dsm(X)$ admits a natural $t$-structure whose heart can be identified with $\Pervsm(X)$, and the functor $\incl: \dsm(X) \to \dsw(X)$ is $t$-exact and restricts to the inclusion functor $\Pervsm(X) \to \Pervsw(X)$.\label{it:mixed-tstruc}
\item The functor $\degr: \dsm(X) \to \ds(X)$ is $t$-exact and induces an isomorphism\label{it:mixed-isom}
\[
\bigoplus_{n \in \Z} \Hom_{\dsm(X)}(\cF,\cG\la n\ra) \simto \Hom_{\ds(X)}(\degr\cF,\degr\cG).
\]
\end{enumerate}
\end{prop}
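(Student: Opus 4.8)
The plan is to prove Theorem~\ref{thm:misc-infext} and Proposition~\ref{prop:mixed-version} simultaneously, as promised, by exploiting the fact that $\Pure(X)$ is an Orlov category in disguise. Indeed, by Corollary~\ref{cor:weight-filt} and the definition of $\dsmisc(X)$, every miscible object is an iterated extension of shifts of pure semisimple objects of weight $0$; the Hom-vanishing statement~\eqref{eqn:dsw-ses-comm} (more precisely the combination of~\eqref{eqn:dsw-ses} and purity) should show that for $\cF,\cG \in \Pure(X)$ one has $\Hom_{\dsw(X)}(\cF,\cG[i]) = 0$ for $i < 0$ and a natural short exact sequence computing it for $i \geq 0$. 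This is precisely the numerology of an infinitesimal extension: $\Hom_{\dsmisc(X)}$ between objects of $\Pure(X)$ splits as a genuine part plus an ``$\iinf$'' part living in $\uHom^{-1}$-coinvariants. So the first step is to establish that $\Pure(X)$, equipped with a suitable degree function (weight, which is identically $0$, is the wrong one — I would use the ``homological'' grading coming from where a simple summand sits, i.e.\ $\deg$ of an $\IC$-sheaf relative to its perverse normalization), is an Orlov category. The hypotheses in Definition~\ref{defn:orlov} are: finite-dimensional Hom's (true by constructibility), $\End(S) \cong \Qlb$ for indecomposables (true since the $\ICm_s$ are the simple objects of a split abelian category, using that $\Qlb$ is the only relevant division ring after our choice of square root of the Tate sheaf), and the triangularity condition~\eqref{it:orlov-deg}, which follows from Kazhdan--Lusztig-type pointwise purity packaged in Definition~\ref{defn:aff-even} together with~\eqref{eqn:dsw-ses}.

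The second step is to construct the equivalence $I \colon \pse\dsm(X) \simto \dsmisc(X)$. On objects it is the identity (both categories have as objects formal complexes / iterated cones of pure objects — more precisely, I would send a complex $X^\bullet \in \Kb\Pure(X)$ to its ``total object'' or ``convolution'' $\mathrm{Tot}(X^\bullet) \in \dsw(X)$, realized via the realization-type machinery of Section~\ref{ss:realization} or, better, via a direct totalization using the weight-filtration triangles). The point of the infinitesimal extension $\pse$ is that the extra $\Hom_{\dsm(X)}(X,Y[-1])$ summand in~\eqref{eqn:pseudotri-hom} exactly matches the $\uHom^{-1}$-coinvariants term in the short exact sequence~\eqref{eqn:dsw-ses} describing $\Hom_{\dsw(X)}$ between totalizations. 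So the content is: (a) the totalization functor $\Kb\Pure(X) \to \dsw(X)$ has essential image exactly $\dsmisc(X)$ (surjectivity by Corollary~\ref{cor:weight-filt}, the fact that the image lands in $\dsmisc(X)$ by induction on the length of the complex); (b) it is not full or faithful, but the ``defect'' is precisely captured by $\iinf$, so it extends to a full and faithful — hence, being essentially surjective, an equivalence — functor out of $\pse\Kb\Pure(X)$. Step (b) is where the Orlov-category input is essential: I expect to invoke Theorem~\ref{thm:orlov-inf} (or rather the computation of Hom-spaces underlying Proposition~\ref{prop:kosorlov-tstruc} and Corollary~\ref{cor:kosorlov-supp}) to identify $\Hom_{\dsw(X)}$ between totalizations with $\Hom_{\pse\Kb\Pure(X)}$ termwise.

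The third step is Proposition~\ref{prop:mixed-version}. For part~(1): $\Kb\Pure(X)$ carries the $t$-structure of Proposition~\ref{prop:kosorlov-tstruc} (applied to the Orlov category $\scA = \Pure(X)$), and its heart $\Kos(\Pure(X))$ needs to be identified with $\Pervsm(X)$. The identification goes: an object of $\Kos(\Pure(X))$ is a complex of pure semisimple objects with support on the anti-diagonal $\lhd \cap \rhd$, whose cohomology in the abelian sense is computed by the differentials; by Corollary~\ref{cor:kosorlov-supp} the weight filtration has graded pieces the individual terms, and these are pure semisimple, so the totalization lands in $\Pervsm(X)$. Conversely any $\cF \in \Pervsm(X)$ is reconstructed from its $\gr^W$ pieces and the $\partial_k \in \Ext^1$ classes exactly as in the functor $\tilde Q$ of Proposition~\ref{prop:kosorlov-orlov} (this is really the same construction, since $\Pervsm(X)$ is a split mixed abelian category). $t$-exactness of $\incl$ and the identification with the inclusion $\Pervsm(X) \hookrightarrow \Pervsw(X)$ then follow by checking on simple objects $\ICm_s$. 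For part~(2): $\degr = \exs \circ \incl$ is $t$-exact because $\exs$ and $\incl$ are, and the Hom-isomorphism $\bigoplus_n \Hom_{\dsm(X)}(\cF,\cG\la n\ra) \simto \Hom_{\ds(X)}(\degr\cF,\degr\cG)$ is checked first for $\cF,\cG \in \Pure(X)$, where it reduces to the statement that $\uHom^i(\cF,\cG)$ over $\Spec \Fq$, summed over Tate twists, recovers the geometric Hom-space on $X \otimes \Fqb$ — this is essentially~\eqref{eqn:hom-forget} together with the fact that a pure object of weight $0$ has Frobenius acting semisimply, so that invariants plus coinvariants over all twists recover the whole space. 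The general case follows by dévissage using the triangulated structure and the five lemma.

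The main obstacle, I expect, is step (b) of the second part: showing that the totalization functor, when extended to $\pse\Kb\Pure(X)$, is \emph{fully faithful}. The subtlety is that $\dsw(X)$ is genuinely not a derived category (as the excerpt stresses: Weil perverse sheaves are not the heart of a derived category situation, $\dsw(X) \neq \Db\Pervsw(X)$), so one cannot simply cite a derived-category Hom computation. One must instead work with the explicit short exact sequence~\eqref{eqn:dsw-ses} relating $\Hom$ in $\dsw(X)$ to invariants/coinvariants of Frobenius on $\uHom$, combine it with the pointwise-purity/semisimplicity of Frobenius on pure objects (so that $\Fr$ acts on the relevant $\uHom^i(\cF,\cG)$ with all eigenvalues $1$ after the twist normalization, making invariants and coinvariants both equal to the full space and the sequence~\eqref{eqn:dsw-ses} literally split as $\Hom_{\ds(X)} \oplus \Hom_{\ds(X)}[-1]$), and then match this against the definition~\eqref{eqn:pseudotri-hom} of Hom in an infinitesimal extension. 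Getting the bookkeeping of Tate twists, shifts, and the lexicographic induction (as in the proofs in Section~\ref{sect:hot-orlov}) to line up cleanly is the delicate part; everything else is dévissage.
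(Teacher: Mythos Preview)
Your overall architecture is close to the paper's, but the logical order is inverted at a crucial point, and this creates a genuine gap.

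The paper does \emph{not} prove Proposition~\ref{prop:mixed-version}\eqref{it:mixed-isom} by the direct Frobenius computation you sketch. Instead, it temporarily redefines $\dsm(X)$ as $\Db\Pervsm(X)$, and then part~\eqref{it:mixed-isom} is taken straight from the Beilinson--Ginzburg--Soergel theorem (Theorem~\ref{thm:bgs-aff-even}\eqref{it:bgs-mixed}): BGS prove that $\Db\Pervsm(X)$ is a mixed version of $\ds(X)$, and this is a substantial result, not a formality. Only \emph{after} part~\eqref{it:mixed-isom} is in hand does the paper prove (Lemma~\ref{lem:aff-even-semis}) that Frobenius acts semisimply on $\uHom(\incl\cF,\incl\cG)$; see Remark~\ref{rmk:aff-even-semis}, which makes explicit that the proof of Lemma~\ref{lem:aff-even-semis} uses Proposition~\ref{prop:mixed-version}. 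That semisimplicity is then what gives the splitting~\eqref{eqn:aff-even-dsw}, which is what extends $\incl$ to the fully faithful $\tilde\jmath: \pse\dsm(X) \to \dsw(X)$. Finally, Koszulity of $\Pervsm(X)$ (again from BGS) and Proposition~\ref{prop:kosorlov-orlov} give the equivalence $\Db\Pervsm(X) \cong \Kb\Pure(X)$, reconciling the temporary definition with~\eqref{eqn:dsm-defn}.

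Your proposal runs this backwards: you want to prove the Hom-isomorphism in part~\eqref{it:mixed-isom} by checking it on $\Pure(X)$ and invoking ``the fact that a pure object of weight $0$ has Frobenius acting semisimply, so that invariants plus coinvariants over all twists recover the whole space.'' But for $\cF,\cG \in \Pure(X)$, the object $\uHom^i(\cF,\cG)$ is a Weil sheaf on $\Spec\Fq$ which is pure of weight $i$, and purity on a point does \emph{not} imply that Frobenius is semisimple (a Jordan block with eigenvalue $q^{i/2}$ is pure of weight $i$). So the claimed identification $\bigoplus_n (\uHom^i(\cF,\cG)\la n\ra)^\Fr \cong \uHom^i(\cF,\cG)$ is exactly what needs proof; it is the content of Lemma~\ref{lem:aff-even-semis}, and the paper's proof of that lemma uses the very isomorphism you are trying to establish. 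Without an independent argument for Frobenius semisimplicity on these $\uHom$-groups (which would essentially amount to reproving the relevant part of BGS), your Step~(b) and your verification of part~\eqref{it:mixed-isom} are circular.

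A secondary issue: your ``totalization'' functor $\Kb\Pure(X) \to \dsw(X)$ is not automatically well-defined as a functor, since iterated cones in a triangulated category are non-canonical. The paper avoids this by going through $\Db\Pervsm(X)$ and the realization functor for $\dsw(X)$ (available via~\cite{bei}); you would need to either do the same or invoke the filtered-triangulated machinery of Section~\ref{ss:realization} directly, which you gesture at but do not carry out.
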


The proofs of Theorem~\ref{thm:misc-infext} and Proposition~\ref{prop:mixed-version} will occupy most of this section.  We begin by recalling some key results about affine even stratifications from~\cite{bgs}.  Those results are mostly stated not for $\Pervsm(X)$ but rather for the Serre subcategory
\[
\Pervsm(X)' \subset \Pervsm(X)
\qquad\text{generated by}\qquad
\{ \ICm_s\la n \ra \mid n \equiv \dim X_s \pmod 2\}.
\]
Note that $\Pervsm(X)'$ is stable under integral Tate twists $\cF \mapsto \cF\la 2n\ra$.

\begin{lem}\label{lem:even-pervm}
We have $\Pervsm(X) \cong \Pervsm(X)' \oplus \Pervsm(X)'\la 1\ra$.
\end{lem}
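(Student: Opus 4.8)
The plan is to reduce everything to the vanishing of $\Ext^1$ between simple objects of ``opposite parity.'' Recall that every simple object of $\Pervsm(X)$ is a simple object of $\Pervsw(X)$, hence of the form $\ICm_s\la n\ra$ for a unique $(s,n)\in S\times\Z$, and that these are pairwise nonisomorphic; assign to $\ICm_s\la n\ra$ the \emph{parity} $n+\dim X_s\bmod 2\in\{0,1\}$. Then $\Pervsm(X)'$ is exactly the full subcategory of objects all of whose composition factors have parity $0$, while $\Pervsm(X)'\la 1\ra$ is the one for parity $1$; every simple object has exactly one parity. Hence the direct-sum functor $(A,B)\mapsto A\oplus B$ from $\Pervsm(X)'\oplus\Pervsm(X)'\la 1\ra$ to $\Pervsm(X)$ will be an equivalence once we know: (a) $\Hom(A,B)=0$ whenever $A,B$ have opposite parity (automatic, since the image of such a map is a common subquotient, so has composition factors of both parities); and (b) every object of $\Pervsm(X)$ splits as a parity-$0$ object plus a parity-$1$ object. (We may assume $\cS$ is an affine even stratification; passing to an affine even refinement only enlarges $\Pervsm(X)$, compatibly with the parity splitting and with taking direct summands.)

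The heart of the matter is the $\Ext^1$-vanishing needed for (b): for simple objects $S,S'$ of opposite parity, $\Ext^1_{\Pervsm(X)}(S,S')=0$. Since $\Pervsm(X)$ is closed under subquotients in $\Pervsw(X)$, a short exact sequence in $\Pervsm(X)$ that splits in $\Pervsw(X)$ splits in $\Pervsm(X)$, so there is an injection $\Ext^1_{\Pervsm(X)}(S,S')\hookrightarrow\Ext^1_{\Pervsw(X)}(S,S')=\Hom^1_{\dsw(X)}(S,S')$; it thus suffices to prove $\Hom^1_{\dsw(X)}(\ICm_s\la m\ra,\ICm_t\la n\ra)=0$ whenever $m+d_s\not\equiv n+d_t\pmod 2$, where $d_u:=\dim X_u$. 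By the short exact sequence~\eqref{eqn:dsw-ses}, it is enough that $\uHom^0(\ICm_s\la m\ra,\ICm_t\la n\ra)_\Fr=0$ and $\uHom^1(\ICm_s\la m\ra,\ICm_t\la n\ra)^\Fr=0$. For the first: by~\eqref{eqn:hom-forget} this group is nonzero only when $s=t$, and then opposite parity forces $m\not\equiv n$, so $\uHom^0(\ICm_s\la m\ra,\ICm_s\la n\ra)\cong\uQlb\la n-m\ra$ (using that $\ICm_s$ is simple and pure of weight $0$) is pure of nonzero weight and has no $\Fr$-coinvariants. The second group is $\uHom^1(\ICm_s,\ICm_t)\la n-m\ra^\Fr$.

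The one nonformal ingredient is the purity and parity of $\uHom^\bullet(\ICm_s,\ICm_t)$, and this is where the affine even hypothesis enters:
\[
\text{$\uHom^k(\ICm_s,\ICm_t)$ is pure of weight $k$, and vanishes unless $k\equiv d_s+d_t\pmod 2$.}
\]
This is essentially~\cite[\S4.4]{bgs}; alternatively one proves it by induction on the number of strata, using that the cohomology and the compactly-supported cohomology of a Tate sheaf on an affine cell are concentrated in a single degree, so that the stalk/costalk parity of the $\IC$ sheaves propagates through the usual dévissage of $Ra_*\cRHom$ along the stratification, together with Deligne's purity for the weights. Granting this: if $d_s+d_t$ is even, then $\uHom^1(\ICm_s,\ICm_t)=0$ outright (as $1$ is odd), so $\uHom^1(\ICm_s\la m\ra,\ICm_t\la n\ra)=0$; if $d_s+d_t$ is odd, then $m+d_s\not\equiv n+d_t$ forces $m+n$ to be even, hence $n-m$ even and $1+(n-m)$ odd, in particular nonzero, so $\uHom^1(\ICm_s,\ICm_t)\la n-m\ra$ is pure of nonzero weight and has no $\Fr$-invariants. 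Either way $\Hom^1_{\dsw(X)}(\ICm_s\la m\ra,\ICm_t\la n\ra)=0$, as needed.

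Finally I would deduce (b) by a length induction. Given $M\in\Pervsm(X)$, pick a simple subobject $L$ of some parity $\varepsilon$; by induction $M/L\cong A\oplus B$ with $A$ of parity $\varepsilon$ and $B$ of parity $1-\varepsilon$. The preimage of $A$ in $M$ has all composition factors of parity $\varepsilon$; the preimage $M_B$ of $B$ is an extension of $B$ by $L$, and $\Ext^1(B,L)=0$ (dévissage on the length of $B$ from the simple-object vanishing above, since $L$ and the composition factors of $B$ have opposite parity), so $M_B\cong L\oplus B'$ with $B'$ of parity $1-\varepsilon$, and then $M$ is the direct sum of the preimage of $A$ and $B'$. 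Together with (a) and the uniqueness of this decomposition, this shows that $M\mapsto(M^{(0)},M^{(1)})$ is an inverse to the direct-sum functor, i.e.\ $\Pervsm(X)\cong\Pervsm(X)'\oplus\Pervsm(X)'\la 1\ra$. The only genuine obstacle is the displayed purity-and-parity statement; the rest is routine homological algebra.
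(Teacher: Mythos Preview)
Your proof is correct, but it is organized differently from the paper's. Both arguments reduce to the vanishing of $\Ext^1_{\Pervsw(X)}(\cF,\cG\la 1\ra)$ for simple $\cF,\cG\in\Pervsm(X)'$. The paper establishes this directly by induction on the number of strata: picking a closed stratum $j_s$ with open complement $h$, the long exact sequence coming from $j_{s*}j_s^!\cG\la 1\ra \to \cG\la 1\ra \to h_*h^*\cG\la 1\ra \to$ reduces to the open piece (handled by induction) and to $\Hom(j_s^*\cF, j_s^!\cG\la 1\ra[1])$, which vanishes by the elementary fact that $\Hom_{\dsw(\bA^m)}(\uQlb,\uQlb[i]\la m\ra)=0$ for $m$ odd, since $j_s^*\cF$ and $j_s^!\cG\la 1\ra$ land in opposite-parity subcategories of $\dsw(X_s)$.

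You instead pass through the short exact sequence~\eqref{eqn:dsw-ses} to reduce $\Hom^1_{\dsw(X)}$ to statements about $\uHom^0_{\Fr}$ and $(\uHom^1)^{\Fr}$, and then invoke the purity and parity of $\uHom^k(\ICm_s,\ICm_t)$. This is valid: purity is Deligne, and the parity follows (non-circularly) from the Koszulity of $\Pervsm(X)'$ in~\cite[\S4.4]{bgs}, or by the dévissage you sketch, which is essentially the same induction the paper runs. Your route isolates a useful intermediate statement (the full parity vanishing of $\uHom^\bullet$, not just $\Hom^1_{\dsw}$) at the cost of invoking more machinery; the paper's argument is shorter and entirely self-contained, never leaving $\dsw(X)$. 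Your parenthetical about affine even refinements is unnecessary here, since the lemma is already stated in that setting.
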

\begin{proof}
Let $\cF, \cG \in \Pervsm(X)'$ be two simple objects.  It is sufficient to show that $\Ext^1_{\Pervsm(X)}(\cF,\cG\la 1\ra) = 0$.  That, in turn, would follow from the vanishing of the $\Ext^1$-group in the larger category $\Pervsw(X)$.  We now proceed by induction on the number of strata in $X$.  Choose a closed stratum $j_s: X_s \to X$, and let $h: U \to X$ be the complementary open subvariety.  From the distinguished triangle $j_{s*}j_s^!\cG\la 1\ra \to \cG\la 1\ra \to h_*h^*\cG\la 1\ra \to$ in $\dsw(X)$, we obtain the long exact sequence
\[
\cdots \to \Hom(j_s^*\cF, j_s^!\cG\la 1\ra[1]) \to \Hom(\cF,\cG\la1\ra[1]) \to \Hom(h^*\cF, h^*\cG\la1\ra[1]) \to \cdots.
\]
The last term vanishes by induction.  For the first term, $j_s^*\cF$ (resp.~$j_s^!\cG\la 1\ra$) lies in the triangulated subcategory of $\dsw(X_s)$ generated by $\uQlb\la n\ra$ with $n \equiv \dim X_s \pmod 2$ (resp.~$n \not\equiv \dim X_s \pmod 2$).  It is well known that on $X \cong \bA^{\dim X_s}$, we have $\Hom_{\dsw(X_s)}(\uQlb, \uQlb[i]\la m\ra) = 0$ if $m$ is odd (cf.~Lemma~\ref{lem:affine-basic}\eqref{it:aff-cohom}), so the first term vanishes as well.  It follows that $\Ext^1_{\Pervsw(X)}(\cF,\cG\la 1\ra) = 0$.
\end{proof}

In the following theorem, parts~\eqref{it:bgs-mix-proj}--\eqref{it:bgs-mixed} are proved in~\cite{bgs} only for $\Pervsm(X)'$ (which is denoted $\tilde{\mathcal{P}}$ in {\it loc.~cit.}), but it is clear from the preceding lemma that the same statements hold for $\Pervsm(X)$ as well.

\begin{thm}\label{thm:bgs-aff-even}
Suppose $X$ has an affine even stratification.
\begin{enumerate}
\item \cite[Theorem~3.3.1 and Corollary~3.3.2]{bgs} The category $\Pervs(X)$ has enough projectives and enough injectives, and finite cohomological dimension.  In addition, the realization functor
\[
\real: D^b\Pervs(X) \to \ds(X)
\]
is an equivalence of categories.\label{it:bgs-equiv}
\item \cite[Lemma~4.4.8]{bgs} The category $\Pervsm(X)$ has enough projectives and enough injectives, and finite cohomological dimension.  An object $\cF \in \Pervsm(X)$ is projective (resp.~injective) if and only if $\exs(\cF) \in \Pervs(X)$ is projective (resp.~injective).\label{it:bgs-mix-proj}
\item \cite[Theorem~4.4.4]{bgs} The category $\Pervsm(X)$ is Koszul. \label{it:bgs-koszul}
\item \cite[Theorem~4.4.4]{bgs} The functor $\degr = \exs|_{\Pervsm(X)}: \Pervsm(X) \to \Pervs(X)$ makes $\Pervsm(X)$ into a mixed version of $\Pervs(X)$.  The composition
\[
\Db\Pervsm(X) \to \Db\Pervs(X) \xrightarrow[\real]{\sim} \ds(X)
\]
makes $\Db\Pervsm(X)$ into a mixed version of $\ds(X)$. \label{it:bgs-mixed}\qed
\end{enumerate}
\end{thm}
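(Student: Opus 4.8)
The plan is to settle part~\eqref{it:bgs-equiv} by direct citation and to bootstrap parts~\eqref{it:bgs-mix-proj}--\eqref{it:bgs-mixed} from their $\Pervsm(X)'$-versions using the decomposition of Lemma~\ref{lem:even-pervm}. Part~\eqref{it:bgs-equiv} is a statement purely about the category $\Pervs(X)$ on $X\otimes\Fqb$, which carries no Weil structure, so the prime/non-prime distinction never arises; here I would simply invoke \cite[Theorem~3.3.1 and Corollary~3.3.2]{bgs}, which applies because $\cS$ is affine even. This yields at once that $\Pervs(X)$ has enough projectives and injectives and finite cohomological dimension, and that $\real\colon\Db\Pervs(X)\to\ds(X)$ is an equivalence, with nothing further to check.

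For parts~\eqref{it:bgs-mix-proj}--\eqref{it:bgs-mixed}, the corresponding statements for the Serre subcategory $\Pervsm(X)'$ (denoted $\tilde{\mathcal{P}}$ in \cite{bgs}) are \cite[Lemma~4.4.8 and Theorem~4.4.4]{bgs}. I would promote each of them to $\Pervsm(X)$ via Lemma~\ref{lem:even-pervm}, which presents $\Pervsm(X)$ as a direct sum of abelian categories $\Pervsm(X)'\oplus\Pervsm(X)'\la1\ra$, so that every object splits canonically as $\cF_0\oplus\cG_0\la1\ra$ with $\cF_0,\cG_0\in\Pervsm(X)'$ and all $\Hom$- and $\Ext$-groups between the two summands vanish. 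Having enough projectives, having enough injectives, and finite cohomological dimension are inherited by such a direct sum from its summands, as is Koszulity: across the summands all $\Ext^i$ vanish, which is vacuously compatible with~\eqref{eqn:koszul-ext-vanish}, while $\Pervsm(X)'\la1\ra$ is Koszul because the autoequivalence $\la1\ra$ merely raises all weights by $1$ and so preserves~\eqref{eqn:koszul-ext-vanish}. For the projectivity (and dually injectivity) criterion of part~\eqref{it:bgs-mix-proj}, I would additionally use that $\exs$ is insensitive to the Tate twist, $\exs(\cF\la1\ra)\cong\exs(\cF)$, so $\exs(\cF)\cong\exs(\cF_0)\oplus\exs(\cG_0)$; then $\cF$ is projective iff $\cF_0,\cG_0$ are projective in $\Pervsm(X)'$ iff (by \cite{bgs}) $\exs(\cF_0),\exs(\cG_0)$ are projective in $\Pervs(X)$ iff $\exs(\cF)$ is, and this same chain shows $\Pervsm(X)$ has enough projectives and injectives.

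The one step requiring real care — and hence the principal obstacle — is the degrading isomorphism $\bigoplus_{n\in\Z}\Hom_{\Pervsm(X)}(\cF,\cG\la n\ra)\simto\Hom_{\Pervs(X)}(\degr\cF,\degr\cG)$ of part~\eqref{it:bgs-mixed}, since $\la n\ra$ swaps the two summands when $n$ is odd. I would split the left-hand side by the parity of $n$ and track which $\Hom$-groups between the components of $\cF$ and of $\cG\la n\ra$ survive; after this bookkeeping the sum reorganizes into the four degrading isomorphisms provided by \cite[Theorem~4.4.4]{bgs}, one for each ordered pair of $\Pervsm(X)'$-components of $\cF$ and of $\cG$, and because $\exs$ does not see $\la1\ra$ these reassemble exactly into the four summands of $\Hom_{\Pervs(X)}(\degr\cF,\degr\cG)$. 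Granting this isomorphism and the equivalence $\real$ of part~\eqref{it:bgs-equiv}, the remaining content of part~\eqref{it:bgs-mixed} — that $\degr$ makes $\Pervsm(X)$ a mixed version of $\Pervs(X)$, and that $\Db\Pervsm(X)$ transported to $\ds(X)$ via $\real$ is a mixed version of $\ds(X)$ — is immediate from the definitions of Section~\ref{sect:mixed}, the derived-level statement following from its $\Pervsm(X)'$-version by the same direct-sum promotion.
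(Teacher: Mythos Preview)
Your proposal is correct and follows exactly the approach the paper indicates: the paper states the theorem with a \qed\ and no explicit proof, remarking just before it that parts~\eqref{it:bgs-mix-proj}--\eqref{it:bgs-mixed} are proved in~\cite{bgs} only for $\Pervsm(X)'$ but that ``it is clear from the preceding lemma [Lemma~\ref{lem:even-pervm}] that the same statements hold for $\Pervsm(X)$ as well.'' Your write-up simply fills in the bookkeeping that the paper leaves to the reader, and the parity-of-$n$ argument for the degrading isomorphism is precisely the kind of check one expects when unpacking that ``clear.''
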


The proof of the next lemma depends on Proposition~\ref{prop:mixed-version}.  Due to the structure of the argument for Theorem~\ref{thm:misc-infext}, it is convenient to give a contingent proof of this lemma now, even though Proposition~\ref{prop:mixed-version} has not yet been proved.

\begin{lem}\label{lem:aff-even-semis}
For any $\cF, \cG \in \dsm(X)$, the action of $\Fr$ on $\uHom(\incl\cF,\incl\cG)$ is semisimple.  In other words, $\uHom(\incl\cF,\incl\cG) \in \Pervsm(\pt)$.  Moreover, the functor $\incl$ induces an isomorphism
\begin{equation}\label{eqn:aff-even-semis}
\Hom_{\dsm(X)}(\cF,\cG\la n\ra) \simto \left(\uHom(\incl\cF,\incl\cG)\la n\ra\right)^\Fr.
\end{equation}
In addition, there is a natural isomorphism
\begin{equation}\label{eqn:aff-even-dsw}
\Hom_{\dsw(X)}(\incl\cF,\incl\cG) \simeq \Hom_{\dsm(X)}(\cF,\cG) \oplus \Hom_{\dsm(X)}(\cF,\cG[-1]).
\end{equation}
\end{lem}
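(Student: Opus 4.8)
The plan is to reduce all three assertions to Proposition~\ref{prop:mixed-version}\eqref{it:mixed-isom}, together with the structural facts~\eqref{eqn:hom-forget}, \eqref{eqn:dsw-ses}, and the remark following~\eqref{eqn:dsw-ses}. Write $V=\uHom^0(\incl\cF,\incl\cG)$; this is a finite-dimensional $\Qlb$-vector space carrying the automorphism $\Fr$. Since $\cRHom(\incl\cF,\incl\cG\la n\ra)\cong\cRHom(\incl\cF,\incl\cG)\la n\ra$, we have $\uHom^0(\incl\cF,\incl\cG\la n\ra)=V\la n\ra$ as a Frobenius module, so $(V\la n\ra)^\Fr$ is precisely the $q^{-n/2}$-eigenspace of $\Fr$ on $V$. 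Because $\degr=\exs\circ\incl$ and because, by the remark after~\eqref{eqn:dsw-ses}, the map $\Hom_{\dsw(X)}(\incl\cF,\incl\cG\la n\ra)\to\Hom_{\ds(X)}(\degr\cF,\degr\cG)$ factors through $\uHom^0(\incl\cF,\incl\cG\la n\ra)^\Fr$, the $\degr$-induced map of Proposition~\ref{prop:mixed-version}\eqref{it:mixed-isom} factors, under the identification $\Hom_{\ds(X)}(\degr\cF,\degr\cG)\cong\exs V$ of~\eqref{eqn:hom-forget}, as a composite
\[
\bigoplus_{n\in\Z}\Hom_{\dsm(X)}(\cF,\cG\la n\ra)\ \xto{\ \phi\ }\ \bigoplus_{n\in\Z}(V\la n\ra)^\Fr\ \xto{\ \psi\ }\ V,
\]
where $\phi$ is built from $\incl$ followed by the edge map of~\eqref{eqn:dsw-ses}, and $\psi$ is the sum of the eigenspace inclusions $(V\la n\ra)^\Fr\hookrightarrow V$ (the underlying vector space). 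By Proposition~\ref{prop:mixed-version}\eqref{it:mixed-isom}, $\psi\circ\phi$ is an isomorphism.

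Next I would run a short linear-algebra argument to get the first two assertions. The map $\psi$ is automatically injective, since eigenspaces of $\Fr$ for the pairwise distinct eigenvalues $q^{-n/2}$ are linearly independent. As $\psi\circ\phi$ is an isomorphism, $\psi$ is also surjective, so $V=\bigoplus_{n}(V\la n\ra)^\Fr$; that is, $\Fr$ acts semisimply on $V$ with all eigenvalues of the form $q^{m/2}$, $m\in\Z$, which is to say $V\in\Pervsm(\pt)$. Applying this with $\cG$ replaced by $\cG[i]$ (still an object of $\dsm(X)$) and using $\uHom^0(\incl\cF,\incl(\cG[i]))=\uHom^i(\incl\cF,\incl\cG)$ gives $\uHom^i(\incl\cF,\incl\cG)\in\Pervsm(\pt)$ for every $i$, which is the semisimplicity assertion. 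Moreover $\psi$ is now an isomorphism, hence so is $\phi$, and comparing the degree-$n$ summands yields the natural isomorphism~\eqref{eqn:aff-even-semis}.

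For~\eqref{eqn:aff-even-dsw} I would feed these facts into~\eqref{eqn:dsw-ses} with $i=0$:
\[
0\to\uHom^{-1}(\incl\cF,\incl\cG)_\Fr\to\Hom^0_{\dsw(X)}(\incl\cF,\incl\cG)\to\uHom^0(\incl\cF,\incl\cG)^\Fr\to 0 .
\]
By the semisimplicity assertion (with $i=-1$), $\Fr$ acts semisimply on $\uHom^{-1}(\incl\cF,\incl\cG)$, so its coinvariants are canonically identified with its invariants; then~\eqref{eqn:aff-even-semis} (taken with $n=0$, once for $\cG$ and once for $\cG[-1]$) identifies the two outer terms with $\Hom_{\dsm(X)}(\cF,\cG[-1])$ and $\Hom_{\dsm(X)}(\cF,\cG)$, so already the dimensions add up correctly. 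To get the splitting itself, note that $\incl$ furnishes a map $\Hom_{\dsm(X)}(\cF,\cG)\to\Hom^0_{\dsw(X)}(\incl\cF,\incl\cG)$ carrying it isomorphically onto $\uHom^0(\incl\cF,\incl\cG)^\Fr$ under the surjection above (by~\eqref{eqn:aff-even-semis}), while the inclusion in~\eqref{eqn:dsw-ses} composed with the identification from~\eqref{eqn:aff-even-semis} gives a map $\Hom_{\dsm(X)}(\cF,\cG[-1])\to\Hom^0_{\dsw(X)}(\incl\cF,\incl\cG)$ with image $\uHom^{-1}(\incl\cF,\incl\cG)_\Fr$. The resulting map out of the direct sum is injective (an element of the first summand that dies maps to $0$ in $\uHom^0(\incl\cF,\incl\cG)^\Fr$, hence is itself $0$, whence the second coordinate vanishes too), so it is an isomorphism between spaces of equal finite dimension; naturality in $\cF$ and $\cG$ is immediate, every arrow in sight being a natural transformation.

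The one delicate point — the step I would be most careful with — is the bookkeeping in the first paragraph: pinning down the precise compatibility among the three natural maps (induced by $\incl$, by $\degr$, and the edge map of~\eqref{eqn:dsw-ses}) and tracking the correspondence between the Tate twist $\la n\ra$ and the Frobenius eigenvalue $q^{-n/2}$. Beyond this there is no real obstacle, provided — as the remark preceding the lemma permits — one is content to invoke Proposition~\ref{prop:mixed-version}, whose own proof is completed only subsequently.
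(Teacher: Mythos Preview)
Your proposal is correct and follows essentially the same route as the paper: factor the isomorphism of Proposition~\ref{prop:mixed-version}\eqref{it:mixed-isom} through the sum of Frobenius eigenspaces, use injectivity of the eigenspace inclusion to deduce semisimplicity and~\eqref{eqn:aff-even-semis}, and then split~\eqref{eqn:dsw-ses} via $\incl$ to obtain~\eqref{eqn:aff-even-dsw}. Your direct linear-algebra argument (linear independence of eigenspaces) is marginally cleaner than the paper's intermediate passage through the weight decomposition $A\simeq\bigoplus_n A^n$ on the point, while your splitting argument is slightly more roundabout than the paper's (which simply observes that $\incl$ furnishes a section of the surjection in~\eqref{eqn:dsw-ses}); these are cosmetic differences only.
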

\begin{rmk}\label{rmk:aff-even-semis}
The proof given below uses only the formal properties stated in Proposition~\ref{prop:mixed-version} and general properties of $\dsw(X)$; it does not make explicit use of the definition of $\dsm(X)$.
\end{rmk}
\begin{proof}
For brevity, let us put $A = \uHom(\incl\cF,\incl\cG)$.  This is an object of $\Pervsw(\pt)$.  The weight filtration for a perverse sheaf on a point splits (cf.~Lemma~\ref{lem:affine-basic}\eqref{it:aff-hered} below), so we may write $A \simeq \bigoplus_n A^n$, where each $A^n$ is pure of weight $n$.  Note that $(A\la -n\ra)^\Fr$ is a subspace of $A^n$: indeed, is the $\Fr$-eigenspace of eigenvalue $q^{n/2}$.  Thus, to show that $\Fr$ acts semisimply on $A$, it suffices to show that $(A\la -n\ra)^\Fr = A^n$ for all $n$.  

To prove the latter assertion, consider the map
\begin{equation}\label{eqn:aff-even-semis-v}
\Hom_{\dsm(X)}(\cF,\cG) \to \Hom_{\ds(X)}(\degr\cF, \degr\cG)
\end{equation}
induced by $\degr$.  Because $\degr \cong \exs \circ \incl$, we have that $\Hom_{\ds(X)}(\degr\cF, \degr\cG)$ is canonically isomorphic to the vector space $A$ with the grading forgotten, cf.~\eqref{eqn:hom-forget}.  Furthermore, the map~\eqref{eqn:aff-even-semis-v} factors as
\begin{equation}\label{eqn:aff-even-semis-i}
\Hom_{\dsm(X)}(\cF,\cG) \to \Hom_{\dsw(X)}(\incl\cF,\incl\cG) \to 
A^\Fr \to A^0 \to A
\end{equation}
where the first map is induced by $\incl$, and the second by $\exs$, cf.~\eqref{eqn:dsw-ses}.  Taking Tate twists and summing up over $n$, we can build the diagram
\begin{equation}\label{eqn:aff-even-semis-total}
\bigoplus_n \Hom_{\dsm(X)}(\cF, \cG\la -n\ra) \to \bigoplus_n (A\la -n\ra)^\Fr \to \bigoplus_n A^n \simeq A.
\end{equation}
The second map is injective, but the composition is an isomorphism by Proposition~\ref{prop:mixed-version}.  Therefore, the second map must be an isomorphism as well, so $(A\la -n\ra)^\Fr = A^n$, as desired.

We now see that the first map in~\eqref{eqn:aff-even-semis-total} is also an isomorphism.  This establishes~\eqref{eqn:aff-even-semis}.  Since the composition
\[
\Hom_{\dsm(X)}(\cF,\cG) \to \Hom_{\dsw(X)}(\incl\cF,\incl\cG) \to \uHom(\incl\cF,\incl\cG)^\Fr
\]
is an isomorphism, the first map provides a canonical splitting of the short exact sequence~\eqref{eqn:dsw-ses}, and we have
\[
\Hom_{\dsw(X)}(\incl\cF,\incl\cG) \cong \uHom(\incl\cF,\incl\cG)^\Fr \oplus \uHom(\incl\cF, \incl\cG[-1])_\Fr.
\]
But since the action of $\Fr$ is already known to be semisimple, there is a canonical identification $\uHom(\incl\cF, \incl\cG[-1])_\Fr = \uHom(\incl\cF,\incl\cG[-1])^\Fr$, and~\eqref{eqn:aff-even-dsw} follows.
\end{proof}

\begin{proof}[Proof of Theorem~\ref{thm:misc-infext} and Proposition~\ref{prop:mixed-version} for affine even stratifications]
We begin by temporarily changing the definition of $\dsm(X)$ to $\dsm(X) = \Db\Pervsm(X)$.  We will first show that the desired results hold with this modified definition, and then we will see that it is equivalent to~\eqref{eqn:dsm-defn}.  We know from Theorem~\ref{thm:bgs-aff-even}\eqref{it:bgs-koszul} that $\Pervsm(X)$ is a Koszul category, so by Proposition~\ref{prop:kosorlov-orlov}, there is a natural equivalence of categories
\begin{equation}\label{eqn:mix-defn2}
\dsm(X) \cong \Kb(\Orl(\Pervsm(X))).
\end{equation}

The inclusion functor $\Pervsm(X) \to \Pervsw(X)$ is exact, so it gives rise to a derived functor $\Db\Pervsm(X) \to \Db\Pervsw(X)$.  Define $\jmath: \dsm(X) \to \dsw(X)$ to be the composition
\[
\dsm(X) = \Db\Pervsm(X) \to \Db\Pervsw(X) \xto{\real} \dsw(X).
\]
By construction, $\jmath$ is a functor of triangulated categories that commutes with Tate twists and restricts to the inclusion functor $\Pervsm(X) \to \Pervsw(X)$.  In particular, for any simple object $\ICm_s \in \Pervsm(X)$, we have $\jmath(\ICm_s[n]\la -n\ra) \cong \ICm_s[n]\la -n\ra$.  The full additive subcategories
\[
\Orl(\Pervsm(X)) \subset \Db\Pervsm(X)
\qquad\text{and}\qquad
\Pure(X) \subset \dsw(X)
\]
both consist of objects that are direct sums of various $\ICm_s[n]\la -n\ra$, so $\jmath$ restricts to an additive functor $\jmath|_{\Orl(\Pervsm(X))}: \Orl(\Pervsm(X)) \to \Pure(X)$.

The functor $\jmath$ has the properties attributed to $\incl$ in Proposition~\ref{prop:mixed-version}: part~\eqref{it:mixed-tstruc} is obvious, and part~\eqref{it:mixed-isom} holds by Theorem~\ref{thm:bgs-aff-even}\eqref{it:bgs-mixed}.  Therefore, according to Remark~\ref{rmk:aff-even-semis}, we may use Lemma~\ref{lem:aff-even-semis} in our setting if we replace $\incl$ by $\jmath$ in that statement.  In particular, the formula~\eqref{eqn:aff-even-dsw} shows that $\jmath: \dsm(X) \to \dsw(X)$ extends in a canonical way to an additive functor
\[
\tilde\jmath: \pse\dsm(X) \to \dsw(X).
\]
Moreover, this functor is fully faithful and makes the following diagram commute:
\begin{equation}\label{eqn:mix-defn-commute}
\vcenter{\xymatrix@R=10pt{ & \pse\dsm(X) \ar[dd]^{\tilde\jmath} \\
\dsm(X) \ar[ur]^{\incl} \ar[dr]_{\jmath} \\
& \dsw(X) }}
\end{equation}

We claim that the essential image of $\tilde\jmath$ is the full subcategory $\dsmisc(X) \subset \dsw(X)$.  Note that an object lies in this essential image if and only if it is in the essential image of $\jmath$, so we prove the claim by working with $\jmath$ instead.  First, given $\cF \in \dsmisc(X)$, say
\[
\cF \in \Pure(X)[a] * \Pure(X)[a+1] * \cdots * \Pure(X)[b],
\]
we will prove by induction on $b-a$ that there exists an object
\begin{equation}\label{eqn:mix-orl-wt}
\tilde\cF \in \Orl(\Pervsm(X))[a] * \Orl(\Pervsm(X))[a+1] * \cdots * \Orl(\Pervsm(X))[b] \subset \dsm(X)
\end{equation}
such that $\jmath(\tilde\cF) \cong \cF$.  In the case where $b-a = 0$, $\cF$ is a direct sum of objects of the form $\ICm_s[n]\la a-n\ra$, and every such object is clearly in the essential image of $\jmath|_{\Orl(\Pervsm(X))[a]}$.  In the general case, there is a distinguished triangle
\[
\cF' \to \cF \to \cF'' \ovto{\delta} \cF'[1]
\]
with $\cF' \in \Pure(X)[a]$ and
\[
\cF'' \in \Pure(X)[a+1] * \Pure(X)[a+2] * \cdots * \Pure(X)[b].
\]
By induction, we may assume that $\cF' = \jmath(\tilde \cF')$ and $\cF'' = \jmath(\tilde \cF'')$ for some objects $\tilde\cF', \tilde\cF'' \in \dsm(X)$, where $\tilde\cF' \in \Orl(\Pervsm(X))[a]$ and
\[
\tilde\cF'' \in \Orl(\Pervsm(X))[a+1] * \Orl(\Pervsm(X))[a+2] * \cdots * \Orl(\Pervsm(X))[b].
\]
In view of the equivalence~\eqref{eqn:mix-defn2} and Theorem~\ref{thm:kosorlov}, we have $\Hom_{\dsm(X)}(\tilde\cF'',\tilde\cF') = 0$, so using Lemma~\ref{lem:aff-even-semis} again, we see that $\jmath$ induces an isomorphism
\[
\Hom_{\dsw(X)}(\cF'',\cF'[1]) \cong \Hom_{\dsm(X)}(\tilde\cF'',\tilde\cF'[1]).
\]
In particular, $\delta: \cF'' \to \cF'[1]$ is equal to $\jmath(\tilde\delta)$ for some morphism $\tilde\delta: \tilde\cF'' \to \tilde\cF'[1]$ in $\dsm(X)$.  If we let $\tilde\cF$ denote the cocone of $\tilde\delta$, then we have $\cF \cong \jmath(\tilde\cF)$.  Thus, every object of $\dsmisc(X)$ lies in the essential image of $\jmath$ and $\tilde\jmath$.

Conversely, by~\eqref{eqn:mix-defn2}, every object of $\dsm(X)$ has the property~\eqref{eqn:mix-orl-wt} for some integers $a \le b$.  Since $\jmath(\Orl(\Pervsm(X))[n]) \subset \Pure(X)[n]$, it follows that $\jmath$ takes values in $\dsmisc(X)$.  Thus, the essential image of $\tilde\jmath$ is $\dsmisc(X)$, and $\tilde\jmath$ induces the desired equivalence~\eqref{eqn:misc-infext}.  We have now established both Theorem~\ref{thm:misc-infext} and Proposition~\ref{prop:mixed-version} for our modified definition of $\dsm(X)$ together with the functor $\jmath: \dsm(X) \to \dsw(X)$.  The same argument also shows that $\tilde\jmath$ restricts to an equivalence
\[
\Orl(\Pervsm(X)) \simto \Pure(X).
\]
Combining this with~\eqref{eqn:mix-defn2}, we see that our modified definition of $\dsm(X)$ is equivalent to~\eqref{eqn:dsm-defn}, as desired.
\end{proof}

The following facts emerged in the course of the preceding proof.

\begin{cor}\label{cor:aff-even-oldmix}
Suppose $X$ has an affine even stratification $\cS$.  There are natural equivalences
$\Pure(X) \cong \Orl(\Pervsm(X))$ and $\dsm(X) \cong \Db\Pervsm(X)$.
\end{cor}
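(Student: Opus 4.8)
The plan is to read off both equivalences from the argument just completed for the affine even case of Theorem~\ref{thm:misc-infext}. Recall that in that proof one temporarily sets $\dsm(X) = \Db\Pervsm(X)$, and, since $\Pervsm(X)$ is split Koszul by Theorem~\ref{thm:bgs-aff-even}\eqref{it:bgs-koszul}, Proposition~\ref{prop:kosorlov-orlov} supplies the equivalence~\eqref{eqn:mix-defn2}, namely $\Db\Pervsm(X) \cong \Kb(\Orl(\Pervsm(X)))$. Thus it suffices to produce a natural equivalence of additive categories $\Orl(\Pervsm(X)) \simto \Pure(X)$; applying $\Kb(-)$ and combining with~\eqref{eqn:mix-defn2} then yields $\dsm(X) = \Kb\Pure(X) \cong \Kb(\Orl(\Pervsm(X))) \cong \Db\Pervsm(X)$, which is the second assertion.

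For the first equivalence I would first note that $\Orl(\Pervsm(X))$ and $\Pure(X)$ have literally the same indecomposable objects, namely the $\ICm_s[n]\la -n\ra$ for $s \in S$ and $n \in \Z$: on the $\Orl$ side this is the description of $\Ind(\Orl(\Pervsm(X)))$ in Proposition~\ref{prop:kosorlov-orlov} combined with $\Irr(\Pervsm(X)) = \{\ICm_s\la m\ra\}$ and $\wt(\ICm_s\la m\ra) = m$, while on the $\Pure$ side it follows from Lemma~\ref{lem:mixed-tri-basic}\eqref{it:mixed-tri-basic-pure} applied inside $\dsw(X)$, since a pure semisimple object of weight $0$ splits as a direct sum of shifts of pure semisimple perverse sheaves, each of which is a sum of $\ICm_s\la i\ra$. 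The functor $\jmath : \Db\Pervsm(X) \to \dsw(X)$ constructed in the proof of Theorem~\ref{thm:misc-infext} restricts to the inclusion on $\Pervsm(X)$ and commutes with $[1]$ and $\la 1\ra$, so it maps $\Orl(\Pervsm(X))$ into $\Pure(X)$ and realizes the above bijection on indecomposables. It remains to check that $\jmath|_{\Orl(\Pervsm(X))}$ is fully faithful: for $\cF,\cG \in \Orl(\Pervsm(X))$ the decomposition~\eqref{eqn:aff-even-dsw} (valid for $\jmath$ by Remark~\ref{rmk:aff-even-semis}) gives $\Hom_{\dsw(X)}(\jmath\cF,\jmath\cG) \cong \Hom_{\dsm(X)}(\cF,\cG) \oplus \Hom_{\dsm(X)}(\cF,\cG[-1])$, and the second summand vanishes because $\cF$ has weights${}\ge 0$, $\cG[-1]$ has weights${}\le -1$, and $\Pervsm(X)$ is Koszul (Lemma~\ref{lem:koszul-tri-basic}); hence $\Hom_{\Pure(X)}(\jmath\cF,\jmath\cG) = \Hom_{\dsw(X)}(\jmath\cF,\jmath\cG) \cong \Hom_{\dsm(X)}(\cF,\cG)$.

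Since a fully faithful additive functor that is a bijection on indecomposable objects is an equivalence, this completes the proof, and I do not expect a genuine obstacle here: everything is bookkeeping on top of the proof of Theorem~\ref{thm:misc-infext}. The only point that warrants a little care is verifying that $\jmath$ really is strictly compatible with $[1]$ and $\la 1\ra$ on the classes of objects involved, so that the coincidence of indecomposables is \emph{induced by} $\jmath$ rather than merely an abstract matching of two lists; with that checked, the two displayed equivalences drop out immediately.
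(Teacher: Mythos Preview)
Your proposal is correct and follows essentially the same route as the paper: the paper simply records that these equivalences ``emerged in the course of the preceding proof,'' pointing to the last paragraph of the proof of Theorem~\ref{thm:misc-infext} (affine even case), where it is noted that $\tilde\jmath$ restricts to an equivalence $\Orl(\Pervsm(X)) \simto \Pure(X)$, and that combining this with~\eqref{eqn:mix-defn2} identifies the modified definition $\Db\Pervsm(X)$ with the official definition $\Kb\Pure(X)$. Your explicit verification that the second summand in~\eqref{eqn:aff-even-dsw} vanishes on $\Orl(\Pervsm(X))$ is exactly the reason $\tilde\jmath$ and $\jmath$ agree there, so you have simply unpacked what the paper leaves implicit.
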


The following fact brings notions from Sections~\ref{sect:hot-orlov} and~\ref{sect:kosorlov} into our setting.

\begin{prop}\label{prop:aff-even-orlov}
Suppose $X$ is endowed with an affine even stratification, and let $\Proj(X)$ and $\Inj(X)$ denote the categories of projective and injective objects, respectively, in $\Pervsm(X)$.  Then the three categories
\[
\Pure(X), \qquad \Proj(X), \qquad \Inj(X)
\]
are all Koszulescent Orlov categories.  If $\scA$ denotes any of these three categories, the inclusion functor $\scA \to \dsm(X)$ extends to an equivalence of triangulated categories $\Kb(\scA) \cong \dsm(X)$.
\end{prop}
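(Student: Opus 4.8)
The plan is to deduce all three assertions from the abstract machinery of Section~\ref{sect:kosorlov}, applied to the split Koszul category $\scM = \Pervsm(X)$, together with the identification $\dsm(X) \cong \Db\Pervsm(X)$ of Corollary~\ref{cor:aff-even-oldmix}. The inputs I would use about $\Pervsm(X)$ are all contained in Theorem~\ref{thm:bgs-aff-even}: it is Koszul (Theorem~\ref{thm:bgs-aff-even}\eqref{it:bgs-koszul}), it is split (since $\End_{\Pervsm(X)}(\ICm_s)\cong\Qlb$ for every $s$, which follows from~\eqref{eqn:dsw-ses} and~\eqref{eqn:hom-forget}), and it has enough projectives and enough injectives and finite cohomological dimension (Theorem~\ref{thm:bgs-aff-even}\eqref{it:bgs-mix-proj}), so that every object has finite projective and finite injective dimension. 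Throughout, I regard $\Pure(X)$, $\Proj(X)$ and $\Inj(X)$ as full subcategories of the heart $\Pervsm(X)$ of the $t$-structure on $\dsm(X)$ (Proposition~\ref{prop:mixed-version}\eqref{it:mixed-tstruc}), so that the inclusion functors in the statement are the induced full embeddings into $\dsm(X)$.

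For $\scA = \Pure(X)$ there is essentially nothing to prove. Corollary~\ref{cor:aff-even-oldmix} supplies a natural equivalence $\Pure(X)\simto\Orl(\Pervsm(X))$, and Proposition~\ref{prop:kosorlov-orlov} shows that $\Orl(\Pervsm(X))$ — hence $\Pure(X)$ — is a Koszulescent Orlov category, with the degree function transported from Proposition~\ref{prop:kosorlov-orlov}. The equivalence $\Kb\Pure(X)\cong\dsm(X)$ is then the definition~\eqref{eqn:dsm-defn}, which manifestly restricts to the given inclusion $\Pure(X)\hookrightarrow\dsm(X)$ on complexes concentrated in degree $0$; and Theorem~\ref{thm:kosorlov} identifies $\Kos(\Pure(X))$ with $\Pervsm(X)$, consistently with Corollary~\ref{cor:aff-even-oldmix}.

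For $\scA = \Proj(X)$ I would invoke Theorem~\ref{thm:kosorlov-duality} with $\scM = \Pervsm(X)$, whose hypotheses are exactly the facts recalled above: it gives that $\Proj(X) = \Proj(\Pervsm(X))$ is a Koszulescent Orlov category (with $\deg P = -\wt(P/\rad P)$), and that the canonical functor $R\colon\Db\Pervsm(X)\simto\Kb\Proj(X)$ (given by taking a bounded projective resolution) is an equivalence. Composing $R^{-1}$ with the equivalence $\dsm(X)\simto\Db\Pervsm(X)$ of Corollary~\ref{cor:aff-even-oldmix}, which is $t$-exact and restricts to the identity on $\Pervsm(X)$, yields an equivalence $\Kb\Proj(X)\simto\dsm(X)$ extending the inclusion: on degree-$0$ complexes it is the composite $\Proj(X)=\Proj(\Pervsm(X))\hookrightarrow\Db\Pervsm(X)\simto\dsm(X)$, because a projective object is its own projective resolution. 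For $\scA = \Inj(X)$ the argument is identical, using the injective analogue of Theorem~\ref{thm:kosorlov-duality} (the construction $\scM\mapsto{}^\natural\scM$ indicated after Theorem~\ref{thm:koszul-duality}, available since $\Pervsm(X)$ has enough injectives and finite injective dimension), with $R$ replaced by the injective-resolution equivalence $\Db\Pervsm(X)\simto\Kb\Inj(X)$ and with the analogous degree function on $\Inj(X)$.

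Since essentially all of the content has been pushed into Part~\ref{part:homological}, Theorem~\ref{thm:bgs-aff-even}, and Corollary~\ref{cor:aff-even-oldmix}, the only real work — and the step I expect to require the most care — is the compatibility bookkeeping in the last two cases: checking that the chain of equivalences $\Kb(\scA)\simto\Db(\Kos(\scA))\simto\Db\Pervsm(X)\simto\dsm(X)$ restricts on degree-$0$ complexes to the prescribed inclusion $\scA\hookrightarrow\dsm(X)$. This comes down to the standard fact that under $\Db\scM\simeq\Kb\Proj(\scM)$ (resp.\ $\Kb\Inj(\scM)$) a projective (resp.\ injective) object corresponds to itself placed in cohomological degree $0$, combined with the $t$-exactness of the identification $\dsm(X)\simeq\Db\Pervsm(X)$. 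I do not anticipate any genuine obstacle beyond this verification.
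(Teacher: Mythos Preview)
Your proposal is correct and follows essentially the same approach as the paper: the paper's proof consists of a single sentence citing Theorem~\ref{thm:bgs-aff-even}\eqref{it:bgs-mix-proj}, Corollary~\ref{cor:aff-even-oldmix}, Proposition~\ref{prop:kosorlov-orlov}, and Theorem~\ref{thm:kosorlov-duality}, which are exactly the ingredients you invoke. You are simply more explicit about the compatibility bookkeeping (that the equivalences restrict to the given inclusions on degree-$0$ complexes), which the paper leaves implicit.
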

\begin{proof}
These assertions follow from Theorem~\ref{thm:bgs-aff-even}\eqref{it:bgs-mix-proj} and Corollary~\ref{cor:aff-even-oldmix} together with Proposition~\ref{prop:kosorlov-orlov} and Theorem~\ref{thm:kosorlov-duality}. 
\end{proof} 

We now consider the case of a general affable stratification $\cS$.  Note that if $\cS_1$ is an affine even refinement of $\cS$, then a number of categories associated to $\cS$ can naturally be regarded as full subcategories of the corresponding categories associated to $\cS_1$.  Specifically, we have full subcategories
\[
\scE_\cS(X) \subset \scE_{\cS_1}(X)
\qquad\text{where $\scE$ is one of:}\quad \Pure, \Pervm, \dm, \pse\dm, \dmisc, \dw.
\]
To deduce the results for $\cS$ from what we have already proved for $\cS_1$, we need to give some of these full subcategories alternate descriptions.

Let $\cF$ be an object of $\dm_{\cS_1}(X)$, $\pse\dm_{\cS_1}(X)$, $\dmisc_{\cS_1}(X)$, or $\dw_{\cS_1}(X)$.  We say that $\cF$ is \emph{$\cS$-constructible} if we have $\pH^i(\cF) \in \Pervsw(X)$ for all $i$.  (In $\pse\dm_{\cS_1}(X)$, the notation $\pH^i(\cF)$ is an abuse that should be understood to mean $\pH^i(\pg \cF)$.)  Of course, in all but $\dw_{\cS_1}(X)$, an $\cS$-constructible object automatically satisfies the stronger condition that $\pH^i(\cF) \in \Pervsm(X)$.  

\begin{lem}\label{lem:affable-refine}
Suppose $\cS$ is an affable stratification of $X$, and let $\cS_1$ be an affine even refinement.  Then we have
\begin{equation}\label{eqn:affable-refine}
\scE_\cS(X) = \{ \cF \in \scE_{\cS_1}(X) \mid \text{$\cF$ is $\cS$-constructible} \},
\end{equation}
where $\scE$ denotes one of $\Pure$, $\Pervm$, $\dm$, $\pse\dm$, $\dmisc$, or $\dw$.
\end{lem}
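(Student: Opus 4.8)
The plan is to establish the six instances of~\eqref{eqn:affable-refine} in the order $\dw$, then $\Pervm$ and $\Pure$, then $\dm$, then $\pse\dm$, and finally $\dmisc$, each case feeding into the next. The foundational case is $\scE=\dw$. Recall (cf.\ the proof of Proposition~\ref{prop:weil-func0}) that $\dsw(X)$ is the full triangulated subcategory of $\dw_{\cS_1}(X)$ generated by the $\ICm_s$ for $\cS$-strata $X_s$, and that it inherits the perverse $t$-structure of $\dw_{\cS_1}(X)$, with heart $\Pervsw(X)$; in particular the inclusion $\dsw(X)\hookrightarrow\dw_{\cS_1}(X)$ is $t$-exact. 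Hence for $\cF\in\dsw(X)$ every $\pH^i(\cF)$ lies in $\Pervsw(X)$, so $\cF$ is $\cS$-constructible. Conversely, if $\cF\in\dw_{\cS_1}(X)$ has $\pH^i(\cF)\in\Pervsw(X)$ for all $i$, then, the $t$-structure being bounded, $\cF$ is a finite iterated extension of the objects $\pH^i(\cF)[-i]$, each of which is a finite iterated extension of shifts of simple objects of $\Pervsw(X)$, i.e.\ of the $\ICm_s$; so $\cF$ lies in the triangulated subcategory they generate, namely $\dsw(X)$.

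The cases $\Pervm$ and $\Pure$ are then formal consequences. For $\Pervm$: an object of $\Pervm_{\cS_1}(X)$ is $\cS$-constructible exactly when, being perverse, it lies in $\Pervsw(X)$; since the weight filtration $W_\bullet$ and the semisimplicity of each $\gr^W_i$ are intrinsic to an object, the subcategory of $\Pervm_{\cS_1}(X)$ cut out by $\cS$-constructibility is precisely $\Pervsm(X)$. For $\Pure$: a pure semisimple object of weight $0$ in $\dw_{\cS_1}(X)$ is a direct sum of shifts $\ICm_t[n]\la-n\ra$ with $X_t$ an $\cS_1$-stratum, and it is $\cS$-constructible precisely when every $X_t$ occurring is an $\cS$-stratum, i.e.\ when it already lies in $\Pure_\cS(X)$.

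For $\scE=\dm$: since $\dsm(X)=\Kb\Pure_\cS(X)$ and, by the $\Pure$ case, $\Pure_\cS(X)$ is the full subcategory of $\cS$-constructible objects of $\Pure_{\cS_1}(X)$, the inclusion $\dsm(X)\hookrightarrow\dm_{\cS_1}(X)=\Kb\Pure_{\cS_1}(X)$ is a full triangulated embedding. I would then identify its essential image with the $\cS$-constructible objects by showing both equal the triangulated subcategory of $\dm_{\cS_1}(X)$ generated by $\Pervsm(X)$. That the $\cS$-constructible objects form such a subcategory uses that $\Pervsw(X)$ is a Serre subcategory of the ambient heart of $\dw_{\cS_1}(X)$ (so subquotients and extensions of $\cS$-constructible perverse sheaves are $\cS$-constructible, whence the long exact sequence of perverse cohomology keeps $\pH^i(\cone f)$ $\cS$-constructible) together with boundedness. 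That $\dsm(X)$ is also this subcategory uses, on one side, that $\Pervsm(X)\subset\dsm(X)$ — because a mixed perverse sheaf $\cF$ corresponds under $\dm_{\cS_1}(X)\cong\Kb\Pure_{\cS_1}(X)$ (Proposition~\ref{prop:aff-even-orlov} for $\cS_1$) to the complex with terms $\gr^W_{-k}\cF[k]$, each of which, being a semisimple $\cS$-constructible pure object of weight $0$, lies in $\Pure_\cS(X)$ — and, on the other side, that $\Pure_\cS(X)$ lies in the triangulated span of $\Pervsm(X)$ since each $\ICm_s[n]\la-n\ra$ is a shift of the Tate twist $\ICm_s\la-n\ra\in\Pervsm(X)$. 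The case $\pse\dm$ is then immediate: $\pse\dm_\cS(X)$ and $\dsm(X)$ have the same objects (likewise for $\cS_1$), the inclusion $\pse\dm_\cS(X)\hookrightarrow\pse\dm_{\cS_1}(X)$ is full because $\dsm(X)\hookrightarrow\dm_{\cS_1}(X)$ is, and $\cS$-constructibility of $\cF\in\pse\dm_{\cS_1}(X)$ is by definition $\cS$-constructibility of $\cF$ regarded in $\dm_{\cS_1}(X)$.

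Finally, for $\scE=\dmisc$: if $\cF\in\dsmisc(X)$ then $\cF\in\dsw(X)$, hence $\cF$ is $\cS$-constructible by the $\dw$ case. For the converse I would induct on $b-a$, given $\cF\in\Pure_{\cS_1}(X)[a]*\cdots*\Pure_{\cS_1}(X)[b]$ that is $\cS$-constructible: choose a distinguished triangle $\cF'\to\cF\to\cF''\to$ with $\cF'\in\Pure_{\cS_1}(X)[a]$ (pure of weight $a$) and $\cF''\in\Pure_{\cS_1}(X)[a+1]*\cdots*\Pure_{\cS_1}(X)[b]$ (weights ${}>a$); by uniqueness of the weight truncation (Lemma~\ref{lem:weight-trunc} together with the vanishing of $\Hom$ from objects of weights ${}\le a$ to objects of weights ${}>a$) one gets $\cF'\cong\cF_{\le a}$ and $\cF''\cong\cF_{>a}$, both in $\dsw(X)$, hence $\cS$-constructible; then $\cF'[-a]\in\Pure_\cS(X)$ by the $\Pure$ case, $\cF''\in\Pure_\cS(X)[a+1]*\cdots*\Pure_\cS(X)[b]$ by the inductive hypothesis, and therefore $\cF\in\Pure_\cS(X)[a]*\cdots*\Pure_\cS(X)[b]\subset\dsmisc(X)$. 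I expect the $\dm$ case to be the main obstacle, essentially because $\Pervsm(X)$ is \emph{not} a Serre subcategory of $\Pervm_{\cS_1}(X)$; the key point is to reformulate everything in terms of the genuinely triangulated subcategory generated by $\Pervsm(X)$ and to exploit that $\Pervsw(X)$, unlike $\Pervsm(X)$, \emph{is} Serre in the ambient category.
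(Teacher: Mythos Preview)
Your treatment of the cases $\dw$, $\Pervm$, $\Pure$, $\dm$, and $\pse\dm$ is correct and follows the same lines as the paper (which declares several of these ``obvious'' and records your $\dm$ argument separately as Lemma~\ref{lem:affable-refine-dsm}).

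The gap is in the $\dmisc$ case. Your argument rests on the assertion that weight truncations in $\dw_{\cS_1}(X)$ are unique up to isomorphism, but this is false. The one-sided vanishing $\Hom(\text{weights}\le a,\text{weights}>a)=0$ produces comparison maps $\phi:\cF'\to\cF_{\le a}$ and $\psi:\cF_{\le a}\to\cF'$ over $\cF$, but it does not force them to be mutually inverse: the difference $\psi\phi-\id_{\cF'}$ factors through $\cF''[-1]$, which has weights ${}\ge a$ (not ${}>a$), and $\Hom(\cF',\cF''[-1])$ need not vanish. Already for $\cF=\uQlb$ on a point there is a second weight-$\le 0$ truncation
\[
\uQlb\oplus\uQlb[-1]\la 1\ra \longrightarrow \uQlb \longrightarrow \uQlb\la 1\ra \longrightarrow,
\]
not isomorphic to the obvious one $(\uQlb,0)$. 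So from an arbitrary $\cS_1$-miscibility witness $\cF'\to\cF\to\cF''$ you cannot conclude that $\cF'$ lies in $\dsw(X)$.

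The paper avoids this by invoking the already-proved (for $\cS_1$) equivalence $I:\pse\dm_{\cS_1}(X)\simto\dmisc_{\cS_1}(X)$ of Theorem~\ref{thm:misc-infext} together with the $t$-exactness of $\incl$ from Proposition~\ref{prop:mixed-version}\eqref{it:mixed-tstruc}. Since $I$ preserves perverse cohomology, it restricts to an equivalence between $\pse\dsm(X)$ and the $\cS$-constructible objects of $\dmisc_{\cS_1}(X)$; and $I(\pse\dsm(X))\subset\dsmisc(X)$ because every object of $\dsm(X)=\Kb\Pure_\cS(X)$ lies, via its chain-complex filtration, in some $\Pure_\cS(X)[a]*\cdots*\Pure_\cS(X)[b]$. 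In effect the paper passes from $\dsw$ (where weight truncations are not unique) to $\dm_{\cS_1}$ (where, by Corollary~\ref{cor:kosorlov-fakekoszul}, they are), exactly the move your final paragraph anticipates for $\dm$ but which is needed here as well.
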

\begin{proof}
For each of the categories $\Pure_\cS(X) \subset \Pure_{\cS_1}(X)$, $\Pervsm(X) \subset \Pervm_{\cS_1}(X)$, and $\dsw(X) \subset \dw_{\cS_1}(X)$, this assertion is obvious.

Next, let $\scD \subset \dm_{\cS_1}(X)$ denote the full subcategory consisting of $\cS$-constructible objects.  This is a triangulated subcategory; it is generated as a triangulated category by $\cS$-constructible objects in $\Pervm_{\cS_1}(X)$, and therefore by simple objects in $\Pervsm(X)$.  All such objects lie in $\dsm(X)$, so $\scD \subset \dsm(X)$.  On the other hand, objects of $\dsm(X)$ are obviously $\cS$-constructible, so $\dsm(X) \subset \scD$ as well.  Thus, $\dsm(X) = \scD$.

It is an immediate consequence that an object $\cF \in \pse\dm_{\cS_1}(X)$ lies in $\pse\dsm(X)$ if and only if $\cF$ is $\cS$-constructible.

It remains to consider the case of $\dsmisc(X) \subset \dmisc_{\cS_1}(X)$. Recall that the equivalence $I: \pse\dm_{\cS_1}(X) \to \dmisc_{\cS_1}(X)$ restricts to the identity functor on $\Pure_{\cS_1}(X)$, and therefore on $\Pure_{\cS}(X)$ as well.  

By Proposition~\ref{prop:mixed-version}\eqref{it:mixed-tstruc}, the functor $\incl: \dm_{\cS_1}(X) \to \dw_{\cS_1}(X)$ has the property that $\incl(\Pervsm(X)) = \Pervsm(X)$.  Since every object of $\dsm(X)$ is contained in some class of the form $\Pure_\cS(X)[a] * \Pure_\cS(X)[a+1] * \cdots * \Pure_\cS(X)[b]$, we have $\incl(\dsm(X)) \subset \dsmisc(X)$.  It follows that the equivalence $I$ of~\eqref{eqn:misc-infext} satisfies
\[
I(\pse\dsm(X)) \subset \dsmisc(X).
\]
Consider now the category $\dmisc_{\cS_1}(X) \cap \dsw(X)$, which is precisely the full subcategory of $\dmisc_{\cS_1}(X)$ consisting of $\cS$-constructible objects.  Because~\eqref{eqn:affable-refine} has already been shown for $\pse\dm_{\cS_1}(X)$, the fact that the equivalence $I: \pse\dm_{\cS_1}(X) \to \dmisc_{\cS_1}(X)$ preserves perverse cohomology means that it restricts to an equivalence
\[
I: \pse\dsm(X) \simto \dmisc_{\cS_1}(X) \cap \dsw(X).
\]
Therefore, $\dmisc_{\cS_1}(X) \cap \dsw(X) \subset \dsmisc(X)$.  But we obviously have $\dsmisc(X) \subset \dmisc_{\cS_1}(X) \cap \dsw(X)$, so $\dsmisc(X) = \dmisc_{\cS_1}(X) \cap \dsw(X)$, as desired.
\end{proof}

In the second paragraph of the preceding proof, we established the following statement, which we now record separately for future reference.

\begin{lem}\label{lem:affable-refine-dsm}
Suppose $\cS$ is an affable stratification of $X$, and let $\cS_1$ be an affine even refinement.  Then $\dsm(X)$ is the full triangulated subcategory of $\dm_{\cS_1}(X)$ generated by objects of $\Pervsm(X)$. \qed
\end{lem}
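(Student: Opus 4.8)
The plan is to repackage the argument already carried out in the second paragraph of the proof of Lemma~\ref{lem:affable-refine}, keeping only the part that never mentions $\pse\dm$ or $\dmisc$. Write $\scD \subset \dm_{\cS_1}(X)$ for the full subcategory of $\cS$-constructible objects, i.e.\ those $\cF$ with $\pH^i(\cF) \in \Pervsw(X)$ — equivalently $\pH^i(\cF) \in \Pervsm(X)$ — for all $i$. I will show that $\scD$ coincides simultaneously with $\dsm(X)$ and with the full triangulated subcategory of $\dm_{\cS_1}(X)$ generated by $\Pervsm(X)$; putting the two identifications together gives the statement.

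First I would check that $\scD$ is a triangulated subcategory and that it is generated by $\Pervsm(X)$. Stability under $[1]$ is clear, and if the first two terms of a distinguished triangle in $\dm_{\cS_1}(X)$ lie in $\scD$, then the long exact sequence of perverse cohomology expresses each perverse cohomology object of the third term as built out of subquotients of those of the first two, hence lands in $\Pervsw(X)$; so $\scD$ is triangulated. Next, any $\cF \in \scD$ has only finitely many nonzero $\pH^i(\cF)$, each in $\Pervsm(X)$, and the perverse truncation triangles $\ptau_{<k}\cF \to \ptau_{\le k}\cF \to \pH^k(\cF)[-k] \to$ exhibit $\cF$ as an iterated extension of shifts of objects of $\Pervsm(X)$; since also $\Pervsm(X) \subset \scD$, we conclude that $\scD$ \emph{is} the full triangulated subcategory of $\dm_{\cS_1}(X)$ generated by $\Pervsm(X)$.

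It remains to identify $\scD$ with $\dsm(X)$. For $\dsm(X) \subset \scD$: recall $\dsm(X) = \Kb\Pure_\cS(X)$ is a full triangulated subcategory of $\Kb\Pure_{\cS_1}(X) = \dm_{\cS_1}(X)$ via $\Pure_\cS(X) \subset \Pure_{\cS_1}(X)$, and every object of $\Pure_\cS(X)$ lies in $\dsw(X)$, so its perverse cohomology objects lie in $\Pervsw(X)$ (the heart of the perverse $t$-structure on $\dsw(X)$, cf.\ the proof of Proposition~\ref{prop:weil-func0}); by the standard dévissage for finite complexes of objects of a heart, every object of $\Kb\Pure_\cS(X)$ then has perverse cohomology in $\Pervsw(X)$, i.e.\ is $\cS$-constructible. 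For the reverse inclusion $\scD \subset \dsm(X)$: by Lemma~\ref{lem:affable-refine} applied with $\scE = \Pervm$ (or directly by Proposition~\ref{prop:mixed-version}\eqref{it:mixed-tstruc}) we have $\Pervsm(X) \subset \dsm(X)$, and $\dsm(X)$ is triangulated, so it contains the triangulated subcategory generated by $\Pervsm(X)$, which by the previous paragraph is $\scD$. Hence $\dsm(X) = \scD$, and combining the two identifications finishes the proof.

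There is no genuine obstacle here; the content is entirely in Lemma~\ref{lem:affable-refine} and the elementary formalism of perverse $t$-structures. The only thing demanding attention is the bookkeeping between the two ambient-category pairs $\Pure_\cS(X) \subset \Pure_{\cS_1}(X)$ and $\Pervsm(X) \subset \Pervm_{\cS_1}(X)$, together with the convention that $\cS$-constructibility is always measured inside $\dm_{\cS_1}(X)$.
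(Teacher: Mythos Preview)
Your proof is correct and mirrors the paper's own argument (the second paragraph of the proof of Lemma~\ref{lem:affable-refine}), with the triangulated-subcategory and truncation steps made explicit. One small point: citing Proposition~\ref{prop:mixed-version}\eqref{it:mixed-tstruc} for general affable $\cS$ to obtain $\Pervsm(X) \subset \dsm(X)$ is circular in the paper's logical order (that case is established only after Lemma~\ref{lem:affable-refine}), and the $\Pervm$ case of Lemma~\ref{lem:affable-refine} by itself does not give this inclusion either; the direct justification---which is what the paper actually uses---is that each simple object $\ICm_s\la n\ra$ of $\Pervsm(X)$ is a shift of an object of $\Pure_\cS(X)$, hence lies in $\dsm(X) = \Kb\Pure_\cS(X)$, and then closure under extensions finishes.
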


We are now almost finished with the general case.

\begin{proof}[Proof of Theorem~\ref{thm:misc-infext} and Proposition~\ref{prop:mixed-version} in general]
Given a variety $X$ with an affable stratification $\cS$, choose an affine even refinement $\cS_1$.  The known equivalence $I: \pse\dm_{\cS_1}(X) \simto \dmisc_{\cS_1}(X)$ and the functor $\incl: \dm_{\cS_1}(X) \to \dw_{\cS_1}(X)$ both preserve the property of being $\cS$-constructible.  Therefore, by Lemma~\ref{lem:affable-refine}, we obtain functors $I: \pse\dsm(X) \simto \dsmisc(X)$ and $\incl: \dsm(X) \to \dsw(X)$ with the desired properties simply by restricting the known functors defined using $\cS_1$ to suitable full subcategories.
\end{proof}

\begin{prop}\label{prop:affable-orlov}
If $\cS$ is an affable stratification, then $\Pure(X)$ is an Orlov category, and $\incl: \dsm(X) \to \dsw(X)$ induces an equivalence
\[
\Kos(\Pure_\cS(X)) \cong \Pervsm(X).
\]
\end{prop}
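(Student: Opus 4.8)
The plan is to reduce to the affine even case, where the corresponding statements are already contained in Corollary~\ref{cor:aff-even-oldmix} and Proposition~\ref{prop:aff-even-orlov}, by passing to an affine even refinement $\cS_1$ of $\cS$. First I would check that $\Pure_\cS(X)$ is an Orlov category. By Lemma~\ref{lem:affable-refine}, $\Pure_\cS(X)$ is the full subcategory of the Koszulescent Orlov category $\Pure_{\cS_1}(X)$ consisting of $\cS$-constructible objects. Since $\cS$-constructibility is a condition on the perverse cohomology sheaves, this subcategory is closed under finite direct sums and under direct summands; hence $\Ind(\Pure_\cS(X))$ is the subset of $\Ind(\Pure_{\cS_1}(X))$ consisting of the objects $\ICm_s[n]\la -n\ra$ attached to the strata of $\cS$, and restricting the degree function of $\Pure_{\cS_1}(X)$ to this subset makes $\Pure_\cS(X)$ into an Orlov category, because each of the three conditions of Definition~\ref{defn:orlov} is inherited by a full subcategory that is closed under direct summands.

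Next I would identify $\Kos(\Pure_\cS(X))$. By definition $\dsm(X) = \Kb\Pure_\cS(X)$, and the fully faithful inclusion $\Pure_\cS(X) \hookrightarrow \Pure_{\cS_1}(X)$ induces a fully faithful triangulated embedding $\dsm(X) \hookrightarrow \Kb\Pure_{\cS_1}(X) = \dm_{\cS_1}(X)$ which, by Lemmas~\ref{lem:affable-refine-dsm} and~\ref{lem:affable-refine}, realizes $\dsm(X)$ as the full subcategory of objects $\cF$ with $\pH^i(\cF) \in \Pervsm(X)$ for all $i$. Under the equivalence $\dm_{\cS_1}(X) \cong \Db\Pervm_{\cS_1}(X)$ of Corollary~\ref{cor:aff-even-oldmix}, the Orlov $t$-structure of Proposition~\ref{prop:kosorlov-tstruc} corresponds to the standard $t$-structure on $\Db\Pervm_{\cS_1}(X)$ --- the equivalence built in Proposition~\ref{prop:kosorlov-orlov} is $t$-exact, being a realization functor precomposed with the derived functor of an exact equivalence of abelian categories --- so its heart is $\Pervm_{\cS_1}(X)$, and $\dsm(X)$ is stable under its truncation functors. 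The restriction of this $t$-structure to $\dsm(X)$ therefore has heart $\Pervm_{\cS_1}(X) \cap \dsm(X) = \Pervsm(X)$, by Lemma~\ref{lem:affable-refine}. On the other hand, the two aisles of the intrinsic Orlov $t$-structure of $\Kb\Pure_\cS(X)$ are contained in the corresponding aisles of this restricted $t$-structure (a complex over $\Pure_\cS(X)$ whose support lies in $\lhd$, or in $\rhd$, has the same support when regarded as a complex over $\Pure_{\cS_1}(X)$); since two $t$-structures on a single triangulated category with containment of both aisles must coincide, the Orlov $t$-structure of $\dsm(X)$ \emph{is} this restricted $t$-structure, and $\Kos(\Pure_\cS(X)) = \Pervsm(X)$. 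Finally, $\incl : \dsm(X) \to \dsw(X)$ restricts to the inclusion $\Pervsm(X) \hookrightarrow \Pervsw(X)$ on hearts by Proposition~\ref{prop:mixed-version}\eqref{it:mixed-tstruc}, so this identification is the one induced by $\incl$, as asserted.

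The main obstacle will be the bookkeeping around $t$-structures in the reduction to $\cS_1$: one must pin down that the intrinsic Orlov $t$-structure of Proposition~\ref{prop:kosorlov-tstruc} on $\Kb\Pure_{\cS_1}(X)$, the standard $t$-structure transported from $\Db\Pervm_{\cS_1}(X)$, and the ``natural'' $t$-structure of Proposition~\ref{prop:mixed-version} all coincide, which rests on $t$-exactness properties of the Koszul-duality equivalences of Section~\ref{sect:kosorlov} that are only implicit there. A more direct route --- proving that $\Pervsm(X)$ is a split Koszul category for an arbitrary affable $\cS$, exhibiting an equivalence $\Pure_\cS(X) \cong \Orl(\Pervsm(X))$, and then quoting Proposition~\ref{prop:kosorlov-orlov} --- meets the comparable difficulty of relating the $\Ext$-groups computed in $\Pervsm(X)$ to those computed in $\Pervm_{\cS_1}(X)$ (the former is closed under subquotients but not under extensions in the latter), so a reduction along $\cS_1$ of essentially this form seems unavoidable.
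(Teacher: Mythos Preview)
Your proof is correct and follows the same overall strategy as the paper: reduce to an affine even refinement $\cS_1$ and identify $\Kos(\Pure_\cS(X))$ with the $\cS$-constructible part of $\Kos(\Pure_{\cS_1}(X)) \cong \Pervm_{\cS_1}(X)$. The difference is in how the identification $\Kos(\Pure_\cS(X)) = \Kos(\Pure_{\cS_1}(X)) \cap \dsm(X)$ is obtained. You argue by comparing $t$-structures: the Orlov aisles on $\Kb\Pure_\cS(X)$ are contained in those of the restricted $t$-structure, and two $t$-structures with mutual containment of aisles coincide. The paper instead invokes Corollary~\ref{cor:kosorlov-supp} directly: since $\Kos(\scA) = \Kb(\scA)_{\lhd\cap\rhd}$ and since for a complex $X$ with $\supp X \subset \lhd\cap\rhd$ one has $\gr^W_k X \cong X^{-k}[k]$, an object of $\Kos(\Pure_{\cS_1}(X)) \cap \dsm(X)$ is represented by a complex whose terms are its own weight-graded pieces, which are $\cS$-constructible whenever the object is; hence the representing complex already lives in $\Pure_\cS(X)$. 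This bypasses entirely the ``bookkeeping around $t$-structures'' you flag as the main obstacle, and in particular avoids appealing to $t$-exactness properties of the equivalences in Section~\ref{sect:kosorlov}. Your argument is perfectly valid, just longer; the paper's shortcut is worth knowing.
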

\begin{proof}
By Corollary~\ref{cor:aff-even-oldmix}, $\Pure_{\cS_1}(X)$ is an Orlov category, so its full subcategory $\Pure_\cS(X)$ is an Orlov category as well.  It follows from Corollary~\ref{cor:kosorlov-supp} that
\[
\Kos(\Pure_\cS(X)) = \Kos(\Pure_{\cS_1}(X)) \cap \dsm(X).
\]
By Lemma~\ref{lem:affable-refine}, this means that $\Kos(\Pure_\cS(X))$ is the full subcategory of $\cS$-constructible objects in $\Kos(\Pure_{\cS_1}(X))$.  Since $\incl$ preserves $\cS$-constructibility and induces an equivalence $\Kos(\Pure_{\cS_1}(X)) \cong \Pervm_{\cS_1}(X)$, the result follows.
\end{proof}

\begin{cor}\label{cor:aff-indep}
If $\cS$ is an affable stratification, the equivalence $I: \pse\dsm(X) \to \dsmisc(X)$ and the functor $\incl: \dsm(X) \to \dsw(X)$ are independent, up to isomorphism, of the choice of affine even refinement of $\cS$ used to define them.
\end{cor}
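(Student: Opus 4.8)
The plan is to reduce the statement to a claim about the single functor $\incl\colon\dsm(X)\to\dsw(X)$, and then to pin that functor down with the Orlov machinery of Section~\ref{sect:hot-orlov}.

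First I would note that the categories $\dsm(X)=\Kb\Pure_\cS(X)$, $\dsmisc(X)$, and $\dsw(X)$ are built from $\cS$ alone, so only the functors $\incl$ and $I$ can a priori depend on the choice of affine even refinement. Next, recall from the proof of Theorem~\ref{thm:misc-infext} that $I$ is obtained as the canonical extension $\tilde\jmath$ of $\incl$ to $\pse\dsm(X)$, an extension built purely from the splittings of the exact sequence~\eqref{eqn:dsw-ses} furnished by Lemma~\ref{lem:aff-even-semis}; by Remark~\ref{rmk:aff-even-semis} those splittings depend only on the formal properties in Proposition~\ref{prop:mixed-version} and not on the refinement, so any isomorphism between two versions of $\incl$ propagates automatically to an isomorphism between the corresponding versions of $I$. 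It therefore suffices to prove: if $\cS_1,\cS_2$ are two affine even refinements of $\cS$ and $\incl_1,\incl_2\colon\dsm(X)\to\dsw(X)$ are the functors they produce, then $\incl_1\cong\incl_2$.

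The key point is that $\incl_1$ and $\incl_2$ agree, on the Orlov category $\Pure_\cS(X)\subset\dsm(X)$, with the tautological embedding $\Pure_\cS(X)\hookrightarrow\dsw(X)$: each $\incl_i$ is $t$-exact, commutes with $[1]$ and with Tate twists, and restricts to $\Pervsm(X)\hookrightarrow\Pervsw(X)$ on hearts by Proposition~\ref{prop:mixed-version}\eqref{it:mixed-tstruc}, while every object of $\Pure_\cS(X)$ is a direct sum of objects $\ICm_s[n]\la-n\ra$. To exploit this I would fix $\cS_1$ as an auxiliary. Both $\incl_i$ take values in $\dsmisc(X)\subset\dmisc_{\cS_1}(X)$, and for the affine even stratification $\cS_1$ the equivalence $I_1\colon\pse\dm_{\cS_1}(X)\simto\dmisc_{\cS_1}(X)$ of Theorem~\ref{thm:misc-infext} does not itself involve any choice of refinement; composing with its inverse lets us regard $\incl_1$ and $\incl_2$ as pseudotriangulated functors
\[
\Kb\Pure_\cS(X)=\dsm(X)\longrightarrow \pse\dm_{\cS_1}(X)=\pse\Kb\Pure_{\cS_1}(X),
\]
where $\Pure_{\cS_1}(X)$ is a Koszulescent Orlov category by Proposition~\ref{prop:aff-even-orlov}. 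Under $I_1$ the functor $\incl_1$ becomes the canonical inclusion $\Kb\Pure_\cS(X)\hookrightarrow\pse\Kb\Pure_{\cS_1}(X)$, which satisfies all the hypotheses of Theorem~\ref{thm:orlov-inf}, and $\incl_2$ carries $\Pure_\cS(X)$ into $\Pure_{\cS_1}(X)$ with the same (degree-preserving, hence homogeneous) restriction. Theorem~\ref{thm:orlov-inf}, applied with $\theta^\circ$ the identity transformation of this common restriction, then yields $\incl_1\cong\incl_2$ as functors into $\pse\Kb\Pure_{\cS_1}(X)$, hence into $\dsmisc(X)$ and into $\dsw(X)$; by the first step this gives $I_1\cong I_2$ as well.

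The hard part will be checking that $\incl_2$ — built from the \emph{foreign} refinement $\cS_2$ — really is pseudotriangulated when viewed, via $I_1$, as a functor into $\pse\dm_{\cS_1}(X)$: that it sends distinguished triangles of $\dsm(X)$ to triangles which under $I_1^{-1}$ become distinguished triangles of the infinitesimal extension, and that its restriction to $\Pure_\cS(X)$ coincides on the nose with $\incl_1$'s so as to furnish the data $\theta^\circ$. The natural route is to use that $I_1$ identifies the distinguished triangles of $\pse\dm_{\cS_1}(X)$ with the distinguished triangles of $\dw_{\cS_1}(X)$ whose terms lie in $\dmisc_{\cS_1}(X)$ — a compatibility belonging to the transfer of notions carried out in Section~\ref{ss:aff-infext} — so that the fact that $\incl_2$ is triangulated into $\dsw(X)\subset\dw_{\cS_1}(X)$ does the job; making this precise, together with Lemma~\ref{lem:affable-refine} to control the $\cS$-constructible objects, is where the real work lies. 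I expect no difficulty in the Orlov-theoretic step itself, which is exactly what Theorem~\ref{thm:orlov-inf} was designed for.
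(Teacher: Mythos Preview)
Your approach is essentially the paper's: reduce to $\incl$, observe that $\incl|_{\Pure_\cS(X)}$ is the tautological embedding regardless of refinement, and invoke Theorem~\ref{thm:orlov-inf}. The paper states exactly this in two sentences and stops.

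You are right, however, that there is something to check that the paper passes over in silence. To apply Theorem~\ref{thm:orlov-inf} one must view both $\incl_1$ and $\incl_2$ as pseudotriangulated functors into the \emph{same} category $\pse\Kb(\scB)$; taking $\scB=\Pure_{\cS_1}(X)$ and transporting via $I_1^{-1}$, one needs $I_1^{-1}\circ\incl_2$ to send distinguished triangles in $\dsm(X)$ to distinguished triangles in $\pse\dm_{\cS_1}(X)$. Unwinding the definitions, this asks that any Weil distinguished triangle in $\dmisc_{\cS_1}(X)$ whose morphisms happen to be miscible with respect to $\cS_2$ is also miscible with respect to $\cS_1$. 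Your proposed resolution---that $I_1$ identifies distinguished triangles of $\pse\dm_{\cS_1}(X)$ with Weil distinguished triangles in $\dw_{\cS_1}(X)$ having miscible terms---is precisely Theorem~\ref{thm:misc-tri} for the affine even stratification $\cS_1$. That theorem appears later in the paper, but its proof for affine even stratifications does not use Corollary~\ref{cor:aff-indep}, so there is no circularity; you are simply making explicit a forward reference the paper leaves implicit. With that in hand, the Orlov step is indeed routine, as you say.
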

\begin{proof}
The two functors are related by a diagram like~\eqref{eqn:mix-defn-commute}, so it suffices to prove the statement for $\incl$.  The restriction of $\incl$ to the Orlov category $\Pure(X)$ is clearly independent of the choice of refinement.  The uniqueness of $\incl$ then follows from Theorem~\ref{thm:orlov-inf}.
\end{proof}

\subsection{$\dsmisc(X)$ as an infinitesimal extension}
\label{ss:aff-infext}

Theorem~\ref{thm:misc-infext} makes it possible to study $\dsmisc(X)$ using the machinery of Section~\ref{sect:infext}.  Note first that the isomorphism~\eqref{eqn:aff-even-dsw} can be identified with~\eqref{eqn:pseudotri-hom}.  On the other hand, the natural transformation $\iinf$ of~\eqref{eqn:iinf-defn} can be identified with the first map in~\eqref{eqn:dsw-ses}.  Thus, the following definition is consistent with Definition~\ref{defn:infext}.

\begin{defn}
A morphism $f: \cF \to \cG$ in $\dsw(X)$ is said to be \emph{infinitesimal} if $\exs(f) = 0$.
\end{defn}

There is no good notion of a \emph{genuine morphism} in $\dsmisc(X)$, however.  Recall that this is not a natural notion even in $\pse\dsm(X)$, in that it is not stable under conjugacy.  Such a notion can be transferred through an isomorphism of categories, but not through an equivalence as in Theorem~\ref{thm:misc-infext}.  As a substitute, we use the following notion.

\begin{defn}
A morphism $f: \cF \to \cG$ in $\dsmisc(X)$ is said to be \emph{miscible} if there is a commutative diagram
\[
\xymatrix{
\incl\tilde \cF \ar[r]^{\incl(\tilde f)} \ar[d]_\wr &
  \incl\tilde \cG \ar[d]^{\wr} \\
\cF \ar[r]_f & \cG}
\]
where $\tilde f: \tilde\cF \to \tilde \cG$ is some morphism in $\dsm(X)$, and the vertical maps are isomorphisms.
\end{defn}

For other terms from Section~\ref{sect:infext}, we encounter a problem: there are two {\it a priori} different notions of ``distinguished triangle'' in $\dsmisc(X)$, which we distinguish with the following terms.

\begin{defn}
A diagram $\cF' \to \cF \to \cF'' \to \cF'[1]$ in $\dsmisc(X)$ is called:
\begin{enumerate}
\item a \emph{Weil distinguished triangle} if it is a distinguished triangle in the triangulated category $\dsw(X)$;
\item a \emph{miscible distinguished triangle} if it is isomorphic to a diagram obtained by applying $\incl$ to a distinguished triangle in $\dsm(X)$.
\end{enumerate}
\end{defn}

Since $\incl: \dsm(X) \to \dsw(X)$ is a triangulated functor, every miscible distinguished triangle is a Weil distinguished triangle.  We will eventually prove the converse as well (see Theorem~\ref{thm:misc-tri}), so there is actually only a single notion of ``distinguished triangle'' in $\dsmisc(X)$.  In the meantime, the following criterion will be useful.

\begin{lem}\label{lem:misc-pt-func}
Let $F: \dsw(X) \to \dw_\cT(Y)$ be a miscible functor.  The following conditions on $F$ are equivalent:
\begin{enumerate}
\item $F$ takes every miscible morphism in $\dsmisc(X)$ to a miscible morphism in $\dmisc_\cT(Y)$.\label{it:misc-mor}
\item $F$ takes every miscible distinguished triangle in $\dsmisc(X)$ to a miscible distinguished triangle in $\dmisc_\cT(Y)$.\label{it:misc-dt}
\item $F$ restricts to a pseudotriangulated functor $F: \dsmisc(X) \to \dmisc_\cT(Y)$.\label{it:misc-pseudotri}
\end{enumerate}
\end{lem}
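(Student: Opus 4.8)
Throughout I would use Theorem~\ref{thm:misc-infext} to identify $\dsmisc(X)$ with $\pse\dsm(X)$, and likewise for $Y$, so that the words \emph{infinitesimal}, \emph{genuine morphism}, \emph{miscible morphism} and \emph{miscible distinguished triangle} carry the meaning given them in Section~\ref{sect:infext}. Two observations will be used repeatedly. First, since $\dsmisc(X)$ is a \emph{full} subcategory of $\dsw(X)$, any isomorphism of $\dsw(X)$ between two miscible objects is an isomorphism of $\dsmisc(X)$; in particular an isomorphism of Weil distinguished triangles whose vertices are miscible is an isomorphism inside $\dsmisc(X)$. Second, because $F$ is geometric one has $\exs(F(f))=\bar F(\exs(f))$, so $F$ sends infinitesimal morphisms (those annihilated by $\exs$) to infinitesimal morphisms; together with miscibility on objects this shows that $F$ restricts to an additive functor $F\colon\dsmisc(X)\to\dmisc_\cT(Y)$.

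The plan is to establish \eqref{it:misc-dt}${}\Rightarrow{}$\eqref{it:misc-mor}, \eqref{it:misc-mor}${}\Rightarrow{}$\eqref{it:misc-dt}, \eqref{it:misc-pseudotri}${}\Rightarrow{}$\eqref{it:misc-dt}, and \eqref{it:misc-dt}${}\Rightarrow{}$\eqref{it:misc-pseudotri}. For \eqref{it:misc-dt}${}\Rightarrow{}$\eqref{it:misc-mor}, take a miscible morphism $f$; it is conjugate to a genuine morphism $\incl(\tilde f)$ with $\tilde f\colon\tilde\cF\to\tilde\cG$ in $\dsm(X)$, and completing $\tilde f$ to a distinguished triangle $\tilde\cF\xrightarrow{\tilde f}\tilde\cG\to\tilde\cK\to$ in $\dsm(X)$ and applying $\incl$ gives a miscible distinguished triangle through $\incl(\tilde f)$. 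Applying $F$ and invoking \eqref{it:misc-dt}, the image is a miscible distinguished triangle, hence isomorphic in $\dmisc_\cT(Y)$ to the $\incl$-image of a distinguished triangle of $\dm_\cT(Y)$; the leftmost square of such an isomorphism exhibits $F(\incl\tilde f)$ as a miscible morphism, and then so is $F(f)$, by composing with $F$ of the conjugating isomorphisms. For \eqref{it:misc-mor}${}\Rightarrow{}$\eqref{it:misc-dt}, a miscible distinguished triangle is, up to isomorphism in $\dsmisc(X)$, the $\incl$-image of a distinguished triangle $\tilde\cF\xrightarrow{\tilde f}\tilde\cG\to\tilde\cK\to$ of $\dsm(X)$, so it is enough to treat the image under $F$ of $\incl\tilde\cF\xrightarrow{\incl\tilde f}\incl\tilde\cG\to\incl\tilde\cK\to$, which is a Weil distinguished triangle because $F$ is triangulated on $\dsw(X)$. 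By \eqref{it:misc-mor}, $F(\incl\tilde f)$ is miscible, hence fits in an isomorphism square in $\dmisc_\cT(Y)$ over $\incl(\tilde f')$ for some $\tilde f'$ in $\dm_\cT(Y)$; completing $\tilde f'$ to a distinguished triangle of $\dm_\cT(Y)$ and extending this square to an isomorphism of Weil distinguished triangles (using the triangulated axioms in $\dw_\cT(Y)$, the third vertical being automatically invertible) identifies, inside $\dmisc_\cT(Y)$, the triangle that $F(\incl\tilde f)$ sits in with the $\incl$-image of a distinguished triangle of $\dm_\cT(Y)$.

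It remains to relate \eqref{it:misc-dt} and \eqref{it:misc-pseudotri}. Under the identification $\dsmisc\cong\pse\dsm$ the distinguished triangles of Definition~\ref{defn:infext} are exactly the miscible distinguished triangles, so \eqref{it:misc-pseudotri}${}\Rightarrow{}$\eqref{it:misc-dt} is immediate from the first clause of Definition~\ref{defn:pseudotri}. Conversely, to see that the restriction $F\colon\dsmisc(X)\to\dmisc_\cT(Y)$ is pseudotriangulated I must verify that it commutes with $[1]$ (inherited from $\dsw(X)$), that it sends distinguished triangles to distinguished triangles (this is exactly \eqref{it:misc-dt}), and that it commutes with $\iinf\circ\pg$. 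For the last point I would argue on $\Hom$-spaces: by Lemma~\ref{lem:aff-even-semis} the Frobenius action on $\uHom$ between miscible objects is semisimple, which makes the coinvariant quotient in \eqref{eqn:dsw-ses} canonically a subspace of $\uHom$ equal to the $\Fr$-invariants; unwinding the identification of \eqref{eqn:aff-even-dsw} with \eqref{eqn:pseudotri-hom} then shows that on $\Hom_{\dsmisc(X)}(\cF,\cG[-1])$ the operator $\iinf\circ\pg$ is the composite of $\exs$ with the inclusion $\iinf$ of \eqref{eqn:dsw-ses}. Since $F$ commutes with $\exs$ and, by the commutative diagram \eqref{eqn:dsw-ses-comm} attached to \eqref{eqn:commute-uhom}, with $\iinf$, it commutes with $\iinf\circ\pg$.

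The main obstacle, as I see it, is this last verification: recognising the formal operator $\iinf\circ\pg$ of the infinitesimal extension $\pse\dsm(X)$ as the sheaf-theoretic recipe ``apply $\exs$, then include via $\iinf$'', which rests on the semisimplicity of Frobenius (Lemma~\ref{lem:aff-even-semis}), and then extracting the required compatibility from \eqref{eqn:dsw-ses-comm}. By comparison the equivalence \eqref{it:misc-mor}${}\Leftrightarrow{}$\eqref{it:misc-dt} is routine, apart from the bookkeeping forced by the fact that $\dsmisc$ is not triangulated, so that every completion of a morphism to a distinguished triangle and every application of the five-lemma has to be carried out inside the genuine triangulated categories $\dsw(X)$ and $\dw_\cT(Y)$ and then transported back.
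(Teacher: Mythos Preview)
Your proof is correct and follows essentially the same approach as the paper. The paper condenses your \eqref{it:misc-mor}${}\Leftrightarrow{}$\eqref{it:misc-dt} argument into a single sentence---``a Weil distinguished triangle is miscible if and only if at least one of its morphisms is miscible''---but the underlying reasoning (complete the morphism to a triangle in $\dsm$, apply $\incl$, use the triangulated axioms in $\dsw$ to identify the cones) is exactly what you spell out; and for \eqref{it:misc-dt}${}\Leftrightarrow{}$\eqref{it:misc-pseudotri} both you and the paper reduce to the observation that any miscible functor automatically commutes with $\iinf\circ\pg$, via Lemma~\ref{lem:aff-even-semis} and the diagram~\eqref{eqn:dsw-ses-comm}.
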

After Theorem~\ref{thm:misc-tri} is proved, this lemma will be superfluous, cf.~Remark~\ref{rmk:misc-pt}.
\begin{proof}
Since $F$ is a functor of triangulated categories that takes miscible objects to miscible objects, it certainly takes Weil distinguished triangles to Weil distinguished triangles.  Note that a Weil distinguished triangle is miscible if and only if at least one of its morphisms is miscible.  The equivalence of conditions~\eqref{it:misc-mor} and~\eqref{it:misc-dt} above follows.

Next, in view of Lemma~\ref{lem:aff-even-semis}, the commutative diagram~\eqref{eqn:dsw-ses-comm} shows that any miscible functor commutes with $\iinf \circ \pg$ in the sense of Definition~\ref{defn:pseudotri}.  From that definition, we see that conditions~\eqref{it:misc-dt} and~\eqref{it:misc-pseudotri} are equivalent.
\end{proof}

The last notion to translate from Section~\ref{sect:infext} is that of a ``genuine functor,'' whose definition is given below.  Table~\ref{tab:dict} summarizes the correspondence between the terminology of Section~\ref{sect:infext} and that of the present section. 

\begin{defn}
A miscible functor $F: \dsw(X) \to \dw_\cT(Y)$ is said to be \emph{genuine} if there is a functor of triangulated categories $\tilde F: \dsm(X) \to \dm_\cT(Y)$ such that $\incl \circ \tilde F \cong F \circ \incl$.  In that case, $\tilde F$ is said to be \emph{induced} by $F$.
\end{defn}

Note that a genuine functor in this sense automatically satisfies condition~\eqref{it:misc-dt} of Lemma~\ref{lem:misc-pt-func}, and therefore the other conditions as well.  In particular, a genuine functor automatically gives rise to a pseudotriangulated functor $\dsmisc(X) \to \dmisc_\cT(Y)$.  Note also that the definition of ``induced'' above is consistent with Definition~\ref{defn:infext-induc}, and recall from Lemma~\ref{lem:genuine-unique} that the induced functor $\tilde F$ of $F$, if it exists, is unique up to isomorphism.

\begin{table}[b]
\begin{center}
\begin{tabular}{|@{\quad}c@{\quad}|@{\quad}c@{\quad}|}
\hline
{\it Terminology for  $\pse\dsm(X)$} & {\it Terminology for $\dsmisc(X)$} \\
\hline
infinitesimal morphism & infinitesimal morphism \\
\hline
genuine morphism & --- \\
\hline
\begin{tabular}{c}
morphism conjugate to \\ a genuine morphism
\end{tabular} & miscible morphism \\
\hline
distinguished triangle & [miscible] distinguished triangle* \\
\hline
pseudotriangulated functor & miscible functor* \\
\hline
genuine functor & genuine functor \\
\hline
\end{tabular}
\end{center}
\caption{Dictionary for infinitesimal extensions and miscible sheaves.  For terms marked (*), see Theorem~\ref{thm:misc-tri}.}\label{tab:dict}
\end{table}

The following lemma is useful is reducing genuineness problems to the case of an affine even stratification.  We will frequently make silent use of it in the sequel, by stating results for general affable stratifications but considering only affine even ones in the proof.

\begin{lem}\label{lem:affable-genuine}
Let $\cS$ be an affable stratification of $X$ with affine even refinement $\cS_1$, and let $\cT$ be an affable stratification of $Y$ with affine even refinement $\cT_1$.  If $F: \dw_{\cS_1}(X) \to \dw_{\cT_1}(Y)$ is a genuine geometric functor that takes objects of $\dsw(X)$ to objects of $\dw_\cT(Y)$, then $F|_{\dsw(X)}: \dsw(X) \to \dw_\cT(Y)$ is genuine as well.
\end{lem}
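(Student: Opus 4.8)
The plan is to obtain the induced functor of $F|_{\dsw(X)}$ simply by restriction. Since $F\colon\dw_{\cS_1}(X)\to\dw_{\cT_1}(Y)$ is genuine, it comes equipped with a triangulated functor $\tilde F\colon\dm_{\cS_1}(X)\to\dm_{\cT_1}(Y)$, unique up to isomorphism, with $\incl\circ\tilde F\cong F\circ\incl$, and I would show that $\tilde F$ carries $\dsm(X)$ into $\dm_\cT(Y)$ and that its restriction there exhibits the genuineness of $F|_{\dsw(X)}$. First I would record that $F|_{\dsw(X)}$ is geometric: it is a triangulated functor $\dsw(X)\to\dw_\cT(Y)$ by hypothesis, the natural transformation~\eqref{eqn:commute-uhom} restricts, and the functor $\bar F$ witnessing that the original $F$ commutes with $\exs$ — being triangulated and sending each $\IC_s=\exs(\ICm_s)$ attached to an $\cS$-stratum into $\dc_\cT(Y)$, as $F(\ICm_s)\in\dw_\cT(Y)$ — restricts to $\bar F\colon\ds(X)\to\dc_\cT(Y)$.

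The heart of the argument is the inclusion $\tilde F(\dsm(X))\subseteq\dm_\cT(Y)$. By the case $\scE=\dm$ of Lemma~\ref{lem:affable-refine}, it suffices to check that for $\cF\in\dsm(X)$ the object $\tilde F(\cF)$ is $\cT$-constructible, i.e.\ $\pH^i(\tilde F\cF)\in\Pervsm(Y)$ for all $i$. From $\incl\circ\tilde F\cong F\circ\incl$ and $\incl\cF\in\dsw(X)$ one gets $\incl(\tilde F\cF)\cong F(\incl\cF)\in\dw_\cT(Y)$, so $\incl(\tilde F\cF)$ is $\cT$-constructible as an object of $\dw_{\cT_1}(Y)$. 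Applying Proposition~\ref{prop:mixed-version}\eqref{it:mixed-tstruc} to the affine even stratification $\cT_1$, the functor $\incl\colon\dm_{\cT_1}(Y)\to\dw_{\cT_1}(Y)$ is $t$-exact and restricts to the inclusion $\Pervm_{\cT_1}(Y)\hookrightarrow\Pervw_{\cT_1}(Y)$; hence $\incl(\pH^i(\tilde F\cF))=\pH^i(\incl\tilde F\cF)\in\Pervsw(Y)$, and since $\Pervsm(Y)=\Pervm_{\cT_1}(Y)\cap\Pervsw(Y)$ (again Lemma~\ref{lem:affable-refine}), this forces $\pH^i(\tilde F\cF)\in\Pervsm(Y)$, as needed.

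Granting this, I would set $\tilde F'=\tilde F|_{\dsm(X)}\colon\dsm(X)\to\dm_\cT(Y)$; it is triangulated because $\dsm(X)\subseteq\dm_{\cS_1}(X)$ and $\dm_\cT(Y)\subseteq\dm_{\cT_1}(Y)$ are full triangulated subcategories (Lemma~\ref{lem:affable-refine-dsm}) and $\tilde F$ is. Since the functor $\incl$ attached to $\cS$ (resp.\ $\cT$) is by construction the restriction to the appropriate full subcategory of the one attached to $\cS_1$ (resp.\ $\cT_1$), the isomorphism $\incl\circ\tilde F\cong F\circ\incl$ restricts over $\dsm(X)$ to $\incl\circ\tilde F'\cong F|_{\dsw(X)}\circ\incl$. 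Finally $F|_{\dsw(X)}$ is miscible: by Theorem~\ref{thm:misc-infext} every object of $\dsmisc(X)$ is isomorphic to $\incl(\tilde\cF)$ for some $\tilde\cF\in\dsm(X)$, so $F(\incl\tilde\cF)\cong\incl(\tilde F'\tilde\cF)$ lies in $\incl(\dm_\cT(Y))\subseteq\dmisc_\cT(Y)$. Thus $F|_{\dsw(X)}$ is a genuine geometric (in particular miscible) functor, induced by $\tilde F'$.

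I expect the only real obstacle to be organizational: keeping the four stratifications $\cS,\cS_1,\cT,\cT_1$ and the several categories built from each straight, and invoking the correct instance of Lemma~\ref{lem:affable-refine} at each step. Beyond the $t$-exactness of $\incl$ and the refinement descriptions of the mixed and miscible categories already established, the lemma carries no further homological or geometric content.
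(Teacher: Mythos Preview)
Your proposal is correct and follows essentially the same approach as the paper's proof: take the induced functor $\tilde F$ on $\dm_{\cS_1}(X)$, use Lemma~\ref{lem:affable-refine} (via $\cT$-constructibility, checked through $t$-exactness of $\incl$) to see that $\tilde F(\dsm(X))\subset\dm_\cT(Y)$, and restrict. You spell out more of the details—the geometricity of the restriction, the explicit $\cT$-constructibility argument via Proposition~\ref{prop:mixed-version}\eqref{it:mixed-tstruc}, and the miscibility check—whereas the paper compresses all of this into a two-sentence appeal to Lemma~\ref{lem:affable-refine}, but the substance is identical.
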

\begin{proof}
Let $\tilde F: \dm_{\cS_1}(X) \to \dm_{\cT_1}(Y)$ be the functor induced by $F$.  Identifying $\dsm(X)$ and $\dm_\cT(Y)$ with full subcategories of $\dm_{\cS_1}(X)$ and $\dm_{\cT_1}(Y)$, respectively, we see from Lemma~\ref{lem:affable-refine} that $\tilde F$ must take objects of $\dsm(X)$ to objects of $\dm_\cT(Y)$.  The functor $\tilde F|_{\dsm(X)}: \dsm(X) \to \dm_\cT(Y)$ satisfies $\incl \circ \tilde F|_{\dsm(X)} \cong F|_{\dsw(X)} \circ \incl$, so $F|_{\dsw(X)}$ is genuine.
\end{proof}

\section{Sheaves on an affine space}
\label{sect:affine}

The easiest example of a variety with an affable stratification is, of course, an affine space $\bA^m$ endowed with the trivial stratification. In this section, we establish a large number of technical results on miscibility of objects and morphisms on an affine space.  These results lay the groundwork for the more general results to be proved in Section~\ref{sect:genuine}.  Throughout this section, $\cS$ will denote the trivial stratification on $\bA^m$.  

\begin{lem}\label{lem:affine-basic}
\begin{enumerate}
\item In $\dsw(\bA^m)$, we have\label{it:aff-cohom}
\[
\Hom^i(\uQlb, \uQlb\la n\ra) \cong
\begin{cases}
\Qlb & \text{if $i \in \{0,1\}$ and $n = 0$,} \\
0 & \text{otherwise.}
\end{cases}
\]
\item For any object $\cF \in \dsw(\bA^m)$, there is a (noncanonical) isomorphism $\cF \cong \bigoplus_i \pH^i(\cF)[-i]$. \label{it:aff-hered}
\item The weight filtration of a perverse sheaf $\cF \in \Pervsw(\bA^m)$ splits canonically.  That is, there is a canonical isomorphism $\cF \cong \bigoplus_j \gr^W_j \cF$. \label{it:wt-split}
\item An object $\cF \in \dsw(\bA^m)$ is miscible if and only if each $\pH^i(\cF)$ is semisimple. \label{it:misc-semis}
\end{enumerate}
\end{lem}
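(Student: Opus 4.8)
The plan is to prove the four parts in order, each building on its predecessors. For part~\eqref{it:aff-cohom}, I would first note that $\dsw(\bA^m)$ with the trivial stratification is generated by a single simple object, namely $\uQlb[m]\la -m\ra$ (up to Tate twist), and that the cohomology computation reduces to the fact that $\bA^m$ is cohomologically trivial: $R\Gamma(\bA^m \otimes \Fqb, \uQlb) \cong \Qlb$ in degree $0$. Using the short exact sequence~\eqref{eqn:dsw-ses} relating $\Hom^i$ in $\dsw$ to invariants and coinvariants of Frobenius on $\uHom^\bullet$, together with $\uHom^i(\uQlb,\uQlb\la n\ra) = H^i(\bA^m, \uQlb\la n\ra)$, which vanishes unless $i = 0$ and $n = 0$ (in which case it is $\Qlb$ with trivial Frobenius action), the claimed values follow: we get $\Qlb$ in degrees $0$ and $1$ from the invariants and coinvariants of the identity on $\Qlb$, and $0$ otherwise.

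For part~\eqref{it:aff-hered}: since $\dsw(\bA^m)$ is generated by shifts and twists of $\uQlb[m]\la -m\ra$, and since part~\eqref{it:aff-cohom} shows there are no negative self-extensions and only a one-dimensional $\Ext^1$ (in the ``diagonal'' degree), I would argue that the perverse $t$-structure on $\dsw(\bA^m)$ is ``hereditary-like'' in the sense that any object is the direct sum of its perverse cohomology shifted. Concretely, one shows by induction on the perverse amplitude that the canonical triangle $\ptau_{\le i}\cF \to \cF \to \ptau_{\ge i+1}\cF \to$ splits, using that $\Hom^1(\pH^j(\cF)[-j], \pH^k(\cF)[-k])$ must vanish for the relevant degrees because all perverse sheaves here are (iterated extensions of) $\uQlb[m]\la\cdot\ra$ and the Hom-computation of~\eqref{it:aff-cohom} kills the obstruction groups $\Hom(\ptau_{\ge i+1}\cF, (\ptau_{\le i}\cF)[1])$. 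The key input is that the only nonvanishing $\Hom^i(\uQlb,\uQlb\la n\ra[k])$ with $k \ge 1$ occurs at $i=1$, $n=0$, which does not contribute to splitting obstructions between distinct perverse degrees.

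For part~\eqref{it:wt-split}: a pure perverse sheaf on $\bA^m$ is a successive extension of copies of $\uQlb[m]\la n\ra$ for fixed $n$, but $\Ext^1_{\Pervsw(\bA^m)}(\uQlb[m]\la n\ra, \uQlb[m]\la n\ra) = \uHom^1(\uQlb,\uQlb)^\Fr$-type group, which by~\eqref{it:aff-cohom} is one-dimensional but carries a specific Frobenius weight forcing it to vanish at the level of \emph{weight-preserving} extensions — more carefully, I would invoke Corollary~\ref{cor:koszul-split} or rather argue directly that $\Ext^1(\gr^W_j\cF, \gr^W_{j'}\cF)$ for $j \ne j'$ between \emph{pure} objects is controlled by a Hom-group that vanishes by~\eqref{it:aff-cohom} once one checks the Tate twists don't match. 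The canonicity comes from the uniqueness of $W_\bullet$ and the splitting of the filtration by these vanishing $\Ext^1$'s. Finally, part~\eqref{it:misc-semis} combines the previous parts: by~\eqref{it:aff-hered}, $\cF \cong \bigoplus_i \pH^i(\cF)[-i]$, so $\cF$ is miscible iff each $\pH^i(\cF)$ lies in $\Pure(\bA^m)[a]*\cdots*\Pure(\bA^m)[b]$; but a single perverse sheaf placed in perverse degree $0$ is in such a class iff, after~\eqref{it:wt-split} splitting it into its weight pieces, each $\gr^W_j\pH^i(\cF)$ is pure semisimple of weight $j$ — and on $\bA^m$ a pure perverse sheaf of a single weight is automatically semisimple (again by the $\Ext^1$-vanishing from~\eqref{it:aff-cohom}), which is exactly the statement that $\pH^i(\cF)$ is semisimple. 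The forward direction is immediate; the reverse uses that a semisimple perverse sheaf decomposes as a sum of $\uQlb[m]\la n\ra$, each of which is pure, hence lies in the relevant $*$-product.

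The main obstacle I anticipate is part~\eqref{it:wt-split} — specifically, keeping careful track of Tate twists and Frobenius weights to show that the relevant $\Ext^1$-groups between non-isomorphic pure objects vanish, so that the weight filtration splits \emph{canonically} rather than merely non-canonically; the non-canonical splitting is easy, but pinning down that the splitting respects all morphisms (strictness of the weight filtration, already known from~\cite{bbd}) and is functorial requires a short argument. Everything else is a fairly direct consequence of the cohomology vanishing in~\eqref{it:aff-cohom} and the generation of $\dsw(\bA^m)$ by a single object.
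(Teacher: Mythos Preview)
Your overall strategy for parts~\eqref{it:aff-cohom}--\eqref{it:wt-split} is essentially the paper's, modulo one small slip in part~\eqref{it:aff-cohom}: the vector space $\uHom^0(\uQlb,\uQlb\la n\ra)$ is nonzero for every $n$ (it is $\Qlb$ with Frobenius acting by $q^{-n/2}$); what vanishes for $n \ne 0$ is its Frobenius invariants and coinvariants. This does not affect your conclusion, since~\eqref{eqn:dsw-ses} only sees those. For part~\eqref{it:aff-hered}, the clean way to phrase your argument is to first deduce from~\eqref{it:aff-cohom} by d\'evissage that $\Hom^i(\cF,\cG) = 0$ for all perverse $\cF,\cG$ and all $i \ge 2$; the splitting of truncation triangles then follows by the standard hereditary argument. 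Your sketch of~\eqref{it:wt-split} via d\'evissage to simples is a valid alternative to the paper's direct use of~\eqref{eqn:dsw-ses}.

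There is, however, a genuine error in your argument for part~\eqref{it:misc-semis}. You assert that ``on $\bA^m$ a pure perverse sheaf of a single weight is automatically semisimple (again by the $\Ext^1$-vanishing from~\eqref{it:aff-cohom}).'' But~\eqref{it:aff-cohom} says exactly the opposite: $\Hom^1_{\dsw(\bA^m)}(\uQlb,\uQlb) \cong \Qlb$ is \emph{nonzero}, so there exist nontrivial self-extensions of $\uQlb$ in $\Pervsw(\bA^m)$. Concretely, $\uQlb^{\oplus 2}$ with Frobenius acting by a $2 \times 2$ Jordan block is a pure, non-semisimple perverse sheaf of weight $0$. Your claim would force every object of $\dsw(\bA^m)$ to be miscible, trivializing the statement. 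The correct argument is: by~\eqref{it:aff-hered} and~\eqref{it:wt-split}, any $\cF$ decomposes as a direct sum of its pure pieces $\gr^W_j \pH^i(\cF)$, so $\cF$ is miscible if and only if each such pure piece lies in some $\Pure(\bA^m)[k]$, i.e., is semisimple---and a pure perverse sheaf is miscible if and only if it is semisimple, by the very definition of $\Pure(\bA^m)$. This last equivalence is a genuine condition, not an automatic one.
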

\begin{proof}
Part~\eqref{it:aff-cohom} is an immediate consequence of~\eqref{eqn:dsw-ses} and the well-known fact that $\Hom^i_{\ds(\bA^m)}(\uQlb, \uQlb) = 0$ for $i > 0$ (see~\cite[Corollary~VI.4.20]{milne}).

By induction on the length of a perverse sheaf, it follows from part~\eqref{it:aff-cohom} that for any two perverse sheaves $\cF, \cG \in \Pervsw(\bA^m)$, we have $\Hom^i(\cF,\cG) = 0$ for $i \ge 2$.  Part~\eqref{it:aff-hered} then follows by a standard argument.

For part~\eqref{it:wt-split}, let $w$ be the largest weight of any simple subquotient of $\cF$.  From the weight filtration of $\cF$, we can form a short exact sequence $0 \to \cF' \to \cF \to \cF'' \to 0$ where $\cF''$ is pure of weight $w$ and $\cF'$ has weights${}< w$.  By induction, it suffices to show that this sequence has a canonical splitting.  To show that, we must check that $\Hom(\cF'',\cF') = \Ext^1(\cF'',\cF') = 0$.  The fact that $\Hom(\cF'',\cF') = 0$ is obvious from considering the weight filtration.  From~\eqref{eqn:dsw-ses}, this implies that $\uHom(\cF'',\cF')^\Fr = 0$.  In other words, $1$ is not an eigenvalue of the action of $\Fr$ on $\uHom(\cF'', \cF')$.  It follows that $\uHom(\cF'',\cF')_\Fr = 0$ as well, so using~\eqref{eqn:dsw-ses} again, we obtain an isomorphism $\Hom^1(\cF'',\cF') \simto \uHom^1(\cF'', \cF')^\Fr$.  But $\uHom^1(\cF'', \cF') = 0$.

Finally, part~\eqref{it:misc-semis} follows from parts~\eqref{it:aff-hered} and~\eqref{it:wt-split} and the fact that a pure perverse sheaf is miscible if and only if it is semisimple.
\end{proof}

A slight modification of the notion of purity will also be useful to us.  Let us call an object $\cF \in \dsw(X)$ \emph{baric-pure} of weight $w$ if each $\pH^i(\cF)$ is pure of weight $w$.  This notion has been studied by S.~Morel~\cite{mor}; the terminology comes from~\cite{at}.  There is an analogue of Lemma~\ref{lem:weight-trunc} for baric purity; in fact, in the baric version, the triangle~\eqref{eqn:weight-trunc} is functorial (see~\cite[\S4.1]{mor} or \cite[\S 2.1]{at}).   Note that unlike ordinary purity, baric purity is stable under translation.  It follows from Lemma~\ref{lem:affine-basic} that for any $\cF \in \dsw(\bA^m)$, there is an isomorphism
\[
\cF \cong \bigoplus_j \cF^j
\]
where each $\cF^j$ is baric-pure of weight $j$.  

\begin{lem}\label{lem:aff-baric-split}
\begin{enumerate}
\item If $\cF, \cG \in \dsw(\bA^m)$ are baric-pure with distinct weights, then $\Hom(\cF,\cG) = 0$.
\item If $\cF, \cG \in \dsw(\bA^m)$ are both baric-pure of weight $j$, then the cone of any morphism is also baric-pure of weight $j$.
\end{enumerate}
\end{lem}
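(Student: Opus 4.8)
The plan is to reduce both assertions to statements about the perverse cohomology sheaves of the objects involved, which on $\bA^m$ are extremely rigid; everything then follows from Lemma~\ref{lem:affine-basic} and the defining exact sequence~\eqref{eqn:dsw-ses}.

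For part~(1), I would first apply Lemma~\ref{lem:affine-basic}\eqref{it:aff-hered} to write $\cF \cong \bigoplus_i \pH^i(\cF)[-i]$ and $\cG \cong \bigoplus_k \pH^k(\cG)[-k]$, so that $\Hom_{\dsw(\bA^m)}(\cF,\cG)$ becomes a direct sum of groups $\Hom^{i-k}(\pH^i(\cF),\pH^k(\cG))$ in which $\pH^i(\cF)$ is pure of weight $w$ and $\pH^k(\cG)$ is pure of weight $w'\neq w$. It is thus enough to prove that $\Hom^l(\cA,\cB)=0$ for all $l\in\Z$ whenever $\cA,\cB\in\Pervsw(\bA^m)$ are pure of distinct weights. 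The cases $l<0$ and $l\ge 2$ are already available (the perverse $t$-structure, and the argument in the proof of Lemma~\ref{lem:affine-basic}\eqref{it:aff-hered}, respectively). For $l=0$, the image of any morphism $\cA\to\cB$ is at once a quotient of $\cA$ and a subobject of $\cB$, so its composition factors are all of weight $w$ and also all of weight $w'$; since $w\neq w'$ the image vanishes. For $l=1$, I would argue exactly as in the proof of Lemma~\ref{lem:affine-basic}\eqref{it:wt-split}: the vanishing of $\Hom^0(\cA,\cB)$, together with $\uHom^{-1}(\cA,\cB)=0$ and the exact sequence~\eqref{eqn:dsw-ses}, forces $\uHom^0(\cA,\cB)^\Fr=0$; as $\uHom^0(\cA,\cB)$ is finite-dimensional, $\Fr-\id$ is then a bijection on it, whence $\uHom^0(\cA,\cB)_\Fr=0$ as well. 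Finally $\uHom^1(\cA,\cB)\cong\Hom^1_{\ds(\bA^m)}(\exs\cA,\exs\cB)=0$, because $\exs\cA,\exs\cB$ are perverse and $\Ext^1_{\Pervs(\bA^m)}(\uQlb[m],\uQlb[m])\cong H^1(\bA^m\otimes\Fqb,\Qlb)=0$ by~\cite[Corollary~VI.4.20]{milne}, so $\Pervs(\bA^m)$ is semisimple. Feeding these vanishings into~\eqref{eqn:dsw-ses} gives $\Hom^1(\cA,\cB)=0$.

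For part~(2), I would complete the given morphism $\phi\colon\cF\to\cG$ to a distinguished triangle $\cF\to\cG\to\cC\to\cF[1]$ and take the associated long exact sequence of perverse cohomology. For each $i$ this yields a short exact sequence
\[
0 \to \cok(\pH^i(\phi)) \to \pH^i(\cC) \to \ker(\pH^{i+1}(\phi)) \to 0 .
\]
Here $\cok(\pH^i(\phi))$ is a quotient of $\pH^i(\cG)$ and $\ker(\pH^{i+1}(\phi))$ is a subobject of $\pH^{i+1}(\cF)$, so, using strict compatibility of morphisms in $\Pervsw(\bA^m)$ with weight filtrations (\cite[Th\'eor\`eme~5.3.5]{bbd}), both are perverse sheaves all of whose composition factors have weight $j$; hence the same holds for their extension $\pH^i(\cC)$, which is therefore pure of weight $j$. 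Thus $\cC$ is baric-pure of weight $j$.

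The bookkeeping with perverse cohomology and composition factors is routine, and part~(2) is essentially formal — it in fact holds over an arbitrary variety, and is recorded here only because baric purity was introduced on $\bA^m$. The one point that requires genuine care is the $l=1$ case of part~(1): it cannot be seen on $\bA^m\otimes\Fqb$ alone and must be extracted by running the vanishing statements through the defining short exact sequence~\eqref{eqn:dsw-ses} of the Weil category, precisely in the manner of Lemma~\ref{lem:affine-basic}\eqref{it:wt-split}.
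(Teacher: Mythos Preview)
Your proof is correct. Part~(2) is exactly the paper's argument: the long exact sequence in perverse cohomology, plus the observation that sub- and quotient-objects of a pure perverse sheaf of weight $j$ are again pure of weight $j$. (Your appeal to strict compatibility with weight filtrations is harmless but unnecessary here.)

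For part~(1) you take a genuinely more elaborate route than the paper. The paper's one-line proof is simply ``follows from Lemma~\ref{lem:affine-basic}\eqref{it:aff-cohom}'': after decomposing $\cF$ and $\cG$ into shifted perverse pieces via Lemma~\ref{lem:affine-basic}\eqref{it:aff-hered}, one observes that a perverse sheaf on $\bA^m$ pure of weight $w$ has all composition factors isomorphic to a single simple object $\uQlb[m]\la w-m\ra$, and Lemma~\ref{lem:affine-basic}\eqref{it:aff-cohom} says $\Hom^i$ between simple objects of distinct weights vanishes for \emph{every} $i$ simultaneously. A d\'evissage on composition-series length then kills all $\Hom^l(\cA,\cB)$ at once. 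Your approach instead splits into cases $l<0$, $l=0$, $l=1$, $l\ge 2$ and supplies a separate argument for each, the $l=1$ case going through the short exact sequence~\eqref{eqn:dsw-ses} in the style of Lemma~\ref{lem:affine-basic}\eqref{it:wt-split}. This works perfectly well and is instructive, but the uniform d\'evissage argument is shorter and avoids the case analysis entirely.
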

\begin{proof}
The first part follows from Lemma~\ref{lem:affine-basic}\eqref{it:aff-cohom}, and the second part is immediate from consideration of the long exact sequence of perverse cohomology sheaves associated to a distinguished triangle.
\end{proof}

\begin{lem}\label{lem:dotmap}
Let $\cF, \cG \in \Pervsm(\bA^m)$ be pure of weight $j$.  There is a canonical isomorphism $\phi: \Hom_{\dsw(\bA^m)}(\cF,\cG) \simto \Hom_{\dsw(\bA^m)}(\cF,\cG[1])$.  Moreover, we have $\phi(f \circ g) = f[1] \circ \phi(g) = \phi(f) \circ g$.
\end{lem}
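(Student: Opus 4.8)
The plan is to compute both $\Hom$-spaces inside the infinitesimal extension, using the identification $\dsmisc(\bA^m)\cong\pse\dsm(\bA^m)$ of Theorem~\ref{thm:misc-infext} together with $\dsm(\bA^m)\cong\Db\Pervsm(\bA^m)$ from Corollary~\ref{cor:aff-even-oldmix}. First I would note that $\cF$ and $\cG$, being pure of weight $j$ in the mixed category $\Pervsm(\bA^m)$ (mixed because it is Koszul, by Theorem~\ref{thm:bgs-aff-even}\eqref{it:bgs-koszul}), are semisimple, hence miscible by Lemma~\ref{lem:affine-basic}\eqref{it:misc-semis}; so are their shifts $\cF[1],\cG[1]$. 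Thus $\cF,\cG$ may be viewed as objects of $\dsm(\bA^m)$ in the heart $\Pervsm(\bA^m)$, and $\Hom_{\dsw(\bA^m)}(\cF,\cG)$, $\Hom_{\dsw(\bA^m)}(\cF,\cG[1])$ are computed in $\pse\dsm(\bA^m)$.

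Next, by~\eqref{eqn:pseudotri-hom}, $\Hom_{\dsw(\bA^m)}(\cF,\cG)=\Hom_{\dsm}(\cF,\cG)\oplus\Hom_{\dsm}(\cF,\cG[-1])$ and $\Hom_{\dsw(\bA^m)}(\cF,\cG[1])=\Hom_{\dsm}(\cF,\cG[1])\oplus\Hom_{\dsm}(\cF,\cG)$. Since $\cF,\cG$ are perverse and $\dsm(\bA^m)\cong\Db\Pervsm(\bA^m)$, the summand $\Hom_{\dsm}(\cF,\cG[-1])=\Ext^{-1}_{\Pervsm}(\cF,\cG)$ vanishes, and $\Hom_{\dsm}(\cF,\cG[1])=\Ext^1_{\Pervsm}(\cF,\cG)$ vanishes because $\cF$ and $\cG$ have equal weight $j$ and $\Pervsm(\bA^m)$ is mixed, by~\eqref{eqn:mixed-ext-vanish} (reducing to composition factors). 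Hence the projection $\pg$ restricts to an isomorphism $\Hom_{\dsw}(\cF,\cG)\simto\Hom_{\dsm}(\cF,\cG)$, and the natural transformation $\iinf$ with target object $\cG[1]$, namely $\iinf\colon\Hom_{\dsm}(\cF,(\cG[1])[-1])=\Hom_{\dsm}(\cF,\cG)\to\Hom_{\dsmisc}(\cF,\cG[1])$, is also an isomorphism. I would then define $\phi=\iinf\circ\pg$; it is canonical because $\pg$ and $\iinf$ are. (Equivalently one can argue directly from the short exact sequence~\eqref{eqn:dsw-ses} and Lemma~\ref{lem:aff-even-semis}: $\uHom^{\pm1}(\cF,\cG)=0$, while $\uHom^0(\cF,\cG)$ is pure of weight $0$ in $\Pervsm(\pt)$ so $\Fr$ acts trivially, giving canonical isomorphisms $\Hom^0(\cF,\cG)\cong\uHom^0(\cF,\cG)\cong\Hom^1(\cF,\cG)$.)

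For the compatibility with composition, the point is that every morphism among these semisimple perverse sheaves is a \emph{genuine} morphism $(h_0,0)$ in $\pse\dsm(\bA^m)$, and the shift $[1]$ sends genuine morphisms to genuine ones; moreover $\phi(h)=\iinf(\pg(h))=(0,h_0)$. Writing $f=(f_0,0)$ and $g=(g_0,0)$, the composition rule~\eqref{eqn:pseudotri-comp} gives $(g_0[1],0)\circ(0,f_0)=(0,g_0\circ f_0)$ and $(0,f_0)\circ(g_0,0)=(0,f_0\circ g_0)$, while $\phi(f\circ g)=\phi((f_0g_0,0))=(0,f_0g_0)$; after relabelling these are exactly $\phi(f\circ g)=f[1]\circ\phi(g)=\phi(f)\circ g$. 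The only delicate point is the bookkeeping of the identifications under the equivalence $\dsmisc(\bA^m)\cong\pse\dsm(\bA^m)$ and keeping track of the shift "$[-1]$" built into $\iinf$ — but there is no substantive obstacle here, as all the input vanishing statements are immediate consequences of the mixed/Koszul structure on $\Pervsm(\bA^m)$ recalled above.
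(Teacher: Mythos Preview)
Your proof is correct and follows essentially the same route as the paper: both arguments establish the vanishing of $\Hom_{\dsm}(\cF,\cG[-1])$ and $\Hom_{\dsm}(\cF,\cG[1])$ for pure objects of equal weight, then read off the isomorphism $\phi$ from the decomposition~\eqref{eqn:aff-even-dsw} (equivalently, from~\eqref{eqn:pseudotri-hom} via the identification of Theorem~\ref{thm:misc-infext}), and deduce the composition formulas directly from~\eqref{eqn:pseudotri-comp}. Your write-up is simply a more detailed unpacking of the paper's terse argument, including the explicit identification $\phi=\iinf\circ\pg$.
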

This statement actually holds for any variety $X$ with an affable stratification, as can be seen from the proof.
\begin{proof}
For two objects $\cF, \cG \in \Pervsm(X)$ that are pure and have the same weight, we clearly have $\Hom_{\dsm(X)}(\cF,\cG[-1]) = 0$ and $\Hom_{\dsm(X)}(\cF,\cG[1]) = 0$.  The existence of $\phi$ is an immediate consequence of~\eqref{eqn:aff-even-dsw}.  Note that $\phi$ takes genuine morphisms $\cF \to \cG$ to infinitesimal morphisms $\cF \to \cG[1]$.  The composition formulas are then simply instances of~\eqref{eqn:pseudotri-comp}.
\end{proof}

For the next two lemmas, we will denote the isomorphism of Lemma~\ref{lem:dotmap} by
\[
r \in \Hom_{\dsw(\bA^m)}(\cF,\cG) \quad\mapsto\quad \dot r \in \Hom_{\dsw(\bA^m)}(\cF,\cG[1]).
\]

\begin{lem}\label{lem:aff-ext1}
Suppose $\cF, \cG \in \Pervsm(\bA^m)$ are pure objects of weight $0$.  For any morphism $r: \cF \to \cG$, the cone of $\dot r[-1]: \cF[-1] \to \cG$ is isomorphic to the object
\[
\cK = \cG \oplus \cF
\qquad\text{with $\Fr$ acting by} \qquad
\begin{bmatrix}
1 & r \\
 & 1
\end{bmatrix}.
\]
\end{lem}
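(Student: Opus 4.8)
The plan is to identify the cone concretely by combining the perverse‐cohomology long exact sequence with a vanishing computation on $\bA^m$ and the description of $\Hom$-groups in $\dsw$.

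First I would record that $\dot r[-1]\colon\cF[-1]\to\cG$ is infinitesimal: since $\dot r=\phi(r)$ with $\phi$ as in Lemma~\ref{lem:dotmap}, which carries morphisms to infinitesimal morphisms, we have $\exs(\dot r[-1])=0$. Let $\cK'$ be a cone of $\dot r[-1]$, so there is a distinguished triangle $\cF[-1]\xrightarrow{\dot r[-1]}\cG\to\cK'\to\cF$ in $\dsw(\bA^m)$. Applying $\exs$ and using $\exs(\dot r[-1])=0$ gives $\exs(\cK')\cong\exs(\cG)\oplus\exs(\cF)$. Applying perverse cohomology and using that $\pH^i(\cF[-1])$ is $\cF$ for $i=1$ and $0$ otherwise while $\pH^i(\cG)$ is $\cG$ for $i=0$ and $0$ otherwise, the long exact sequence collapses to show $\pH^i(\cK')=0$ for $i\ne0$ and that $0\to\cG\to\cK'\to\cF\to0$ is a short exact sequence in $\Pervsw(\bA^m)$. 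Thus $\cK'$ is a perverse sheaf which geometrically is $\exs\cG\oplus\exs\cF$, and it is an extension of $\cF$ by $\cG$ whose Yoneda class in $\Ext^1_{\Pervsw(\bA^m)}(\cF,\cG)\cong\Hom^1_{\dsw(\bA^m)}(\cF,\cG)$ (by \cite[Remarque~3.1.17]{bbd}) is, up to sign, the connecting morphism of the triangle, namely $\dot r$.

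Next I would pin down the Weil structure. Since $\cF,\cG$ are pure of weight $0$ in $\Pervsm(\bA^m)$ they are semisimple, hence geometrically direct sums of $\uQlb[m]$, so $\uHom^1(\cF,\cG)=\Hom^1_{\ds(\bA^m)}(\exs\cF,\exs\cG)=0$ by Lemma~\ref{lem:affine-basic}\eqref{it:aff-cohom}; moreover $\Fr$ acts trivially on $\uHom^0(\cF,\cG)$, this being pure of weight $0$. Hence~\eqref{eqn:dsw-ses} degenerates to an isomorphism $\iinf\colon\uHom^0(\cF,\cG)\xrightarrow{\sim}\Hom^1_{\dsw(\bA^m)}(\cF,\cG)$, and unwinding the identifications of Section~\ref{ss:aff-infext} the class $\dot r$ equals $\iinf(r)$, with $r\in\uHom^0(\cF,\cG)=\Hom_{\ds(\bA^m)}(\exs\cF,\exs\cG)$. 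Finally, by the explicit form of the boundary map in \cite[(5.1.2.5)]{bbd} — the connecting homomorphism of the triangle $R\uHom(\cF,\cG)\xrightarrow{\Fr-1}R\uHom(\cF,\cG)\to R\Hom_{\dsw(\bA^m)}(\cF,\cG)\xrightarrow{+1}$ — the extension of Weil perverse sheaves whose class is $\iinf(r)$ is exactly $\exs\cG\oplus\exs\cF$ with Frobenius $\left[\begin{smallmatrix}1&r\\&1\end{smallmatrix}\right]$ (the weight-$0$ normalization of $\cF,\cG$ being absorbed into this presentation). Therefore $\cK'\cong\cK$, as claimed.

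The main obstacle is this last bookkeeping step: tracing through the definition of the $\dsw$-$\Hom$ groups (equivalently of the map $\iinf$ of~\eqref{eqn:iinf-defn}) and of the Yoneda class of a short exact sequence of Weil perverse sheaves, so as to be sure that $\iinf(r)$ corresponds to the extension with off-diagonal Frobenius entry exactly $r$ — though since the statement is only an isomorphism, replacing $r$ by $\Fr_\cG r$, $r\Fr_\cF$, or $-r$ does no harm, as these give conjugate Frobenius matrices. An equivalent and more hands-on route avoiding a direct appeal to \cite[(5.1.2.5)]{bbd} is to run the argument backwards: construct $\cK$ first, observe the evident short exact sequence $0\to\cG\to\cK\to\cF\to0$ and the resulting triangle $\cF[-1]\xrightarrow{\partial[-1]}\cG\to\cK\to\cF$, so that $\cK$ is a cone of $\partial[-1]$, and then show $\partial=\dot r$ by checking via~\eqref{eqn:aff-even-dsw} and $\uHom^1(\cF,\cG)=0$ that $\partial$ is infinitesimal with infinitesimal part $r$ — which reduces to the same computation.
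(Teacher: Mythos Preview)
Your argument is correct, but the paper's proof takes a different and more elementary route.  The paper first disposes of $r=0$ trivially; then, in the special case $\cF\cong\cG\cong\uQlb[m]\la-m\ra$ with $r\ne0$, it observes that the cone is an indecomposable pure rank-$2$ perverse sheaf of weight $0$ and simply cites~\cite[Proposition~5.3.9(i)]{bbd} for its Jordan-block Frobenius structure; finally, since $\cF$ and $\cG$ are direct sums of copies of $\uQlb[m]\la-m\ra$, it chooses decompositions so that $r$ becomes a direct sum of zero maps and isomorphisms, reducing the general case to the two special ones.  This diagonalization trick sidesteps precisely the bookkeeping you identify as ``the main obstacle'': once reduced to rank $1$, one only needs to know that there is a \emph{unique} nontrivial extension up to isomorphism, rather than to trace through~\eqref{eqn:dsw-ses} and match $\iinf(r)$ explicitly with the off-diagonal Frobenius entry.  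Your approach is more conceptual and would adapt more readily to situations where the base is not semisimple, but in the present setting the paper's reduction is cleaner and avoids the unwinding you defer.
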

\begin{proof}
This statement clearly holds when $r = 0$.  On the other hand, in the special case where $\cF \cong \cG \cong \uQlb$ and $r: \cF \to \cG$ is any nonzero map, then $\dot r \in \Ext^1(\uQlb,\uQlb)$ corresponds to a short exact sequence in $\Pervsw(\bA^m)$ whose middle term is an indecomposable pure rank-$2$ perverse sheaf on $\bA^m$.  Such an object has the form described above by~\cite[Proposition~5.3.9(i)]{bbd}.  Finally, in the general case, note that $\cF$ and $\cG$ are both direct sums of copies of $\uQlb$.  One can always choose direct-sum decompositions of these objects so that $r: \cF \to \cG$ arises as a direct sum of some number of zero maps and some number of isomorphisms $\uQlb \simto \uQlb$.  Thus, the general case follows from the special cases considered above.
\end{proof}

\begin{lem}\label{lem:aff-cone}
Suppose $\cF, \cG \in \dsw(\bA^m)$ are miscible baric-pure objects of weight $0$.  Any morphism $f: \cF \to \cG$ can be written as a sum
\begin{equation}\label{eqn:aff-cone}
f = \sum_i (p^i[-i] + \dot r^i[-i])
\end{equation}
involving morphisms $p^i: \pH^i(\cF) \to \pH^i(\cG)$ and $\dot r^i: \pH^i(\cF) \to \pH^{i-1}(\cG)[1]$.  Let $\cK$ denote the cone of $f$.  Its cohomology sheaves are described by
\begin{equation}\label{eqn:aff-cone-formula}
\pH^i(\cK) \cong \cok p^i \oplus \ker p^{i+1}
\qquad\text{with $\Fr$ acting by} \qquad
\begin{bmatrix}
1 & \bar r^{i+1} \\
 & 1
 \end{bmatrix},
\end{equation}
where $\bar r^{i+1}$ denotes the composition $\xymatrix{\ker p^{i+1} \ar[r]^{r^{i+1}} & \pH^i(\cG) \ar[r] & \cok p^i}$.  In particular, $\cK$ is miscible if and only if $r^{i+1}(\ker p^{i+1}) \subset \im p^{i}$ for all $i$.
\end{lem}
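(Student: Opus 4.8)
The plan is to put $f$ into the stated normal form, compute $\cK=\cone(f)$ one perverse degree at a time by Gaussian elimination, and read off the Frobenius action from Lemma~\ref{lem:aff-ext1}. For the normal form: since $\cF$ and $\cG$ are miscible and baric-pure of weight $0$, Lemma~\ref{lem:affine-basic}\eqref{it:aff-hered},\eqref{it:misc-semis} provide decompositions $\cF\cong\bigoplus_i\pH^i(\cF)[-i]$ and $\cG\cong\bigoplus_j\pH^j(\cG)[-j]$ in which every $\pH^i(\cF)$ and $\pH^j(\cG)$ is a semisimple object of $\Pervsm(\bA^m)$ that is pure of weight $0$. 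Then $\Hom(\cF,\cG)=\bigoplus_{i,j}\Hom(\pH^i(\cF),\pH^j(\cG)[i-j])$, and I would compute each summand from \eqref{eqn:aff-even-dsw}: since $\Pervsm(\bA^m)$ is Koszul (Theorem~\ref{thm:bgs-aff-even}\eqref{it:bgs-koszul}) and all the cohomology sheaves are pure of weight $0$, $\Hom_{\dsm}(\pH^i(\cF),\pH^j(\cG)[k])=\Ext^k_{\Pervsm}(\pH^i(\cF),\pH^j(\cG))$ vanishes unless $k=0$. Hence only $j=i$ and $j=i-1$ contribute; the former gives a genuine $p^i\colon\pH^i(\cF)\to\pH^i(\cG)$, the latter gives $\dot r^i$, the image under the isomorphism of Lemma~\ref{lem:dotmap} of some $r^i\in\Hom_{\Pervsm}(\pH^i(\cF),\pH^{i-1}(\cG))$. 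This yields the asserted expansion of $f$.

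\emph{The cone.} The $\dot r$-terms contribute nothing to $\pH^\bullet(f)$, so $\pH^i(f)=p^i$, and the perverse-cohomology long exact sequence produces short exact sequences $0\to\cok p^i\to\pH^i(\cK)\to\ker p^{i+1}\to0$ with all terms pure of weight $0$; by Lemma~\ref{lem:affine-basic}\eqref{it:aff-hered}, $\cK\cong\bigoplus_i\pH^i(\cK)[-i]$. To pin down each $\pH^i(\cK)$ as an object of $\Pervsw(\bA^m)$, I would first observe, via the octahedral axiom applied to a summand inclusion $\cF'\hookrightarrow\cF$ and a summand projection $\cG\twoheadrightarrow\cG'$, that $\pH^i(\cK)$ is unchanged if $\cF$ is replaced by $\pH^i(\cF)[-i]\oplus\pH^{i+1}(\cF)[-(i+1)]$ and $\cG$ by $\pH^i(\cG)[-i]\oplus\pH^{i+1}(\cG)[-(i+1)]$; after a shift one may take $i=0$. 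Now choose decompositions $\pH^0(\cF)=\ker p^0\oplus V_0$ and $\pH^0(\cG)=\cok p^0\oplus\im p^0$, and similarly in degree $1$, making $p^0,p^1$ block-diagonal with an isomorphism onto the image. Performing Gaussian elimination on $f$ to cancel these two isomorphisms — in each cancellation the relevant off-diagonal composite vanishes, so nothing else is modified — reduces $f$ to $\bigl[\begin{smallmatrix}0&a\\0&0\end{smallmatrix}\bigr]\colon\ker p^0\oplus\ker p^1[-1]\to\cok p^0\oplus\cok p^1[-1]$, where $a$ is the component of $\dot r^1$ from $\ker p^1[-1]$ to $\cok p^0$. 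By the composition formulas of Lemma~\ref{lem:dotmap}, $a=\dot{\bar r}^1[-1]$ with $\bar r^1$ the composite in the statement. Hence $\cK\cong\ker p^0[1]\oplus\cone(\dot{\bar r}^1[-1])\oplus\cok p^1[-1]$, and Lemma~\ref{lem:aff-ext1} identifies $\cone(\dot{\bar r}^1[-1])$ with $\cok p^0\oplus\ker p^1$ carrying the Frobenius $\bigl[\begin{smallmatrix}1&\bar r^1\\&1\end{smallmatrix}\bigr]$. Taking perverse cohomology gives \eqref{eqn:aff-cone-formula} for all $i$.

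\emph{Miscibility.} By Lemma~\ref{lem:affine-basic}\eqref{it:misc-semis}, $\cK$ is miscible iff each $\pH^i(\cK)$ is semisimple. Being pure of weight $0$ and presented as an extension of $\ker p^{i+1}$ by $\cok p^i$ with class corresponding to $\bar r^{i+1}$, it is semisimple exactly when $\bar r^{i+1}=0$, i.e. when $r^{i+1}(\ker p^{i+1})\subseteq\ker\bigl(\pH^i(\cG)\twoheadrightarrow\cok p^i\bigr)=\im p^i$.

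\emph{Expected main obstacle.} The delicate point is the Gaussian-elimination step: keeping the shifts straight and verifying that the single surviving off-diagonal term is precisely $\bar r^{i+1}$ rather than some other component of $\dot r^{i+1}$ — this is exactly where the composition formulas of Lemma~\ref{lem:dotmap} are needed — so that Lemma~\ref{lem:aff-ext1} can be quoted verbatim. Everything else is the routine homological-algebra bookkeeping of mapping cones and long exact sequences.
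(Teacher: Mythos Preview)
Your proof is correct and reaches the same endpoint (Lemma~\ref{lem:aff-ext1}) as the paper, but by a genuinely different and more economical route. The paper argues by induction on the number of nonzero cohomology sheaves of $\cK$: it builds a sequence of $9$-lemma diagrams (as in~\cite[Proposition~1.1.11]{bbd}) to pass from $\cK$ to $\tau_{\le k}\cK$ and $\tau_{>k}\cK$, eventually reducing to the case where $\cK$ is concentrated in a single degree, and then two further $9$-diagrams strip the problem down to $\cF = \pH^0(\cF)\oplus\pH^1(\cF)[-1]$, $\cG = \pH^0(\cG)$ with $p^0$ injective and $p^1=0$, whereupon an octahedral diagram and Lemma~\ref{lem:aff-ext1} finish the job. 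Your argument instead localizes at a single degree immediately via two octahedral diagrams (the key point being that the degree-$j$ summand of $\cF$ maps only into degrees $j$ and $j-1$ of $\cG$, so the relevant long exact sequences give the needed isomorphisms on $\pH^i$), and then handles the resulting two-degree problem by Gaussian elimination rather than by further $9$-diagrams. The observation that the Schur-complement corrections vanish---because the $V_0$-column has no other nonzero entry and the $\im p^1[-1]$-row has no other nonzero entry---is exactly right, and it is what makes your reduction cleaner than the paper's. Both approaches ultimately rely on Lemma~\ref{lem:dotmap} to identify the surviving off-diagonal term as $\dot{\bar r}^{i+1}$; you flag this correctly as the delicate step. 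One small notational point: after the octahedral reduction you write ``$\cK\cong\ldots$'' where you mean the cone of the reduced two-degree map, not the original $\cK$; since you only extract $\pH^0$ of it and then invoke the earlier isomorphism $\pH^i(\cK)\cong\pH^0(\text{reduced cone})$, this is harmless, but worth stating explicitly.
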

\begin{proof}
The fact that $f$ can be written as a sum~\eqref{eqn:aff-cone} follows from Lemma~\ref{lem:affine-basic}.  Note that the map $\pH^i(\cF) \to \pH^i(\cG)$ induced by $f$ is none other than $p^i$.  In particular, $f$ is an isomorphism if and only if each $p^i$ is an isomorphism.  Thus, in the special case where $f$ is an isomorphism, its cone $\cK = 0$ is indeed described by~\eqref{eqn:aff-cone-formula}.

Suppose henceforth that $\cK \ne 0$, and let $k$ be the smallest integer such that $\pH^k(\cK) \ne 0$.  Fix an isomorphism $\cF \cong \bigoplus \pH^i(\cF)[-i]$, and define two new objects as follows:
\begin{align*}
\cF' &= \bigoplus_{i \le k} \pH^i(\cF)[-i] \oplus (\ker p^{k+1})[-k-1], \\
\cF'' &= \bigoplus_{i > k+1} \pH^i(\cF)[-i] \oplus (\im p^{k+1})[-k-1].
\end{align*}
For each $i$, there is an obvious short exact sequence
\[
0 \to \pH^i(\cF') \to \pH^i(\cF) \to \pH^i(\cF'') \to 0.
\]
By putting in appropriate shifts and taking the direct sum over all $i$, we obtain a split distinguished triangle
\begin{equation}\label{eqn:aff-cone-ind}
\cF' \to \cF \to \cF'' \to.
\end{equation}
Note that $\cF'$ and $\cF''$ are both miscible by construction.  Next, we define maps $f': \cF' \to \tau_{\le k}\cG$ and $f'': \cF'' \to \tau_{> k}\cG$ by
\[
f' = \sum_{i\le k} (p^i[-i] + \dot r^i[-i]) + \dot r^{k+1},
\qquad
f'' = \sum_{i > k+1} (p^i[-i] + \dot r^i[-i]) + \bar p^{k+1},
\]
where $\bar p^{k+1}: \im p^{k+1} \to \pH^{k+1}(\cG)$ is the inclusion map.
These definitions make the two leftmost squares below commute.
\[
\xymatrix{
\cF' \ar[r] \ar[d]_{f'} & \cF\ar[r] \ar[d]_f & \cF'' \ar[r]\ar[d]_{f''} & \cF'[1] \ar[d]^{f'[1]}\\
\tau_{\le k}\cG \ar[r] & \cG \ar[r] & \tau_{> k}\cG \ar[r] & (\tau_{\le k}\cG)[1] }
\]
The rightmost square commutes as well: the map $\tau_{> k}\cG \to (\tau_{\le k}\cG)[1]$ vanishes by Lemma~\ref{lem:affine-basic}\eqref{it:aff-hered}, and~\eqref{eqn:aff-cone-ind} splits by construction.  Thus, this is a morphism of distinguished triangles.  By the $9$-lemma~\cite[Proposition~1.1.11]{bbd}, we can extend this to a diagram in which all rows and columns are distinguished triangles:
\begin{equation}\label{eqn:aff-cone-9}
\vcenter{\xymatrix{
\cF' \ar[r] \ar[d]_{f'} & \cF\ar[r] \ar[d]_f & \cF'' \ar[r]\ar[d]_{f''} & \\
\tau_{\le k}\cG \ar[r] \ar[d] & \cG \ar[r]\ar[d] & \tau_{> k}\cG \ar[r]\ar[d] & \\
\cK' \ar[r] \ar[d] & \cK\ar[r] \ar[d] & \cK'' \ar[r]\ar[d] & \\
&&&}}
\end{equation}
From the known cohomology vanishing conditions on the first two rows, it is obvious that $\pH^i(\cK') = 0$ for $i > k$, and that $\pH^i(\cK'') = 0$ for $i < k$.  In fact, we also have $\pH^k(\cK'') = 0$, since the map
\begin{equation}\label{eqn:aff-cone-pkinj}
\bar p^{k+1}: \pH^{k+1}(\cF'') \to \pH^{k+1}(\tau_{>k}\cG) \cong \pH^{k+1}(\cG)
\end{equation}
is injective.  Therefore, we have canonical isomorphisms
\[
\cK' \cong \tau_{\le k}\cK
\qquad\text{and}\qquad
\cK'' \cong \tau_{> k}\cK.
\]
If we already knew that the cohomology sheaves $\pH^i(\cK')$ and $\pH^i(\cK'')$ could be described by~\eqref{eqn:aff-cone-formula} in terms of $f'$ and $f''$, then the result would follow for $\cK$.  Note that $\cK'$ and $\cK''$ each have fewer nonzero cohomology sheaves than $\cK$.  Therefore, by induction, it suffices to prove~\eqref{eqn:aff-cone-formula} in the special case where $\cK$ has nonzero cohomology in a single degree.  We may further assume, without loss of generality, that $\cK$ is in fact concentrated in degree $0$.

With this assumption in place, we may still construct the diagram~\eqref{eqn:aff-cone-9}, taking $k = 0$.  In this case, we have $\tau_{>0}\cK = 0$, so $f''$ is an isomorphism.  That diagram tells us that to prove the result for the middle column, it suffices to prove it for the first column.  In other words, by replacing $f: \cF \to \cG$ by $f': \cF' \to \tau_{\le 0}\cG$, we may henceforth assume that
\[
\text{$\pH^i(\cF) = 0$ for $i > 1$, \qquad and \qquad $\pH^i(\cG) = 0$ for $i > 0$.}
\]
Under these conditions, let us form yet another copy of~\eqref{eqn:aff-cone-9}, this time with $k = -1$.  Now we have $\tau_{\le -1}\cK = 0$, so to prove the result, it suffices to consider the third column.  Making another replacement, we have reduced the problem to following situation:
\[
\cF \cong \pH^0(\cF) \oplus \pH^1(\cF)[-1],
\qquad
\cG \cong \pH^0(\cG),
\qquad
f = p^0 + \dot r^1.
\]
For brevity, let us put $\cF^i = \pH^i(\cF)$ for $i = 0,1$.  In particular, we have $p^1 = 0$, so $\cF^1 = \ker p^1$.  We further know that $p^0 = \bar p^0: \cF^0 \to \cG$ is injective, cf.~\eqref{eqn:aff-cone-pkinj}.  Form the octahedral diagram associated with the composition $\cF^0 \to \cF^0 \oplus \cF^1[-1] \ovto{f} \cG$:
\[
\xymatrix@=10pt{
&&&& \cF^0 \oplus \cF^1[-1] \ar[ddddl]^(.3)f \ar[ddr] \\ \\
&&&&& \cF^1[-1] \ar[dlllll]|!{[uul];[ddll]}\hole_(.7){+1}
  \ar[ddddl]|!{[ddll];[drrr]}\hole \\
\cF^0 \ar[uuurrrr]\ar[drrr] &&&&&&&& \cK \ar[uuullll]_{+1}
\ar[ulll]^{+1} \\
&&& \cG \ar[urrrrr]\ar[ddr] \\ \\
&&&& \cok p^0 \ar[uuullll]^{+1}\ar[uuurrrr] }
\]
Applying Lemma~\ref{lem:aff-ext1} to the distinguished triangle $\cF^1[-1] \to \cok p^0 \to \cK \to$, we obtain the desired result.
\end{proof}

\begin{lem}\label{lem:misc-morphism}
Suppose $\cF, \cG \in \dsw(\bA^m)$ are miscible baric-pure objects of weight $0$.  If $f: \cF \to \cG$ is a morphism whose cone is miscible, then $f$ is miscible.
\end{lem}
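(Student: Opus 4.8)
The plan is to reduce the statement to the completely explicit description of morphisms furnished by Lemma~\ref{lem:aff-cone}, and then to kill the infinitesimal part of $f$ by an explicit change of coordinates. First, since $\cF$ and $\cG$ are miscible and baric-pure of weight~$0$, Lemma~\ref{lem:affine-basic}\eqref{it:aff-hered} and \eqref{it:misc-semis} let us assume $\cF=\bigoplus_i\cF^i[-i]$ and $\cG=\bigoplus_j\cG^j[-j]$, where $\cF^i=\pH^i(\cF)$ and $\cG^j=\pH^j(\cG)$ are pure semisimple perverse sheaves of weight~$0$, i.e.\ objects of $\Pure(\bA^m)\subset\dsm(\bA^m)=\Kb\Pure(\bA^m)$. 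By Lemma~\ref{lem:aff-cone} (and Lemma~\ref{lem:affine-basic}\eqref{it:aff-cohom}) we may write $f=f_0+f'$, where $f_0=\sum_i p^i[-i]$ is a chain map, hence a genuine morphism, and $f'=\sum_i\dot r^i[-i]$, for morphisms $p^i\colon\cF^i\to\cG^i$ and $r^i\colon\cF^i\to\cG^{i-1}$ in $\Pure(\bA^m)$. A short computation of homotopy classes shows that $\Hom_{\dsw(\bA^m)}(\cF^i[-i],\cG^{i-1}[-i+1])$ is purely infinitesimal: its genuine summand is $\Hom_{\Kb\Pure}(\cF^i,\cG^{i-1}[1])=0$, while its infinitesimal summand is $\Hom_{\Kb\Pure}(\cF^i,\cG^{i-1})=\Hom_{\Pure}(\cF^i,\cG^{i-1})$. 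Thus $f'$ is precisely the infinitesimal part of $f$, and the hypothesis that $\cone(f)$ is miscible says, by Lemma~\ref{lem:aff-cone}, that $r^i(\ker p^i)\subseteq\im p^{i-1}$ for all $i$.

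The second step is to conjugate $f$, by automorphisms of $\cF$ and $\cG$ of the form $\id+(\text{infinitesimal})$, into the genuine morphism $\incl(f_0)$. The infinitesimal endomorphisms of $\cF$ form $\Hom_{\dsm}(\cF,\cF[-1])=\bigoplus_i\Hom_{\Pure}(\cF^i,\cF^{i-1})$, and for each such $\nu$ the map $\id+\nu$ is invertible (Lemma~\ref{lem:infext-basic}). Writing $\nu_\cF\leftrightarrow(a_\cF^i\colon\cF^i\to\cF^{i-1})$ and $\nu_\cG\leftrightarrow(a_\cG^i\colon\cG^i\to\cG^{i-1})$, a direct calculation with the composition rule~\eqref{eqn:pseudotri-comp} (all products of two infinitesimals vanishing) shows that $(\id+\nu_\cG)\circ f\circ(\id+\nu_\cF)^{-1}$ again has genuine part $f_0$ and has infinitesimal part corresponding componentwise to $r^i+a_\cG^i\circ p^i-p^{i-1}\circ a_\cF^i$. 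Hence it is enough to solve, for each $i$ separately, the equation $r^i=p^{i-1}\circ a_\cF^i-a_\cG^i\circ p^i$ in $\Pervsm(\bA^m)$.

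For the final step, fix $i$ and pick splittings $\cF^i\cong\ker p^i\oplus\cF^i_1$, $\cG^i\cong\im p^i\oplus\cG^i_1$, $\cF^{i-1}\cong\ker p^{i-1}\oplus\cF^{i-1}_1$, $\cG^{i-1}\cong\im p^{i-1}\oplus\cG^{i-1}_1$ using semisimplicity of $\Pervsm(\bA^m)$, so that $p^i$ and $p^{i-1}$ restrict to isomorphisms $\cF^i_1\xrightarrow{\sim}\im p^i$ and $\cF^{i-1}_1\xrightarrow{\sim}\im p^{i-1}$. A matrix computation then identifies $\{\,p^{i-1}\circ a_\cF^i-a_\cG^i\circ p^i\,\}$, as $a_\cF^i,a_\cG^i$ vary, with the set of all morphisms $\cF^i\to\cG^{i-1}$ whose restriction to $\ker p^i$ is valued in $\im p^{i-1}$; by the constraint above, $r^i$ belongs to this set, so the equation is solvable. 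With $\nu_\cF,\nu_\cG$ so chosen we obtain $(\id+\nu_\cG)\circ f\circ(\id+\nu_\cF)^{-1}=\incl(f_0)$, and the square
\[
\xymatrix{
\incl\cF \ar[r]^{\incl(f_0)} \ar[d]_{(\id+\nu_\cF)^{-1}}^{\wr} & \incl\cG \ar[d]^{(\id+\nu_\cG)^{-1}}_{\wr} \\
\cF \ar[r]_{f} & \cG }
\]
exhibits $f$ as miscible. The main obstacle is not any isolated deep step but the coordination of two bookkeeping points: verifying that the $\dot r^i$ terms of Lemma~\ref{lem:aff-cone} really sit in the infinitesimal summand (which turns on the shift gymnastics in the homotopy-category $\Hom$-computation), and matching the miscibility constraint on $\cone(f)$ exactly with the solvability of the equations $r^i=p^{i-1}a_\cF^i-a_\cG^i p^i$ --- this last point is where the genuine content of the lemma resides, being a somewhat delicate piece of linear algebra inside the semisimple category $\Pervsm(\bA^m)$.
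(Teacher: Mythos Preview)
Your proof is correct and follows essentially the same approach as the paper: both decompose $f$ into its genuine part $f_0=\sum p^i[-i]$ and infinitesimal part $\sum\dot r^i[-i]$, invoke Lemma~\ref{lem:aff-cone} to obtain the constraint $r^i(\ker p^i)\subset\im p^{i-1}$, and then construct automorphisms of the form $\id+(\text{infinitesimal})$ that conjugate $f$ to $f_0$. The only difference is presentational---the paper directly writes down maps $u^i,v^i$ solving $r^i=p^{i-1}u^i+v^i p^i$, whereas you first characterize the full image of $(a_\cF^i,a_\cG^i)\mapsto p^{i-1}a_\cF^i-a_\cG^i p^i$ and observe that $r^i$ lies in it; these are equivalent.
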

\begin{proof}
Let $p^i$ and $r^i$ be as in~\eqref{eqn:aff-cone}, and let $f' = \sum_i p^i[-i]$.  We will construct a commutative diagram
\[
\xymatrix{
\cF \ar[r]^f \ar[d]_\phi^\wr &
  \cG \ar[d]^\psi_\wr \\
\cF \ar[r]^{f'} &
  \cG}
\]
where $\phi$ and $\psi$ are isomorphisms.  Since $f'$ is obviously miscible, $f$ will be as well.  By Lemma~\ref{lem:aff-cone}, we know that $r^i(\ker p^i) \subset \im p^{i-1}$ for all $i$.  For each $i$, choose a complement $U^i \subset \pH^i(\cF)$ to $\ker p^i$.  In other words, we have $\pH^i(\cF) \cong \ker p^i \oplus U^i$.  Then, let $u^i: \pH^i(\cF) \to \pH^{i-1}(\cF)$ be the map such that
\[
u^i(U^i) = 0,
\qquad
u^i(\ker p^i) \subset U^{i-1},
\qquad
(p^{i-1} \circ u^i)|_{\ker p^i} = r^i|_{\ker p^i}.
\]
Since $p^{i-1}$ induces an isomorphism $U^{i-1} \cong \im p^{i-1}$ and $r^i(\ker p^i) \subset \im p^{i-1}$, there is a unique map $u^i$ satisfying the conditions above.  Next, let $v^i: \pH^i(\cG) \to \pH^{i-1}(\cG)$ be a map such that
\[
(v^i \circ p^i)|_{U^i} = r^i|_{U^i}.
\]
Such a map certainly exists since $p^i$ induces an isomorphism $U^i \cong \im p^i$, although it is not uniquely determined.  Note that we have an equality
\[
r^i = p^{i-1} \circ u^i + v^i \circ p^i: \pH^i(\cF) \to \pH^{i-1}(\cG)
\]
and therefore, using the formulas in Lemma~\ref{lem:dotmap}, we have
\[
\dot r^i = p^{i-1}[1] \circ \dot u^i + \dot v^i \circ p^i: \pH^i(\cF) \to \pH^{i-1}(\cG)[1].
\]
Define $\phi: \cF \to \cF$  and $\psi: \cG \to \cG$ by
\[
\phi = \sum_i \id_{\pH^i(\cF)}[-i] + \dot u^i[-i],
\qquad
\psi = \sum_i \id_{\pH^i(\cG)}[-i] - \dot v^i[-i].
\]
It is now easy to see that
\[
f' \circ \phi = \sum_i (p^i + p^{i-1} \circ \dot u^i)[-i]
= \sum_i (p^i + \dot r^i - \dot v^i \circ p^i)[-i] = \psi \circ f.
\]
Finally, $\dot u^i$ and $\dot v^i$ are infinitesimal, so by Lemma~\ref{lem:infext-basic}, $\phi$ and $\psi$ are themselves isomorphisms, as desired.
\end{proof}

\section{Miscibility and genuineness results}
\label{sect:genuine}

We have now defined a number of properties that a functor between categories of constructible complexes on varieties over $\F_q$ may have.  In order from strongest to weakest, they are:
\[
\text{genuine} \quad\Longrightarrow\quad
\text{miscible} \quad\Longrightarrow\quad
\text{geometric} \quad\Longrightarrow\quad
\text{preserves the Weil category.}
\]
This section contains the main results of the paper, which state that various functors (including smooth pull-backs, open and closed inclusions, and smooth proper push-forwards) are genuine.  For a few more functors (including tensor products and arbitrary proper push-forwards), we prove miscibility.  Under an additional hypothesis, we will show that arbitrary proper push-forwards are genuine in Section~\ref{sect:tilting}.

We will generally not comment on the property of preserving the Weil category in the proofs below, as this has already been checked for most functors in Section~\ref{sect:mixedweil} (two exceptions occur in Proposition~\ref{prop:boxtimes} and Corollary~\ref{cor:proper-strat-weil}).  We will likewise remain silent about the property of being geometric, since this is essentially automatic for the usual sheaf operations.

Along the way, we also prove that Weil and miscible distinguished triangles coincide, as promised in Section~\ref{ss:aff-infext}, and we establish a pointwise criterion for semisimplicity (Proposition~\ref{prop:sterile-miscible}) that may be useful in other contexts as well.

\subsection{Basic results on genuineness}

The following proposition is the main tool we will use to apply results from Section~\ref{sect:hot-orlov} in the sheaf-theoretic setting.

\begin{prop}\label{prop:homog-genuine}
Let $\cS$ be an affable stratification of $X$, and $\cT$ an affable stratification of $Y$.  Let $F: \dsw(X) \to \dw_\cT(Y)$ be a geometric functor, and let $\scA \subset \dsmisc(X)$ and $\scB \subset \dmisc_\cT(Y)$ each be one of the Orlov categories of Propositions~\ref{prop:aff-even-orlov} or~\ref{prop:affable-orlov}.  
\begin{enumerate}
\item If $F(\scA) \subset \scB$, then $F$ is a miscible functor.  Furthermore, $F$ takes miscible morphisms to miscible morphisms.\label{it:homog-misc}
\item If, in addition, the restriction $F|_{\scA}: \scA \to \scB$ is homogeneous, then $F$ is genuine.\label{it:homog-gen}
\end{enumerate}
\end{prop}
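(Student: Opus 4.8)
The plan is to reduce everything to the combinatorics of Orlov categories from Section~\ref{sect:hot-orlov}, applied to the composite $F_0 := F \circ \incl \colon \dsm(X) \to \dw_\cT(Y)$. Using Propositions~\ref{prop:aff-even-orlov} and~\ref{prop:affable-orlov} we identify $\dsm(X) \cong \Kb(\scA)$ and $\dm_\cT(Y) \cong \Kb(\scB)$ compatibly with the given inclusions $\scA \subset \dsw(X)$, $\scB \subset \dw_\cT(Y)$, so that $F_0|_\scA = F|_\scA \colon \scA \to \scB$; by Theorem~\ref{thm:misc-infext} we also identify $\dsmisc(X) \cong \pse\dsm(X)$ and $\dmisc_\cT(Y) \cong \pse\dm_\cT(Y)$. (By Lemma~\ref{lem:affable-genuine} it would suffice to treat the affine even case, but the argument works verbatim for affable stratifications with $\scA = \Pure$.)

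First I would show that $F$ is miscible. Any object of $\dsmisc(X)$ is isomorphic to $\incl(\hat\cF)$ for some $\hat\cF \in \Kb(\scA)$; realizing $\hat\cF$ as a bounded complex over $\scA$ and forming its stupid-truncation Postnikov tower, $\hat\cF$ is an iterated extension of objects of $\scA$ placed in various cohomological degrees. Since $F_0$ is triangulated and $F_0(\scA) = F(\scA) \subset \scB$, the object $F(\incl\hat\cF) = F_0(\hat\cF)$ is an iterated extension, in $\dw_\cT(Y)$, of objects of $\scB$ placed in various cohomological degrees. The crucial observation is that for $\cB, \cB' \in \scB$ one has $\Hom_{\dw_\cT(Y)}(\cB, \cB'[-1]) = 0$: by~\eqref{eqn:aff-even-dsw} this group is $\Hom_{\Kb(\scB)}(\cB, \cB'[-1]) \oplus \Hom_{\Kb(\scB)}(\cB, \cB'[-2])$, and both summands vanish because $\cB, \cB'$ are complexes concentrated in degree $0$. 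A downward induction then shows that any such iterated extension lies in the essential image of $\incl \colon \dm_\cT(Y) \to \dw_\cT(Y)$: at each stage the connecting morphism has no infinitesimal component by the vanishing just noted, so its cocone is again $\incl$ of an object of $\Kb(\scB)$. Hence $F(\dsmisc(X)) \subset \dmisc_\cT(Y)$.

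Next comes the heart of the matter, which I expect to be the main obstacle: showing that $F$ sends miscible distinguished triangles to miscible distinguished triangles. By Definition~\ref{defn:pseudotri}, together with the fact — via~\eqref{eqn:dsw-ses-comm} and Lemma~\ref{lem:aff-even-semis} — that a miscible functor automatically commutes with $\iinf \circ \pg$, this is equivalent to $F_0$ being a pseudotriangulated functor $\Kb(\scA) \to \pse\Kb(\scB)$. A miscible distinguished triangle is $\incl$ of a distinguished triangle $\Delta$ in $\Kb(\scA)$; by Lemma~\ref{lem:chain-ses} we may take $\Delta$ to be a termwise-split triangle of bounded complexes over $\scA$, apply $F_0$, and run the same devissage over $\scB$-pieces as above — again invoking the collapse of off-diagonal $\Hom$'s between objects of $\scB$ in $\dw_\cT(Y)$ — to recognize $F_0(\incl\Delta)$ as $\incl$ of a distinguished triangle in $\Kb(\scB)$. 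Concretely, one shows $F_0$ carries the Postnikov tower of a complex over $\scA$ to a tower of $\scB$-pieces that reassembles, by the $\Hom$-vanishing, into a complex over $\scB$. Granting this, Lemma~\ref{lem:misc-pt-func} completes part~(1): $F$ restricts to a pseudotriangulated functor $\dsmisc(X) \to \dmisc_\cT(Y)$, hence sends miscible morphisms to miscible morphisms. The delicate part is the bookkeeping of supports and of the termwise-split structure under application of $F_0$, so as to guarantee that every perturbation that appears is genuine.

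Finally, for part~(2) I would invoke the rigidity of Section~\ref{sect:hot-orlov}. With the extra hypothesis that $F|_\scA \colon \scA \to \scB$ is homogeneous, $F_0 \colon \Kb(\scA) \to \pse\Kb(\scB)$ is a pseudotriangulated functor (by part~(1)) whose restriction to $\scA$ lands in $\scB$ and is homogeneous, so Theorem~\ref{thm:orlov-inf} — in the form of Remark~\ref{rmk:orlov-inf}, taking $F' = \incl \circ \Kb(F|_\scA)$ — yields $F_0 \cong \incl \circ \Kb(F|_\scA)$; equivalently, this is Theorem~\ref{thm:orlov-genuine}. Setting $\tilde F := \Kb(F|_\scA) \colon \dsm(X) = \Kb(\scA) \to \Kb(\scB) = \dm_\cT(Y)$, a functor of triangulated categories, we obtain $\incl \circ \tilde F \cong F_0 = F \circ \incl$, which is precisely the statement that $F$ is genuine, with $\tilde F$ the induced functor (cf.\ Definition~\ref{defn:infext-induc}).
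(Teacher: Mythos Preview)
Your overall strategy matches the paper's: reduce part~(2) to part~(1) via Theorem~\ref{thm:orlov-genuine}, and prove part~(1) by a d\'evissage over the stupid filtration of complexes in $\Kb(\scA)$, using the vanishing $\Hom_{\dw_\cT(Y)}(\cB,\cB'[-1])=0$ for $\cB,\cB'\in\scB$. Your part~(2) is exactly what the paper does.

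The one genuine gap is in your treatment of morphisms in part~(1). You propose to first show $F$ preserves miscible objects, and then separately to show it preserves miscible triangles by ``running the same d\'evissage.'' But the object argument only produces, for each $\cF$, \emph{some} isomorphism $F_0(\cF)\cong\incl(\cG_\cF)$ in $\dw_\cT(Y)$; there is no reason these isomorphisms should be natural in $\cF$, so you cannot conclude that $F_0(f)$ becomes genuine after conjugation by them. The paper avoids this by running a \emph{single simultaneous} induction on both objects and morphisms: given $f:\cF\to\cG$ in $\Kb(\scA)$ with terms in an interval $I=\{j,\dots,k\}$, one splits off the degree-$k$ piece to obtain a morphism of triangles
\[
\xymatrix@R=12pt{
\cF^k[-k]\ar[r]\ar[d]^{f^k[-k]} & \cF\ar[r]\ar[d]^{f} & \cF'\ar[r]\ar[d]^{f'} & \cF^k[-k+1]\ar[d]\\
\cG^k[-k]\ar[r] & \cG\ar[r] & \cG'\ar[r] & \cG^k[-k+1]
}
\]
and applies the inductive hypothesis to the rightmost square (where all complexes live in $I\smallsetminus\{k\}$). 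The point you flag as ``delicate'' is then this: once that square is miscible and the two rows are miscible triangles, \emph{any} completion $F_0(\cF)\to F_0(\cG)$ is forced, because the ambiguity lies in $\Hom_{\pse\Kb(\scB)}(\hat\cF',\hat\cG^k[-k])$, and this group vanishes since $\hat\cF'$ (built inductively) has terms only in degrees~$<k$ while $\hat\cG^k[-k]$ sits in degree~$k$. This uniqueness is what pins down $F(\incl f)$ as the miscible completion. Your $\Hom$-vanishing observation is exactly the right ingredient, but it must be applied to the \emph{morphism of triangles}, not to a single Postnikov tower, and the support bookkeeping has to be carried through the induction on morphisms, not done beforehand for objects alone.
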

\begin{proof}
By Lemma~\ref{lem:misc-pt-func}, the first assertion implies that $F|_{\dsmisc(X)}: \dsmisc(X) \to \dmisc_\cT(Y)$ is a pseudotriangulated functor, so the second assertion follows from it by Theorem~\ref{thm:orlov-genuine}, using Theorem~\ref{thm:misc-infext} and Proposition~\ref{prop:aff-even-orlov}.  Thus, it suffices to prove the first assertion. We will prove simultaneously that $F$ takes miscible objects to miscible objects and miscible morphisms to miscible morphisms.  Let $f: \cF \to \cG$ be a morphism in $\dsm(X) \cong \Kb(\scA)$.  Write these objects as chain complexes: $\cF = (\cF^\bullet, d)$ and $\cG = (\cG^\bullet, d)$.  Let $I = \{j, j+1, \ldots, k\} \subset \Z$ be the smallest interval in $\Z$ such that $\cF^i = \cG^i = 0$ for $i \notin I$.  We proceed by induction on the size of $I$.  With $k$ denoting the largest element of $I$, then there is an obvious distinguished triangle
\[
\cF^k[-k] \to \cF \to \cF' \to
\]
in $\Kb(\scA)$, where $\cF'$ is the complex obtained from $\cF$ by replacing its $k$th term by $0$.  We can form the analogous triangle $\cG^k[-k] \to \cG \to \cG' \to$ for $\cG$.  It is clear that $\Hom(\cF^k[-k],\cG') = 0$, so the composition $\cF^k[-k] \to \cF \ovto{f} \cG$ factors through $\cG^k[-k]$, and we obtain a morphism of triangles
\[
\xymatrix{
\cF^k[-k] \ar[d]_{f^k[-k]} \ar[r] &
  \cF \ar[d]_f \ar[r] & \cF' \ar[d]_{f'} \ar[r] & \cF^k[-k+1] \ar[d]^{f^k[-k+1]} \\
\cG^k[-k] \ar[r] & \cG \ar[r] & \cG' \ar[r] & \cG^k[-k+1]}
\]
for some morphisms $f^k, f'$.  All the objects in the rightmost commutative square are chain complexes whose nonzero terms appear only in degrees $i \in I \smallsetminus \{k\}$.  Therefore, by induction, all objects and morphisms in the square
\[
\xymatrix{
F(\incl \cF') \ar[d]\ar[r] & F(\incl \cF^k[-k+1]) \ar[d] \\
F(\incl \cG') \ar[r] & F(\incl \cG^k[k+1]) }
\]
are miscible.  Since the morphism $F(\incl f): F(\incl \cF) \to F(\incl \cG)$ arises by completing this square to a morphism of distinguished triangles, it follows that $F(\incl\cF)$, $F(\incl\cG)$, and $F(\incl f)$ are all miscible.
\end{proof}

The next three results are straightfoward applications of the preceding proposition.

\begin{prop}\label{prop:smooth-pullback}
Suppose $X$ and $Y$ are endowed with affable stratifications $\cS$ and $\cT$, and let $f: X \to Y$ be a weakly stratified morphism.  If $f$ is smooth, then the functors $f^*, f^!: \dw_\cT(Y) \to \dsw(X)$ are genuine.
\end{prop}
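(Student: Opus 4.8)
The plan is to verify the hypotheses of Proposition~\ref{prop:homog-genuine} for $f^*$ (up to a harmless shift and twist), and then deduce the case of $f^!$ formally. Since $f$ is smooth, say of relative dimension $d$ (which we may assume constant, passing to connected components if necessary), we have $f^!\cong f^*[2d]\la -2d\ra$; as $[1]$ and $\la 1\ra$ are visibly genuine autoequivalences and genuineness is stable under composition, it suffices to treat $f^*$. We already know from Proposition~\ref{prop:weil-func2} that $f^*$ preserves the Weil category, and it is geometric in the sense of Definition~\ref{defn:geometric}, being a standard sheaf operation. So, applying Proposition~\ref{prop:homog-genuine} with the roles of the two varieties interchanged and taking for the relevant Orlov categories $\Pure_\cT(Y)\subset\dmisc_\cT(Y)$ and $\Pure_\cS(X)\subset\dsmisc(X)$ (Proposition~\ref{prop:affable-orlov}), it is enough to show that $f^*$ maps $\Pure_\cT(Y)$ into $\Pure_\cS(X)$ and that some shift--twist of $f^*$ restricts to a \emph{homogeneous} functor between these categories.

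The heart of the argument is a computation of $f^*\ICm_t$ for each stratum $Y_t\subset Y$. Because $f$ is weakly stratified and smooth, $f^{-1}(Y_t)$ is a smooth union of strata whose top-dimensional members $X_{s_a}$ have dimension $\dim Y_t+d$, and $f^{-1}(\overline{Y_t})=\overline{f^{-1}(Y_t)}$ since $f$ is open. Using $f^*\uQlb\cong\uQlb$ together with the facts that (suitably shifted) smooth pull-back is perverse $t$-exact and commutes with intermediate extension \cite{bbd}, one obtains
\[
f^*\ICm_t\;\cong\;\bigoplus_a \ICm_{s_a}[-d]\la d\ra .
\]
In particular $f^*\ICm_t$ is pure semisimple of weight $0$, hence lies in $\Pure_\cS(X)$, so $f^*(\Pure_\cT(Y))\subset\Pure_\cS(X)$; by Proposition~\ref{prop:homog-genuine}\eqref{it:homog-misc} this already makes $f^*$ miscible and shows it preserves miscible morphisms. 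With the degree functions coming from Proposition~\ref{prop:kosorlov-tstruc} --- for which $\deg(\ICm_s[a]\la -a\ra)=-a$ --- the displayed formula says that $f^*$ raises the degree of every indecomposable of $\Pure_\cT(Y)$ by exactly $d$. Thus $f^*[d]\la -d\ra$ restricts to a homogeneous functor $\Pure_\cT(Y)\to\Pure_\cS(X)$ (sending $\ICm_t$ to $\bigoplus_a\ICm_{s_a}$), so it is genuine by Proposition~\ref{prop:homog-genuine}\eqref{it:homog-gen}; composing back with $[-d]$ and $\la d\ra$ yields genuineness of $f^*$, and therefore of $f^!\cong f^*[2d]\la -2d\ra$.

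The main obstacle is the displayed formula for $f^*\ICm_t$. It would not suffice to know merely that $f^*\ICm_t$ is pure of weight $0$: by Lemma~\ref{lem:aff-ext1} a pure weight-$0$ object can fail to be semisimple, i.e.\ can fail to lie in $\Pure_\cS(X)$, precisely when the Frobenius action is not semisimple. One really must identify $f^*\ICm_t$ on the nose as a direct sum of shifted Tate twists of the $\ICm_{s_a}$, which is where the compatibility of smooth inverse image with intermediate extensions is essential.
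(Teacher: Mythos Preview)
Your proof is correct and follows essentially the same approach as the paper: both reduce to the normalized functor $f^\sharp = f^*[d]\la -d\ra$, compute $f^\sharp\ICm_t$ as the intermediate extension $h_{!*}\uQlb[n]\la -n\ra$ from $f^{-1}(Y_t)$ (which decomposes as $\bigoplus_a \ICm_{s_a}$), and then invoke Proposition~\ref{prop:homog-genuine}. Your write-up is slightly more explicit about the direct-sum decomposition and about why mere purity would not suffice, but the argument is the same.
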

\begin{proof}
In place of $f^*$ and $f^!$, we will instead consider the functor $f^\sharp = f^*[d]\la -d\ra \cong f^![-d]\la d\ra$, where $d$ denotes the relative dimension of $f$.  This functor is $t$-exact and takes simple perverse sheaves to pure semisimple perverse sheaves of the same weight.  More specifically, if $h: f^{-1}(Y_t) \to X$ is the inclusion map, then $f^\sharp\ICm_t \cong h_{!*}\uQlb[n] \la -n\ra$, where $n = \dim f^{-1}(Y_t)$.  The constant sheaf $\uQlb[n]\la-n\ra$ on the smooth variety $f^{-1}(Y_t)$ is the direct sum of simple perverse sheaves of weight $0$ on the various connected components of $f^{-1}(Y_t)$, so $h_{!*}\uQlb[n]\la-n\ra$ is also a direct sum of simple perverse sheaves of weight $0$.  Thus, $f^\sharp$ gives rise to a homogeneous functor $\Pure(Y) \to \Pure(X)$, so it is genuine by Proposition~\ref{prop:homog-genuine}.
\end{proof}

\begin{prop}\label{prop:verdier}
Let $X$ be a variety with an affable stratification.  The Verdier duality functor $\D : \dsw(X)^\op \to \dsw(X)$ is genuine.
\end{prop}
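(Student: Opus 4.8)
The plan is to obtain Proposition~\ref{prop:verdier} from Theorem~\ref{thm:orlov-genuine}, after reindexing so that the contravariant functor $\D$ becomes a covariant functor out of an opposite category; there is no need to reduce first to the affine even case, since Theorem~\ref{thm:orlov-genuine} uses only the Orlov structure, which $\Pure(X)$ carries for every affable $\cS$ by Proposition~\ref{prop:affable-orlov}. First I would record the preliminaries: $\D$ preserves $\dsw(X)$ by Proposition~\ref{prop:weil-func1}, and it is geometric, the natural transformation $\uRHom(\cF,\cG)\to\uRHom(\D\cF,\D\cG)$ coming from the canonical isomorphism $\uRHom(\D\cF,\D\cG)\cong\uRHom(\cG,\cF)$ together with the commutation of $\D$ with $\exs$. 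Since $\D\ICm_s\cong\ICm_s$ for all $s$ and every object of $\Pure(X)$ is a finite direct sum of copies of the $\ICm_s[k]\la -k\ra$, we get $\D(\Pure(X))=\Pure(X)$; as $\D$ reverses the order of the $*$-operation and negates shifts, it follows that $\D(\dsmisc(X))\subset\dsmisc(X)$, so $\D$ is a miscible functor and only genuineness remains.

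Next I would set up the opposite-category picture. Using the evident identifications $(\pse\scD)^{\op}\cong\pse(\scD^{\op})$ and $\Kb(\scA)^{\op}\cong\Kb(\scA^{\op})$ (the latter identifying the opposite shift with $[1]$), together with the definition $\dsm(X)=\Kb\Pure(X)$ and Theorem~\ref{thm:misc-infext}, one has
\[
\dsmisc(X)^{\op}\;\cong\;\pse\bigl(\dsm(X)^{\op}\bigr)\;\cong\;\pse\Kb\bigl(\Pure(X)^{\op}\bigr).
\]
The category $\Pure(X)^{\op}$, equipped with the degree function $-\deg$, is again an Orlov category: conditions~\eqref{it:orlov-homfin} and~\eqref{it:orlov-endind} of Definition~\ref{defn:orlov} are self-opposite, while~\eqref{it:orlov-deg} for $\Pure(X)^{\op}$ with degree $-\deg$ is exactly~\eqref{it:orlov-deg} for $\Pure(X)$ with the roles of the two objects interchanged. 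Viewed through these identifications, $\D$ becomes a functor $\pse\Kb(\Pure(X)^{\op})\to\pse\Kb(\Pure(X))$ taking $\Pure(X)^{\op}$ into $\Pure(X)$; it is pseudotriangulated because $\D(X[-1])\cong\D(X)[1]$, it sends distinguished triangles to distinguished triangles, and it commutes with $\iinf\circ\pg$ by the $\uRHom$-compatibility above combined with the analogue of~\eqref{eqn:dsw-ses-comm}.

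The crux is then a degree computation. Under the equivalence $\Kos(\Pure(X))\cong\Pervsm(X)$ of Proposition~\ref{prop:affable-orlov} and the description of $\Irr(\Kos(\Pure(X)))$ from Proposition~\ref{prop:kosorlov-tstruc}, the indecomposable objects of $\Pure(X)$ of degree $n$ are the $\ICm_s\la n\ra[-n]=\ICm_s[-n]\la n\ra$; applying $\D$ gives $\ICm_s[n]\la -n\ra$, which is an indecomposable of degree $-n$. Thus $\D$ negates degrees as a functor $\Pure(X)\to\Pure(X)$, and hence $\D\colon\Pure(X)^{\op}\to\Pure(X)$ is homogeneous. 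We are now precisely in the situation of Theorem~\ref{thm:orlov-genuine} (with $\scA=\Pure(X)^{\op}$ and $\scB=\Pure(X)$), which gives that $\D$ is genuine, the induced triangulated functor $\tilde\D=\pg\circ\D\circ\incl\colon\dsm(X)^{\op}\to\dsm(X)$ satisfying $\incl\circ\tilde\D\cong\D\circ\incl$. I expect the main obstacle to be bookkeeping rather than any deep point: one must keep the three ``opposite'' passages — of the category, of the triangulated shift, and of the Orlov degree function — consistent with one another, so that the homogeneity of $\D$ on the Orlov subcategories comes out as stated; once that is pinned down the argument is a direct appeal to Theorem~\ref{thm:orlov-genuine}.
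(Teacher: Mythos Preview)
Your proposal is correct and follows essentially the same route as the paper's proof: equip $\Pure(X)^{\op}$ with the degree function $-\deg$, observe that $\D\colon\Pure(X)^{\op}\to\Pure(X)$ is homogeneous because $\D(\ICm_s[n]\la -n\ra)\cong\ICm_s[-n]\la n\ra$, and then invoke the Orlov-category machinery (the paper phrases this as ``an analogue of Proposition~\ref{prop:homog-genuine}'', whose core is exactly Theorem~\ref{thm:orlov-genuine}). Your write-up is considerably more explicit than the paper's about the opposite-category identifications $\Kb(\scA)^{\op}\cong\Kb(\scA^{\op})$ and $(\pse\scD)^{\op}\cong\pse(\scD^{\op})$; one small point to tighten is that your claim ``it sends distinguished triangles to distinguished triangles'' (in the $\pse\Kb$ sense, i.e., miscible triangles) is precisely what the inductive argument in Proposition~\ref{prop:homog-genuine}\eqref{it:homog-misc} supplies, so you should cite that analogue rather than assert it, since at this point in the paper Theorem~\ref{thm:misc-tri} is not yet available.
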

\begin{proof}
The category $\Pure(X)^\op \subset \dsw(X)^\op$ is clearly an Orlov category with degree function given by
\[
\deg_{\Pure(X)^\op} (\ICm_s[n]\la -n\ra) = n = -\deg_{\Pure(X)} (\ICm_s[n]\la -n\ra).
\]
We clearly have $\D(\Pure(X)^\op) \subset \Pure(X)$, and $\D: \Pure(X)^\op \to \Pure(X)$ is a homogeneous functor.  The result follows by an analogue of Proposition~\ref{prop:homog-genuine}.
\end{proof}

\begin{prop}\label{prop:boxtimes}
Let $X$ and $Y$ be two varieties equipped with affable stratifications.  Then, for $\cF \in \dsw(X)$ and $\cG \in \dw_\cT(Y)$, we have $\cF \boxtimes \cG \in \dw_{\cS \times \cT}(X \times Y)$.  Moreover, the induced stratification $\cS \times \cT$ on $X \times Y$ is affable, and the functor $\boxtimes: \dsw(X) \times \dw_\cT(Y) \to \dw_{\cS \times \cT}(X \times Y)$ is genuine.
\end{prop}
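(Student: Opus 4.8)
The plan is to reduce the statement to the Künneth compatibilities for external products, to Lemma~\ref{lem:weil-obj}, and to the bifunctor form of the Orlov-category machinery.

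\emph{Affability of $\cS\times\cT$.} I would first choose affine even refinements $\cS_1$ of $\cS$ and $\cT_1$ of $\cT$ and show that the product stratification $\cS_1\times\cT_1$ of $X\times Y$ is affine even. Condition~(1) of Definition~\ref{defn:aff-even} is immediate, since a stratum $X_s\times Y_t$ is isomorphic to $\bA^{\dim X_s}\times\bA^{\dim Y_t}\cong\bA^{\dim(X_s\times Y_t)}$. For condition~(2), the key point is the Künneth compatibility of the intermediate extension: with the normalization $\ICm_s=j_{s!*}\uQlb[\dim X_s]\la-\dim X_s\ra$ one has $\ICm_{s\times t}\cong\ICm_s\boxtimes\ICm_t$ for $\cS_1\times\cT_1$, so its restriction to a stratum $X_u\times Y_v$ is $(\ICm_s|_{X_u})\boxtimes(\ICm_t|_{Y_v})$. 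The ordinary Künneth formula then expresses $H^k$ of this as a direct sum of external tensor products $H^a(\ICm_s|_{X_u})\otimes H^b(\ICm_t|_{Y_v})$ with $a+b=k$, and the affine even hypotheses on $\cS_1,\cT_1$ force each factor to vanish unless $a\equiv\dim X_s$ and $b\equiv\dim Y_t\pmod 2$, in which case they are sums of copies of $\uQlb\la a\ra$, resp.\ $\uQlb\la b\ra$; since $\uQlb\la a\ra\otimes\uQlb\la b\ra\cong\uQlb\la a+b\ra$, condition~(2) follows. Hence $\cS\times\cT$ is affable and the results of Section~\ref{sect:affable} apply on $X\times Y$.

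\emph{Preservation of the Weil category.} That $\cF\boxtimes\cG$ belongs to $\ddel_{\cS\times\cT}(X\times Y)$ is the Künneth formula together with~\cite{del2}. To see that it lies in $\dw_{\cS\times\cT}(X\times Y)$ I would use Lemma~\ref{lem:weil-obj}: for a stratum $X_s\times Y_t$ of $\cS\times\cT$ we have $j_{s\times t}^*(\cF\boxtimes\cG)\cong(j_s^*\cF)\boxtimes(j_t^*\cG)$, and since $j_s^*\cF\in\dw(X_s)$ and $j_t^*\cG\in\dw(Y_t)$ are built, as triangulated subcategories, out of copies of $\uQlb$, their external product is built out of $\uQlb\boxtimes\uQlb\cong\uQlb$ and so lies in $\dw(X_s\times Y_t)$. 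Lemma~\ref{lem:weil-obj} then gives $\cF\boxtimes\cG\in\dw_{\cS\times\cT}(X\times Y)$, so $\boxtimes$ is in particular a geometric bifunctor.

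\emph{Genuineness.} I would invoke Proposition~\ref{prop:homog-genuine} in its bifunctor form — proved by the same support induction as Proposition~\ref{prop:homog-genuine}, now on the bidegree of a pair of complexes, with Theorem~\ref{thm:biorlov-genuine} in place of Theorem~\ref{thm:orlov-genuine}. It therefore suffices to check that $\boxtimes$ restricts to a bihomogeneous additive bifunctor $\Pure_\cS(X)\times\Pure_\cT(Y)\to\Pure_{\cS\times\cT}(X\times Y)$, where these are the Orlov categories of Proposition~\ref{prop:affable-orlov}. An indecomposable object of $\Pure_\cS(X)$ is a shift--twist $\ICm_s\la n\ra[-n]$ of a simple $\cS$-constructible perverse sheaf, of Orlov degree $n$ (by Proposition~\ref{prop:affable-orlov} together with Proposition~\ref{prop:kosorlov-tstruc}, since $\wt(\ICm_s\la n\ra)=n$), and an object of $\Pure_\cS(X)$ is a finite direct sum of such, each summand being automatically $\cS$-constructible. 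Applying $\boxtimes$ to two such indecomposables and again using the Künneth compatibility of $!*$-extension gives
\[
\ICm_s\la n\ra[-n]\boxtimes\ICm_t\la m\ra[-m]\;\cong\;\ICm_{s\times t}\la n+m\ra[-(n+m)],
\]
which is a shift--twist of a simple $(\cS\times\cT)$-constructible perverse sheaf — its $(\cS\times\cT)$-constructibility being exactly the Künneth-on-strata computation of the first step — hence an indecomposable of $\Pure_{\cS\times\cT}(X\times Y)$ of Orlov degree $n+m$. Thus $\boxtimes$ lands in $\Pure_{\cS\times\cT}(X\times Y)$ and is bihomogeneous, so it is genuine.

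\emph{Expected difficulty.} None of this is deep once the machinery is available; the two points needing care are the Künneth compatibility $\ICm_{s\times t}\cong\ICm_s\boxtimes\ICm_t$ (which must be recorded with the correct shift and Tate-twist normalizations so that the Orlov degrees add), and the verification that the bifunctor analogue of Proposition~\ref{prop:homog-genuine} genuinely follows from the bifunctor results of Section~\ref{sect:hot-orlov}; the rest is bookkeeping with perverse cohomology sheaves and weights.
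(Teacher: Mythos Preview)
Your argument is correct and follows essentially the same approach as the paper: reduce to affine even refinements, use the K\"unneth compatibilities $j_{s,t}^*(\cF\boxtimes\cG)\cong j_s^*\cF\boxtimes j_t^*\cG$ and $\ICm_s\boxtimes\ICm_t\cong\ICm_{s,t}$ to verify that $\cS\times\cT$ is affable and that $\boxtimes$ preserves the Weil category, and then conclude genuineness from bihomogeneity on $\Pure$ via the Orlov-category machinery. You are in fact slightly more careful than the paper in one respect: the paper simply cites Proposition~\ref{prop:homog-genuine} (stated for a single functor) at the end, whereas you correctly flag that a bifunctor version is needed and point to Theorem~\ref{thm:biorlov-genuine}; the paper handles this only by the parenthetical reference to Section~\ref{ss:bifunc}.
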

See Section~\ref{ss:bifunc} for remarks on homogeneity and genuineness for bifunctors.
\begin{proof}
It suffices to treat the case where $\cS$ and $\cT$ are both affine even stratifications, so we henceforth restrict to that case.  Let $j_{s,t}: X_s \times Y_t \to X \times Y$ denote the inclusion of a stratum.  Recall that
\begin{equation}\label{eqn:boxtimes-stalk}
j_{s,t}^*(\cF \boxtimes \cG) \cong j_s^*\cF \boxtimes j_t^*\cG
\qquad\text{and}\qquad
j_{s,t}^!(\cF \boxtimes \cG) \cong j_t^!\cF \boxtimes j_t^!\cG.
\end{equation}
Using these facts, it follows by elementary dimension calculations (cf.~\cite[Proposition~4.2.8]{bbd}) that $\boxtimes$ takes perverse sheaves to perverse sheaves.  In fact, the same calculations also show that
\begin{equation}\label{eqn:boxtimes-ic}
\ICm_s \boxtimes \ICm_t \cong \ICm_{s,t}.
\end{equation}
Now, on the variety $\bA^n \times \bA^m$, we clearly have $\uQlb_{\bA^n} \boxtimes \uQlb_{\bA^m} \cong \uQlb_{\bA^n \times \bA^m}$.  Using this observation together with~\eqref{eqn:boxtimes-stalk} and~\eqref{eqn:boxtimes-ic} to compute $j_{u,v}^*\ICm_{s,t}$ and $j_{u,v}^!\ICm_{s,t}$, we see that the second condition of Definition~\ref{defn:aff-even} holds.  Thus, $\cS \times \cT$ is an affine even stratification.  Now,~\eqref{eqn:boxtimes-ic} shows that $\boxtimes$ takes pure semisimple objects in either variable to pure semisimple objects, so it clearly takes values in the Weil category when applied to objects in the Weil category.  Lastly, it is genuine by Proposition~\ref{prop:homog-genuine}.
\end{proof}

\subsection{Open and closed inclusions}
\label{ss:open-closed}

The following theorem tells us in part that mixed categories satisfy the axioms in~\cite[\S 1.4.3]{bbd} for the formalism of ``gluing,'' so all subsequent results of~\cite[\S 1.4]{bbd} apply in this setting.

\begin{thm}\label{thm:open-closed}
Let $i: Z \to X$ be the inclusion of a closed subvariety that is a union of strata, and let $j: U \to X$ be the inclusion of the complementary open subvariety.  The functors $i^*$, $i^!$, $i_*$, $j^*$, $j_*$, and $j_!$ are all genuine.  Moreover, the induced functors on the mixed categories enjoy the following properties:
\begin{enumerate}
\item The usual adjointness properties hold.
\item For $\cF \in \dsm(X)$, there are functorial distinguished triangles
\begin{equation}\label{eqn:open-closed-dt}
i_*i^!\cF \to \cF \to j_*j^*\cF \to
\qquad\text{and}\qquad
j_!j^*\cF \to \cF \to i_*i^*\cF \to
\end{equation}
in $\dsm(X)$.
\item The functors $i_*: \dsm(Z) \to \dsm(X)$ and $j_*, j_!: \dsm(U) \to \dsm(X)$ are fully faithful.
\end{enumerate}
\end{thm}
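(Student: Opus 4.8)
The plan is to deduce everything from Proposition~\ref{prop:homog-genuine} by choosing, for each of the six functors, an appropriate one of the Orlov categories $\Pure$, $\Proj$, $\Inj$. First I would reduce to the case where $\cS$ is an affine even stratification: any affine even refinement of $\cS$ has $Z$ and $U$ among its unions of strata and restricts to affine even stratifications on them, so by Lemma~\ref{lem:affable-genuine} it suffices to prove genuineness for the corresponding functors between the categories attached to the refined stratifications. (This reduction is genuinely needed, since $\Proj$ and $\Inj$ are only available as Orlov categories, via Proposition~\ref{prop:aff-even-orlov}, in the affine even case.)

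Next I would treat the functors themselves. The functors $i_*$ and $j^*$ are $t$-exact, with $i_*\ICm_s \cong \ICm_s$ and $j^*\ICm_s$ equal to $\ICm_s$ or $0$; thus they restrict to homogeneous functors of degree $0$ between the categories $\Pure$, and Proposition~\ref{prop:homog-genuine} applies directly. For $j_!$ and $i^*$ the category $\Pure$ is not preserved, so I would use $\Proj$ instead. Since $j^*$ (resp.\ $i_*$) is exact on perverse hearts and $j_! \dashv j^*$ (resp.\ $i^* \dashv i_*$), the functor $j_!$ (resp.\ $i^*$) preserves projectives; a short computation with the perverse $t$-structure shows that $j_!\cP$ (resp.\ $i^*\cP$) is perverse whenever $\cP$ is a projective object of $\Pervsm$, that $j_!$ sends the projective cover of a simple $\cL$ on $U$ to the projective cover of $j_{!*}\cL$ on $X$, and that $i^*$ sends the projective cover of $\cL$ on $X$ to the projective cover of $i^*\cL$ if $\supp\cL \subseteq Z$ and to $0$ otherwise --- in all cases without change of weight. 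Hence $j_!$ and $i^*$ restrict to homogeneous functors between the categories $\Proj$ and are genuine. Dually, using that $j_*$ (resp.\ $i^!$) is right adjoint to the exact functor $j^*$ (resp.\ $i_*$) and hence preserves injectives, the same argument with $\Inj$ shows $j_*$ and $i^!$ are genuine; alternatively one may write $j_* \cong \D j_! \D$ and $i^! \cong \D i^* \D$ and invoke Proposition~\ref{prop:verdier} together with the fact that composites of genuine functors are genuine.

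Finally I would derive the three numbered assertions from the abstract formalism. Since each of the six functors is genuine, it restricts to a pseudotriangulated functor on $\dsmisc$ by Lemma~\ref{lem:misc-pt-func}; the adjunctions $i^* \dashv i_* \dashv i^!$ and $j_! \dashv j^* \dashv j_*$ hold on $\dsw$, and as these functors preserve $\dsmisc$ and $\dsmisc(X)=\pse\dsm(X)$, their units and counits are inherited on $\dsmisc$, so Lemma~\ref{lem:infext-ind-adjoint} yields the corresponding adjunctions between the induced functors on $\dsm$ (which coincide, up to isomorphism, with the $\tilde F$ of the genuine functors by Lemma~\ref{lem:genuine-unique}); this gives (1). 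For (2), I would complete the counit $\tilde{i_*}\tilde{i^!}\cF \to \cF$ to a triangle $\tilde{i_*}\tilde{i^!}\cF \to \cF \to \cC \to$ in $\dsm(X)$, apply $\incl$, and compare with the standard triangle $i_*i^!\incl\cF \to \incl\cF \to j_*j^*\incl\cF \to$ in $\dsw(X)$ to obtain an isomorphism $\incl\cC \cong \incl\tilde{j_*}\tilde{j^*}\cF$; applying $\pg$ and using $\pg\incl \cong \id$ then identifies $\cC$ with $\tilde{j_*}\tilde{j^*}\cF$, functoriality following by repeating the argument with morphisms, and the second triangle being identical. For (3), the unit $\id \to \tilde{j^*}\tilde{j_!}$ maps under $\incl$ to the isomorphism $\id \to j^*j_!$ of $\dsw(X)$, so applying $\pg$ shows it is an isomorphism and $\tilde{j_!}$ is fully faithful; the cases of $j_*$ (via $j^*j_* \to \id$) and $i_*$ (via $i^*i_* \to \id$) are the same. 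I expect the main obstacle to be the computation in the second paragraph --- verifying that $j_!\cP$ and $i^*\cP$ are perverse and identifying their indecomposable summands and weights precisely enough to conclude homogeneity --- while the arguments in the last paragraph, though routine, require care in transporting adjunctions, (co)units, and the gluing triangles from $\dsw(X)$ down to $\dsm(X)$ through $\incl$ and $\pg$.
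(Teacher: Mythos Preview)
Your strategy is exactly the paper's: reduce to an affine even stratification, treat $i_*$ and $j^*$ via $\Pure$, $j_!$ and $i^*$ via $\Proj$, $j_*$ and $i^!$ via Verdier duality (the paper uses duality rather than $\Inj$, but either works), and then deduce (1)--(3) from Lemmas~\ref{lem:infext-ind-adjoint} and~\ref{lem:genuine-unique} together with $\pg\circ\incl\cong\id$.

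One caution on the $\Proj$ step: the abstract slogan ``$j_!\dashv j^*$ with $j^*$ exact, hence $j_!$ preserves projectives'' does not directly apply here, because at this point the adjunction only lives in $\dsw$ (or $\Pervsw$), and you do not yet know that $j_!P$ lands in $\Pervsm(X)$ rather than merely $\Pervsw(X)$. The paper circumvents this by computing $j_!P'_s$ concretely: one first shows $j^*P_s\cong P'_s$ (using the equivalence $\Db\Pervs(X)\simeq\ds(X)$ of Theorem~\ref{thm:bgs-aff-even}\eqref{it:bgs-equiv} to reduce $\Ext^1$-vanishing to a $\Hom$-computation in $\ds$, plus the characterization of projectives in $\Pervsm$ via $\exs$ in part~\eqref{it:bgs-mix-proj}), and then uses the standard filtration of $\degr(P_s)$ from BGG reciprocity to show $i^*P_s=0$ when $X_s\subset U$, whence $j_!P'_s\cong P_s$ via the recollement triangle. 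The computation for $i^*$ is similar. So your ``short computation'' is not quite short and needs this BGG input; you correctly anticipated this is where the work lies.
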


\begin{rmk}\label{rmk:open-quot}
As an example of a statement that follows purely from the formalism of gluing, we have by~\cite[\S1.4.6(b)]{bbd} that $j^*$ induces an equivalence
\[
\dsm(X)/\dsm(Z) \simto \dsm(U).
\]
Here, we have identified $\dsm(Z)$ with a full triangulated subcategory of $\dsm(X)$ using the fully faithful functor $i_*$.  This observation will be used in the proof of Theorem~\ref{thm:ajext}.
\end{rmk}

Before proving this theorem, we recall a result about the structure of projectives in $\Pervs(X)$ from~\cite{bgs}.  Let us put
\[
\Dm_s = j_{s!}\uQlb[\dim X_s]\la -\dim X_s\ra
\qquad\text{and}\qquad
\Nm_s = j_{s*}\uQlb[\dim X_s]\la -\dim X_s\ra.
\]
These objects are perverse sheaves by~\cite[Corollaire~4.1.3]{bbd}, {\it a priori} only in $\Pervsw(X)$, although it clearly follows from Theorem~\ref{thm:open-closed} that they lie {\it a posteriori} in $\Pervsm(X)$.  $\Dm_s$ is called a \emph{standard perverse sheaf}, and $\Nm_s$ is called a \emph{costandard perverse sheaf}.  The same terms are used for the objects
\[
\Delta_s = \degr(\Dm_s)
\qquad\text{and}\qquad
\nabla_s = \degr(\Nm_s)
\]
in $\Pervs(X)$.  According to~\cite[Theorem~3.3.1]{bgs}, every projective in $\Pervs(X)$ has a filtration with standard subquotients, and every injective has a filtration with costandard subquotients.  To be more specific, it follows from ``BGG reciprocity'' (see \cite[Remark~(1) following Theorem~3.2.1]{bgs}) that the standard objects $\Delta_t$ occurring as subquotients of the projective cover of $\IC_s$ all have the property that $X_s \subset \overline{X_t}$.

\begin{proof}[Proof of Theorem~\ref{thm:open-closed}]
By Lemma~\ref{lem:affable-genuine}, it suffices to treat the case where $\cS$ is an affine even stratification, and we henceforth assume this to be the case.  The proof is somewhat lengthy and proceeds in several steps.

{\it Step 1. $i_*$ and $j^*$.}  These two functors send semisimple pure objects to semisimple pure objects.  That is, they induce functors $\Pure(Z) \to \Pure(X)$ and $\Pure(X) \to \Pure(U)$.  Moreover, the latter functors are homogeneous functors of Orlov categories because $i_*$ and $j^*$ are $t$-exact.  By Proposition~\ref{prop:homog-genuine}, these functors are genuine.

{\it Step 2. $j_!$.}  Our strategy is to show that this functor induces a homogeneous functor $\Proj(U) \to \Proj(X)$.  Specifically, consider a stratum $X_s \subset U$.  Let $P_s$ denote the projective cover of $\ICm_s$ in $\Pervsm(X)$, and let $P'_s$ denote the projective cover of the simple object $\ICm_s|_U$ in $\Pervsm(U)$.  It suffices to show that
\begin{equation}\label{eqn:open-proj-adj}
j_!P'_s \cong P_s.
\end{equation}
We begin by showing that $j^*P_s \cong P'_s$.  Note first that we at least have $j^*P_s \in \Pervsm(U)$, since $j^*$ is already known to be genuine.  To prove that $j^*P_s$ is projective, it suffices, by Theorem~\ref{thm:bgs-aff-even}\eqref{it:bgs-mix-proj}, to show that $\degr(j^*P_s) \cong j^*(\degr(P_s))$ is a projective object in $\Pervs(U)$.  Making use of the equivalence in Theorem~\ref{thm:bgs-aff-even}\eqref{it:bgs-equiv} and the fact that $\degr(P_s)$ is projective, we have
\[
\Ext^1(j^*\degr(P_s),\cG) \cong \Hom(\degr(P_s), j_*\cG[1]) \cong \Hom(\degr(P_s), \pH^1(j_*\cG))
\]
for any $\cG \in \Pervs(U)$.  Since $\pH^1(j_*\cG)$ is supported on $Z$, it cannot contain $\IC_s$ as a composition factor, so the last $\Hom$-group above vanishes.  We conclude that $j^*\degr(P_s)$ and $j^*P_s$ are projective.  For similar reasons, we have
\[
\Hom(\degr(P_s), i_*i^!\degr(P_s)) = \Ext^1(\degr(P_s), i_*i^!\degr(P_s)) = 0,
\]
so we deduce from the distinguished triangle $i_*i^!\degr(P_s) \to \degr(P_s) \to j_*j^*\degr(P_s) \to$ that there are isomorphisms
\[
\Hom(\degr(P_s), \degr(P_s)) \cong \Hom(\degr(P_s), j_*j^*\degr(P_s)) \cong \Hom(j^*\degr(P_s), j^*\degr(P_s)) \cong \Bbbk.
\]
In particular, we have that $j^*\degr(P_s)$ is indecomposable, so $j^*P_s$ is as well.  Since $j^*P_s$ is an indecomposable projective with a nonzero map $j^*P_s \to \ICm_s|_U$, we must have $j^*P_s \cong P'_s$, as desired.

Consider now the distinguished triangle $\degr(j_!P'_s) \to \degr(P_s) \to \degr(i_*i^*P_s) \to$ in $\ds(X)$.  Recall that $\degr(P_s)$ has a standard filtration consisting of $\Delta_t$ with $X_s \subset \overline{X_t}$.  All such $X_t$ are contained in $U$, so $i^*\Delta_t = 0$, and therefore $\degr(i^*P_s) = 0$.  Since $\degr$ kills no nonzero object, we conclude that $i^*P_s = 0$ as well.  Thus, the natural map $j_!P'_s \to P_s$ is an isomorphism.

{\it Step 3. $i^*$.}  The strategy is similar to that in Step~2.  For any $s \in \cS$, let $P_s$ denote the projective cover of $\ICm_s$ in $\Pervsm(X)$.  For any standard object $\Delta_t \in \Pervs(X)$, we have that $i^*\Delta_t$ is either $0$ or a standard object in $\Pervs(Z)$.  In either case, it is a perverse sheaf.  Since $\degr(P_s)$ has a standard filtration, $\degr(i^*P_s)$ is a perverse sheaf as well, and hence so is $i^*P_s$.  From the distinguished triangle
\begin{equation}\label{eqn:closed-adj-dt}
j_!j^*P_s \to P_s \to i_*i^*P_s \to
\end{equation}
and the right $t$-exactness of $j_!$, we see that $i_*i^*P_s$ is a quotient of $P_s$.  Therefore, like any quotient of an indecomposable projective, it is either indecomposable or $0$. Indeed, we saw in Step~2 that it is $0$ if $X_s \subset U$.  On the other hand, if $X_s \subset Z$, there is a nonzero morphism $i^*P_s \to \ICm_s$.  Since 
\[
\Hom(i^*(\degr(P_s)), \cG[1]) \cong \Hom(\degr(P_s), i_*\cG[1]) = 0
\]
for all $\cG \in \Pervs(Z)$, we see that $\degr(i^*P_s)$ is projective, and therefore so is $i^*P_s$.  We have shown that
\[
i^*P_s  \cong
\begin{cases}
0 & \text{if $X_s \subset U$,} \\
P''_s & \text{if $X_s \subset Z$,}
\end{cases}
\]
where $P''_s$ is the projective cover of $\ICm_s$ in $\Pervsm(Z)$.  In particular, $i^*$ induces a homogeneous functor $\Proj(X) \to \Proj(Z)$, and is therefore genuine.  

{\it Step 4. $j_*$ and $i^!$.}  These follow from Steps~2 and~3 and Proposition~\ref{prop:verdier} by the formulas $j_* \cong \D \circ j_! \circ \D$ and $i^! \cong \D \circ i^* \cong \D$.

{\it Step 5. Adjointness properties.}  The fact that the induced functors on the mixed categories have the usual adjointness properties follows from Lemma~\ref{lem:infext-ind-adjoint}.

{\it Step 6. Functorial distinguished triangles.}  By Step~5, for any $\cF \in \dsm(X)$, we have an adjunction morphism $\epsilon: i_*i^!\cF \to \cF$.  Let us complete this to a distinguished triangle
\begin{equation}\label{eqn:mix-opcl-dt}
i_*i^!\cF \ovto{\epsilon} \cF \ovto{q} \cK \ovto{p} i_*i^!\cF[1].
\end{equation}
After applying $\incl$, we obtain a distinguished triangle in $\dsw(X)$ that is canonically isomorphic to the functorial distinguished triangle
\begin{equation}\label{eqn:weil-opcl-dt}
i_*i^!(\incl\cF) \ovto{\epsilon} \incl\cF \ovto{\eta} j_*j^*(\incl\cF) \ovto{\delta} i_*i^!(\incl\cF)[1]
\end{equation}
In particular, we see that~\eqref{eqn:weil-opcl-dt}, which is a priori only a Weil distinguished triangle, is actually miscible.  Recall from Lemma~\ref{lem:infext-induc} that the functor $\tilde F$ induced by a pseudotriangulated functor $F$ is given by $\tilde F \cong \pg \circ F \circ \incl$.  Therefore, applying $\pg$ to~\eqref{eqn:weil-opcl-dt} gives us a functorial distinguished triangle in $\dsm(X)$ that is isomorphic to~\eqref{eqn:mix-opcl-dt}.  The argument for the second triangle in~\eqref{eqn:open-closed-dt} is similar.

{\it Step 7. Fullness and faithfulness.}  We first note that $i_*: \dsm(Z) \to \dsm(X)$ is faithful, because the original functor $i_*: \dsw(Z) \to \dsw(X)$ is.  In addition, for $\cF \in \dsm(Z)$, the adjunction map $i^*i_*\cF \simto \cF$ is an isomorphism because the same statement holds in $\dsw(Z)$, so for any $\cG \in \dsm(Z)$, we have
\[
\Hom_{\dsm(X)}(i_*\cF,i_*\cG) \cong \Hom_{\dsm(Z)}(i^*i_*\cF,\cG) \cong \Hom_{\dsm(Z)}(\cF,\cG).
\]
Thus, $i_*: \Hom_{\dsm(Z)}(\cF,\cG) \to \Hom_{\dsm(X)}(i_*\cF, i_*\cG)$ is an injective map between vector spaces of the same dimension, so it is an isomorphism.  The arguments for $j_*$ and $j_!$ are similar.
\end{proof}

\begin{cor}\label{cor:loc-closed}
If $h: Y \to X$ is the inclusion map of a locally closed subvariety that is a union of strata, then $h^*$, $h_*$, $h^!$, and $h_!$ are all genuine.
\end{cor}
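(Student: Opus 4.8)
The plan is to reduce the corollary to Theorem~\ref{thm:open-closed} by factoring the locally closed inclusion $h$ as an open inclusion followed by a closed inclusion, and then to observe that genuineness is preserved under composition.

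First I would use the elementary fact that a locally closed subset is open in its closure. Write $\overline{Y}$ for the closure of $Y$ in $X$; then $Y$ is open in $\overline{Y}$, and $\overline{Y}$ is a closed subvariety of $X$ that is again a union of strata. The stratification induced on $\overline{Y}$ is affable (an affine even refinement of $\cS$ restricts to an affine even refinement of $\overline{Y}$), so $h$ factors as $Y \xrightarrow{k} \overline{Y} \xrightarrow{i} X$, where $i : \overline{Y} \to X$ is the inclusion of a closed union of strata and $k : Y \to \overline{Y}$ is the inclusion of the complementary open subvariety $Y = \overline{Y} \smallsetminus (\overline{Y} \smallsetminus Y)$. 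Using $i_! \cong i_*$ for the closed inclusion $i$, this gives
\[
h^* \cong k^* \circ i^*, \qquad h^! \cong k^! \circ i^!, \qquad h_* \cong i_* \circ k_*, \qquad h_! \cong i_* \circ k_!.
\]
By Theorem~\ref{thm:open-closed} applied to $i$ and to $k$, each of $i^*, i^!, i_*, k^*, k^!, k_*, k_!$ is genuine.

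It then remains to note that a composition of two genuine functors is genuine. A genuine functor is by definition miscible, hence in particular geometric, and composites of miscible (resp.\ geometric) functors are clearly again miscible (resp.\ geometric). If $F$ and $G$ are genuine, with induced triangulated functors $\tilde F, \tilde G$ satisfying $\incl \circ \tilde F \cong F \circ \incl$ and $\incl \circ \tilde G \cong G \circ \incl$, then
\[
\incl \circ (\tilde G \circ \tilde F) \cong (G \circ \incl) \circ \tilde F \cong G \circ (\incl \circ \tilde F) \cong (G \circ F) \circ \incl,
\]
so $G \circ F$ is genuine, with induced functor $\tilde G \circ \tilde F$ (which is unique up to isomorphism by Lemma~\ref{lem:genuine-unique}). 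Combining this with the displayed factorizations proves the corollary. I do not expect any real obstacle here: all of the substance lies in Theorem~\ref{thm:open-closed}, and the only facts needing a word of comment --- that $\overline{Y}$ carries an affable stratification and that $Y$ is open in it --- are immediate.
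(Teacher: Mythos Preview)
Your proof is correct and follows exactly the same approach as the paper: factor $h$ as an open inclusion followed by a closed inclusion, then invoke Theorem~\ref{thm:open-closed} for each factor. You have simply spelled out in full the details (notably that composites of genuine functors are genuine) that the paper leaves implicit.
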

\begin{proof}
The map $h$ can be factored as an open embedding followed by a closed embedding, and the pull-back and push-forward functors for each of those maps are genuine by Theorem~\ref{thm:open-closed}.
\end{proof}

We are now able to prove the following mixed analogue of~\cite[Theorem~3.3.1]{bgs}.

\begin{prop}\label{prop:proj-std-filt}
Assume $X$ has an affine even stratification.  Every projective object in $\Pervsm(X)$ has a filtration with standard subquotients, and every injective object has a filtration with costandard subquotients.
\end{prop}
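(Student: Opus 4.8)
The plan is to argue by induction on the number of strata of $X$, reducing immediately to the case of an indecomposable projective $P_s$, the projective cover of $\ICm_s$ in $\Pervsm(X)$. If $X$ consists of a single stratum then $X \cong \bA^m$, the category $\Pervsm(X)$ is semisimple (Lemma~\ref{lem:affine-basic}), and $P_s \cong \ICm_s \cong \Dm_s \cong \Nm_s$, so there is nothing to prove. For the inductive step, fix an open stratum $j\colon U = X_u \to X$ with complementary closed union of strata $i\colon Z \to X$; then $Z$ inherits an affine even stratification with strictly fewer strata, and $\Pervsm(U)$ is semisimple.

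First I would dispose of the case $s = u$: Step~2 of the proof of Theorem~\ref{thm:open-closed} gives $P_s \cong j_!P'_s$, where $P'_s$ is the projective cover of $\ICm_s|_U$ in $\Pervsm(U)$; semisimplicity of $\Pervsm(U)$ forces $P'_s \cong \uQlb[m]\la -m\ra$, whence $P_s \cong \Dm_u$, which is itself standard. Now suppose $X_s \subset Z$. Theorem~\ref{thm:open-closed} furnishes a functorial distinguished triangle $j_!j^*P_s \to P_s \to i_*i^*P_s \to$ in $\dsm(X)$ (the second triangle of~\eqref{eqn:open-closed-dt}). I claim that all three terms are perverse: since $j^*$ is $t$-exact and compatible with weight filtrations, $j^*P_s$ is a semisimple object of $\Pervsm(U)$, hence a direct sum of Tate twists of $\uQlb[m]$, so $j_!j^*P_s$ is a direct sum of Tate twists of $\Dm_u$ and in particular perverse; and Step~3 of the proof of Theorem~\ref{thm:open-closed} identifies $i^*P_s$ with the projective cover $P''_s$ of $\ICm_s$ in $\Pervsm(Z)$, which is perverse, so $i_*i^*P_s$ is perverse. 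A distinguished triangle whose three terms lie in the heart of a $t$-structure is a short exact sequence, so we obtain
\[
0 \to j_!j^*P_s \to P_s \to i_*i^*P_s \to 0
\]
in $\Pervsm(X)$. Here $j_!j^*P_s$ is a direct sum of standard objects, while $i_*i^*P_s = i_*P''_s$ has a standard filtration because $P''_s$ does by the inductive hypothesis and $i_*$ is exact and carries $\Dm_t$ on $Z$ to $\Dm_t$ on $X$. Splicing gives a standard filtration of $P_s$. Since an arbitrary projective of $\Pervsm(X)$ is a finite direct sum of objects $P_s$, every projective has a standard filtration.

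For the injective statement I would invoke Verdier duality. By Proposition~\ref{prop:verdier} the functor $\D$ restricts to an anti-autoequivalence of $\Pervsm(X)$ interchanging $\Dm_t\la n\ra$ with $\Nm_t\la -n\ra$; moreover, since $\exs \circ \D \cong \D \circ \exs$ and $\D$ carries projectives to injectives in $\Pervs(X)$, Theorem~\ref{thm:bgs-aff-even}\eqref{it:bgs-mix-proj} shows that $\D$ carries projective objects of $\Pervsm(X)$ to injective ones and conversely. Applying $\D$ to a standard filtration of $\D I$ then produces a costandard filtration of $I$ for every injective $I$.

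The main bookkeeping to be careful about is verifying that the distinguished triangle really becomes a short exact sequence of perverse sheaves and that $i_*$ preserves standard objects; beyond that, the essential content has already been extracted in the explicit descriptions of $i^*P_s$, $j^*P_s$ and $j_!P'_s$ obtained in the proof of Theorem~\ref{thm:open-closed}, so I do not anticipate any serious obstacle.
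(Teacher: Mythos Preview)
Your proposal is correct and follows essentially the same approach as the paper: induction on the number of strata via an open stratum $j\colon X_u \hookrightarrow X$, with the short exact sequence $0 \to j_!j^*P \to P \to i_*i^*P \to 0$ obtained from the gluing triangle together with the identification of $i^*P$ as a projective in $\Pervsm(Z)$ from Step~3 of Theorem~\ref{thm:open-closed}. The only cosmetic differences are that the paper treats an arbitrary projective $P$ directly (so the case $s=u$ and the reduction to indecomposables are unnecessary, since $i^*P_u=0$ and $j^*P$ is semisimple regardless), and that the paper simply says ``the injective case is similar'' where you invoke Verdier duality.
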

\begin{proof}
We will prove the statement for projectives; the injective case is similar.  We proceed by induction on the number of strata in $X$.  Let $j: X_t \to X$ be the inclusion of an open stratum, and let $i: Z \to X$ be the inclusion of the complementary closed subvariety.  For a projective $P \in \Pervsm(X)$, recall from the proof of Theorem~\ref{thm:open-closed} that $i^*P$ is also a perverse sheaf.  Since $j^*$ is $t$-exact and $j_!$ is right $t$-exact, we have a short exact sequence
\[
0 \to j_!j^*P \to P \to i_*i^*P \to 0.
\]
Since $X_t$ is an affine space, $j^*P$ is semisimple, and $j_!j^*P$ is a direct sum of standard objects.  On the other hand, the projective object $i^*P \in \Pervsm(Z)$ has a standard filtration by induction.  Thus, $P$ has a standard filtration.
\end{proof}

\begin{rmk}
It can also be deduced using the methods of~\cite[Lemma~4.4.8]{bgs} that if $P \in \Pervsm(X)$ is projective, then the standard filtration of $\degr(P)$ lifts to some filtration of $P$ with subquotients $\cF_s$ satisfying $\degr(\cF_s) \cong \degr(\Dm_s)$.  But the stronger statement in Proposition~\ref{prop:proj-std-filt} requires knowing that $j_{s!}$ and $j_{s*}$ are miscible functors; it does not directly follow from the results of~\cite{bgs}, as far as we understand.
\end{rmk}

\begin{rmk}
Now that we know that the objects $\Dm_s$ belong to $\Pervsm(X)$, it is easy to check, by a further use of Theorem~\ref{thm:open-closed}, that $\Ext^k(\Dm_s, \ICm_t\la n\ra) = 0$ unless $n = -k$.  In other words, the $\Dm_s$ are ``Koszul objects'' of $\Pervsm(X)$ in the sense of~\cite[Definition~2.14.1]{bgs}, cf.~the remark following~\cite[Theorem~3.11.4]{bgs}.
\end{rmk}

\subsection{Weil and miscible distinguished triangles}

We can now supply a foundational fact about $\dsmisc(X)$ that was promised in Section~\ref{ss:aff-infext}.  The proof relies on Theorem~\ref{thm:open-closed}.

\begin{thm}\label{thm:misc-tri}
Assume that $X$ has an affable stratification.  A diagram
\[
\cF' \to \cF \to \cF'' \to \cF'[1]
\]
in $\dsmisc(X)$ is a miscible distinguished triangle if and only if it is a Weil distinguished triangle.
\end{thm}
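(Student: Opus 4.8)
Since $\incl:\dsm(X)\to\dsw(X)$ is a triangulated functor, it carries distinguished triangles in $\dsm(X)$ to Weil distinguished triangles, and this property is preserved under isomorphism in $\dsmisc(X)$; hence every miscible distinguished triangle is a Weil distinguished triangle. For the converse, I would first reduce to a statement about a single morphism. Suppose $\cF'\xrightarrow{u}\cF\xrightarrow{v}\cF''\xrightarrow{w}\cF'[1]$ is a Weil distinguished triangle all of whose terms are miscible; rotating, $\cF'[1]$ is the cone in $\dsw(X)$ of $v\colon\cF\to\cF''$, and it is miscible since $\cF'$ is. So it suffices to prove the following.

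\emph{Claim A.} If $f\colon\cF\to\cG$ is a morphism in $\dsmisc(X)$ whose cone in $\dsw(X)$ is miscible, then $f$ is a miscible morphism. Granting Claim A, $v$ is miscible, so there is a commutative square with vertical isomorphisms $\incl\tilde\cF\xrightarrow{\sim}\cF$, $\incl\tilde\cF''\xrightarrow{\sim}\cF''$ intertwining $\incl(\tilde v)$ with $v$, for some $\tilde v\colon\tilde\cF\to\tilde\cF''$ in $\dsm(X)$. Completing $\tilde v$ to a distinguished triangle in $\dsm(X)$ and applying $\incl$ produces a Weil distinguished triangle which, by the triangulated axioms of $\dsw(X)$ (completing the given isomorphism of the first two terms to an isomorphism of triangles), is isomorphic in $\dsmisc(X)$ to a rotation of $\cF'\to\cF\to\cF''\to\cF'[1]$. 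Rotating back, the original diagram is isomorphic to $\incl$ of a distinguished triangle in $\dsm(X)$, i.e. it is a miscible distinguished triangle.

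It remains to prove Claim A. Using Lemma~\ref{lem:affable-refine} I may assume $\cS$ is an affine even stratification, and I argue by induction on the number of strata of $X$. The base case is $X=\bA^m$ with the trivial stratification: decomposing $\cF$ and $\cG$ into their baric-pure components (Lemma~\ref{lem:affine-basic} and the discussion following it, together with Lemma~\ref{lem:aff-baric-split}) writes $f$, up to isomorphism, as a direct sum $\bigoplus_j f^j$ with $f^j$ a morphism between miscible baric-pure objects of weight $j$ and $\cone(f)\cong\bigoplus_j\cone(f^j)$, each $\cone(f^j)$ baric-pure of weight $j$ and hence miscible; after the autoequivalence $\la -j\ra$, which preserves miscibility, each $f^j\la -j\ra$ is covered by Lemma~\ref{lem:misc-morphism}, so each $f^j$, and therefore $f=\bigoplus_j f^j$, is miscible. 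For the inductive step, let $j\colon X_s\to X$ be an open stratum and $i\colon Z\to X$ the complementary closed union of strata. The functors $j^{*}$, $i^{*}$, $i^{!}$ are genuine by Theorem~\ref{thm:open-closed}, so they preserve miscible objects, take miscible morphisms to miscible morphisms, and commute with cones; hence $\cone(j^{*}f)\cong j^{*}\cone(f)$ and $\cone(i^{!}f)\cong i^{!}\cone(f)$ are miscible, and by the inductive hypothesis $j^{*}f$ and $i^{!}f$ are miscible morphisms. Applying $f$ to the functorial distinguished triangle $i_{*}i^{!}(-)\to(-)\to j_{*}j^{*}(-)\to$ of Theorem~\ref{thm:open-closed} yields a morphism of Weil distinguished triangles whose outer vertical arrows $i_{*}i^{!}f$ and $j_{*}j^{*}f$ are miscible; a gluing argument, expressing $\cF$ as the cone of the attachment map $i_{*}i^{!}\cF\to j_{*}j^{*}\cF[1]$ and comparing $f$ with the miscible morphism assembled from $i^{!}f$ and $j^{*}f$, then shows $f$ is miscible.

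The main obstacle is precisely this last gluing step: one must show that a morphism of $\dsmisc(X)$ whose restrictions to the open stratum and the closed complement are miscible is itself miscible. The delicate point is that ``miscible morphism'' is not a natural notion — only its abstract counterpart ``conjugate to a genuine morphism'' in $\pse\dsm(X)$ is — so one cannot naively patch the witnessing data over $Z$ and $U$. I would resolve this by running the comparison inside the honest triangulated category $\dsw(X)$: since $i_{*}$, $i^{!}$, $j_{*}$, $j^{*}$ are genuine, $\incl$ commutes with them up to isomorphism, so for $\cF\cong\incl\tilde\cF$ the Weil functorial triangle of $\cF$ is identified with the $\incl$-image of the corresponding functorial triangle of $\tilde\cF$ in $\dsm(X)$; any two completions of the outer square then differ by a morphism that is automatically infinitesimal, and Lemma~\ref{lem:infext-basic} together with the five-lemma in the form of Lemma~\ref{lem:infext-sq-comp} upgrades a miscible completion into a proof that $f$ itself is miscible.
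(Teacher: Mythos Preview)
Your overall strategy matches the paper's: reduce to an affine even stratification, settle the one-stratum case on $\bA^m$ via the baric decomposition and Lemma~\ref{lem:misc-morphism}, and induct on the number of strata using Theorem~\ref{thm:open-closed}. Reformulating the statement as your Claim~A is harmless, and whether one removes an open stratum or a closed one is cosmetic.

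The gap is in your final gluing paragraph. The claim that ``any two completions of the outer square then differ by a morphism that is automatically infinitesimal'' is not justified, and Lemmas~\ref{lem:infext-basic} and~\ref{lem:infext-sq-comp} do not supply it. The ambiguity in filling in the middle map of a morphism of distinguished triangles $(A_1,B_1,C_1)\to(A_2,B_2,C_2)$ is parametrized by $\Hom(C_1,A_2)$; for your functorial triangle $i_*i^!(-)\to(-)\to j_*j^*(-)$ this is $\Hom(j_*j^*\cF,\,i_*i^!\cG)$, which need not vanish and need not consist of infinitesimal morphisms. So you cannot conclude that the given $f$ agrees with a miscible completion you build from $i^!f$ and $j^*f$.

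The paper's inductive step is organized transversally and thereby sidesteps this fill-in-the-middle problem. It writes the full $3\times3$ diagram in $\dsw(X)$ whose \emph{columns} are the functorial triangles $h_!h^*\cF^?\to\cF^?\to j_{t*}j_t^*\cF^?$ (for $\cF^?\in\{\cF',\cF,\cF''\}$) and whose \emph{middle row} is the given Weil triangle. All three columns are miscible by Theorem~\ref{thm:open-closed}; the top and bottom rows are miscible because $h^*$ and $j_t^*$ of the given triangle are miscible by induction on $U$ and on $X_t$, and $h_!$, $j_{t*}$ are genuine. The given Weil triangle is then exhibited as the cone row of the morphism from the (shifted) bottom row to the top row, with vertical components the connecting maps of the columns---all of which are miscible. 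In other words, the target triangle appears as the \emph{output} of a $9$-lemma applied to a miscible morphism between miscible triangles, rather than as a nonunique middle completion; this is what lets the paper conclude it is miscible.
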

\begin{rmk}\label{rmk:misc-pt}
We now see that all three conditions of Lemma~\ref{lem:misc-pt-func} hold for all miscible functors.  In particular, every miscible functor $F: \dsw(X) \to \dw_\cT(Y)$ gives rise to a pseudotriangulated functor $F|_{\dsmisc(X)}: \dsmisc(X) \to \dmisc_\cT(Y)$, as indicated in Table~\ref{tab:dict}.
\end{rmk}
\begin{proof}
Recall that every miscible distinguished triangle is a Weil distinguished triangle; we need only prove the opposite implication.  It suffices to treat the case where $\cS$ is an affine even stratification, and we henceforth restrict to this case.

We begin by proving the statement in the special case where $X = \bA^m$.  It follows from Lemmas~\ref{lem:affine-basic} and~\ref{lem:aff-baric-split} that any Weil distinguished triangle $\cF' \to \cF \to \cF'' \to$ in $\dsmisc(\bA^m)$ can be written as the direct sum over $j \in \Z$ of triangles
\[
(\cF')^j \to \cF^j \to (\cF'')^j \to
\]
in which all three terms are baric-pure of weight $j$.  In particular, to prove the proposition, it suffices to consider the case where $\cF'$, $\cF$, and $\cF''$ are all baric-pure of weight $j$.  Since all three objects are miscible, all three morphisms in the triangle are miscible by Lemma~\ref{lem:misc-morphism}, so the distinguished triangle is miscible, as desired.

For the case of a general variety $X$ with an affine even stratification, we proceed by induction on the number of strata in $X$.  Choose a closed stratum $j_t: X_t \to X$, and let $h: U \to X$ be the inclusion of the open complement to $X_t$.  From the given distinguished triangle, form the following commutative diagram:
\[
\xymatrix{
h_!h^*\cF' \ar[r]\ar[d] &
  h_!h^*\cF \ar[r]\ar[d] &
    h_!h^*\cF'' \ar[r]\ar[d] & \\
\cF' \ar[r]\ar[d] &
  \cF \ar[r]\ar[d] &
    \cF'' \ar[r]\ar[d] & \\
j_{t*}j_t^*\cF' \ar[r]\ar[d] &
  j_{t*}j_t^*\cF \ar[r]\ar[d] &
    j_{t*}j_t^*\cF'' \ar[r]\ar[d] & \\
    &&&}
\]
The columns of this diagram are miscible triangles by Theorem~\ref{thm:open-closed}.  Furthermore, the triangles $h^*\cF' \to h^*\cF \to h^*\cF'' \to $ and $j_t^*\cF' \to j_t^*\cF \to j_t^*\cF'' \to $ are miscible by induction, as $U$ and $X_t$ each consist of fewer strata than $X$.  Since $h_!$ and $j_{t*}$ are miscible functors, the top and bottom rows of this diagram are miscible triangles.  Thus, the given distinguished triangle is obtained by taking the ``cone'' of the miscible commutative diagram
\[
\xymatrix{
j_{t*}j_t^*\cF'[-1] \ar[r]\ar[d] &
  j_{t*}j_t^*\cF[-1] \ar[r]\ar[d] &
    j_{t*}j_t^*\cF''[-1] \ar[r]\ar[d] & \\
h_!h^*\cF' \ar[r] &
  h_!h^*\cF \ar[r] &
    h_!h^*\cF'' \ar[r] & }
\]
and is therefore miscible itself, as desired.
\end{proof}

\subsection{Proper stratified morphisms}
\label{ss:stratified}

Let $X$ and $Y$ be two varieties equipped with affable stratifications, denoted $\cS$ and $\cT$, respectively.  In this section, we will study functors arising from morphisms $f: X \to Y$ that respect the stratifications, in the following sense.

\begin{defn}\label{defn:stratified}
Assume $X$ and $Y$ have affine even stratifications.  A morphism $f: X \to Y$ is called a \emph{stratified morphism} if the following two conditions hold:
\begin{enumerate}
\item For each stratum $Y_t \subset Y$, its preimage $f^{-1}(Y_t) \subset X$ is a union of strata.\label{it:strat-preim}
\item For each point $y \in Y_t$, the collection of spaces\label{it:strat-fiber}
\[
\cS_y = \{ X_s \cap f^{-1}(y) \mid X_s \subset f^{-1}(Y_t) \}
\]
constitutes an affine even stratification of $f^{-1}(y)$.  Moreover, there is an isomorphism
\[
f^{-1}(y) \times Y_t \simto f^{-1}(Y_t)
\]
that restricts to an isomorphism $(X_s \cap f^{-1}(y)) \times Y_t \simto X_s$ for each $X_s$, and such that the composition
\[
f^{-1}(y) \times Y_t \simto f^{-1}(Y_t) \ovto{f} Y_t
\]
is just projection onto the second factor.
\end{enumerate}
If $X$ and $Y$ have only affable stratifications, then $f: X \to Y$ is called a \emph{stratified morphism} if both stratifications admit simultaneous affine even refinements that make $f$ stratified in the above sense.
\end{defn}

This definition is very close to the one originally introduced by Goresky and MacPherson~\cite[Definition~1.2]{gm}.  Note that part~\eqref{it:strat-preim} is simply the definition of a weakly stratified morphism.

The remainder of the section is devoted to studying proper stratified morphisms.  We begin by giving a useful alternate characterization of pure miscible objects.

\begin{defn}\label{defn:sterile}
An object $\cF \in \dsw(X)$ is said to be \emph{sterile} of weight $w$ if for all $s \in \cS$, the objects $j_s^*\cF$ and $j_s^!\cF$ are pure and semisimple of weight $w$.
\end{defn}

It is immediate from the definition that a sterile object of weight $w$ is pure of weight $w$.

\begin{prop}\label{prop:sterile-miscible}
Every sterile object is miscible, and therefore semisimple.
\end{prop}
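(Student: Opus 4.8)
The plan is to argue by induction on the number of strata, after the usual reduction to an affine even stratification. A sterile object stays sterile under refinement: along strata of an affine even refinement $\cS_1$ of $\cS$, the stalks and costalks of $\cF$ are restrictions to affine subspaces of the (pure semisimple) stalks and costalks along $\cS$-strata, hence again pure semisimple of the same weight $w$; and miscibility is insensitive to refinement (Lemma~\ref{lem:affable-refine}). If $X$ is a single stratum $\bA^m$, then sterility of $\cF$ says exactly that $\cF$ is pure and semisimple of weight $w$, i.e.\ $\cF \in \Pure(X)[w]$, which is miscible and semisimple; this is the base case.

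For the inductive step, choose a closed stratum $i\colon X_t \hookrightarrow X$ and let $h\colon U \hookrightarrow X$ be the complementary open union of strata. Using $h^* \cong h^!$ one checks that $h^*\cF$ is sterile of weight $w$ on $U$; as $U$ has fewer strata, $h^*\cF$ is miscible by induction, so $h_!h^*\cF$ is miscible because $h_!$ is genuine (Corollary~\ref{cor:loc-closed}). On the other hand $i^*\cF = j_t^*\cF$ is pure semisimple of weight $w$ on $X_t \cong \bA^{\dim X_t}$, so $i_*i^*\cF$ is a direct sum of shifts and twists of $\ICm_t$ (here we use that $X_t$ is closed), hence lies in $\Pure(X)[w]$. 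Consider the distinguished triangle
\[
h_!h^*\cF \longrightarrow \cF \longrightarrow i_*i^*\cF \xrightarrow{\delta} (h_!h^*\cF)[1].
\]
Both outer terms being miscible, under the identification $\dsmisc(X) \cong \pse\dsm(X)$ the morphism $\delta$ splits into a genuine and an infinitesimal component (via~\eqref{eqn:aff-even-dsw}); the crux is that the infinitesimal component vanishes, i.e.\ that $\delta$ is miscible. Granting this, $\delta \cong \incl(\tilde\delta)$ for a morphism $\tilde\delta$ of $\dsm(X)$, and since $\incl$ is triangulated, $\cF$ is isomorphic to $\incl$ of the cocone of $\tilde\delta$, hence miscible. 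Finally, a sterile object is pure of weight $w$ (as noted before the statement), so the object $\tilde\cF \in \dsm(X) = \Kb\Pure(X)$ with $\incl\tilde\cF \cong \cF$ is pure of weight $w$ ($\incl$ being $t$-exact and weight-preserving, Proposition~\ref{prop:mixed-version}); since $\dsm(X)$ is a mixed triangulated category (Corollary~\ref{cor:kosorlov-fakekoszul}), Lemma~\ref{lem:mixed-tri-basic}\eqref{it:mixed-tri-basic-pure} forces $\tilde\cF$, and hence $\cF$, to be semisimple.

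The remaining point — that $\delta$ is miscible — is the main obstacle. By adjunction $\Hom_{\dsw(X)}(i_*i^*\cF,(h_!h^*\cF)[1]) \cong \Hom(i^*\cF,\, i^!h_!h^*\cF[1])$, and the identity $i^!h_! \cong i^*h_*[-1]$ (a consequence of $i^!h_* = 0$, cf.~\cite[\S1.4]{bbd}) rewrites this as $\Hom(i^*\cF,\, i^*h_*h^*\cF)$, a group of morphisms living entirely on the affine space $X_t$. There $i^*\cF$ is baric-pure of weight $w$, while $i^*h_*h^*\cF \cong \cone(i^!\cF \to i^*\cF)$ has weights in $[w,w+1]$ and is miscible, being built from miscible objects by the genuine functors $h^*$, $h_*$, $i^*$. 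One then applies the affine-space technology of Section~\ref{sect:affine} — the baric decomposition, the vanishing of $\Hom$ between baric-pure objects of distinct weights (Lemma~\ref{lem:aff-baric-split}), and the morphism-miscibility criterion (Lemma~\ref{lem:misc-morphism}, with the cone formula of Lemma~\ref{lem:aff-cone}) — to recognize the avatar of $\delta$ on $X_t$ as a miscible morphism. The delicate part is to match the genuine/infinitesimal splitting of $\delta$ in $\dsmisc(X)$ with the corresponding splitting on $X_t$; this works precisely because $i^*$ and $h_!$ are genuine and therefore compatible with $\incl$ and with the passage to $\dsm$.
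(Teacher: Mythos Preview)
Your strategy coincides with the paper's: induct on the number of strata, peel off a closed stratum $j_t\colon X_t \hookrightarrow X$ with open complement $h\colon U \hookrightarrow X$, and reduce to showing that the boundary map $\delta$ of the triangle $h_!h^*\cF \to \cF \to j_{t*}j_t^*\cF \xrightarrow{\delta}$ is miscible by passing to $X_t$. However, your endgame contains a slip that blocks the argument as written: you assert that $i^*\cF = j_t^*\cF$ is \emph{baric-pure} of weight $w$, but sterility says only that it is \emph{pure} of weight $w$, i.e.\ each $\pH^k(j_t^*\cF)$ is pure of weight $w+k$, not of weight $w$. Hence Lemma~\ref{lem:misc-morphism} does not apply to $\delta'\colon j_t^*\cF \to i^*h_*h^*\cF$ as stated, and your appeal to ``the affine-space technology'' is not a proof without first baric-decomposing \emph{all} the objects involved and identifying the cone in each baric component.

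The paper avoids this entirely, and you are in fact one step away from its argument. You already observed $i^*h_*h^*\cF \cong \cone(j_t^!\cF \to j_t^*\cF)$; equivalently, the map $\alpha = j_t^!(\delta)[-1]\colon j_t^*\cF[-1] \to j_t^!h_!h^*\cF$ sits in the distinguished triangle
\[
j_t^*\cF[-1] \longrightarrow j_t^!h_!h^*\cF \longrightarrow j_t^!\cF \longrightarrow
\]
obtained by applying $j_t^!$ to the original triangle on $X$. All three terms are miscible: the outer two by sterility, the middle because $j_t^!$, $h_!$, $h^*$ are genuine and $h^*\cF$ is miscible by induction. Now simply invoke Theorem~\ref{thm:misc-tri}, already available at this point: a Weil triangle of miscible objects is a miscible triangle, so $\alpha$ is miscible. (The single-stratum case of Theorem~\ref{thm:misc-tri} \emph{is} the baric decomposition plus Lemma~\ref{lem:misc-morphism}, done correctly.) For your ``delicate part'', the paper does not chase the infinitesimal component through adjunction: instead it factors $\delta[-1]$ on $X$ as
\[
j_{t*}j_t^*\cF[-1] \xrightarrow{\ j_{t*}\alpha\ } j_{t*}j_t^!h_!h^*\cF \xrightarrow{\ \epsilon\ } h_!h^*\cF,
\]
using $\Hom(j_{t*}j_t^*\cF[-1], h_*h^*\cF) = 0$; here $\epsilon$ is the counit appearing in the miscible triangle of Theorem~\ref{thm:open-closed}, $j_{t*}\alpha$ is miscible since $j_{t*}$ is genuine, and a composite of miscible morphisms is miscible.
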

\begin{proof}
Let $\cF \in \dsw(X)$ be sterile of weight $w$.  We proceed by induction on the number of strata in $X$.  If $X$ consists of a single stratum, then $\cF$ is miscible because it is pure and semisimple by definition.  Otherwise, choose a closed stratum $j_t: X_t \to X$, and let $h: U \to X$ be the inclusion of the complementary open subset.  Then $h^*\cF \in \dsw(U)$ is sterile, and therefore miscible by induction.  It also follows that $h_!h^*\cF$ is miscible.  On the other hand, $j_t^*\cF$ is pure and semisimple, and so miscible, by definition.  Consider the distinguished triangle
\begin{equation}\label{eqn:sterile-miscible}
h_!h^*\cF \to \cF \to j_{t*}j_t^*\cF \ovto{\delta}.
\end{equation}
To show that $\cF$ is miscible, it suffices to show that $\delta[-1]: j_{t*}j_t^*\cF[-1] \to h_!h^*\cF$ is a miscible morphism.  Consider the distinguished triangle
\[
j_{t*}j_t^!h_!h^*\cF \to h_!h^*\cF \to h_*h^*\cF \to,
\]
which is miscible by Theorem~\ref{thm:open-closed}.  There is no nonzero morphism $j_{t*}j_t^*\cF[-1] \to h_*h^*\cF$, so $\delta[-1]$ factors through $j_{t*}j_t^!h_!h^*\cF \to h_!h^*\cF$.  We are therefore reduced to showing that the map $j_t^*\cF[-1] \to j_t^!h_!h^*\cF$ is miscible.  If we complete this map to a distinguished triangle, we simply obtain the diagram
\[
j_t^*\cF[-1] \to j_t^!h_!h^*\cF \to j_t^!\cF \to
\]
given by applying $j_t^!$ to~\eqref{eqn:sterile-miscible}.  Here, the first and last terms are miscible by the definition of a sterile object, and the middle term is miscible by Theorem~\ref{thm:open-closed}.  Therefore, the whole triangle is miscible by Theorem~\ref{thm:misc-tri}.
\end{proof}

The following lemma is a special case of Theorem~\ref{thm:smooth-proper} below.

\begin{lem}\label{lem:proper-pushforward}
Let $f: X \to \bA^m$ be a proper stratified morphism, where $\bA^m$ is endowed with the trivial stratification, denoted $\cT$.  Then, for $\cF \in \dsw(X)$, we have $f_*\cF \in \dw_\cT(\bA^m)$.  Moreover, the functor $f_*: \dsw(X) \to \dw_\cT(\bA^m)$ is miscible, and for a simple perverse sheaf $\ICm_s \in \Pervsm(X)$, we have that $\pH^k(f_*\ICm_s) = 0$ if $|k| > \dim X_s - m$.
\end{lem}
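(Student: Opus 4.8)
The vanishing assertion is essentially formal. Since $\ICm_s$ is supported on the closed union of strata $\overline{X_s}$, we have $f_*\ICm_s\cong (f|_{\overline{X_s}})_*(\ICm_s|_{\overline{X_s}})$ with $\ICm_s|_{\overline{X_s}}$ perverse. The definition of a stratified morphism forces each stratum $X_t\subseteq\overline{X_s}$ to map to $\bA^m$ as a trivial bundle with fibre $X_t\cap f^{-1}(y)$ of dimension $\dim X_t-m\le \dim X_s-m$, so the proper morphism $f|_{\overline{X_s}}$ has all fibres of dimension $\le \dim X_s-m$; the standard perverse amplitude estimate for proper push-forward along a morphism with fibres of dimension $\le D$ (cf.~\cite[4.2.4]{bbd}) then gives $\pH^k(f_*\ICm_s)=0$ for $|k|>\dim X_s-m$.

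For the remaining two assertions I would first reduce, as usual, to the case of an affine even stratification (passing to a refinement and back via Lemma~\ref{lem:affable-refine}, noting that $\ICm_s$ and $\dim X_s$ are unchanged), and then induct on the number of strata of $X$. If $X$ has one stratum, properness together with the stratified hypothesis forces $X\cong\bA^m$ with $f$ an isomorphism, and there is nothing to prove. For the inductive step, let $j\colon X_0\hookrightarrow X$ be an open stratum and $i\colon Z\hookrightarrow X$ the closed complement; the stratified hypothesis identifies $X_0$ with $\bA^d\times\bA^m$ so that $f|_{X_0}$ becomes the projection $p\colon\bA^d\times\bA^m\to\bA^m$. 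Applying $f_*=f_!$ to the functorial distinguished triangle $j_!j^*\cF\to\cF\to i_*i^*\cF\to$ of Theorem~\ref{thm:open-closed} produces a distinguished triangle $p_!(j^*\cF)\to f_*\cF\to (f|_Z)_*(i^*\cF)\to$. Here $f|_Z\colon Z\to\bA^m$ is again proper and stratified with fewer strata, hence covered by the inductive hypothesis; and $p_!\cong({-})[-2d]\la 2d\ra\circ (p^*)^{-1}$, because $p^*\colon\dsw(\bA^m)\to\dsw(\bA^{d+m})$ is an equivalence with quasi-inverse $p_*$ (well known by homotopy invariance; concretely $p_*p^*\cong\id$ and $p^*$ hits the generators $\uQlb\la n\ra$) and $Rp_!\uQlb\cong\uQlb\la 2d\ra[-2d]$. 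Consequently $p_!$ is genuine (being a composite of the genuine functor $p^*$ of Proposition~\ref{prop:smooth-pullback}, its inverse, and a shift-twist) and carries $\Pure(\bA^{d+m})$ into $\Pure(\bA^m)$. Thus both outer terms of the triangle lie in $\dw_\cT(\bA^m)$, which proves that $f_*\cF\in\dw_\cT(\bA^m)$, and both are miscible when $\cF$ is miscible.

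It remains to see that $f_*\cF$ itself is miscible when $\cF$ is, and that $f_*$ takes miscible morphisms to miscible morphisms; these I would establish simultaneously, by the same induction, in the style of the proof of Proposition~\ref{prop:homog-genuine}. The key point is that the triangle on $X$ is not arbitrary: by Theorem~\ref{thm:open-closed} it is the image under $\incl$ of a distinguished triangle in $\dsm(X)$, and the functors $j_!,i_*,j^*,i^*$ and $p_!$ are all genuine, so its connecting morphism is miscible; applying $f_*$ and invoking the inductive hypothesis for $(f|_Z)_*$ on morphisms, the connecting morphism $(f|_Z)_*(i^*\cF)\to p_!(j^*\cF)[1]$ is again miscible, whence $f_*\cF$ — being, via $\incl$, the cone of a morphism in $\dsm(\bA^m)$ — is miscible. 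On the affine target one can alternatively make the cone computation explicit through Lemmas~\ref{lem:aff-cone} and~\ref{lem:misc-morphism}.

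The main obstacle is precisely this last step. A priori the cone of a morphism between two miscible objects need not be miscible — miscible objects do not form a triangulated subcategory — so the statement "$f_*\cF\in\dw_\cT(\bA^m)$'' does not formally yield miscibility, and one genuinely needs that the recollement triangle lifts to the mixed category and hence has a miscible connecting morphism. Tracking that connecting morphism accurately through the genuine functors $j_!,i_*,i^!,j^*,\dots$ and the relevant adjunctions, so as to verify that $f_*$ of it remains miscible, is the part of the argument I expect to demand the most care; everything else should be routine once the inductive framework and the description of $p_!$ are set up.
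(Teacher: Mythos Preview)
Your vanishing argument via the fibre-dimension estimate \cite[4.2.4]{bbd} is correct and in fact cleaner than the paper's route, which extracts the same vanishing as a by-product of a more detailed cohomology computation.

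The miscibility argument, however, has a genuine gap, and it is exactly the one you flag at the end. You want to conclude that $f_*\cF$ is miscible because it sits in a triangle whose outer terms $p_!(j^*\cF)$ and $(f|_Z)_*(i^*\cF)$ are miscible and whose connecting morphism is miscible. But the connecting morphism on $\bA^m$ is $f_*$ applied to the connecting morphism $\delta\colon i_*i^*\cF\to j_!j^*\cF[1]$ on $X$; it is \emph{not} obtained by applying $(f|_Z)_*$ or $p_!$ to anything, so the genuineness of those functors and the inductive hypothesis on $(f|_Z)_*$ say nothing about $f_*\delta$. Knowing that $\delta$ itself is miscible on $X$ is useless until $f_*$ is known to preserve miscible morphisms, which is precisely what you are trying to prove. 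I do not see how to break this circularity by ``tracking the connecting morphism through adjunctions.''

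The paper sidesteps this entirely by proving something stronger for a smaller class of objects: it shows $f_*\ICm_s\in\Pure(\bA^m)$, and then invokes Proposition~\ref{prop:homog-genuine}\eqref{it:homog-misc} to get miscibility of the functor (on objects and morphisms) in one stroke. The mechanism is the long exact perverse-cohomology sequence of your same triangle, but specialised to $\cF=\ICm_t$ with $t$ the open stratum: the term $p_!(j^*\ICm_t)\cong\uQlb[2m-n]\la n-2m\ra$ has exactly one nonzero cohomology sheaf, so the sequence identifies each $\pH^k(f_*\ICm_t)$ with either $0$, $\uQlb[m]\la n-2m\ra$, or $\pH^k((f|_Z)_*i^*\ICm_t)$. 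The last of these is semisimple by induction (miscibility plus Lemma~\ref{lem:affine-basic}\eqref{it:misc-semis}); combined with the purity of $f_*\ICm_t$ (proper push-forward of a pure object), this forces each $\pH^k(f_*\ICm_t)$ to be a direct sum of copies of $\uQlb[m]\la k-m\ra$. That is the missing idea: purity of $f_*\ICm_t$ plus the degree concentration of $p_!\uQlb$ let you read off semisimplicity of cohomology directly, with no need to control any connecting morphism.
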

\begin{proof}
Assume that $\cS$ is an affine even stratification, and consider a simple object $\ICm_s \in \Pervsm(X)$.  In addition to the vanishing condition stated at the end of the lemma, we will prove that all nonzero $\pH^k(f_*\ICm_s)$ are in fact direct sums of copies of $\uQlb[m]\la k - m\ra$.  That assertion implies that $f_*\ICm_s$ lies in $\dw_\cT(\bA^m)$ and is miscible (by Lemma~\ref{lem:affine-basic}\eqref{it:misc-semis}).  In particular, it follows that $f_*$ preserves the Weil category and, by Proposition~\ref{prop:homog-genuine}, that $f_*: \dsw(X) \to \dw_\cT(\bA^m)$ is miscible.

We proceed by induction on the number of strata in $X$.  Let $j_t: X_t \to X$ be the inclusion of an open stratum, and let $i: Z \to X$ be the inclusion of the complementary closed subvariety.  If $X_s \subset Z$, then, by a slight abuse of notation, we may write  $f_*\ICm_s \cong (f \circ i)_*\ICm_s$.  Note that $f \circ i: Z \to \bA^m$ is also proper and stratified, so the lemma holds for $(f \circ i)_*$ by assumption.  Thus, $f_*\ICm_s$ has the required properties.

If $s = t$, on the other hand, we may form the distinguished triangle
\[
j_{t!}j_t^*\ICm_t \to \ICm_t \to i_*i^*\ICm_t \to.
\]
Let $n = \dim X_t$.  Applying $f_* \cong f_!$, we obtain
\begin{equation}\label{eqn:proper-pushf-genuine}
(f \circ j_t)_! \uQlb[n]\la -n\ra \to f_*\ICm_t \to (f \circ i)_*i^*\ICm_t \to.
\end{equation}
By Theorem~\ref{thm:open-closed}, $i^*\ICm_t$ is a miscible object of $\dsw(Z)$, so by induction, $(f \circ i)_*i^*\ICm_t$ is miscible.  We also know that $\pH^k(i^*\ICm_t) = 0$ for $k \ge 0$.  Moreover, for $k \le -1$, any composition factor of $\pH^k(i^*\ICm_t)$ is a simple perverse sheaf $\ICm_u$ with $\dim X_u < n$.  The cohomology vanishing for $(f \circ i)_*$ implies that
\[
\pH^k((f \circ i)_*i^*\ICm_t) = 0 \qquad\text{if $k \ge n - m - 1$.}
\]
Recall from Definition~\ref{defn:stratified} that there is an isomorphism $X_t \cong \bA^{n-m} \times \bA^m$ such that $(f \circ j_t): \bA^n \to \bA^m$ can be identified with projection onto the second factor.  It follows that
\[
(f \circ j_t)_! \uQlb[n]\la -n\ra \cong \uQlb[2m-n]\la n-2m\ra 
\cong (\uQlb[m]\la n - 2m\ra) [m-n].
\]
Thus, $\pH^k((f \circ j_t)_! \uQlb[n]\la -n\ra)$ vanishes except when $k = n - m$.  Now, forming the long exact sequence in perverse cohomology associated to~\eqref{eqn:proper-pushf-genuine}, we see that
\[
\pH^k(f_*\ICm_t) \cong
\begin{cases}
\pH^k((f \circ i)_*i^*\ICm_t) & \text{if $k < n -m -1$,} \\
\uQlb[m]\la n - 2m\ra & \text{if $k = n - m$,} \\
0 & \text{otherwise.}
\end{cases}
\]
By induction, each $\pH^k((f \circ i)_*i^*\ICm_t)$ is miscible and therefore semisimple.  But we already know that $f_*\ICm_t$ is pure of weight $0$, so in fact, each $\pH^k(f_*\ICm_t)$ must be a direct sum of copies of $\uQlb[m]\la k-m\ra$, as desired.  We have just seen that $\pH^k(f_*\ICm_t) = 0$ for $k > \dim X_t - m$, and the vanishing for $k < m - \dim X_t$ follows by Verdier duality.
\end{proof}

\begin{cor}\label{cor:proper-strat-weil}
Let $f: X \to Y$ be a proper stratified morphism.  If $\cF \in \dsw(X)$, then $f_*\cF \in \dw_\cT(Y)$.
\end{cor}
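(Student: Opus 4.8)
The plan is to verify the criterion of Lemma~\ref{lem:weil-obj}: since the usual sheaf operations keep us inside $\ddel_\cT(Y)$, it suffices to show that $j_t^*(f_*\cF) \in \dw_\cT(Y_t)$ for every stratum $j_t\colon Y_t \to Y$. As in the other results of this section, I would first reduce to the case where $\cS$ and $\cT$ are affine even stratifications. In the general affable case one passes to affine even refinements $\cS_1,\cT_1$ for which $f$ is stratified, applies the affine even case to get $f_*\cF \in \dw_{\cT_1}(Y)$, and then concludes $f_*\cF \in \dw_\cT(Y)$ by Lemma~\ref{lem:affable-refine}, once one knows $f_*\cF$ is $\cT$-constructible; the latter follows from the product structure of Definition~\ref{defn:stratified}(\ref{it:strat-fiber}) by a routine dévissage, using that a proper trivial fibration of the relevant kind has locally constant direct image (Künneth). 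So assume henceforth that $\cS$ and $\cT$ are affine even.

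Now fix a stratum $Y_t$. Because $f$ is proper, $f_* = f_!$, and proper base change gives a canonical isomorphism $j_t^*(f_*\cF) \cong (f_t)_*\bigl((j_t')^*\cF\bigr)$, where $j_t'\colon f^{-1}(Y_t) \hookrightarrow X$ is the inclusion and $f_t\colon f^{-1}(Y_t) \to Y_t$ is the restriction of $f$. By Definition~\ref{defn:stratified}(\ref{it:strat-preim}), $f^{-1}(Y_t)$ is a union of strata, hence a locally closed union of strata, so Proposition~\ref{prop:weil-func0} shows $(j_t')^*\cF \in \dsw(f^{-1}(Y_t))$.

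Next I would invoke Definition~\ref{defn:stratified}(\ref{it:strat-fiber}): choosing $y \in Y_t$, there is an isomorphism $f^{-1}(y) \times Y_t \simto f^{-1}(Y_t)$ carrying the product of the affine even stratification $\cS_y$ on $f^{-1}(y)$ with the trivial stratification on $Y_t \cong \bA^m$ onto $\cS|_{f^{-1}(Y_t)}$, and carrying $f_t$ to the projection $\pi\colon f^{-1}(y) \times Y_t \to Y_t$. The product stratification is affine even (as in the proof of Proposition~\ref{prop:boxtimes}); $f^{-1}(y)$ is proper, being a fibre of the proper map $f$; and thus $\pi$ is a proper stratified morphism onto the trivially stratified affine space $Y_t$. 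Transporting $(j_t')^*\cF$ across the isomorphism, it becomes an object of $\dsw(f^{-1}(y)\times Y_t)$, and Lemma~\ref{lem:proper-pushforward} then yields $\pi_*$ of it in $\dw_\cT(Y_t)$, i.e.\ $j_t^*(f_*\cF) \in \dw_\cT(Y_t)$. Lemma~\ref{lem:weil-obj} then gives $f_*\cF \in \dw_\cT(Y)$, as desired.

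I expect the only real obstacle to be the affable-to-affine-even reduction — specifically, confirming that $f_*\cF$ remains $\cT$-constructible and not merely $\cT_1$-constructible. This is genuinely needed because in the affable setting $f^{-1}(Y_t)$ need not be a union of $\cS$-strata for a coarse stratum $Y_t$, so one cannot simply rerun the proper base change argument with the coarse stratifications; one must argue through the refinements. Everything else is a bookkeeping assembly of proper base change, Proposition~\ref{prop:weil-func0}, and Lemma~\ref{lem:proper-pushforward}.
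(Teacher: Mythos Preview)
Your proposal is correct and follows the same route as the paper: proper base change gives $j_t^*f_*\cF \cong (f|_{f^{-1}(Y_t)})_*(\cF|_{f^{-1}(Y_t)})$, then Lemma~\ref{lem:proper-pushforward} and Lemma~\ref{lem:weil-obj} finish. The paper simply applies Lemma~\ref{lem:proper-pushforward} directly to $f|_{f^{-1}(Y_t)}$ (which is already a proper stratified morphism to $Y_t\cong\bA^m$, since Definition~\ref{defn:stratified}\eqref{it:strat-fiber} supplies the required data) rather than transporting across the product isomorphism, and it does not discuss the affable-to-affine-even reduction at all.
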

\begin{proof}
For each stratum $j_t: Y_t \to Y$, we know that $\cF|_{f^{-1}(Y_t)} \in \dsw(f^{-1}(Y_t))$, so by the previous lemma, the object
\[
j_t^*f_*\cF \cong (f|_{f^{-1}(Y_t)})_*(\cF|_{f^{-1}(Y_t)})
\]
lies in $\dsw(Y_t)$.  By Lemma~\ref{lem:weil-obj}, $f_*\cF \in \dw_\cT(Y)$.
\end{proof}

\begin{thm}\label{thm:smooth-proper}
If $f: X \to Y$ is a proper stratified morphism, then the functor $f_*: \dsw(X) \to \dw_\cT(Y)$ is miscible.  If $f$ is also smooth, then $f_*$ is genuine.
\end{thm}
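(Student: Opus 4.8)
The plan is first to reduce to the case where $\cS$ and $\cT$ are affine even stratifications: for the genuineness assertion this is Lemma~\ref{lem:affable-genuine} together with Corollary~\ref{cor:proper-strat-weil}, and for miscibility it follows from Lemma~\ref{lem:affable-refine} and the fact that $f_*$ preserves $\cS$-constructibility (again clear from proper base change and Corollary~\ref{cor:proper-strat-weil}). So assume $\cS$ and $\cT$ are affine even. For the miscibility of $f_*$, the strategy is to show that $f_*$ restricts to a functor $\Pure(X) \to \Pure(Y)$ between the Orlov categories of Proposition~\ref{prop:aff-even-orlov}, and then to invoke Proposition~\ref{prop:homog-genuine}\eqref{it:homog-misc}. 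Since $\Pure(X)$ is generated under shifts, Tate twists and finite direct sums by the objects $\ICm_s$, it suffices to prove $f_*\ICm_s \in \Pure(Y)$. Because $f$ is proper we have $f_* \cong f_!$, so $f_*\ICm_s$ has weights ${}\le 0$ and weights ${}\ge 0$, hence is pure of weight $0$ in $\dsw(Y)$ by the weight formalism of~\cite{del2} (see also~\cite[\S5.1]{bbd}).

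The heart of the argument is then to show that $f_*\ICm_s$ is \emph{sterile} of weight $0$ in the sense of Definition~\ref{defn:sterile}. Fix a stratum $j_t : Y_t \to Y$ and a point $y \in Y_t$; write $F_y = f^{-1}(y)$ and $f_t : f^{-1}(Y_t) \to Y_t$ for the restriction of $f$. Proper base change gives $j_t^* f_*\ICm_s \cong (f_t)_*(\ICm_s|_{f^{-1}(Y_t)})$. Using the stratified trivialization $F_y \times Y_t \simto f^{-1}(Y_t)$ of Definition~\ref{defn:stratified}\eqref{it:strat-fiber} and the multiplicativity of intersection cohomology sheaves (as in~\eqref{eqn:boxtimes-ic}), $\ICm_s|_{f^{-1}(Y_t)} \cong \ICm^{F_y}_{s} \boxtimes \ICm_{Y_t}$, where $\ICm^{F_y}_{s}$ is the corresponding simple perverse sheaf on $F_y$ (pure of weight $0$) and $\ICm_{Y_t} = \uQlb_{Y_t}[\dim Y_t]\la -\dim Y_t\ra$; since $f_t$ becomes the projection onto $Y_t$, the projection formula and base change give $j_t^* f_*\ICm_s \cong (a_*\ICm^{F_y}_{s}) \boxtimes \ICm_{Y_t}$, where $a : F_y \to \Spec\Fq$ is the structure map. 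Now $a : F_y \to \bA^0$ is itself a proper stratified morphism (its fibre data being supplied by those of $f$), so Lemma~\ref{lem:proper-pushforward} shows $a_*\ICm^{F_y}_{s}$ is miscible, hence has semisimple Frobenius by Lemma~\ref{lem:affine-basic}\eqref{it:misc-semis}, and it is pure of weight $0$ since $a$ is proper. Therefore $j_t^* f_*\ICm_s$ is pure and semisimple of weight $0$; and since $f$ is proper, $j_t^! f_*\ICm_s \cong \D(j_t^* f_*\ICm_s)$, which is pure and semisimple of weight $0$ as well. Thus $f_*\ICm_s$ is sterile of weight $0$, hence miscible and semisimple by Proposition~\ref{prop:sterile-miscible}; being also pure of weight $0$ in $\dsw(Y)$, the object of $\dsm(Y) \cong \Db\Pervsm(Y)$ to which it corresponds is pure of weight $0$, i.e.\ lies in $\Orl(\Pervsm(Y)) \cong \Pure(Y)$ by Corollary~\ref{cor:aff-even-oldmix}. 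This gives $f_*(\Pure(X)) \subset \Pure(Y)$ and completes the proof of miscibility.

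For the genuineness assertion, assume in addition that $f$ is smooth. Rather than trying to show that $f_*|_{\Pure(X)}$ is homogeneous (it is not, in general — consider $\bP^1 \to \Spec\Fq$), I would argue by adjunction. Since $f$ is smooth, $f^* : \dw_\cT(Y) \to \dsw(X)$ is genuine by Proposition~\ref{prop:smooth-pullback}, so under the identification $\dsmisc \cong \pse\dsm$ of Theorem~\ref{thm:misc-infext} it is a genuine pseudotriangulated functor, and $f^*$ is left adjoint to $f_*$. By the miscibility just established and Remark~\ref{rmk:misc-pt}, $f_*$ restricts to a pseudotriangulated functor $\dsmisc(X) \to \dmisc_\cT(Y)$, and since $\dsmisc$ is a full subcategory of $\dsw$, the adjunction $f^* \dashv f_*$ restricts to the miscible categories. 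Theorem~\ref{thm:adjoint-genuine}, applied to the genuine functor $F = f^*$ and its right adjoint $G = f_*$, then shows that $f_*$ is genuine. The step I expect to be the main obstacle is the sterility computation: one must make precise the identification $\ICm_s|_{f^{-1}(Y_t)} \cong \ICm^{F_y}_{s} \boxtimes \ICm_{Y_t}$ under the stratified trivialization — carefully tracking shifts and Tate twists — and verify that the restricted map $a : F_y \to \bA^0$ genuinely satisfies the hypotheses of Definition~\ref{defn:stratified} so that Lemma~\ref{lem:proper-pushforward} applies to it. The remaining ingredients — the reduction to affine even strata, the purity of a proper pushforward of a pure object, and the adjunction argument for genuineness — are formal consequences of results already in hand.
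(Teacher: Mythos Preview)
Your overall architecture matches the paper's: reduce to affine even stratifications, show $f_*\ICm_s$ is sterile of weight $0$, invoke Proposition~\ref{prop:sterile-miscible} and Proposition~\ref{prop:homog-genuine}\eqref{it:homog-misc}, and for the smooth case deduce genuineness by adjunction with a genuine pullback (the paper uses the right adjoint $f^!$, you use the left adjoint $f^*$; both are valid by Theorem~\ref{thm:adjoint-genuine}).

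The gap is in your sterility computation. The identification $\ICm_s|_{f^{-1}(Y_t)} \cong \ICm^{F_y}_{s} \boxtimes \ICm_{Y_t}$ is not correct. The stratum $X_s$ lies in a single $f^{-1}(Y_{t_0})$; for $t \ne t_0$ the restriction $h^*\ICm_s$ is a complex supported on lower strata, not a simple perverse sheaf, and there is no ``corresponding simple perverse sheaf on $F_y$'' to write down. Even when $X_s \subset f^{-1}(Y_t)$, the closure $\overline{X_s}$ typically leaves $f^{-1}(Y_t)$, so $h^*\ICm_s$ is not an IC sheaf for the closure inside $f^{-1}(Y_t)$. The formula~\eqref{eqn:boxtimes-ic} you cite concerns IC sheaves on a genuine product of stratified varieties, not restrictions of IC sheaves from a larger ambient space. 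You flag this step as the main obstacle, but it is actually false as stated, not merely delicate.

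The paper avoids this entirely. Writing $h: f^{-1}(Y_t) \hookrightarrow X$ and $f_0 = f|_{f^{-1}(Y_t)}: f^{-1}(Y_t) \to Y_t$, it uses proper base change to get $j_t^* f_*\ICm_s \cong f_{0*} h^*\ICm_s$ and $j_t^! f_*\ICm_s \cong f_{0*} h^!\ICm_s$, then observes that $h^*$, $h^!$ are miscible by Corollary~\ref{cor:loc-closed} and that $f_{0*}$ is miscible by Lemma~\ref{lem:proper-pushforward} (since $Y_t$ is an affine space). Thus $j_t^* f_*\ICm_s$ and $j_t^! f_*\ICm_s$ are miscible without ever identifying $h^*\ICm_s$ explicitly; purity then gives semisimplicity, hence sterility. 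This is exactly the step where you should drop the fiberwise box-product picture and apply Lemma~\ref{lem:proper-pushforward} to $f_0$ directly.
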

\begin{proof}
We will show that $f_*$ takes any simple perverse sheaf on $X$ to a pure miscible object of the same weight on $Y$.  It will then follow by Proposition~\ref{prop:homog-genuine} that $f_*$ is miscible.  In general, the induced functor $f_*: \Pure(X) \to \Pure(Y)$ will \emph{not} be homogeneous, so we cannot use that same proposition to prove genuineness.  However, in the case where $f$ is also smooth, it has a right adjoint $f^!$ that is genuine by Proposition~\ref{prop:smooth-pullback}, so $f_*$ is genuine by Theorem~\ref{thm:adjoint-genuine}.

Let $j_t: Y_t \to Y$ denote the inclusion of a stratum in $Y$, and let $h: f^{-1}(Y_t) \to X$ denote the inclusion of its preimage in $X$.  In addition, let $f_0 = f|_{f^{-1}(Y_t)}: f^{-1}(Y_t) \to Y_t$.  Then $h^*$ and $h^!$ are miscible by Corollary~\ref{cor:loc-closed}, and $f_{0*}$ is as well, by Lemma~\ref{lem:proper-pushforward}.  It follows that the objects
\[
j_t^*f_*\ICm_s \cong f_{0*}h^*\ICm_s
\qquad\text{and}\qquad
j_t^!f_*\ICm_s \cong f_{0*}h^!\ICm_s
\]
are miscible.  We know that $f_*\ICm_s$ is pure of weight $0$.  Since $Y$ is endowed with an affine even stratification, it follows that all objects $j_t^*f_*\ICm_s$ and $j_t^!f_*\ICm_s$ are pure.  Since they are pure and miscible, they are semisimple, and so $f_*\ICm_s$ is sterile.  By Proposition~\ref{prop:sterile-miscible}, $f_*\ICm_s$ is miscible, as desired.
\end{proof}

\subsection{Other miscible functors}

It is reasonable to expect that $f_*$ is genuine for any proper stratified morphism $f$, regardless of whether it is smooth, but unfortunately, the authors do not know how to prove this statement.  Similar remarks apply to the following statement.

\begin{prop}\label{prop:tensorhom}
Suppose $X$ has an affable stratification.  Then the functors
\begin{align*}
\Lotimes&: \dsw(X) \times \dsw(X) \to \dsw(X), \\
\cRHom&: \dsw(X)^\op \times \dsw(X) \to \dsw(X)
\end{align*}
are miscible.
\end{prop}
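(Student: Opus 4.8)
The plan is to reduce to an affine even stratification, dispose of $\cRHom$ by Verdier duality, apply the bifunctor analogue of Proposition~\ref{prop:homog-genuine}\eqref{it:homog-misc} to cut the problem down to simple perverse sheaves, and finish with a stalk/costalk computation.

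First I would reduce to the case where $\cS$ is an affine even stratification. If $\cS_1$ is an affine even refinement of $\cS$, then for $\cF,\cG\in\dsw(X)$ the base-change isomorphism $j_s^*(\cF\Lotimes\cG)\cong j_s^*\cF\Lotimes j_s^*\cG$, together with Lemma~\ref{lem:weil-obj} and Proposition~\ref{prop:weil-func1}, shows that $\Lotimes$ and $\cRHom$ preserve $\cS$-constructibility; so by Lemma~\ref{lem:affable-refine}, $\cS$-miscibility of these functors follows from $\cS_1$-miscibility. Next, since $\cRHom(\cF,\cG)\cong\D(\cF\Lotimes\D\cG)$ and $\D$ is genuine, hence miscible, by Proposition~\ref{prop:verdier}, it suffices to prove that $\Lotimes$ is miscible. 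So assume $\cS$ is affine even.

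Just as the proof of Proposition~\ref{prop:homog-genuine} reduces a single-variable miscibility statement to the Orlov category $\Pure(X)$, the bifunctor analogue of part~\eqref{it:homog-misc} of that proposition (proved by the same simultaneous induction on objects and miscible morphisms, run over the doubled ``paragraph'' supports used in the proof of Theorem~\ref{thm:biorlov}, and invoking Theorem~\ref{thm:misc-tri} at each step) reduces the miscibility of $\Lotimes$ to checking that the restriction $\Lotimes\colon\Pure(X)\times\Pure(X)\to\dsw(X)$ takes values in $\dsmisc(X)$ (and then, since a pure miscible object is semisimple, automatically in $\Pure(X)$). Because every object of $\Pure(X)$ is a finite direct sum of objects $\ICm_s\la n\ra[-n]$ and $\Lotimes$ commutes with $[1]$ and $\la 1\ra$ in each variable, this reduces to showing that $\ICm_s\Lotimes\ICm_t$ is miscible for all simple perverse sheaves $\ICm_s,\ICm_t\in\Pervsm(X)$. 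To see this, recall that $\ICm_s\Lotimes\ICm_t\in\dsw(X)$ by Proposition~\ref{prop:weil-func1} and that it is pure of weight $0$ by Deligne's bounds (the derived tensor product of objects with weights${}\le 0$, resp.\ ${}\ge 0$, has weights${}\le 0$, resp.\ ${}\ge 0$). I claim it is moreover sterile of weight $0$ in the sense of Definition~\ref{defn:sterile}, and so miscible by Proposition~\ref{prop:sterile-miscible}. For each stratum $j_u\colon X_u\to X$ we have $j_u^*(\ICm_s\Lotimes\ICm_t)\cong j_u^*\ICm_s\Lotimes j_u^*\ICm_t$; by the second condition in Definition~\ref{defn:aff-even} and Lemma~\ref{lem:affine-basic}\eqref{it:aff-hered}, $j_u^*\ICm_s$ is a finite direct sum of copies of objects $\uQlb_{X_u}\la a\ra[-a]$ on $X_u\cong\bA^{\dim X_u}$, and since $\uQlb\Lotimes\uQlb\cong\uQlb$ on affine space, the tensor product is again such a direct sum, hence pure and semisimple of weight $0$. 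For the costalk, using $\D\ICm_t\cong\ICm_t$, the identity $j_u^!\D\cong\D j_u^*$, the isomorphism $\cF\Lotimes\cG\cong\D\cRHom(\cF,\D\cG)$, and the compatibility of $j_u^*$ with $\cRHom$ on bounded constructible complexes, one gets $j_u^!(\ICm_s\Lotimes\ICm_t)\cong\D\cRHom(j_u^*\ICm_s,j_u^*\ICm_t)$; by the same description of $j_u^*\ICm_s$ and $j_u^*\ICm_t$ (and $\cRHom(\uQlb,\uQlb)\cong\uQlb$ on affine space) this is again pure and semisimple of weight $0$. Thus $\ICm_s\Lotimes\ICm_t$ is sterile, completing the argument.

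The main obstacle is the bifunctor version of Proposition~\ref{prop:homog-genuine}\eqref{it:homog-misc} invoked above: since $\dsmisc(X)$ is not closed under cones, one cannot simply build $\cF\Lotimes\cG$ from the objects $\ICm_s\Lotimes\ICm_t$ by iterated cones, and the induction must carry along the miscibility of objects \emph{and} of miscible morphisms simultaneously, exactly as in the proofs of Proposition~\ref{prop:homog-genuine} and Theorem~\ref{thm:biorlov}; I expect writing this out carefully (or extracting it cleanly from those proofs) to be the bulk of the work. A secondary, routine point is the verification of the costalk formula $j_u^!(\cF\Lotimes\cG)\cong\D\cRHom(j_u^*\cF,j_u^*\D\cG)$ and of the compatibility of $j_u^*$ with $\cRHom$ for the constructible complexes at hand.
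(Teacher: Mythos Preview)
Your approach is essentially the paper's: reduce to an affine even stratification, show that $\ICm_s \Lotimes \ICm_t$ is sterile of weight~$0$ via stalk/costalk computations, invoke Proposition~\ref{prop:sterile-miscible}, and then cite Proposition~\ref{prop:homog-genuine}\eqref{it:homog-misc}; finally deduce the statement for $\cRHom$ from $\cRHom(\cF,\cG)\cong\D(\cF\Lotimes\D\cG)$ and Proposition~\ref{prop:verdier}.

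Two differences are worth noting. First, for the costalk the paper uses the direct identity
\[
j_s^!(\cF \Lotimes \cG) \cong j_s^*\cF \Lotimes j_s^!\cG,
\]
rather than your detour through $\D$ and $\cRHom$. This is cleaner: it avoids the question of whether $j_u^*$ commutes with $\cRHom$, which is not a general fact for locally closed immersions (though it does hold here, since constructible $\Qlb$-complexes are dualizable). With the paper's formula, the costalk is immediately $j_s^*\ICm_s \Lotimes j_s^!\ICm_t$, a tensor product of pure semisimple objects on $\bA^{\dim X_s}$, and you are done. Second, you are right that, strictly speaking, a bifunctor analogue of Proposition~\ref{prop:homog-genuine}\eqref{it:homog-misc} is being used; the paper simply cites the one-variable version without comment. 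Your outline of how to prove the bifunctor version (by the same simultaneous induction as in Theorem~\ref{thm:biorlov}) is correct, but you should not regard this as ``the bulk of the work'': once you know $\Lotimes$ carries $\Pure(X)\times\Pure(X)$ into $\Pure(X)$ itself (not merely into $\dsmisc(X)$), the adaptation of the proof of Proposition~\ref{prop:homog-genuine}\eqref{it:homog-misc} is entirely mechanical.
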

\begin{proof}
For $\Lotimes$, let us assume that $\cS$ is an affine even stratification.  If $\cF, \cG \in \Pure(X)$, then for any stratum $j_s: X_s \to X$, we have
\begin{equation}\label{eqn:otimes-misc}
j_s^*(\cF \Lotimes \cG) \cong j_s^*\cF \Lotimes j_s^*\cG
\qquad\text{and}\qquad
j_s^!(\cF \Lotimes \cG) \cong j_s^*\cF \Lotimes j_s^!\cG.
\end{equation}
On a single stratum $X_s \cong \bA^{\dim X_s}$, we clearly have $\uQlb \Lotimes \uQlb \cong \uQlb$.  It follows that the tensor product of semisimple pure objects on $\bA^n$ is semisimple.  Since $j_s^*\cF$, $j_s^*\cG$, and $j_s^!\cG$ are pure and semisimple, \eqref{eqn:otimes-misc}~shows that $j_s^*(\cF \Lotimes \cG)$ and $j_s^!(\cF \Lotimes \cG)$ are pure and semisimple.  Thus, $\cF \Lotimes \cG$ is sterile, so it is miscible by Proposition~\ref{prop:sterile-miscible}.  Since the bifunctor $\Lotimes$ takes $\Pure(X) \times \Pure(X)$ to $\Pure(X)$, it is miscible by Proposition~\ref{prop:homog-genuine}\eqref{it:homog-misc}.

Finally, since we have a natural isomorphism $\cRHom(\cF,\cG) \cong \D(\cF \Lotimes \D\cG)$, the miscibility of $\cRHom$ follows from Proposition~\ref{prop:verdier} and the statement for $\Lotimes$.
\end{proof}

\subsection{Ind-varieties}
\label{ss:ind-var}

We conclude Part~\ref{part:sheaf} of the paper by explaining how to extend the above results to certain ind-varieties.  Let $X$ be an ind-variety over $\F_q$.  Let $\cS = \{ X_s \}_{s \in S}$ be a collection of disjoint locally closed ordinary (finite-dimensional) subvarieties of $X$ whose union is $X$.  Assume that the closure of each $X_s$ is the union of $X_s$ and finitely many other $X_t$'s.  In particular, each $\overline{X_s}$ is an ordinary, finite-dimensional variety, so it makes sense to form the perverse sheaf $\ICm_s \in \Pervsm(\overline{X_s})$.  We call $\cS$ an \emph{affine even stratification} or an \emph{affable stratification} if it restricts to such a stratification on each variety $\overline{X_s}$.

The index set $S$ is partially ordered by containment of closures: we say that $s \le t$ if $\overline{X_s} \subset \overline{X_t}$.  Whenever $s \le t$, we have an inclusion map $i_{s,t}: \overline{X_s} \hookrightarrow \overline{X_t}$, and if $s \le t \le u$, we clearly have
\[
i_{s,u} = i_{t,u} \circ i_{s,t}.
\]
These closed inclusion maps give rise to fully faithful push-forward functors
\[
i_{s,t*}: \scE(\overline{X_s}) \to \scE(\overline{X_t})
\]
where $\scE$ stands for one of the following eight categories:
\begin{equation}\label{eqn:cat-list}
\Pervsm,\quad
\Pure,\quad
\Pervsw,\quad
\Pervs,\qquad
\dsm,\quad
\dsmisc,\quad
\dsw,\quad
\ds.
\end{equation}
We define the corresponding categories on $X$ by taking inductive limits over $S$:
\[
\scE(X) = \tlimind_{\substack{\longrightarrow \\ S}} \scE(\overline{X_s})
\qquad\text{where $\scE$ comes from the list~\eqref{eqn:cat-list}.}
\]
Every object and every morphism in one of these inductive limit categories is ``supported'' on some finite-dimensional variety $\overline{X_s}$, and as a result, many results about the categories attached to $\overline{X_s}$ generalize to $X$ without any difficulty.  The following basic facts are straightforward to verify; we omit the proofs.

\begin{prop}
Let $X$ be an ind-variety with an affable stratification.
\begin{enumerate}
\item $\dsm(X)$, $\dsw(X)$, and $\ds(X)$ are triangulated categories. We also have $\dsmisc(X) \cong \pse\dsm(X)$.
\item $\Pervsm(X)$, $\Pervsw(X)$, and $\Pervs(X)$ are  the hearts of $t$-structures on $\dsm(X)$, $\dsw(X)$, and $\ds(X)$, respectively.  They are all finite-length categories.
\item $\Pure(X)$ is an Orlov category, and $\dsm(X) \cong \Kb\Pure(X)$.
\item $\Pervsm(X)$ is a mixed abelian category, and its mixed structure makes $\dsm(X)$ into a mixed version of $\ds(X)$.
\item If $\cS$ is an affine even stratification, then $\Pure(X)$ is Koszulescent.  As a consequence, $\dsm(X) \cong \Db\Pervsm(X)$, and $\Pervsm(X)$ is a Koszul abelian category. \qed
\end{enumerate}
\end{prop}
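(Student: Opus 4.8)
\emph{Proof proposal.} The plan is to deduce every assertion from its finite-dimensional counterpart on the varieties $\overline{X_s}$ by passing to the filtered $2$-colimit along the closed push-forward functors $i_{s,t*}$, $s\le t$. The one structural input I would set up first is that, for each of the eight categories $\scE$ in~\eqref{eqn:cat-list}, the transition functor $i_{s,t*}\colon\scE(\overline{X_s})\to\scE(\overline{X_t})$ is fully faithful: for $\ds$ this is standard, for $\dsw$ it follows from Proposition~\ref{prop:weil-func0} together with the adjunction $i^*i_{s,t*}\cong\id$, for $\dsm$ it is part of Theorem~\ref{thm:open-closed}, and for $\dsmisc\cong\pse\dsm$ it follows because $i_{s,t*}$ is genuine (Corollary~\ref{cor:loc-closed}) and hence, by Remark~\ref{rmk:misc-pt}, induces a pseudotriangulated functor whose action on $\Hom$-groups is the direct sum of two isomorphisms. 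I would also record that these functors are $t$-exact for the perverse $t$-structures (a closed push-forward is $t$-exact), send distinguished triangles to distinguished triangles, commute with $\incl$, $\pg$, $\degr$, $\Kb(-)$, $\Db(-)$, $\pse(-)$, and restrict compatibly to the perverse hearts and to the Orlov subcategories $\Pure$ and $\Orl(\Pervsm(-))$. Granting this, the general bookkeeping fact for filtered $2$-colimits along fully faithful functors — every object and every morphism of $\scE(X)$ comes from some $\scE(\overline{X_s})$, and $\Hom$-spaces are already computed at a finite stage — transports each finite-dimensional statement to $X$.

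For~(1) and~(2): since any bounded complex over $\Pure(X)$ and any finite diagram in $\dsm(X)$ (resp.\ $\dsw(X)$, $\ds(X)$) is supported on a single $\overline{X_s}$ by filteredness of $S$, declaring a triangle distinguished when it is distinguished on some $\overline{X_s}$ makes $\dsm(X)$ triangulated, the octahedral axiom and TR1--TR4 being inherited stagewise. The identification $\dsmisc(X)\cong\pse\dsm(X)$ holds because $\pse(-)$ only enlarges $\Hom(\cF,\cG)$ to $\Hom(\cF,\cG)\oplus\Hom(\cF,\cG[-1])$ functorially, hence commutes with the $2$-colimit, and Theorem~\ref{thm:misc-infext} supplies the equivalence at each stage compatibly with the (genuine) transition functors. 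Truncation triangles exist on each $\overline{X_s}$ and are preserved by the $t$-exact $i_{s,t*}$, so they glue to a $t$-structure with heart $\tlimind_S\Pervsm(\overline{X_s})=\Pervsm(X)$ (resp.\ $\Pervsw(X)$, $\Pervs(X)$); finite length follows since each object lives on some $\overline{X_s}$, where the heart is finite-length, and — as $i_{s,t*}$ is exact and fully faithful — so do its sub- and quotient objects.

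For~(3)--(5): conditions~(1)--(3) of Definition~\ref{defn:orlov} for $\Pure(X)$ involve only finitely many indecomposables at a time, hence reduce to the Orlov property of $\Pure(\overline{X_s})$ (Proposition~\ref{prop:affable-orlov}/\ref{prop:aff-even-orlov}), with the inherited degree function; and $\dsm(X)\cong\Kb\Pure(X)$ because $\Kb(-)$ commutes with the $2$-colimit and $\dsm(\overline{X_s})=\Kb\Pure(\overline{X_s})$ by definition and Lemma~\ref{lem:affable-refine-dsm}. The weight function on $\Irr(\Pervsm(X))=\bigcup_s\Irr(\Pervsm(\overline{X_s}))$, the vanishing~\eqref{eqn:mixed-ext-vanish}, the functor $\degr$, and the degrading isomorphism~\eqref{eqn:degrading} are all colimits of the corresponding data on the $\overline{X_s}$, giving~(4). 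Finally $\Db(-)$ also commutes with the $2$-colimit along the exact fully faithful $\Db(i_{s,t*})$, so $\Db(\Pervsm(X))=\tlimind_S\Db(\Pervsm(\overline{X_s}))$ and $\real\colon\Db(\Kos(\Pure(X)))\to\Kb\Pure(X)$ is the $2$-colimit of the equivalences of Proposition~\ref{prop:aff-even-orlov}, hence an equivalence; thus $\Pure(X)$ is Koszulescent, $\dsm(X)\cong\Db\Pervsm(X)$ (cf.\ Corollary~\ref{cor:aff-even-oldmix}), and $\Pervsm(X)$ is Koszul by Proposition~\ref{prop:kosorlov-koszul} (or directly from~\eqref{eqn:koszul-ext-vanish} checked on the $\overline{X_s}$).

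The main obstacle is purely organizational: one must formulate once and for all a lemma stating that a filtered $2$-colimit of categories along fully faithful functors inherits triangulated, $t$-exact, abelian, Orlov, and Koszul(escent) structure whenever the transition functors respect those structures, and then verify that all eight systems in~\eqref{eqn:cat-list}, together with the functors $\incl$, $\pg$, $\degr$, $\real$, $\Kb(-)$, $\Db(-)$, $\pse(-)$ relating them, assemble into compatible such systems. There is no new geometric content — every genuinely nontrivial input (full faithfulness of $i_{s,t*}$, the equivalence $\dsmisc\cong\pse\dsm$, Koszulescence of $\Pure$) is already available for finite-dimensional varieties — which is why it is reasonable to omit the verification.
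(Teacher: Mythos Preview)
Your proposal is correct and matches the paper's approach exactly: the paper omits the proof entirely, stating only that ``every object and every morphism in one of these inductive limit categories is `supported' on some finite-dimensional variety $\overline{X_s}$, and as a result, many results about the categories attached to $\overline{X_s}$ generalize to $X$ without any difficulty,'' and your sketch is precisely the straightforward verification the authors have in mind.
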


As an immediate consequence, all the miscibility and genuineness results proved in Section~\ref{sect:genuine} apply in the ind-variety setting.

\begin{rmk}
Note that in the ind-variety setting, when $\cS$ is an affine even stratification, $\Pervsm(X)$ need not have enough projectives.  (The proof of Theorem~\ref{thm:bgs-aff-even}\eqref{it:bgs-mix-proj} does not go through, as it involves induction on the number of strata.) 
\end{rmk}

\section{Mixed tilting sheaves}
\label{sect:tilting}

In this section, we consider only varieties with an affine even stratification.  Tilting perverse sheaves (whose definition is recalled below) on such a variety are certain objects that enjoy both the ``local'' nature of $\IC$ objects and the good $\Ext$-vanishing properties of projectives and injectives.  For basic properties and applications to flag varieties in the setting of $\Pervs(X)$, see~\cite{bbm}.  Similar statements in the setting of $\Pervsw(X)$ can be found in~\cite{yun}.

Here, we classify the indecomposable tilting perverse sheaves in $\Pervsm(X)$.  Under an additional assumption on the variety $X$, we prove that tilting perverse sheaves form a Koszulescent Orlov category.  As an application of the latter, we show how to strengthen Theorem~\ref{thm:smooth-proper}.

\subsection{Classification of tilting perverse sheaves}
\label{ss:tilt-pervsm}

We begin with the definition.  For the equivalence of the two conditions below, see~\cite[Proposition~1.3]{bbm}.

\begin{defn}
Let $X$ be a variety with an affine even stratification $\cS$.  A perverse sheaf $\cF$ (in any of $\Pervs(X)$, $\Pervsw(X)$, or $\Pervsm(X)$) is said to be \emph{tilting} if either of the following equivalent conditions holds:
\begin{enumerate}
\item For each stratum $j_s: X_s \to X$, both $j_s^*\cF$ and $j_s^!\cF$ are perverse sheaves.
\item $\cF$ admits both a standard filtration and a costandard filtration.
\end{enumerate}
\end{defn}

The next two statements are adapted from results in~\cite{bbm,yun}; we include proofs because $\Pervsm(X)$ differs in some details from $\Pervs(X)$ and $\Pervsw(X)$.

\begin{lem}\label{lem:tilt-indecomp}
Let $j_t: X_t \to X$ be the inclusion of a closed stratum, and let $h: U \to X$ be the inclusion of the complementary open subset.  Let $M \in \Pervsm(X)$ be a tilting perverse sheaf, and assume that the canonical morphism $j_t^!M \to j_t^*M$ vanishes.  Then $M$ is indecomposable if and only if $h^*M$ is indecomposable.
\end{lem}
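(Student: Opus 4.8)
I would prove the two directions separately, using the distinguished triangle associated with the recollement $(j_t, h)$. The forward direction is easy: if $M \cong M_1 \oplus M_2$ nontrivially in $\Pervsm(X)$, then $h^*M \cong h^*M_1 \oplus h^*M_2$, and since $h$ is an open inclusion of a union of strata and $M$ is tilting, $h^*M_1$ and $h^*M_2$ are themselves tilting perverse sheaves on $U$; one then notes that neither summand can vanish, because if $h^*M_1 = 0$ then $M_1$ is supported on the \emph{closed} stratum $X_t$, forcing $j_t^*M_1 = j_t^!M_1 = M_1$ and hence $j_t^!M_1 \to j_t^*M_1$ to be the identity on $M_1$ — but this morphism is a direct summand of the vanishing morphism $j_t^!M \to j_t^*M$, so $M_1 = 0$, a contradiction. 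Thus a nontrivial decomposition of $M$ yields a nontrivial decomposition of $h^*M$.

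\textbf{The converse.} Suppose $h^*M \cong N_1 \oplus N_2$ is a nontrivial decomposition in $\Pervsm(U)$. Here $N_1$ and $N_2$ are again tilting (being direct summands of the tilting perverse sheaf $h^*M$, since tiltingness is inherited by summands in $\Pervsm$). The strategy is to lift each $N_i$ to a tilting perverse sheaf $\tilde N_i \in \Pervsm(X)$ with $h^*\tilde N_i \cong N_i$, and then show $M \cong \tilde N_1 \oplus \tilde N_2$. For the lifting, I would use the same mechanism used for $\IC$-extensions in the recollement formalism available from Theorem~\ref{thm:open-closed}: the morphism $j_t^!(h_!N_i) \to j_t^*(h_*N_i)$ lives in a group that, on the point-like stratum $X_t \cong \bA^{\dim X_t}$, decomposes into graded pieces controlled by Lemma~\ref{lem:affine-basic}; one takes $\tilde N_i$ to be the image of $h_!N_i \to h_*N_i$ in $\Pervsm(X)$, which is the unique perverse extension with no subobject or quotient supported on $X_t$. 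One must check $\tilde N_i$ is tilting: $j_t^*\tilde N_i$ and $j_t^!\tilde N_i$ are perverse because $N_i$ is tilting on $U$ and the relevant $\Ext$-groups that could obstruct perversity vanish (again by the cohomological computation on the affine stratum $X_t$, using $\Hom^i(\uQlb,\uQlb\la n\ra)=0$ for $i\geq 2$ from Lemma~\ref{lem:affine-basic}(1)), exactly as in~\cite{bbm}.

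\textbf{Assembling the decomposition.} Having $\tilde N_1 \oplus \tilde N_2$, I would compare it with $M$. Both are tilting with the same restriction $h^*(\tilde N_1 \oplus \tilde N_2) \cong h^*M$; moreover both satisfy the hypothesis that $j_t^!(-) \to j_t^*(-)$ vanishes — for $M$ by assumption, and for $\tilde N_i$ by construction (the no-sub-no-quotient condition forces $j_t^*\tilde N_i$ to be a quotient of $j_t^!\tilde N_i$ through an intermediate object, and one checks the composite is zero; this is where the vanishing of $j_t^!M \to j_t^*M$ for \emph{all} of $M$ is genuinely used, to propagate the condition to summands after we know the objects agree). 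The key point is then that a tilting perverse sheaf $P$ with $j_t^!P \to j_t^*P$ zero is \emph{canonically} recovered from $h^*P$ as $\operatorname{im}(h_!h^*P \to h_*h^*P)$: this is a uniqueness statement, following because any such $P$ sits in the canonical triangle $j_{t!}j_t^!P \to P \to h_*h^*P\to$ with $j_t^*P$ a sub-quotient determined by the attaching data, and the vanishing kills the ambiguity. Applying this to both $M$ and $\tilde N_1 \oplus \tilde N_2$ gives $M \cong \tilde N_1 \oplus \tilde N_2$, a nontrivial decomposition. Hence $M$ is decomposable, completing the contrapositive.

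\textbf{Main obstacle.} The delicate point is the lifting step: establishing that the perverse extension $\tilde N_i = \operatorname{im}(h_!N_i \to h_*N_i)$ is again tilting and inherits the vanishing of $j_t^! \to j_t^*$. This is exactly the heart of the Beilinson--Bezrukavnikov--Mirkovi\'c argument, and the only subtlety in transplanting it to $\Pervsm(X)$ is that one must keep track of weights — but since $X_t$ is an affine space and we are using the affine-even hypothesis, the relevant $\Ext^1$-groups between standard and costandard objects vanish integrally (by Lemma~\ref{lem:affine-basic} and the fact, from Theorem~\ref{thm:open-closed} and Proposition~\ref{prop:proj-std-filt}, that $\Dm_s,\Nm_s \in \Pervsm(X)$ with $\Ext^k(\Dm_s,\ICm_t\la n\ra)=0$ unless $n=-k$), so no new difficulty arises beyond bookkeeping.
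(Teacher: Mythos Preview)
Your forward direction is correct and clean. The converse, however, has a genuine gap: the object $\tilde N_i = \operatorname{im}(h_!N_i \to h_*N_i) = h_{!*}N_i$ is \emph{not} tilting in general, even when $N_i$ is tilting on $U$. Take $X = \bP^1$ with the closed point $X_t$ and open $U = \bA^1$; the constant perverse sheaf $N = \uQlb[1]\la -1\ra$ on $U$ is tilting, but $h_{!*}N = \ICm_{\bP^1}$ has $j_t^*\ICm_{\bP^1}$ concentrated in perverse degree $-1$, so it is not tilting. The correct tilting extension (constructed in the paper's Proposition~\ref{prop:tilt-exist}) is built by a different recipe: one takes the kernel $A$ and cokernel $B$ of $h_!N \to h_*N$ and forms an object $\tilde N$ fitting into $0 \to A \to \tilde N \to h_*N \to 0$ and $0 \to h_!N \to \tilde N \to B \to 0$; this $\tilde N$ has $j_t^!\tilde N \cong A$ and $j_t^*\tilde N \cong B$, which are nonzero in general, so $\tilde N \ne h_{!*}N$. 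Your ``canonical reconstruction'' claim --- that any tilting $P$ with $j_t^!P \to j_t^*P$ zero equals $h_{!*}(h^*P)$ --- is therefore also false, since the tilting extension itself is a counterexample.

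You could try to repair the argument using the correct tilting extension, but note that Proposition~\ref{prop:tilt-exist} (existence and uniqueness of that extension) \emph{invokes} the very lemma you are proving, so you would have to redo that work independently. The paper avoids all of this with a short ring-theoretic argument: from the two short exact sequences $0 \to j_{t*}j_t^!M \xrightarrow{p} M \to h_*M_U \to 0$ and $0 \to h_!M_U \to M \xrightarrow{q} j_{t*}j_t^*M \to 0$, one applies $\Hom(M,-)$ to the first and uses $\Ext^1(j_t^*M, j_t^!M) = 0$ (semisimplicity of $\Pervsm(X_t)$) to get a short exact sequence
\[
0 \to \Hom(j_t^*M, j_t^!M) \xrightarrow{\theta} \End(M) \to \End(M_U) \to 0,
\]
where $\theta(f) = p \circ f \circ q$. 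The vanishing hypothesis says $q \circ p = 0$, so the image of $\theta$ squares to zero and is a nil ideal; hence $\End(M)$ is local if and only if $\End(M_U)$ is. This handles both directions at once with no lifting needed.
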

\begin{proof}
Let $M_U = h^*M \in \Pervsm(U)$.  We can form two short exact sequences
\[
0 \to j_{t*}j_t^!M \ovto{p} M \to h_*M_U \to 0
\qquad\text{and}\qquad
0 \to h_!M_U \to M \ovto{q} j_{t*}j_t^*M \to 0.
\]
By assumption, we have $q \circ p = 0$.  Applying $\Hom(M,\cdot)$ to the first of these, we obtain an exact sequence
\[
0 \to \Hom(M,j_{t*}j_t^!M) \to \End(M) \to \Hom(M,h_*M_U) \to \Ext^1(M,j_{t*}j_t^!M).
\]
Note that $\Ext^1(M,j_{t*}j_t^!M) \cong \Ext^1(j_t^*M,j_t^!M) = 0$, since all $\Ext^1$-groups in the category $\Pervsm(X_t)$ vanish.  So we actually have a short exact sequence; rewriting it using the usual adjointness properties, we obtain
\[
0 \to \Hom(j_t^*M,j_t^!M) \ovto{\theta} \End(M) \to \End(M_U) \to 0.
\]
For $f: j_t^*M \to j_t^!M$, we have $\theta(f) = p \circ f \circ q$.  Since $q \circ p = 0$, the image of $\theta$ is a nil ideal in $\End(M)$.  It follows that $\End(M)$ is a local ring if and only if $\End(M_U)$ is a local ring.  In other words, $M$ is indecomposable if and only if $M_U$ is indecomposable.
\end{proof}

\begin{prop}\label{prop:tilt-exist}
Let $X$ be a variety with an affine even stratification $\cS$.  For each stratum $X_s$, there exists a unique (up to isomorphism) indecomposable tilting perverse sheaf $\Tm_s \in \Pervsm(X)$ whose support is $\overline{X_s}$ and whose restriction to $X_s$ is given by $\Tm_s|_{X_s} \cong \uQlb[\dim X_s]\la-\dim X_s\ra$.  Moreover, every indecomposable tilting perverse sheaf is isomorphic to some $\Tm_s\la n\ra$.
\end{prop}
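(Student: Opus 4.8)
For existence, the plan is to induct on the number of strata. First I would reduce to the case where $X_s$ is open and dense: the closure $\overline{X_s}$ is a closed union of strata, so it inherits an affine even stratification in which $X_s$ is open, and it suffices to construct $\Tm_s$ there and push it forward to $X$ along the (fully faithful, genuine) closed inclusion of Theorem~\ref{thm:open-closed}. So assume $X=\overline{X_s}$ with $X_s$ open, and put $d=\dim X_s$. If $X$ has a single stratum, $\Tm_s=\uQlb[d]\la-d\ra$ works. Otherwise choose a \emph{closed} stratum $j_t\colon X_t\hookrightarrow X$ (so $X_t\cong\bA^{\dim X_t}$, and $X_t\neq X_s$), with open complement $h\colon U\hookrightarrow X$. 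By induction there is an indecomposable tilting $\Tm^U_s\in\Pervsm(U)$ with $(\Tm^U_s)|_{X_s}\cong\uQlb[d]\la-d\ra$. Following the construction of \cite{bbm} and \cite{yun}, I would extend it to a tilting $M\in\Pervsm(X)$: the object $h_!\Tm^U_s$ carries a standard filtration (since $h_!\Dm^U_u\cong\Dm_u$ and $h_!$ is exact on $\Delta$-filtered objects), and the task is to enlarge it by an object of the form $j_{t*}K$ so that the result $M$ is also costandard-filtered and $j_t^*M$, $j_t^!M$ become perverse. The $\Ext^1$-vanishing needed to carry this out is supplied by the recollement of Theorem~\ref{thm:open-closed} together with the fact (Lemma~\ref{lem:affine-basic}\eqref{it:wt-split}, \eqref{it:misc-semis}) that $\Pervsm(X_t)$ is semisimple. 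The extension can be arranged so that the canonical morphism $j_t^!M\to j_t^*M$ vanishes, and then Lemma~\ref{lem:tilt-indecomp} shows $M$ is indecomposable because $h^*M\cong\Tm^U_s$ is. Setting $\Tm_s:=M$ gives an object with support $\overline{X_s}$ and the prescribed restriction to $X_s$.

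For uniqueness, let $M'$ be any indecomposable tilting object with support $\overline{X_s}$ and $M'|_{X_s}\cong\uQlb[d]\la-d\ra$; I keep the notation $j_t$, $h$ as above. Working throughout inside $\dsm(X)$ with the recollement of Theorem~\ref{thm:open-closed}, and using $\Ext^1_{\Pervsm(X_t)}=0$, the adjunction triangle $i_*i^!(\,\cdot\,)\to(\,\cdot\,)\to h_*h^*(\,\cdot\,)\to$ shows that restriction $\Hom_{\dsm(X)}(A,B)\to\Hom_{\dsm(U)}(h^*A,h^*B)$ is surjective for $(A,B)=(M',\Tm_s)$ and $(\Tm_s,M')$. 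By induction $h^*M'\cong h^*\Tm_s$ (both are indecomposable tilting on $U$ with the same restriction to $X_s$); lift this isomorphism and its inverse to $\phi\colon M'\to\Tm_s$ and $\psi\colon\Tm_s\to M'$. Since $j_t^!\Tm_s\to j_t^*\Tm_s$ vanishes, the proof of Lemma~\ref{lem:tilt-indecomp} shows that $\ker(\End(\Tm_s)\to\End(h^*\Tm_s))$ is a nil ideal, so $\psi\phi$ is an automorphism of $\Tm_s$; hence $\phi$ is a split monomorphism. As $\End(M')$ is local, $\phi\psi$ is either nilpotent or invertible, and it cannot be nilpotent since $\psi\phi$ is invertible; so $\phi\psi$ is invertible and $\phi$ is an isomorphism.

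For the final clause, summands of tilting objects are again tilting, and $\Pervsm(X)$ is finite-length, hence Krull--Schmidt, so it suffices to treat an indecomposable tilting $M$. By Proposition~\ref{prop:proj-std-filt}, $M$ has a standard filtration; among the strata $X_t$ such that $\Dm_t\la n\ra$ occurs as a subquotient, a maximal one $X_s$ for the closure order is unique (otherwise a splitting would propagate up, contradicting indecomposability), and then $j_s^*M\cong\uQlb[\dim X_s]\la m\ra$ for a unique $m$ (by the same lifting argument, any splitting of $j_s^*M$ would split $M$). The uniqueness part then yields $M\cong\Tm_s\la m\ra$.

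The main obstacle is the extension step of the existence argument: producing the tilting extension $M$ of $\Tm^U_s$ with control both of its support and of the canonical map $j_t^!M\to j_t^*M$. This is where the arguments of \cite{bbm,yun} must be transcribed into $\Pervsm(X)$, and the essential care is to perform all the relevant $\Hom$- and $\Ext$-computations in $\dsm(X)$ rather than in $\dsw(X)$, so that the semisimplicity of $\Pervsm(X_t)$ is available; every other step is a routine adaptation of the classical highest-weight-category proof.
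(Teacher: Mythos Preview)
Your approach is essentially the paper's: induct on strata by removing a closed stratum $X_t$, extend an inductively given tilting sheaf across $X_t$, and lift isomorphisms from $U$ for uniqueness. A few points where the paper's execution is cleaner or where your argument slips:

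\textbf{Existence.} The paper does not defer to \cite{bbm,yun}; it gives a one-step construction. With $T=\Tm^U_s$, both $h_!T$ and $h_*T$ are perverse, and the kernel $A$ and cokernel $B$ of the canonical map $h_!T\to h_*T$ are supported on $X_t$. Since $\Ext^2_{\Pervsm(X)}(B,A)$ can be computed in $\Pervsm(X_t)$ (Theorem~\ref{thm:open-closed}) and hence vanishes, the four-term exact sequence $0\to A\to h_!T\to h_*T\to B\to 0$ admits a splice: there is an object $M$ fitting into $0\to A\to M\to h_*T\to 0$ and $0\to h_!T\to M\to B\to 0$. These sequences give $j_t^!M\cong A$ and $j_t^*M\cong B$, so $M$ is tilting, and the canonical map $j_t^!M\to j_t^*M$ is visibly zero. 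Your reference to ``$\Ext^1$-vanishing'' should be $\Ext^2$.

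\textbf{Uniqueness.} You have the compositions swapped: $\psi\phi\in\End(M')$, while $\phi\psi\in\End(\Tm_s)$. The nil-ideal argument applies to $\End(\Tm_s)$ (where you control the canonical map by construction), giving $\phi\psi$ invertible; then $\psi$ is a split monomorphism, and indecomposability of $M'$ finishes. Alternatively, as the paper does: both $\End(M')$ and $\End(\Tm_s)$ are local, and both compositions restrict to the identity on $U$, hence both are units.

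\textbf{Final clause.} This is the one genuine gap. The assertion that a maximal stratum in the standard filtration is unique because ``a splitting would propagate up'' is not justified: two incomparable maximal strata can share strata in their closures, and no splitting of $M$ is evident. The paper avoids this by folding the classification into the same induction: for an arbitrary indecomposable tilting $M$ not supported on $X_t$, the restriction $h^*M$ is indecomposable (because $\End(h^*M)$ is a nonzero quotient of the local ring $\End(M)$), so by induction $h^*M\cong h^*\Tm_s\la n\ra$ for some $s,n$, and the lifting argument then yields $M\cong\Tm_s\la n\ra$. (Also, tilting objects have standard filtrations by definition; Proposition~\ref{prop:proj-std-filt} concerns projectives.)
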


\begin{rmk}
In general, the uniqueness statement for $\Tm_s$ does not hold in $\Pervsw(X)$, essentially because there may be a nonvanishing $\Ext^1$-group between perverse sheaves supported on a closed stratum.  See~\cite[Remark~2.2]{yun}.
\end{rmk}

\begin{proof}
We proceed by induction on the number of strata.  Let $j_t: X_t \to X$ be a closed stratum, and let $h: U \to X$ be the complementary open subvariety.  In the case where $s = t$, it is clear that $\Tm_t = \ICm_t$ is the unique indecomposable tilting perverse sheaf supported on $X_t$ up to Tate twist.

Suppose now that $X_s \subset U$.  By induction, there is a unique indecomposable tilting perverse sheaf $T \in \Pervsm(U)$ supported on $\overline{X_s} \cap U$ and satisfying $T|_{X_s} \cong \uQlb[\dim X_s]\la -\dim X_s\ra$.  Since $T$ admits both a standard filtration and a costandard filtration, both $h_!T$ and $h_*T$ are perverse sheaves.  Let $A$ and $B$ denote the kernel and cokernel, respectively, of the natural map $h_!T \to h_*T$, so that we have an exact sequence
\begin{equation}\label{eqn:tilt-ext2}
0 \to A \to h_!T \to h_*T \to B \to 0.
\end{equation}
Both $A$ and $B$ are supported on $X_t$.  By Theorem~\ref{thm:open-closed}, the group $\Ext^2_{\Pervsm(X)}(B,A)$ can be computed instead in $\Pervsm(X_t)$.  The latter is a semisimple category, so $\Ext^2(B,A) = 0$.  Therefore, there exists an object $\Tm_s \in \Pervsm(X)$ that fits into two short exact sequences
\begin{equation}\label{eqn:tms-tilt}
0 \to A \to \Tm_s \to h_*T \to 0
\qquad\text{and}\qquad
0 \to h_!T \to \Tm_s \to B \to 0.
\end{equation}
These sequences show that $j_{t*}j_t^!\Tm_s \cong A$ and $j_{t*}j_t^*\Tm_s \cong B$, so $\Tm_s$ is certainly tilting.  It is obvious from~\eqref{eqn:tilt-ext2} that the canonical map $A \to B$ vanishes, so by Lemma~\ref{lem:tilt-indecomp}, $\Tm_s$ is indecomposable.

Now, let $M \in \Pervsm(X)$ be an indecomposable tilting perverse sheaf that is not supported on $X_t$.  Then, by Lemma~\ref{lem:tilt-indecomp}, $M_U = h^*M$ is indecomposable, and by induction, there is some stratum $X_s \subset U$ and some $n \in \Z$ such that $M_U \cong h^*\Tm_s\la n \ra$.  Assume for now that $n = 0$.  Let us apply the functors $\Hom(M,\cdot)$ and $\Hom(\cdot,M)$, respectively, to the two short exact sequences~\eqref{eqn:tms-tilt}.  By the reasoning in the proof of Lemma~\ref{lem:tilt-indecomp}, we obtain two new short exact sequences:
\begin{gather*}
0 \to \Hom(j_t^*M, B) \to \Hom(M,\Tm_s) \to \Hom(M_U,h^*\Tm_s) \to 0,\\
0 \to \Hom(A, j_t^!M) \to \Hom(\Tm_s,M) \to \Hom(h^*\Tm_s,M_U) \to 0.
\end{gather*}
Fix an isomorphism $f_U: M_U \simto h^*\Tm_s$, and let $g_U = f_U^{-1}: h^*\Tm_s \to M_U$.  We can lift these to maps $\tilde f_U: M \to \Tm_s$ and $\tilde g_U: \Tm_s \to M$.  Note that $\tilde g_U \circ \tilde f_U \in \End(M)$ is a unit, since $h^*(\tilde g_U \circ \tilde f_U) = \id_{M_U}$.  Similarly, $\tilde f_U \circ \tilde g_U \in \End(\Tm_s)$ is a unit.  We conclude that $\tilde f_U$ and $\tilde g_U$ are isomorphisms.

We have shown that every indecomposable tilting perverse sheaf in $\Pervsm(X)$ is isomorphic to some $\Tm_s\la n\ra$.  The uniqueness of $\Tm_s$ follows.
\end{proof}

Consider the unmixed tilting perverse sheaves $\Tw_s = \exs(\Tm_s) \in \Pervs(X)$.  It is well known that
\[
\Ext^k(\Tw_s, \Tw_t) = 0\qquad\text{for all $k > 0$.}
\]
This is essentially a consequence of the fact that $\Ext^k(\Delta_s, \nabla_t) = 0$ for $k > 0$.  The analogous fact for tilting perverse sheaves in $\Pervsm(X)$ follows by Proposition~\ref{prop:mixed-version}.  This observation can be used to establish the following fact; see~\cite[Proposition~1.5]{bbm} for a proof.

\begin{prop}\label{prop:ktilt-dsm}
The natural functor $\Kb\Tilt(X) \to \Db\Pervsm(X) \cong \dsm(X)$ is an equivalence of categories.\qed
\end{prop}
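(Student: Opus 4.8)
The plan is to show that the natural functor $\Kb\Tilt(X) \to \dsm(X)$ is fully faithful and essentially surjective, working throughout inside $\Db\Pervsm(X)$ via the identification $\dsm(X) \cong \Db\Pervsm(X)$ of Corollary~\ref{cor:aff-even-oldmix}. The single prior input I would rely on is the vanishing $\Ext^k_{\Pervsm(X)}(\Tm_s, \Tm_t\la n\ra) = 0$ for $k > 0$ recorded in the remark just before the proposition; equivalently, $\Hom_{\dsm(X)}(M, N[k]) = 0$ whenever $M, N \in \Tilt(X)$ and $k \ne 0$. All other ingredients — the classification of indecomposable tiltings, the structure of standard filtrations, and the genuineness of the inclusion functors — are available from Proposition~\ref{prop:tilt-exist}, Proposition~\ref{prop:proj-std-filt}, and Theorem~\ref{thm:open-closed}.

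For full faithfulness I would invoke the standard homological principle that if $\scA$ is a full additive subcategory of an abelian category $\scM$ with $\Ext^{>0}_{\scM}(\scA,\scA) = 0$, then $\Kb(\scA) \to \Db(\scM)$ is fully faithful, applied with $\scM = \Pervsm(X)$ and $\scA = \Tilt(X)$. The proof is a double d\'evissage in the two arguments: given bounded complexes $M^\bullet$, $N^\bullet$ of tilting sheaves, I would peel off one term at a time using the stupid-truncation triangles $M^a[-a] \to M^\bullet \to \sigma_{>a}M^\bullet \to$ (where $a$ is the lowest nonzero degree), and similarly for $N^\bullet$, reducing by the five lemma to the case $M^\bullet = T$, $N^\bullet = T'[m]$ with $T, T' \in \Tilt(X)$; there $\Hom_{\Db(\scM)}(T, T'[m]) = \Ext^m_{\scM}(T,T')$ vanishes unless $m = 0$, in agreement with $\Hom_{\Kb(\scA)}$.

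For essential surjectivity I would argue that every object of $\Db\Pervsm(X)$ lies in the strictly full triangulated subcategory generated by $\Tilt(X)$. Since the standard objects $\Dm_s\la n\ra$ generate $\Db\Pervsm(X)$ as a triangulated category — which follows by induction on the number of strata from the distinguished triangles $j_{s!}j_s^*\cF \to \cF \to i_*i^*\cF \to$ of Theorem~\ref{thm:open-closed}, together with $j_{s!}\uQlb[\dim X_s]\la-\dim X_s\ra = \Dm_s$ — it is enough to exhibit, for each $s$, a finite coresolution $0 \to \Dm_s \to M^0 \to \cdots \to M^r \to 0$ by tilting perverse sheaves, for then $\Dm_s$ is isomorphic in $\Db\Pervsm(X)$ to the complex $(M^0 \to \cdots \to M^r)$. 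I would build this by induction on the number of strata contained in $\overline{X_s}$: by Proposition~\ref{prop:tilt-exist} there is an indecomposable tilting $\Tm_s$ with $\Tm_s|_{X_s} \cong \Dm_s|_{X_s}$, the canonical map $\Dm_s \to \Tm_s$ is injective, and its cokernel $Q_s$ admits a standard filtration whose subquotients are of the form $\Dm_u\la m\ra$ with $X_u \subset \overline{X_s}\smallsetminus X_s$; each of these has a finite tilting coresolution by the inductive hypothesis, so $Q_s$ does too (splicing along the standard filtration, using the $\Ext^{>0}$-vanishing among tiltings to make the splicing go through), and splicing $0 \to \Dm_s \to \Tm_s \to Q_s \to 0$ with a coresolution of $Q_s$ yields the desired coresolution of $\Dm_s$.

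The hard part will be the essential-surjectivity step, and within it the precise structure theory that makes the induction run: that $\Dm_s$ embeds in $\Tm_s$ with cokernel filtered by strictly smaller standards, which rests on Proposition~\ref{prop:tilt-exist} and BGG-type reciprocity, together with keeping the coresolutions finite (guaranteed by the finiteness of the stratification). Full faithfulness, by contrast, is essentially formal once the $\Ext$-vanishing is in place. An alternative to reproving everything is to observe that the argument of \cite[Proposition~1.5]{bbm} for $\Pervs(X)$ and $\Pervsw(X)$ transports verbatim to $\Pervsm(X)$, now that $\Tilt(X)$ is known to behave as expected.
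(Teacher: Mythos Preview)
Your proposal is correct and is precisely the argument of \cite[Proposition~1.5]{bbm}, which is all the paper does here: it records the $\Ext^{>0}$-vanishing between tiltings and then cites that reference for the proof. You have simply unpacked that citation (and indeed note this yourself at the end), so there is nothing to compare.
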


\subsection{Tilting objects as an Orlov category}
\label{ss:tilt-orlov}

We now consider varieties on which tilting objects obey the constraint described below:

\begin{defn}
A variety $X$ with an affine even stratification is said to satisfy \emph{condition~(W)} if for any two strata $X_s, X_t \subset X$ with $X_t \subset \overline{X_s}$ and $t \ne s$, we have that $j_t^*\Tm_s$ has weights${}\ge 1$ and $j_t^!\Tm_s$ has weights${}\le -1$.
\end{defn}

The terminology is taken from~\cite{yun}, where it is shown that flag varieties and affine flag varieties satisfy condition~(W)~\cite[Theorem~5.3.1]{yun}.  The following is the main result of this section.

\begin{thm}\label{thm:tilt-kosorlov}
Let $X$ be a variety with an affine even stratification $\cS$ satisfying condition~(W), and let $\Tilt(X)$ be the additive category of tilting perverse sheaves in $\Pervsm(X)$.  For an indecomposable tilting perverse sheaf $\Tm_s\la n\ra$, let us put
\[
\deg \Tm_s\la n\ra = -n.
\]
With respect to this degree function, $\Tilt(X)$ is a Koszulescent Orlov category.
\end{thm}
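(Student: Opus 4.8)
The plan is to prove the statement in two stages: first that $\Tilt(X)$, equipped with the given degree function, is an Orlov category, and then that this Orlov category is Koszulescent. For the first stage, axiom~\eqref{it:orlov-homfin} of Definition~\ref{defn:orlov} is immediate because $\Pervsm(X)$ is finite-length, so all its $\Hom$-spaces are finite-dimensional. For axioms~\eqref{it:orlov-endind} and~\eqref{it:orlov-deg} I would work with the standard and costandard filtrations carried by the indecomposable tilting sheaves $\Tm_s\la n\ra$ (Proposition~\ref{prop:tilt-exist} and the definition of tilting), together with the pairing
\[
\Hom^i_{\dsm(X)}(\Dm_u\la c\ra,\Nm_v\la d\ra)\cong
\begin{cases}
\Bbbk & \text{if $i=0$, $u=v$, $c=d$,}\\
0 & \text{otherwise,}
\end{cases}
\]
which follows from the unmixed vanishing $\Ext^{>0}(\Delta_u,\nabla_v)=0$, $\Hom(\Delta_u,\nabla_v)=\delta_{uv}\Bbbk$, Proposition~\ref{prop:mixed-version}\eqref{it:mixed-isom}, and a purity argument locating the cohomological degree. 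Since $X_s$ is the open stratum of $\overline{X_s}$ one has $j_s^*\Tm_s\cong j_s^!\Tm_s\cong\uQlb[\dim X_s]\la-\dim X_s\ra$, whence $[\Tm_s:\Dm_s]=[\Tm_s:\Nm_s]=1$ with no Tate twist; the role of condition~(W) is that for $u\ne s$ the perverse sheaf $j_u^*\Tm_s$ on the affine space $X_u$ has weights${}\ge 1$, hence is a direct sum of copies of $\uQlb[\dim X_u]\la c-\dim X_u\ra$ with $c\ge 1$, and dually $j_u^!\Tm_s$ has weights${}\le -1$. Computing $[\Tm_s:\Dm_u\la a\ra]=\dim\Hom(j_u^*\Tm_s,\uQlb[\dim X_u]\la a-\dim X_u\ra)$ and $[\Tm_s:\Nm_u\la a\ra]=\dim\Hom(\uQlb[\dim X_u]\la a-\dim X_u\ra,j_u^!\Tm_s)$ by adjunction and Lemma~\ref{lem:affine-basic}\eqref{it:aff-cohom}, this forces: in the standard filtration of $\Tm_s$ every subquotient other than $\Dm_s$ is of the form $\Dm_u\la a\ra$ with $a\ge 1$, and in the costandard filtration every subquotient other than $\Nm_s$ is $\Nm_v\la b\ra$ with $b\le -1$.

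Feeding these constraints into the pairing above, the space $\Hom_{\Pervsm(X)}(\Tm_s\la m\ra,\Tm_t\la n\ra)$ is computed as a sum of terms $[\Tm_s\la m\ra:\Dm_u\la c\ra]\cdot[\Tm_t\la n\ra:\Nm_u\la c\ra]$, and a short case analysis on the admissible twists shows that as soon as $m\ge n$ the only possibly nonzero term occurs when $s=t$ and $m=n$, in which case it equals $\Bbbk$. Specializing to $(s,m)=(t,n)$ this gives $\End(\Tm_s\la m\ra)\cong\Bbbk$, which is axiom~\eqref{it:orlov-endind}, and specializing to $(s,m)\ne(t,n)$ with $\deg\Tm_s\la m\ra=-m\le -n=\deg\Tm_t\la n\ra$ it gives exactly the vanishing required by axiom~\eqref{it:orlov-deg}. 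Thus $\Tilt(X)$ is an Orlov category.

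For the second stage, recall that Koszulescence means that $\real\colon\Db(\Kos(\Tilt(X)))\to\Kb(\Tilt(X))$ is an equivalence. I would use the identification $\Kb(\Tilt(X))\cong\dsm(X)$ of Proposition~\ref{prop:ktilt-dsm}, under which both sides are generated by the indecomposable tilting sheaves $\Tm_s\la n\ra$ (on the left these are the simple objects of $\Kos(\Tilt(X))$ from Proposition~\ref{prop:kosorlov-tstruc}, placed in the appropriate cohomological degrees). As in the proof of Proposition~\ref{prop:kosorlov-orlov}, it then suffices to show that $\real$ induces an isomorphism $\Ext^k_{\Kos(\Tilt(X))}(L,L')\simto\Hom_{\Kb(\Tilt(X))}(L,L'[k])$ for all simple $L,L'$ and all $k$; this holds automatically for $k=0,1$ by \cite[Remarque~3.1.17]{bbd}, and once it is known for $k<n$ it is injective for $k=n$, so the problem reduces to surjectivity, after which $\real$ is an equivalence by the argument concluding the proof of \cite[Proposition~3.1.16]{bbd}. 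The target $\Hom_{\Kb(\Tilt(X))}(L,L'[k])$ is concentrated in a single cohomological degree and there equals $\Hom_{\Tilt(X)}(\Tm_s\la m\ra,\Tm_t\la n\ra)$, so the content is to produce these morphisms as Yoneda $k$-extensions of simples inside $\Kos(\Tilt(X))$, using the explicit description of that category from Corollary~\ref{cor:kosorlov-supp} (its objects are chain complexes of tilting sheaves whose $i$-th term is a sum of $\la i\ra$-twisted indecomposable tiltings) together with the standard and costandard filtrations and condition~(W) once more. An alternative route is to realize $\Tilt(X)$ as the category of projective objects of the finite-length category of finitely generated modules over $\End\bigl(\bigoplus_s\Tm_s\bigr)$, to verify — again using condition~(W) to control the gradings of minimal projective resolutions — that this category is split Koszul of finite projective dimension, and then to invoke Theorem~\ref{thm:kosorlov-duality} directly.

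The hard part will be this second stage: by whichever route, one must control the homological structure of $\Kos(\Tilt(X))$ — equivalently, the grading of projective resolutions of simples in the ``Ringel-dual'' abelian category attached to $\Pervsm(X)$ — and it is precisely here that condition~(W) is indispensable, and where a genuinely combinatorial input beyond the purely formal theory of Orlov categories developed in Sections~\ref{sect:hot-orlov} and~\ref{sect:kosorlov} is required. The first stage, by contrast, is a direct consequence of condition~(W) and standard highest-weight-category bookkeeping.
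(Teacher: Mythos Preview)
Your Stage~1 is correct and in fact cleaner than the paper's argument.  The paper proves the Orlov conditions by induction on the number of strata, using a short exact sequence
\[
0 \to \Hom(j_u^*\Tm_s\la n\ra, j_u^!\Tm_t\la m\ra) \to \Hom(\Tm_s\la n\ra, \Tm_t\la m\ra) \to \Hom(h^*\Tm_s\la n\ra, h^*\Tm_t\la m\ra) \to 0
\]
where $X_u$ is a closed stratum and $U$ its complement; condition~(W) controls the first term and induction the last.  Your direct count via the standard/costandard pairing avoids the induction and gives the same conclusion.

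Your Stage~2, however, has a genuine gap: you have correctly identified that surjectivity of $\real$ on $\Ext$-groups is the issue, but neither of your proposed routes is concrete enough to carry through.  The paper's key idea, which you are missing, is Lemma~\ref{lem:tilt-tstruc}: one defines a new $t$-structure $(\tD^{\le 0},\tD^{\ge 0})$ on $\dsm(X)$ by declaring $\cF\in\tD^{\le 0}$ iff every $j_s^*\cF$ has weights~$\ge 0$, and $\cF\in\tD^{\ge 0}$ iff every $j_s^!\cF$ has weights~$\le 0$.  This is obtained by gluing (Theorem~\ref{thm:open-closed}), and condition~(W) together with~\cite[Corollaire~1.4.24 and Proposition~1.4.26]{bbd} identifies its simple objects as precisely the $\Tm_s[n]\la -n\ra$.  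Since these are also the simples of $\Kos(\Tilt(X))$, the two $t$-structures coincide.

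The payoff is that the heart $\tD^{\le 0}\cap\tD^{\ge 0}$ visibly contains the objects $\Dm_s[n]\la -n\ra$ and $\Nm_s[n]\la -n\ra$, and these satisfy the axioms of~\cite[\S3.2]{bgs} for a highest weight category.  The argument of~\cite[Theorem~3.2.1]{bgs} then gives enough projectives with standard filtrations, and~\cite[Corollary~3.3.2]{bgs} gives that $\real$ is an equivalence.  So the ``combinatorial input'' you anticipated is precisely this: recognising $\Kos(\Tilt(X))$ as a glued $t$-structure in which the shifted standard and costandard sheaves play their usual highest weight roles, thereby reducing Koszulescence to known BGS machinery rather than attempting to build Yoneda extensions by hand.
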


We first require the following lemma, suggested by the remarks in~\cite[\S1.3]{yun}.

\begin{lem}\label{lem:tilt-tstruc}
Consider the following two subcategories of $\dsm(X)$:
\begin{align*}
\tD^{\le 0} &= \{ \cF \in \dsm(X) \mid \text{$j_s^*\cF$ has weights${}\ge 0$ for all strata $X_s$} \}, \\
\tD^{\ge 0} &= \{ \cF \in \dsm(X) \mid \text{$j_s^!\cF$ has weights${}\le 0$ for all strata $X_s$} \}.
\end{align*}
Then $(\tD^{\le 0}, \tD^{\ge 0})$ is a bounded $t$-structure on $\dsm(X)$.  Its heart is a finite-length category, and a set of representatives for the isomorphism classes of simple objects is
\[
\{ \Tm_s[n]\la -n\ra \mid \text{$s \in \cS$, $n \in \Z$} \}.
\]
\end{lem}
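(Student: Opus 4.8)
The plan is to realize $(\tD^{\le 0},\tD^{\ge 0})$ by gluing, in the sense of \cite[\S 1.4]{bbd}, a family of ``weight'' $t$-structures attached to the strata, inducting on the number of strata. So I would fix a closed stratum $j_t\colon X_t\to X$ with open complement $h\colon U\to X$. The first step is to note that Theorem~\ref{thm:open-closed} turns $(\dsm(X_t),\dsm(X),\dsm(U))$ into a recollement in the sense of \cite[1.4.3]{bbd}: the functors $j_t^*$, $j_{t*}=j_{t!}$, $j_t^!$, $h_!$, $h^*=h^!$, $h_*$ satisfy the required adjunctions, $j_{t*}$, $h_*$, $h_!$ are fully faithful, the two functorial triangles \eqref{eqn:open-closed-dt} are available, and $j_t^*h_!=0=j_t^!h_*$ (the last two being read off from \eqref{eqn:open-closed-dt} applied to $h_!\cF$ and $h_*\cF$).

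The second step is the stratum case: for $X_s\cong\bA^{\dim X_s}$ with the trivial stratification, I claim that
\[
\tD^{\le 0}_{X_s}=\{\cF:\cF\text{ has weights}\ge 0\},\qquad \tD^{\ge 0}_{X_s}=\{\cF:\cF\text{ has weights}\le 0\}
\]
is a bounded $t$-structure on $\dsm(X_s)=\Kb\Pure(X_s)$. Every object here has bounded weights (being a bounded complex of pure objects), the truncation triangles come from Lemma~\ref{lem:mixed-tri-basic}\eqref{it:mixed-tri-basic-dt}, and the orthogonality $\Hom(\tD^{\le 0}_{X_s},\tD^{\ge 1}_{X_s})=0$ — a \emph{wrong-way} vanishing, from weights $\ge 0$ to weights $\le -1$ — follows from Corollary~\ref{cor:kosorlov-fakekoszul}, since $\Pure(X_s)$ is an Orlov category (Proposition~\ref{prop:aff-even-orlov}); the ordinary mixed vanishing of Lemma~\ref{lem:mixed-tri-basic}\eqref{it:mixed-tri-basic-homv} would not suffice. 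The heart of this $t$-structure consists of the objects pure of weight $0$, which by Lemma~\ref{lem:mixed-tri-basic}\eqref{it:mixed-tri-basic-pure} are the finite direct sums of the simple objects $\uQlb[\dim X_s+n]\la-\dim X_s-n\ra$, $n\in\Z$, a split finite-length category. The same applies to $X_t$, and the inductive hypothesis supplies the $t$-structure on $\dsm(U)$. Then \cite[Th\'eor\`eme~1.4.10]{bbd} glues these into a bounded $t$-structure on $\dsm(X)$, with $\tD^{\le 0}=\{\cF:j_t^*\cF\in\tD^{\le 0}_{X_t},\ h^*\cF\in\tD^{\le 0}_{U}\}$ and dually for $\tD^{\ge 0}$ using $j_t^!$ and $h^!=h^*$; unwinding the condition on $U$ by induction shows this coincides with the pair in the statement, and boundedness propagates from the finitely many strata. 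The finite length of the heart $\tD^\heartsuit$, and the description of its simple objects as intermediate extensions $j_{s!*}\mathcal L$ of the simple objects $\mathcal L$ of the stratum hearts, are part of the standard gluing formalism \cite[\S 1.4]{bbd}; concretely the simples of $\tD^\heartsuit$ are the $j_{s!*}\bigl(\uQlb[\dim X_s+n]\la-\dim X_s-n\ra\bigr)$.

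The last, and I expect the most delicate, step is to identify $j_{s!*}\bigl(\uQlb[\dim X_s]\la-\dim X_s\ra\bigr)$ with $\Tm_s$, the general case following by applying $[n]\la-n\ra$; this is exactly where condition~(W) enters. First, $\Tm_s\in\tD^\heartsuit$: for a stratum $X_u$, $j_u^*\Tm_s$ vanishes unless $X_u\subset\overline{X_s}$, equals $\uQlb[\dim X_s]\la-\dim X_s\ra$ (pure of weight $0$) when $u=s$, and has weights $\ge 1$ when $X_u\subsetneq\overline{X_s}$ by~(W), so in all cases $j_u^*\Tm_s$ has weights $\ge 0$, and symmetrically $j_u^!\Tm_s$ has weights $\le 0$. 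Second, since $j_s^*\Tm_s$ is the relevant simple object of $\tD^\heartsuit_{X_s}$, the intermediate extension is characterised in $\tD^\heartsuit$ by the conditions $j_u^*\Tm_s\in\tD^{\le -1}_{X_u}$ and $j_u^!\Tm_s\in\tD^{\ge 1}_{X_u}$ for the boundary strata $X_u\subsetneq\overline{X_s}$ (iterating the $j_{!*}$ characterisation of \cite[\S 1.4]{bbd}); a bookkeeping of the weight shifts identifies $\tD^{\le -1}_{X_u}$ with the category of objects of weights $\ge 1$ and $\tD^{\ge 1}_{X_u}$ with that of objects of weights $\le -1$, which are precisely the two halves of condition~(W). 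Hence $\Tm_s\cong j_{s!*}\bigl(\uQlb[\dim X_s]\la-\dim X_s\ra\bigr)$ is simple in $\tD^\heartsuit$, and reading this backwards, every simple object of $\tD^\heartsuit$ is some $\Tm_s[n]\la-n\ra$; these are pairwise non-isomorphic because their supports and their restrictions to the open stratum recover $s$ and $n$. The main obstacle is thus not conceptual but a matter of fixing weight-shift conventions so that condition~(W) lines up with the $\tD^{\le -1}$, $\tD^{\ge 1}$ conditions, and of invoking the sharper Koszul $\Hom$-vanishing of Corollary~\ref{cor:kosorlov-fakekoszul} on the strata.
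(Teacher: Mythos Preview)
Your proposal is correct and follows essentially the same route as the paper: induction on strata via the gluing formalism of \cite[\S1.4]{bbd} (made available by Theorem~\ref{thm:open-closed}), the single-stratum case handled directly, and the identification of the simple objects with the $\Tm_s[n]\la-n\ra$ via the characterization of intermediate extensions in \cite[1.4.23--1.4.26]{bbd} together with condition~(W). Your explicit invocation of Corollary~\ref{cor:kosorlov-fakekoszul} for the ``wrong-way'' $\Hom$-vanishing on a stratum is a nice clarification of a step the paper leaves implicit.
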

\begin{proof}
If $X$ consists of a single stratum, then it is clear that for $\cF \in \tD^{\le 0}$ and $\cG \in \tD^{\ge 0}$, we have $\Hom(\cF,\cG[-1]) = 0$.  Moreover, it follows from parts~\eqref{it:aff-hered} and~\eqref{it:wt-split} of Lemma~\ref{lem:affine-basic} that every $\cF \in \dsm(X)$ fits into a split distinguished triangle $\cF' \to \cF \to \cF'' \to $ with $\cF' \in \tD^{\le 0}$ and $\cF''[1] \in \tD^{\ge 0}$.  Thus, $(\tD^{\le 0}, \tD^{\ge 0})$ is indeed a $t$-structure.  Its heart consists precisely of pure objects of weight $0$ in $\dsm(X)$.  This is evidently a semisimple abelian category whose simple objects are precisely those of the form $\uQlb[n]\la-n\ra$.

In the general case, one sees that $(\tD^{\le 0}, \tD^{\ge 0})$ is a $t$-structure by induction on the number of strata and the formalism of gluing, made available by Theorem~\ref{thm:open-closed}.  It is clear from condition~(W) that the $\Tm_s[n]\la-n\ra$ lie in the heart of this $t$-structure.  In fact, they satisfy the stronger condition from~\cite[Corollaire~1.4.24]{bbd} characterizing objects arising from the ``intermediate-extension'' functor, so by~\cite[Proposition~1.4.26]{bbd}, these are precisely the simple objects in the heart.
\end{proof}

\begin{proof}[Proof of Theorem~\ref{thm:tilt-kosorlov}]
To prove that $\Tilt(X)$ is an Orlov category, we proceed by induction on the number of strata in $X$.  Choose a closed stratum $j_u: X_u \to X$, and let $h: U \to X$ be the inclusion of the complementary open subvariety.  Now, consider two indecomposable tilting perverse sheaves $\Tm_s\la n\ra$ and $\Tm_t\la m\ra$.  Recall that $h_*h^*\Tm_t$ is a perverse sheaf, since $h^*\Tm_t$ has a costandard filtration.  We therefore have a short exact sequence
\[
0 \to j_{u*}j_u^!\Tm_t\la m\ra \to \Tm_t\la m\ra \to h_*h^*\Tm_t\la m \ra \to 0.
\]
This gives rise to a short exact sequence
\begin{multline*}
0 \to \Hom(j_u^*\Tm_s\la n\ra, j_u^!\Tm_t\la m\ra) \to \Hom(\Tm_s\la n\ra, \Tm_t\la m\ra) \to\\
 \Hom(h^*\Tm_s\la n\ra, h^*\Tm_t\la m\ra) \to 0.
\end{multline*}
(The sequence is exact because $\Ext^1(j_u^*\Tm_s\la n\ra, j_u^!\Tm_t\la m\ra) = 0$.)  Suppose that $n \ge m$.  Consider first the case where $s = t = u$.  Then the last term vanishes, and the first term vanishes if $n > m$.  Now, consider the case where at least one of $s$ and $t$ is distinct from $u$.  If $s \ne t$ or if $n > m$, then the last term vanishes by induction.  For the first term, note that $j_u^*\Tm_s\la n\ra$ has weights${}\ge n$, and that $j_u^!\Tm_t\la m\ra$ has weights${}\le m$.  Moreover, at least one of these inequalities must be strict (since at least one of $s$ and $t$ is distinct from $u$). Since $n \ge m$, the first term above vanishes as well.

We conclude in all cases that $\Hom(\Tm_s\la n\ra, \Tm_t\la m\ra) = 0$ if $n > m$ or if $n = m$ and $s \ne t$, so $\Tilt(X)$ is an Orlov category.

Using Proposition~\ref{prop:ktilt-dsm}, we henceforth identify $\Kb\Tilt(X)$ with $\dsm(X)$.  To prove that $\Tilt(X)$ is Koszulescent, consider the abelian category $\Kos(\Tilt(X))$ of Proposition~\ref{prop:kosorlov-tstruc}.  According to that proposition, the simple objects in that category are of the form $\Tm_s[n]\la -n\ra$.  But these objects also lie in the heart of the $t$-structure of Lemma~\ref{lem:tilt-tstruc}, so we conclude that the two $t$-structures coincide:
\[
\Kos(\Tilt(X)) = \tD^{\le 0} \cap \tD^{\ge 0}.
\]
From the description in Lemma~\ref{lem:tilt-tstruc}, it is easy to see that $\Kos(\Tilt(X))$ contains the objects
\[
\Dm_s[n]\la-n\ra
\qquad\text{and}\qquad
\Nm_s[n]\la-n\ra,
\]
and that these objects satisfy graded versions of axioms~(1)--(6) of~\cite[\S3.2]{bgs}.  Then, the argument of~\cite[Theorem~3.2.1]{bgs} shows that $\Kos(\Tilt(X))$ has enough projectives (resp.~injectives), and that these objects admit standard (resp.~costandard) filtrations.  Finally, the argument of~\cite[Corollary~3.3.2]{bgs} shows that the realization functor $\real: \Db\Kos(\Tilt(X)) \to \dsm(X)$ is an equivalence of categories.  Thus, $\Tilt(X)$ is Koszulescent.
\end{proof}

\begin{prop}
Let $X$ and $Y$ be two varieties with affine even stratifications, denoted $\cS$ and $\cT$, and assume that both satisfy condition~(W).  If $f: X \to Y$ is a proper stratified morphism, then the functor $f_*: \dsw(X) \to \dsw(Y)$ is genuine.
\end{prop}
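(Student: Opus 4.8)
The plan is to reduce the statement to the Orlov-category criterion of Theorem~\ref{thm:orlov-genuine}, using tilting perverse sheaves as the relevant Orlov category. By Theorem~\ref{thm:smooth-proper}, $f_*$ is miscible, so by Remark~\ref{rmk:misc-pt} it restricts to a pseudotriangulated functor $f_*\colon \dsmisc(X)\to\dmisc_\cT(Y)$. Under the equivalences $\dsmisc(X)\cong\pse\dsm(X)\cong\pse\Kb\Tilt(X)$ furnished by Theorem~\ref{thm:misc-infext} and Proposition~\ref{prop:ktilt-dsm}, and likewise for $Y$, this becomes a pseudotriangulated functor $\pse\Kb\Tilt(X)\to\pse\Kb\Tilt(Y)$; and by Theorem~\ref{thm:tilt-kosorlov} the categories $\Tilt(X)$ and $\Tilt(Y)$ are Orlov categories, with degree function $\deg\Tm_s\la n\ra=-n$. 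Hence, by Theorem~\ref{thm:orlov-genuine}, it suffices to check: (a) that $f_*$ carries a tilting perverse sheaf on $X$ to a tilting perverse sheaf on $Y$; and (b) that $f_*\Tm_s$ is, for each $s$, a direct sum of \emph{untwisted} indecomposable tilting perverse sheaves $\Tm_t$ (equivalently, that $f_*|_{\Tilt(X)}\colon\Tilt(X)\to\Tilt(Y)$ is homogeneous). Granting (a) and (b), $f_*$ is genuine.

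To prove (a) I would first compute $f_*$ on standard and costandard objects. Since $f$ is proper, $f_*=f_!$; since $f$ is stratified, over any stratum $Y_t$ the map $f^{-1}(Y_t)\to Y_t$ is, after the product decomposition of Definition~\ref{defn:stratified}, the projection $F_t\times Y_t\to Y_t$, with $F_t$ a fibre of $f$ — hence proper — and affine even stratified. Using $R\Gamma_c(\bA^d,\uQlb)=\uQlb(-d)[-2d]$ and $R\Gamma(\bA^d,\uQlb)=\uQlb$, one gets $f_*\Dm_s\cong\Dm_t[-d]\la d\ra$ and $f_*\Nm_s\cong\Nm_t[d]\la-d\ra$, where $Y_t=f(X_s)$ and $d\ge 0$ is the relative dimension of $f|_{X_s}$. (For instance $f_*$ annihilates the big indecomposable tilting sheaf on $\bP^1$ when $f$ contracts $\bP^1$ to a point: it is the non-split extensions defining a tilting sheaf that make $f_*$ land again among tilting sheaves, rather than merely among objects with a standard filtration.) If now $M$ is tilting, its standard filtration shows that $f_*M$ has perverse cohomology only in degrees $\ge 0$, and its costandard filtration that it has perverse cohomology only in degrees $\le 0$, so $f_*M$ is perverse. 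Moreover, writing $h_t\colon f^{-1}(Y_t)\hookrightarrow X$ and $j_t\colon Y_t\hookrightarrow Y$, proper base change gives $j_t^*f_*M\cong (f|_{f^{-1}(Y_t)})_*\,h_t^*M$, and $h_t^*M$ inherits a standard filtration on $f^{-1}(Y_t)$ (each $h_t^*$ of a standard object being the corresponding standard object or zero); since $f|_{f^{-1}(Y_t)}$ is proper, $j_t^*f_*M$ again has perverse cohomology in degrees $\ge 0$, which together with the perversity of $f_*M$ forces $j_t^*f_*M$ to be perverse. Dually $j_t^!f_*M$ is perverse, so $f_*M$ is tilting.

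For (b) I would use condition~(W). By the defining property of $\Tm_s$ together with condition~(W), for every stratum $X_w$ the stalk $j_w^*\Tm_s$ has weights $\ge 0$ and the costalk $j_w^!\Tm_s$ has weights $\le 0$ — equivalently $\Tm_s$ lies in the heart $\tD^{\le 0}\cap\tD^{\ge 0}$ of the weight $t$-structure of Lemma~\ref{lem:tilt-tstruc} — and correspondingly the standard filtration of $\Tm_s$ involves only twists $\Dm_w\la m\ra$ with $m\ge 0$ and the costandard filtration only $\Nm_w\la m\ra$ with $m\le 0$. Pushing these filtrations forward, using the formulas of step~(a) and the fact that $f_*=f_!$ preserves weight bounds on the strata, one checks that $j_t^*f_*\Tm_s$ has weights $\ge 0$ and $j_t^!f_*\Tm_s$ has weights $\le 0$ for all $t$, so $f_*\Tm_s$ again lies in $\tD^{\le 0}\cap\tD^{\ge 0}$. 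But, once more by condition~(W) — this time for $Y$ — the only tilting perverse sheaves in that heart are the direct sums of untwisted $\Tm_t$'s, because a twist $\Tm_t\la n\ra$ restricts to the open stratum $Y_t$ of its support with weight $n$, so membership in the heart forces $n=0$. Combined with step~(a), this yields (b), and hence the proposition.

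The main obstacle is the weight bookkeeping in step~(b): one must verify that the bounds $m\ge 0$ (resp.\ $m\le 0$) occurring in the standard (resp.\ costandard) filtration of $\Tm_s$ survive the generally non-split extensions after applying $f_*$, so that $f_*\Tm_s$ stays in the heart of the weight $t$-structure. This is really a statement about how proper pushforward along a stratified morphism interacts with weights stratum by stratum; one has to accommodate the cohomological shifts and the cancellation phenomenon already visible in the $\bP^1$ example, but the pointwise (stalk and costalk) weight bounds do propagate, because on each stratum $Y_t$ the map is a projection from a proper fibre, over which $R\Gamma=R\Gamma_c$ preserves both ``weights ${}\le w$'' and ``weights ${}\ge w$''.
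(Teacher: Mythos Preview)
Your argument is correct and follows the same strategy as the paper: use the Orlov category $\Tilt$ and verify that $f_*$ restricts to a homogeneous functor $\Tilt(X)\to\Tilt(Y)$, then invoke the Orlov-category genuineness criterion. The paper's proof is much shorter because it cites \cite[Proposition~3.4.1]{yun} for the key fact that $f_*$ sends each indecomposable tilting perverse sheaf to either $0$ or an indecomposable tilting perverse sheaf of the same degree, and then appeals to Proposition~\ref{prop:homog-genuine} (which, as the paper notes, extends to $\Tilt$ via Proposition~\ref{prop:ktilt-dsm}). Your steps (a) and (b) amount to a self-contained sketch of (a slightly weaker form of) Yun's result, which is a reasonable thing to do; your computation $f_*\Dm_s\cong\Dm_t[-d]\la d\ra$, $f_*\Nm_s\cong\Nm_t[d]\la-d\ra$ and the ensuing perversity argument are correct, and the weight bookkeeping in (b) via the standard and costandard filtrations is sound, since each filtration piece pushes forward to a pure object of the required weight sign on $Y_t$.
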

\begin{proof}
According to~\cite[Proposition~3.4.1]{yun}, $f_*$ takes each indecomposable tilting perverse sheaf in $\Pervsw(X)$ either to $0$ or to an indecomposable tilting perverse sheaf in $\Pervw_\cT(Y)$ of the same degree.  Since $f_*$ is already known to be miscible, the same statement holds with respect to $\Pervsm(X)$ and $\Pervm_\cT(Y)$.  In particular, $f_*$ restricts to a homogeneous functor of Orlov categories $\Tilt(X) \to \Tilt(Y)$.  In view of Proposition~\ref{prop:ktilt-dsm}, Proposition~\ref{prop:homog-genuine} applies to these categories as well, and we conclude that $f_*$ is genuine.
\end{proof}

\part{Applications to representation theory}
\label{part:applications}

\section{Ext-algebras of Andersen--Jantzen sheaves}
\label{sect:ajext}

Let $G$ be a semisimple algebraic group of adjoint type over $\C$.  Fix a Borel subgroup $B \subset G$ and a maximal torus $T \subset B$.  Let $X^*(T)$ denote the weight lattice of $T$, and let $X^*_+(T) \subset X^*(T)$ be the set of dominant weights with respect to $B$.  Any $\lambda \in X^*(T)$ determines a line bundle $\cL_\lambda$ on the flag variety $G/B$.  Let $\tcN$ denote the cotangent bundle of $G/B$, with projection map $\pi: \tcN \to G/B$.  We also have the \emph{Springer resolution} $\mu: \tcN \to \cN$, where $\cN$ is the nilpotent cone in the Lie algebra of $G$.  The \emph{Andersen--Jantzen sheaf} of weight $\lambda \in X^*(T)$ is the object
\[
A_\lambda = R\mu_*\pi^*\cL_\lambda
\]
in the bounded derived category $\Db\Coh^G(\tcN)$.  When $\lambda$ is dominant, the higher direct images $R^i\mu_*\pi^*\cL_\lambda$ vanish for $i > 0$ (see~\cite[Theorem~3.6]{aj:cir} for the strictly dominant case and~\cite[Theorem~2.4]{bro:lbcb} for the general case), so $A_\lambda$ is in fact a coherent sheaf.  For their role in the cohomology of quantized tilting modules, see~\cite{o:ekt,bez:ctm}.  The aim of this section is to calculate the $\Ext$-algebra of $A_\lambda$, using the mixed derived category of the affine Grassmannian for the Langlands dual group $\Gv$.  This section has benefitted from conversations with Victor Ostrik and David Treumann.

\subsection{Coherent sheaves on the nilpotent cone}
\label{ss:coh-nilp}

Let the multiplicative group $\Gm$ act on $\cN$ and on fibers of $\tcN$ by $(t,x) \mapsto t^{-2}x$.  This action commutes with the natural action of $G$ on both of these varieties, so we have an action of $G \times \Gm$.  Let $\Cohgm(\cN)$ and $\Cohgm(\tcN)$ denote the abelian categories of $(G \times \Gm)$-equivariant coherent sheaves on these two varieties.  For an object $\cF$ in one of the bounded derived categories $\Db\Cohgm(\cN)$ or $\Db\Cohgm(\tcN)$, let $\cF \la n \ra$ denote the object obtained from $\cF$ by twisting the $\Gm$-action by $z \mapsto z^n$.  We define graded $\Hom$-spaces by
\begin{equation}\label{eqn:homc}
\uHom(\cF,\cG) = \bigoplus_{n \in \Z} \Hom(\cF, \cG\la -n\ra).
\end{equation}
Graded $\Ext$-groups are defined analogously.  By endowing the line bundle $\cL_\lambda$ on $G/B$ with trivial $\Gm$-action, we may naturally regard the Andersen--Jantzen sheaves $A_\lambda$ as objects of $\Cohgm(\cN)$.

For $\lambda, \mu \in X^*_+(T)$, we write $\mu \le \lambda$ if $\lambda - \mu$ is a sum of positive roots, as usual.  For $\lambda \in X^*_+(T)$, let $\cD_{\le \lambda}$ (resp.~$\cD_{< \lambda}$, $\cD_\lambda$) denote the full triangulated subcategory of $\Db\Cohgm(\cN)$ generated by the objects $A_\mu\la n\ra$ with $\mu \le \lambda$ (resp.~$\mu < \lambda$, $\mu = \lambda$) and $n \in \Z$.  It follows from~\cite[Proposition~4(a)]{bez:qes} that for a fixed $\lambda$, the full additive subcategory consisting of direct sums of objects of the form $A_\lambda\la n \ra$ (for $n \in \Z$) is a semisimple abelian category.  Then, by~\cite[Lemma~3]{bez:qes}, $\cD_\lambda$ admits a unique $t$-structure whose heart $\cA_\lambda$ contains the $A_\lambda\la n \ra$.  $\cA_\lambda$ is a finite-length category, and the simple objects (up to isomorphism) are precisely the $A_\lambda\la n \ra$.

For $\lambda \in X^*_+(T)$, let $V_\lambda$ denote the irreducible $G$-representation of highest weight $\lambda$.  Regard it as a $(G \times \Gm)$-equivariant coherent sheaf on a point, with trivial $\Gm$-action.  Its pullback to $\tcN$ (resp.~$\cN$) is denoted $\cO_\tcN \otimes V_\lambda$ (resp.~$\cO_\cN \otimes V_\lambda$).  Let $\Dfr\Cohgm(\cN)$ denote the full subcategory of $\Db\Cohgm(\cN)$ generated by the objects $(\cO_\cN \otimes V_\lambda)\la n\ra$, known as the category of \emph{perfect complexes}.

\begin{prop}\label{prop:qes}
\begin{enumerate}
\item We have $\cO_\cN \otimes V_\lambda \in \cD_{\le \lambda}$.\label{it:free}
\item The projection functor $\Pi: \cD_{\le \lambda} \to \cD_{\le \lambda}/\cD_{< \lambda}$ induces an equivalence of categories $\cD_\lambda \overset{\sim}{\to} \cD_{\le \lambda}/\cD_{< \lambda}$.  Moreover, $\Pi(\cO_\cN \otimes V_\lambda)$ lies in $\Pi(\cA_\lambda)$ and is a projective cover of $\Pi(A_\lambda)$.\label{it:proj}
\item The realization functor $\real: D^b\cA_\lambda \to \cD_\lambda$
is also an equivalence of categories.  In particular, we have\label{it:ext}
\[
\uExt^k_{\cA_\lambda}(A_\lambda,A_\lambda) \simeq \uHom_{\scE}(A_\lambda, A_\lambda[k]),
\]
where $\scE$ is any of:
$\cD_\lambda$, $\cD_{\le \lambda}$, $\cD_{\le \lambda}/\cD_{< \lambda}$, $\Db\Cohgm(\cN)$.
\end{enumerate}
\end{prop}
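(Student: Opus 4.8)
The plan is to deduce all three assertions from the theory of quasi-exceptional sets of~\cite{bez:qes}, together with the realization-functor argument already used in the proof of Proposition~\ref{prop:kosorlov-orlov}. For part~\eqref{it:free}, I would argue by a filtration. Fix a filtration $0 = V^0 \subset V^1 \subset \cdots \subset V^N = V_\lambda$ of $V_\lambda$ by $B$-submodules with one-dimensional subquotients of weights $\mu_1, \dots, \mu_N$, arranged so that $\mu_N = \lambda$ (which occurs with multiplicity one). The associated-bundle construction turns this into a filtration of the trivial bundle $\cO_{G/B} \otimes V_\lambda$ by $(G \times \Gm)$-stable coherent subsheaves with subquotients $\cL_{\mu_i}$. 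Pulling back along the flat morphism $\pi$ gives a filtration of $\cO_\tcN \otimes V_\lambda$ with subquotients $\pi^*\cL_{\mu_i}$, and applying $R\mu_*$ --- using $R\mu_*\cO_\tcN \cong \cO_\cN$ and the projection formula, so that $R\mu_*(\cO_\tcN \otimes V_\lambda) \cong \cO_\cN \otimes V_\lambda$ --- shows that $\cO_\cN \otimes V_\lambda$ lies in the triangulated subcategory of $\Db\Cohgm(\cN)$ generated by the objects $R\mu_*\pi^*\cL_{\mu_i}\la n\ra$, $n \in \Z$. It therefore suffices to know that $R\mu_*\pi^*\cL_\mu \in \cD_{\le\lambda}$ for each weight $\mu$ of $V_\lambda$: for dominant $\mu$ this object is $A_\mu$ with $\mu \le \lambda$, and the general case is part of the analysis of the objects $R\mu_*\pi^*\cL_\mu$ in~\cite{bez:qes}, where they are expressed in terms of the $A_\nu$ with $\nu$ dominant and $\nu \le \lambda$.

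For part~\eqref{it:proj}, the equivalence $\cD_\lambda \simto \cD_{\le\lambda}/\cD_{<\lambda}$ is formal: the $\Hom$-vanishing among the $A_\mu\la n\ra$ built into the quasi-exceptional-set structure of~\cite{bez:qes} exhibits $\cD_{\le\lambda}$ as a semiorthogonal gluing of $\cD_{<\lambda}$ and $\cD_\lambda$, and then the composite $\cD_\lambda \hookrightarrow \cD_{\le\lambda} \ovto{\Pi} \cD_{\le\lambda}/\cD_{<\lambda}$ is automatically an equivalence. The substantive point is that $\Pi(\cO_\cN \otimes V_\lambda)$ lies in the heart $\Pi(\cA_\lambda)$, is projective there, and has simple top $\Pi(A_\lambda)$; this comes down to computing $\uHom(\cO_\cN \otimes V_\lambda, A_\mu\la n\ra)$ by means of the adjunction between $L\mu^*$ and $R\mu_*$ and relative Borel--Weil--Bott on $\tcN = T^*(G/B)$, which is exactly what is carried out in~\cite{bez:qes}; the canonical surjection $\Pi(\cO_\cN \otimes V_\lambda) \twoheadrightarrow \Pi(A_\lambda)$ is induced by the top quotient of the filtration of part~\eqref{it:free} (equivalently, by Frobenius reciprocity).

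For part~\eqref{it:ext}, part~\eqref{it:proj} shows that the heart $\cA_\lambda$ has enough projectives --- the indecomposable ones being the twists $\Pi(\cO_\cN \otimes V_\lambda)\la n\ra$ --- and that these have vanishing positive $\Hom$-groups in $\cD_\lambda$ into $\cA_\lambda$; together with the finiteness of cohomological dimension recorded in~\cite{bez:qes}, this makes the realization functor $\real\colon \Db\cA_\lambda \to \cD_\lambda$ full and essentially surjective, whence an equivalence by the argument at the end of the proof of Proposition~\ref{prop:kosorlov-orlov} (invoking~\cite[Remarque~3.1.17]{bbd} and~\cite[Proposition~3.1.16]{bbd}). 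This gives the isomorphism with $\scE = \cD_\lambda$; the case $\scE = \cD_{\le\lambda}/\cD_{<\lambda}$ follows since $\Pi$ restricts to an equivalence on $\cD_\lambda$, and the cases $\scE = \cD_{\le\lambda}$ and $\scE = \Db\Cohgm(\cN)$ follow because $\cD_\lambda \subset \cD_{\le\lambda} \subset \Db\Cohgm(\cN)$ are full triangulated subcategories (for the first inclusion one uses the semiorthogonality again, which shows that $\Hom_{\cD_{\le\lambda}}(A_\lambda, A_\lambda[k])$ is unchanged on passing to the quotient).

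Granting the results of~\cite{bez:qes}, all of the above is routine assembly; the step I expect to be the main obstacle --- and the one needing the most care in a full write-up --- is the verification in part~\eqref{it:proj} that $\Pi(\cO_\cN \otimes V_\lambda)$ is \emph{projective} in the heart with simple top $\Pi(A_\lambda)$, i.e. the cohomology computations on $T^*(G/B)$ underlying~\cite{bez:qes}, and the accompanying homological bookkeeping (enough projectives, finite cohomological dimension of $\cA_\lambda$, matching of $\Hom$-spaces) required to run the realization-functor argument for part~\eqref{it:ext}.
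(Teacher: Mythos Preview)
Your proposal is correct and follows essentially the same route as the paper's proof, which likewise reduces everything to the quasi-exceptional machinery of~\cite{bez:qes}.  There are only two minor points where the paper's argument is organized slightly differently from yours.  First, the fact that $\Pi(\cO_\cN \otimes V_\lambda)$ lies in the heart $\Pi(\cA_\lambda)$ is obtained in the paper as a free byproduct of the filtration argument for part~\eqref{it:free} (all subquotients land in $\cA_\lambda$ after applying~\cite[Proposition~3]{bez:qes}), rather than via a separate $\uHom$-computation.  Second, and more to the point you flagged as the ``main obstacle'': the projectivity of $\Pi(\cO_\cN \otimes V_\lambda)$ does not require Borel--Weil--Bott.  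The paper uses the right adjoint $\Pi^r$ to $\Pi$ (from~\cite[Lemma~4(d)]{bez:qes}) to identify
\[
\uHom^i(\Pi(\cO_\cN \otimes V_\lambda), \Pi(A_\lambda)) \cong \uHom^i(\cO_\cN \otimes V_\lambda, A_\lambda),
\]
and the right-hand side vanishes for $i>0$ simply because $\cN$ is affine and $A_\lambda$ is an honest coherent sheaf for dominant $\lambda$.  The one-dimensionality of the degree-zero piece is then read off from~\cite[Fact~1(a) and~(27)]{bez:qes}.  Your realization-functor argument for part~\eqref{it:ext} is fine and matches the paper's; note that for the displayed $\Ext$-identity (which is all that is used downstream) one does not actually need finite cohomological dimension, only the computability of $\uHom^\bullet$ via projective resolutions in the first variable.
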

\begin{proof}
This proposition is mostly a restatement of results of~\cite{bez:qes}.  It follows from the proof of~\cite[Proposition~4(a)]{bez:qes} that the object $\cO_\cN \otimes V_\lambda$ can be obtained by repeatedly taking extensions among various $A_\nu\la n \ra$ with $\nu$ a weight of $V_\lambda$.  By~\cite[Proposition~3]{bez:qes}, we may assume that all the required $\nu$'s are dominant weights of $V_\lambda$, so part~\eqref{it:free} of the proposition follows, as does the fact that $\Pi(\cO_\cN \otimes V_\lambda) \in \Pi(\cA_\lambda)$.  Next, the fact that $\Pi$ induces an equivalence as in part~\eqref{it:proj} is simply a $\Gm$-equivariant analogue of~\cite[Lemma~4(d)]{bez:qes}.  That lemma also says that the inverse equivalence factors through a right adjoint $\Pi^r: \cD_{\le \lambda}/\cD_{< \lambda} \to \cD_{\le \lambda}$ to $\Pi$.  Therefore,
\begin{equation}\label{eqn:proj}
\uHom^i(\Pi(\cO_\cN \otimes V_\lambda), \Pi(A_\lambda)) \simeq \uHom^i(\cO_\cN \otimes V_\lambda, \Pi^r(\Pi(A_\lambda)) \simeq
\uHom^i(\cO_\cN \otimes V_\lambda, A_\lambda).
\end{equation}
Since the last term vanishes when $i = 1$, we see that $\Pi(\cO_\cN \otimes V_\lambda)$ is a projective object in $\Pi(\cA_\lambda)$.  Moreover, it follows from~\cite[Fact~1(a)]{bez:qes} (see also~\cite[Equation~(27)]{bez:qes}) that $\dim \uHom(\Pi(\cO_\cN \otimes V_\lambda), \Pi(A_\lambda)) = 1$.  Since $\Pi(A_\lambda)$ is, up to Tate twist, the unique simple object of $\Pi(\cA_\lambda)$, it follows that $\Pi(\cO_\cN \otimes V_\lambda)$ is the projective cover of $\Pi(A_\lambda)$.

Finally, we see from~\eqref{eqn:proj} that the functor $\uHom^i(\Pi(\cO_\cN \otimes V_\lambda), \cdot)$ vanishes on $\Pi(\cA_\lambda)$ for all $i > 0$.  It follows that $\uHom^i(\cdot,\cdot)$ can be computed on $\cA_\lambda$ by taking projective resolutions in the first variable.  The equivalence in part~\eqref{it:ext} follows.
\end{proof}

\begin{cor}\label{cor:qes}
For $X \in \cD_{\le \lambda}$, we have $X \in \cD_\lambda$ if and only if $\Hom^i(\cO_\cN \otimes V_\mu\la n \ra, X) = 0$ for all $\mu < \lambda$ and all $i, n \in \Z$.
\end{cor}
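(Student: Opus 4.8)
The plan is to reinterpret membership in $\cD_\lambda$ as a right-orthogonality condition inside $\cD_{\le\lambda}$, and then to trade the defining generators $A_\mu\la n\ra$ of $\cD_{<\lambda}$ for the objects $\cO_\cN\otimes V_\mu\la n\ra$. By Proposition~\ref{prop:qes}\eqref{it:proj} the Verdier quotient $\Pi\colon\cD_{\le\lambda}\to\cD_{\le\lambda}/\cD_{<\lambda}$ has a fully faithful right adjoint $\Pi^r$ whose essential image is $\cD_\lambda$; by the standard theory of such adjoints this essential image is exactly the right orthogonal of $\cD_{<\lambda}$ computed in $\cD_{\le\lambda}$, and since $\cD_{<\lambda}$ is triangulated this is the condition $\Hom^i_{\cD_{\le\lambda}}(\cF,X)=0$ for all $\cF\in\cD_{<\lambda}$ and all $i\in\Z$. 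Thus $X\in\cD_\lambda$ if and only if $X$ is right-orthogonal to $\cD_{<\lambda}$. One direction of the corollary then follows immediately: if $\mu<\lambda$ then $\cO_\cN\otimes V_\mu\in\cD_{\le\mu}\subseteq\cD_{<\lambda}$ by Proposition~\ref{prop:qes}\eqref{it:free} and the definitions, so $\Hom^i(\cO_\cN\otimes V_\mu\la n\ra,X)=0$ whenever $X\in\cD_\lambda$.

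For the converse, suppose all the groups $\Hom^i(\cO_\cN\otimes V_\mu\la n\ra,X)$ vanish; I would show that $\Hom^i(A_\mu\la n\ra,X)=0$ for every $\mu<\lambda$ and all $i,n$, which by the previous paragraph gives $X\in\cD_\lambda$. This I would prove by induction on the finite poset $\{\mu:\mu\le\lambda\}$, treating minimal elements first, and for each fixed $\mu$ by a \emph{descending} induction on $n$. The base case $n\gg0$ holds because $\cN$ is affine: each $\uHom^i(A_\mu,X)=\bigoplus_n\Hom^i(A_\mu\la n\ra,X)$ is a finitely generated graded module over the non-negatively graded, connected ring $\C[\cN]$, hence bounded, and since $A_\mu$ has non-negative $\Gm$-weights while $X$ is bounded, $\Hom^i(A_\mu\la n\ra,X)=0$ once $n$ is large. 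For the inductive step, \eqref{eqn:proj} (from the proof of Proposition~\ref{prop:qes}) identifies $\Hom_{\cD_{\le\mu}}(\cO_\cN\otimes V_\mu,A_\mu)$ with $\Hom(P_\mu,\Pi_\mu A_\mu)$, where $\Pi_\mu\colon\cD_{\le\mu}\to\cD_{\le\mu}/\cD_{<\mu}$ and $P_\mu=\Pi_\mu(\cO_\cN\otimes V_\mu)$ is the projective cover of the simple object $\Pi_\mu A_\mu$; I would pick $f\colon\cO_\cN\otimes V_\mu\to A_\mu$ corresponding to the projective-cover epimorphism and complete it to a triangle $Z\to\cO_\cN\otimes V_\mu\xrightarrow{f}A_\mu\to Z[1]$ in $\cD_{\le\mu}$. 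Applying $\Pi_\mu$ shows $\Pi_\mu Z\cong\rad P_\mu$, a finite-length object of $\cA_\mu$ whose composition factors are of the form $\Pi_\mu A_\mu\la k\ra$ with $k\ge1$; consequently $Z$ lies in the thick subcategory of $\cD_{\le\mu}$ generated by $\cD_{<\mu}$ and finitely many objects $A_\mu\la k\ra$ with $k\ge1$. Twisting the triangle by $\la n\ra$ and taking the long exact sequence for $\Hom^\bullet(-,X)$, the middle term vanishes by hypothesis, $\Hom^\bullet(\cD_{<\mu},X)=0$ by the outer induction (the generators $A_{\mu'}\la m\ra$ of $\cD_{<\mu}$ have $\mu'<\mu$), and $\Hom^\bullet(A_\mu\la n+k\ra,X)=0$ for $k\ge1$ by the inner induction; hence $\Hom^\bullet(Z\la n\ra,X)=0$ and therefore $\Hom^\bullet(A_\mu\la n\ra,X)=0$, which closes the induction.

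The main obstacle is the claim that $\rad P_\mu$ has composition factors carrying only \emph{strictly positive} Tate twists relative to $\Pi_\mu A_\mu$: this positivity is precisely what makes the descending induction on $n$ terminate, and establishing it requires tracking the $\Gm$-grading carefully through $\Pi_\mu$, using that $\cO_\cN\otimes V_\mu$ (hence $P_\mu$) is generated in $\Gm$-degree $0$ together with the purity properties of the $A_\mu$ from \cite{bez:qes}. A secondary, routine point is the transfer of information through $\Pi_\mu$ between $\rad P_\mu$ in the quotient and the object $Z$ upstairs in $\cD_{\le\mu}$, which rests on the universal property of the Verdier quotient and the identity $\ker\Pi_\mu=\cD_{<\mu}$.
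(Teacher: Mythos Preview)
Your overall shape---identify $\cD_\lambda$ with the right orthogonal of $\cD_{<\lambda}$ and then trade the generators $A_\mu\la n\ra$ for the $\cO_\cN\otimes V_\mu\la n\ra$---is the same as the paper's, and the ``only if'' direction is fine. The difference is in how you carry out the trade in the ``if'' direction, and here your argument has a genuine gap that the paper's argument avoids entirely.

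Your inner induction on $n$ rests on the claim that $\rad P_\mu$ has only composition factors $\Pi_\mu A_\mu\la k\ra$ with $k\ge 1$. You flag this as the main obstacle, and it is: proving it amounts to showing that $H'_\mu=\uEnd(P_\mu)$ is non-negatively graded with degree-$0$ part $\Bbbk$. While this is ultimately true, it does not follow immediately from what is available at this point---$H'_\mu$ is a $\Hom$ in a Verdier quotient, not simply a quotient of $\uEnd(\cO_\cN\otimes V_\mu)$, so the non-negative grading on $\C[\cN]$ does not transparently descend. (Your base case also needs a repair: a finitely generated graded $\C[\cN]$-module is bounded \emph{below}, not above; what actually saves you is that $\C[\cN]^G=\C$, so the $G$-invariant part of $\uHom^i(A_\mu,X)$ is finite-dimensional.)

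The paper sidesteps both issues by never inducting on $n$. Instead of the triangle $Z\to\cO_\cN\otimes V_\mu\to A_\mu\to$, it uses the triangle
\[
Y \to \cO_\cN\otimes V_\mu \to \Pi^r\Pi(\cO_\cN\otimes V_\mu)\to
\]
coming from the unit of the adjunction $(\Pi,\Pi^r)$; here $Y$ lies \emph{entirely} in $\cD_{<\mu}$, with no contribution from $A_\mu\la k\ra$ at all. The outer induction is replaced by choosing $\mu<\lambda$ \emph{minimal} with $\Hom^\bullet(A_\mu\la n\ra,X)\ne 0$, and the role of your ``stay inside $\cD_{\le\mu}$'' step is played by the right adjoint $\iota_\mu^r$ to the inclusion $\cD_{\le\mu}\hookrightarrow\cD_{\le\lambda}$ (from \cite[Lemma~4(e)]{bez:qes}). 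Minimality forces $\iota_\mu^r X\in\cD_\mu$, which is nonzero, so the projective cover $\Pi^r\Pi(\cO_\cN\otimes V_\mu)$ maps nontrivially to some shift and twist of it; then the triangle above and $\uHom^\bullet(Y,\iota_\mu^r X)=0$ give $\Hom^i(\cO_\cN\otimes V_\mu\la n\ra,X)\ne 0$. This is strictly cleaner: it uses only the recollement structure from \cite{bez:qes} and requires no control on the $\Gm$-grading of $\rad P_\mu$.
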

\begin{proof}
Suppose $X \notin \cD_\lambda$.  By~\cite[Lemma~4(b)]{bez:qes}, we know that there is some object $Y \in \cD_{<\lambda}$ such that $\Hom(Y,X) \ne 0$.  Since $\cD_{<\lambda}$ is generated by the $A_\mu\la n\ra$ with $\mu < \lambda$, there is some $\mu < \lambda$ and some $i, n \in \Z$ such that $\Hom^i(A_\mu\la n\ra, X) \ne 0$.  Suppose that $\mu$ is chosen to be minimal with this property, i.e.,
\begin{equation}\label{eqn:qes-perf}
\Hom^j(A_\nu\la m\ra, X) = 0 \qquad\text{if $\nu < \mu$.}
\end{equation}
By a repeated use of~\cite[Lemma~4(e)]{bez:qes}, the inclusion functor $\iota_\mu: \cD_{\le \mu} \to \cD_{\le \lambda}$ has a right adjoint $\iota_\mu^r$, so $\Hom^i(A_\mu\la n\ra, \iota_\mu^r X) \ne 0$.  It then follows from~\eqref{eqn:qes-perf} and~\cite[Lemma~4(b)]{bez:qes} that $\iota_\mu^r X \in \cD_\mu \cong \Db\cA_\mu$.  From the proof of the preceding proposition, we know that $\Pi^r\Pi(\cO_\cN \otimes V_\mu)$ is the projective cover of the unique (up to Tate twist) simple object in $\cA_\mu$.  Since $\iota_\mu^r X \ne 0$, there certainly exist $i, n \in \Z$ such that $\Hom^i(\Pi^r\Pi(\cO_\cN \otimes V_\mu)\la n\ra, \iota_\mu^r X) \ne 0$.  Next, there is a distinguished triangle $Y \to \cO_\cN \otimes V_\mu \to \Pi^r\Pi(\cO_\cN \otimes V_\mu) \to $ with $Y \in \cD_{<\mu}$.  Using~\eqref{eqn:qes-perf} once again, we see that $\uHom^\bullet(Y, \iota_\mu^r X) = 0$, so we then have
\[
\Hom^i(\cO_\cN \otimes V_\mu\la n\ra, X) \cong
\Hom^i(\cO_\cN \otimes V_\mu\la n\ra, \iota_\mu^r X) \ne 0.
\]
Finally, the opposite implication is clear: if $X \in \cD_\lambda$, then $\Hom^i(\cO_\cN \otimes V_\mu\la n \ra, X) = 0$ for $\mu < \lambda$ by~\cite[Lemma~4(b)]{bez:qes} and Proposition~\ref{prop:qes}\eqref{it:free}.
\end{proof}

Let $H'_\lambda$ denote the graded ring $\uEnd(\Pi(\cO_\cN \otimes V_\lambda))$.  The category $H'_\lambda\Mod$ of graded $H'_\lambda$-modules is endowed with a shift-of-grading functor, also denoted $X \mapsto X\la 1 \ra$.  A standard argument (see, for example,~\cite[Proposition~II.2.5]{ars}) yields the following result.

\begin{prop}\label{prop:morita}
There is an equivalence of categories $\cA_\lambda \simeq H'_\lambda\Mod$ that commutes with $X \mapsto X\la 1 \ra$ and that sends $A_\lambda$ to the trivial $H'_\lambda$-module.\qed
\end{prop}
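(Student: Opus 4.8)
The plan is to realize the asserted equivalence as the graded $\Hom$-functor out of the projective cover $P := \Pi(\cO_\cN \otimes V_\lambda)$, following the usual Morita-theoretic recipe (as in~\cite[Proposition~II.2.5]{ars}) adapted to the graded setting. First I would replace $\cA_\lambda$ by its image $\Pi(\cA_\lambda)$ inside $\cD_{\le\lambda}/\cD_{<\lambda}$ under the equivalence of Proposition~\ref{prop:qes}\eqref{it:proj}, so that $P$ becomes an honest object of the category and, by that same proposition, a projective cover of the unique simple object $\Pi(A_\lambda)$ up to Tate twist. Since $\cA_\lambda$ is a finite-length category all of whose simple objects are twists $A_\lambda\la n\ra$, an easy induction on length shows that every object admits a presentation $P_1 \to P_0 \to M \to 0$ in which $P_0, P_1$ are finite direct sums of twists $P\la n\ra$; this is the only form of ``enough projectives'' that the argument needs.

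Next I would define
\[
F = \uHom(P,-) \colon \cA_\lambda \to H'_\lambda\Mod, \qquad F(M) = \textstyle\bigoplus_{n} \Hom(P, M\la -n\ra),
\]
with the graded left $H'_\lambda$-module structure ($H'_\lambda = \uEnd(P)$) coming from precomposition. Three properties are then immediate: $F$ commutes with $\la 1\ra$ because $\Hom(P,(M\la 1\ra)\la -n\ra) = \Hom(P, M\la -(n-1)\ra)$; $F$ is exact because $P$ is projective, so each summand $\Hom(P,-\la -n\ra)$ is exact; and $F(P) \cong H'_\lambda$ as a graded left module over itself. Moreover $F(A_\lambda)$ is the trivial module: since $P$ is the projective cover of $\Pi(A_\lambda)$, the graded space $\uHom(P,A_\lambda)$ is one-dimensional and concentrated in degree $0$ --- this is exactly the equality $\dim\uHom(P,\Pi(A_\lambda)) = 1$ recorded in the proof of Proposition~\ref{prop:qes} via~\cite[Fact~1(a)]{bez:qes} --- hence it is $H'_\lambda$ modulo its graded radical.

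Then I would prove $F$ is an equivalence by the standard two-step argument. Full faithfulness is tautological on the additive subcategory of finite sums of twists of $P$, because $F$ identifies $\uHom(P\la a\ra, P\la b\ra)$ with $\uHom_{H'_\lambda}(H'_\lambda\la a\ra, H'_\lambda\la b\ra)$; one upgrades to all of $\cA_\lambda$ by applying the exact functor $F$ to a two-step presentation of each object by such sums and invoking the five lemma together with the fact that $F$ preserves the relevant cokernels. Essential surjectivity follows by lifting a free presentation of an arbitrary graded $H'_\lambda$-module to a two-term complex of twists of $P$ and taking the cokernel. The equivalence so constructed commutes with $\la 1\ra$ and sends $A_\lambda$ to the trivial module, as required.

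The main obstacle I anticipate is not conceptual but is the bookkeeping about finiteness needed to pin down the target category $H'_\lambda\Mod$: one must check that $\uHom(P,M)$ is a finitely generated graded $H'_\lambda$-module (with the appropriate finiteness of its graded pieces) for every $M \in \cA_\lambda$, and conversely that every such module arises as a cokernel of the type above. Both statements reduce by \emph{d\'evissage} along composition series to the one-dimensionality of $\uHom(P,A_\lambda)$ established in Proposition~\ref{prop:qes}, so once that reduction is carried out carefully no input beyond Proposition~\ref{prop:qes} and~\cite{bez:qes} is needed.
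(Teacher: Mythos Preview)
Your proposal is correct and is exactly the standard Morita-theoretic argument the paper has in mind: the paper does not spell out a proof at all, merely citing~\cite[Proposition~II.2.5]{ars} and appending a \qed, so your write-up is a faithful unpacking of that citation in the graded setting. The only comment is that your ``main obstacle'' is a non-issue here, since $\cA_\lambda$ is finite-length with a single simple up to twist and $\uHom(P,A_\lambda)$ is one-dimensional, so the finiteness bookkeeping is immediate.
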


\subsection{Mixed perverse sheaves on the affine Grassmannian}
\label{ss:mixed-gr}

Fix a prime $p$, and consider the field $\fK = \F_p((t))$ and its subring $\fO = \F_p[[t]]$.  By \emph{affine Grassmannian}, we mean the ind-variety $\Gr = \Gv(\fK)/\GvO$.  Recall that the choice of $B$ determines the Iwahori subgroup $\Iv \subset \GvO$.  The stratification of $\Gr$ by orbits of $\Iv$ is an affine even stratification, and the stratification by $\GvO$-orbits is affable.  When naming categories of constructible sheaves, the stratification will be indicated by a group as a subscript: for instance, $\Pervm_\GvO(\Gr)$ or $\dw_\GvO(\Gr)$.  It is well-known that the $\Iv$-orbits (resp.~$\GvO$-orbits) on $\Gr$ are parametrized by $X^*(T)$ (resp.~$X^*_+(T)$).  For $\lambda \in X^*_+(T)$, let $\Gr_\lambda$ denote the corresponding $\GvO$-orbit in $\Gr$. We also have the corresponding simple perverse sheaf $\ICm_\lambda \in \Pervm_\GvO(\Gr)$.

An important result due to Arkhipov--Bezrukavnikov--Ginzburg (see~\cite[Theorem~9.4.3]{abg}) is the construction of an equivalence of triangulated categories 
\begin{equation}\label{eqn:abg-equiv}
P: \dm_\Iv(\Gr) \to \Db\Cohgm(\tcN).
\end{equation}
This equivalence does not commute with Tate twist; instead, we have $P(\cF\la 1\ra) \cong (P\cF)\la 1\ra [1]$.  When $\lambda \in X^*_+(T)$, we have $P(\ICm_\lambda) \simeq \cO_\tcN \otimes V_\lambda$.  Define
\[
\Phi: \dm_\GvO(\Gr) \to \Db\Cohgm(\cN)
\qquad\text{by}\qquad
\Phi = R\mu_* \circ P|_{\dm_\GvO(\Gr)}.
\]
We claim that $\Phi$ induces an equivalence
\begin{equation}\label{eqn:perf-equiv}
\Phi: \dm_\GvO(\Gr) \simto \Dfr\Cohgm(\cN).
\end{equation}
Indeed, $P$ induces an equivalence between $\dm_\GvO(\Gr)$ and the full triangulated subcategory of $\Db\Cohgm(\tcN)$ generated by the objects $(\cO_\tcN \otimes V_\lambda)\la n\ra$.  Using the fact that $R\mu_*\cO_\tcN \cong \cO_\cN$ (see~\cite[Lemma~3.9]{aj:cir}), one easily checks that the functors $R\mu_*$ and $L\mu^*$ induce quasi-inverse equivalences between this category and $\Dfr\Cohgm(\cN)$.

Since $\cO_\cN \otimes V_\mu \in \cD_{\le \mu}$ for any $\mu$, $\Phi$ clearly restricts to a fully faithful functor $\dm_\GvO(\overline{\Gr_\lambda}) \to \cD_{\le \lambda}$.  This functor takes objects supported on the closed subvariety $\overline{\Gr_\lambda} \smallsetminus \Gr_\lambda$ to $\cD_{< \lambda}$.  As a consequence of Theorem~\ref{thm:open-closed} (see Remark~\ref{rmk:open-quot}), there is a natural equivalence
\[
\dm_\GvO(\overline{\Gr_\lambda})/\dm_\GvO(\overline{\Gr_\lambda} \smallsetminus \Gr_\lambda) \simeq \dm_\GvO(\Gr_\lambda)
\]
induced by restriction.  Thus, $\Phi$ gives rise to a functor $\Phi_\lambda: \dm_\GvO(\Gr_\lambda) \to \cD_{\le \lambda}/\cD_{<\lambda}$ that takes $\ICm_\lambda|_{\Gr_\lambda} \cong \uQlb[\dim \Gr_\lambda]\la -\dim \Gr_\lambda\ra$ to $\Pi(\cO_\cN \otimes V_\lambda)$. 

\begin{lem}\label{lem:ffaith}
The functor $\Phi_\lambda: \dm_\GvO(\Gr_\lambda) \to \cD_{\le \lambda}/\cD_{<\lambda}$ induced by $\Phi$ is fully faithful.
\end{lem}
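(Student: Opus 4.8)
The plan is to deduce full faithfulness of $\Phi_\lambda$ from full faithfulness of the functor $\Psi := \Phi|_{\dm_\GvO(\overline{\Gr_\lambda})} \colon \dm_\GvO(\overline{\Gr_\lambda}) \to \cD_{\le\lambda}$ --- which holds because $\Phi$ is an equivalence by~\eqref{eqn:perf-equiv} and $\dm_\GvO(\overline{\Gr_\lambda})$ is a full triangulated subcategory of $\dm_\GvO(\Gr)$ --- together with the formalism of the Verdier quotient $\cD_{\le\lambda}/\cD_{<\lambda}$. Write $j \colon \Gr_\lambda \hookrightarrow \overline{\Gr_\lambda}$ for the open inclusion and $i$ for the complementary closed inclusion, so that (Theorem~\ref{thm:open-closed} and Remark~\ref{rmk:open-quot}) the restriction $j^* \colon \dm_\GvO(\overline{\Gr_\lambda}) \to \dm_\GvO(\Gr_\lambda)$ realizes $\Phi_\lambda$ via $\Phi_\lambda \circ j^* \cong \Pi \circ \Psi$, where $\Pi \colon \cD_{\le\lambda} \to \cD_{\le\lambda}/\cD_{<\lambda}$ is the quotient functor; moreover $j_!, j_* \colon \dm_\GvO(\Gr_\lambda) \to \dm_\GvO(\overline{\Gr_\lambda})$ are fully faithful, left and right adjoint to $j^*$, with $j^*j_! \cong j^*j_* \cong \id$ and $i^!j_* \cong 0$. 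By Proposition~\ref{prop:qes}\eqref{it:proj}, $\Pi$ restricts to an equivalence $\cD_\lambda \simto \cD_{\le\lambda}/\cD_{<\lambda}$ whose quasi-inverse is, up to the inclusion $\cD_\lambda \hookrightarrow \cD_{\le\lambda}$, the right adjoint $\Pi^r$ of $\Pi$.

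The crux is to show that $\Psi(j_*Y) \in \cD_\lambda$ for every $Y \in \dm_\GvO(\Gr_\lambda)$. Since $j_*Y \in \dm_\GvO(\overline{\Gr_\lambda})$, certainly $\Psi(j_*Y) \in \cD_{\le\lambda}$, so by Corollary~\ref{cor:qes} it is enough to check $\Hom^k(\cO_\cN\otimes V_\mu\la n\ra, \Psi(j_*Y)) = 0$ for all $\mu < \lambda$ and all $k, n \in \Z$. Fix such $\mu$ and $n$. Since $P(\ICm_\mu) \cong \cO_\tcN\otimes V_\mu$, $R\mu_*\cO_\tcN \cong \cO_\cN$, and $P(\cF\la1\ra) \cong (P\cF)\la1\ra[1]$, one has $\Psi(\ICm_\mu\la n\ra) \cong (\cO_\cN\otimes V_\mu)\la n\ra[n]$, hence $\cO_\cN\otimes V_\mu\la n\ra \cong \Psi\bigl(\ICm_\mu\la n\ra[-n]\bigr)$. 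Because $\mu < \lambda$, the object $\ICm_\mu\la n\ra[-n]$ is supported on $\overline{\Gr_\lambda}\smallsetminus\Gr_\lambda$, say $\ICm_\mu\la n\ra[-n] \cong i_*\cG$ with $\cG = i^*(\ICm_\mu\la n\ra[-n])$. Using full faithfulness of $\Psi$ and the adjunctions $(i^*, i_*)$, $(i_*, i^!)$ together with $i^!j_* \cong 0$,
\[
\Hom^k(\cO_\cN\otimes V_\mu\la n\ra, \Psi(j_*Y)) \cong \Hom^k_{\dm_\GvO(\overline{\Gr_\lambda})}(i_*\cG, j_*Y) \cong \Hom^k(\cG, i^!j_*Y) = 0.
\]
This proves $\Psi(j_*Y) \in \cD_\lambda$.

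It remains to assemble the Hom-computation. Fix $X, Y \in \dm_\GvO(\Gr_\lambda)$. From $\Phi_\lambda \circ j^* \cong \Pi \circ \Psi$ and $j^*j_! \cong j^*j_* \cong \id$ we get $\Pi\Psi(j_!X) \cong \Phi_\lambda X$ and $\Pi\Psi(j_*Y) \cong \Phi_\lambda Y$. Since $\Psi(j_*Y) \in \cD_\lambda$ and $\Pi^r\Pi \cong \id_{\cD_\lambda}$, the adjunction $(\Pi, \Pi^r)$ gives an isomorphism $\Hom_{\cD_{\le\lambda}}(Z, \Psi(j_*Y)) \simto \Hom_{\cD_{\le\lambda}/\cD_{<\lambda}}(\Pi Z, \Pi\Psi(j_*Y))$ for every $Z \in \cD_{\le\lambda}$. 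Taking $Z = \Psi(j_!X)$ and combining with the full faithfulness of $\Psi$ and the adjunction $j_! \dashv j^*$,
\[
\Hom_{\cD_{\le\lambda}/\cD_{<\lambda}}(\Phi_\lambda X, \Phi_\lambda Y) \cong \Hom_{\cD_{\le\lambda}}(\Psi(j_!X), \Psi(j_*Y)) \cong \Hom_{\dm_\GvO(\overline{\Gr_\lambda})}(j_!X, j_*Y) \cong \Hom_{\dm_\GvO(\Gr_\lambda)}(X, Y).
\]
A routine diagram chase shows that this composite isomorphism is the map induced by $\Phi_\lambda$, so $\Phi_\lambda$ is fully faithful. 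The only step with real content is the membership $\Psi(j_*Y) \in \cD_\lambda$, whose essential inputs are the identification $\Psi(\ICm_\mu) \cong \cO_\cN\otimes V_\mu$ and the characterization of $\cD_\lambda$ inside $\cD_{\le\lambda}$ furnished by Corollary~\ref{cor:qes}; everything else is formal, provided one keeps careful track of the shift $[n]$ that accompanies the Tate twist $\la n\ra$ under $P$, and hence under $\Phi$ and $\Phi_\lambda$.
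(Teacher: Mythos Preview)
Your proof is correct and follows essentially the same approach as the paper's. Both arguments hinge on showing that $\Psi(j_*Y) \in \cD_\lambda$ via Corollary~\ref{cor:qes} and the vanishing $\Hom^\bullet(\ICm_\mu\la n\ra, j_*Y) = 0$ for $\mu < \lambda$; you spell out the final Hom computation with $j_!$, $j_*$, and $\Pi^r$ explicitly, while the paper simply invokes the parallel equivalences $j_*(\dm_\GvO(\Gr_\lambda)) \simto \dm_\GvO(\Gr_\lambda)$ and $\cD_\lambda \simto \cD_{\le\lambda}/\cD_{<\lambda}$ to conclude.
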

\begin{proof}
Consider the essential image of the functor $j_*: \dm_\GvO(\Gr_\lambda) \to \dm_\GvO(\overline{\Gr_\lambda})$, where $j: \Gr_\lambda \to \overline{\Gr_\lambda}$ is the inclusion map.  The quotient functor $\dm_\GvO(\overline{\Gr_\lambda}) \to \dm_\GvO(\Gr_\lambda)$ induces an equivalence
\[
j_*(\dm_\GvO(\Gr_\lambda)) \simto \dm_\GvO(\Gr_\lambda).
\]
This statement is analogous to Proposition~\ref{prop:qes}\eqref{it:proj}.  In view of that, it suffices to show that $\Phi$ takes objects in $j_*(\dm_\GvO(\Gr_\lambda))$ to objects in $\cD_\lambda \subset \Db\Cohgm(\cN)$.  Consider an object $j_*\cG$, where $\cG \in \dm_\GvO(\Gr_\lambda)$.  Clearly, $\Hom^i(\ICm_\mu\la n\ra, j_*\cG) = 0$ if $\mu < \lambda$.  Because $\Phi$ is fully faithful, it follows that $\Hom^i(\cO_\cN \otimes V_\mu\la n\ra, \Phi(j_*\cG)) = 0$, so by Corollary~\ref{cor:qes}, we have $\Phi(j_*\cG) \in \cD_\lambda$.
\end{proof}

We are now ready for the main result of this section.  For $\lambda \in X^*_+(T)$, let $P_\lambda \subset G$ be the standard parabolic subgroup whose simple roots are orthogonal to $\lambda$, and consider the cohomology ring $H_\lambda = H^\bullet(G/P_\lambda)$.  This is a graded ring, so we can define graded $\Hom$- and $\Ext$-groups over it as in~\eqref{eqn:homc}.

\begin{thm}\label{thm:ajext}
There is an isomorphism of bigraded algebras
$\uExt^\bullet(A_\lambda, A_\lambda) \simeq \uExt^\bullet_{H_\lambda}(\C,\C)$.
\end{thm}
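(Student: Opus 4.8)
The plan is to establish the chain of bigraded algebra isomorphisms
\[
\uExt^\bullet(A_\lambda,A_\lambda) \;\cong\; \uExt^\bullet_{\cA_\lambda}(A_\lambda,A_\lambda) \;\cong\; \uExt^\bullet_{H'_\lambda}(\C,\C) \;\cong\; \uExt^\bullet_{H_\lambda}(\C,\C).
\]
The first isomorphism is Proposition~\ref{prop:qes}\eqref{it:ext} with $\scE = \Db\Cohgm(\cN)$; it is visibly compatible with the bigrading (cohomological degree of $\Ext$, and internal degree coming from the twist $\la\,\cdot\,\ra$). The second is Proposition~\ref{prop:morita}: a Morita equivalence preserves $\Ext$-algebras, and under the equivalence $\cA_\lambda \simeq H'_\lambda\Mod$ the object $A_\lambda$ goes to the trivial module $\C$, so $\uExt^\bullet_{\cA_\lambda}(A_\lambda,A_\lambda) \cong \uExt^\bullet_{H'_\lambda}(\C,\C)$ as bigraded algebras, where $H'_\lambda = \uEnd(\Pi(\cO_\cN\otimes V_\lambda))$. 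The third isomorphism is then automatic once we know $H'_\lambda \cong H_\lambda$ as graded rings, since $\uExt^\bullet$ of the residue field depends only on the underlying graded ring. So the whole proof reduces to identifying the graded ring $H'_\lambda$ with $H_\lambda = H^\bullet(G/P_\lambda)$.

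To compute $H'_\lambda$ I would transport it to the affine Grassmannian via $\Phi = R\mu_* \circ P$. By Lemma~\ref{lem:ffaith} the functor $\Phi_\lambda\colon \dm_\GvO(\Gr_\lambda) \to \cD_{\le\lambda}/\cD_{<\lambda}$ is fully faithful, and $\Phi_\lambda(\ICm_\lambda|_{\Gr_\lambda}) \cong \Pi(\cO_\cN\otimes V_\lambda)$. Since $P$ satisfies $P(\cF\la 1\ra) \cong (P\cF)\la 1\ra[1]$ while $R\mu_*$ commutes with the $\Gm$-action, the functor $\Phi_\lambda$ intertwines the Tate twist on the source with the sheared twist on the target; combining this with full faithfulness and with $\ICm_\lambda|_{\Gr_\lambda} \cong \uQlb[\dim\Gr_\lambda]\la -\dim\Gr_\lambda\ra$ (so that the shift and twist cancel, appearing in both arguments), the degree-$m$ part of $H'_\lambda$ becomes
\[
H'^{\,m}_\lambda \;\cong\; \Hom^m_{\dm_\GvO(\Gr_\lambda)}\big(\uQlb,\, \uQlb\la -m\ra\big),
\]
with composition of morphisms corresponding to multiplication in $H'_\lambda$. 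Now $\uQlb$ on $\Gr_\lambda$ lies in $\dm_\GvO(\Gr_\lambda)$, being the restriction of $\ICm_\lambda$ along a locally closed inclusion (Corollary~\ref{cor:loc-closed}), so Proposition~\ref{prop:mixed-version} applies: $\dm_\GvO(\Gr_\lambda)$ is a mixed version of the corresponding $\Fqb$-category, hence $\bigoplus_n \Hom^m_{\dm_\GvO(\Gr_\lambda)}(\uQlb, \uQlb\la n\ra) \cong H^m(\Gr_\lambda\otimes\Fqb)$; and because $\Gr_\lambda$ is a single $\GvO$-orbit, $\uQlb[\dim\Gr_\lambda]\la-\dim\Gr_\lambda\ra$ is the simple constant perverse sheaf of weight $0$, so Koszulity of the $\Iv$-refinement forces this sum onto the diagonal $n=-m$. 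This gives $H'_\lambda \cong H^\bullet(\Gr_\lambda\otimes\Fqb)$ as graded rings, composition matching cup product.

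It remains to identify $H^\bullet(\Gr_\lambda\otimes\Fqb)$ with $H_\lambda = H^\bullet(G/P_\lambda)$. The $\GvO$-orbit $\Gr_\lambda$ fibres over the partial flag variety $\Gv/\Pv_\lambda$ with affine-space fibres, so $H^\bullet(\Gr_\lambda\otimes\Fqb) \cong H^\bullet(\Gv/\Pv_\lambda)$; and $H^\bullet(\Gv/\Pv_\lambda)$ is isomorphic as a graded ring to $H^\bullet(G/P_\lambda)$ because both are the coinvariant algebra of $(W,W_\lambda)$ acting on its reflection representation, and the reflection representations of $W$ attached to $\Gv$ and to $G$ are $W$-equivariantly isomorphic in characteristic $0$. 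Concatenating all the identifications proves the theorem.

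The main obstacle, as I see it, is not any single step but the bookkeeping that ties them together: one must track the bigrading faithfully through the ABG equivalence (the shearing $\la 1\ra \leftrightarrow \la1\ra[1]$), confirm that the objects in play genuinely lie in the mixed category so that Proposition~\ref{prop:mixed-version} can be invoked, and verify that every identification in sight — the full faithfulness of $\Phi_\lambda$, the mixed-version isomorphism, the fibration statement for $\Gr_\lambda$, and the duality of coinvariant algebras — is compatible with the relevant multiplicative structure (composition, Yoneda product, cup product). Each piece is routine on its own, but assembling them into a single bigraded-algebra isomorphism is where the care is required.
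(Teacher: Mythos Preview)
Your proposal is correct and follows essentially the same route as the paper: reduce to $H'_\lambda \cong H_\lambda$ via Propositions~\ref{prop:qes}\eqref{it:ext} and~\ref{prop:morita}, then use the full faithfulness of $\Phi_\lambda$ (Lemma~\ref{lem:ffaith}) together with the shearing $P(\cF\la 1\ra)\cong P(\cF)\la 1\ra[1]$ to identify $H'_\lambda$ with $\bigoplus_n \Hom^n_{\dm_\GvO(\Gr_\lambda)}(\uQlb,\uQlb\la -n\ra)$, and finally invoke purity of $H^\bullet(\Gr_\lambda)$ and the vector-bundle projection $\Gr_\lambda \to \Gv/\Pv_\lambda$. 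You are in fact a bit more careful than the paper in making explicit the passage from $\Gv/\Pv_\lambda$ to $G/P_\lambda$ via the coinvariant description, which the paper elides.
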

$H_\lambda$ can be described in terms of the coinvariant ring of the Weyl group~\cite[Theorem~5.5]{bgg:sccs}, so the result above can be used to carry out explicit calculations.  
\begin{proof}
In view of Propositions~\ref{prop:qes}\eqref{it:ext} and~\ref{prop:morita}, the proof of this statement reduces to showing that $H_\lambda \cong H'_\lambda$.  The fully faithful functor $\Phi_\lambda$ of Lemma~\ref{lem:ffaith} has the property that $\Phi_\lambda(\uQlb[\dim \Gr_\lambda]\la-\dim \Gr_\lambda\ra) \cong \Pi(\cO_\cN \otimes V_\lambda)$, so we have
\begin{multline*}
H'_\lambda = \uEnd(\Pi(\cO_\cN \otimes V_\lambda)) \cong \bigoplus_{n \in \Z} \Hom^n_{\dm_\GvO(\Gr_\lambda)}(\uQlb, \uQlb\la-n\ra) \\
\cong \bigoplus_{n \in \Z} H^n(\Gr_\lambda) \cong \bigoplus_{n\in \Z} H^n(G/P_\lambda) = H_\lambda,
\end{multline*}
where the third isomorphism follows from purity of the cohomology of the smooth variety $\Gr_\lambda$, and the fourth one from the fact that $\Gr_\lambda$ is naturally a vector bundle over $G/P_\lambda$.
\end{proof}

\section{Wakimoto sheaves}
\label{sect:wakimoto}

We retain the notation and conventions of the previous section, with the exception that we now allow $G$ to be an arbitrary connected reductive group.  Line bundles $\cL_\lambda$ on $\tcN$ (where now we allow any $\lambda \in X^*(T)$, not just dominant weights) form a particularly important class of objects in $\Db\Cohgm(\tcN)$, and it is natural to ask what objects in $\dm_\Iv(\Gr)$ they correspond to under the equivalence~\eqref{eqn:abg-equiv}.  In~\cite[Remark~9.4.4]{abg}, it was conjectured that line bundles should correspond to \emph{Wakimoto sheaves}, whose definition we will review below.  In fact, this was proved for $\lambda$ dominant or antidominant, and the analogous statement for the unmixed version of~\eqref{eqn:abg-equiv} (involving $\dc_\Iv(\Gr)$) was proved in general.  But in the mixed case, it was not known in~\cite{abg} whether Wakimoto sheaves for general $\lambda$ are miscible.  In this section, we provide a positive answer to this question.

\subsection{Twisted external tensor products and convolution products}
\label{ss:twisted}

Let $\Fl = \Gv(\fK)/\Iv$ be the \emph{affine flag variety} of $\Gv$.  As with $\Gr$, this is an ind-variety equipped with an affine even stratification given by orbits of $\Iv$, but these orbits are now indexed by the extended affine Weyl group $W = W_0 \ltimes X^*(T)$, where $W_0(T) = N_G(T)/T$ is the ordinary Weyl group.  To explain the construction of the convolution product, we require the \emph{equivariant derived category} of $\Fl$ in the sense of Bernstein--Lunts~\cite{bl}.  This category, denoted $\dw_{\eq\Iv}(\Fl)$, is a triangulated category equipped with a forgetful functor $\dw_{\eq\Iv}(\Fl) \to \dw_\Iv(\Fl)$, as well as with a $t$-structure whose heart $\Pervw_{\eq\Iv}(\Fl)$ is known as the category of \emph{equivariant perverse sheaves}.  When restricted to this abelian category, the forgetful functor $\Pervw_{\eq\Iv}(\Fl) \to \Pervw_\Iv(\Fl)$ is full and faithful.  

Now, consider the diagram
\[
\Fl \times \Fl \overset{p}{\longleftarrow} \Gv(\fK) \times \Fl \ovto{q} \Gv(\fK) \times_\Iv \Fl \ovto{m} \Fl
\]
where $p$ and $q$ are the obvious projection maps, and $m$ is the map induced by the action of $\Gv(\fK)$ on $\Fl$.  Suppose $\cF \in \dw_\Iv(\Fl)$ and $\cG \in \dw_{\eq\Iv}(\Fl)$. The \emph{twisted external tensor product} of $\cF$ and $\cG$, denoted $\cF \tboxtimes \cG$, is the unique object of $\dsw(\Gv(\fK) \times_\Iv \Fl)$ characterized by the property that
\[
q^*(\cF \tboxtimes \cG) \cong p^*(\cF \boxtimes \cG).
\]
Here, $\cS$ denotes the stratification whose strata are subvarieties of the form
\[
\Fl_w \ttimes \Fl_v = q(p^{-1}(\Fl_w \times \Fl_v))\qquad\text{where $w,v \in W$.}
\]
This construction actually gives us a bifunctor of triangulated categories
\[
\tboxtimes: \dw_\Iv(\Fl) \times \dw_{\eq\Iv}(\Fl) \to \dsw(\Gv(\fK) \times_\Iv \Fl).
\]
Finally, the \emph{convolution product} is the bifunctor
\[
\star: \dw_\Iv(\Fl) \times \dw_{\eq\Iv}(\Fl) \to \dw_\Iv(\Fl)
\qquad\text{given by}\qquad
\cF \star \cG = m_!(\cF \tboxtimes \cG).
\]
The $\Iv$-equivariance of $\cG$ is an essential ingredient in this construction; there is no way to make $\tboxtimes$ or $\star$ a bifunctor on $\dw_\Iv(\Fl) \times \dw_\Iv(\Fl)$ instead.  (It is possible to avoid equivariant derived categories at the expense of replacing one copy of $\Fl$ by the ``extended affine flag manifold''; see~\cite[\S 8.9]{abg}.)

\begin{lem}\label{lem:conv-aff-even}
The stratification $\cS$ of $\Gv(\fK) \times_\Iv \Fl$ is an affine even stratification.
\end{lem}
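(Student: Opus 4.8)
The plan is to verify directly the two conditions of Definition~\ref{defn:aff-even}, using the defining property $q^*(\cF \tboxtimes \cG) \cong p^*(\cF \boxtimes \cG)$ to transport information about $\ICm$ sheaves from $\Fl \times \Fl$ (where the product stratification is affine even by Proposition~\ref{prop:boxtimes}) to the twisted product. As is customary for ind-varieties with a $\Gv(\fK)$-action, every assertion below is to be checked over a finite union of strata, where the relevant quotient maps become torsors under finite-dimensional smooth affine group schemes, hence smooth; I will not belabor this point.

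For condition~(1), the map $(g,x)\Iv \mapsto g\Iv$ exhibits $\Fl_w \ttimes \Fl_v$ as a fibration over $\Fl_w$ with fibre $\Fl_v$, associated to the $\Iv$-torsor $\Iv w \Iv \to \Fl_w$. Since $\Iv w \Iv \to \Fl_w$ admits a section (a standard fact about Bruhat cells) and $\Fl_w \cong \bA^{\ell(w)}$, $\Fl_v \cong \bA^{\ell(v)}$, the space $\Fl_w \ttimes \Fl_v$ is an iterated affine-space bundle over $\bA^{\ell(w)}$, hence isomorphic to $\bA^{\ell(w)+\ell(v)}$, as required.

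For condition~(2), the first step is to control the simple perverse sheaf attached to the stratum $\Fl_w \ttimes \Fl_v$. Intersection cohomology commutes with smooth pull-back up to shift and Tate twist (as in the proof of Proposition~\ref{prop:smooth-pullback}); the maps $p$ and $q$ have the same relative dimension; and $\ICm_w \boxtimes \ICm_v \cong \ICm_{w,v}$ by (the proof of) Proposition~\ref{prop:boxtimes}. Combining these with the defining property of $\tboxtimes$ gives
\[
q^*(\ICm_w \tboxtimes \ICm_v) \cong p^*(\ICm_w \boxtimes \ICm_v) \cong p^* \ICm_{w,v},
\]
and, since $q$ is a smooth surjection, one deduces that $q^*$ of the $\ICm$-complex of the stratum $\Fl_w \ttimes \Fl_v$ agrees with $p^* \ICm_{w,v}$. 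Now fix a second stratum $\Fl_u \ttimes \Fl_z$. The preimage of $\Fl_u \ttimes \Fl_z$ under $q$ coincides with the preimage of $\Fl_u \times \Fl_z$ under $p$, so restricting the above isomorphism expresses $\bar q^*$ applied to $\ICm_{\Fl_w \ttimes \Fl_v}|_{\Fl_u \ttimes \Fl_z}$ as $\bar p^*$ applied to $\ICm_{w,v}|_{\Fl_u \times \Fl_z}$, where $\bar p,\bar q$ denote the restricted (smooth, geometrically connected) projections. Since the product stratification of $\Fl \times \Fl$ is affine even, $\ICm_{w,v}|_{\Fl_u \times \Fl_z}$ has cohomology sheaves vanishing outside degrees $\equiv \ell(w)+\ell(v) \equiv \dim(\Fl_w \ttimes \Fl_v) \pmod 2$ and equal to direct sums of copies of $\uQlb\la i\ra$ otherwise; exactness of smooth pull-back together with faithful descent along the smooth surjection $\bar q$ transfers both properties to $H^i(\ICm_{\Fl_w \ttimes \Fl_v}|_{\Fl_u \ttimes \Fl_z})$ (using that $\Fl_u \ttimes \Fl_z$ is simply connected and that pull-back preserves Frobenius weights). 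This establishes condition~(2) and completes the proof.

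The only genuinely delicate point is the bookkeeping just described: making precise the (pro-)smoothness of $p$ and $q$, the equality of their relative dimensions, the compatibility of intersection cohomology and of the parity/purity condition with pull-back along them, and the local triviality of Iwahori-orbit bundles. All of this is routine in the affine-Grassmannian setting, handled by passing to finite-dimensional approximations of the ind-schemes involved, but it is where the care needs to go.
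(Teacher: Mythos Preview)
Your proof is correct and follows essentially the same approach as the paper: identify the $\ICm$-complex of a twisted stratum as $\ICm_w \tboxtimes \ICm_v$ via the defining relation $q^*(\cF \tboxtimes \cG) \cong p^*(\cF \boxtimes \cG)$ and smoothness of $p,q$, then read off the parity/purity of stalks from the known affine even structure on (the product of two copies of) $\Fl$. The paper packages the stalk computation slightly differently---writing $\tilde\jmath_{x,y}^*(\ICm_w \tboxtimes \ICm_v) \cong j_x^*\ICm_w \tboxtimes j_y^*\ICm_v$ directly rather than descending from $p^{-1}(\Fl_u \times \Fl_z)$---and leaves condition~(1) implicit, but the substance is the same.
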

\begin{proof}
Given two strata $\Fl_v, \Fl_w \subset \Fl$, we have a functor $\tboxtimes : \dm_\Iv(\Fl_w) \times \dm_{\eq\Iv}(\Fl_v) \to \dw_\Iv(\Fl_w \ttimes \Fl_v)$ defined as above using the diagram
\[
\Fl_w \times \Fl_v \overset{p}{\longleftarrow} p^{-1}(\Fl_w \times \Fl_v) \ovto{q} \Fl_w \ttimes \Fl_v.
\]
Observe that
\begin{equation}\label{eqn:tbox-const}
\uQlb_{\Fl_w} \tboxtimes \uQlb_{\Fl_v} \cong \uQlb_{\Fl_w \ttimes \Fl_v}.
\end{equation}
If $\tilde\jmath_{w,v}: \Fl_w \ttimes \Fl_v \to \Gv(\fK) \times_\Iv \Fl$ denotes the inclusion map, it is straightforward to check that
\begin{equation}\label{eqn:tboxtimes-stalk}
\begin{aligned}
\tilde\jmath_{x,y}^*(\ICm_w \tboxtimes \ICm_v) &\cong j_x^*\ICm_w \tboxtimes j_y^*\ICm_v, \\
\tilde\jmath_{x,y}^!(\ICm_w \tboxtimes \ICm_v) &\cong j_x^!\ICm_w \tboxtimes j_y^!\ICm_v.
\end{aligned}
\end{equation}
In particular, it follows by a dimension calculation that
\begin{equation}\label{eqn:tboxtimes-ic}
\ICm_w \tboxtimes \ICm_v \cong \ICm_{w,v}.
\end{equation}
In view of~\eqref{eqn:tbox-const}, it follows now from~\eqref{eqn:tboxtimes-stalk} that $\cS$ is an affine even stratification of $\Gv(\fK) \times_\Iv \Fl$.  
\end{proof}

Since the inclusion map $j_w: \Fl_w \to \Fl$ is $\Iv$-equivariant, the object $\Dm_w = j_{w!}\uQlb[\dim \Fl_w]\la-\dim \Fl_w\ra$ can naturally be regarded as an object of $\dw_{\eq\Iv}(\Fl)$, so convolution products of the form $\cF \star \Dm_w$ are defined.

\begin{prop}\label{prop:conv-delta-misc}
If $s \in W$ is a simple reflection, the functor $({-}) \star \Dm_s: \dw_\Iv(\Fl) \to \dw_\Iv(\Fl)$ is miscible.
\end{prop}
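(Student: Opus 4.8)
\textbf{Proof proposal for Proposition~\ref{prop:conv-delta-misc}.}
The plan is to reduce the miscibility of $({-}) \star \Dm_s$ to the genuineness results of Section~\ref{sect:genuine} by factoring this functor as a composition of functors each of which is already known to be miscible or genuine.  First I would recall from Lemma~\ref{lem:conv-aff-even} that $\cS$ is an affine even stratification of $\Gv(\fK) \times_\Iv \Fl$, so all of the machinery of Part~\ref{part:sheaf} applies to it.  Next, because $s$ is a \emph{simple} reflection, the relevant geometry is finite-dimensional: only finitely many $\Iv$-orbits are involved, and the convolution $\cF \star \Dm_s$ is supported, for each fixed input $\cF$ supported on $\overline{\Fl_w}$, on a finite union of orbits $\overline{\Fl_{ws}} \cup \overline{\Fl_w}$.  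Concretely, if $\pi_s : \Gv(\fK)/\Iv \to \Gv(\fK)/\Pv_s$ denotes the projection to the partial affine flag variety associated with $s$ (so $\pi_s$ is a $\bP^1$-bundle, in particular smooth and proper), then $\cF \star \Dm_s$ can be rewritten in terms of $\pi_s^*$, $\pi_{s*} = \pi_{s!}$, and a shift/twist: one has $\cF \star \Dm_s \cong \pi_s^! \pi_{s!}(\cF)[c]\la -c\ra$ for the appropriate $c$ (this is the standard expression of the ``$\star \Dm_s$'' functor as the composition ``push to the $\bP^1$-bundle base, pull back''; cf.~\cite[\S 8]{abg}).

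With that factorization in hand, I would argue as follows.  The map $\pi_s$ is smooth and proper, and it is a stratified morphism in the sense of Definition~\ref{defn:stratified} with respect to the $\Iv$-orbit stratifications of $\Gv(\fK)/\Iv$ and $\Gv(\fK)/\Pv_s$ (the fibers are $\bP^1$, with their evident two-cell stratification, and the product structure over each base stratum is clear).  By Proposition~\ref{prop:smooth-pullback}, $\pi_s^*$ and hence $\pi_s^!$ are genuine; by Theorem~\ref{thm:smooth-proper}, since $\pi_s$ is proper and smooth, $\pi_{s*}$ is genuine as well.  A shift by $[c]$ and Tate twist by $\la -c\ra$ are evidently genuine.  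Since a composition of genuine functors is genuine, and in particular miscible, it follows that $({-}) \star \Dm_s$ is miscible; in fact this argument shows it is \emph{genuine}, which is stronger than what is claimed.  (Alternatively, one can bypass the smoothness/properness of $\pi_s$ entirely by checking directly that $({-}) \star \Dm_s$ sends $\Pure_\Iv(\Fl)$ into $\Pure_\Iv(\Fl)$ and is homogeneous there, using~\eqref{eqn:tboxtimes-ic} together with the parabolic analogue of~\eqref{eqn:boxtimes-ic}, and then invoking Proposition~\ref{prop:homog-genuine}; this amounts to computing $j_{s}^*$ and $j_s^!$ of $\ICm_w \star \Dm_s$ along each stratum and observing these are direct sums of shifted constant sheaves of the expected weight because everything is smooth.)

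The main obstacle I anticipate is verifying carefully that the convolution $({-}) \star \Dm_s$ really does coincide with $\pi_s^! \pi_{s!}({-})$ up to the claimed shift and twist — this requires unwinding the definition of $\tboxtimes$ and $\star$ in Section~\ref{ss:twisted}, identifying $\Gv(\fK) \times_\Iv \overline{\Fl_s}$ with the fiber product $\Gv(\fK)/\Iv \times_{\Gv(\fK)/\Pv_s} \Gv(\fK)/\Iv$, and checking that $\Dm_s = j_{s!}\uQlb[1]\la -1\ra$ convolves to produce exactly the ``$\bP^1$-bundle'' functor rather than its cocone or some twist thereof.  This is a base-change computation that is well known in the unmixed setting (and indeed used in~\cite{abg}), but one must be attentive to the Tate twists, since the equivalence~\eqref{eqn:abg-equiv} does not commute with $\la 1 \ra$ on the nose.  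Once that identification is in place, the genuineness conclusion is immediate from Propositions~\ref{prop:smooth-pullback} and Theorem~\ref{thm:smooth-proper}.  A secondary point requiring a remark is the ind-variety aspect: since $\Fl$ is an ind-variety, one works one finite-dimensional piece $\overline{\Fl_w}$ at a time, noting that $({-}) \star \Dm_s$ preserves the filtration of $\dw_\Iv(\Fl)$ by such pieces, so the results of Section~\ref{ss:ind-var} apply.
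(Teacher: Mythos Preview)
Your claimed factorization $\cF \star \Dm_s \cong \pi_s^! \pi_{s!}(\cF)[c]\la -c\ra$ is incorrect, and this is a genuine gap.  Since $\pi_s$ is a smooth proper $\bP^1$-fibration, one has $\pi_s^! \cong \pi_s^*[2]\la -2\ra$ and $\pi_{s!} = \pi_{s*}$, so $\pi_s^!\pi_{s!}$ and $\pi_s^*\pi_{s*}$ agree up to a shift and twist.  What the paper actually shows (equations~\eqref{eqn:conv-pullback}--\eqref{eqn:conv-pushforward}) is that $({-}) \star \ICm_s[-1]\la 1\ra \cong \pi_s^* \circ \pi_{s*}$; in other words, the push--pull functor is convolution with $\ICm_s$, not with $\Dm_s$.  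A quick sanity check confirms this: $\ICm_e \star \Dm_s \cong \Dm_s$, which sits in a nonsplit extension $0 \to \ICm_e\la -1\ra \to \Dm_s \to \ICm_s \to 0$ and hence has weights $\{-1,0\}$, so it is not pure and cannot lie in $\Pure_\Iv(\Fl)$.  This also kills your alternative approach via Proposition~\ref{prop:homog-genuine}: the functor $({-}) \star \Dm_s$ does \emph{not} send $\Pure_\Iv(\Fl)$ into $\Pure_\Iv(\Fl)$, let alone homogeneously.

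The paper's argument supplies exactly the missing step.  From the distinguished triangle $\ICm_s[-1]\la 1\ra \xrightarrow{\theta} \ICm_e \to \Dm_s\la 1\ra \to$, convolution with $\Dm_s\la 1\ra$ is the \emph{cone} of the natural transformation $({-}) \star \theta : \pi_s^*\pi_{s*} \to \id$, which one identifies with the adjunction counit.  Both $\pi_s^*\pi_{s*}$ and $\id$ are genuine, hence miscible; the point is that the counit morphism is itself a miscible morphism (this follows from Lemma~\ref{lem:infext-ind-adjoint}), so its cone on a miscible object is miscible.  Note that this argument yields miscibility but not genuineness of $({-}) \star \Dm_s$, and your stronger claim of genuineness does not follow from it.
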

\begin{proof}
Let $\Jv \subset \Gv(\fK)$ denote the standard parahoric subgroup corresponding to the simple reflection $s$.  Let $\Fl^s = \Gv(\fK)/\Jv$ be the associated partial affine flag variety, and let $\pi_s: \Fl \to \Fl^s$ denote the natural projection map.

If $\cG \in \dw_{\eq\Jv}(\Fl)$, then, by a construction using the diagram
\[
\Fl^s \times \Fl \longleftarrow \Gv(\fK) \times \Fl \longrightarrow \Gv(\fK) \times_\Jv \Fl \longrightarrow \Fl,
\]
one has a convolution product functor
\[
({-}) \star^s \cG: \dw_\Iv(\Fl^s) \to \dw_\Iv(\Fl).
\]
Similarly, if $\cG \in \dw_{\eq\Iv}(\Fl^s)$, then there is a convolution product functor
\[
({-}) \star_s \cG: \dw_\Iv(\Fl) \to \dw_\Iv(\Fl^s).
\]
It is straightforward to check that these new convolution products are associative in the appropriate sense.

For instance, consider the orbit closure $\overline{\Fl_s} \subset \Fl$, which is a single $\Jv$-orbit.  The object $\ICm_s[-1]\la 1\ra \cong \uQlb$ can be regarded as an object of $\dw_{\eq\Jv}(\Fl)$.  We claim that there is an isomorphism of functors
\begin{equation}\label{eqn:conv-pullback}
({-})\star^s \ICm_s[-1]\la 1\ra \cong \pi_s^*: \dw_\Iv(\Fl^s) \to \dw_\Iv(\Fl).
\end{equation}
To see this, we first note that the map $m: \Gv(\fK) \times_\Jv \overline{\Fl_s} \to \Fl$ is an isomorphism, since $\overline{\Fl_s}$ can be identified with $\Jv/\Iv$.  Let $r = \pi_s \circ m : \Gv(\fK) \times_\Jv \overline{\Fl_s} \to \Fl^s$.  To establish~\eqref{eqn:conv-pullback}, it suffices to show that $\cF \tboxtimes \uQlb \cong r^*\cF$.  But this follows from the observation that $q^*r^*\cF \cong p^*(\cF \boxtimes \uQlb)$, where $p$ and $q$ are the maps in the diagram
\[
\Fl^s \times \overline{\Fl_s} \overset{p}{\longleftarrow}
  \Gv(\fK) \times \overline{\Fl_s} \ovto{q} \Gv(\fK) \times_\Jv \overline{\Fl_s}.
\]

Next, consider the object $\pi_{s*}\ICm_e \in \dw_\Iv(\Fl^s)$, where $e \in W$ is the identity element.  This is a skyscraper sheaf on $\Fl^s$; it can certainly be regarded as an object of $\dw_{\eq\Iv}(\Fl^s)$.  An argument similar to (but easier than) the one above shows that there is an isomorphism of functors
\begin{equation}\label{eqn:conv-pushforward}
({-})\star_s \pi_{s*}\ICm_e \cong \pi_{s*}: \dw_\Iv(\Fl) \to \dw_\Iv(\Fl^s).
\end{equation}

Note that $(\pi_{s*}\ICm_e) \star^s \ICm_s[-1]\la 1\ra \cong \pi_s^*\pi_{s*}\ICm_e \cong \ICm_s[-1]\la 1\ra$.  By associativity of convolution products, we have
\[
({-}) \star \ICm_s[-1]\la 1\ra \cong ({-}) \star_s \pi_{s*}\ICm_e \star^s \ICm_s[-1]\la 1\ra \cong \pi_s^* \circ \pi_{s*}.
\]
The functors $\pi_s^*$ and $\pi_{s*}$ are genuine by Proposition~\ref{prop:smooth-pullback} and Theorem~\ref{thm:smooth-proper}, respectively, so the functor $({-}) \star \ICm_s[-1]\la 1\ra$ is as well.

The functor $({-}) \star \ICm_e \cong \id$ is obviously genuine as well.  Consider now the distinguished triangle $\ICm_s[-1]\la 1\ra \ovto{\theta} \ICm_e \to \Dm_s\la 1\ra \to$.  By a routine calculation involving the convolution products in~\eqref{eqn:conv-pullback} and~\eqref{eqn:conv-pushforward}, one can check that the morphism of functors
\[
({-}) \star \theta: ({-}) \star \ICm_s[-1]\la 1\ra \to ({-}) \star \ICm_e
\]
can be identified with the adjunction morphism $\pi_s^* \circ \pi_{s*} \to \id$.  

Suppose now that $\cF \in \dmisc_\Iv(\Fl)$.  We then have a distinguished triangle
\[
\cF \star \ICm_s[-1]\la 1\ra \ovto{\cF \star \theta} \cF \star \ICm_e \to \cF \star \Dm_s\la 1\ra \to.
\]
It follows from Lemma~\ref{lem:infext-ind-adjoint} that the adjunction morphism $\cF \star \theta$ is miscible, so its cone $\cF \star \Dm_s\la 1\ra$ is miscible as well, as desired.
\end{proof}

\begin{rmk}
In the course of the preceding proof, we saw that the functor $\cF \mapsto \cF \star \ICm_s$ is genuine.  By an induction argument on lengths of elements in $W$, one can deduce that the convolution product of any two simple perverse sheaves is a pure semisimple object of $\dw_\Iv(\Fl)$.  For another proof of this fact, see~\cite[Proposition~3.2.5]{by}.
\end{rmk}

\subsection{Wakimoto sheaves}
\label{ss:convolve}

Given a weight $\lambda \in X^*(T)$, choose two dominant weights $\mu, \nu \in X^*_+(T)$ such that $\lambda = \mu - \nu$.  All these weights can be regarded as elements of the affine Weyl group $W$, so they determine strata in $\Fl$.  The \emph{Wakimoto sheaf} of weight $\lambda$ is defined to be
\[
\cW_\lambda = \Nm_\mu \star \Dm_{-\nu}.
\]
This object is independent of the choice of $\mu$ and $\nu$; see~\cite[\S 8.3]{abg}.  Sometimes, the term \emph{Wakimoto sheaf} is instead used for the object
\[
\bar\cW_\lambda = \pi_*\cW_\lambda,
\]
where $\pi: \Fl \to \Gr$ is the natural projection map.  The following result answers a question posed in~\cite[Remark~9.4.4]{abg}.

\begin{prop}
The Wakimoto sheaves $\cW_\lambda \in \dw_\Iv(\Fl)$ and $\bar\cW_\lambda \in \dw_\Iv(\Gr)$ are miscible for all $\lambda \in X^*(T)$.
\end{prop}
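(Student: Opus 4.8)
The plan is to reduce the statement for $\bar\cW_\lambda$ to the one for $\cW_\lambda$, and then to exhibit $\cW_\lambda$ as an iterated convolution product in which every factor convolves on the right by a functor already known to be miscible. First I would observe that the projection $\pi\colon\Fl\to\Gr$ is a proper (in fact smooth and proper) stratified morphism for the Iwahori stratifications: its fibres are copies of the finite flag variety $\GvO/\Iv$, stratified by Schubert cells, and $\pi$ is locally a product over each Iwahori orbit of $\Gr$. Hence $\pi_*$ is miscible by Theorem~\ref{thm:smooth-proper}, and since $\bar\cW_\lambda=\pi_*\cW_\lambda$ it suffices to prove $\cW_\lambda\in\dmisc_\Iv(\Fl)$.

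Next I would unwind the definition. Write $\lambda=\mu-\nu$ with $\mu,\nu$ dominant, so $\cW_\lambda=\Nm_\mu\star\Dm_{-\nu}$, where $\Nm_\mu$ and $\Dm_{-\nu}$ are the costandard and standard objects attached to the strata $\Fl_{t_\mu}$ and $\Fl_{t_{-\nu}}$ of $\Fl$. Choose reduced expressions $t_\mu=\omega\,s_{i_1}\cdots s_{i_k}$ and $t_{-\nu}=\omega'\,s_{j_1}\cdots s_{j_l}$, with $\omega,\omega'$ of length zero and the $s$'s simple reflections. The standard factorizations of standard and costandard objects along reduced words (see \cite{abg}; they come from $\Dm_w\star\Dm_v\cong\Dm_{wv}$ and $\Nm_w\star\Nm_v\cong\Nm_{wv}$ when $\ell(wv)=\ell(w)+\ell(v)$, together with $\Dm_\omega=\Nm_\omega=\ICm_\omega$ for $\omega$ of length zero) give
\[
\Nm_\mu\cong\ICm_\omega\star\Nm_{s_{i_1}}\star\cdots\star\Nm_{s_{i_k}},\qquad
\Dm_{-\nu}\cong\ICm_{\omega'}\star\Dm_{s_{j_1}}\star\cdots\star\Dm_{s_{j_l}}.
\]
Since $\ICm_e$ is the unit for $\star$ and $\star$ is associative, this yields
\[
\cW_\lambda\cong\ICm_e\star\ICm_\omega\star\Nm_{s_{i_1}}\star\cdots\star\Nm_{s_{i_k}}\star\ICm_{\omega'}\star\Dm_{s_{j_1}}\star\cdots\star\Dm_{s_{j_l}},
\]
which I would read as: start with $\ICm_e$ and convolve on the right by one factor at a time.

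It then remains to see that right convolution by each factor occurring here is a miscible functor. For $\omega$ of length zero, $({-})\star\ICm_\omega$ is the equivalence of $\dw_\Iv(\Fl)$ induced by an automorphism of $\Fl$ permuting the strata, hence genuine (by Theorem~\ref{thm:smooth-proper} or Corollary~\ref{cor:loc-closed}), a fortiori miscible. For a simple reflection $s$, $({-})\star\Dm_s$ is miscible by Proposition~\ref{prop:conv-delta-misc}. For $({-})\star\Nm_s$ I would argue by Verdier duality: since $m$ is proper and $\D$ commutes with the smooth pullbacks defining $\tboxtimes$ (the relative-dimension shifts cancel), one has $\D(\cF\star\cG)\cong\D\cF\star\D\cG$; together with $\D\Dm_s\cong\Nm_s$ this gives $({-})\star\Nm_s\cong\D\circ\bigl(({-})\star\Dm_s\bigr)\circ\D$, a composite of functors each of which carries miscible objects to miscible objects (Proposition~\ref{prop:verdier} for $\D$, Proposition~\ref{prop:conv-delta-misc} for $({-})\star\Dm_s$). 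Since $\ICm_e$ is a skyscraper sheaf, hence pure semisimple and therefore miscible, applying these functors successively shows that $\cW_\lambda$ is miscible, whence so is $\bar\cW_\lambda$.

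I expect the only genuinely substantive point to be the input already isolated in Proposition~\ref{prop:conv-delta-misc}, so the remaining work is: (i) verifying that $({-})\star\Nm_s$ is miscible, which rests on the compatibility of convolution with Verdier duality — standard, but needing a little care in the equivariant setting (properness of $m$ on the relevant finite-dimensional pieces, and the twist built into $\tboxtimes$); and (ii) the bookkeeping with reduced expressions in the extended affine Weyl group and with the definition of $\cW_\lambda$ from \cite{abg}. Neither is deep, but (i) is where I would be most careful.
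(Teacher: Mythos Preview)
Your argument is correct, but it does a bit more work than the paper's. The paper also reduces $\bar\cW_\lambda$ to $\cW_\lambda$ via Theorem~\ref{thm:smooth-proper}, and also decomposes $\Dm_{-\nu}$ along a reduced expression $-\nu = t s_1 \cdots s_k$ with $\ell(t)=0$. However, it does \emph{not} decompose $\Nm_\mu$. Instead it observes that since $\Fl_t$ is a closed stratum we have $\Dm_t \cong \Nm_t$, so $\Nm_\mu \star \Dm_t \cong \Nm_\mu \star \Nm_t \cong \Nm_{\mu t}$, and this costandard object is already known to lie in $\Pervsm_\Iv(\Fl)$ (by the genuineness of open push-forwards, Theorem~\ref{thm:open-closed}) and is therefore miscible. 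From that point one only needs Proposition~\ref{prop:conv-delta-misc} repeatedly to convolve on by the $\Dm_{s_i}$.

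The upshot is that the paper never needs to show that $({-})\star\Nm_s$ is miscible, and so avoids the Verdier-duality step $(\D(\cF\star\cG)\cong\D\cF\star\D\cG)$ that you flagged as the place requiring the most care. Your route is perfectly valid and that duality compatibility is indeed standard once $m$ is proper and $\tboxtimes$ is built from smooth pullbacks; it just buys you a fact (miscibility of $({-})\star\Nm_s$) that the paper sidesteps by the trick of absorbing the length-zero factor into the costandard side.
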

\begin{proof}
Choose $\mu, \nu \in X^*_+(T)$ such that $\lambda = \mu - \nu$, and then choose a reduced expression $-\nu = ts_1s_2\cdots s_k$ in $W$, where $t$ is an element of length $0$, and the $s_i$ are simple reflections.  The stratum $\Fl_t$ is closed in $\Fl$, so $\Dm_t \cong \Nm_t$.  Therefore, $\Nm_\mu \star \Dm_t \cong \Nm_\mu \star \Nm_t \cong \Nm_{\mu t}$ (for the last step, see, e.g.,~\cite[Equation~(8.2.3)]{abg}).  The object $\Nm_{\mu t}$ is, of course, miscible, and then it follows from Proposition~\ref{prop:conv-delta-misc} and induction on $i$ that
\[
(\Nm_\mu \star \Dm_t) \star \Dm_{s_1} \star \Dm_{s_2} \star \cdots \star \Dm_{s_i}
\]
is miscible for each $i \in \{1,2,\ldots, k\}$.  Since $\Dm_t \star \Dm_{s_1} \star \cdots \star \Dm_{s_k} \cong \Dm_{-\nu}$, we conclude that $\cW_\lambda$ is miscible.  Lastly, since $\pi: \Fl \to \Gr$ is a smooth, proper stratified morphism, we have from Theorem~\ref{thm:smooth-proper} that $\bar\cW_\lambda$ is miscible as well.
\end{proof}

\end{document}